\def\hB{\hspace*{\fill}$\qed$}
\title{$KK$- and $E$-theory via homotopy theory}
\author{
Ulrich Bunke\thanks{Fakult{\"a}t f{\"u}r Mathematik,
Universit{\"a}t Regensburg,
93040 Regensburg,
ulrich.bunke@mathematik.uni-regensburg.de}, 
}
\numberwithin{equation}{section}
\newtheorem{theorem}{Theorem}[section] 
\newtheorem{prop}[theorem]{Proposition}
\newtheorem{lem}[theorem]{Lemma}
\newtheorem{ddd}[theorem]{Definition}
\newtheorem{kor}[theorem]{Corollary}
\theoremstyle{remark}
\theoremstyle{definition}
\newtheorem{ex}[theorem]{Example}
\newtheorem{rem}[theorem]{Remark}
\newtheorem{construction}[theorem]{Construction}
\newcommand{\coCMon}{\mathbf{coCMon}}
\newcommand{\coCGroups}{\mathbf{coCGroup}}
\newcommand{\UCT}{\mathrm{UCT}}
\newcommand{\se}{\mathrm{se}}
\newcommand{\splt}{\mathrm{splt}}
\newcommand{\rProj}{\mathcal{P}\tilde{\mathrm{roj}}}
\newcommand{\bProj}{\mathcal{P}\bar{\mathrm{roj}}}
\newcommand{\SPC}{\mathbf{SPC}}
\newcommand{\class}{\mathrm{class}}
\newcommand{\KU}{\mathrm{KU}}
\newcommand{\sa}{\mathrm{sa}}
\newcommand{\Nat}{\mathrm{Nat}}
\newcommand{\cProj}{\mathcal{P}\mathrm{roj}}
\newcommand{\sepa}{\mathrm{sep}}
\newcommand{\codiag}{\mathrm{codiag}}
\newcommand{\lex}{\mathrm{lex}}
\newcommand{\lax}{\mathrm{lax}}
\newcommand{\alg}{\mathrm{alg}}
\newcommand{\Bd}{\mathrm{Bd}}
\newcommand{\EE}{\mathrm{E}}
\newcommand{\ee}{\mathrm{e}}
\newcommand{\group}{\mathrm{group}}
\newcommand{\nCalg}{C^{*}\mathbf{Alg}^{\mathrm{nu}}}
\newcommand{\can}{\mathrm{can}}
\newcommand{\F}{\mathbb{F}}
\newcommand{\ho}{\mathrm{ho}}
\newcommand{\cR}{\mathcal{R}}
\newcommand{\Ob}{\mathrm{Ob}}
\newcommand{\CMon}{\mathbf{CMon}}
\newcommand{\Fib}{{\mathrm{Fib}}}
\newcommand{\incl}{\mathrm{incl}}
\newcommand{\cK}{\mathcal{K}}
\newcommand{\CAlg}{\mathbf{CAlg}}
\newcommand{\uli}[1]{\textcolor{red}{#1}}
\newcommand{\exa}{\mathrm{ex}}
\renewcommand{\Proj}{\mathrm{Proj}}
\newcommand{\const}{{\mathtt{const}}}
\newcommand{\cU}{{\mathcal{U}}}
\renewcommand{\Dirac}{\slashed{D}}
\newcommand{\Spc}{\mathbf{Spc}}
\newcommand{\Calg}{C^{\ast}\mathbf{Alg}}
\newcommand{\op}{\mathrm{op}}
\newcommand{\add}{\mathrm{add}}
\newcommand{\kk}{\mathrm{kk}}
\newcommand{\KK}{\mathrm{KK}}
\renewcommand{\1}{\mathbbm{1}}
\newcommand{\nClinAlg}{{}^{*}\mathbf{Alg}^{\mathrm{nu}}_{\C}}
\newcommand{\npreCalg}{{C}^{*}_{\mathrm{pre}}\mathbf{Alg}^{\mathrm{nu}}}
\newcommand{\compl}{\mathrm{compl}}
\newcommand{\Groups}{\mathbf{Groups}}
\newcommand{\CGroups}{\mathbf{CGroups}}
\begin{document}

		 \setcounter{tocdepth}{5}
\maketitle

\begin{abstract}
	We provide a homotopy theorist's point of view on $KK$ -and $E$-theory for $C^{*}$-algebras. 
	We construct stable $\infty$-categories representing these theories through a sequence of Dwyer-Kan localizations of the category of $C^{*}$-algebras. Thereby we will reveal the homotopic theoretic meaning of various
	classical constructions from $C^{*}$-algebra theory, in particular of Cuntz' $q$-construction. 
We will also discuss operator algebra $K$-theory in this framework.	
	 	\end{abstract}

\tableofcontents

\section{Introduction}\label{rgijrweiogwrwrefwf}

In this note we describe a construction of stable $\infty$-categories representing $KK$- and $E$-theory  for $C^{*}$-algebras through   sequences of localizations of the category $\nCalg_{\sepa}$ of   separable $C^{*}$-algebras followed by a left Kan-extension along the inclusion of separable into all $C^{*}$-algebras.
 In contrast to the previous constructions of such an $\infty$-category \cite{LN}, \cite{KKG} in the case  of $KK$-theory,   the  description presented here is independent of the classical  group-valued $KK$-theory  introduced \cite{kasparovinvent}, \cite{MR899916} which is   described e.g.  in the textbooks \cite{blackadar}, \cite{MR1077390}.   A stable $\infty$-category representing $E$-theory 
 has not been considered so far.
 
 The main goal of this note is to give a complete account of the basic categorical and functorial properties of  $KK$- and $E$-theory using only the basic elements of $C^{*}$-algebra theory. In this way we   hope   to make these theories more accessible to readers with a homotopy theory background.  
The  approach to $KK$- and $E$-theory  described here  can easily be generalized to the case of $G$-$C^{*}$-algebras for discrete groups $G$ (see e.g.  \cite{Bunke:2024aa} for $E$-theory) or to $C_{0}(X)$-algebras. With more modifications it should be possible to develop a similar approach to the algebraic version of $KK$-theory \cite{Corti_as_2007}, \cite{zbMATH06421353}, \cite{Garkusha_2016}, \cite{Ellis_2014}. It is also an interesting task to provide a homotopy theoretic interpretation of the constructions from \cite{zbMATH01302969} in the spirit of the present paper.

The starting point of our construction is the characterization of the stable $\infty$-category version of $KK$-theory   through a universal property.
\begin{ddd}\label{weijgoewrfrefwre9}
The functor $\kk:\nCalg\to \KK$ is  initial for functors from $\nCalg$ to cocomplete stable $\infty$-categories
which are homotopy invariant, stable, semiexact and $s$-finitary. 
\end{ddd}
This means that $\kk$ has these properties, described in detail in  \cref{wrogjpwrgrfrewf}, and that for any cocomplete stable $\infty$-category
$\bD$ the restriction along $\kk$ induces an equivalence
$$\kk^{*}:\Fun^{\colim}(\KK,\bD)\stackrel{\simeq}{\to} \Fun^{h,s,se,sfin}(\nCalg,\bD)\ .$$
Here the superscripts $colim$ and $h,s,se,sfin$ stand for colimit-preserving and the corresponding properties listed in \cref{weijgoewrfrefwre9}. 

The characterization of $KK$-theory by  \cref{weijgoewrfrefwre9}  was given in \cite{KKG} following \cite{LN}. A  similar characterization of the group-valued $KK$-functor through universal properties  has been known for a long time \cite{higsa}.

The  characterization of  the stable $\infty$-category representing $E$-theory is similar and obtained by replacing in  \cref{weijgoewrfrefwre9}  the condition of semiexactness by exactness. The motivation comes from the universal property of the classical $E$-theory stated in \cite[Thm. 3.6]{MR1068250}.
\begin{ddd}\label{weijgoewrfrefwre91}
The functor $\ee:\nCalg\to \EE$ is  initial for functors from $\nCalg$ to cocomplete stable $\infty$-categories
which are homotopy invariant, stable, exact and $s$-finitary. 
\end{ddd}
In this case we have an equivalence 
$$\ee^{*}:\Fun^{\colim}(\EE,\bD)\stackrel{\simeq}{\to} \Fun^{h,s,ex,sfin}(\nCalg,\bD)\ .$$
Our construction proceeds with the following steps which are designed to force the universal properties stated above:
\begin{enumerate}
\item \label{werjigowegwerfrefwrf}  $L_{h}:\nCalg\to \nCalg_{h}$ is a Dwyer-Kan localization which inverts the homotopy equivalences.
The resulting $\infty$-category $ \nCalg_{h}$  is left-exact (see \cref{lh}) and the functor $L_{h}$  is 
Schochet exact  in the sense that it sends Schochet fibrant cartesian squares to cartesian squares.
\item  \label{werjigowegwerfrefwrf1}  $L_{K}:\nCalg_{h}\to L_{K}\nCalg_{h}$ is a smashing left Bousfield  localization which inverts the left-upper corner inclusions and produces a semi-additive and left-exact $\infty$-category (see \cref{lhK}).
\item\label{step3} We restrict to the full subcategory of separable algebras and form a left-exact Dwyer-Kan localization
$L_{\sepa,!}:L_{K}\nCalg_{\sepa,h}\to L_{K}\nCalg_{\sepa,h,!} $ for $!$ in  $\{\splt,\se,\exa\}$ which forces split exact, semi-split exact or exact sequences to induce fibre sequence (see \cref{wegijowerferferwfrfwdvdfvdf}).
\item For $!$ in $\{\se ,\exa\} $ the two-fold loop functor $\Omega^{2}_{\sepa,!}:L_{K}\nCalg_{\sepa,h,!} \to L_{K}{\nCalg_{\sepa,h,!}}^{\group}$ (where $\bC^{\group}$ denotes the full subcategory of group objects in a semi-additive $\infty$-category $\bC$) turns out to be the right-adjoint of a right Bousfield localization and has a stable target.
 The composition of the localizations above give functors 
 \begin{align*}\kk_{\sepa}:\nCalg_{\sepa}\to \KK_{\sepa}&:=L_{K}{\nCalg_{\sepa,h,\se}}^{\group}  \ ,\\ \ee_{\sepa}:\nCalg_{\sepa}\to \EE_{\sepa}&:=L_{K}{\nCalg_{\sepa,h,\exa}}^{\group}\ .\end{align*} (see \cref{werigjowergerwg9}). 
\item \label{wegkjowerfwerf} We define the presentable stable $\infty$-categories $\KK $ and $\EE $ as the $\Ind$-completions of  the stable $\infty$-categories $\KK_{\sepa}$ and $\EE_{\sepa}$
and the functors \[\kk:\nCalg\to \KK\ , \quad \ee:\nCalg\to \EE\] 
by   left Kan-extending  the compositions \[\nCalg_{\sepa} \xrightarrow{\kk_{\sepa}}\KK_{\sepa}\xrightarrow{y}\KK \ , \quad \mbox{or}\quad \nCalg_{\sepa} \xrightarrow{\ee_{\sepa}}\EE_{\sepa}\xrightarrow{y}\EE\]  along the inclusion of separable $C^{*}$-algebras into all $C^{*}$-algebras (see \cref{fbweroibjdfbsfdbsfb}). \cref{wergijweogrefwrferwfw} states that the functors constructed by this procedure indeed satisfy the conditions of \cref{weijgoewrfrefwre9} and \cref{weijgoewrfrefwre91}.
\end{enumerate}

One interesting consequence of the constructions is that the functors $$\kk_{\sepa}:\nCalg_{\sepa}\to \KK_{\sepa}\ , \:\:\:\quad \mbox{and} \:\:\: \quad  \ee_{\sepa}:\nCalg_{\sepa}\to \EE_{\sepa}$$ are Dwyer-Kan localizations (see \cref{ergiojeroigwerfwrefrefdvs}).
 
Our construction of $KK$- and $E$-theory for separable $C^{*}$-algebras via a sequence of localizations is analogous to the   construction of an additive category representing $E$-theory in \cite{MR1068250}.  The idea of left-Kan  extending  $KK$-theory from separable $C^{*}$-algebras to all $C^{*}$-algebras also appears in \cite{zbMATH04065714}.

The category $\nCalg$ has symmetric monoidal structures $\otimes_{\max}$ and $\otimes_{\min}$.
The $\kk$- and $\ee$-theory functors have symmetric monoidal refinements which are characterized by
symmetric monoidal versions of \cref{weijgoewrfrefwre9} and  \cref{weijgoewrfrefwre91}. We will discuss the universal properties of the
symmetric monoidal  refinements   in the main body of the present  paper.

The categories $\KK$ and $\EE$ whose construction is sketched above  are  stable $\infty$-categories.  For any two
 $C^{*}$-algebras $A$ and $B$ we therefore have  mapping spectra
 \[\KK(A,B):=\map_{\KK}(\kk(A),\kk(B))\ , \quad \EE(A,B):=\map_{\EE}(\ee(A),\ee(B))\ .\]  Taking homotopy groups we get    $\Z$-graded $KK$- and $E$-theory groups
  \[\KK_{*}(A,B):=\pi_{*}\KK(A,B)\ , \quad \EE_{*}(A,B):=\pi_{*}\EE(A.B)\ .\]
 The approach to $KK$- and $E$-theory taken in the present note   turns the classical constructions 
  of these group-valued bifunctors into calculations. Our homotopy theoretic construction of
$KK$- and $E$- theory is straightforward once one knows which universal property one would like to enforce. 
Composition, homotopy invariance, stability and the respective exactness properties come for free. Also Bott periodicity is just a property which holds because of the existence of the Toeplitz extension.
The real problem in our approach is to calculate the homotopy groups of the mapping spaces  in order to see that they
coincide with the classical groups. The latter are defined in terms of Kasparov modules (see \cite{MR899916},  \cite{zbMATH01302969}  \cite{thomsen1}, \cite{Dadarlat_2018} for alternatives)  in the case of $KK$-theory, or in case of $E$-theory, 
by the one-categorical localization procedure as in  \cite{MR1068250} or asymptotic morphisms \cite{zbMATH04182148}.
The comparison of the homotopy groups of the mapping spectra of the categories constructed in the present note with the classical groups  is  not obvious at all  just from the construction. But it  is   crucial if one wants  
 to use  the models proposed in this note as a homotopy theoretic replacement of the classical analytic constructions. 

In the case of $KK$-theory one could argue by a  comparison of 
 the universal properties that  the  functors $\kk_{\sepa}$ for separable algebras
 constructed in the present paper and in  \cite{LN} are canonically equivalent.  Moreover,  in 
  \cite{KKG} we have shown   that 
 the composition
\[\nCalg_{\sepa}\xrightarrow{\kk_{\sepa}} \KK_{\sepa}\xrightarrow{\ho}\ho\KK_{\sepa}\] is equivalent to
the triangulated category valued $KK$-theory of \cite{MR2193334}, and that the $KK$-groups
$\KK_{0}(A,B)$ for separable $C^{*}$-algebras $A,B$ are canonically isomorphic to the $KK$-groups introduced in  \cite{kasparovinvent}.  But this  argument has a   draw back.
 Though the classical definition of 
 $KK$-groups in terms of equivalence classes of Kasparov modules is not very complicated,
 this method of comparison also relies on
  the construction of the composition (i.e., the Kasparov product) and the verification of semiexactness 
  in the classical theory which 
  are deep theorems.   
   It is therefore one of the guiding challenges  of the present paper to give an independent complete proof for the comparison.

 From the perspective of the present notes it is natural to  compare the $KK$- and $E$-theory functors
of the present paper with the classical ones by comparing their universal properties.  This can be done in a model independent way by defining
the classical functors
$$\kk_{\sepa}^{\class}:\nCalg_{\sepa}\to \KK_{\sepa}^{\class}\ , \quad \ee_{\sepa}^{\class}:\nCalg_{\sepa}\to \EE_{\sepa}^{\class}$$
as the universal homotopy invariant, stable and split-exact or half-exact functors, respectively,  to an additive category in the sense of   \cite[Thm. 3.4]{MR1068250} or  \cite[Thm. 3.6]{MR1068250}.
These can directly be compared with the compositions 
\[\ho\kk_{\sepa}:\nCalg_{\sepa}\xrightarrow{\kk_{\sepa}} \KK_{\sepa}\xrightarrow{\ho}\ho\KK_{\sepa}\ ,\quad \ho\ee_{\sepa}:\nCalg_{\sepa}\xrightarrow{\ee_{\sepa}} \EE_{\sepa}\xrightarrow{\ho}\ho\EE_{\sepa}\ .\] 
  The following is a consequence of   \cref{qiurhfgiuewrgwrfrefrfwrefw} 
 and the  automatic semiexactness theorem \cref{wgokjweprgrefwrefwrfw} (which allows to replace $\kk_{\sepa,q}$ appearing  in  \cref{qiurhfgiuewrgwrfrefrfwrefw} )  by $\kk_{\sepa}$).
  
  \begin{kor}\label{wrtgowtgwgrrwfrefw}
  We have commutative squares
  \[\xymatrix{\nCalg_{\sepa}\ar[r]^{\kk_{\sepa}^{\class}}\ar[d]^{\kk_{\sepa}}&  \KK_{\sepa}^{\class}\\ \KK_{\sepa}\ar[r]^{\ho}\  &\ho\KK_{\sepa}\ar@{-->}[u]^{\simeq}}\ , \xymatrix{\nCalg_{\sepa}\ar[r]^{\ee_{\sepa}^{\class}}\ar[d]^{\ee_{\sepa}}&  \EE_{\sepa}^{\class}\\ \EE_{\sepa}\ar[r]^{\ho}&\ho\EE_{\sepa}\ar@{-->}[u]^{\simeq}}\ .\]
where the dashed arrows are equivalences of additive categories. 
  \end{kor}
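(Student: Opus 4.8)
The plan is to deduce both commutative squares from \cref{qiurhfgiuewrgwrfrefrfwrefw} and the automatic semiexactness theorem \cref{wgokjweprgrefwrefwrfw} by a comparison of universal properties among additive categories. First I would record the two structural observations that make such a comparison possible. Since $\KK_{\sepa}$ and $\EE_{\sepa}$ are stable $\infty$-categories, their homotopy categories $\ho\KK_{\sepa}$ and $\ho\EE_{\sepa}$ are triangulated, hence additive, and are therefore legitimate targets for the universal properties defining $\kk_{\sepa}^{\class}$ and $\ee_{\sepa}^{\class}$. Moreover the composites $\ho\kk_{\sepa}$ and $\ho\ee_{\sepa}$ inherit the matching exactness: because a split exact sequence of $C^{*}$-algebras is in particular semisplit, the $\se$-exactness of $\kk_{\sepa}$ descends to split-exactness of $\ho\kk_{\sepa}$, while the $\exa$-exactness of $\ee_{\sepa}$ descends to half-exactness of $\ho\ee_{\sepa}$, both landing in additive categories.

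Granting this, I would argue by comparison of initiality. On the classical side, \cite[Thm. 3.4]{MR1068250} and \cite[Thm. 3.6]{MR1068250} characterize $\kk_{\sepa}^{\class}$ and $\ee_{\sepa}^{\class}$ as the initial homotopy invariant, stable, split-exact, respectively half-exact, functors from $\nCalg_{\sepa}$ to an additive category. On the homotopy-theoretic side, \cref{qiurhfgiuewrgwrfrefrfwrefw} supplies the corresponding initiality of the composite $\ho(-)$: directly for $\ho\ee_{\sepa}$, and for the $q$-variant $\ho\kk_{\sepa,q}$ in the $KK$-case. Since two functors with the same initiality are canonically equivalent, feeding $\kk_{\sepa}^{\class}$ into the universal property of $\ho\kk_{\sepa,q}$ and $\ho\kk_{\sepa,q}$ into that of $\kk_{\sepa}^{\class}$ produces mutually inverse additive comparison functors, and symmetrically for $E$-theory; the commutativity of each square is built into these factorizations. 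To replace the $q$-variant by $\kk_{\sepa}$ itself I would invoke \cref{wgokjweprgrefwrefwrfw}: it shows that the split-exact localization is already semisplit-exact, so that $\KK_{\sepa,q}\to\KK_{\sepa}$ is an equivalence and $\ho\KK_{\sepa,q}\simeq\ho\KK_{\sepa}$; transporting the equivalence along this identification gives the dashed $\ho\KK_{\sepa}\xrightarrow{\simeq}\KK_{\sepa}^{\class}$.

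The real content is packaged in the two quoted results, so from the vantage point of the corollary the single delicate point is the reconciliation of the classical split-exactness, governed by sequences admitting a $*$-homomorphism section, with the semisplit-exactness $\se$ forced into $\kk_{\sepa}$, governed by completely positive contractive sections. This is exactly what \cref{wgokjweprgrefwrefwrfw} resolves, being the $\infty$-categorical incarnation of the Cuntz--Higson principle that a homotopy invariant, stable, split-exact functor is automatically half-exact. Once that bridge is in place, checking that the comparison functors are equivalences reduces to essential surjectivity, which is immediate since every object of either category is the image of a separable $C^{*}$-algebra, together with full faithfulness, i.e. the identification of the Hom-groups $\KK_{0}(A,B)$ and $\EE_{0}(A,B)$ with the classical $KK$- and $E$-groups — which is precisely the output of \cref{qiurhfgiuewrgwrfrefrfwrefw}.
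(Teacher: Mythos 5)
Your proposal is correct and follows essentially the same route as the paper: the corollary is obtained by combining \cref{qiurhfgiuewrgwrfrefrfwrefw} with the automatic semiexactness theorem \cref{wgokjweprgrefwrefwrfw}, which identifies $\ho\KK_{\sepa,q}$ with $\ho\KK_{\sepa}$ and so lets one replace the $q$-variant by $\kk_{\sepa}$. The only minor slip is attribution: the initiality/universal property of $\ho\kk_{\sepa,q}$ and $\ho\ee_{\sepa}$ among half-exact additive-category-valued functors is \cref{qrifgjoergrfref9}, while \cref{qiurhfgiuewrgwrfrefrfwrefw} is already the statement that the comparison functors $\psi$ are equivalences, so your re-derivation of that equivalence is redundant rather than wrong.
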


    One could argue that in the case of $KK$-theory  the proof of \cref{wrtgowtgwgrrwfrefw} has 
    a similar problem as the argument mentioned above since
    we must know that our preferred model 
    of $\kk^{\class}_{\sepa}$ has the universal property  stated in \cite[Thm. 3.4]{MR1068250}. 
    We therefore will provide another,  completely independent  comparison with the Cuntz picture of $KK$ by showing the formula \eqref{fewweqfhwiffqhideewdq} below.
  One could then read the arguments also in a different direction as showing that the Cuntz  model 
  indeed has the universal property  \cite[Thm. 3.4]{MR1068250}.

In our approach the enrichment    of $KK$- and $E$-theory theory   in spectra is a natural consequence  of  the stability of the $\infty$-categories $\KK$ or $\EE$.  But point-set level constructions  of spectral enrichments of $KK$-theory have previously been considered in
   \cite{zbMATH05657126},  \cite{mitmit}.

%
%
%
%
%
%

As can be seen from the description above our approach to a stable $\infty$-category representing $KK$- and $E$- theory is different from other attempts to produce such stable $\infty$-categories which were guided by the methods of motivic homotopy theory  \cite{pao}, \cite{Mahanta_2014}.
There  the idea was to start from the category of presheaves $\Fun((\nCalg_{\sepa})^{\op} ,\Spc)$, to
perform a series of left Bousfield  localizations forcing homotopy invariance,
stability and the desired version of exactness $!$ in $\{\splt, \se,\exa\}$, and finally to apply $-\otimes \Sp$ in presentable $\infty$-categories in order to stablize.
Let us denote the resulting presentable stable $\infty$-category by $\cK\cK_{\sepa,!}
$.  It comes with a functor $\mathit{kk}_{\sepa,!}:\nCalg_{\sepa}\to \cK\cK_{\sepa,!}
$ which by construction  has the universal property that
\[\mathit{kk}_{\sepa,!}^{*}:\Fun^{\colim}( \cK\cK_{\sepa,!},\bD)\stackrel{\simeq}{\to} \Fun^{h,s,!}(\nCalg_{\sepa}
,\bD)\] for any presentable stable $\infty$-category $\bD$. The main non-trivial question is then to understand the relation between $\pi_{*}\map_{\cK\cK_{\sepa,!}}(\mathit{kk}_{\sepa,!}(A),\mathit{kk}_{\sepa,!}(B))$ and the classical $KK$-groups $\KK^{\class}_{\sepa,*}(A,B)$ (for $!=\se$) or $E$-theory groups $\EE^{\class}_{\sepa,*}(A,B)$ (for $!=\exa$). We will not pursue  this direction.

As said above the advantage of the constructions in the present note is that they do not require previous knowledge of $KK$- or $E$-theory. In contrast, in   \cite{LN}, \cite{KKG} the basic idea was to construct the category $\KK_{\sepa}$ as a Dwyer-Kan localization of $\nCalg_{\sepa}$ at the $\kk$-equivalences. The latter notion was imported from the classical theory.
In the present paper we do not have to know from the beginning what a $\kk$-equivalence is. The notion of a $\kk$-equivalence comes out at the end   as a morphism which is sent to an equivalence by the functor $\kk_{\sepa}$. The input for  the construction of $\KK_{\sepa}$ in the present paper are only  simple $C^{*}$-algebraic notions as homotopy of homomorphisms, compact operators and semi-split exact sequences.

The construction of the $\infty$-categories  $\KK $ and $\EE $ via localizations and $\Ind$-completions is very  suitable for understanding functors  out of these categories. This will be employed in some subsequent papers. On the other hand, it is notoriously difficult to understand the  homotopy types of the mapping spaces in a Dwyer-Kan localization  just from the definition.  In \cref{wreijgowertgwrgrwfrefwrf} and \cref{reoijqoeirfewfqwdewd} we will,
with some effort, calculate the mapping spectra $\EE(A,B)$ for $A\cong \C$ or $A\cong S(\C)\cong C_{0}(\R)$ explicitly.  

We first define the commutative ring spectrum $\KU:=\EE(\C,\C )$. We  justify this notation by providing a ring  isomorphism 
$\pi_{*}\KU\cong \Z[b,b^{-1}]$ with $\deg(b)=-2$ and comparing $\Omega^{\infty}\KU$ with the classical constructions of an infinite loop space  with the same name.
Since $\ee(\C)$ is the tensor unit of $\EE$  this category has a canonical   enrichment over the category $\Mod(\KU)$ of $\KU$-module spectra.
 In \cref{wtgijwoergferwfrefw} we then define the  lax symmetric monoidal $\Mod(\KU)$-valued  $K$-theory functor for $C^{*}$-algebras simply  as \begin{equation}\label{referwfweferf}K(-):=\EE(\C,-):\nCalg\to \Mod(\KU)\ .
\end{equation}
This gives an effordless construction of a highly structured version of a $K$-theory functor for $C^{*}$-algebras.
 For previous constructions of spectrum-valued $K$-theory functors see e.g.   \cite{zbMATH02065570}, \cite{DellAmbrogio:2011aa}, \cite{joachimcat}, 
\cite{Dadarlat:2015wh}.

Recall that the classical constructions of $C^{*}$-algebra $K$-theory groups as described e.g. in \cite{blackadar} employ
equivalence classes of projections or components of unitary groups. 
In order to connect our definition \eqref{referwfweferf} with the classical ones and in order to
show that it gives the correct group valued functors after taking homotopy groups we
relate the infinite loop space  valued functor $\Omega^{\infty}K$ with spaces of projections or unitaries. Thereby we take care of the natural commutative monoid or groups structures.

 Using that $L_{K}\nCalg_{h}$ is semi-additive  we can define the commutative monoid (see  \cref{poregjopewrgerg}) \[\cProj^{s}(B ):=\Map_{L_{K}\nCalg_{h}}(\C,B)\] of stable projections  and the commutative group (see \cref{poregjopewrgerg1}.) \[  \cU^{s}(B):= \Map_{L_{K}\nCalg_{h}}(S(\C),B)\] of stable unitaries in $B $.   The following result combines \cref{trgjkowergferfweferfw} and \cref{erijgoerwgregffwfrefw}. 
\begin{kor}\label{erogjkperfrewf}\mbox{}\begin{enumerate} \item \label{wtgijorgferwfrefr3feerfwr}If $B$ is unital, then 
there is a  canonical morphism $ \cProj^{s}(B)\to \Omega^{\infty}K(B)$ in $\CMon(\Spc)$
which presents its target as the group completion.
\item\label{qrojifgpqrfqedeqwdq} We have a canonical equivalence $ \cU^{s}(B)\simeq \Omega^{\infty-1}K(B)$ in $\CGroups(\Spc)$.
\end{enumerate}
 \end{kor}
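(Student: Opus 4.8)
The plan is to derive both parts from a single comparison functor and then to track the commutative-monoid, respectively commutative-group, structures throughout. Since the functor $\ee$ is homotopy invariant and $K$-stable, it factors through $L_{K}\circ L_{h}$, producing a functor $\Phi\colon L_{K}\nCalg_{h}\to\EE$ with $\Phi(\C)=\ee(\C)=\1$ and $\Phi(B)=\ee(B)$. Applying $\Phi$ to mapping objects yields canonical maps
\[
\cProj^{s}(B)=\Map_{L_{K}\nCalg_{h}}(\C,B)\longrightarrow\Map_{\EE}(\1,\ee(B))=\Omega^{\infty}K(B),
\]
\[
\cU^{s}(B)=\Map_{L_{K}\nCalg_{h}}(S(\C),B)\longrightarrow\Map_{\EE}(\ee(S(\C)),\ee(B)).
\]
Because $L_{K}\nCalg_{h}$ is semi-additive (\cref{lhK}), its mapping spaces are canonically $E_{\infty}$-monoids; and as $\Phi$ is built from left-exact localizations and the $\Ind$-completion it preserves finite biproducts, so the displayed maps are morphisms in $\CMon(\Spc)$. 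Since $\EE$ is stable, the targets are grouplike, i.e.\ lie in $\CGroups(\Spc)$ and are the underlying infinite loop spaces of the respective mapping spectra.

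For part \ref{qrojifgpqrfqedeqwdq} I would first pin down the target. Using that $S(\C)$ is the suspension and that Bott periodicity via the Toeplitz extension gives $\ee(S(\C))\simeq\1[-1]$ in $\EE$, one gets $\map_{\EE}(\ee(S(\C)),\ee(B))\simeq\Sigma K(B)$, whose underlying space is $\Omega^{\infty-1}K(B)$. The remaining task is to show that the second comparison map is an \emph{equivalence}; here no group completion intervenes because $\cU^{s}(B)$ is already grouplike. This space-level equivalence is exactly \cref{erijgoerwgregffwfrefw}, and the upgrade to an equivalence in $\CGroups(\Spc)$ is the formal consequence of the semi-additive naturality recorded above.

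For part \ref{wtgijorgferwfrefr3feerfwr} the point is that $\Omega^{\infty}$ depends only on the connective cover, so it suffices to realize the first comparison map as the group completion of $E_{\infty}$-monoids. I would identify $\cProj^{s}(B)$ with the space of (stable) projections in $B\otimes\cK$ up to homotopy, using the explicit descriptions of $L_{h}$ and $L_{K}$ behind \cref{poregjopewrgerg}; its $\pi_{0}$ is then the Murray--von Neumann monoid, and here unitality of $B$ is used so that this monoid group-completes to $K_{0}(B)$. The identification of $\Omega^{\infty}K(B)$ with the resulting group completion, compatibly with the map produced by $\Phi$, is the content of \cref{trgjkowergferfweferfw}.

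The main obstacle is precisely what \cref{trgjkowergferfweferfw} and \cref{erijgoerwgregffwfrefw} must accomplish: the objects $\cProj^{s}(B)$ and $\cU^{s}(B)$ are defined in $L_{K}\nCalg_{h}$, that is \emph{before} the exact (resp.\ semi-split) localization of \cref{wegijowerferferwfrfwdvdfvdf} and the $\Omega^{2}$-stabilization of \cref{werigjowergerwg9}, yet they must already compute (group-completed) $K$-theory. The difficulty is to control how these remaining steps act on the two specific mapping objects $\Map(\C,-)$ and $\Map(S(\C),-)$: one must show that forcing exact sequences into fibre sequences, passing to group objects via $\Omega^{2}$ together with Bott periodicity, and finally $\Ind$-completing perform \emph{exactly} a group completion on the former and an equivalence on the latter, with no further homotopical change. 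The $\Ind$-completion for non-separable $B$ is handled by the $s$-finitarity of $K$-theory, which reduces it to a filtered colimit over separable subalgebras. Once this control is established the monoid- and group-refinements follow formally from semi-additivity.
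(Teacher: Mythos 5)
Your proposal is correct and takes essentially the same route as the paper: the paper obtains this corollary precisely by constructing the canonical maps from the localization functors (semi-additivity giving the $\CMon$-structure, stability of $\EE$ giving grouplike targets) and then invoking the two results you cite, namely the group-completion statement for stable projections and the equivalence for stable unitaries. Two small slips to fix: your two references are swapped (part 1 is \cref{erijgoerwgregffwfrefw}, part 2 is \cref{trgjkowergferfweferfw}), and the identification $\ee(S(\C))\simeq \1[-1]$ requires only stability of $\EE$ together with the loop--suspension identification of \cref{qerigqerfewfq9}, not Bott periodicity.
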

The canonical morphisms in \cref{erogjkperfrewf} are induced by the Steps \ref{step3} to \ref{wegkjowerfwerf} of the above sequence of localizations. 
The standard modification of \cref{erogjkperfrewf}.\ref{wtgijorgferwfrefr3feerfwr} for non-unital $C^{*}$-algebras will be  stated as \cref{wogkpwgerfwrefwf}. 

If one goes over to connected components in Assertion \ref{wtgijorgferwfrefr3feerfwr} or homotopy groups in
Assertion \ref{qrojifgpqrfqedeqwdq}, and if one interprets  $\pi_{*}K(B)$ as the classical version of $K$-theory of $C^{*}$-algebras, then the assertions of \cref{erogjkperfrewf} are well-known.
The main point of \cref{erogjkperfrewf} is that   $K(B)$ is not given by the classical definitions but is defined through mapping spectra of
  the category $\EE$ which is constructed by  a formal homotopy theoretic procedure of 
Dwyer-Kan localizations. It is only by \cref{erogjkperfrewf} that we know that these mapping spectra have the correct homotopy types to represent the classical $K$-theory of $C^{*}$-algebras. 

An advantage of the definition  of  the  $K$-theory functor for $C^{*}$-algebras by \eqref{referwfweferf} 
is that it
is  homotopy invariant, stable, exact, and  also $s$-finitary by construction. In addition, 
  in \cref{erigoeggerfwerf9} we show, using the equivalence from     \cref{erogjkperfrewf}.\ref{qrojifgpqrfqedeqwdq},
 that it also preserves filtered colimits. Of course, all these properties are well-known  propositions about the classical definition.

Following \cite{schros} we define the $\UCT$-class in $\KK$ as the localizing subcategory generated by the tensor unit $\kk(\C)$.
Using \cref{erogjkperfrewf}.\ref{qrojifgpqrfqedeqwdq} we will see in \cref{wtrkgoerfrefwerfwerf} that  the natural map $\KK(B,-)\to \EE(B,-)$ is an equivalence if $B$ belongs to the $\UCT$-class. Essentially by definition,  the $K$-theory functor
induces a symmetric monoidal equivalence between the $\UCT$-class and   the stable $\infty$-category  $\Mod(\KU)$.  This leads   to a simple picture of the universal coefficient theorem and the Künneth formula also formulated in \cref{wtrkgoerfrefwerfwerf}.


The Cuntz picture \cite{MR899916} of $KK$-theory  is based on the $q$-construction which involves a functor  and a natural transformation
\[q:\nCalg\to \nCalg\ , \quad \iota:q\to \id_{\nCalg}\ .\]  The goal of  \cref{efgijoerfreferfwrf} is to study 
the homotopical features of the $q$-construction. This whole section is essentially a translation of  \cite{MR899916}  from abelian  group valued functors to functors having values in semi-additive or additive $\infty$-categories.   The main insight derived in  this section is that inverting the image of the set  
$\{\iota_{A}:qA\to A\mid A\in \nCalg\}$ in $L_{K}\nCalg_{h}$ (see Step \ref{werjigowegwerfrefwrf1} above)  yields the universal homotopy invariant, stable  split- and Schochet exact functor \[L_{h,K,q}:\nCalg\to L_{K}\nCalg_{h,q}\]  with values in a left-exact additive $\infty$-category. 
By \cref{eigjohwergerfgerwfrwef} it is a Dwyer-Kan localization.
 Using  the deep result    \cite[Thm 1.6]{MR899916} (reproduced in these notes as \cref{erigjwoergerfwerfwref}) we will see in the separable case that  the Dwyer-Kan localization $L_{\sepa,q}:L_{K}\nCalg_{\sepa,h}\to L_{K}\nCalg_{\sepa,h,q}$  is actually a right  Bousfield localization, and    we   obtain the  very simple formula \begin{equation}\label{fweqwefljweofqwedeqdqedqewd}\ell\underline{\Hom}(qA,K\otimes B)\simeq \Map_{L_{K}\nCalg_{\sepa,h,q}}(A,B)
\end{equation}   for the mapping space between two {\em separable} $C^{*}$-algebras $A$ and $B$  in this localization. Here the left-hand side of this equivalence is the  space associated to the topological space of homomorphisms from $qA$ to $K\otimes B$.

By \cite{MR899916} it is known that for  two   separable  $C^{*}$-algebras $A$ and $B$
there is an isomorphism \begin{equation}\label{fewweqfhwiffqhideewdq333}\pi_{0}\underline{\Hom}(qA,K\otimes B)\cong \KK^{\class}_{\sepa}(A,B)\ .\end{equation}
It is probably the deepest challenge of these notes to provide an accessible proof of the analogue \begin{equation}\label{fewweqfhwiffqhideewdq}\pi_{0}\underline{\Hom}(qA,K\otimes B)\cong \KK_{\sepa,0}(A,B)
\end{equation} of this  formula with classical $KK$-theory replaced by the the homotopy theoretic version constructed in the present paper.
   Note that in contrast to  \cite{MR899916}, where  
\eqref{fewweqfhwiffqhideewdq333} is essentially the definition of the right-hand side,
in our situation the group on the right of \eqref{fewweqfhwiffqhideewdq} is defined  as the  group of components of the mapping space
in a certain Dwyer-Kan localization.
By  \eqref{fweqwefljweofqwedeqdqedqewd} and the possibility to replace $B$ by suspensions $S^{i}(B)$ for $i$ in $\nat$ we see that  \eqref{fewweqfhwiffqhideewdq} 
 is equivalent to the fact that the functor $L_{\sepa,h,K,q}:\nCalg_{\sepa}\to L_{K}\nCalg_{\sepa ,h,q}$   is equivalent to the functor $\kk_{\sepa}:\nCalg_{\sepa}\to \KK_{\sepa}$.  An equivalent formulation of this latter fact  is 
 the automatic semiexactness \cref{tkohprtggrtgetrg} stating that for any additive left-exact $\infty$-category $\bD$ the natural inclusion 
\[\Fun^{h,s,se+Sch}(\nCalg_{\sepa},\bD)\to \Fun^{h,s,splt+Sch}(\nCalg_{\sepa},\bD)\]
from homotopy invariant, stable and Schochet- and semiexact functors   to homotopy invariant, stable and Schochet- and split-exact functors   is an equivalence. The proof of the  automatic semiexactness theorem 
will be discussed in detail in \cref{weijgiowergfrfreferfwr}. 

Note that  semiexactness of an exact sequence of $C^{*}$-algebras is defined in terms of the existence of a completely positive contractive (cpc) split. Since this is an analytic condition which somehow has to be exploited  it is not surprising that 
the proof of the automatic semiexactness theorem  in \cref{weijgiowergfrfreferfwr} is not  purely homotopy   theoretic  in  nature  but contains various analytic arguments. But since we will  avoid to use Kasparov products or other deep results from the classical theory it might be  quite accessible to homotopy theorists. In particular note that our proof does not depend on the formula \eqref{fweqwefljweofqwedeqdqedqewd}, whose proof in \cref{erigjwoergerfwerfwref} involves
Pedersen's derivation lifting.
 
But using   \eqref{fweqwefljweofqwedeqdqedqewd} and the automatic semiexactness theorem together  in \cref{weijotgwegferfwrefw} we can show 
 also in the context of the present notes
that    $\KK_{\sepa}$ admits countable  colimits  and is  therefore idempotent complete. In  \cite[Lem. 2.19]{KKG}
this fact has been shown by importing  \cite[Thm. 2.9]{kasparovinvent}.
\footnote{Using the results from  \cite{Bunke:2024aa} one can show that also 
 $\EE_{\sepa}$ admits countable coproducts.}


The classical construction of $KK$-theory is based on the notion of Kasparov modules. Equivalence classes of   Kasparov modules are interpreted as elements of 
  $\KK_{0}(A,B)$. Kasparov modules in  a certain standard form can be captured by the Cuntz picture in terms of the  $q$-construction. 
 Essentially by definition, the left-hand side of \eqref{fweqwefljweofqwedeqdqedqewd} can be interpreted
 as the space of Kasparov $(A,B)$-modules. 
 So the mapping spaces in $L_{K}\nCalg_{\sepa,h,q}$ are   expressed in terms of spaces of Kasparov modules via  \eqref{fweqwefljweofqwedeqdqedqewd}, while the 
   automatic semiexactness theorem
   implies that these are also the mapping spaces in $\KK_{\sepa}$. 
  In the present note we will not discuss  the alternative models  for the group-valued $KK$-theory  based on asymptotic morphisms 
   \cite{thomsen1} or localization algebras \cite{Dadarlat_2018}.
   
   Recall that the classical concrete model of $E$-theory \cite{zbMATH04182148} involves asymptotic morphisms.
   In \cref{ewriogjowegregrewf99} we will show 
that asymptotic morphisms give rise to elements  in $\EE_{0}(A,B)$ 
in a way which is compatible with compositions.

We finally stress that these notes concentrate on the homotopy theoretic and categorical aspects of $KK$- and $E$-theory.
The full power of $KK$-theory to applications e.g. to the classification  programmes for $C^{*}$-algebras 
only unrolls itself if one employs the equivalence of  different cycle-by-relation models  based on Kasparov modules.
This aspect  will not be considered at all  in these notes.
Other applications e.g. to the Baum-Connes or Novikov conjecture require the ability to control the composition of  morphisms in $KK$-theory explicitly. 
If one uses the model based on Kasparov modules, then there are well-developed methods serving
this purpose e.g. using  connections.    In the model given in the present paper it is quite tricky to calculate
compositions of morphisms which do not   simply  come  from morphisms between the $C^{*}$-algebras. We actually do only one non-trivial example of such a composition which is  \cref{fkjgogerfwreferfw} which is already complicated enough. But note that this calculation is absolutely crucial since it provides the last cornerstone for the  automatic semiexactness theorem which also goes into the comparison result \cref{wrtgowtgwgrrwfrefw}.

%

These notes present an expanded version of the lecture notes of a course  on {\em Noncommutative Homotopy Theory}
tought  at the University of Regensburg in the winter term 2022/23. The author thanks the participants for suspicious attention and various critical remarks.
 
 {\em Acknowledgement: The author was supported by the SFB 1085 (Higher Invariants) funded by the Deutsche Forschungsgemeinschaft (DFG). The author profited from discussions with M. Land und U. Pennig which started this project. Special thanks go to B. Dünzinger for carefully reading the draft.
 }
 
\section{$C^{*}$-algebras}\label{qrfuhqeiurfqwfdewfdqwef9}

In this section we collect the basic facts from $C^{*}$-algebra theory which we will use in the present paper.
The material can be found in the introductory chapters of  textbooks  like \cite{MR458185}, \cite{pedersen}, \cite{zbMATH05256855},   \cite{williams}, \cite{Pisier} and many others. 

In order to fix set-theoretic size issues we choose three Grothendieck universes called the small, large, and very large sets.

In \cref{wrogjpwrgrfrewf} we will introduce the notions appearing in \cref{weijgoewrfrefwre9} and  \cref{weijgoewrfrefwre91}. 

We let $\nCalg$ denote the large, but locally small category of  small $C^{*}$-algebras and homomorphisms. By $\Calg$ we denote its subcategory of unital $C^{*}$-algebras and unit-preserving homomorphisms. 
As we are interested in the categorical properties of the categories of $C^{*}$-algebras  we will follow the approach in   \cite{crosscat}.
We consider $\nCalg$ as a full subcategory of the large locally small  category of small $*$-algebras $\nClinAlg$ over $\C$. The latter is the category of small   (possibly non-unital) algebras over $\C$ with an antilinear involution $*$ and structure-preserving maps.

A {\em $C^{*}$-seminorm} on a $*$-algebra $A$ is a submultiplicative seminorm satisfying the $C^{*}$-equality $ \|a^{*}a\|=\|a\|^{2}$. For $a$ in $A$ we define the {\em maximal seminorm} of $a$ by    $\|a\|_{\max}:=\sup_{\|-\|} \|a\|$, where the supremum runs over all $C^{*}$-seminorms on $A$. 

We say that $A$ is a {\em pre-$C^{*}$-algebra} if all its elements have a finite maximal seminorm. The inclusion $\npreCalg\to \nClinAlg$ of the category of  pre-$C^{*}$-algebras into the category of all $*$-algebras
is the left-adjoint of a right Bousfield localization whose right-adjoint is the {\em bounded elements functor} $\Bd^{\infty}$.

A  {\em $C^{*}$-algebra}    is a pre-$C^{*}$-algebra $A$ with the property  that   $(A,\|-\|_{\max})$ is a Banach space. 
The inclusion  $\nCalg\to \npreCalg$ of the category of $C^{*}$-algebras into the category of  pre-$C^{*}$-algebras is the right-adjoint of a left Bousfield localization whose  left adjoint is the {\em completion functor} $\compl$.

In view of its algebraic description the category $  \nClinAlg$ is clearly complete and cocomplete in the sense that it admits all small limits and colimit.
As a consequence of the above  characterization of $C^{*}$-algebras  
the category $\nCalg$ is  complete and cocomplete, too. We furthermore obtain an explicit description of limits and colimits in terms of their algebraic counterparts indicated by a superscript $\alg$. If $A:I\to \nCalg$ is an $I$-diagram of $C^{*}$-algebras for some small index category $I$, then
\begin{equation}\label{fvrvfevefcvsdfvsfdvsfdvsv}\lim_{I}A\cong \Bd^{\infty}({\lim_{I}}^{\alg}A)\ , \quad \colim_{I}A\cong \compl({\colim_{I}}^{\alg}A)\ .
\end{equation} 

 In particular, the coproduct of the $C^{*}$-algebras $A_{0}$ and $A_{1}$ is represented by the free product of $C^{*}$-algebras $A_{0}*A_{1}:=\compl(A_{0}*^{\alg}A_{1}) $
together with the canonical morphisms $\iota_{i}:A_{i}\to A_{0}*A_{1}$. Similarly, the product  of $A_{0}$ and $A_{1}$ is represented by the (algebraic) sum $A_{0}\oplus A_{1}$ together with the canonical projections $\pr_{i}:A_{0}\oplus A_{1}\to A_{i}$.  
If $(A_{i})_{i\in I}$ is a small  infinite family of $C^{*}$-algebras, then \eqref{fvrvfevefcvsdfvsfdvsfdvsv} says that
$\prod_{i\in I}A_{i}\cong \Bd^{\infty}(\prod_{i\in I}^{\alg}A_{i}) $ is the subalgebra of  the algebraic product of families $(a_{i})_{i\in I}$ of elements $a_{i}$ in $A_{i}$ with $\sup_{i\in I} \|a_{i}\|_{A_{i}}<\infty$.

From now on we will suppress the size adjectives large and small  as much as possible.

The category $\nCalg$ is pointed by the zero algebra $0$. 

 The category $\nCalg$ has two canonical symmetric monoidal structures $\otimes_{\max}$ and $\otimes_{\min}$.
 For $C^{*}$-algebras $A,B$   the {\em maximal tensor product}
 is defined by  $A\otimes_{\max}B:=\compl(A\otimes^{\alg}B)$, 
 where we use that the $*$-algebra $A\otimes^{\alg}B$ is actually a pre-$C^{*}$-algebra.
 
  In order to define the {\em minimal tensor product} (also called the spatial tensor product) we equip $A\otimes^{\alg}B$ with the minimal $C^{*}$-norm (not seminorm!) and form the closure.
This minimal norm can alternatively be characterized as the   norm induced by the representation $ A\otimes B\to B(H\otimes L)$ induced by any two faithful representations $A\to B(L)$ and $B\to B(H)$ for Hilbert spaces $L$ and $H$.

We always have a canonical morphism $A\otimes_{\max}B\to A\otimes_{\min}B$, and $A$ is called {\em nuclear} if this morphism is an isomorphism for all $B$. Its is known that commutative $C^{*}$-algebras and the $C^{*}$-algebra $K$ of compact operators on a separable Hilbert space are nuclear.
If one of the tensor factors is nuclear we can safely  write $\otimes$ and omit the subscript specifying the choice.

  \begin{ex}
  The commutative algebra objects $\CAlg(\nCalg)$ (say with respect to $\otimes_{\max}$)
 are precisely the unital  commutative $C^{*}$-algebras. \hB
  \end{ex}

If $X$ is a  compact topological space, then by $C(X)$ we denote the commutative  $C^{*}$-algebra of continuous $\C$-valued  functions on $X$.  
For  $C^{*}$-algebras $A$ and $B$ we let $\underline{\Hom}(A,B)$ denote the compactly generated topological space characterized by the property that for every {\em compact} space $X$ we have a natural bijection
\begin{equation}\label{fqwfewfdwedewdwdq}\Hom_{\Top}(X,\underline{\Hom} (A,B))\cong \Hom_{\nCalg}(A,C(X)\otimes B)\ .
\end{equation}  The topology on $ \underline{\Hom}(A,B)$
 is equivalent to the maximal compactly generated topology  containing the point-norm topology  on 
   $\Hom_{\nCalg}(A,B)$. 
In this way $\nCalg$ becomes a {\em category enriched in topological spaces}.
 
 A homomorphism $f:B\to C$ between $C^{*}$-algebras  is a {\em homotopy equivalence} if there exists a homomorphism $g:C\to B$, called a {\em homotopy inverse}, 
  such that $f\circ g$ is homotopic to $\id_{C}$ in $\underline{\Hom}(C,C)$ and
 $g\circ f$ is homotopic to $\id_{B}$ in $\underline{\Hom}(B,B)$. Equivalently, one could require that the
 induced map $\underline{\Hom}(A,f):
\underline{\Hom}(A,B)\to \underline{\Hom}(A,C)$ is a homotopy equivalence of topological spaces  for all $C^{*}$-algebras $A$.

A {\em left upper corner inclusion} $A\to A\otimes K$ is a homomorphism of the form $a\mapsto a\otimes e$ where $e$ is a minimal non-zero projection in $K$.

\begin{rem}\label{jgioweprgwrfrwrfrf}
If one interprets $K$ and $A\otimes K$ as algebras of $\nat$-indexed matrices with entries in $\C$ or $A$, respectively,  then we can write this map as
\[a\mapsto \left(\begin{array}{cccc}a&0&0&\dots\\0&0&0&\dots\\0&0&0&\cdots\\\vdots&\vdots&\vdots&\ddots \end{array}\right)\ .\] This picture explains the name {\em left upper corner} inclusion.\hB
\end{rem}

An exact sequence \[0\to I\to B\xrightarrow{\pi} Q\to 0\] of $C^{*}$-algebras is called {\em semisplit exact} (or {\em split exact}), if $\pi$ admits a completely positive contractive (cpc) right-inverse (or a right-inverse homomorphism, respectively).
It is known that the functor $A\otimes_{\max}-$ preserves  exact sequences and in addition the condition
of being  semi-split exact or split-exact. The functor $A\otimes_{\min}-$ preserves semi-split exact sequences and split-exact sequences. A cartesian square in $\nCalg$
\[\xymatrix{E\ar[r]\ar[d] &B \ar[d] \\D \ar[r] &C } \] is called 
{\em exact (semisplit)} if the vertical maps are surjective (admit a cpc split). The functor $A\otimes_{\max}-$ preserves exact cartesian squares and also  semisplit cartesian squares, and $A\otimes_{\min}-$ preserves semisplit ones.  Note that the fact that $B\to C$ is surjective or admits a cpc split implies that $E\to D$ has the same property.

A $C^{*}$-algebra is called  {\em separable} if it contains a countable dense subset. 
We let $\nCalg_{\sepa}$ denote the full subcategory of separable $C^{*}$-algebras.
Note that $\nCalg_{\sepa}$ is essentially small.
For a $C^{*}$-algebra $A$ we let $A'\subseteq_{\sepa}A$ denote the poset of separable subalgebras of $A$.
Then we have a canonical isomorphism \begin{equation}\label{adsfoiajoifafadsff}\colim_{A'\subseteq A}A'\cong A .
\end{equation} 

\begin{ex}
The algebra of compact operators $K(H)$ on a separable Hilbert space $H$ is separable. 
If $\dim(H)=\infty$, then the algebra of bounded operators $B(H)$ is not separable. 
If $X$ is a separable metric space, then $C_{0}(X)$ is a separable $C^{*}$-algebra. 
If $X$ is not compact, then the $C^{*}$-algebra of bounded continuous functions $C_{b}(X)$ on $X$ is not separable.
 \hB
\end{ex}

Let $F$ be a functor defined on $\nCalg$ or $\nCalg_{\sepa}$.
\begin{ddd}\label{wrogjpwrgrfrewf}
\mbox{}\begin{enumerate}
\item \label{rqgjfewfqfewf14rrerwg}$F$ is {\em homotopy invariant} if $F$ sends homotopy equivalences to equivalences.
\item\label{erwqgoihrioferfqfewfqewfqwef} $F$ is {\em stable} if it sends left-upper corner inclusions to equivalences.
\item\label{eqrkgoegferwfqfqfwe} $F$ is {\em reduced} if $F(0)$ is a  zero object.
\item\label{trokgperwgrefwerfrefew} $F$ is {\em exact} ({\em semi-split exact}, {\em split-exact}) if  $F$ is reduced and $F$ sends exact (semi-split exact or split-exact) sequences to fibre sequences.
\item\label{trokgperwgrefwerfrefew1} If $F$ is defined on $\nCalg $, then we say that $F$ is {\em $s$-finitary} if for every $C^{*}$-algebra $A$ the canonical morphism
$\colim_{A'\subseteq_{\sepa}A} F(A')\to F(A)$ is an equivalence.
\end{enumerate}
\end{ddd}

Here   in \cref{wrogjpwrgrfrewf}.\ref{eqrkgoegferwfqfqfwe} and \cref{wrogjpwrgrfrewf}.\ref{trokgperwgrefwerfrefew}
we implicitly assume that the target of $F$ is pointed. In 
\cref{wrogjpwrgrfrewf}.\ref{trokgperwgrefwerfrefew1} we further assume that the colimit exists.

\begin{rem}\label{wergijwerogerwfewrferwfw} In order to check that $F$ is homotopy invariant it suffices to check that $F(A)\to F(C([0,1])\otimes A)$ is an equivalence for every $C^{*}$-algebra $A$, where the map is induced by $(\C\to C([0,1]))\otimes\id_{A}$. 
 
 The functor $F$ is $s$-finitary if and only if it represents the left Kan-exaction of the restriction $ F_{|\nCalg_{\sepa}}$
 along the inclusion $\nCalg_{\sepa}\to \nCalg$.
 In view of \eqref{adsfoiajoifafadsff} a filtered colimit preserving functor is $s$-finitary.
   \hB
\end{rem}

\begin{rem} Many constructions of the present paper done for $\nCalg$ have a version for separable algebras.
We will indicate this in the notation  by adding  subscripts $\sepa$ to the categories or functors.
If everything goes through for separable algebras word by word, then we will simply state that we {\em have a separable version.} At some places separability matters, and then we will be explicit. \hB
\end{rem}

\section{Inverting homotopy equivalences}\label{lh}

In this section we study the Dwyer-Kan localization of the category $\nCalg$ at the set of homotopy equivalences.
We will show that the resulting $\infty$-category $\nCalg_{h}$ is presented by the topological enriched version of $\nCalg$ so that we understand the mapping spaces in $\nCalg_{h}$ explicitly. It will turn out that $\nCalg_{h}$
is a pointed left-exact $\infty$-category. 

We start with recalling the $\infty$-categorical background on Dwyer-Kan localizations. 
Let $\bC$ be a   $\infty$-category and $W$ be a set of morphisms in $\bC$. Then we can form the {\em Dywer-Kan localization} $$L:\bC\to \bC[W^{-1}]$$  of $\bC$ at $W$.  It is   characterized by the universal property
that \begin{equation}\label{vfdsviuhsviisvfdvsvsdv}L^{*}:\Fun (\bC[W^{-1}],\bD)\stackrel{\simeq}{\to} \Fun^{W}(\bC,\bD)
\end{equation}  is an equivalence for every $\infty$-category $\bD$, where the superscript $W$ on the right indicates the full subcategory of functors which send the elements of $W$ to equivalences \cite[Def. 1.3.4.1 \& Rem. 1.3.4.2]{HA}. In the present paper we will apply this to $\infty$-categories $\bC$, $\bD$ in the universe of large sets 
so that $\bC[W^{-1}]$ also belongs to this universe.

\begin{rem}\label{werkgoewgrefrfwref}The functor $L$ is essentially surjective and in order to make formulas more readable  we will usually denote the image $L(C)$ or $L(f)$ in $\bC[W^{-1}]$ of an object or morphism in $\bC$
   simply by $C$ or $f$.  This convention in particular applies when we insert them into functors defined on $ \bC[W^{-1}]$. But sometimes we need the longer, more precise notation in order to avoid confusion. 
\hB \end{rem}

If $\bC$ is symmetric monoidal, then we say  that the localization $L$ admits a symmetric monoidal refinement, if 
$\bC[W^{-1}]$ has a symmetric monoidal structure, $L$ has a symmetric monoial refinement, and we have an equivalence
\begin{equation}\label{qwefqwefwefewdwedqewd}  L^{*}:\Fun_{\otimes/\lax}(\bC[W^{-1}],\bD)\stackrel{\simeq}{\to} \Fun^{W}_{\otimes/\lax}(\bC ,\bD) 
 \end{equation} for every symmetric monoidal $\infty$-category $\bD$, where 
 the notation $\otimes/\lax$ indicates   two separate formulas, one  for symmetric monoidal functors and one for lax symmetric monoidal functors.   
 
In order to check that $L$ has a symmetric monoidal refinement by \cite[Def.3.3.2]{hinich}  it suffices to check that the functor $C\otimes-$ preserves $W$ for every object $C$ of $\bC$.

\begin{ddd} We let 
\begin{equation}\label{vfdvsdfvwr3fes}L_{h}:\nCalg\to \nCalg_{h}
\end{equation}   be the Dwyer-Kan localization of the category $\nCalg$ at the homotopy equivalences. \end{ddd}
By definition it is characterized by the universal property that 
 pull-back along $L_{h}$ induces for any $\infty$-category $\bD$ an equivalence  \begin{equation}\label{weqfwdqedeqdqed} L_{h}^{*}:\Fun(\nCalg_{h},\bD)\stackrel{\simeq}{\to} \Fun^{h}(\nCalg,\bD)\ ,\end{equation}
where the superscript $h$ indicates the full subcategory of $\Fun(\nCalg,\bD)$ of homotopy invariant functors (see \cref{wrogjpwrgrfrewf}.\ref{rqgjfewfqfewf14rrerwg}).  

We consider the tensor product  $\otimes_{?}$  on $\nCalg$  for $?$ in $\{\max,\min\}$.
\begin{lem} For  $?$ in $\{\max,\min\}$
the localization $L_{h}$ has a symmetric monoidal refinement. 
\end{lem}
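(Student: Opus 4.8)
The plan is to apply the criterion recalled immediately before the statement: following \cite[Def.3.3.2]{hinich}, to produce a symmetric monoidal refinement of $L_{h}$ it suffices to check that for every $C^{*}$-algebra $C$ the functor $C\otimes_{?}-\colon\nCalg\to\nCalg$ sends homotopy equivalences to homotopy equivalences, where here the set $W$ consists of the homotopy equivalences. Since $C\otimes_{?}-$ is a functor it preserves identities and composition, so a homotopy inverse $g$ of a homotopy equivalence $f$ is carried to a candidate inverse $C\otimes_{?}g$ of $C\otimes_{?}f$. The whole problem therefore reduces to the single assertion that $C\otimes_{?}-$ preserves the homotopy relation on homomorphisms: if $h_{0}\simeq h_{1}\colon B\to C'$, then $C\otimes_{?}h_{0}\simeq C\otimes_{?}h_{1}$. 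Indeed, from $f\circ g\simeq\id_{C'}$ and $g\circ f\simeq\id_{B}$ this relation then yields $(C\otimes_{?}f)\circ(C\otimes_{?}g)\simeq\id$ and symmetrically.

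To prove preservation of homotopy, recall that a homotopy from $h_{0}$ to $h_{1}$ is, via the adjunction \eqref{fqwfewfdwedewdwdq} with $X=[0,1]$, a homomorphism $H\colon B\to C([0,1])\otimes C'$ with $(\mathrm{ev}_{t}\otimes\id_{C'})\circ H=h_{t}$ for $t\in\{0,1\}$, where $\mathrm{ev}_{t}\colon C([0,1])\to\C$ is evaluation. Since $C([0,1])$ is nuclear, the cylinder functor $C([0,1])\otimes-$ is unambiguous and coincides with $C([0,1])\otimes_{?}-$. I would then apply $C\otimes_{?}-$ to $H$ and postcompose with the coherence isomorphism of the symmetric monoidal category $(\nCalg,\otimes_{?})$,
$$\phi\colon C\otimes_{?}\big(C([0,1])\otimes_{?}C'\big)\;\xrightarrow{\ \cong\ }\;C([0,1])\otimes\big(C\otimes_{?}C'\big),$$
assembled from the associativity and symmetry constraints of $\otimes_{?}$, to obtain $\tilde H:=\phi\circ(C\otimes_{?}H)\colon C\otimes_{?}B\to C([0,1])\otimes(C\otimes_{?}C')$.

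It then remains to verify that $\tilde H$ is a homotopy from $C\otimes_{?}h_{0}$ to $C\otimes_{?}h_{1}$, i.e. that $(\mathrm{ev}_{t}\otimes\id)\circ\tilde H=C\otimes_{?}h_{t}$. This is pure naturality: $\phi$ is natural in its middle tensor factor, so precomposing the map $\mathrm{ev}_{t}\colon C([0,1])\to\C$ commutes through $\phi$ up to the unit isomorphism $C\otimes_{?}(\C\otimes C')\cong C\otimes_{?}C'$, and applying $C\otimes_{?}-$ to the defining identity $(\mathrm{ev}_{t}\otimes\id_{C'})\circ H=h_{t}$ gives the claim. The argument is literally the same for $?=\max$ and $?=\min$; the only input specific to the tensor product is the associativity/symmetry isomorphism of $\otimes_{?}$, and the only genuinely analytic input is the nuclearity of $C([0,1])$ used to identify the cylinder functor for either choice of $?$.

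I expect no conceptual obstacle here: homotopies are detected by maps into $C([0,1])\otimes-$, and tensoring by $C$ commutes past this cylinder purely by monoidal coherence. The main work is bookkeeping, namely writing out $\phi$ explicitly and confirming its compatibility with the evaluations $\mathrm{ev}_{t}$ and the unit constraint, together with the observation that nuclearity of $C([0,1])$ makes the manipulations legitimate simultaneously for both tensor structures so that one proof covers both cases.
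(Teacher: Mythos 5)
Your proposal is correct and follows essentially the same route as the paper: the paper's one-line proof invokes the Hinich criterion and observes that $A\otimes_{?}-$ is continuous for the topological enrichment, which, unpacked via the adjunction \eqref{fqwfewfdwedewdwdq}, is exactly your computation of moving $C\otimes_{?}-$ past the cylinder $C([0,1])\otimes-$ using associativity, symmetry, and nuclearity of the commutative algebra $C([0,1])$. The only cosmetic difference is that the paper phrases this as enriched continuity (i.e.\ for all compact $X$, not just $X=[0,1]$), which is the same argument with $C(X)$ in place of $C([0,1])$.
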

\begin{proof}  It follows from the functoriality and associativity of $\otimes_{?}$ and \eqref{fqwfewfdwedewdwdq} that for 
  every $C^{*}$-algebra $A$    the  functor $A\otimes_{?} - :\nCalg\to \nCalg $ is continuous for the topological enrichment and therefore   preserves 
 homotopy equivalences. This implies that $L_{h}$ has a symmetric monoidal refinement.  \end{proof}

 Thus for every 
  symmetric monoidal $\infty$-category $\bD$ we have an equivalence  \begin{equation}\label{vacdqwefwfd} L_{h}^{*}:\Fun_{\otimes/\lax}(\nCalg_{h} ,\bD)\stackrel{\simeq}{\to} \Fun^{h}_{\otimes/\lax}(\nCalg,\bD)\ .  \end{equation}

\begin{rem}
 Note that on $\nCalg_{h}$ we have two symmetric monoidal structures $\otimes_{?}$, one for $?=\max$ and one for $?=\min$ which will be discussed in a parallel manner. In particular, \eqref{vacdqwefwfd} actually has two versions. \hB
 \end{rem}
   
  
In contrast to general Dwyer Kan localizations, in the present  case we can understand the mapping spaces in $\nCalg_{h}$ explicitly. In fact, we will see that the topologically enriched category $\nCalg$ directly presents the localization.  To this end we  
  apply the singular complex functor $\sing$ to the $\underline{\Hom}$-spaces in order to get a Kan-complex enriched category. Further applying 
 the homotopy coherent nerve we get an $\infty$-category $\nCalg_{\infty}$ together with a functor 
 $\nCalg\to \nCalg_{\infty}$ given by the inclusion of the zero skeleton of the mapping spaces.

 \begin{prop}\label{erjgiowergrwefwerfwref}
 The functor $\nCalg\to \nCalg_{\infty}$ presents the 
 Dwyer-Kan localization of $\nCalg$ at the homotopy equivalences. 
 \end{prop}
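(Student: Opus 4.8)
The plan is to identify $\nCalg_\infty$ as the $\infty$-category presented by the topologically enriched category $\nCalg$ and then to invoke the general principle that, for a suitably nice topological (or simplicially enriched) model category, the associated $\infty$-category \emph{is} the Dwyer--Kan localization at the weak equivalences. Concretely, I would exhibit a canonical comparison functor from the universal Dwyer--Kan localization $\nCalg_h=\nCalg[W^{-1}]$ (with $W$ the homotopy equivalences) to $\nCalg_\infty$ and show it is an equivalence by checking it is essentially surjective and fully faithful on mapping spaces.

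First I would observe that the functor $\nCalg\to\nCalg_\infty$ sends every homotopy equivalence to an equivalence: by definition $f\colon B\to C$ is a homotopy equivalence precisely when there is $g$ with $f\circ g$ and $g\circ f$ homotopic to the identities in the $\underline{\Hom}$-spaces, and applying $\Sing$ turns such a homotopy into an honest path in the Kan complex $\Sing\,\underline{\Hom}(C,C)$ (resp.\ $\Sing\,\underline{\Hom}(B,B)$), so $f$ becomes invertible up to homotopy in $\nCalg_\infty$. By the universal property \eqref{weqfwdqedeqdqed} this produces a factorization $\nCalg_h\to\nCalg_\infty$ through the localization. Essential surjectivity is immediate, since $\nCalg\to\nCalg_\infty$ is the identity on objects and $\nCalg\to\nCalg_h$ is essentially surjective (\cref{werkgoewgrefrfwref}).

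The heart of the argument is full faithfulness, i.e.\ that for all $C^*$-algebras $A,B$ the induced map on mapping spaces
\[
\Map_{\nCalg_h}(A,B)\longrightarrow \Map_{\nCalg_\infty}(A,B)\simeq \Sing\,\underline{\Hom}(A,B)
\]
is an equivalence. I would prove this by realizing $\nCalg$ as the cofibrant--fibrant objects of a topological model structure (or, following the cited framework, by appealing to the Dwyer--Kan hammock-localization computation) in which every object is both fibrant and cofibrant; in such a situation the hammock localization mapping space is computed by the enriched hom, yielding exactly $\Sing\,\underline{\Hom}(A,B)$. The key structural inputs are that $\underline{\Hom}(A,-)$ is homotopy-invariant (it sends homotopy equivalences to homotopy equivalences of spaces, as recorded right after \eqref{fqwfewfdwedewdwdq}) and that $C([0,1])$ functions as a cylinder object via the tensoring \eqref{fqwfewfdwedewdwdq}, so that homotopy of maps in the $C^*$-sense agrees with the simplicial notion of homotopy after applying $\Sing$.

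\textbf{Main obstacle.} The delicate point is \emph{not} the formal localization but verifying the homotopical hypotheses that let one identify the Dwyer--Kan mapping space with $\Sing\,\underline{\Hom}(A,B)$. Unlike the setting of a genuine simplicial model category, here one has only a topological enrichment on $\nCalg$ with a cylinder $C([0,1])\otimes-$, and one must check the relevant exponential/tensoring compatibilities (that the interval object is well-behaved and that every object is ``cofibrant enough'') so that the enriched hom already computes the derived mapping space without an intermediate (co)fibrant replacement. I expect the technical work to consist precisely of assembling these interval- and enrichment-level facts — all of which are consequences of \eqref{fqwfewfdwedewdwdq} and the continuity of $A\otimes_?-$ established in the preceding lemma — into the recognition criterion for when a simplicially enriched category presents its Dwyer--Kan localization.
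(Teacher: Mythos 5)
Your overall strategy --- reduce everything to a recognition criterion saying that a Kan-enriched category with a good interval presents the Dwyer--Kan localization of its underlying category at the homotopy equivalences --- is exactly what the paper does, and your identification of \eqref{fqwfewfdwedewdwdq} as the essential input is correct. The paper's proof is very short: it cites \cite[Thm. 1.3.4.7]{HA}, whose only hypothesis (beyond fibrancy of the enrichment) is that every object $B$ admits a cotensor (path object) by $\Delta^{1}$, and it verifies this hypothesis with the path algebra $PB:=C(\Delta^{1})\otimes B$, for which \eqref{fqwfewfdwedewdwdq} yields the required bijections $\Hom_{\sSet}([n],\sing \underline{\Hom}(A,PB))\cong \Hom_{\sSet}([1]\times[n],\sing \underline{\Hom}(A,B))$. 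So the ``technical work'' you defer to the end is in fact a one-line check plus this citation; no comparison functor, essential-surjectivity step, or separate full-faithfulness argument is needed.

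Two points in your plan would go wrong if pursued literally. First, the route through a topological model structure in which every object is cofibrant--fibrant is not available: $\nCalg$ does not carry (and the paper never invokes) a model structure whose weak equivalences are the homotopy equivalences; the only homotopical structure in play is Uuye's category of fibrant objects \cite{Uuye:2010aa}, which is used later for left-exactness and which has no notion of cofibrancy at all. Second, the unqualified claim that for a fibrant simplicial category with all objects ``fibrant--cofibrant'' the Dwyer--Kan mapping spaces are computed by the enriched hom is false: the one-object Kan-enriched category with mapping space $B\IZ$ has trivial underlying category, so the Dwyer--Kan localization of that underlying category is a point, while the coherent nerve is not. Some hypothesis such as the existence of path objects is indispensable, and this is also where your terminology needs care: \eqref{fqwfewfdwedewdwdq} is a cotensoring, not a tensoring, so $C([0,1])\otimes B$ is a path object for the target $B$ rather than a cylinder for the source ($\nCalg$ is not tensored over spaces). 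Lurie's criterion is phrased exactly in terms of such path objects, which is why it applies here with no further work.
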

\begin{proof}
For every $C^{*}$-algebra $B$ we define the {\em path algebra} \begin{equation}\label{vadfvasdvscsdcsdac}PB:=C(\Delta^{1})\otimes B\ .
\end{equation}  By \eqref{fqwfewfdwedewdwdq}, defining  a map of simplicial sets $[n]\to \sign(\underline{\Hom}(A,B))$   is equivalent to specifying an element in $\Hom_{\nCalg}(A,C(\Delta^{n})\otimes B )$. We let 
$h_{B} :[1]\to \sign(\underline{\Hom}(PB,B))$ correspond to the identity of $PB$. One then checks that  for any $C^{*}$-algebra $A$ the canonically induced map
$$ \Hom_{\sSet}([n],\sing(\underline{\Hom}(A,PB)))\to \Hom_{\sSet}([1]\times [n],\sing(\underline{\Hom}(A,B)))$$
is a bijection. The assertion of \cref{erjgiowergrwefwerfwref} now follows from  \cite[Thm 1.3.4.7]{HA}.
\end{proof}

\begin{kor} \label{jgiwortegrfrefrwefw}The $\infty$-category
$\nCalg_{h}$  is locally small.
\end{kor}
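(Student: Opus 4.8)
The plan is to exploit the explicit presentation of $\nCalg_{h}$ furnished by \cref{erjgiowergrwefwerfwref}, which identifies the Dwyer-Kan localization with the homotopy coherent nerve $\nCalg_{\infty}$ of the Kan-complex enriched category obtained by applying $\sing$ to the $\underline{\Hom}$-spaces. The key point is that for a homotopy coherent nerve $\CNerve$ of a fibrant simplicially enriched category $\cC$ (i.e.\ one whose mapping objects are Kan complexes), the mapping space $\Map_{\CNerve(\cC)}(x,y)$ is canonically weakly equivalent to the enriched hom-object $\cC(x,y)$. Hence it suffices to show that each Kan complex $\sing(\underline{\Hom}(A,B))$ is essentially small, which is to say it has the homotopy type of a small Kan complex, or at least that its homotopy groups are small sets.

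First I would recall that local smallness of an $\infty$-category means exactly that all its mapping spaces are (equivalent to) small spaces. By \cref{erjgiowergrwefwerfwref} and the identification of mapping spaces in a homotopy coherent nerve just recalled, we have $\Map_{\nCalg_{h}}(A,B)\simeq \sing(\underline{\Hom}(A,B))$ for all $C^{*}$-algebras $A,B$. So the statement reduces to the claim that the singular complex of the compactly generated topological space $\underline{\Hom}(A,B)$ is essentially small. Now $\sing(\underline{\Hom}(A,B))$ is determined up to weak equivalence by the weak homotopy type of $\underline{\Hom}(A,B)$, and a space has small weak homotopy type as soon as it admits a CW approximation by a small CW complex, equivalently as soon as its set of path components and all its homotopy groups at all basepoints are small sets.

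The second step is therefore to verify that $\underline{\Hom}(A,B)$ has small homotopy invariants. Here I would use the defining adjunction \eqref{fqwfewfdwedewdwdq}: for every compact space $X$ one has a natural bijection $\Hom_{\Top}(X,\underline{\Hom}(A,B))\cong \Hom_{\nCalg}(A,C(X)\otimes B)$. Since $\nCalg$ is locally small (its morphism sets are small, being sets of structure-preserving maps between small $*$-algebras), the right-hand side is a small set for every $X$; taking $X=\Delta^{n}$ (which is compact) shows that the set of $n$-simplices $\Hom_{\sSet}(\Delta^{n},\sing(\underline{\Hom}(A,B)))=\Hom_{\nCalg}(A,C(\Delta^{n})\otimes B)$ is small for every $n$. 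Thus $\sing(\underline{\Hom}(A,B))$ is a simplicial set with small sets of simplices in each degree, hence a small simplicial set, and a fortiori its homotopy groups and components are small.

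Combining these, $\Map_{\nCalg_{h}}(A,B)$ is equivalent to the small Kan complex $\sing(\underline{\Hom}(A,B))$, so $\nCalg_{h}$ is locally small. The main obstacle, such as it is, is purely bookkeeping about universes: one must be careful that the enriched-hom mapping space $\sing(\underline{\Hom}(A,B))$ is genuinely small on the nose (not merely essentially small relative to the large universe), which the adjunction isomorphism delivers degreewise since $\nCalg$ was set up to be locally small at the very start of \cref{qrfuhqeiurfqwfdewfdqwef9}. There is no genuine homotopy-theoretic difficulty; the content is entirely the identification of mapping spaces via \cref{erjgiowergrwefwerfwref} together with the fact that $C(\Delta^{n})\otimes B$ is again a (small) $C^{*}$-algebra so that the morphism sets out of $A$ remain small.
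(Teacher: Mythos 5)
Your proposal is correct and is exactly the argument the paper leaves implicit: \cref{jgiwortegrfrefrwefw} is stated as an immediate consequence of \cref{erjgiowergrwefwerfwref}, because the presentation via the homotopy coherent nerve identifies $\Map_{\nCalg_{h}}(A,B)$ with $\sing(\underline{\Hom}(A,B))$, whose $n$-simplices form the small set $\Hom_{\nCalg}(A,C(\Delta^{n})\otimes B)$ by \eqref{fqwfewfdwedewdwdq}. Your universe bookkeeping (local smallness of $\nCalg$ from \cref{qrfuhqeiurfqwfdewfdqwef9} plus compactness of $\Delta^{n}$) is precisely what makes this work, so there is nothing to add.
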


\begin{rem} At various places in this note we will use that small topological spaces present objects in the large $\infty$-category of small spaces\footnote{This name is changed to {\em anima} in recent literature.} $\Spc$. This is achieved by the functor
\begin{equation}\label{gregwegwerfwfer}\ell:\Top\to \Spc
\end{equation}
which presents the $\infty$-category $\Spc$ as the Dwyer-Kan localization of $\Top$
at the set of weak homotopy equivalences. It is one of the fundamental principles called the {\em Grothendieck's homotopy 
hypothesis} that the $\infty$-category  $\Spc$ defined in this way is equivalent to the $\infty$-category of $\infty$-groupoids in which the mapping spaces of locally small $\infty$-categories naturally live.
For a general large $\infty$-category they belong to the very large $\infty$-category of large spaces which we will denote by $\SPC$.

We will use that $\ell$ preserves coproducts, products and sends Serre fibrant  cartesian squares to cartesian squares,
were a cartesian square \[\xymatrix{X\ar[r]\ar[d]&Y\ar[d]^{f}\\Z\ar[r]&U}\] 
in $\Top$ is called {\em Serre fibrant} if $f$ is a Serre fibration.
 \hB\end{rem}

  As an immediate corollary of Proposition \ref{erjgiowergrwefwerfwref} we get an explicit description of the mapping spaces in $\nCalg_{h}$.
 
\begin{kor}\label{erigjeroferferfrefwerf}
For any two $C^{*}$-algebras $A,B$   we have a natural equivalence of spaces
\begin{equation}\label{fijweiof24ffwrefwrefwff}\Map_{\nCalg_{h}}(A,B)\simeq \ell \underline{\Hom}(A,B)\ .
\end{equation} 
\end{kor}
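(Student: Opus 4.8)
The plan is to read the mapping spaces off directly from the explicit model $\nCalg_{\infty}$ of the localization furnished by \cref{erjgiowergrwefwerfwref}, so that the corollary reduces to chasing three natural equivalences. First I would invoke \cref{erjgiowergrwefwerfwref} itself: since the canonical functor $\nCalg\to\nCalg_{\infty}$ exhibits $\nCalg_{\infty}$ as the Dwyer-Kan localization of $\nCalg$ at the homotopy equivalences, it is equivalent over $\nCalg$ to $L_{h}\colon\nCalg\to\nCalg_{h}$. In particular there is an equivalence $\nCalg_{\infty}\simeq\nCalg_{h}$ compatible with the two functors out of $\nCalg$, and hence a natural equivalence $\Map_{\nCalg_{h}}(A,B)\simeq\Map_{\nCalg_{\infty}}(A,B)$ for all $C^{*}$-algebras $A,B$.

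Next I would identify the right-hand side. By construction $\nCalg_{\infty}$ is the homotopy coherent nerve of the simplicial category whose objects are $C^{*}$-algebras and whose mapping complexes are the $\Sing(\underline{\Hom}(A,B))$; these are Kan complexes, being singular complexes of topological spaces, so the simplicial category is fibrant. The standard comparison between the mapping spaces of the homotopy coherent nerve of a fibrant simplicial category and its enriched hom-complexes then yields a natural equivalence $\Map_{\nCalg_{\infty}}(A,B)\simeq\Sing(\underline{\Hom}(A,B))$ in $\Spc$.

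Finally I would recognise $\Sing(\underline{\Hom}(A,B))$ as $\ell\underline{\Hom}(A,B)$. The functor $\ell\colon\Top\to\Spc$ of \eqref{gregwegwerfwfer} presents $\Spc$ as the localization of $\Top$ at the weak homotopy equivalences, and on any space $X$ its value is exactly the anima presented by the Kan complex $\Sing(X)$; thus $\Sing(\underline{\Hom}(A,B))\simeq\ell\underline{\Hom}(A,B)$ naturally in $A$ and $B$. Composing the three equivalences produces the asserted natural equivalence \eqref{fijweiof24ffwrefwrefwff}.

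Since \cref{erjgiowergrwefwerfwref} carries all the genuine content, the only point requiring care is the bookkeeping: checking that each of the three identifications is natural in both variables, and that the equivalence $\nCalg_{\infty}\simeq\nCalg_{h}$ is genuinely compatible with the structure functors from $\nCalg$, so that the induced map on mapping spaces is the intended one rather than merely an abstract equivalence. This compatibility is the step I would expect to be the most delicate, although it is entirely formal and requires no further input from $C^{*}$-algebra theory.
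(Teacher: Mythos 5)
Your proposal is correct and follows exactly the route the paper takes: the paper derives \cref{erigjeroferferfrefwerf} as an immediate consequence of \cref{erjgiowergrwefwerfwref}, with the identification of the mapping spaces of the homotopy coherent nerve $\nCalg_{\infty}$ with the Kan complexes $\Sing(\underline{\Hom}(A,B))$, and of these with $\ell\underline{\Hom}(A,B)$, left implicit. You have merely spelled out the bookkeeping that the paper compresses into the phrase \emph{immediate corollary}, and your attention to naturality and compatibility over $\nCalg$ is exactly the right point to be careful about.
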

 In the formula above we adopted the conventions from  \cref{werkgoewgrefrfwref}.

We now discuss limits and colimits in $ \nCalg_{h}$.

\begin{prop}\label{weoijgowefgrefwrefrf}
The category $\nCalg_{h}$ admits finite products and arbitrary small coproducts, and the localization $L_{h}$ preserves them.
\end{prop}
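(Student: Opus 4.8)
The plan is to exploit two facts established above: that products and coproducts in an $\infty$-category are detected objectwise on mapping spaces, together with the explicit formula $\Map_{\nCalg_{h}}(A,B)\simeq \ell\,\underline{\Hom}(A,B)$ from \cref{erigjeroferferfrefwerf}. Since $\nCalg$ is complete and cocomplete, it suffices to show that the images under $L_{h}$ of the finite product $A_{0}\oplus A_{1}$ (with its projections $\pr_{i}$) and of a coproduct $\coprod_{i\in I}A_{i}$ (the completed free product, with its structure maps $\iota_{i}$) already satisfy the universal properties of product and coproduct in $\nCalg_{h}$. By the detection principle this reduces, in each case, to checking that the canonical comparison of mapping spaces is an equivalence for every object $C$ of $\nCalg_{h}$, and by \cref{erigjeroferferfrefwerf} this in turn reduces to a statement about the topological $\underline{\Hom}$-spaces and the functor $\ell\colon\Top\to\Spc$.

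For finite products I would first show that the canonical map
\[
\underline{\Hom}(C,A_{0}\oplus A_{1})\xrightarrow{\ \cong\ }\underline{\Hom}(C,A_{0})\times\underline{\Hom}(C,A_{1})
\]
is a homeomorphism of compactly generated spaces. Probing by a compact space $X$ via the defining adjunction \eqref{fqwfewfdwedewdwdq} turns the left-hand side into $\Hom_{\nCalg}(C,C(X)\otimes_{?}(A_{0}\oplus A_{1}))$; one then invokes the distributivity of $\otimes_{?}$ over finite products, namely $C(X)\otimes_{?}(A_{0}\oplus A_{1})\cong (C(X)\otimes_{?}A_{0})\oplus(C(X)\otimes_{?}A_{1})$, followed by the universal property of the product in $\nCalg$. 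As a compactly generated space is determined by the functor $X\mapsto\Hom_{\Top}(X,-)$ on compact $X$, the resulting natural chain of bijections yields the homeomorphism. Applying $\ell$, which preserves finite products, gives the desired equivalence on mapping spaces. The empty product is handled by the zero algebra: $\underline{\Hom}(C,0)$ is a point, so $L_{h}(0)$ is a terminal (indeed zero) object.

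For coproducts the argument is dual and in fact simpler, since no distributivity is needed. I would show that the canonical map
\[
\underline{\Hom}\Bigl(\coprod_{i\in I}A_{i},\,B\Bigr)\xrightarrow{\ \cong\ }\prod_{i\in I}\underline{\Hom}(A_{i},B)
\]
is a homeomorphism: probing by a compact $X$ and using that $\coprod_{i\in I}A_{i}$ is the coproduct in $\nCalg$ gives $\Hom_{\nCalg}(\coprod_{i}A_{i},C(X)\otimes_{?}B)\cong\prod_{i}\Hom_{\nCalg}(A_{i},C(X)\otimes_{?}B)$, and one concludes again by the uniqueness of the representing compactly generated space. Applying $\ell$, which preserves arbitrary products, produces the equivalence $\ell\,\underline{\Hom}(\coprod_{i}A_{i},B)\simeq\prod_{i}\ell\,\underline{\Hom}(A_{i},B)$ that exhibits $L_{h}(\coprod_{i}A_{i})$ as the coproduct.

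The main obstacle is the interaction of $\ell$ with products, in particular with infinite ones: one must use that $\ell$ carries the product of compactly generated spaces to the product in $\Spc$. This is precisely the property of $\ell$ recorded above, but it is worth being careful that the product computed topologically is the compactly generated one, whose weak homotopy type agrees with the ordinary product (all objects of $\Top$ are fibrant, so products of weak equivalences remain weak equivalences). A secondary point requiring attention is naturality: one must check that the comparison maps built from \eqref{fqwfewfdwedewdwdq} are natural in $C$ (resp.\ in $B$) and are exactly the maps induced by the projections $\pr_{i}$ (resp.\ the structure morphisms $\iota_{i}$), so that the equivalences genuinely identify $L_{h}(A_{0}\oplus A_{1})$ and $L_{h}(\coprod_{i}A_{i})$ as the product and coproduct, rather than merely as abstractly equivalent objects.
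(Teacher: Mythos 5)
Your proposal is correct and follows essentially the same route as the paper's proof: both reduce the statement, via the equivalence $\Map_{\nCalg_{h}}(A,B)\simeq \ell\,\underline{\Hom}(A,B)$ of \cref{erigjeroferferfrefwerf} and the fact that $\ell$ preserves products, to the claims that $\underline{\Hom}(A,\prod_{i}B_{i})\to\prod_{i}\underline{\Hom}(A,B_{i})$ (finite $I$) and $\underline{\Hom}(\coprod_{i}A_{i},B)\to\prod_{i}\underline{\Hom}(A_{i},B)$ are homeomorphisms. The only difference is that you justify these homeomorphisms by probing with compact spaces via \eqref{fqwfewfdwedewdwdq} (plus distributivity of $\otimes$ over finite sums), which the paper simply asserts, so your write-up supplies detail rather than a different argument.
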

\begin{proof} We start with finite products.
Let $(B_{i})_{i\in I}$ be a finite  family of  $C^{*}$-algebras  and $A$ be any $C^{*}$-algebra. Then we must show that the 
map
\[\Map_{\nCalg_{h}}(A,\prod_{i}B_{i})\to \prod_{i\in I} \Map_{\nCalg}(A,B_{i})\] induced by the family of  projections 
$(\prod_{i\in I}  B_{i}\to B_{j})_{j\in I}$
 is an equivalence.
 But this follows from the fact that \begin{equation}\label{sfgsfdgrgergegsfd}\underline{\Hom} (A,\prod_{i}B_{i})\to \prod_{i\in I} \underline{\Hom} (A,B_{i})
\end{equation}
   is actually a homeomorphism.

 We now consider coproducts. Let $(A_{i})_{i\in I}$ be a small family of $C^{*}$-algebras and $B$ be any $C^{*}$-algebra. Then we must show that the map
  \[\Map_{\nCalg_{h}}(\coprod_{i\in I}A_{i}, B)\to \prod_{i\in I} \Map_{\nCalg}(A_{i},B)\]
  induced by the family of inclusions $(A_{j}\to \coprod_{i\in I}A_{i})_{j\in I}$ is an equivalence.
  Again this follows  from the fact that
 \[\underline{\Hom}(\coprod_{i\in I}A_{i}, B)\to \prod_{i\in I} \underline{\Hom}(A_{i},B)\]
is actually a homeomorphism.
\end{proof}

\begin{rem}
In the case of products we assume that the index set $I$ is finite. If it is not finite, then the map
\eqref{sfgsfdgrgergegsfd} is no longer a homeomorphism.
Let $X$ be a compact topological space.
Then the image under \eqref{sfgsfdgrgergegsfd}  of $\Hom_{\Top}(X,\underline{\Hom} (A,\prod_{i}B_{i}))$ in
\[\Hom_{\Top}(X, \prod_{i\in I} \underline{\Hom} (A,B_{i}))\cong \prod_{i\in I} \Hom_{\Top}(X,\underline{\Hom} (A,B_{i}))\] consists of the families of maps
$(\phi_{i}:X\to \underline{\Hom} (A,B_{i}))_{i\in I}$ such that
the family $(\phi_{i}(a):X\to  B_{i})_{i\in I}$ is equicontinuous for every $a$ in $A$.
\hB
\end{rem}

\begin{lem} The functor $L_{h}$ is reduced and
$\nCalg_{h}$ is pointed.
\end{lem}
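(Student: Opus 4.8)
The plan is to deduce both assertions at once by showing that the image $L_{h}(0)$ of the zero algebra is a zero object in $\nCalg_{h}$. Indeed, if $L_{h}(0)$ is both initial and terminal, then $\nCalg_{h}$ is pointed and, by \cref{wrogjpwrgrfrewf}.\ref{eqrkgoegferwfqfqfwe}, the functor $L_{h}$ is reduced. Although $0$ is already a zero object of $\nCalg$, a Dwyer--Kan localization need not preserve zero objects in general, so I would not argue abstractly but instead exploit the explicit description of the mapping spaces of $\nCalg_{h}$ furnished by \cref{erigjeroferferfrefwerf}.

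First I would check initiality. For any $C^{*}$-algebra $B$, \cref{erigjeroferferfrefwerf} gives $\Map_{\nCalg_{h}}(0,B)\simeq \ell\underline{\Hom}(0,B)$. By the defining property \eqref{fqwfewfdwedewdwdq} of the enriched $\underline{\Hom}$, for every compact space $X$ the set $\Hom_{\Top}(X,\underline{\Hom}(0,B))$ is in bijection with $\Hom_{\nCalg}(0,C(X)\otimes B)$, which is a singleton since there is a unique homomorphism out of the zero algebra. Taking $X$ a point shows that $\underline{\Hom}(0,B)$ has a single element, hence is the one-point space. Since the one-point space is weakly contractible, $\ell$ sends it to the terminal object of $\Spc$, so $\Map_{\nCalg_{h}}(0,B)$ is contractible and $L_{h}(0)$ is initial. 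The argument for terminality is symmetric: using $C(X)\otimes 0\cong 0$ together with the uniqueness of the homomorphism into the zero algebra, one finds that $\Hom_{\nCalg}(A,C(X)\otimes 0)$ is again a singleton, so $\underline{\Hom}(A,0)$ is the one-point space and $\Map_{\nCalg_{h}}(A,0)\simeq \ell\underline{\Hom}(A,0)\simeq *$ for every $A$.

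Combining the two computations, $L_{h}(0)$ is simultaneously initial and terminal, i.e.\ a zero object of $\nCalg_{h}$, which yields both claims of the statement. I do not expect any real obstacle here: the only points requiring care are the identification of $\underline{\Hom}(0,B)$ and $\underline{\Hom}(A,0)$ with the one-point space, which is immediate from \eqref{fqwfewfdwedewdwdq}, and the fact that $\ell\colon\Top\to\Spc$ carries a contractible space to a point of $\Spc$. Everything rests on the prior identification of the mapping spaces in \cref{erigjeroferferfrefwerf}, so no further input from $C^{*}$-algebra theory is needed.
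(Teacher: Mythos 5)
Your proof is correct and takes essentially the same route as the paper, whose entire proof is the one-line assertion that the zero algebra represents the initial and final object of $\nCalg_{h}$; your computation of $\underline{\Hom}(0,B)$ and $\underline{\Hom}(A,0)$ via \eqref{fqwfewfdwedewdwdq} and \cref{erigjeroferferfrefwerf} is exactly the justification the paper leaves implicit.
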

\begin{proof}
The zero algebra represents the inital and the final object of $\nCalg_{h}$.
\end{proof}

\begin{ex}\label{weoigowerferferf}
Let $A$ be any $C^{*}$-algebra. Then $C_{0}([0,\infty))\otimes A$ also represents the zero object in $\nCalg_{h}$.
\hB
\end{ex}

A morphism $f:B\to C$ in $\nCalg$ is called a {\em Schochet fibration} if the map
$f_{*}:\underline{\Hom}(A,B)\to \underline{\Hom}(A,C)$ is a Serre fibration of topological spaces for every $C^{*}$-algebra $A$  \cite{zicki}.

\begin{ex} \label{rgifqfqwef} If $i:Y\to X$ is a map of compact spaces which has the homotopy extension property, then the restriction map $i^{*}:C(X)\to C(Y)$ is a Schochet fibration
which in addition admits a cpc split.
\hB\end{ex}
A cartesian square \[\xymatrix{A\ar[r]\ar[d] &B \ar[d]^{f} \\D \ar[r]  &C } \]
is called {\em Schochet fibrant} if $f$ is a  Schochet fibration.
Note that a Schochet fibration is automatically surjective.
If $D=0$, then we say that
\[0\to A\to B\to C\to 0\] is a {\em Schochet exact} sequence.

\begin{ddd}\label{fuhquifhiuqwefeqwdewdewdcdfvca}
A functor $\nCalg\to \bC$ will be called {\em Schochet exact}
if  it sends Schochet fibrant cartesian squares to cartesian squares.
\end{ddd}
We will indicate Schochet exact functors by a super script as in  $\Fun^{Sch}$. 

\begin{rem}\label{fuhquifhiuqwefeqwdewdewdcdfvca1}
In contrast to the other notions of exactness introduced in \cref{wrogjpwrgrfrewf}.\ref{trokgperwgrefwerfrefew}, the notion of Schochet exactness is formulated in terms of squares instead of exact sequences.

If $\bC$ is pointed, then a reduced Schochet exact functor sends Schochet exact sequences to fibre sequences.
If $\bC$ is stable, then it is easy to see that the converse is also true. A functor which sends Schochet exact  sequences
to fibre sequences is reduced and Schochet exact, see \cite[Lem. 2.14]{KKG} for analogous statements for semiexact functors and squares. 
\hB
\end{rem}


\begin{rem}\label{werigowrgerfewrfwerfwerf} For the proof of  \cref{weokjgpwerferfwef}.\ref{eiojfgrgfreaedc3} below we need the   mapping cylinder construction. 
  The {\em mapping cylinder} of a map $f:B\to C$ of $C^{*}$-algebras is defined by the     Schochet fibrant and semisplit cartesian square 
\begin{equation}\label{grrefgrefrefrefwerf}\xymatrix{Z(f)\ar[r]\ar[d]^{h_{f}} &PC \ar[d]^{\ev_{0}} \\B \ar[r]^{f} &C } \ ,
\end{equation} where $PC$ is the path algebra    as in \eqref{vadfvasdvscsdcsdac}.
The maps 
 $h_{f}:Z(f)\to B$  and $\ev_{0}$ are   homotopy equivalences.
 We write elements in $Z(f)$ as pairs $(b,\gamma)$ with $b$ in $B$ and $\gamma$ in $PC$ such that $\gamma(0)=f(b)$. The map
$\tilde f: Z(f)\to C$ given by $(b,\gamma) \mapsto \gamma(1)$ is a Schochet fibration and also admits a cpc split $c\mapsto (0,\gamma_{c})$ with $\gamma_{c}(t):=tc$.
We further define the {\em mapping cone} of $f$ by $C(f):=\ker(\tilde f)$.
An element of $C(f) $ is thus a pair $(a,\gamma)$ of an element of $A$ and a path in $C$ with $f(a)=\gamma(0)$ and $\gamma(1)=0$.
 
 The sequence \begin{equation}\label{regegregefw}0\to C(f)\xrightarrow{\iota_{f}} Z(f)  \xrightarrow{\tilde f} C\to 0 
\end{equation}
 is Schochet- and semi-split exact.  
\hB
\end{rem}


Recall that an $\infty$-category is called {\em left-exact} if it admits all finite limits. A functor between left-exact $\infty$-categories is called {\em left-exact} if it preserves finite limits. We use the notation $\Fun^{\lex}$ in order to denote the full subcategory of left-exact functors.

\begin{prop}\label{weokjgpwerferfwef}\mbox{}
\begin{enumerate}
\item \label{eiojfgrgfreaedc} 
The $\infty$-category $\nCalg_{h}$ is left-exact.
\item \label{eiojfgrgfreaedc1} The functor $L_{h}$ sends Schochet fibrant cartesian squares to cartesian squares.
\item\label{elrkijgowlferfwffr9} The pull-back along $L_{h}$ induces for every left-exact $\infty$-category $\bD$ 
an equivalence  \begin{equation}\label{fdvrfdsfsfrewfcsdfvfdv}  L_{h}^{*}:\Fun^{\lex}(\nCalg_{h},\bD)\stackrel{\simeq}{\to}  \Fun^{h,Sch}(\nCalg,\bD)\ . 
\end{equation}\item\label{elrkijgowlferfwffr91}  The pull-back along the symmetric monoidal refinement of $L_{h}$ induces for every  symmetric monoidal left-exact $\infty$-category $\bD$ 
an equivalence  $$L_{h}^{*}:\Fun^{\lex}_{\otimes,\lax}(\nCalg_{h},\bD)\stackrel{\simeq}{\to}  \Fun^{h,Sch}_{\otimes,\lax}(\nCalg,\bD)\ .$$
\item   \label{eiojfgrgfreaedc3}   For  $?$ in $\{\min,\max\}$
the functor $-\otimes_{?} -:\nCalg_{h}\times \nCalg_{h}\to \nCalg_{h}$  is  bi left-exact.
 \end{enumerate}
\end{prop}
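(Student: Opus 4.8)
The plan is to treat the second assertion as the geometric core, then bootstrap everything else from it together with the explicit mapping-space model $\Map_{\nCalg_{h}}(E,-)\simeq \ell\underline{\Hom}(E,-)$ of \cref{erigjeroferferfrefwerf} and the fact recalled above that $\ell$ carries Serre fibrant cartesian squares to cartesian squares. For the second assertion I would use that a square in $\nCalg_{h}$ is cartesian precisely when applying $\Map_{\nCalg_{h}}(E,-)$ yields a cartesian square in $\Spc$ for every $C^{*}$-algebra $E$ (Yoneda detection of limits, legitimate by local smallness, \cref{jgiwortegrfrefrwefw}). Given a Schochet fibrant cartesian square with Schochet fibration $f\colon B\to C$ as right vertical map, the corepresentable functor $\Hom_{\nCalg}(E,-)$ preserves its limit and the point-norm compactly generated topology on the pullback is the subspace topology, so $\underline{\Hom}(E,-)$ sends it to a cartesian square of spaces whose right vertical map $f_{*}$ is a Serre fibration exactly because $f$ is a Schochet fibration. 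This square is thus Serre fibrant cartesian, and applying $\ell$ together with \cref{erigjeroferferfrefwerf} gives the required cartesian square of mapping spaces.

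For the first assertion I would verify that $\nCalg_{h}$ has a terminal object (the zero object) and all pullbacks, which by the standard criterion yields all finite limits. To build the pullback of a cospan $B\xrightarrow{f}C\xleftarrow{g}D$, replace $f$ by the mapping-cylinder data of \cref{werigowrgerfewrfwerfwerf}: the endpoint map $\tilde f\colon Z(f)\to C$ is a Schochet fibration, $h_{f}\colon Z(f)\to B$ is a homotopy equivalence, and $f\circ h_{f}$ is homotopic to $\tilde f$, so the cospan $Z(f)\to C\leftarrow D$ agrees in $\nCalg_{h}$ with the original one. The ordinary pullback $Z(f)\times_{C}D$ then sits in a Schochet fibrant cartesian square (its right vertical map is $\tilde f$), so by the second assertion $L_{h}(Z(f)\times_{C}D)$ is the desired pullback.

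The third assertion I would obtain by restricting the localization equivalence \eqref{weqfwdqedeqdqed} to the relevant full subcategories: it suffices to show that, for $\bD$ left-exact, a functor $F\colon\nCalg_{h}\to\bD$ is left-exact iff $F\circ L_{h}$ is Schochet exact (with $F(0)$ terminal, matching reducedness). The forward implication is immediate from the second assertion and preservation of finite limits by $F$; conversely, preservation of the terminal object corresponds to reducedness, and since by the first assertion every pullback in $\nCalg_{h}$ is the image of a Schochet fibrant cartesian square, Schochet exactness of $F\circ L_{h}$ forces $F$ to preserve pullbacks, hence all finite limits. The fourth assertion is then formal: left-exactness and Schochet exactness are conditions on underlying functors, so restricting the symmetric monoidal equivalence \eqref{vacdqwefwfd} along the forgetful functors to these full subcategories yields the claim in both the symmetric monoidal and lax variants.

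Finally, for the fifth assertion, symmetry of $\otimes_{?}$ reduces the task to showing $A\otimes_{?}-\colon\nCalg_{h}\to\nCalg_{h}$ is left-exact for each $A$. It preserves the terminal object since $A\otimes_{?}0\cong 0$. For pullbacks I would use that $A\otimes_{?}-$ preserves semisplit cartesian squares (recalled in \cref{qrfuhqeiurfqwfdewfdqwef9}) and path algebras, since $C(\Delta^{1})$ is nuclear so that $A\otimes_{?}PC\cong P(A\otimes_{?}C)$; consequently $A\otimes_{?}-$ carries the mapping-cylinder square and the Schochet fibrant pullback square of the first assertion to squares of the same kind, sending $\tilde f$ to the endpoint map $\widetilde{A\otimes_{?}f}$, which is again a Schochet fibration. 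By the second assertion the image is again a pullback, so $A\otimes_{?}-$ preserves pullbacks and is left-exact. I expect the main obstacle to lie in the clean bookkeeping behind the second assertion and, for the fifth, in the fact that the tensor products preserve exactly the semisplit cartesian squares used to generate the limits, a genuinely $C^{*}$-algebraic input quoted from \cref{qrfuhqeiurfqwfdewfdqwef9}, so that the soft homotopy-theoretic argument through $\ell$ and the endpoint fibrations can be applied.
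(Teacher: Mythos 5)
Your proposal is correct; for Assertions \ref{elrkijgowlferfwffr9}, \ref{elrkijgowlferfwffr91} and \ref{eiojfgrgfreaedc3} it essentially coincides with the paper's proof, but for the first two assertions it takes a genuinely different route. The paper obtains Assertions \ref{eiojfgrgfreaedc} and \ref{eiojfgrgfreaedc1} simultaneously from the fact that the homotopy equivalences and the Schochet fibrations exhibit $\nCalg$ as a category of fibrant objects (citing Uuye for the verifications and Cisinski's localization theorem for the conclusions). You instead prove Assertion \ref{eiojfgrgfreaedc1} directly from the mapping-space model of \cref{erigjeroferferfrefwerf} --- Yoneda detection of limits, the fact that $\underline{\Hom}(E,-)$ carries Schochet fibrant cartesian squares to Serre fibrant ones, and preservation of such squares by $\ell$ --- which the paper records only as an alternative argument for \ref{eiojfgrgfreaedc1}; you then bootstrap Assertion \ref{eiojfgrgfreaedc} from it via the mapping-cylinder replacement, a derivation the paper does not perform. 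Your route is more self-contained, at the cost of the point-set verification that the compactly generated topology on $\underline{\Hom}(E,B\times_{C}D)$ agrees with that of the pullback of the $\underline{\Hom}$-spaces; this rests on $C(X)\otimes-$ preserving exact cartesian squares (legitimate since $C(X)$ is nuclear and Schochet fibrations are surjective), which is precisely the $C^{*}$-algebraic input that the fibration-category machinery packages away. One step you should make explicit in the argument for Assertion \ref{eiojfgrgfreaedc}: producing a pullback for an \emph{arbitrary} cospan in $\nCalg_{h}$ requires first lifting the cospan to $\nCalg$, which is legitimate by \cref{erigjeroferferfrefwerf} since morphisms in $\nCalg_{h}$ are homotopy classes of honest homomorphisms; the paper spells this lifting out in its proof of Assertion \ref{elrkijgowlferfwffr9}, while your text silently assumes it.
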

\begin{proof}
We let $W$ be the subcategory of  homotopy equivalences in $\nCalg$ and $F$ be the subcategory of  Schochet fibrations. Then
$(\nCalg,W,F)$ is a category of fibrant objects in the sense of \cite[Def. 7.4.12 and Def. 7.5.7]{Cisinski:2017}.
The corresponding verifications are due to  \cite[Thm. 2.19]{Uuye:2010aa}. 
The main point is to see that the pull-back of a Schochet fibration or of a homotopy equivalence is again a Schochet fibration or a homotopy equivalence.

The Assertions \ref{eiojfgrgfreaedc} and \ref{eiojfgrgfreaedc1}  then follow from \cite[Prop. 7.5.6]{Cisinski:2017}.
For  \ref{eiojfgrgfreaedc1}, in view of \cref{erigjeroferferfrefwerf} one could argue alternatively  that $\underline{\Hom}(A,-)$ sends
Schochet fibrant cartesian squares to Serre fibrant cartesian squares, and that $\ell$ sends  Serre fibrant cartesian squares
 to cartesian squares.

 
 We now show Assertion \ref{elrkijgowlferfwffr9}. 
  By Assertion \ref{eiojfgrgfreaedc1} it is clear that $L_{h}^{*}$ in \eqref{fdvrfdsfsfrewfcsdfvfdv} sends left-exact functors to Schochet exact functors. Since it is the restriction of the equivalence in \eqref{weqfwdqedeqdqed} it is fully faithful.
  We argue that it  also essentially surjective. Let $F$ be in $\Fun^{h,Sch}(\nCalg,\bD)$. Then by  \eqref{weqfwdqedeqdqed} there exists $\hat F$ in $ \Fun(\nCalg_{h},\bD)$ such that $L_{h}^{*}\hat F\simeq F$.
  We must show that $\hat F$ is left-exact.  Since it is reduced it suffices to show that it preserves cartesian squares.
 
In view of \cref{erigjeroferferfrefwerf} 
any diagram   of the shape  \[\xymatrix{ & B\ar[d]^{f} \\ D\ar[r]  & C} \] in $\nCalg_{h}$
 is equivalent to the image under $L_{h}$ of  a diagram of this shape in $\nCalg$. 
 We can replace $f$ by 
  $\tilde f:Z(f)\to C$ without changing  the image of the diagram under $L_{h}$ up to equivalence. 
 We now complete the diagram to a cartesian square
 \begin{equation}\label{vfdvsrefvsvwergtwerg} \xymatrix{ P\ar[d]^{f'}\ar[r] & Z(f)\ar[d]^{\tilde f} \\ D\ar[r]  & C}   \end{equation}in $\nCalg$.
 It is Schochet fibrant and  semisplit.  Its image under $L_{h}$ is then a cartesian square in $\nCalg_{h}$, and every cartesian square in $\nCalg_{h}$  is equivalent to one of this form. The image under $\hat F$ of $L_{h}$ of the latter square is equivalent to the image under $F$ of the original Schochet fibrant cartesian square and hence a cartesian square.

Assertion \ref{elrkijgowlferfwffr91}  follows from a combination of Assertion  \ref{elrkijgowlferfwffr9} and the equivalence \eqref{vacdqwefwfd}.

In the following $\otimes$ stands for $\otimes_{\max}$ or $\otimes_{\min}$.
Let $A$ be a $C^{*}$-algebra. It  follows from \cref{weoijgowefgrefwrefrf} that the endofunctor $A\otimes -:\nCalg_{h}\to \nCalg_{h}$ is reduced and preserves finite products. So it suffices to show that it preserves  cartesian squares.
As seen above every cartesian square in $\nCalg_{h}$ is equivalent to the image under $L_{h}$ of a  square of the form \eqref{vfdvsrefvsvwergtwerg}.
 
Using the exactness of $A\otimes_{\max}-$ or the semi-exactness of $A\otimes_{\min}-$ we see that
 \begin{equation}\label{gwerfefewrfwerfwerfw}\xymatrix{ A\otimes P\ar[d]\ar[r]& A\otimes Z(f)\ar[d]^{\id_{A}\otimes \tilde f} \\ A\otimes D\ar[r]  & A\otimes C} 
\end{equation} 
 is again cartesian.  By analysing the application of   $A\otimes- $ to \eqref{grrefgrefrefrefwerf} we can  obtain an 
 isomorphism 
 \[ \xymatrix{A\otimes Z(f)\ar[r]^{\cong} \ar[d]^{\id_{A}\otimes \tilde f} & Z(\id_{A}\otimes f) \ar[d]^{\widetilde{\id_{A}\otimes f}} \\ A\otimes C\ar@{=}[r] &A\otimes C }\ .\]
 We conclude
 that the square \eqref{gwerfefewrfwerfwerfw} is again a Schochet fibrant cartesian square, and  that its image under $L_{h}$ is a cartesian square in $\nCalg_{h}$.
  \end{proof}


%

\begin{ex}\label{rgkopwergerfwrfwrf}
The functor $L_{h}$   sends the sequence \eqref{regegregefw}  to a fibre sequence. Since the square 
\[\xymatrix{Z(f)\ar[r]^{\tilde f} \ar[d]^{h_{f}} &C \ar@{=}[d] \\B \ar[r]^{f} &C }\]
commutes up to a preferred homotopy it provides an equivalence between $L_{h}(\tilde f)$ and $L_{h}(f)$.
In particular, the mapping cone $C(f)$    represents the fibre of $L_{h}(f)$.\hB
\end{ex}

\begin{ex}\label{qerigqerfewfq9}
A pointed  left-exact $\infty$-category $\bC$ has a loop endofunctor
$\Omega:\bC\to \bC$ For an object $C$ of $\bC$ the object $\Omega C$ is  determined by 
the pull-back \begin{equation}\label{sfdvfsdvfdvsfdvsvfdvftregrtgfv}\xymatrix{\Omega C\ar[r]\ar[d] & 0\ar[d] \\ 0\ar[r] &C } \ .
\end{equation}
The category  $\nCalg$ has the suspension endofunctor \[S:=C_{0}(\R)\otimes -:\nCalg\to \nCalg\ .\]
The square of restriction maps
\[\xymatrix{C_{0}(\R)\ar[r]\ar[d] & \ar[d]^{\ev_{0}} C_{0}((-\infty,0])\\C_{0}([0,\infty)) \ar[r]^{\ev_{0}}  & \C}\] 
 is a Schochet fibrant semisplit cartesian square by \cref{rgifqfqwef}.
 Applying $L_{h}( - \otimes A)$ we get a cartesian square whose lower left and upper right corners are zero objects by \cref{weoigowerferferf}. We therefore obtain an equivalence of endofunctors  \begin{equation}\label{terhertgrtgetrgetrgt}L_{h} \circ S\simeq \Omega\circ L_{h}:\nCalg_{h}\to \nCalg_{h}\ .
\end{equation}
 \hB 
 \end{ex}

 \begin{ex}\label{wtghoktopgerwfrefwrefw}In this example we consider the Puppe sequence associated to a  morphism $f:A\to B$.
 The latter  gives rise to the mapping cone sequence \eqref{regegregefw}.
Since $\nCalg_{h}$  is left-exact we can form the following diagram of pull-back squares
  \[\xymatrix{\Omega^{2}  L_{h}(B))\ar[d]\ar[r]^{\Omega L_{h}(\partial_{f})} &\Omega L_{h}(C(f))\ar[d]^{\Omega L_{h}(i_{f})} \ar[r] &0\ar[d]&\\0\ar[r]&\Omega L_{h}(A)\ar[d]\ar[r]^{\Omega L_{h}(f) }&\Omega L_{h}(B)\ar[r]\ar[d]^{\partial_{f}}&0\ar[d]\\&0\ar[r]&L_{h}(C(f))\ar[d]\ar[r]^{L_{h}(i_{f})}&L_{h}(A)\ar[d]^{L_{h}(f)}\\&&0\ar[r]&L_{h}(B)}\] in 
   $\nCalg_{h}$. The lower square is cartesian by \cref{rgkopwergerfwrfwrf}.
    We further use the homotopy invariance of $L_{h}$ applied to the homotopy equivalence $h_{f}$ in order to replace $L_{h}(Z(f))$ by $L_{h}(A)$, and
  we use \eqref{sfdvfsdvfdvsfdvsvfdvftregrtgfv} in order to identify the corners with iterated loops of the objects. E.g. for  the corner
  $\Omega L_{h}(A)$ just observe that the horizontal composition of the two middle squares is again cartesian.
By \eqref{terhertgrtgetrgetrgt} we can express looping in terms of suspension. The sequence
\begin{equation}\label{oijiosfeqrfgerfefsfdfgsgsdfgdsfg}
\dots \to L_{h}(S^{2}(B))\to L_{h}(S(C(f)))\to L_{h}(S(A))\to L_{h}(S(B))\to L_{h}(C(f))\to L_{h}(A)\to L_{h}(B)
\end{equation} in  $\nCalg_{h}$
is called the {\em Puppe sequence} associated to $f$. Each consecutive pair of morphisms is a part of a fibre sequence in $\nCalg_{h}$.
\hB
  \end{ex}

\begin{rem}\label{qertgoqergoijeqwgwegwergw}
If $\bC$ is some $\infty$-category with a set of morphisms $W_{\bC}$, $\bD$ is a full subcategory, and if we set $\bW_{\bD}:=W\cap \bD$, then we have a commutative square \[\xymatrix{\bD\ar[r]\ar[d] &\bC \ar[d] \\ \bD[W_{\bD}^{-1}]\ar[r] &\bC[W_{\bC}^{-1}] } \ ,\]
where the vertical  functors are Dwyer Kan localizations. In general the lower horizontal map is not fully faithful. 
But this is true if
 we specialize to the case  where $\bC=\nCalg$, $W_{\bC}$ are the homotopy equivalences, and $\bD=\nCalg_{\sepa}$.\hB \end{rem} We 
 write $L_{\sepa,h}:\nCalg_{\sepa}\to \nCalg_{\sepa,h}$ for the corresponding Dwyer-Kan localization.
 Thus  for any $\infty$-category $\bD$ pull-back along $L_{\sepa,h}$ induces an equivalence
 \[L_{\sepa,h}^{*}:\Fun(\nCalg_{\sepa,h},\bD)\stackrel{\simeq}{\to} \Fun^{h}(\nCalg_{\sepa},\bD)\ .\]
Note that $\nCalg_{\sepa,h}$ is essentially small.
Using 
 \cref{erigjeroferferfrefwerf}, \cref{weokjgpwerferfwef} and its separable version and the fact that the tensor products preserve separable algebras  we get the following statements.

 \begin{kor}\label{weogpwergerfgwerfwerf9}\mbox{}\begin{enumerate}
 \item \label{powekgpowegref} We have a commutative square 
 \[\xymatrix{\nCalg_{\sepa}\ar[r]\ar[d]^{L_{\sepa, h}} &\nCalg \ar[d]^{L_{h}} \\ \nCalg_{\sepa,h} \ar[r] &\nCalg_{h} } \ ,\] whose vertical arrows are Dwyer-Kan localizations and whose horizontal arrows are fully faithful.
\item The square in \ref{powekgpowegref}.   has a refinement to a diagram of symmetric monoidal categories and symmetric monoidal functors  for $\otimes_{?}$ with $?\in \{\min,\max\}$ such that $L_{\sepa,h}$ becomes a symmetric monoidal localization.
\item $\nCalg_{\sepa,h}$ is pointed and $L_{\sepa,h}$ is reduced.
\item \label{egljkeogpweferfwrefwref} $\nCalg_{\sepa,h}$ admits  finite products and countable coproducts, and $L_{\sepa,h}$ preserves them.
\item $\nCalg_{\sepa,h}$ is left-exact and $L_{\sepa,h}$ sends Schochet fibrant cartesian squares of separable algebras to cartesian squares.
\item\label{elrkijgowlferfwffr9sss} The pull-back along $L_{\sepa,h}$ induces for every left-exact $\infty$-category $\bD$ 
an equivalence  \begin{equation}\label{fdvrfdsfsfrewfcsdfvfdvsss}  L_{\sepa,h}^{*}:\Fun^{\lex}(\nCalg_{\sepa,h},\bD)\stackrel{\simeq}{\to}  \Fun^{h,Sch}(\nCalg_{\sepa},\bD)\ . 
\end{equation}\item\label{elrkijgowlferfwffr91sss}  The pull-back along the symmetric monoidal refinement of $L_{\sepa,h}$ induces for every  symmetric monoidal left-exact $\infty$-category $\bD$ 
an equivalence  $$L_{\sepa,h}^{*}:\Fun^{\lex}_{\otimes/\lax}(\nCalg_{\sepa,h},\bD)\stackrel{\simeq}{\to}  \Fun^{h,Sch}_{\otimes/\lax}(\nCalg_{\sepa},\bD)\ .$$
\item For $?\in \{\min,\max\}$ the functor $-\otimes_{} -:\nCalg_{\sepa,h}\times \nCalg_{\sepa,h}\to \nCalg_{\sepa,h}$  is  bi left-exact.
\end{enumerate}
 \end{kor}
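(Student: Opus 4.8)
The plan is to obtain every assertion as the separable analogue of a statement already proved for $\nCalg_h$, and to transport each proof once I have checked that the constructions it relies on preserve separability. The only genuinely new ingredients will be the full faithfulness of the comparison functor and the two size restrictions forced by separability.

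First I would establish the full faithfulness claimed in \ref{powekgpowegref}. The presentation of the localization in \cref{erjgiowergrwefwerfwref} goes through verbatim for separable algebras, because for separable $B$ the path algebra $PB=C(\Delta^{1})\otimes B$ is again separable; this yields the separable version of the mapping-space formula \cref{erigjeroferferfrefwerf}, namely $\Map_{\nCalg_{\sepa,h}}(A,B)\simeq \ell\underline{\Hom}(A,B)$ for separable $A,B$. Since $\underline{\Hom}(A,B)$ depends only on the pair $(A,B)$ and not on the ambient category, I would conclude that the comparison functor $\nCalg_{\sepa,h}\to\nCalg_{h}$ induces equivalences on all mapping spaces and is therefore fully faithful, the top inclusion being fully faithful by definition. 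The commutativity of the square and the Dwyer-Kan property of its vertical arrows are immediate from the definitions of $L_{h}$ and $L_{\sepa,h}$. This supplies the full faithfulness asserted without proof in \cref{qertgoqergoijeqwgwegwergw}.

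Next I would treat the monoidal, pointedness and (co)product assertions. For the second assertion I would note that $\otimes_{?}$ restricts to $\nCalg_{\sepa}$ because $A\otimes_{?}B$ is separable whenever $A,B$ are, so that the argument giving the monoidal refinement of $L_{h}$, namely the continuity of $A\otimes_{?}-$ and its preservation of homotopy equivalences, applies unchanged. Pointedness of $\nCalg_{\sepa,h}$ and reducedness of $L_{\sepa,h}$ follow because the zero algebra is separable. For \ref{egljkeogpweferfwrefwref} I would repeat the homeomorphism computation of \cref{weoijgowefgrefwrefrf}; this is where the size bounds enter, and I expect to argue that a finite product of separable algebras is separable while a countable one need not be, and that a countable free product of separable algebras is separable while an uncountable coproduct need not be, precisely accounting for the restriction to finite products and countable coproducts.

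Finally I would transport the left-exactness package (the fifth assertion through the last one). The key step, which I expect to be the main obstacle, is to verify that the category of fibrant objects $(\nCalg,W,F)$ from the proof of \cref{weokjgpwerferfwef} restricts to one on $\nCalg_{\sepa}$: I must check that fibre products of separable algebras along Schochet fibrations are separable, being closed subalgebras of products of separables, and that the path algebras, mapping cylinders $Z(f)$ and cones $C(f)$ remain separable. This matters because the essential-surjectivity half of the universal property is proved by replacing an arbitrary cartesian square with the image of a Schochet-fibrant semisplit square built from a mapping cylinder, and this replacement must be carried out inside $\nCalg_{\sepa}$. Granting this, \cite[Prop. 7.5.6]{Cisinski:2017} yields left-exactness and the Schochet-square statement, the mapping-cylinder argument of \cref{weokjgpwerferfwef}.\ref{elrkijgowlferfwffr9} transports verbatim to give the universal property \ref{elrkijgowlferfwffr9sss}, its combination with the second assertion gives \ref{elrkijgowlferfwffr91sss}, and the final bi left-exactness statement follows by repeating the computation of \cref{weokjgpwerferfwef}.\ref{eiojfgrgfreaedc3} using the exactness of $A\otimes_{\max}-$ and the semiexactness of $A\otimes_{\min}-$, both of which preserve separability.
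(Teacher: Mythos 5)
Your proposal is correct and follows essentially the same route as the paper, which proves the corollary in one sentence by invoking the separable versions of \cref{erigjeroferferfrefwerf} and \cref{weokjgpwerferfwef} together with the fact that the tensor products preserve separability; your full-faithfulness argument via the shared mapping-space formula $\ell\underline{\Hom}(A,B)$ is exactly the intended justification of \cref{qertgoqergoijeqwgwegwergw}, and your separability checks (path algebras, cylinders, pullbacks along Schochet fibrations, and the finite-product/countable-coproduct size analysis) are the details the paper leaves implicit.
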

 
In order to ensure separability of the coproducts, in \cref{weogpwergerfgwerfwerf9}.\ref{egljkeogpweferfwrefwref} 
we must restrict to countable families.
%

\section{Stabilization}\label{lhK}

In this section we consider the Dwyer-Kan   localization of the $\infty$-category $\nCalg_{h}$ from \eqref{vfdvsdfvwr3fes} at the set of left-upper corner inclusions
$A\to K\otimes A$ for all $C^{*}$-algebras $A$, where $K$ denotes the algebra of compact operators on a separable Hilbert space. It turns out that this localization  is a left Bousfield localization generated by the tensor  idempotent  $K$. This fact makes is easy to understand the resulting category $L_{K}\nCalg_{h}$. Its main new feature is semi-additivity.

We start with recalling the $\infty$-categorical background. 
  Let $\bC$ be an $\infty$-category with an endofunctor $L:\bC\to \bC$ and a natural transformation 
$\alpha:\id_{\bC}\to L$. If for every object $C$ the morphisms \[\alpha_{L(C)}, L(\alpha_{C}):L(C)\to L(L(C)) \] are equivalences, then $L:C\to L(C)$ is the left-adjoint of a {\em left Bousfield localization} with unit $\alpha$ (see \cite[Prop. 5.2.7.4]{htt}). It is also a Dwyer-Kan localization 
at the set of morphisms $W_{L}:=\{\alpha_{C}\mid C\in \bC\}$.

 Let $\bC$ be left-exact $\infty$-category with a set of morphisms $W$.
 We say that the Dwyer-Kan localization $L:\bC\to \bC[W^{-1}]$   is {\em left-exact}, if $\bC[W^{-1}]$ is left-exact and the functor $L$ is left-exact. In this case, in addition to \eqref{vfdsviuhsviisvfdvsvsdv},  we have an equivalence
\[L^{*}:\Fun^{\lex}(\bC[W^{-1}],\bD)\stackrel{\simeq}{\to} \Fun^{\lex,W}(\bC,\bD)
\] for any left-exact $\infty$-category $\bD$.

In the present section we encounter a smashing  left Bousfield localization which is generated by an 
  idempotent object \cite[Sec. 4.8.2]{HA}.
  An {\em idempotent  object} in a   symmetric monoidal $\infty$-category $\bC$ with tensor unit $\beins$  
 is an
object $(\epsilon:\beins \to A)$ in the slice $\bC_{\beins/}$ such that the map
$\epsilon\otimes \id_{A}:A\simeq \beins\otimes A\to A\otimes A$ is an equivalence.  The inverse of this map is the multiplication of an essentially unique refinement of this object to a commutative algebra object in $ \bC$.

The functor $L_{A}:=A\otimes -:\bC\to  L_{A}\bC$ together with the 
unit $\id\xrightarrow{\epsilon\otimes \id_{-}} L_{A}$ satisfies the conditions above ensuring that
$L_{A}:\bC\to L_{A}\bC$ is
 the left-adjoint of a left Bousfield localization.
The localization
$L_{A}:\bC\to L_{A}\bC$ is also the Dwyer Kan localization inverting the set $W_{A}$ of morphisms
$B\xrightarrow{\epsilon\otimes \id_{B}}A\otimes B$ for all $B$. By associativity of the tensor product the set  $W_{A}$ is preserved by the functor $-\otimes C$ for any object $C$ of $\bC$. It follows that the localization $L_{A}$ has a  symmetric monoidal refinement.

If $\bC$ has arbitrary coproducts, then by general properties of a left Bousfield localization so does $L_{A}\bC$. Given a family $(B_{i})_{i\in I}$ in $\bC$ we have
\[\coprod^{L_{A}\bC}_{i\in I} L_{A}(B_{i})\simeq L_{A}(\coprod_{i\in I}^{\bC} B_{i})\ .\]
Finally, if $\bC$ is left-exact and $\otimes$ is bi-left exact, then  $L_{A}$ is a  left-exact  localization, and the induced tensor product
$-\otimes-:L_{A}\bC\times L_{A}\bC\to L_{A}\bC$ is bi-left exact.

We will apply the above constructions to  the left-exact symmetric monoidal $\infty$-category $\nCalg_{h}$ with one of the bi-left exact structures $\otimes_{\max}$ or $\otimes_{\min}$  and the tensor unit $\C$.

Recall that $K$ is the algebra of compact operators on a separable Hilbert space $H$.
Let $e$ be a minimal non-zero projection in $K$ and $\epsilon:\C\to K$ be the homomorphism $\lambda\mapsto \lambda e$.  \begin{lem}
$(\epsilon:\C\to K)$ is an idempotent object in $\nCalg_{h}$.
\end{lem}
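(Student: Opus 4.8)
The plan is to unwind the definition and reduce the claim to a single concrete homotopy equivalence of $C^{*}$-algebras. By definition of an idempotent object I must show that $\epsilon\otimes\id_{K}\colon \C\otimes K\to K\otimes K$ is an equivalence in $\nCalg_{h}$. Under the canonical identification $\C\otimes K\cong K$ this is the homomorphism $\phi\colon K\to K\otimes K$, $k\mapsto e\otimes k$. By \cref{erjgiowergrwefwerfwref} the $\infty$-category $\nCalg_{h}$ is presented by the topologically enriched category $\nCalg$, so by \cref{erigjeroferferfrefwerf} a morphism is an equivalence in $\nCalg_{h}$ exactly when it becomes invertible in the homotopy category of $C^{*}$-algebras, i.e. when it is a homotopy equivalence. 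Hence it suffices to produce a homotopy inverse of $\phi$.

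Next I would make $\phi$ completely explicit. Writing $K=K(H)$ and $e=P_{\xi_{0}}$ for a unit vector $\xi_{0}\in H$, and identifying $K\otimes K\cong K(H\otimes H)$, one checks that $\phi=\mathrm{Ad}(V)$ for the isometry $V\colon H\to H\otimes H$, $\eta\mapsto\xi_{0}\otimes\eta$, whose cokernel $(\C\xi_{0})^{\perp}\otimes H$ is infinite-dimensional. Choosing a unitary $w\colon H\otimes H\to H$ gives an \emph{isomorphism} $r:=\mathrm{Ad}(w)\colon K\otimes K\xrightarrow{\cong}K$, and $r\circ\phi=\mathrm{Ad}(s)$ with $s:=wV\colon H\to H$ a proper isometry with infinite-dimensional cokernel. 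Since $r$ is an honest isomorphism it is enough to prove the \emph{one-sided} statement $\mathrm{Ad}(s)\simeq\id_{K}$ in $\underline{\Hom}(K,K)$: then $[\phi]=[r]^{-1}$ is invertible in the homotopy category, so $\phi$ is a homotopy equivalence with homotopy inverse $r$, and second-sidedness comes for free.

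The heart of the matter is therefore the homotopy $\mathrm{Ad}(s)\simeq\id_{K}$, which is exactly the classical fact that the upper-left corner inclusion of the \emph{stable} algebra $K$ into itself is a homotopy equivalence. I would prove it by a rotation argument that exploits the infinite-dimensional cokernel of $s$: construct a path $t\mapsto s_{t}$ of isometries of $H$, continuous in the strict (equivalently, after applying $\mathrm{Ad}$ to $K$, the point-norm) topology, from $s$ to $\id_{H}$, by rotating finite permutation-type unitaries so as to push the deficiency $1-ss^{*}$ off to infinity. Conjugation by a strictly continuous path of isometries is continuous in the point-norm topology underlying $\underline{\Hom}(K,K)$, so $t\mapsto\mathrm{Ad}(s_{t})$ assembles to a genuine path in $\underline{\Hom}(K,K)$ from $\mathrm{Ad}(s)$ to $\id_{K}$, i.e. a homotopy.

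I expect this last step to be the main obstacle, and it is the only genuinely analytic input. One must take care that the rotation is realized by a path of isometries continuous for the \emph{strict/point-norm} topology rather than the norm topology: in norm the isometries split into components indexed by the dimension of the cokernel (nearby isometries have Murray--von Neumann equivalent, hence equidimensional, cokernel projections), so no norm path joins the proper isometry $s$ to a unitary. It is precisely the infinite-dimensional cokernel provided by the stability $K\cong K\otimes K$ that allows the deficiency to be absorbed along a strongly continuous path. Everything else — the reduction to $\mathrm{Ad}(s)$, the freeness of the second-sided inverse, and the identification of $\nCalg_{h}$-equivalences with homotopy equivalences — is formal given \cref{erjgiowergrwefwerfwref} and \cref{erigjeroferferfrefwerf}.
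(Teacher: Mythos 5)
You are correct, and your route is essentially the paper's own: both arguments come down to conjugating by a strongly continuous path of isometries joining the corner isometry to a unitary, which exists precisely because the cokernel is infinite-dimensional — the paper writes such a path down explicitly (the dilation-type isometries $U_{t}$, giving a point-norm continuous homotopy from $\epsilon\otimes\id_{K}$ directly to an isomorphism $K\to K\otimes K$), while you first compose with the isomorphism $r=\mathrm{Ad}(w)$ and deform $\mathrm{Ad}(s)$ to $\id_{K}$. The one imprecision is your insistence on \emph{strict} continuity and the claimed equivalence with point-norm continuity of $\mathrm{Ad}(s_{t})$: strong continuity of the bounded path $s_{t}$ already suffices, since for compact $k$ one has $\|(s_{t}-s_{t_{0}})k\|\to 0$ by finite-rank approximation and $\|k(s_{t}^{*}-s_{t_{0}}^{*})\|=\|(s_{t}-s_{t_{0}})k^{*}\|\to 0$; this matters because a permutation-rotation path of the kind you sketch is in general only strongly, not strictly, continuous at its endpoint (the adjoints need not converge there), whereas a dilation path like the paper's is.
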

\begin{proof}
For completeness of the presentation we add an argument for this  well-known  fact  from  $C^{*}$-algebra theory.
 For $t$ in $[0,1]$ we define the isometry
 $U_{t}:L^{2}((-\infty,0])\to L^{2}((-\infty,1])$ by 
\[U_{t}(f)(x):=\left\{\begin{array}{cc}f(x)&x\in (-\infty,-t)\\ \frac{1}{\sqrt{2}} f(\frac{x-t}{2})&x\in (-t,t)\\0&x\in [t,1]\end{array} \right.\ .\]
 Then $t\mapsto U_{t}$ is strongly continuous and $U_{1}$ is unitary.
  
 We now construct a  square 
 \[\xymatrix{H\ar[r]^{e\otimes \id_{H}}\ar[d]^{v}&H\otimes H\ar[d]^{w}\\L^{2}((-\infty,0])\ar[r]^{U_{0}}&L^{2}((-\infty,1])}\]  of isometric maps between Hilbert spaces. For 
 $v$ we choose  any unitary  isomorphism. In order to construct $w$ we 
  choose an isomorphism $\im(e)\cong \C$ and
 a unitary  isomorphism $v':  \im(e)^{\perp}\otimes H\to L^{2}([0,1])$. Then we get the isomorhism
 $H\otimes H\cong \im(e)\otimes H\oplus \im(e)^{\perp}\otimes H\cong H\oplus   \im(e)^{\perp}\otimes H$.
 We then define $w_{|H}:= U_{0}\circ v$ and $ w_{|\im(e)^{\perp}\otimes H}:=v'$

 Then $t\mapsto \phi_{t}:=w^{*}U_{t}v (-)v^{*}U_{t}^{*}w:K\to K\otimes K$ is a point-norm continuous homotopy from
 $\epsilon\otimes \id_{K}$ to an isomorphism.
  \end{proof}
 
 For every $C^{*}$-algebra $A$ the map
  \begin{equation}\label{oiwjveoivjowevsvdfvsfdv}\kappa_{A}:A\simeq \C\otimes A\xrightarrow{\epsilon\otimes \id_{A}}  K\otimes A
\end{equation} 
 is a {\em left upper corner inclusion}.

 We let $L_{K}\nCalg_{h}$ denote the image of the functor $K\otimes -$.
 Since we have an isomorphism $K\otimes K\cong K$ in $\nCalg$, it consists precisely of the objects which are represented by {\em $K$-stable $C^{*}$-algebras}, i.e. algebras $A$ satisfying $A\cong K\otimes A$ (note that this isomorphism is not related with the left upper corner inclusion).
 
 \begin{ddd}
 We define the functor
 \[L_{K}:\nCalg_{k}\to L_{K}\nCalg_{h}\ , \quad A\mapsto K\otimes A\ .\]
 and the natural transformation
 $\kappa:\id\to L_{K}$ with components $(\kappa_{A})_{A}$.   \end{ddd}
 Note that $L_{K}\nCalg_{h}$ is locally small by \cref{jgiwortegrfrefrwefw}.
%
%
   
 We define the functor \begin{equation}\label{qwefdwedqdwed}L_{h,K}: \:\:\: \nCalg\xrightarrow{L_{h}} \nCalg_{h}\xrightarrow{L_{K}} L_{K}\nCalg_{h}
\ .\end{equation}

 \begin{kor}\label{wreogkpwegrewferf}\mbox{}
 \begin{enumerate}
 \item\label{wegergoijiowerfgwerferfrewfrfwefrf} 
 The functor $L_{K}$ is the left-adjoint of a left Boufield localization 
 \[L_{K}:\nCalg_{h}\leftrightarrows L_{K}\nCalg_{h}:\incl\] unit $\kappa$.    \item \label{wkogpwtgwerfwrefwr} The localization  $L_{K}$ is   left-exact.
 \item \label{wetokgpwerferwfwref}
 $L_{K}\nCalg_{h}$ admits arbitrary small coproducts and $L_{K}$ preserves them. 
  \item  For $\otimes_{?}$ with $?\in \{\min,\max\}$ the localization $L_{K} $ has a symmetric monoidal refinement.
 
 \item \label{ierjogoergegsegs9} The functor $-\otimes_{?}-:L_{K}\nCalg_{h}\times L_{K}\nCalg_{h}\to L_{K}\nCalg_{h}$ is bi-left exact.
 \end{enumerate}
  \end{kor}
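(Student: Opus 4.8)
The plan is to instantiate, assertion by assertion, the general facts about idempotent objects recalled in the preamble of this section, taking $\bC=\nCalg_h$, tensor unit $\C$, and the idempotent object $(\epsilon\colon\C\to K)$ furnished by the preceding lemma. Every structural hypothesis required has already been verified in \cref{lh}, so the corollary is a direct application of the recalled machinery rather than new work. Throughout one runs the argument in parallel for $?\in\{\min,\max\}$, each time confirming that the relevant input holds for both choices of tensor product.

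For \cref{wreogkpwegrewferf}.\ref{wegergoijiowerfgwerferfrewfrfwefrf}, the idempotency of $(\epsilon\colon\C\to K)$ in $\nCalg_h$ is precisely the hypothesis guaranteeing that $L_K=K\otimes_?-$, with unit $\kappa=\epsilon\otimes\id_{-}$, is the left-adjoint of a left Bousfield localization; its essential image is the full subcategory of $K$-stable algebras, as observed before the definition of $L_K$. For \cref{wreogkpwegrewferf}.\ref{wkogpwtgwerfwrefwr}, I would feed into the general criterion the left-exactness of $\nCalg_h$ (\cref{weokjgpwerferfwef}.\ref{eiojfgrgfreaedc}) and the bi-left-exactness of $\otimes_?$ (\cref{weokjgpwerferfwef}.\ref{eiojfgrgfreaedc3}), which together yield that $L_K$ is a left-exact localization. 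For \cref{wreogkpwegrewferf}.\ref{wetokgpwerferwfwref}, I would combine the existence of arbitrary small coproducts in $\nCalg_h$ (\cref{weoijgowefgrefwrefrf}) with the recalled formula $\coprod^{L_K\nCalg_h}_{i} L_K(B_i)\simeq L_K(\coprod^{\nCalg_h}_{i} B_i)$ for coproducts in a left Bousfield localization.

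The symmetric monoidal refinement (the fourth assertion) follows from associativity of $\otimes_?$: the inverted set $W_K$ of morphisms $B\xrightarrow{\epsilon\otimes\id_B}K\otimes_? B$ is preserved by $-\otimes_? C$ for every object $C$, which by \cite[Def. 3.3.2]{hinich} is exactly the criterion producing a symmetric monoidal refinement. Finally, \cref{wreogkpwegrewferf}.\ref{ierjogoergegsegs9} is the last of the recalled general facts: for a left-exact $\bC$ with bi-left-exact $\otimes$, the tensor product descends to a bi-left-exact functor on $L_K\bC$. I anticipate no genuine obstacle, as the entire content was established earlier; the only care required is to check that both the idempotency lemma and the exactness statements for $\otimes_?$ are in hand for the minimal as well as the maximal tensor product, and to recall that local smallness of $L_K\nCalg_h$ is already recorded via \cref{jgiwortegrfrefrwefw}.
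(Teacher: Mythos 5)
Your proposal is correct and follows exactly the paper's own route: the corollary is obtained by instantiating the general facts about smashing localizations at an idempotent object with $\bC=\nCalg_{h}$ and the idempotent $(\epsilon\colon\C\to K)$, feeding in \cref{weokjgpwerferfwef} for left-exactness and bi-left-exactness of $\otimes_{?}$ and \cref{weoijgowefgrefwrefrf} for coproducts. Nothing further is needed.
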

For Assertion \ref{wetokgpwerferwfwref} we also used \cref{weoijgowefgrefwrefrf}.
The fact that $L_{K}$ is a left Bousfield localization yields the following formula for the mapping spaces in $L_{K}\nCalg_{h}$. For $A,B$ in $\nCalg$ we have
\begin{equation}\label{vwihviuewhviwevewre}\Map_{L_{K}\nCalg_{h}}(A,B)\simeq \Map_{\nCalg_{h}}(A,K\otimes B)\stackrel{\eqref{fijweiof24ffwrefwrefwff}}{\simeq} \ell \underline{\Hom}(A,K\otimes B)\ .
\end{equation} 

   The pull-back along $L_{h,K}$ induces for every
$\infty$-category $\bD$ an equivalence \begin{equation}\label{qwfdqedewdqwedd}L_{h,K}^{*}:\Fun(L_{K}\nCalg_{h},\bD)\stackrel{\simeq}{\to}\Fun^{h,s}(\nCalg,\bD)\ ,
\end{equation}
 where the additional subscript indicates the full subcategory of $\Fun^{h}(\nCalg,\bD)$ of stable functors (see \cref{wrogjpwrgrfrewf}.\ref{erwqgoihrioferfqfewfqewfqwef}).
For any symmetric monoidal $\infty$-category the pull-back along the symmetric monoidal refinement of $L_{h,K}$
induces an equivalence \begin{equation}\label{wregwefrefewrfrefw}L_{h,K}^{*}:\Fun_{\otimes/\lax} (L_{K}\nCalg_{h},\bD)\stackrel{\simeq}{\to} \Fun_{\otimes/\lax}^{h,s}(\nCalg,\bD)\ .
\end{equation}
 For every left-exact $\infty$-category $\bD$ the pull-back along $L_{h,K}$ induces an equivalence  \begin{equation}\label{fqwewfedeqwd}  L_{h,K}^{*}:\Fun^{\lex}(L_{K}\nCalg_{h},\bD)\stackrel{\simeq}{\to} \Fun^{h,s,Sch}(\nCalg,\bD)\ .   \end{equation}If $\bD$ is in addition symmetric monoidal, then we have an equivalence 
\[L_{h,K}^{*}:\Fun^{\lex}_{\otimes/\lax}(L_{K}\nCalg_{h},\bD)\stackrel{\simeq}{\to} \Fun^{h,s,Sch}_{\otimes/\lax}(\nCalg,\bD)\ .\]

If $\bC$ is a pointed $\infty$-category with products and coproducts, then for any two objects $C$ and $C'$ in $\bC$ we have a canonical morphism
\[(c\mapsto (c,0))\sqcup (c'\mapsto (0,c')):C\sqcup C'\to C\times C'\ .\] A pointed $\infty$-category $\bC$ with products and coproducts  
is called {\em semi-additive} if this canonical map is an equivalence for every
two objects $C$ and $C'$, see \cite[Def. 6.1.6.13]{HA}. 

If $\bC$ is semi-additive, then its mapping spaces $\Map_{\bC}(C,D)$ have canonical refinements to commutative monoids in $\Spc$. In particular, every object of $\bC$ naturally becomes a commutative monoid and a commutative comonoid object in $\bC$ \cite[Rem.6.1.6.14]{HA}.

Note that $L_{K}\nCalg_{h}$ is pointed and admits products and coproducts by \cref{wreogkpwegrewferf}.
\begin{prop}\label{weotgpwergferferfwref}
The $\infty$-category $L_{K}\nCalg_{h}$ is semi-additive.
\end{prop}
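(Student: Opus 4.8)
The plan is to verify directly the defining condition of semi-additivity recalled just before the statement: that for all objects $A,B$ the canonical map $A\sqcup B\to A\times B$ is an equivalence in $L_K\nCalg_h$. First I would identify the two sides. Since $L_{h,K}$ is the composite of the localizations $L_h$ and $L_K$, and these preserve finite coproducts (by \cref{weoijgowefgrefwrefrf} and \cref{wreogkpwegrewferf}) as well as finite products (by left-exactness, \cref{wreogkpwegrewferf}), the coproduct and product of $A$ and $B$ in $L_K\nCalg_h$ are the images of the free product $A*B$ and the direct sum $A\oplus B$, and the canonical map is the image of the algebra homomorphism $A*B\to A\oplus B$. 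By the Yoneda lemma it therefore suffices to show that for every object $Z$ the restriction along the two inclusions $\iota_A\colon A\to A\oplus B$ and $\iota_B\colon B\to A\oplus B$ exhibits $A\oplus B$ as a coproduct, i.e. that
\[\Map_{L_K\nCalg_h}(A\oplus B,Z)\xrightarrow{(\iota_A^*,\iota_B^*)}\Map_{L_K\nCalg_h}(A,Z)\times\Map_{L_K\nCalg_h}(B,Z)\]
is an equivalence.

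Using the mapping space formula \eqref{vwihviuewhviwevewre} and writing $D:=K\otimes Z$ (a $K$-stable algebra), this reduces to the purely topological statement that the restriction map
\[\ell\underline{\Hom}(A\oplus B,D)\longrightarrow \ell\underline{\Hom}(A,D)\times \ell\underline{\Hom}(B,D)\]
is an equivalence of anima. A homomorphism out of $A\oplus B$ is the same as a pair of homomorphisms out of $A$ and $B$ with orthogonal ranges, and the displayed map simply forgets the orthogonality; the content is that this forgetting is a weak equivalence, and it is here that the $K$-stability of $D$ provides the necessary room. To produce a homotopy inverse I would use that $D\cong K\otimes D$, so that $M(D)$ contains two isometries $s_0,s_1$ coming from a decomposition $H\cong H\oplus H$, satisfying $s_0^*s_1=0$ and $s_0s_0^*+s_1s_1^*=1$. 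Block summation
\[\Psi(\phi,\psi)\colon (a,b)\mapsto s_0\phi(a)s_0^*+s_1\psi(b)s_1^*\]
is continuous in $(\phi,\psi)$ and lands in $\underline{\Hom}(A\oplus B,D)$, the orthogonality of the ranges making it multiplicative.

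It then remains to check that $\mathrm{res}\circ\Psi$ and $\Psi\circ\mathrm{res}$ are homotopic to the identity. For the first composite, $\mathrm{res}\circ\Psi=(\mathrm{Ad}(s_0)\circ-,\mathrm{Ad}(s_1)\circ-)$, and a strongly continuous path of isometries in $M(D)$ from $1$ to $s_i$ (available since the isometries of $B(H)$ for infinite-dimensional $H$ are norm-connected to $1$, exactly in the spirit of the path $U_t$ used to prove that $\epsilon\colon\C\to K$ is idempotent) yields, by conjugation, a homotopy $\mathrm{Ad}(s_i)\circ\phi\simeq\phi$ depending continuously on $\phi$. For the second composite I would use a single rotation homotopy: conjugation $\mathrm{Ad}(w_t)\circ\chi$ by a strongly continuous path of isometries $w_t\in M(D)$ with $w_0=1$ chosen so that $w_1$ implements the block sum on the already orthogonal ranges of $\chi\circ\iota_A$ and $\chi\circ\iota_B$; since each $w_t$ is an isometry this is a homotopy through genuine homomorphisms from $\chi$ to $\Psi(\mathrm{res}(\chi))$. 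Applying $\ell$ (equivalently $\Sing$) turns these continuous homotopies into homotopies of maps of anima.

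The hard part will be the construction and verification of these homotopies, and in particular of the rotation path $w_t$ for the second composite: this is the analytic heart of the statement, where one must use the orthogonality of the ranges of $\chi\circ\iota_A$ and $\chi\circ\iota_B$ together with the absorption $K\cong M_2(K)$ (i.e. the $K$-stability of $D$) to connect the block-sum isometry to $1$ through isometries while keeping everything strongly continuous, so that the resulting path of homomorphisms is continuous for the topology on $\underline{\Hom}(A\oplus B,D)$. Everything else is formal given the explicit mapping-space description of $L_K\nCalg_h$.
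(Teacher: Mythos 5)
Your reduction via Yoneda, the identification of $\Map_{L_{K}\nCalg_{h}}(A\oplus B,Z)$ with the space of pairs of homomorphisms into $D:=K\otimes Z$ with orthogonal ranges, the construction of the block-sum map $\Psi$ from isometries $s_{0},s_{1}$ in $M(D)$, and the first homotopy $\mathrm{res}\circ\Psi\simeq\mathrm{id}$ are all sound (one small slip: isometries of infinite corank are \emph{not} norm-connected to $1$ in $M(D)$; what you need, and what the $U_{t}$-type path provides, is a \emph{strictly} continuous path, which still induces point-norm continuous conjugation homotopies because the values lie in $D$). The genuine gap is the second homotopy $\Psi\circ\mathrm{res}\simeq\mathrm{id}$. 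You propose conjugating $\chi$ by a single path of isometries $w_{t}$ with $w_{0}=1$ and $w_{1}$ ``implementing the block sum on the ranges'' of $\phi:=\chi\circ\iota_{A}$ and $\psi:=\chi\circ\iota_{B}$. Such a $w_{1}$ would have to satisfy $w_{1}\phi(a)w_{1}^{*}=s_{0}\phi(a)s_{0}^{*}$ and $w_{1}\psi(b)w_{1}^{*}=s_{1}\psi(b)s_{1}^{*}$ for all $a,b$, i.e.\ act as $s_{0}$ on $\overline{\phi(A)D}$ and as $s_{1}$ on $\overline{\psi(B)D}$. No single multiplier does this for all $\chi$ simultaneously, so $w_{t}$ must depend on $\chi$. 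This breaks the argument twice over: a homotopy between $\Psi\circ\mathrm{res}$ and the identity must be jointly continuous in $(t,\chi)$, and ``supports'' of homomorphisms do not vary continuously with $\chi$, so a $\chi$-dependent conjugation gives at best a pointwise homotopy, which does not suffice to conclude $\ell(\Psi\circ\mathrm{res})\simeq\ell(\mathrm{id})$; moreover, even for a fixed $\chi$ the existence of an adjointable isometry of $D$ with the prescribed behaviour on the two non-complemented submodules is unclear.

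The repair is a standard two-stage, $\chi$-independent rotation. First conjugate all of $\chi$ by your strictly continuous path of isometries from $1$ to $s_{0}$, giving $\chi\simeq\mathrm{Ad}(s_{0})\circ\chi=\mathrm{Ad}(s_{0})\circ\phi+\mathrm{Ad}(s_{0})\circ\psi$. Then deform only the second summand along the norm-continuous path of isometries $u_{t}:=\cos(\tfrac{\pi t}{2})s_{0}+\sin(\tfrac{\pi t}{2})s_{1}$, i.e.\ consider $(a,b)\mapsto s_{0}\phi(a)s_{0}^{*}+u_{t}\psi(b)u_{t}^{*}$: this is multiplicative for every $t$ because $s_{0}^{*}u_{t}=\cos(\tfrac{\pi t}{2})1$ and $\phi(a)\psi(b)=0=\psi(b)\phi(a)$, and at $t=1$ it equals $\Psi(\mathrm{res}(\chi))$. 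Both stages are post-compositions with fixed homomorphisms $D\to C([0,1])\otimes D$ given by universal formulas in $s_{0},s_{1}$, hence jointly continuous and natural in $\chi$. With this fix your proof goes through and is, via Yoneda and \eqref{vwihviuewhviwevewre}, exactly dual to the paper's route: the paper proves (\cref{weroigjowerfrefw}, following Cuntz) that $L_{h,K}(c):L_{h,K}(A*B)\to L_{h,K}(A\oplus B)$ is an equivalence by mapping both algebras into $\Mat_{2}$'s and using precisely this ``fix one block, rotate the other'' homotopy, and then deduces semi-additivity formally from preservation of coproducts and finite products by the localizations.
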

\begin{proof}
We consider two $C^{*}$-algebras $A$ and $B$. We then have a canonical homomorphism
\[c:A*B\to A\oplus B\] induced via the universal property of the free product 
by  the homomorphisms  $A\to A\oplus B$, $a\mapsto (a,0)$ and $B\to A\oplus B$, $b\mapsto (0,b)$.
\begin{lem}[{\cite[Prop. 3.1]{MR899916},\cite[Prop. 5.3]{Meyer:aa}}]\label{weroigjowerfrefw}
$L_{h,K}(c):L_{h,K}(A*B)\to L_{h,K}(A\oplus B)$ is an equivalence in $L_{K}\nCalg_{h}$.
\end{lem}
A proof of \cref{weroigjowerfrefw} will be given below after the completion of the argument for \cref{weotgpwergferferfwref}.
We consider $C^{*}$-algebras $A$ and $B$. Then
the canonical map from the coproduct to the product of $L_{h,K}(A)$ and $L_{h,K}(B)$  in $L_{K}\nCalg_{h}$ has the following factorization
over equivalences:
\begin{eqnarray*}
L_{h,K}(A)\sqcup L_{h,K}(B)&\stackrel{\cref{wreogkpwegrewferf}.\ref{wetokgpwerferwfwref}}{\simeq}& L_{h,K}(A*B)\\&\stackrel{\cref{weroigjowerfrefw}}{\simeq}&L_{h,K}(A\oplus B)\\
&\stackrel{\cref{weoijgowefgrefwrefrf}}{\simeq}&L_{K}(L_{h}(A)\times L_{h}(B))\\
&\stackrel{\cref{wreogkpwegrewferf}.\ref{wkogpwtgwerfwrefwr}}{\simeq}
&L_{h,K}(A)\times L_{h,K}(B) \end{eqnarray*}
This finishes the proof of \cref{weotgpwergferferfwref} assuming \cref{weroigjowerfrefw}.
  \end{proof}

\begin{proof}[Proof of  \cref{weroigjowerfrefw}]
Since we need some details of the proof of    Lemma \ref{weroigjowerfrefw} later we recall the argument.
We first observe, using that $L_{h,K}\simeq L_{h,K}\circ \Mat_{2}(-)$, 
  that for any $C^{*}$-algebra $C$ the left-upper corner inclusion $C\to \Mat_{2}(C) $ is an equivalence in $L_{K}\nCalg_{h}$.
 
  In the following we consider the $C^{*}$-algebras $A$ and $B$ as subsets of $A*B$.
 We define a homomorphism
  \[f:A\oplus B\xrightarrow{(a,b)\mapsto (a,b) }(A*B)\oplus (A*B) \xrightarrow{\incl} \Mat_{2}(A*B)\ ,\quad f(a,b):=\left( \begin{array}{cc} a&0 \\0 &b \end{array} \right)\ . \] Then
  $f\circ c:A*B \to  \Mat_{2}(A*B)$ is determined by
\[a\mapsto \left( \begin{array}{cc} a&0 \\0 &0 \end{array} \right) \ , \quad b\mapsto \left( \begin{array}{cc} 0&0 \\0 &b \end{array} \right)\ .\]
We consider the family of  is unitaries  \begin{equation}\label{qrfqfoiue9foqwefqwedwqewdqewd}U_{t}:=\left( \begin{array}{cc}\cos(\frac{\pi t}{2}) &-\sin(\frac{\pi t}{2}) \\ \sin(\frac{\pi t}{2})&\cos(\frac{\pi t}{2}) \end{array} \right) \end{equation} 
  in $ \Mat_{2}(M(A*B))$
  and define $h_{t}:A*B \to  \Mat_{2}(A*B)$  by
  \[h_{t}(a):=\left( \begin{array}{cc} a&0 \\0 &0 \end{array} \right) \ , \quad h_{t}(b)=U_{t}^{*}\left( \begin{array}{cc} 0&0 \\0 &b \end{array} \right)U_{t}
  \ .\]
  Then $h_{0}=f\circ c$ and 
$h_{1} $ is  an upper corner inclusion.
We have 
\[\Mat_{2}(c)\circ f:A\oplus  B\to \Mat_{2}(A\oplus  B) \ , \quad (a,b)\mapsto  \left( \begin{array}{cc} (a,0)&0 \\0 &(0,b) \end{array} \right)\ .\]
We define the homotopy $h_{t}:A\oplus  B\to \Mat_{2}(A\oplus  B) $ by 
\[h_{t}(a,b):= \left( \begin{array}{cc} (a,b\sin^{2}(\frac{\pi t}{2}))&(0,b\sin(\frac{\pi t}{2})\cos(\frac{\pi t}{2})) \\ (0,b\sin(\frac{\pi t}{2})\cos(\frac{\pi t}{2})) &(0,b\cos^{2}( \frac{\pi t}{2} )) \end{array} \right)\ .\]
 Then $h_{0}=\Mat_{2}(c)\circ f$ and 
  $h_{1}$   is  a upper corner inclusion.
\end{proof}

\begin{rem}
For two $C^{*}$-algebras $A$ and $B$ the mapping space $\Map_{L_{K}\nCalg_{h}}(A,B)$ in the semi-additive $\infty$-category $L_{K}\nCalg_{h}$  is a  commutative monoid in $\Spc$.
 One is often interested
in its group completion. In this case the observation
\cref{erijgowergfrefwrfwref} might be helpful.
\hB
\end{rem}

\cref{weroigjowerfrefw} can be generalized to countable  coproducts and sums as follows.
Recall that 
  $L_{K}\nCalg_{h}$ admits small  coproducts by \cref{wreogkpwegrewferf}.\ref{wetokgpwerferwfwref}. Furthermore,
  for a small  family  $(A_{i})_{i\in I}$   of $C^{*}$-algebras we can also form the sum \begin{equation}\label{fqoiwjefioqwejfoiqwjioewdjq}\bigoplus_{i\in I}A_{i}:= \colim_{F\subseteq I\ ,|F|<\infty} \bigoplus_{i\in F}A_{i}
\end{equation} 
  which for infinite $I$ should not be confused with the coproduct or product in $\nCalg$.
 We still  have a canonical map $c:*_{i\in I}A_{i}\to \bigoplus_{i\in I}A_{i}$. 
\begin{prop}\label{ergoijfofefdewdqed}If $I$ is countable, then 
  the canonical map induces an equivalence
$$L_{h,K}(c):L_{h,K}(*_{i\in I}A_{i})\to L_{h,K}(\bigoplus_{i\in I}A_{i})\ .$$
\end{prop}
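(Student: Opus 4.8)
The plan is to present both $*_{i\in I}A_i$ and $\bigoplus_{i\in I}A_i$ as filtered colimits over the poset of finite subsets $F\subseteq I$ of their finite analogues, to settle the finite case by induction from \cref{weroigjowerfrefw}, and then to reduce the whole assertion to the single statement that $L_{h,K}$ preserves the colimit \eqref{fqoiwjefioqwejfoiqwjioewdjq}. Since $I$ is countable, the poset of finite subsets is countably filtered and cofinal with a tower $F_{1}\subseteq F_{2}\subseteq\cdots$ exhausting $I$.

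First I would treat the finite case: for finite $F$ the map $L_{h,K}(c_{F})\colon L_{h,K}(*_{i\in F}A_{i})\to L_{h,K}(\bigoplus_{i\in F}A_{i})$ is an equivalence, by induction on $|F|$. Writing $F=F'\sqcup\{j\}$ one has $*_{i\in F}A_{i}=(*_{i\in F'}A_{i})*A_{j}$ and $\bigoplus_{i\in F}A_{i}=(\bigoplus_{i\in F'}A_{i})\oplus A_{j}$; applying \cref{weroigjowerfrefw} to the pair $*_{i\in F'}A_{i},\,A_{j}$ gives $L_{h,K}((*_{i\in F'}A_{i})*A_{j})\simeq L_{h,K}((*_{i\in F'}A_{i})\oplus A_{j})$, and since a finite direct sum is the product in $\nCalg$ and $L_{h,K}$ preserves finite products (\cref{weoijgowefgrefwrefrf} together with the left-exactness of $L_{K}$ from \cref{wreogkpwegrewferf}), this identifies with $L_{h,K}(*_{i\in F'}A_{i})\times L_{h,K}(A_{j})$; the inductive hypothesis then finishes the step.

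Next I would assemble the colimits. The free product is the coproduct in $\nCalg$, so $L_{h,K}(*_{i\in I}A_{i})\simeq\coprod_{i\in I}L_{h,K}(A_{i})$ by \cref{weoijgowefgrefwrefrf} and \cref{wreogkpwegrewferf}; as coproducts are the filtered colimit of their finite sub-coproducts, this realizes $L_{h,K}(*_{i\in I}A_{i})\simeq\colim_{F}L_{h,K}(*_{i\in F}A_{i})$ in $L_{K}\nCalg_{h}$. Because $c$ is the colimit of the $c_{F}$, the morphism $L_{h,K}(c)$ factors as $\colim_{F}L_{h,K}(*_{i\in F}A_{i})\xrightarrow{\colim_{F}L_{h,K}(c_{F})}\colim_{F}L_{h,K}(\bigoplus_{i\in F}A_{i})\to L_{h,K}(\bigoplus_{i\in I}A_{i})$, where the first arrow is an equivalence by the finite case. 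Hence it suffices to prove that the second arrow is an equivalence, that is, that $L_{h,K}$ carries the colimit \eqref{fqoiwjefioqwejfoiqwjioewdjq} to a colimit in $L_{K}\nCalg_{h}$.

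Finally, the remaining statement I would verify by mapping out. As $L_{h,K}$ is essentially surjective it suffices, by co-Yoneda and \eqref{vwihviuewhviwevewre}, to show for every $C^{*}$-algebra $B$ that $\ell\underline{\Hom}(\bigoplus_{i\in I}A_{i},K\otimes B)\to\lim_{F}\ell\underline{\Hom}(\bigoplus_{i\in F}A_{i},K\otimes B)$ is an equivalence. Since $\bigoplus_{i\in I}A_{i}=\colim_{F}\bigoplus_{i\in F}A_{i}$ in $\nCalg$, the functor $\underline{\Hom}(-,K\otimes B)$ turns this into a homeomorphism $\underline{\Hom}(\bigoplus_{i\in I}A_{i},K\otimes B)\cong\lim_{F}\underline{\Hom}(\bigoplus_{i\in F}A_{i},K\otimes B)$ of topological spaces, so what is needed is that $\ell$ commutes with this tower limit. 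The main obstacle is exactly here: $\ell$ preserves limits of towers of Serre fibrations (this follows from its preservation of countable products and of Serre fibrant cartesian squares, upon writing the tower limit as an equalizer of products), so I must show the bonding maps $\underline{\Hom}(\bigoplus_{i\in F'}A_{i},K\otimes B)\to\underline{\Hom}(\bigoplus_{i\in F}A_{i},K\otimes B)$ are Serre fibrations. This is the genuinely analytic input: restriction along the split inclusion $\bigoplus_{i\in F}A_{i}\hookrightarrow\bigoplus_{i\in F'}A_{i}$ is a fibration precisely because the target $K\otimes B$ is $K$-stable and hence carries infinitely many mutually orthogonal corners, allowing one to lift homotopies by conjugating the complementary component with a continuous path of isometries keeping its range orthogonal to the moving part, in the spirit of Schochet's homotopy-lifting arguments. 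An equivalent route, sidestepping the inverse-limit bookkeeping, would be to extend the explicit rotation homotopies of \cref{weroigjowerfrefw} to an infinite block rotation, where the same $K$-stability is what makes the construction possible.
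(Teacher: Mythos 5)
Your scaffolding is fine as far as it goes: the finite case by induction from \cref{weroigjowerfrefw}, the identification $L_{h,K}(*_{i\in I}A_{i})\simeq\colim_{F}L_{h,K}(*_{i\in F}A_{i})$ from preservation of coproducts, and the reduction of everything to the claim that $L_{h,K}$ preserves the colimit \eqref{fqoiwjefioqwejfoiqwjioewdjq}. The gap is your justification of that last claim: the bonding maps $\underline{\Hom}(\bigoplus_{i\in F'}A_{i},K\otimes B)\to\underline{\Hom}(\bigoplus_{i\in F}A_{i},K\otimes B)$ are \emph{not} Serre fibrations in general, and $K$-stability of the target does not help. Concretely, take $A_{0}=C_{0}((0,1])$, $A_{1}=\C$, $B=\C$, $F=\{0\}\subseteq F'=\{0,1\}$. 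A point of $\underline{\Hom}(A_{0}\oplus\C,K)$ is a pair $(f,g)$ of homomorphisms with mutually orthogonal ranges, and a homomorphism $C_{0}((0,1])\to K$ is the same as a positive contraction in $K$. Let $f_{t}$ be the path corresponding to the positive contractions $h_{t}:=t\,p$ for a fixed rank-one projection $p$, and take the initial lift $(f_{0},g_{0})=(0,p)$, which is admissible because $f_{0}=0$. Any continuous lift $(f_{t},g_{t})$ must satisfy $g_{t}(1)\,p=0$ for all $t>0$ (orthogonality against $f_{t}(\mathrm{id})=t\,p$), and letting $t\to 0$ gives $p\cdot p=0$, a contradiction. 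So even path lifting fails: the range of the moving homomorphism jumps up instantaneously from $0$, and an orthogonal complement cannot be carried along continuously. Note also that the fibration statements actually available in the paper (\cref{wtogjgergregw9}, \cref{wejiogwergwerfrrewf}) concern maps induced by \emph{surjections in the target variable}, which is a genuinely different situation from restriction along a split inclusion in the source. Finally, the statement you are trying to establish — that $\ell$ carries this tower limit to a limit in $\Spc$ — is in fact true, but only because it is, via Yoneda and coproduct preservation, essentially a reformulation of the proposition itself; it is not available as an independent input.

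The paper's proof is the route you mention only in your closing sentence, and all of the substance lies there. One writes down an explicit candidate inverse $f:\bigoplus_{i\in I}A_{i}\to K\otimes(*_{i\in I}A_{i})$, $f((a_{i})_{i}):=\diag(a_{0},a_{1},a_{2},\dots)$, which lands in $K\otimes *_{i\in I}A_{i}$ precisely because $\|a_{i}\|\to 0$ for elements of the sum, and then shows that $f\circ c$ and $(\id_{K}\otimes c)\circ f$ are each homotopic to left upper corner embeddings by an \emph{infinite} concatenation of the $2\times 2$ rotation homotopies of \cref{weroigjowerfrefw}, the rotation involving the $i$-th coordinate being scheduled on the time interval $[1-\frac{1}{i+1},1-\frac{1}{i+2}]$. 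The genuine analytic point is the continuity of these infinite homotopies as maps $[0,1]\to\underline{\Hom}(-,-)$; the paper obtains it because the homotopies are uniformly bounded and continuous on each finite stage, and the union of the finite stages is dense — this is also exactly where countability of $I$ enters. To repair your write-up you would replace the false fibration claim by this construction, at which point the colimit bookkeeping becomes unnecessary.
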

\begin{proof} 
 For finite $I$ this follows by a finite induction from \cref{weroigjowerfrefw}.
We now assume that 
 $I=\nat$.
 We define the map \[f:\bigoplus_{i\in I}A_{i}\to K\otimes *_{i\in I}A_{i}\ , \quad f((a_{i})_{i}):= \diag(a_{0},a_{1},a_{2},\dots )\ .\]
Note that  $\lim_{i\to \infty}\|a_{i}\|=0$  so that 
this diagonal matrix really belongs to $K\otimes *_{i\in I}A_{i}$.

 The composition $f\circ c:*_{i\in I}A_{i}\to K\otimes *_{i\in I}A_{i}$ is determined by 
$a_{i}\mapsto \diag(0,\dots 0,a_{i},0\dots )$ with $a_{i}$ at the $i$th place.
We define 
a homotopy $h_{t}:*_{i\in I}A_{i}\to K\otimes *_{i\in I}A_{i}$ such that for 
 $t\in [1-\frac{1}{i+1},1-\frac{1}{i+2}]$ it rotates in the coordinates $0$ and $i$   as in the proof of \cref{weroigjowerfrefw}.
Then  $h_{t}$ is continuous  as a map $[0,1]\to \underline{\Hom} (*_{i\in I}A_{i},K\otimes *_{i\in I}A_{i})$. Indeed, 
$h_{t}$ is    continuous on the subalgebras $*_{i=1}^{n}A_{i}$ for all $n$ in $\nat$. Since 
 their union is dense and 
 $h_{t}$ is uniformly bounded we conclude  continuity.
We have 
$h_{0}=f\circ c$ and 
$h_{1}$ is a  left upper corner embedding. This implies that 
$L_{h,K}(f)\circ L_{h,K}(c)\simeq L_{h,K}(\id_{ *_{i\in I}A_{i}})$.

 The composition
 $(\id_{K}\otimes c)\circ  f: \bigoplus_{i\in I}A_{i}\to  K\otimes \bigoplus_{i\in I}A_{i}$ is determined by 
\[(a_{i})_{i}\mapsto \diag ((a_{0},0,\dots),(0,a_{1},0,\dots),(0,0,a_{2},\dots),\dots)\ .\]
We 
define the  homotopy $l_{t}: \bigoplus_{i\in I}A_{i}\to  K\otimes \bigoplus_{i\in I}A_{i}$
such that 
 for $t\in [1-\frac{1}{i+1},1-\frac{1}{i+2}]$ it rotates in the coordinates $(0,i)$ and $(i,i)$   as in the proof of \cref{weroigjowerfrefw}.
Then 
 $l_{t}$ is continuous as a map $[0,1]\to \underline{\Hom}( \bigoplus_{i\in I}A_{i},L_{K}(\bigoplus_{i\in I}A_{i})$.
 Indeed,  
$l_{t}$ is   continuous on the subalgebras $\bigoplus_{i=1}^{n}A_{i}$ for all $n$  in $\nat$. Since their union 
  is dense and 
$l_{t}$ is uniformly bounded we can conclude  continuity.
We have  
$l_{0}=(\id_{K}\otimes c)\circ f$
and 
$l_{1}$ is a   left upper corner embedding.
This implies that 
 $L_{h,K}(c)\circ L_{h,K}(f)\simeq L_{h,K}(\id_{ \bigoplus_{i\in I}A_{i}})$.
\end{proof}

\begin{ex} \label{poregjopewrgerg}
For any $C^{*}$-algebra $A$ we let $\Proj(A)$ denote the topological space of projections in $A$. The functor $\Proj(-):\nCalg\to \Top$  is corepresented by 
the $C^{*}$-algebra $\C$. Indeed, for  a $C^{*}$-algebra   $A$   we have a homeomorphism  \[\underline{\Hom} (\C,A)\stackrel{\cong}{\to} \Proj(A)\ , \quad  f\mapsto f(1)\ . \]  
We define the topological space of stable projections in $A$ by 
\[\Proj^{s}(A):=\underline{\Hom}(\C,K\otimes A)\]
which becomes an $H$-space with respect to the block sum operations. 
Using the semi-additivity of $L_{K}\nCalg$ its underlying space  \[\cProj^{s}(A):=\ell \Proj^{s}(A)\stackrel{\eqref{vwihviuewhviwevewre}}{\simeq} \Map_{L_{K}\nCalg_{h}}(\C,A)\]  has a refinement to an object of   $\CMon(\Spc)$, i.e., a commutative monoid object in spaces. It will be called the monoid  of {\em stable projections} in $A$.

We will see in \cref{erijgoerwgregffwfrefw}, that for unital $A$ the group completion of the
 commutative monoid $\cProj^{s}(A)$ is equivalent to the $K$-theory space of $A$.
\hB
\end{ex}

\begin{ex}\label{poregjopewrgerg1}
 For any unital $C^{*}$-algebra $A$ we let  $U(A)$ denote the topological group of unitaries in $A$.
The functor $U:\Calg\to \Groups(\Top)$ is corepresented by 
the $C^{*}$-algebra $C(S^{1})$. Let $u:S^{1}\to \C$ be the inclusion considered as an element of
$U(C(S^{1}))$. Then we have an isomorphism
\[\underline{\Hom_{u}}(C(S^{1}),A)\xrightarrow{\cong} U(A)\ , \quad f\mapsto f(u)\]   of topological groups, 
where the subscript $u$ indicates that we consider the subspace of $\underline{\Hom}(C(S^{1}),A)$ of unit-preserving homomorphisms. 

Using the unitalization functor  $(-)^{u}:\nCalg\to \Calg$  we  define the functor
\begin{equation}\label{urfuherufhwerjkfekrf89}U^{s}:\nCalg\to \Groups(\Top)\ , \quad A\mapsto \{U\in U((K\otimes A)^{u})  \mid U-1_{(K\otimes A)^{u}}\in K\otimes A\} 
\end{equation} which associates to $A$ the topological group of {\em stable unitaries}. 
%
The stable unitaries functor is corepresented by the suspension $S(\C)$ of $\C$. Indeed,
 using the  split-exact sequence
 \begin{equation}\label{fwqefwfedwdwedwdwdfwfwfwf} 0\to S(\C)\xrightarrow{i} C(S^{1})\xrightarrow{\ev_{1}} \C\to 0\end{equation} 
 we identify $C(S^{1})$ with the unitalization of $S(\C)$.
 Then the unitalization functor    induces an identification
   \begin{equation}\label{qefeoijoi4r34r34}\underline{\Hom}(S(\C),K\otimes A) \cong U^{s}(A)\ .
\end{equation}
    By 
     \eqref{vwihviuewhviwevewre} we have an equivalence of spaces
 \begin{equation}\label{vfevsdfvrqfccscsdc}\cU^{s}(A):=\Map_{L_{K}\nCalg_{h}}(S(\C),A)\simeq \ell U^{s}(A)
\end{equation} 
 which equips the commutative monoid on the left with a second group structure.
These two structures distribute and therefore coincide by an Eckmann-Hilton  type argument.
It follows that the commutative  monoid $\cU^{s}(A)$ is already an object of $\CGroups(\Spc)$, i.e., a commutative group object in spaces. It will be called the  {\em group of stable unitaries}. 

Unfolding definitions we see that the monoid structure on $\cU^{s}(A)$ comes from the block-sum of unitaries in $(K\otimes A)^{u}$ while the other group structures is  given by the multiplication of unitaries.
In \cref{trgjkowergferfweferfw} we will show that $\cU^{s}(A)$ is equivalent to a one-fold delooping of the $K$-theory space of $A$.
\hB
\end{ex}

\begin{ex}
\label{ewfiuhgwqeuifdewdewqdqewdqd}
We consider two $C^{*}$-algebras $A$ and $B$ and homomorphisms $f,f':A\to B$.
We say that $\im(f)\perp \im(f')$ if $f(a)f'(a')=0=f'(a')f(a)$ for all $a,a'$ in $A$.
In this case we can define a homomorphism \[f+f':A\to B\ , \quad a\mapsto f(a)+ f(a') \ .\]
We then have an equivalence
\[L_{h,K}(f)+L_{h,K}(f')\simeq L_{h,K}(f+f')\] in $\Map_{L_{K}\nCalg_{h}}(A,B)$, where the sum on the left is the monoid structure on the mapping space coming from the semi-additivity of $L_{K}\nCalg_{h}$ (see  \cref{erkgowegergrewfwerf9}).
Indeed, this sum  is represented by  \[ A  \xrightarrow{ \diag(f,f') }  \Mat_{2}(B) \to K\otimes B  \ .\]
There is a rotation homoptopy $\phi_{t}$ from
$\diag(0,f')$ to $\diag(f',0)$ such that $\im(\phi_{t})\perp \im(\diag(f,0))$ for all $t$. Hence
$
\diag(f,0)+\phi_{t}$ is a homotopy from $\diag(f,f')$ to $\diag(f+f',0)$. This implies the assertion.
\hB
\end{ex}

Since $K$ is separable the functor $L_{K}$ restricts to \[L_{\sepa,K}:\nCalg_{\sepa,h}\to L_{K}\nCalg_{\sepa,h}\]
with an essentially small target.
Together with  \cref{weogpwergerfgwerfwerf9}, \cref{wreogkpwegrewferf} and \cref{weotgpwergferferfwref} this implies the following statements:

 \begin{kor}\label{wreogkpwegrewferf1}\mbox{}
 \begin{enumerate}
 \item \label{werigjweogwergwreg}We have a commutative square \[\xymatrix{\nCalg_{\sepa,h}\ar[r]\ar[d]^{L_{\sepa,K}} &\nCalg_{h} \ar[d]^{L_{K}} \\ L_{K} \nCalg_{\sepa,h}\ar[r] &L_{K}\nCalg_{h} }\]
of $\infty$-categories  where the  vertical arrows are left Bousfield localizations and the horizontal arrows are fully faithful.
 \item The square in \ref{werigjweogwergwreg} has a refinement to a diagram of symmetric monoidal $\infty$-categories and symmetric monoidal functors  for $\otimes_{?}$ with $?\in \{\min,\max\}$ such that $L_{\sepa,K}$ becomes a symmetric monoidal localization.
 \item \label{wkogpwtgwerfwrefwr1} The localization $L_{\sepa,K}$ is  left-exact.
 \item \label{wetokgpwerferwfwref1}
 $L_{K}\nCalg_{\sepa,h}$ admits countable coproducts and $L_{\sepa,K}$ preserves them. 
 \item The functor $-\otimes_{?}-:L_{K}\nCalg_{\sepa,h}\times L_{K}\nCalg_{\sepa,h}\to L_{K}\nCalg_{\sepa,h}$ is bi-left exact.
 \item $L_{K}\nCalg_{\sepa,h}$ is semi-additive.
 \end{enumerate}
  \end{kor}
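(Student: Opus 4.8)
The plan is to observe that the entire development of \cref{lhK} carried out for $\nCalg_{h}$ transfers essentially verbatim to the full subcategory $\nCalg_{\sepa,h}$, the only modification being that ``arbitrary small coproducts'' must everywhere be weakened to ``countable coproducts''. The decisive input is that $K$ is separable, so that the functor $K\otimes_{?}-$ preserves separability: if $A$ is separable then so is $K\otimes_{?}A$. Consequently the idempotent object $(\epsilon\colon\C\to K)$ already lives in the symmetric monoidal $\infty$-category $\nCalg_{\sepa,h}$, the unit $\kappa$ restricts, and the endofunctor $K\otimes_{?}-$ restricts to an endofunctor $L_{\sepa,K}$ of $\nCalg_{\sepa,h}$ whose essential image is the full subcategory $L_{K}\nCalg_{\sepa,h}$ of $K$-stable separable algebras. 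Since the homotopy witnessing idempotency of $\C\to K$ is a path of homomorphisms between separable algebras, and the inclusion $\nCalg_{\sepa,h}\to\nCalg_{h}$ is fully faithful by \cref{weogpwergerfgwerfwerf9}, the object $\C\to K$ is again idempotent in $\nCalg_{\sepa,h}$.

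First I would record from \cref{weogpwergerfgwerfwerf9} that $\nCalg_{\sepa,h}$ is a pointed, left-exact, symmetric monoidal $\infty$-category with finite products and countable coproducts, whose tensor product $\otimes_{?}$ is bi-left exact. This is exactly the structure needed to run the general theory of idempotent (smashing) left Bousfield localizations recalled at the start of the section, precisely as in the proof of \cref{wreogkpwegrewferf} but now internal to $\nCalg_{\sepa,h}$. Applying that theory to the idempotent object $\C\to K$ yields Assertions (2)--(5) at once: the left Bousfield localization together with its symmetric monoidal refinement; left-exactness of $L_{\sepa,K}$ and bi-left exactness of the induced tensor product, which follow from the general statements since $\otimes_{?}$ is bi-left exact on $\nCalg_{\sepa,h}$; and preservation of coproducts, now only for countable families because $\nCalg_{\sepa,h}$ admits only countable coproducts.

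For the commutative square and the fully faithfulness in (1), I would note that the top horizontal inclusion $\nCalg_{\sepa,h}\to\nCalg_{h}$ is fully faithful by \cref{weogpwergerfgwerfwerf9}, and that both $L_{K}\nCalg_{\sepa,h}$ and $L_{K}\nCalg_{h}$ are realised as reflective full subcategories (of $K$-stable objects) of $\nCalg_{\sepa,h}$ and $\nCalg_{h}$. Hence, for $K$-stable separable $X,Y$, one has $\Map_{L_{K}\nCalg_{\sepa,h}}(X,Y)\simeq\Map_{\nCalg_{\sepa,h}}(X,Y)\simeq\Map_{\nCalg_{h}}(X,Y)\simeq\Map_{L_{K}\nCalg_{h}}(X,Y)$, so the bottom inclusion is the restriction of a fully faithful functor to full subcategories and is again fully faithful; commutativity is clear since both composites send $A$ to $K\otimes_{?}A$. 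Finally, for the semi-additivity in (6) I would re-run the proof of \cref{weotgpwergferferfwref}: the chain of equivalences exhibiting the canonical map $L_{h,K}(A)\sqcup L_{h,K}(B)\to L_{h,K}(A)\times L_{h,K}(B)$ as an equivalence uses only finite coproducts, finite products, and \cref{weroigjowerfrefw}, and since the auxiliary algebras $A*B$, $A\oplus B$ and their matrix amplifications $\Mat_{2}(-)$ are separable whenever $A$ and $B$ are, every step, including the rotation homotopies of \cref{weroigjowerfrefw}, stays inside $\nCalg_{\sepa,h}$.

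I expect the only genuinely non-routine point to be the bookkeeping: confirming that restricting a smashing left Bousfield localization to a full subcategory that is preserved by the localization functor is again such a localization, and tracking that every auxiliary construction ($K$-stabilisation, free products, matrix amplifications) remains within the separable world. All of this hinges on separability of $K$ and the closure of separable $C^{*}$-algebras under tensoring with $K$, free products, and finite sums; the one substantive adjustment relative to the non-separable case is the replacement of arbitrary coproducts by countable ones, which is why Assertion (4) is stated only for countable families.
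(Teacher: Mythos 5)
Your proposal is correct and takes essentially the same approach as the paper, which likewise obtains this corollary from the separability of $K$ (so that $K\otimes_{?}-$ restricts to $\nCalg_{\sepa,h}$) combined with the separable homotopy-localization results of \cref{weogpwergerfgwerfwerf9}, the smashing-localization machinery of \cref{wreogkpwegrewferf}, and the semi-additivity statement \cref{weotgpwergferferfwref}. The only cosmetic difference is that you re-instantiate the idempotent-object machinery internally to $\nCalg_{\sepa,h}$ and re-run the semi-additivity argument in the separable world, whereas the paper simply cites the non-separable statements and lets full faithfulness of the inclusions do the transfer; both routes rest on exactly the same ingredients.
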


The pull-back along $ L_{\sepa,h,K}:=L_{\sepa,K}\circ L_{\sepa,h}$
 induces for every
$\infty$-category $\bD$ an equivalence
\[L_{\sepa,h,K}^{*}:\Fun(L_{K}\nCalg_{\sepa,h},\bD)\stackrel{\simeq}{\to}\Fun^{h,s}(\nCalg_{\sepa},\bD)\ .\]
For any symmetric monoidal $\infty$-category the pull-back along the symmetric monoidal refinement of $L_{\sepa,h,K} $
induces an equivalence
\[L_{\sepa,h,K}^{*}:\Fun_{\otimes/\lax} (L_{K}\nCalg_{\sepa,h},\bD)\stackrel{\simeq}{\to} \Fun_{\otimes/\lax}^{ h,s}(\nCalg_{\sepa},\bD)\ .\]
For every left-exact $\infty$-category $\bD$ the pull-back along $L_{\sepa,h,K}$ induces an equivalence 
\[L_{\sepa,h,K}^{*}:\Fun^{\lex}(L_{K}\nCalg_{\sepa,h},\bD)\stackrel{\simeq}{\to} \Fun^{h,s,Sch}(\nCalg_{\sepa},\bD)\ .\]
If $\bD$ is in addition symmetric monoidal, then we have the equivalence 
 \[L^{*}_{ \sepa,h,K}:\Fun_{\otimes/\lax}^{\lex}(L_{K}\nCalg_{\sepa,h},\bD)\stackrel{\simeq}{\to} \Fun_{\otimes/\lax}^{h,s,Sch}(\nCalg_{\sepa},\bD)\ .\]

\section{Forcing exactness}\label{wegijowerferferwfrfwdvdfvdf}

In this subsection we describe  left-exact  Dwyer-Kan localizations
\[L_{!}:L_{K}\nCalg_{h}\to L_{K}\nCalg_{h,!} \] for $!$ in $\{\splt, \se,\exa\}$
designed such that the composition 
\[L_{h,K,!}:=L_{!}\circ L_{h,K}:\nCalg\to L_{K}\nCalg_{h,!} \] (see \eqref{qwefdwedqdwed} for $L_{h,K}$)
sends exact (for $!=\exa$) or semi-split exact (for $!=\se$) or split exact (for $!=\splt$) sequences of $C^{*}$-algebras to fibre sequences. 
We further analyse the compatibility of $L_{!}$ with the symmetric monoidal structures.

Given an exact   exact sequence   $0\to A\to B\xrightarrow{f}C\to 0$ of $C^{*}$-algebras, using the mapping cylinder construction described in \cref{werigowrgerfewrfwerfwerf} 
we can produce  
 a diagram of exact sequences of $C^{*}$-algebras
\begin{equation}\label{tbrtbbgdbfert} \xymatrix{&0\ar[d]&&&\\0\ar[r]  &A\ar[r]\ar[d]^{\iota_{f}} &B\ar[r]^{f}\ar[d]^{h_{f}} &C\ar[r]\ar@{=}[d] \ar@/^-1cm/@{..>}[l]^{s}&0 \\ 0\ar[r]  &C(f)\ar[r]\ar[d]^{\pi_{f}} &Z(f)\ar[r]^{\tilde f}\ar[r] &C\ar[r]  \ar@/^1cm/@{..>}[l]^{\tilde s}  &0\\&Q(f)\ar@/^1cm/@{..>}[u]^{\hat s}\ar[d]\\&0&& }\end{equation}
We write    elements of the mapping cone $C(f)$ of $f$ as pairs $(b,\gamma)$ with $b$ in $B$ and $\gamma$ in $PC$ (see \eqref{vadfvasdvscsdcsdac}) such that  $f(b)=\gamma(0)$ and $\gamma(1)=0$ and consider $A$ as a subset of $B$. With this notation  the map $\iota_{f}$ is given by   $\iota_{f}(a):=(a,0)$. This map is the inclusion of an ideal and the $C^{*}$-algebra 
   $Q(f)$ is defined as the quotient. We have an isomorphism 
   \[Q(f)\cong \{(c,\gamma)\in C\times PC\mid  \gamma(0)=c\ , \gamma(1)=0\}\cong C_{0}([0,1))\otimes C\] which implies that
 $Q$ is contractible.
If $f$ admits a cpc split $s$ (or a split), then we get induced cpc splits (or splits) 
  $\tilde s(c):=(s(c),\const_{c})$
and     $\hat s(c,\gamma)=(s(c),\gamma)$ as indicated.

We consider 
a functor $F:\nCalg\to \bC$ to a pointed $\infty$-category. 
Versions of the  following observation were  basic to the approach in  \cite{MR1068250}, \cite{zbMATH03973625}.
\begin{prop}\label{wetigowergferferferwfw}
We assume that   $F$ is homotopy invariant and
  reduced,
 and that it sends mapping cone sequences   to fibre sequences.
 Then the following statements are equivalent:
  \begin{enumerate}
  \item \label{wergrewfweref}
$F$ is exact (semiexact, or split-exact,  respectively).
\item  \label{wergrewfweref1}
 For every exact (sem-split exact, or split exact,  respectively) sequence of $C^{*}$-algebras
$0\to I\xrightarrow{i} A\xrightarrow{\pi} Q\to 0$ with $Q$ contractible the map $F(i)$ is an equivalence.\end{enumerate}
\end{prop}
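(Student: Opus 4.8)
The plan is to establish the two implications separately and to run the three exactness flavours ($!\in\{\exa,\se,\splt\}$) in parallel, since every auxiliary sequence produced below inherits the relevant type of splitting. The implication \ref{wergrewfweref}$\Rightarrow$\ref{wergrewfweref1} is the easy one: given an exact (resp.\ semi-split, split) sequence $0\to I\xrightarrow{i} A\xrightarrow{\pi} Q\to 0$ with $Q$ contractible, exactness of $F$ gives a fibre sequence $F(I)\xrightarrow{F(i)}F(A)\to F(Q)$. Since $Q$ is homotopy equivalent to $0$ and $F$ is homotopy invariant and reduced, $F(Q)$ is a zero object, so the fibre of $F(A)\to F(Q)$ is $F(A)$ and $F(i)$ is an equivalence.

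For the substantial implication \ref{wergrewfweref1}$\Rightarrow$\ref{wergrewfweref} I would feed a general exact (resp.\ semi-split, split) sequence $0\to A\xrightarrow{j} B\xrightarrow{f}C\to 0$ into the diagram \eqref{tbrtbbgdbfert} and read off three facts. First, $h_{f}$ is a homotopy equivalence with $\tilde f\circ h_{f}=f$, so by homotopy invariance $F(h_{f})$ is an equivalence and $F(f)\simeq F(\tilde f)\circ F(h_{f})$; hence the fibre of $F(f)$ agrees with the fibre of $F(\tilde f)$. Second, the middle row of \eqref{tbrtbbgdbfert} is exactly the mapping cone sequence \eqref{regegregefw} of $f$, which $F$ sends to a fibre sequence by hypothesis, identifying the fibre of $F(\tilde f)$ with $F(C(f))$ through the mapping-cone inclusion $C(f)\to Z(f)$. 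Third, the left column of \eqref{tbrtbbgdbfert} is a sequence $0\to A\to C(f)\to Q(f)\to 0$ whose quotient $Q(f)\cong C_{0}([0,1))\otimes C$ is contractible, so hypothesis \ref{wergrewfweref1} makes the ideal inclusion $F(A)\to F(C(f))$ an equivalence. Since the square with edges $j$, $h_{f}$, the ideal inclusion $A\to C(f)$ and the mapping-cone inclusion $C(f)\to Z(f)$ commutes (both composites send $a$ to $(a,0)$), these identifications combine to show that $F(j)$ exhibits $F(A)$ as the fibre of $F(f)$, i.e.\ that $F(A)\to F(B)\to F(C)$ is a fibre sequence.

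The step requiring care is matching the exactness type of the left column to that of the input sequence: when $f$ carries a cpc split $s$ (resp.\ a splitting homomorphism), I must verify that the quotient map $C(f)\to Q(f)$ carries the split $\hat s(c,\gamma)=(s(c),\gamma)$ of \eqref{tbrtbbgdbfert}, which is cpc (resp.\ a homomorphism) precisely when $s$ is, so that \ref{wergrewfweref1} is applicable in each of the three cases. Beyond this, the only genuinely analytic input is already absorbed into the hypotheses (homotopy invariance, reducedness, and the mapping-cone axiom), and the remaining work is purely formal: a chase of the three squares of \eqref{tbrtbbgdbfert} using the pasting law for pullbacks in the pointed $\infty$-category $\bC$. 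I therefore expect the main obstacle to be bookkeeping rather than mathematics, namely arranging the equivalences $F(A)\simeq F(C(f))$ and $F(C(f))\simeq\operatorname{fib}(F(\tilde f))$ to be compatible with $F(j)$ and $F(f)$, so that the outcome is an honest fibre sequence on $F(A)\to F(B)\to F(C)$ rather than a mere abstract equivalence of fibres.
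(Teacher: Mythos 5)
Your proposal is correct and follows essentially the same route as the paper's proof: the easy direction uses $F(Q)\simeq 0$, and the converse runs the general sequence through diagram \eqref{tbrtbbgdbfert}, applying the mapping-cone hypothesis to the middle row, hypothesis \ref{wergrewfweref1} to the left column $0\to A\to C(f)\to Q(f)\to 0$ (with the induced split $\hat s$ matching the exactness type), and homotopy invariance to $h_{f}$. Your write-up merely makes explicit the compatibility checks (the commuting square $(a\mapsto(a,0))$ and the identification of fibres) that the paper leaves implicit.
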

\begin{proof}\mbox{}
We assume that 
 $F$ satisfies \ref{wergrewfweref}.
Then $F$ sends the exact (semi-split exact, or split exact) sequence  $0\to I\xrightarrow{i} A\xrightarrow{\pi} Q\to 0$  to a fibre sequence
\[F(I)\to F(A)\to F(Q)\] with $F(Q)\simeq 0$.
Consequently, 
 $F(i)$ is an equivalence.

Conversely we assume that  $F$ satisfies \ref{wergrewfweref1}.  
We 
  consider a general  exact  (semi-split exact or split exact) sequence $0\to A\to B\xrightarrow{f}C\to 0$ and form the  
  diagram \eqref{tbrtbbgdbfert}. By assumption   
$F$ sends the lower horizontal  sequence to a fibre sequence and 
$F(\iota_{f})$ is an equivalence. 
By homotopy invariance  of $F$ also $F(h_{f})$ is an equivalence.
We can therefore conclude that 
 $F(A)\to F(B)\to F(C)$ is fibre sequence, too.
 \end{proof}

Recall  the  \cref{fuhquifhiuqwefeqwdewdewdcdfvca}  of Schochet exactness of a functor in terms of Schochet fibrant squares. 
By our experience,   Schochet exactness    is either obvious from the construction or very difficult to verify.  The following result shows that for stable target categories homotopy invariance and semiexactness together imply Schochet exactness. We consider  homotopy invariance and semiexactnes as properties which are much closer
 to the usual $C^{*}$-algebraic business.
 \begin{lem}\label{reijgoerfeffrfqrferfe}
A homotopy invariant and semiexact functor from $\nCalg$ to a stable $\infty$-category is Schochet exact.
  \end{lem}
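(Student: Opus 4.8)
The plan is to use that the target $\bD$ is stable in order to replace ``Schochet exact'' by the more tractable condition of sending Schochet exact \emph{sequences} to fibre sequences, and then to compare an arbitrary Schochet exact sequence with its mapping cone sequence, whose advantage is that it is semi-split. Concretely, by Remark~\ref{fuhquifhiuqwefeqwdewdewdcdfvca1} (using stability of $\bD$) it suffices to show that $F$ sends every Schochet exact sequence $0\to A\xrightarrow{i} B\xrightarrow{f} C\to 0$, with $f$ a Schochet fibration and $A=\ker f$, to a fibre sequence. Since $F$ is homotopy invariant, by \eqref{weqfwdqedeqdqed} it factors as $F\simeq \hat F\circ L_{h}$ for a functor $\hat F\colon \nCalg_{h}\to \bD$.

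First I would form the mapping cylinder diagram \eqref{tbrtbbgdbfert} attached to $f$. Its middle row $0\to C(f)\to Z(f)\xrightarrow{\tilde f} C\to 0$ is semi-split exact (Remark~\ref{werigowrgerfewrfwerfwerf}), so by semiexactness of $F$ (Definition~\ref{wrogjpwrgrfrewf}.\ref{trokgperwgrefwerfrefew}) its image $F(C(f))\to F(Z(f))\to F(C)$ is a fibre sequence. The diagram \eqref{tbrtbbgdbfert} is a map of short exact sequences from the given sequence to this middle row, with vertical components $\iota_{f}\colon A\to C(f)$, a homotopy equivalence $h_{f}$ between $B$ and $Z(f)$ (Remark~\ref{werigowrgerfewrfwerfwerf}), and $\id_{C}$. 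Applying $F$, the middle vertical map becomes an equivalence by homotopy invariance and the right one is the identity; hence, once we know that $F(\iota_{f})$ is an equivalence, the map of sequences identifies $F(A)\to F(B)\to F(C)$ with the fibre sequence $F(C(f))\to F(Z(f))\to F(C)$, so it is itself a fibre sequence.

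The remaining, and I expect main, point is that $F(\iota_{f})$ is an equivalence, and this is where the Schochet fibration hypothesis genuinely enters. Both $0\to A\to B\xrightarrow{f} C\to 0$ and $0\to C(f)\to Z(f)\xrightarrow{\tilde f} C\to 0$ are Schochet exact, i.e.\ Schochet fibrant cartesian squares whose lower-left corner is $0$. Since $L_{h}$ is itself Schochet exact by Proposition~\ref{weokjgpwerferfwef}.\ref{eiojfgrgfreaedc1}, it carries both to cartesian squares in the left-exact $\infty$-category $\nCalg_{h}$, exhibiting $L_{h}(A)\simeq \mathrm{fib}(L_{h}(B)\to L_{h}(C))$ and $L_{h}(C(f))\simeq \mathrm{fib}(L_{h}(Z(f))\to L_{h}(C))$. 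The induced map of cospans $(L_{h}(B)\to L_{h}(C)\leftarrow 0)\to (L_{h}(Z(f))\to L_{h}(C)\leftarrow 0)$ is an equivalence on each corner, namely on the $B$-corner because $L_{h}$ inverts the homotopy equivalence $h_{f}$, and on the $C$-corner because it is the identity, so the induced map on pullbacks $L_{h}(\iota_{f})\colon L_{h}(A)\to L_{h}(C(f))$ is an equivalence. Applying $\hat F$ shows that $F(\iota_{f})$ is an equivalence, completing the argument. As a conceptual sanity check one can verify this step directly: applying $\underline{\Hom}(D,-)$ identifies $\underline{\Hom}(D,A)$ and $\underline{\Hom}(D,C(f))$ with the strict and homotopy fibres of the Serre fibration $f_{*}\colon \underline{\Hom}(D,B)\to \underline{\Hom}(D,C)$, and $\iota_{f,*}$ with the comparison between them, which is a weak equivalence precisely because $f_{*}$ is a Serre fibration.
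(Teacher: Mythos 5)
Your proof is correct and follows essentially the same route as the paper's: reduce via Remark~\ref{fuhquifhiuqwefeqwdewdewdcdfvca1} and stability to Schochet exact sequences, apply semiexactness to the semi-split middle row of the mapping cylinder diagram \eqref{tbrtbbgdbfert}, and deduce that $F(\iota_{f})$ is an equivalence by noting that $L_{h}$ sends both Schochet exact rows to fibre sequences while inverting $h_{f}$, then factor $F$ through $L_{h}$. The only difference is presentational — you spell out the cospan/pullback comparison that the paper leaves implicit.
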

 \begin{proof}
 Let $F:\nCalg\to \bD$ be a homotopy invariant and semiexact functor to a stable $\infty$-category. 
 Then $F$ is reduced. Since $\bD$ is stable,  in order to show Schochet exactness by \cref{fuhquifhiuqwefeqwdewdewdcdfvca1}  it suffices  
 to show 
 that $F$  sends any  Schochet exact sequence  $0\to A\to B\to C\to 0$ to a fibre sequence.
 
 We apply $F$ to the diagram  \eqref{tbrtbbgdbfert} and get a diagram
  \[\xymatrix{F(A)\ar[d]^{F(\iota_{f})}\ar[r]&F(B)\ar[d]^{F(h_{f})}\ar[r]&F(C)\ar@{=}[d]\\F(C(f))\ar[r]\ar[d]&F(Z(f))\ar[r]&F(C)\\F(Q(f))&&}\ .\]
  The middle horizontal line is a fibre sequence since $F$ is semiexact and the middle horizontal sequence in  \eqref{tbrtbbgdbfert} is semi-split exact.
  The functor $L_{h}$ sends both horizontal sequences in \eqref{tbrtbbgdbfert} to fibre sequences since they are Schochet exact. 
  Since $h_{f}$ is a homotopy equivalence we see that  $L_{h}(h_{f})$ and hence also $L_{h}(\iota_{f})$ are equivalences. Since $F$ is homotopy invariant it factorizes over $L_{h}$ and therefore $F(\iota_{f})$ is also an equivalence. We can now conclude that the upper horizontal sequence in the diagram above is a fibre sequence.
%
%
    \end{proof}

We consider exact sequences 
\[0\to I\xrightarrow{i}A  \xrightarrow{\pi}Q\to 0\]
and define the following sets of morphisms 
 in $\nCalg$:
\begin{eqnarray}\label{erijoglerfqwfwerfrefwrf}
 \hat W_{\splt}&:=&\{ i\mid \mbox{for all  split-exact sequences  with $Q$ contractible}\} \\
 \hat W_{\se}&:=&\{ i\mid \mbox{for all semi-split exact sequences  with $Q$ contractible}\} \nonumber\\
  \hat W_{\exa}&:=&\{i\mid \mbox{for all exact sequences  with $Q$ contractible}\} \nonumber
 \end{eqnarray}
 We will denote the closures  in $L_{K}\nCalg_{h}$  under equivalences of their images by $L_{h,K}$ by the same symbols.

\begin{rem}A natural idea would be to form the Dwyer-Kan localizations of $L_{K}\nCalg_{h}$ at the sets
$\hat W_{!}$ defined above. But there is no reason that these localizations are left-exact. In order to produce   left-exact
localizations we must localize at the closures of the above sets under pull-backs and the $2$-out-of-$3$ property.
\hB
\end{rem}

In the following a set $W$ of morphisms in an $\infty$-category $\bC$ is  always assumed to be closed under equivalences.
The set  $W$    
is {\em closed under pull-backs} if for every cartesian square
\[\xymatrix{A\ar[r]\ar[d]^{g} & B\ar[d]^{f} \\D \ar[r] &C } \] in $\bC$ with $f$  in $W$ also $g$ belongs to $W$.

The set $W$  has the {\em $2$-out-of $3$-property} if the fact that two out of $f,g,g\circ f$
belong to $W$ implies that the third also belongs to  $W$.

\begin{ex}
If $F:\bC\to \bD$ is a functor between $\infty$-categories, then the set $W$ of morphisms in $\bC$ which are
 sent by $F$ to equivalences  has the  $2$-out-of $3$-property. If $F$ is a left-exact functor between left-exact $\infty$-categories, then  $W$ is also closed under pull-backs.  \hB
\end{ex}

Let    
  $\bC$ be a left-exact $\infty$-category with a set of morphisms $W$.
  \begin{lem}\label{qerighqofqwefqew9}
  If $W$ has the $2$-out-of $3$-property and is closed under pull-backs, then the Dwyer-Kan localization $L:\bC\to \bC[W^{-1}]$ is left-exact.
  \end{lem}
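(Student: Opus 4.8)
The plan is to verify exactly the two conditions in the definition of a left-exact localization recalled just before the statement: that $\bC[W^{-1}]$ admits all finite limits and that $L$ preserves them. I will use the standard facts that an $\infty$-category has all finite limits as soon as it has a terminal object and all pull-backs, and that a functor between such categories is left-exact as soon as it preserves the terminal object and pull-backs. So it suffices to treat these two cases. The single technical input from which everything else is formal is an explicit description of the mapping spaces of $\bC[W^{-1}]$ through a \emph{calculus of right fractions}.

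First I would check that $W$ admits such a calculus. Since $W$ contains the equivalences, the $2$-out-of-$3$ property forces $W$ to be closed under composition, and closure under pull-backs supplies the (right) Ore condition: any cospan $X'\xrightarrow{f} Z \xleftarrow{s} Y$ with $s\in W$ is completed, by forming the pull-back $X'\times_Z Y$, to a commutative square whose leg over $X'$ is the base change of $s$ and hence again lies in $W$. Granting the coherent $\infty$-categorical analogue of the Gabriel--Zisman theorem, this yields for all objects $X,Y$ a natural equivalence
\[
\Map_{\bC[W^{-1}]}(X,Y)\simeq \colim_{(X'\xrightarrow{s}X)\in \cJ_X}\Map_{\bC}(X',Y)\ ,
\]
where $\cJ_X$ is the $\infty$-category whose objects are the morphisms $s\colon X'\to X$ in $W$ and whose morphisms are maps over $X$, and where the Ore condition together with $2$-out-of-$3$ show that $\cJ_X$ is filtered.

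From this formula both assertions drop out. For the terminal object $\ast$ of $\bC$ one has $\Map_{\bC}(X',\ast)\simeq \ast$ for every $X'$, so $\Map_{\bC[W^{-1}]}(X,L(\ast))\simeq \colim_{\cJ_X}\ast\simeq |\cJ_X|$, which is contractible because $\cJ_X$ has the initial object $\id_X$; hence $L(\ast)$ is terminal. For pull-backs, fix $X$ and apply $\Map_{\bC[W^{-1}]}(X,-)$ to the image under $L$ of a pull-back square $X'\times_Z Y$ in $\bC$. Each corepresentable $\Map_{\bC}(X',-)$ sends this square to a pull-back square of spaces since $\bC$ is left-exact, and filtered colimits in $\Spc$ commute with finite limits; therefore $\Map_{\bC[W^{-1}]}(X,-)$ carries the square to a pull-back square for every $X$, i.e. $L$ takes pull-back squares in $\bC$ to pull-back squares in $\bC[W^{-1}]$. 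Existence of arbitrary pull-backs in $\bC[W^{-1}]$ then follows by using the Ore condition to replace a general cospan, whose legs are fractions, by the $L$-image of an honest cospan of $\bC$ up to equivalence, and then invoking the preservation just established. Together these give finite completeness of $\bC[W^{-1}]$ and left-exactness of $L$.

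The hard part will be the first step, namely making the calculus of right fractions precise at the $\infty$-categorical level. In the $1$-categorical Gabriel--Zisman setting the Ore and cancellation conditions suffice, but here one must produce the mapping-space equivalence coherently and verify that $\cJ_X$ is filtered in the $\infty$-categorical sense, including the higher cancellation data. Pull-back stability of $W$ is precisely what feeds the Ore condition and the filteredness of $\cJ_X$, while $2$-out-of-$3$ provides closure under composition and the cancellation needed for the higher coherences. Once the fraction formula is in hand, the commutation of filtered colimits with finite limits in $\Spc$ makes everything else routine.
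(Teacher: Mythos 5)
Your overall strategy is viable, and the formal half of your argument is exactly right: once one has the fraction formula $\Map_{\bC[W^{-1}]}(X,Y)\simeq \colim_{\cJ_X^{\op}}\Map_{\bC}(X',Y)$ with filtered index, left-exactness follows just as you say, since corepresentables are left exact, filtered colimits commute with finite limits in $\Spc$, the index is weakly contractible (note a variance slip: the colimit runs over $\cJ_X^{\op}$, so it is $\cJ_X^{\op}$ that must be filtered, and $\id_X$ is a \emph{terminal}, not initial, object of $\cJ_X$), and any cospan in the localization can be replaced by the $L$-image of an honest cospan. The paper's own proof is much shorter and differently packaged: under the hypotheses of \cref{qerighqofqwefqew9} the triple $(\bC,W,\bC)$, with \emph{every} morphism declared to be a fibration, satisfies the axioms of a category of fibrant objects in the sense of \cite[Def. 7.4.12, Def. 7.5.7]{Cisinski:2017} -- pullback-stability and the $2$-out-of-$3$ property are exactly the hypotheses, left-exactness of $\bC$ gives the limits, and the factorization/path-object axioms hold trivially because identities lie in $W$ -- and then \cite[Prop. 7.5.6]{Cisinski:2017} is quoted. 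Your route is essentially a re-derivation of the mathematics that this citation packages: it makes visible how each hypothesis is used, but at the cost of having to reprove known machinery.

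The genuine gap is precisely the step you ``grant''. The coherent fraction theorem available in the literature -- and used by the paper itself in \cref{efgijoerfreferfwrf} -- is \cite[7.2.8]{Cisinski:2017}, and its hypothesis is \emph{not} the Ore condition: a right calculus of fractions in the sense of \cite[Def. 7.2.6]{Cisinski:2017} is a cofiltered diagram of $W$-maps into $X$ \emph{together with} the requirement that the functor $H_X=\colim_{\cJ_X^{\op}}\Map_{\bC}(X',-)$ already sends morphisms of $W$ to equivalences. Your outline addresses the cofilteredness (the clean argument: $\cJ_X\subseteq \bC_{/X}$ is closed under finite limits, since its morphisms lie in $W$ by $2$-out-of-$3$ and pullbacks of $W$-maps are again in $W$), but it never verifies this inverting property, which does not follow formally from filteredness and is where the real content sits. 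It does follow from the lemma's hypotheses: for $t\colon Y_0\to Y_1$ in $W$, the surjectivity statements for $H_X(t)$ come from pulling $t$ back along a given map $X'\to Y_1$, and the cancellation statements come from pulling back along the diagonal $Y_0\to Y_0\times_{Y_1}Y_0$, which lies in $W$ because it is a section of a pullback of $t$ and $W$ has the $2$-out-of-$3$ property. This is the $\infty$-categorical avatar of the path-object argument in Brown's factorization lemma, i.e. exactly what the category-of-fibrant-objects citation gives for free. Supplying this verification (and being careful that $W$, hence $\cJ_X$, may be large, so the colimit formula must be interpreted in large spaces) would complete your proof; as written, the hard part has been assumed rather than proven.
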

\begin{proof}The triple $(\bC, W,\bC)$ is a category of fibrant objects in the sense of \cite[Def. 7.4.12 and Def. 7.5.7]{Cisinski:2017}. The Assertion  \ref{wergiuhweriugrewf}  therefore follows from \cite[Prop. 7.5.6]{Cisinski:2017}
\end{proof}
 
 Let    
  $\bC$ be a left-exact $\infty$-category with a set of morphisms $\hat W$, and let $W$ be the minimal 
subset of morphisms containing $\hat W$ which has the $2$-out-of $3$-property and is closed under pull-backs. Then for every left-exact $\infty$-category $\bD$   the canonical inclusion    \begin{equation}\label{erfwerferfwef} 
\Fun^{\lex,W}( \bC ,\bD) \stackrel{\simeq}{\to} \Fun^{\lex,  \hat W}(\bC ,\bD)
\end{equation}
is an equivalence.

%
%

For $!$ in $\{\exa,\se,\splt\}$ we define
$W_{!}$ as the smallest  set of morphisms in $L_{K}\nCalg_{h}$ which is closed under  pull-backs and has the $2$-out-of $3$-property, and which contains $\hat W_{!}$ from \eqref{erijoglerfqwfwerfrefwrf}.  
\begin{ddd} \label{wegjwoergerwfrferfrwefwf}We  define  the Dwyer-Kan localization
\[L_{!}:L_{K}\nCalg_{h}\to L_{K}\nCalg_{h,!} \]
at the set $W_{!}$.
\end{ddd}
Note that by construction $ L_{K}\nCalg_{h,!}$ is a large $\infty$-category. As we localize at a large set of morphisms we lose the property of being locally small at this point.

We  define the functor
\[L_{h,K,!}:\nCalg\xrightarrow{L_{h}}\nCalg_{h}\xrightarrow{L_{K}}L_{K}\nCalg_{h}\xrightarrow{L_{!}}L_{K}\nCalg_{h,!}\ .\]

We let $\Fun^{h,s,!+Sch}(\nCalg,\bD)$ denote the full subcategory of homotopy invariant and stable functors which are  Schochet exact and in addition
exact for $!=\exa$ (semiexact for $!=\se$ or split exact for $!=\splt$, respectively).  
 \begin{prop}\label{wreoigwtegwtgw9}
\mbox{}
\begin{enumerate}
\item\label{wergiuhweriugrewf}  The localization $L_{!}$ is   left-exact.
 
\item\label{wergiuhweriugrewf2} Pull-back along $L_{h,K,!}$ induces for every left-exact $\infty$-category $\bD$ an equivalence \begin{equation}\label{ewfwqedqewdedqwd}L_{h,K,! }^{*}:\Fun^{\lex}(L_{K}\nCalg_{h,! } ,\bD)\stackrel{\simeq}{\to} \Fun^{h,s,!+Sch}(\nCalg,\bD)\ .
\end{equation}
\item\label{wergiuhweriugrewf1} $L_{K}\nCalg_{h,!} $  is semi-additive and $L_{!}$ preserves coproducts.
\end{enumerate}
\end{prop}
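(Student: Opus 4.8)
The plan is to dispatch the three assertions in turn, in each case reducing to the formal results already established for general left-exact Dwyer--Kan localizations, so that the only genuinely hands-on work concerns coproducts.

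For the first assertion there is essentially nothing to do beyond unwinding definitions and quoting \cref{qerighqofqwefqew9}. By construction $W_{!}$ is the smallest set of morphisms in the left-exact $\infty$-category $L_{K}\nCalg_{h}$ that contains $\hat W_{!}$, has the $2$-out-of-$3$ property, and is closed under pull-backs. In particular $W_{!}$ itself has the $2$-out-of-$3$ property and is closed under pull-backs, so \cref{qerighqofqwefqew9} applies verbatim and shows that $L_{!}$ is left-exact.

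For the second assertion I would chain three equivalences. First, since $L_{!}$ is a left-exact Dwyer--Kan localization at $W_{!}$, its universal property gives $L_{!}^{*}\colon \Fun^{\lex}(L_{K}\nCalg_{h,!},\bD)\xrightarrow{\simeq}\Fun^{\lex,W_{!}}(L_{K}\nCalg_{h},\bD)$ for every left-exact $\bD$. Second, because $W_{!}$ is exactly the minimal pull-back-closed, $2$-out-of-$3$ set generated by $\hat W_{!}$, the inclusion \eqref{erfwerferfwef} identifies this with $\Fun^{\lex,\hat W_{!}}(L_{K}\nCalg_{h},\bD)$. Third, the equivalence \eqref{fqwewfedeqwd} identifies $\Fun^{\lex}(L_{K}\nCalg_{h},\bD)$ with $\Fun^{h,s,Sch}(\nCalg,\bD)$ via restriction along $L_{h,K}$. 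It then remains to match the side conditions: a left-exact $G$ inverts $\hat W_{!}$ iff $F:=G\circ L_{h,K}$ sends each $i$ occurring in \eqref{erijoglerfqwfwerfrefwrf} to an equivalence. Now $F$ is homotopy invariant, stable and Schochet exact; in particular it is reduced and, being reduced and Schochet exact, it sends the Schochet exact mapping cone sequences \eqref{regegregefw} to fibre sequences. Hence \cref{wetigowergferferferwfw} applies and tells us that inverting these $i$ is equivalent to $F$ being exact, semiexact, or split-exact according to $!$. Thus the composite equivalence restricts to $\Fun^{\lex}(L_{K}\nCalg_{h,!},\bD)\xrightarrow{\simeq}\Fun^{h,s,!+Sch}(\nCalg,\bD)$, and by construction it is pull-back along $L_{!}\circ L_{h,K}=L_{h,K,!}$.

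For the third assertion the plan is to show that $W_{!}$ is stable under coproducts, so that the coproduct functor descends to the localization. The key point is that $L_{K}\nCalg_{h}$ is semi-additive (\cref{weotgpwergferferfwref}), so finite coproducts are biproducts; for $f\colon X\to X'$ in $W_{!}$ and any object $Y$ the square
\[\xymatrix{X\oplus Y\ar[r]^{\pr_{X}}\ar[d]_{f\oplus \id_{Y}}&X\ar[d]^{f}\\ X'\oplus Y\ar[r]^{\pr_{X'}}&X'}\]
is cartesian, as one checks on mapping spaces using the biproduct decomposition. Pull-back-stability of $W_{!}$ then forces $f\oplus \id_{Y}\in W_{!}$; combining this with its mirror image $\id_{Y}\oplus f$ and the (composition-)closure coming from the $2$-out-of-$3$ property shows $f\oplus g\in W_{!}$ whenever $f,g\in W_{!}$, i.e. $W_{!}$ is closed under finite coproducts and $L_{!}$ preserves them. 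Semi-additivity of $L_{K}\nCalg_{h,!}$ is then automatic: $L_{!}$ is essentially surjective and, being left-exact, preserves finite products and the zero object, while it now also preserves finite coproducts; applying $L_{!}$ to the canonical equivalences $X\sqcup X'\xrightarrow{\simeq}X\times X'$ in the semi-additive $L_{K}\nCalg_{h}$ therefore yields the corresponding canonical equivalences in the target.

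I expect the main obstacle to lie precisely in the coproduct statement of the third assertion once one asks for \emph{all} small coproducts rather than only the finite ones needed for semi-additivity. The biproduct/pull-back argument above is clean for finite coproducts but breaks down as soon as products and coproducts diverge, since there is then no projection $\coprod_{i}X_{i}\to X_{j}$ to build the relevant cartesian square. To reach arbitrary small coproducts I would write them as filtered colimits of their finite sub-coproducts and argue that $W_{!}$ is stable under these filtered colimits of arrows; establishing that stability (rather than the formal descent once it is known) is the delicate step, and it is where the analysis would need the most care.
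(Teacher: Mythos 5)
Your treatment of the first two assertions coincides with the paper's own proof: assertion (1) is exactly the application of \cref{qerighqofqwefqew9}, and for assertion (2) you chain the same three identifications the paper uses — the universal property of the left-exact localization $L_{!}$, the reduction \eqref{erfwerferfwef} from $W_{!}$ to $\hat W_{!}$, and the combination of \eqref{fqwewfedeqwd} with \cref{wetigowergferferferwfw} to translate inversion of $\hat W_{!}$ into the exactness condition $!$. The only place you genuinely add something is assertion (3), which the paper dispatches as ``a general fact about left-exact localizations of left-exact $\infty$-categories which are in addition semi-additive'' without proof, whereas you actually prove that fact: your observation that the square with horizontal projections and vertical maps $f\oplus\id_{Y}$ and $f$ is cartesian (since $X'\oplus Y\simeq X'\times Y$ and pulling back a product projection yields a product projection), so that pull-back stability and the $2$-out-of-$3$ property force $W_{!}$ to be closed under finite biproducts, is correct, and the descent of the coproduct--diagonal adjunction (both functors preserve the relevant classes, so unit and counit descend) then gives finite coproducts in $L_{K}\nCalg_{h,!}$, their preservation by $L_{!}$, and hence semi-additivity by transporting the canonical equivalences. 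Your explicit argument is thus a useful supplement to the paper rather than a divergence from it. One clarification concerning your final paragraph: the worry about arbitrary small coproducts is moot, because the assertion should be read as being about \emph{finite} coproducts only. This is made explicit in the separable analogue \cref{wreoigwtegwtgw9111}, which states ``preserves finite coproducts'', and in the remark following it, where the paper records that it is not even known whether $L_{K}\nCalg_{h,!}$ admits infinite coproducts (in contrast to \cref{wreogkpwegrewferf}.\ref{wetokgpwerferwfwref} for $L_{K}\nCalg_{h}$). So the filtered-colimit program you sketch for infinite coproducts is not required — and indeed could not be carried out with the techniques at hand — while what you proved for finite coproducts is the full content of the assertion.
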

\begin{proof}
The Assertion  \ref{wergiuhweriugrewf} follows from \cref{qerighqofqwefqew9}.

We next show Assertion \ref{wergiuhweriugrewf2}.
For any $\infty$-category $\bD$ we have restriction functors
\[   \Fun (L_{K}\nCalg_{h,!} ,\bD)\stackrel{L_{!}^{*}}{\simeq} 
  \Fun^{ W_{!}}(L_{K}\nCalg_{h} ,\bD)\stackrel{L_{h,K}^{*}}{\to }
\Fun^{h,s}(\nCalg ,\bD)\ ,\]
where the first is an equivalence by the universal property of the Dwyer-Kan localization  $L_{!}$.
The second functor is 
 the  restriction of the equivalence  \eqref{qwfdqedewdqwedd} and hence fully faithful.
  If $\bD$ is left-exact, then by \ref{wergiuhweriugrewf}   
 the first functor  restricts to the first equivalence in
\begin{equation}\label{bdsiobjoibsbsfbsfvsfdv}\Fun^{\lex} (L_{K}\nCalg_{h,!} ,\bD)\stackrel{L_{!}^{*}}{\simeq} 
  \Fun^{\lex, W_{!} }(L_{K}\nCalg_{h} ,\bD)\stackrel{\eqref{erfwerferfwef}}{\simeq}   \Fun^{ \lex,\hat W_{!} }(L_{K}\nCalg_{h} ,\bD)\ . 
\end{equation}  
  
  Finally, by \cref{wetigowergferferferwfw} the equivalence \eqref{fqwewfedeqwd}  restricts to   an equivalence 
  \[ \Fun^{ \lex,\hat W_{!}, }(L_{K}\nCalg_{h} ,\bD)\stackrel{L_{h,K}^{*}}{\simeq}  \Fun^{h,s,!+Sch}(\nCalg ,\bD)\ .\]
  The composition of these equivalences gives \eqref{ewfwqedqewdedqwd}.  
  
  Assertion \ref{wergiuhweriugrewf1} is a general fact about left-exact localizations of left-exact $\infty$-categories which are in addition semi-additive.
\end{proof}

Next we consider the symmetric monoidal structures. We allow the following combinations of $!$ and $?$:
\bigskip

\centerline{\begin{tabular}{c|c|c} ! $\backslash$ ?&min&max\\ \hline 
splt&yes&yes\\
se&yes&yes\\
ex&no&yes
\end{tabular}}
The combination $(\exa,\min)$ is excluded since the minimal tensor product does not preserve exact sequences.

\begin{prop}\label{ewrogpwergfrefwrefwref}\mbox{} \begin{enumerate} 
\item \label{wrthgioweiogiowerg} The localization $L_{!} $ has a symmetric monoidal refinement. 

\item\label{wrthgioweiogiowerg1}  The functor $-\otimes_{?}-:L_{K}\nCalg_{h,!}  \times L_{K}\nCalg_{h,!} \to L_{K}\nCalg_{h,!} $ is bi-left exact.
\item \label{wrthgioweiogiowerg2}  Pull-back along $L_{h,K,!}$ induces for every left-exact $\infty$-category an equivalence \begin{equation}\label{ewfwqedqewdedqwd1}L_{h,K,!}^{*}:\Fun_{\otimes/\lax}^{\lex}(L_{K}\nCalg_{h,!} ,\bD)\stackrel{\simeq}{\to} \Fun^{h,s,!+Sch}_{\otimes/\lax}(\nCalg,\bD)
\end{equation}
for any symmetric monoidal and left-exact $\infty$-category $\bD$.
\end{enumerate}
\end{prop}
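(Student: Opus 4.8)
The plan is to dispatch the three assertions in order, extracting Assertions \ref{wrthgioweiogiowerg1} and \ref{wrthgioweiogiowerg2} formally from Assertion \ref{wrthgioweiogiowerg} together with the left-exactness of $L_{!}$ already recorded in \cref{wreoigwtegwtgw9}.\ref{wergiuhweriugrewf}; the only genuine content lies in Assertion \ref{wrthgioweiogiowerg}.

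For Assertion \ref{wrthgioweiogiowerg}, by the criterion of \cite[Def.3.3.2]{hinich} recalled below \eqref{qwefqwefwefewdwedqewd} it suffices to show that for every object $C$ of $L_{K}\nCalg_{h}$ the functor $C\otimes_{?}-$ maps $W_{!}$ into $W_{!}$. I would prove this by a minimality argument. Set $W':=\{g\in\Mor(L_{K}\nCalg_{h})\mid C\otimes_{?}g\in W_{!}\}$. Since by \cref{wegjwoergerwfrferfrwefwf} the set $W_{!}$ is the smallest set containing $\hat W_{!}$ that is closed under pull-backs and has the $2$-out-of-$3$ property, it is enough to verify that $W'$ has these three features. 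The $2$-out-of-$3$ property of $W'$ is immediate since $C\otimes_{?}-$ is a functor and $W_{!}$ has $2$-out-of-$3$; closure of $W'$ under pull-backs holds because $C\otimes_{?}-$ is left-exact by \cref{wreogkpwegrewferf}.\ref{ierjogoergegsegs9}, hence preserves the cartesian square defining a pull-back, and $W_{!}$ is closed under pull-backs. The only point needing the $C^{*}$-algebraic input is $\hat W_{!}\subseteq W'$: an element of $\hat W_{!}$ is the image under $L_{h,K}$ of an inclusion $i\colon I\to A$ in a sequence $0\to I\to A\to Q\to 0$ of type $!$ with $Q$ contractible, and representing $C$ by a $C^{*}$-algebra we have $C\otimes_{?}L_{h,K}(i)\simeq L_{h,K}(C\otimes_{?}i)$. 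For the admissible pairs $(!,?)$ in the table the functor $C\otimes_{?}-$ preserves sequences of type $!$, and it preserves contractibility because $C\otimes_{?}-$ preserves homotopy equivalences; thus $C\otimes_{?}i$ again lies in $\hat W_{!}$. This yields $\hat W_{!}\subseteq W'$, hence $W_{!}\subseteq W'$, proving the claim. The excluded pair $(\exa,\min)$ is precisely the case where this preservation fails.

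Assertion \ref{wrthgioweiogiowerg1} then follows by the mechanism already used for the smashing localization in \cref{lhK} and for $-\otimes_{?}-$ on $L_{K}\nCalg_{h}$. For objects $X,Y_{j}$ of $L_{K}\nCalg_{h,!}$ one has $X\otimes Y_{j}=L_{!}(\incl X\otimes_{?}\incl Y_{j})$, where $\incl$ is the fully faithful right adjoint and hence preserves limits. Given a finite limit $\lim_{j}Y_{j}$, using that $\incl$ preserves it, that $\otimes_{?}$ on $L_{K}\nCalg_{h}$ is bi-left exact by \cref{wreogkpwegrewferf}.\ref{ierjogoergegsegs9}, and that $L_{!}$ is left-exact by \cref{wreoigwtegwtgw9}.\ref{wergiuhweriugrewf}, one commutes $X\otimes-$ past the limit, so $X\otimes-$ preserves finite limits; the symmetric argument applies in the other variable.

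For Assertion \ref{wrthgioweiogiowerg2} I would run the monoidal analogue of the proof of \cref{wreoigwtegwtgw9}.\ref{wergiuhweriugrewf2}, namely compose the chain
\begin{equation*}
\Fun^{\lex}_{\otimes/\lax}(L_{K}\nCalg_{h,!},\bD)\xrightarrow[\simeq]{L_{!}^{*}}\Fun^{\lex,W_{!}}_{\otimes/\lax}(L_{K}\nCalg_{h},\bD)=\Fun^{\lex,\hat W_{!}}_{\otimes/\lax}(L_{K}\nCalg_{h},\bD)\xrightarrow[\simeq]{L_{h,K}^{*}}\Fun^{h,s,!+Sch}_{\otimes/\lax}(\nCalg,\bD).
\end{equation*}
The first equivalence is the universal property \eqref{qwefqwefwefewdwedqewd} of the symmetric monoidal refinement of $L_{!}$ from Assertion \ref{wrthgioweiogiowerg}, restricted to left-exact functors via left-exactness of $L_{!}$. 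The middle identification holds because whether a left-exact functor inverts $\hat W_{!}$ or $W_{!}$ is a property of its underlying functor, the set of morphisms it inverts being closed under pull-backs and $2$-out-of-$3$; this is insensitive to the (lax) monoidal enhancement and is the monoidal form of \eqref{erfwerferfwef}. The last equivalence is the symmetric monoidal version of \eqref{fqwewfedeqwd} stated immediately after it, combined with \cref{wetigowergferferferwfw}, which identifies inversion of $\hat W_{!}$ on the source with $!$-exactness on $\nCalg$; as this criterion depends only on the underlying functor it restricts to (lax) monoidal functors. The main obstacle throughout is the inclusion $\hat W_{!}\subseteq W'$ in Assertion \ref{wrthgioweiogiowerg}, which is exactly where the analytic fact that $\otimes_{?}$ preserves type-$!$ sequences for the admissible pairs is used; everything else is a formal transport of the non-monoidal arguments.
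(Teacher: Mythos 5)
Your proof of Assertion \ref{wrthgioweiogiowerg1} contains a genuine gap: you compute the induced tensor product as $X\otimes Y_{j}=L_{!}(\incl X\otimes_{?}\incl Y_{j})$ ``where $\incl$ is the fully faithful right adjoint'' of $L_{!}$, transplanting the mechanism that works for the smashing localization $L_{K}$ of \cref{lhK}. But $L_{!}$ is not a left Bousfield localization: it is defined in \cref{wegjwoergerwfrferfrwefwf} purely as a Dwyer-Kan localization at the large set $W_{!}$, no right adjoint is constructed anywhere, and there is no reason for one to exist --- the paper even remarks after \cref{wegjwoergerwfrferfrwefwf} that $L_{K}\nCalg_{h,!}$ may fail to be locally small, which is incompatible with realizing it as a reflective full subcategory of the locally small category $L_{K}\nCalg_{h}$. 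Hence both the formula for $X\otimes Y_{j}$ and the step ``$\incl$ preserves the limit'' are unjustified. The conclusion is true, but the correct mechanism is different: by the monoidal refinement from Assertion \ref{wrthgioweiogiowerg}, the induced functor $L_{!}(A)\otimes_{?}-$ on $L_{K}\nCalg_{h,!}$ is the essentially unique functor whose precomposition with $L_{!}$ is $L_{!}\circ(A\otimes_{?}-)$ from \eqref{gqerfqqewdqewdwedq}; since that composite is left-exact and inverts $W_{!}$, the universal-property equivalence $L_{!}^{*}:\Fun^{\lex}(L_{K}\nCalg_{h,!},\bD)\stackrel{\simeq}{\to}\Fun^{\lex,W_{!}}(L_{K}\nCalg_{h},\bD)$ coming from the left-exactness of $L_{!}$ (\cref{wreoigwtegwtgw9}.\ref{wergiuhweriugrewf}), applied with $\bD=L_{K}\nCalg_{h,!}$, exhibits the induced functor as left-exact. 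This is how the paper argues.

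Your Assertions \ref{wrthgioweiogiowerg} and \ref{wrthgioweiogiowerg2} are correct, and Assertion \ref{wrthgioweiogiowerg} in fact takes a genuinely different route from the paper. You show that $C\otimes_{?}-$ preserves $W_{!}$ itself, by checking that $W':=(C\otimes_{?}-)^{-1}(W_{!})$ contains $\hat W_{!}$ (the $C^{*}$-algebraic input), has the $2$-out-of-$3$ property, and is closed under pull-backs (via bi-left exactness of $\otimes_{?}$ from \cref{wreogkpwegrewferf}.\ref{ierjogoergegsegs9}), and then invoke minimality of $W_{!}$. The paper instead avoids proving preservation of $W_{!}$ directly: it introduces the saturation $\tilde W_{!}$ of all morphisms inverted by $L_{!}$, uses \eqref{vsdfvdfsvfrfes} to conclude that the left-exact, $\hat W_{!}$-inverting functor $L_{!}\circ(A\otimes_{?}-)$ inverts $\tilde W_{!}$, and applies Hinich's criterion to $\tilde W_{!}$, using that $L_{!}$ is also the localization at $\tilde W_{!}$. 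Your minimality argument is somewhat more self-contained, while the paper's saturation trick avoids verifying closure properties of a preimage set; both rest on the same analytic fact that $A\otimes_{?}-$ preserves type-$!$ sequences and contractibility for the admissible pairs $(!,?)$. Your chain of equivalences for Assertion \ref{wrthgioweiogiowerg2} is in substance the paper's argument, organized through the left-exact monoidal universal property of $L_{h,K}$ and \cref{wetigowergferferferwfw} rather than through \eqref{wregwefrefewrfrefw} followed by an essential-surjectivity check.
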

\begin{proof}
We first observe that the functor \[A\otimes_{?}-:L_{K}\nCalg_{h}\to L_{K}\nCalg_{h}\] preserves the set $\hat W_{!}$ defined in \eqref{erijoglerfqwfwerfrefwrf}.
Indeed if $!=\exa$ and $?=\max$, then we use that this functor preserves exact sequences and
contractible objects.  It is at this point where we must exclude the combination $!=\exa$ and $?=\min$.

If $!$ is in $\{\se,\splt\}$, then  this functor preserves semi-split exact or split exact sequences and
contractible objects for both $?=\min$ and $?=\max$.

Let $\tilde W_{!}$ be set of morphisms in $L_{K}\nCalg_{h}$ which are sent to equivalences by $L_{!}$.
Then for every left-exact  $\infty$-category $\bD$ we have an equivalence
\begin{equation}\label{vsdfvdfsvfrfes}\Fun^{\lex, \hat W_{!}}(L_{K}\nCalg_{h},\bD)\stackrel{\eqref{erfwerferfwef}}{\simeq}   \Fun^{\lex, W_{!}}(L_{K}\nCalg_{h},\bD)\simeq  \Fun^{\lex, \tilde W_{!}}(L_{K}\nCalg_{h},\bD)\ .
\end{equation}   By \cref{wreogkpwegrewferf}.\ref{ierjogoergegsegs9} and \cref{wreoigwtegwtgw9}.\ref{wergiuhweriugrewf} the composition  \begin{equation}\label{gqerfqqewdqewdwedq}L_{!}\circ (A\otimes_{?}-): L_{K}\nCalg_{h}\to L_{K}\nCalg_{h,!} 
\end{equation}
is left-exact. By the discussion above it inverts $\hat W_{!}$.  It  then follows from \eqref{vsdfvdfsvfrfes} that $A\otimes_{?}-$ preserves the set $\tilde W_{!}$.  Since $L_{!}$ is also the Dwyer-Kan localization of $L_{K}\nCalg_{h}$ at $\tilde W_{!}$
we conclude that the localization  $L_{!}$ has a symmetric monoidal refinement, hence Assertion \ref{wrthgioweiogiowerg}.

For Assertion \ref{wrthgioweiogiowerg1} we note that
the induced functor  \[A\otimes_{?}-:L_{K}\nCalg_{h,!} \to L_{K}\nCalg_{h,!}\] is  left exact since
it is the preimage of \eqref{gqerfqqewdqewdwedq} under the equivalence 
\[L_{!}^{*}:\Fun^{\lex}( L_{K}\nCalg_{h,!} , L_{K}\nCalg_{h,!} )\stackrel{\simeq}{\to} \Fun^{\lex,W_{!}}( L_{K}\nCalg_{h} , L_{K}\nCalg_{h,!})\]
 (see \cref{ewrogpwergfrefwrefwref}.\ref{wergiuhweriugrewf}).

We finally show Assertion \ref{wrthgioweiogiowerg2}.
Assertion  \ref{wrthgioweiogiowerg} implies the first equivalence in 
 \[\Fun_{\otimes/\lax} (L_{K}\nCalg_{h,!},\bD)\stackrel{L_{!}^{*}}{\simeq} \Fun_{\otimes/\lax}^{W_{!}}(L_{K}\nCalg_{h},\bD)\to  \Fun^{h,s}_{\otimes/\lax}(\nCalg,\bD)\]
 whose second arrow is a restriction of the equivalence \eqref{wregwefrefewrfrefw} and  hence fully faithful.
 We now restrict the domain to (lax) symmetric monoidal functors which are in addition left-exact and get 
 \[\Fun_{\otimes/\lax}^{\lex}(L_{K}\nCalg_{h,!} ,\bD)\stackrel{L_{!}^{*}}{\simeq} \Fun_{\otimes/\lax}^{W_{!},\lex}(L_{K}\nCalg_{h},\bD)\stackrel{\simeq}{\to}   \Fun^{h,s,!+Sch}_{\otimes/\lax}(\nCalg,\bD)\ .\]
 The second arrow indeed takes values in the indicated subcategory by \cref{wreoigwtegwtgw9}.\ref{wergiuhweriugrewf2}.
 In order to see that  it is essential surjective consider a functor $F$ in  $\Fun^{h,s,!+Sch}_{\otimes/\lax}(\nCalg,\bD)$.
 Then  by \eqref{wregwefrefewrfrefw} there is an essentially unique (lax) symmetric monoidal functor $\tilde F:L_{K}\nCalg_{h}\to \bD$ such that $L_{h,K}^{*}\tilde F\simeq F$. By \cref{wreoigwtegwtgw9}.\ref{wergiuhweriugrewf2} the functor $\tilde F$ 
 is left-exact and inverts $W_{!}$. Thus $\tilde F$ belongs to $  \Fun_{\otimes/\lax}^{W_{!},\lex}(L_{K}\nCalg_{h},\bD)$.
   \end{proof}

The constructions above have versions for the category of separable $C^{*}$-algebras. We let $\hat W_{\sepa,!}$ denote the  analog of  $\hat W_{!} $ from \eqref{erijoglerfqwfwerfrefwrf} for separable algebras and $W_{\sepa,!}$ be the smallest subset of morphisms containing $\hat W_{\sepa,!}$ which has the $2$-out-of-$3$-property and is closed under pull-backs.
For $!$ in $\{\exa,\se,\splt\}$ we define the Dwyer-Kan localization
\begin{equation}\label{vervweriuheiucececwewce}L_{\sepa,!}:L_{K}\nCalg_{\sepa,h}\to L_{K}\nCalg_{\sepa,h,!} 
\end{equation} 
at $W_{\sepa,!}$.
Since $L_{K}\nCalg_{\sepa,h}$ is locally small and essentially small,
the set of equivalence classes in $W_{\sepa,!}$ is small. This implies that $L_{K}\nCalg_{\sepa,h,!} $ is still
essentially small and locally small.

We define the composition\begin{equation}\label{qwefoifjhwoefcwdwqedqewdqewd}
L_{\sepa,h,K,!}:=L_{\sepa,!}\circ L_{\sepa,h,K}:\nCalg_{\sepa}\to L_{K}\nCalg_{\sepa,h,!} \ .
 \end{equation}
Then we have the following statements.
\begin{prop}\label{wreoigwtegwtgw9111}
\mbox{}
\begin{enumerate}
\item\label{wergiuhweriugrewf111}  The localization $L_{\sepa,!}$ is   left-exact.
\item\label{wergiuhweriugrewf1111} $L_{K}\nCalg_{\sepa,h,!} $  is semi-additive and $L_{\sepa,!}$ preserves finite coproducts.
\item\label{wergiuhweriugrewf2111} Pull-back along $L_{\sepa,h,K,!}$ induces for every left-exact $\infty$-category $\bD$ an equivalence \begin{equation}\label{ewfwqedqewdedqwd111}L_{\sepa,h,K,!}^{*}:\Fun^{\lex}(L_{K}\nCalg_{\sepa,h,!} ,\bD)\stackrel{\simeq}{\to} \Fun^{h,s,!+Sch}(\nCalg_{\sepa},\bD)\ .
\end{equation}

\item \label{wrthgioweiogiowerg111} The localization $L_{\sepa,!} $ has a symmetric monoidal refinement. 
\item We have a commutative square of symmetric monoidal functors
\begin{equation}\label{fdvsdvsdfvfsdvsdfv}\xymatrix{L_{K}\nCalg_{\sepa,h} \ar[r]\ar[d]^{L_{\sepa,!}}&L_{K}\nCalg_{h}\ar[d]^{L_{!}}\\L_{K}\nCalg_{\sepa,h,!} \ar[r]&L_{K}\nCalg_{h,!} }\ . \end{equation} 
\item\label{wrthgioweiogiowerg1111}  The functor $-\otimes_{?}-:L_{K}\nCalg_{\sepa,h,!} \times L_{K}\nCalg_{\sepa,h,!} \to L_{K}\nCalg_{\sepa,h,!} $ is bi-left exact.
\item \label{wrthgioweiogiowerg2111}  Pull-back along $L_{\sepa,h,K,!}$ induces for every left-exact $\infty$-category an equivalence \begin{equation}\label{ewfwqedqewdedqwd1111}L_{\sepa,h,K,!}^{*}:\Fun_{\otimes/\lax}^{\lex}(L_{K}\nCalg_{\sepa,h,!} ,\bD)\stackrel{\simeq}{\to} \Fun^{h,s,!+Sch}_{\otimes/\lax}(\nCalg_{\sepa},\bD)
\end{equation}
for any symmetric monoidal and left-exact $\infty$-category $\bD$.
\end{enumerate}
\end{prop}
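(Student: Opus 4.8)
The strategy is that every assertion except the commutative square is the separable analogue of a statement already proved in \cref{wreoigwtegwtgw9} and \cref{ewrogpwergfrefwrefwref}, and that its proof runs word by word once the separable inputs are substituted. Concretely, wherever the non-separable argument invoked \cref{weokjgpwerferfwef} or \cref{wreogkpwegrewferf} one uses instead their separable counterparts \cref{weogpwergerfgwerfwerf9} and \cref{wreogkpwegrewferf1}, and the universal equivalences \eqref{qwfdqedewdqwedd}, \eqref{wregwefrefewrfrefw} and \eqref{fqwewfedeqwd} are replaced by their separable versions for $L_{\sepa,h,K}$ recorded after \cref{wreogkpwegrewferf1}. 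Throughout one uses that $L_{K}\nCalg_{\sepa,h}$ is itself left-exact, semi-additive and symmetric monoidal, and that the tensor factors occurring preserve separability.

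For the left-exactness of $L_{\sepa,!}$ one notes that $W_{\sepa,!}$ is by construction closed under pull-backs and has the $2$-out-of-$3$-property, so \cref{qerighqofqwefqew9} applies to the left-exact $\infty$-category $L_{K}\nCalg_{\sepa,h}$. Semi-additivity of $L_{K}\nCalg_{\sepa,h,!}$ and preservation of finite coproducts are then the same general fact about left-exact localizations of semi-additive left-exact $\infty$-categories used in the proof of \cref{wreoigwtegwtgw9}. The universal property \eqref{ewfwqedqewdedqwd111} is obtained by reproducing the chain of equivalences in that proof: restriction along $L_{\sepa,!}$ identifies left-exact functors with left-exact functors inverting $W_{\sepa,!}$, the passage to $\hat W_{\sepa,!}$ is \eqref{erfwerferfwef}, and \cref{wetigowergferferferwfw} identifies the latter with $\Fun^{h,s,!+Sch}(\nCalg_{\sepa},\bD)$, its hypotheses being visibly stable under restriction to separable algebras. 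The symmetric monoidal refinement of $L_{\sepa,!}$, its bi-left-exactness and the monoidal universal property \eqref{ewfwqedqewdedqwd1111} are transcriptions of the proof of \cref{ewrogpwergfrefwrefwref}: for separable $A$ the functor $A\otimes_{?}-$ preserves $\hat W_{\sepa,!}$, hence the saturation, because it preserves the appropriate exact sequences, contractible objects and separability; the smallness remark preceding the proposition ensures that $L_{K}\nCalg_{\sepa,h,!}$ stays essentially small and locally small.

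The one genuinely new ingredient is the commutative square relating the separable and the global localizations, which has no counterpart above. The plan is to show that under the fully faithful inclusion $\iota:L_{K}\nCalg_{\sepa,h}\to L_{K}\nCalg_{h}$ one has $\iota(W_{\sepa,!})\subseteq W_{!}$; then $L_{!}\circ\iota$ inverts $W_{\sepa,!}$ and factors, by the universal property of $L_{\sepa,!}$, through a functor $L_{K}\nCalg_{\sepa,h,!}\to L_{K}\nCalg_{h,!}$ filling in the bottom row, the symmetric monoidal refinement being automatic since each functor in the square already carries one. To get the inclusion of morphism sets I would first observe $\iota(\hat W_{\sepa,!})\subseteq\hat W_{!}$, which is immediate because a separable exact (semi-split, split) sequence with contractible quotient is such a sequence in $\nCalg$, and then argue that $\iota^{-1}(W_{!})$ contains $\hat W_{\sepa,!}$ and is closed under pull-backs and $2$-out-of-$3$, so that minimality of $W_{\sepa,!}$ forces $W_{\sepa,!}\subseteq\iota^{-1}(W_{!})$.

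The hard part of this last step, and the only place needing more than bookkeeping, is the left-exactness of $\iota$, which is what makes $\iota^{-1}(W_{!})$ closed under pull-backs. This reduces to the fact that every cartesian square in $L_{K}\nCalg_{\sepa,h}$ is represented, as in \cref{weokjgpwerferfwef}, by a Schochet fibrant semisplit square arising from a mapping cylinder, and that the pull-back $P$ in such a square is a subalgebra of a finite direct sum of separable algebras and hence separable; consequently these pull-backs agree in $L_{K}\nCalg_{\sepa,h}$ and in $L_{K}\nCalg_{h}$ and are preserved by $\iota$. Once this is established the remaining claims are formal.
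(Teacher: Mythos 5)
Your proposal is correct and, for every assertion except the commutative square, takes exactly the paper's route: \cref{wreoigwtegwtgw9111} is stated there without any proof, as the evident separable analogue of \cref{wreoigwtegwtgw9} and \cref{ewrogpwergfrefwrefwref}, and the substitutions you list (\cref{qerighqofqwefqew9}, the separable universal properties recorded after \cref{wreogkpwegrewferf1}, and the separable version of \cref{wetigowergferferferwfw}, whose mapping cylinder constructions stay within separable algebras) are precisely what makes those proofs transcribe verbatim.

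The square \eqref{fdvsdvsdfvfsdvsdfv} is the one item for which the paper records no argument at all (it only remarks afterwards, citing \cref{qertgoqergoijeqwgwegwergw}, that the lower horizontal functor is not known to be fully faithful), so your treatment is genuinely supplementary, and it is sound. The inclusion $\iota(\hat W_{\sepa,!})\subseteq \hat W_{!}$ is indeed immediate; the fully faithful inclusion $\iota:L_{K}\nCalg_{\sepa,h}\to L_{K}\nCalg_{h}$ is left-exact because, by the separable version of \cref{weokjgpwerferfwef}, every cartesian square in $L_{K}\nCalg_{\sepa,h}$ is equivalent to the image of a Schochet fibrant cartesian square of separable $C^{*}$-algebras, and the pull-back occurring there, being a closed subalgebra of the direct sum of two separable algebras, is again separable, so that this square also computes the pull-back in $L_{K}\nCalg_{h}$. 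Consequently $\iota^{-1}(W_{!})$ contains $\hat W_{\sepa,!}$, is closed under pull-backs and has the $2$-out-of-$3$ property, whence $W_{\sepa,!}\subseteq\iota^{-1}(W_{!})$ by minimality, and factoring the symmetric monoidal functor $L_{!}\circ\iota$ through the symmetric monoidal localization $L_{\sepa,!}$ via \eqref{qwefqwefwefewdwedqewd} yields the asserted square of symmetric monoidal functors.
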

\begin{rem} 
In contrast to \cref{wreogkpwegrewferf}.\ref{wetokgpwerferwfwref}
we do no not know whether $L_{K}\nCalg_{h,!} $ admits infinite coproducts.  

In contrast to the upper horizontal arrow in \eqref{fdvsdvsdfvfsdvsdfv} the lower horizontal arrow in this square is not known to be fully faithful, see \cref{qertgoqergoijeqwgwegwergw}.
\hB
\end{rem}
 \begin{rem}
 If the target category $\bD$ is stable and we consider $!$ in $\{\se,\exa\}$, then by \cref{reijgoerfeffrfqrferfe} (or its separable version) we could remove the superscripts $+Sch$ on the right-rand sides of \eqref{ewfwqedqewdedqwd}, \eqref{ewfwqedqewdedqwd1}, \eqref{ewfwqedqewdedqwd111}, and \eqref{ewfwqedqewdedqwd1111}. \hB
  \end{rem}

\section{Bott periodicity} 

In this section we analyse the Toeplitz extension 
\[0\to K\to \cT\to C(S^{1})\to 0\]
from the homotopy theoretic point of view.  The section is essentially an $\infty$-categorical version of \cite[Sec. 4]{MR750677}. The main result is \cref{erjigowergerrqf9}.
 
 We start with recalling some generalities on group objects in $\infty$-categories.
If $\bC$ is an $\infty$-category admitting cartesian products, then we can consider the  $\infty$-category of 
 commutative algebras in $\bC$ for the cartesian monoidal structure which will be called the $\infty$-category
of {\em  commutative  monoids}
$\CMon(\bC)$. Let $C$ be a  commutative  monoid with multiplication map $m:C\times C\to C$. It is called a {\em  commutative  group} if 
the shear map
\[C\times C\xrightarrow{(c,c')\mapsto (c,m(c,c'))}C\times C\] is an equivalence. 
We let $\CGroups(\bC)$ denote the full subcategory of $\CMon(\bC)$ of  commutative groups.
Note that we have used these notions already for $\bC=\Spc$.

Dually, if $\bC$ admits coproducts, then we can consider the  $\infty$-categories of 
  {\em  cocommutative  comonoids}
$\coCMon(\bC)$ and its full subcategory {\em  cocommutative  cogrouops} $\coCGroups(\bC)$.  
\begin{ex}\label{erkgowegergrewfwerf9}
In a semi-additive $\infty$-category  $\bC$   every object  is naturally a commutative monoid and a commutative comonoid. The 
 functors  forgetting the commutative monoid or  comonoid structures are equivalences:
\[\CMon(\bC)\stackrel{\simeq}{\to} \bC\stackrel{ \simeq}{\leftarrow} \coCMon(\bC)\ .\]
  Let $C$ be in an object of $\bC$. Then the multiplication and comultiplication maps of the corresponding monoid or comonoid  are given by
\[C\times C\stackrel{\simeq}{\leftarrow} C\sqcup C \xrightarrow{\codiag} C\ ,\quad C\xrightarrow{\diag} C\times C\stackrel{\simeq}{\leftarrow} C\sqcup C\ . \]
Moreover, the conditions of being a group or a cogroup are equivalent.

For any two objects $C,C'$ in $\bC$ the mapping space
 $\Map_{\bC}(C,C')$ has a  natural refinement to an object of $\CMon(\Spc)$. 
 The object $C'$ is a group if and only if $\Map_{\bC}(C,C')$ is a group for all objects $C$.  
  \hB\end{ex}

\begin{ex}\label{eriojgwegwerfrfw}
The last assertion in \eqref{erkgowegergrewfwerf9} reduces the verification of the group property for an object in a semi-additive category to  
the case of monoids in spaces. In this case we have a simple criterion.
An object $X$ in $\CMon(\Spc)$ is a group if an only if the monoid $\pi_{0}X$ is  a group. \hB
\end{ex}

  \begin{lem}\label{wgtojwregpokerpgergwefewferf}
  If $\bC$ is semi-additive and left-exact, then $\Omega:\bC\to \bC$ (see \cref{qerigqerfewfq9}) takes values in commutative groups.
  \end{lem}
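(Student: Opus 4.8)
The plan is to verify the group condition objectwise via the Yoneda-type criterion recorded at the end of \cref{erkgowegergrewfwerf9}: in a semi-additive $\infty$-category an object $C'$ is a commutative group precisely when $\Map_{\bC}(D,C')$ is a commutative group in $\Spc$ for every object $D$. So I would fix $C$ together with an arbitrary test object $D$ and analyse the mapping space $\Map_{\bC}(D,\Omega C)$.

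First I would compute this mapping space. Applying the limit-preserving functor $\Map_{\bC}(D,-)$ to the defining pullback square \eqref{sfdvfsdvfdvsfdvsvfdvftregrtgfv} of $\Omega C$, and using that $\bC$ is pointed so that $\Map_{\bC}(D,0)\simeq *$, yields a natural equivalence $\Map_{\bC}(D,\Omega C)\simeq \Omega\,\Map_{\bC}(D,C)$, the based loop space of $\Map_{\bC}(D,C)$ taken at the zero morphism. Moreover, by semi-additivity the functor $\Map_{\bC}(D,-)$ refines to a functor $\bC\to\CMon(\Spc)$, and since the forgetful functor $\CMon(\Spc)\to\Spc$ creates limits this refined functor still sends the square \eqref{sfdvfsdvfdvsfdvsvfdvftregrtgfv} to a pullback. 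Hence the equivalence above is an equivalence in $\CMon(\Spc)$, the monoid structure on the left being the semi-additive one and on the right the one inherited by the loop object from $\Map_{\bC}(D,C)$.

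By \cref{eriojgwegwerfrfw} it then suffices to check that $\pi_{0}\Map_{\bC}(D,\Omega C)$ is a group under the semi-additive addition. Under the equivalence of the previous step this set is $\pi_{0}\,\Omega\,\Map_{\bC}(D,C)=\pi_{1}(\Map_{\bC}(D,C),0)$, where the basepoint $0$ is the unit of the commutative monoid $\Map_{\bC}(D,C)$. On this set there are two unital binary operations sharing the unit $0$: the pointwise addition coming from the $\E_{\infty}$-structure of $\Map_{\bC}(D,C)$ (this is the semi-additive operation), and the concatenation of loops. The hard point — indeed the only genuinely non-formal step — is to observe that these two operations satisfy the Eckmann--Hilton interchange law, because the monoid multiplication of $\Map_{\bC}(D,C)$ is a map of spaces and hence compatible with concatenation; Eckmann--Hilton then forces the two operations to agree and to be commutative. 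Since concatenation of loops always admits inverses by reversing the loop, the common operation is a group law, so $\pi_{0}\Map_{\bC}(D,\Omega C)$ is a group.

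Putting these together, $\Map_{\bC}(D,\Omega C)$ lies in $\CGroups(\Spc)$ for every $D$, and the criterion from \cref{erkgowegergrewfwerf9} shows that $\Omega C$ is a commutative group in $\bC$. As $C$ was arbitrary, $\Omega$ takes values in commutative groups. I expect the entire argument to be smooth given \cref{qerigqerfewfq9}, \cref{erkgowegergrewfwerf9} and \cref{eriojgwegwerfrfw}; the single place where more than formal manipulation enters is the Eckmann--Hilton identification, which reconciles the semi-additive addition on $\Omega C$ with the loop concatenation responsible for invertibility.
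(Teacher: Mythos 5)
Your proposal is correct and follows essentially the same route as the paper's proof: identify $\Map_{\bC}(D,\Omega C)\simeq \Omega\Map_{\bC}(D,C)$, reduce to $\pi_{0}$ via \cref{eriojgwegwerfrfw}, and then run the Eckmann--Hilton argument to identify the semi-additive addition with loop concatenation, which has inverses. The extra bookkeeping you supply (the refinement of $\Map_{\bC}(D,-)$ to $\CMon(\Spc)$ and the Yoneda-type criterion from \cref{erkgowegergrewfwerf9}) is exactly what the paper uses implicitly.
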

  \begin{proof}
  Let $C'$ be an object of $\bC$. We must show that $\Omega C'$ is a group.  To this end we will show that $\Map_{\bC}(C,\Omega C')\simeq \Omega \Map_{\bC}(C,C')$ is a group in $\Spc$ for any object $C$ of $\bC$. 
  We now use \cref{eriojgwegwerfrfw} in order to reduce the problem  to the set of components.
 
  Note that  $\pi_{0}  \Omega \Map_{\bC}(C,C')\cong \pi_{1} \Map_{\bC}(C,C')$ clearly has a group structure $\sharp$ as a fundamental group. This structure distributes over the commutative monoid structure  $+$  
  on $   \pi_{0}\Map_{\bC}(C,\Omega C')$ coming from the semi-additivity in the sense that
  $(a+b)\sharp (c+d)=(a\sharp c)+(b\sharp d)$. The Eckmann-Hilton argument implies that both structures coincide. In particular, the commutative monoid structure $+$ is a commutative group structure.
  \end{proof}

To every unital $C^{*}$-algebra   $A$  we functorially associate the topological space    
\[I(A):=\{v\in A\mid v^{*}v=1_{A}\] of isometries in $A$. 
By definition, the   Toeplitz algebra $\cT$ is the  isometries classifier  in $\Calg$. It
  contains a universal  isometry $v$ such that  
$$\underline{\Hom}_{u} (\cT,A)\stackrel{\cong}{\to} I(A)\ , \quad f\mapsto f(v)$$
is a homeomorphism for every unital $C^{*}$-algebra $A$.

Recall from \cref{poregjopewrgerg1}  that $C(S^{1})$ is the unitaries classifier in $\Calg$ with the universal unitary $u$.
Since  unitaries are in particular isometries   we have a canonical unital homomorphism
$$\pi:\cT\to C(S^{1})\ ,  \quad \pi(v)=u\ .$$ Since $C(S^{1})$ is generated by $u$, the homomorphism $\pi$
 is  surjective. We let $K$ denote the  kernel of  $\pi$.   We thus have 
 the {\em Toeplitz exact sequence}\begin{equation}\label{gregeerfwffregfergewg} 0\to K\to  \cT \xrightarrow{\pi} C(S^{1})\to 0\ .\end{equation}
It is known that $\cT$ is separable and nuclear.
The projection $e:=1_{\cT}-vv^{*}$ belongs to $K$, and 
the algebra $K$ is generated by the family of minimal pairwise orthogonal projections   $(v^{n}e
v^{*,n})_{n\in \nat}$. This provides an identification of $K$ with the algebra of compact operators on a separable Hilbert space and justifies the notation. Note that $e$ is 
  a minimal projection in $K$.

 Using the universal property of $\cT$ and the unit we define homomorphisms
$$q:\cT\to \C\ , \quad v\mapsto 1\ , \qquad  j:\C\to \cT \ , \quad 1\mapsto 1_{\cT}\ .$$
 We consider a functor 
$F:\nCalg \to \bC$  to a  semi-additive $\infty$-category.
 
 \begin{prop}\label{qerkghqeriffewq}
 If   $F$  is homotopy invariant, stable,  split-exact   and takes values in 
    group objects,
  then $F(j)$ and $F(q)$ are mutually inverse equivalences.
\end{prop}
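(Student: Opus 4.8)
The plan is to exploit the two standard devices available for such functors---invariance under conjugation by an isometry and additivity on homomorphisms with orthogonal images (\cref{ewfiuhgwqeuifdewdewqdqewdqd})---to reduce the statement to the vanishing of one class, and then to appeal to the Toeplitz index computation underlying \cite[Sec.~4]{MR750677}. First I would dispose of the trivial half: since $q$ is unital, $q\circ j=\id_{\C}$, so $F(q)\circ F(j)\simeq\id_{F(\C)}$ and it remains to prove $F(j\circ q)\simeq\id_{F(\cT)}$. Writing $\mu:=j\circ q$ for the unital endomorphism $v\mapsto 1_{\cT}$, I would apply split-exactness of $F$ to the split exact sequence $0\to J\to\cT\xrightarrow{q}\C\to 0$ with splitting $j$, where $J:=\ker q$. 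In the additive $\infty$-category of group objects of $\bC$ this yields a biproduct decomposition $F(\cT)\simeq F(\C)\oplus F(J)$ under which $F(\mu)$ is the projection onto the first summand and $F(\id_{\cT})$ is the identity; the claim thus becomes $F(J)\simeq 0$.

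Next I would install the two engines. Using the rotation unitaries exactly as in the proof of \cref{weroigjowerfrefw}, together with stability (left-upper corner inclusions are inverted) and homotopy invariance, one checks that conjugation by the isometry $v$ is $F$-trivial, i.e. $F(\mathrm{Ad}_{v}\circ\phi)\simeq F(\phi)$ for every $\phi$; concretely the unitary $\bigl(\begin{smallmatrix} v & 1-vv^{*}\\ 0 & v^{*}\end{smallmatrix}\bigr)$ in $\mathrm{Mat}_{2}(\cT)$ conjugates $\mathrm{diag}(\phi,0)$ into $\mathrm{diag}(\mathrm{Ad}_{v}\phi,0)$ and is nullhomotopic through unitaries. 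Combining this with \cref{ewfiuhgwqeuifdewdewqdqewdqd}, the orthogonal splitting $1_{\cT}=vv^{*}+e$ of the unit into the orthogonal projections $vv^{*}$ and $e=1_{\cT}-vv^{*}$ exhibits $j$ as the orthogonal sum of the corner inclusions $\iota_{vv^{*}}\colon\lambda\mapsto\lambda vv^{*}$ and $\iota_{e}\colon\lambda\mapsto\lambda e$, whence $F(j)\simeq F(\iota_{vv^{*}})+F(\iota_{e})$. Since $\iota_{vv^{*}}=\mathrm{Ad}_{v}\circ j$ we have $F(\iota_{vv^{*}})\simeq F(j)$, and because $F$ takes values in group objects we may cancel to conclude $F(\iota_{e})\simeq 0$. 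As $\iota_{e}$ factors as the left-upper corner inclusion $\C\to K$ (onto the minimal projection $e$) followed by $K\hookrightarrow\cT$, and the former is inverted by stability, this shows that the ideal inclusion $K\hookrightarrow\cT$ is itself $F$-trivial.

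The crux---and the step I expect to be the main obstacle---is to upgrade $F(\iota_{e})\simeq 0$ to $F(J)\simeq 0$. The difficulty is genuine rather than formal: $\mu$ and $\id_{\cT}$ are \emph{not} homotopic as homomorphisms, their difference being detected by the Fredholm index of $v$, i.e. by the class $[e]$, which dies only through the full stability relation and not through any homotopy of diagonal ampliations. The plan is to follow Cuntz and use the canonical unital endomorphism $\psi(x):=vxv^{*}+q(x)e$; a short computation (using $v^{*}e=0=ev$) shows $\psi$ is a well-defined unital endomorphism which decomposes as the orthogonal sum of $\mathrm{Ad}_{v}$ and $\iota_{e}\circ q$, so that $F(\psi)\simeq F(\mathrm{Ad}_{v})+F(\iota_{e}\circ q)\simeq\id_{F(\cT)}$. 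The remaining content is to convert the $F$-triviality of the compact ideal, together with the action of $\psi$ over the Toeplitz extension $0\to K\to\cT\xrightarrow{\pi}C(S^{1})\to 0$, into the vanishing of $F(J)$. This is precisely the $\infty$-categorical incarnation of the Toeplitz index argument of \cite[Sec.~4]{MR750677}: one must realise the identity of $F(J)$ as absorbed by its $\psi$-shift through an Eilenberg-type swindle assembled into a \emph{single} homomorphism on the stabilisation (so that only finite additivity from \cref{ewfiuhgwqeuifdewdewqdqewdqd}, homotopy invariance and $F(\iota_{e})\simeq 0$ are invoked, never naive additivity over an infinite family).

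I would expect the delicate point to be exactly this last assembly, since a carelessly stated swindle would prove the false statement $F(\mu)\simeq 0$; the index obstruction must be spent in a controlled way, and keeping the limiting rotation genuinely inside $J$ (after tensoring with $K$) rather than merely in its multiplier algebra is where the real work lies. Everything preceding it---the reduction via $q\circ j=\id_{\C}$, the isometry-conjugation invariance, and the vanishing $F(\iota_{e})\simeq 0$---is formal and uses only the already-established properties of $F$ recorded in \cref{wrogjpwrgrfrewf} and the additivity of \cref{ewfiuhgwqeuifdewdewqdqewdqd}.
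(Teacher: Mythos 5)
Your formal skeleton is correct, and it uses the same toolkit as the paper: the trivial half coming from $q\circ j=\id_{\C}$; the reduction, via split-exactness of $0\to \cT_{0}\to\cT\xrightarrow{q}\C\to 0$, of the proposition to the vanishing $F(\cT_{0})\simeq 0$ (your $F(J)\simeq 0$; note this is precisely \cref{wtriojgopwrtgrwewfref}, which the paper deduces \emph{from} the proposition, so the two statements are indeed equivalent in the presence of split-exactness); the invariance $F(\mathrm{Ad}_{v}\circ\phi)\simeq F(\phi)$ via the $2\times 2$ unitary built from the isometry $v$; and the consequence $F(\iota_{e})\simeq 0$, i.e.\ that $F$ annihilates the inclusion $K\hookrightarrow\cT$, obtained from orthogonal additivity (\cref{ewfiuhgwqeuifdewdewqdqewdqd}) and cancellation in group objects. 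Each of these steps is valid.

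The genuine gap is that the decisive step is never carried out: nothing in your text actually proves $F(\cT_{0})\simeq 0$. The facts you establish ($F(q)\circ F(j)\simeq\id$, $\mathrm{Ad}_{v}$-invariance, $F(\iota_{e})\simeq 0$, $F(\psi)\simeq\id$ for $\psi(x)=vxv^{*}+q(x)e$) do not by themselves imply this vanishing; the ``Eilenberg-type swindle assembled into a single homomorphism on the stabilisation'' is named but not constructed, and — as you yourself point out — an uncontrolled version of it would prove the false statement $F(j\circ q)\simeq 0$, so the construction \emph{is} the entire mathematical content of the proposition. For comparison, the paper spends this content as follows: it forms the pull-back $\bar\cT$ of $\pi\otimes\id_{\cT}:\cT\otimes\cT\to C(S^{1})\otimes\cT$ along $\tau(v)=u\otimes 1_{\cT}$, applies split-exactness to the split sequence $0\to K\otimes\cT\xrightarrow{\iota}\bar\cT\xrightarrow{p}\cT\to 0$ to see that $F(\iota)$ is a monomorphism, and then invokes the explicit homotopy $\phi_{t}$ of \cite[Sec.\ 4]{MR750677} (reproduced in \cref{weoighwjeoirgfwrefrfwerfwrfw}) between $\alpha+r\iota\kappa\,\id_{\cT}$ and $\alpha+r\iota\kappa\,(j\circ q)$; the condition $(\pi\otimes\id_{\cT})\circ\phi_{t}(v)=\tau(v)$ for all $t$ is what allows this homotopy to be lifted to $\bar\cT$, after which one cancels $F(s)$ using the group structure and concludes by the monomorphy of $F(\iota)$ and the stability equivalence $F(\kappa)$. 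That homotopy — simultaneous rotations in infinitely many two-dimensional subspaces, performed fibrewise over $C(S^{1})$ — is exactly the ``controlled swindle'' your plan calls for; until you write it (or an equivalent mechanism) down, the proposition is not proved.
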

\begin{proof} The equality 
 $j\circ q=\id_{\C}$
 implies that $F(j)\circ F(q)\simeq \id_{F(\C)}$. It remains to show that 
 $F(q)\circ F(j)\simeq \id_{F(\cT)}$. To this end we construct the following  diagram of $C^{*}$-algebras:
 \[\xymatrix{&\cT\ar[d]^{\kappa}&&&&\\0\ar[r]&\ar@{=}[d]K\otimes \cT\ar[r]^{\iota}&\ar[d]^{r}\bar \cT\ar[r]^{p}&\cT\ar[r]\ar@{-->}[dl]_{\alpha,\phi_{t}}\ar[d]^{\tau }\ar@/^-1cm/@{..>}[l]^{s,\psi_{t}}\ar@/^-1cm/@{-->}[llu]^{j\circ q , \id_{\cT}}&0\\0\ar[r]&K\otimes \cT\ar[r]&\cT\otimes \cT\ar[r]^{\pi\otimes \id_{\cT}}&C(S^{1})\otimes \cT\ar[r]&0}\ .\]
 The lower horizontal  sequence is the tensor product of the Toeplitz sequence with $\cT$.
 The algebra $\bar \cT$ is defined such that the right square is a pull-back. This determines the homomorphisms $r$ and $p$. 
 The maps $\kappa$, $\tau$ and $\alpha$ are determined by the universal property of $\cT$ and the relations \[\kappa: v\mapsto  e\otimes v\ , \quad \alpha:v\mapsto v(1-e)\otimes 1_{\cT}\ ,\quad   \tau:v\mapsto u\otimes 1_{\cT}\ .\]
Since $(\pi\otimes \id_{\cT})\circ \alpha=\tau$ we can define the map $s$ by the universal property of the pull-back
such that $r\circ s=\alpha$ and $p\circ s=\id_{\cT}$.
The last equality implies that the upper horizontal sequence is split-exact. 
By the split-exactness of $F$ we get an equivalence 
 \[F(\iota)\oplus F(s):F(K\otimes \cT)\oplus F(\cT)\stackrel{\simeq}{\to} F(\bar \cT)\ .\]  In particular we can conclude that  $F(\iota)$ is monomorphism. 
 
 Since 
 $ \im(r\iota \kappa) \perp \im(\alpha)$  (see \cref{ewfiuhgwqeuifdewdewqdqewdqd}) we can define  
  following homomorphisms   
\begin{equation}\label{vewrvreververwvffewrfref}\phi_{0}:=\alpha+r\iota\kappa \id_{\cT} \ , \quad \phi_{1}:=\alpha+ r\iota \kappa (j\circ q)
\end{equation}  
from $\cT$ to $\cT\otimes \cT$.
 It has been shown in \cite[Sec. 4]{MR750677} (see \cite{fritz} for a nice presentation, reproduced in \cref{weoighwjeoirgfwrefrfwerfwrfw} below) that $\phi_{0}$ and $\phi_{1}$ are homotopic by  a homotopy $\phi_{t}:\cT\to \cT\otimes \cT$ such that $(\pi\otimes\id_{\cT})\circ \phi_{t}(v)=\tau$ for all $t$.
 By the universal property of the pull-back we get a homotopy $\psi_{t}:\cT\to \bar \cT$ from
  $s+\iota\kappa \id_{\cT}$   to $s+\iota\kappa (j\circ q)$
 such that $r\circ \psi_{t}=\phi_{t}$ and $p\circ \psi_{t}=\id_{\cT}$.   By the homotopy invariance of $F$ we get 
\[F(s+\iota\kappa \id_{\cT})\simeq F(s+ \iota\kappa (j\circ q))\ .\]
In view of \cref{ewfiuhgwqeuifdewdewqdqewdqd} and the fact that by \eqref{ewfwqedqewdedqwd} 
for $!=\splt$
the functor $F$ has a left-exact, and hence additive, factorization  
$$\xymatrix{  \nCalg  \ar[rr]^{F}\ar[dr]^{L_{ h,K,!}}&&\bC\\&L_{K}\nCalg_{ h,!} \ar@{..>}[ur]& }$$ 
we conclude that  
 $$F(s)+F(\iota\kappa \id_{\cT})\simeq F(s)+ F(\iota \kappa (j\circ q))$$
 as morphisms with target $F(\bar \cT)$.
Since  
 $F( \bar \cT)$ is a group object we can 
cancel $F(s)$ and obtain the equivalence
$$F(\iota\kappa \id_{\cT})\simeq F(\iota \kappa (j\circ q))\ .$$
Since, as seen above,
 $F(\iota)$ is a monomorphism, 
 and $F(\kappa)$ is an equivalence by stability of $F$ (note that $\kappa$ is a left upper corner inclusion), we can conclude that 
$F(\id_{\cT})\simeq F( j\circ q )\simeq  F( j)\circ F(q )$ as desired.
\end{proof}

\begin{rem}\label{weoighwjeoirgfwrefrfwerfwrfw}
For completeness of the presentation,  following  \cite{fritz}
 we sketch the construction of the homotopy between $\phi_{0}$ and $\phi_{1}$
from \eqref{vewrvreververwvffewrfref}.  
These homomorphisms are determined via the universal property of $\cT$ by 
   $$\phi_{0}(v)=v(1-e)\otimes 1_{\cT}+e\otimes v\ , \quad 
 \phi_{1}(v)=v(1-e)\otimes 1_{\cT}+ e\otimes 1_{\cT}\ .$$
 We will employ an explicit realization of $\cT$ by bounded operators on $L^{2}(\nat)$. If $(\xi_{i})_{i\in \nat}$ denotes the standard basis, then  $v$ is the isometry given by $v\xi_{i}=\xi_{i+1}$ for all $i$ in $\nat$. Furthermore,  
 $e$ is the projection onto the subspace generated by $\xi_{0}$.
 
 We realize $\cT\otimes \cT$ correspondingly on $L^{2}(\nat\times \nat)$ with basis $(\xi_{i,j})_{i,j\in  \nat\times \nat}$. We define  
  the   selfadjoint unitaries   $$u_{0}:=v(1-e)v^{*}\otimes 1_{\cT}+ev^{*}\otimes v+ve\otimes v^{*}+e\otimes e$$
 and 
 $$u_{1}:=v(1-e)v^{*}\otimes 1_{\cT}+ev^{*}\otimes 1_{\cT}+ve\otimes 1_{\cT}\ .$$
 One then checks that 
  $$u_{0}(v\otimes 1_{\cT})u_{0}^{*}=\phi_{0}(v)\ ,  \quad u_{1}(v\otimes 1_{\cT})u_{1}^{*}= \phi_{1}(v)$$
 in $\cT\otimes \cT$.
On basis vectors the unitary $u_{0}$ is given by 
 $$\xi_{i,j}\mapsto  \left\{\begin{array}{cc}
\xi_{0,0}&(i,j)=(0,0) \\\xi_{1,j-1} &i=0,j\ge 1  \\ \xi_{0,j+1} &i=1,j\ge 0\\
\xi_{i,j}& i\ge 2\end{array} \right.$$
We connect  
 $u_{0}$ by a homotopy $u_{0,t}$ with $1_{\cT\otimes \cT}$ by a path in $\cT\otimes \cT$
which rotates (with constant speed) in the each of the two-dimensional subspaces $\C\langle \xi_{1,j-1},\xi_{0,j} \rangle $  for $j\ge 1$ from flip to the identity.
Similarly, the action of 
 $u_{1}$  on basis vectors is given by  $$\xi_{i,j}\mapsto  \left\{\begin{array}{cc}
\xi_{1,j}&i=0 \\\xi_{0,j} &i=1  \\ \xi_{i,j} &i\ge 2 \end{array} \right.$$
 We connect  
 $u_{1}$ by a homotopy $u_{1,t}$ with $1_{\cT\otimes \cT}$ by a path in $\cT\otimes \cT$
which rotates (with constant speed) in the each of the two-dimensional subspaces 
  $\C\langle \xi_{0,j},\xi_{1,j} \rangle $  for $j\ge 0$ from the flip to  the identity. 
  Then $u_{0,t}^{*}\phi_{0}u_{0,t}$ is a homotopy from $\phi_{0}$ to the map determined by $v\mapsto v\otimes 1_{\cT}$. Similarly, $u_{1,t}^{*}\phi_{1}u_{1,t}$ is a homotopy from $\phi_{1}$ to the same map.
  The concatenation of the first with the inverse of the second homotopy is the desired homotopy $\phi_{t}$.
  One checks from the explicit formulas, that  $(\pi\otimes\id_{\cT})\circ \phi_{t}(v)=\tau$ for all $t$.
  
  Note that the selfadjointness of $u_{i}$ is not relevant here, but it would be important for a version for real $C^{*}$-algebras. \hB
  \end{rem}

We now consider the split-exact sequence
 \begin{equation}\label{asdvcdcadcqefr}  0\to \cT_{0}\to \cT\xrightarrow{q} \C\to 0  \end{equation} 
 defining 
$\cT_{0}$ as an ideal in $\cT$.
As above we consider  a functor 
$F:\nCalg \to \bC$  to a  semi-additive $\infty$-category.

\begin{kor}\label{wtriojgopwrtgrwewfref}
 If  $F$  is homotopy invariant, stable,  split-exact   and takes values in 
    group objects,
  then 
 $F(\cT_{0})\simeq 0$.
\end{kor}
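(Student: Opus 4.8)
The plan is to read the statement directly off the split-exact sequence \eqref{asdvcdcadcqefr} together with the Bott-type computation already carried out in \cref{qerkghqeriffewq}. The point is that the corollary carries essentially no new content: all the genuine work sits in the preceding proposition, and what remains is a formal manipulation in a pointed $\infty$-category.

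First I would observe that the hypotheses imposed on $F$ here are exactly those of \cref{qerkghqeriffewq}: $F$ is homotopy invariant, stable, split-exact, and takes values in the group objects of the semi-additive target $\bC$. Hence \cref{qerkghqeriffewq} applies and yields that $F(j)$ and $F(q)$ are mutually inverse equivalences; in particular the map $F(q)\colon F(\cT)\to F(\C)$ is an equivalence.

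Next I would apply $F$ to the split-exact sequence $0\to \cT_{0}\to \cT\xrightarrow{q}\C\to 0$. Since $F$ is split-exact in the sense of \cref{wrogjpwrgrfrewf}.\ref{trokgperwgrefwerfrefew}, it is reduced and sends this sequence to a fibre sequence
\[
F(\cT_{0})\to F(\cT)\xrightarrow{F(q)} F(\C)
\]
in $\bC$, so that $F(\cT_{0})$ is identified with $\mathrm{fib}(F(q))$. Finally I would use that $\bC$ is pointed (being semi-additive) and that the fibre of an equivalence in a pointed $\infty$-category is a zero object: pulling back $0\to F(\C)$ along the equivalence $F(q)$ agrees with pulling it back along $\id_{F(\C)}$, which produces the zero object. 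Therefore $F(\cT_{0})\simeq \mathrm{fib}(F(q))\simeq 0$, as claimed.

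The hard part of this circle of ideas is not in the corollary itself but in \cref{qerkghqeriffewq}, whose proof manufactures the auxiliary algebra $\bar{\cT}$ as a pull-back and uses the split-exactness of $F$ to cancel $F(s)$ after transporting along the explicit operator homotopy recalled in \cref{weoighwjeoirgfwrefrfwerfwrfw}. Once $F(q)$ is known to be an equivalence, the present statement is immediate, and I would expect no obstacle beyond correctly invoking that split-exactness delivers the fibre sequence above.
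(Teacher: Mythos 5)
Your proposal is correct and follows the paper's own argument exactly: apply split-exactness of $F$ to the sequence \eqref{asdvcdcadcqefr} to get the fibre sequence $F(\cT_{0})\to F(\cT)\xrightarrow{F(q)}F(\C)$, then use that $F(q)$ is an equivalence by \cref{qerkghqeriffewq} to conclude the fibre vanishes. The only difference is that you spell out the (correct) elementary fact that the fibre of an equivalence in a pointed $\infty$-category is a zero object, which the paper leaves implicit.
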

\begin{proof}\mbox{}
Since $F$ is split-exact  it sends the split-exact exact sequence \eqref{asdvcdcadcqefr} to a fibre sequence.
Since $F(q)$ is an equivalence by \cref{qerkghqeriffewq} we conclude that its fibre $F(\cT_{0})$ is a zero object.
  \end{proof}

The Toeplitz sequence \eqref{gregeerfwffregfergewg} is semi-split exact. This can be seen either by an application of the Choi-Effros lifting theorem \cite{choi-effros} using that $K$ is nuclear,  or by an explicit construction of a cpc right inverse $s$  of $\pi$, see \cref{weriogjoergrefwrf}.

\begin{rem}\label{weriogjoergrefwrf}
For completeness of the presentation we provide a cpc split for 
the Toeplitz extension \eqref{gregeerfwffregfergewg}. We consider $L^{2}(\Z)$ with the standard basis $(\xi_{i})_{i\in \Z}$
and realize the Toeplitz algebra $\cT$ on the subspace 
$L^{2}(\nat)$ as in \cref{weoighwjeoirgfwrefrfwerfwrfw}.
We let $w$ be the  unitary 
 shift operator determined by   $ \xi_{i}\mapsto \xi_{i+1}$  for all $i$ in $\Z$ and
  $P:L^{2}(\Z)\to L^{2}(\nat)$ be the orthogonal projection.
Then we have  $v=PwP$.  Since $C(S^{1})$ classifies unitaries (see  \cref{poregjopewrgerg1})
 we  have a unique  unital homomorphism $\phi: C(S^{1})\to B(L^{2}(\Z))$
determined by   $\phi(u):= w$.
 By an explicit calculation  of its action on basis vectors one checks that $
 [P,\phi(u^{n})]$ is finite-dimensional and therefore belongs to $K$ for all $n$ in $\Z$. 
 Since $u$ generates $C(S^{1})$
 we conclude that $[P,\phi(f)]\in K$
 for all $f$ in $C(S^{1})$.
We define the linear map
$s:C(S^{1})\to B(L^{2}(\nat))$ by $s(f)=P\phi(f)P$.
Using the discussion above one checks that it takes vales in $\cT$. Moreover, since  $\pi(s(u))=\pi(v)=u$
we conclude that $\pi\circ s=\id_{\cT}$. Since it is the compression of a homomorphism
it is completely positive. 
  \hB

\end{rem}

\begin{rem}
The Toeplitz extension does not admit a split. For this reason in the constructions below we must assume that $!$ belongs to $\{\se,\exa\}$ and exclude the case $\splt$. \hB
\end{rem}

We consider the diagram of vertical exact sequence
 \begin{equation}\label{dfbpojpwortbfdgbdfgbdgfb}\xymatrix{&0\ar[d]&0\ar[d]&&\\&K\ar[d]\ar@{=}[r]&K\ar[d]&&\\
&
\cT_{0}\ar[r]\ar[d]^{\pi_{0}}&\cT\ar[d]^{\pi}
&
&
\\
&
\ar@/^0.5cm/@{..>}[u]^{s_{0}}
S(\C)\ar[r]^{i}\ar[d]&\ar[d]\ar@/^0.5cm/@{..>}[u]^{s}
C(S^{1})
&
&
\\&0&0&&}
\end{equation} 
where the lower square is a cartesian and the map $i$ is an in \eqref{fwqefwfedwdwedwdwdfwfwfwf}.  The  cpc split $s$ induces a cpc split  $s_{0}$ as indicated.

%
%
%
%
%
%
%
%
%
%
%
%
%
%
%
%
%
%
%
%
%
%
%
%
%
%
%
%
%
%
%
%
%

%
%
%
%
%
%
%
%
%
%
%

We let $!$ in $\{\se,\exa\}$ and   apply the semiexact functor $L_{h,K,!}$ to the  left vertical semi-split exact  sequence in order to
get a fibre sequence \begin{equation}\label{svfsfvfdvfvdsv}  L_{ h,K,!}( S^{2}(\C))\xrightarrow{\beta_{!,\C}}  L_{ h,K,!}(\C)\to L_{h,K,!}(\cT_{0})\to   L_{ h,K,!}( S(\C))
\end{equation}
defining $\beta_{!, \C}$,
where we used  
 $L_{ h,K,!}(\C)\simeq  L_{ h,K,!}(K)$ by stability of $ L_{ h,K,!}$.
 
 Since we want to speak about the two-fold loop functor in different left-exact $\infty$-categories we add subscripts indicating which category is meant in each case.
 Note that for all $k$ in $\nat$, by  \cref{qerigqerfewfq9} and the left-exactness of $L_{!}\circ L_{K}$  the $k$-fold loop functor $\Omega_{!}^{k}$  on $L_{K}\nCalg_{h,!}$ can be represented by the $k$-fold suspension
 on the level of $C^{*}$-algebras
 $$\Omega^{k}_{!}(-) \simeq  L_{ h,K,!}(S^{k} (\C))\otimes-:L_{K}\nCalg_{h,!} \to L_{K}\nCalg_{h,!} \ .$$  
 Recall  the tensor unit constraint $$  \id_{L_{K}\nCalg_{h,!} }\simeq   L_{ h,K,!}( \C)\otimes- :L_{K}\nCalg_{ h,!} \to L_{K}\nCalg_{ h,!} \ .$$

 The following definition implicitly uses these identifications.
\begin{ddd}\label{qeorigjiowergwerfwfrewfwr} We define a natural transformation of endofunctors
$$\beta_{!} :=\beta_{ !,\C}\otimes- :\Omega_{!}^{2} \to \id_{L_{K}\nCalg_{ h,!} }: L_{K}\nCalg_{ h,!} \to L_{K}\nCalg_{ h,!} $$
 \end{ddd}

We consider a functor 
$E:L_{K}\nCalg_{ h,!}\to \bC$  to a  semi-additive $\infty$-category and 
let $A$ be an object of $L_{K}\nCalg_{ h,!} $.
 \begin{kor}\label{wtriojgopwrtgrwewfref222}
 If  $E$  is left-exact and $E(-\otimes A)$ takes values in 
    group objects, then 
   $E(\beta_{ !, A}): E(\Omega_{!}^{2}(A))\to E(A)$ is an equivalence.
\end{kor}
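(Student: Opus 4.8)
The plan is to realize $E(\beta_{!,A})$ as the first map of a fibre sequence in $\bC$ whose third term vanishes, and to recognise that vanishing as an instance of \cref{wtriojgopwrtgrwewfref}.

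First I would tensor the defining fibre sequence \eqref{svfsfvfdvfvdsv} with $A$. Since $-\otimes_{?}A$ is bi-left exact on $L_{K}\nCalg_{h,!}$ (\cref{ewrogpwergfrefwrefwref}.\ref{wrthgioweiogiowerg1}) it preserves fibre sequences, and using the identifications $L_{h,K,!}(S^{k}(\C))\otimes A\simeq \Omega_{!}^{k}(A)$ together with the tensor-unit constraint $L_{h,K,!}(\C)\otimes A\simeq A$ recalled before \cref{qeorigjiowergwerfwfrewfwr}, this produces a fibre sequence
\[\Omega_{!}^{2}(A)\xrightarrow{\beta_{!,A}} A\to L_{h,K,!}(\cT_{0})\otimes A\]
in which $\beta_{!,A}=\beta_{!,\C}\otimes\id_{A}$ is exactly the fibre inclusion. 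Applying the left-exact functor $E$ preserves this, giving a fibre sequence
\[E(\Omega_{!}^{2}(A))\xrightarrow{E(\beta_{!,A})} E(A)\to E\big(L_{h,K,!}(\cT_{0})\otimes A\big)\]
in $\bC$. Hence $E(\beta_{!,A})$ is an equivalence if and only if the third term is a zero object.

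To prove that vanishing I would consider the composite
\[F:=E(-\otimes A)\circ L_{h,K,!}:\nCalg\to \bC,\qquad F(C)=E\big(L_{h,K,!}(C)\otimes A\big),\]
so that the term in question is $F(\cT_{0})$, and check that $F$ satisfies the hypotheses of \cref{wtriojgopwrtgrwewfref}. It takes values in group objects because its values are values of $E(-\otimes A)$, which is group-valued by assumption. It is homotopy invariant and stable because $L_{h,K,!}$ already inverts homotopy equivalences and left upper corner inclusions, so any functor out of $L_{K}\nCalg_{h,!}$ precomposed with it inherits these properties; and it is reduced since $L_{h,K,!}$, $-\otimes A$ and $E$ all preserve the zero object. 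For split-exactness I would use that $L_{h,K,!}$ sends split-exact sequences to fibre sequences — a $*$-homomorphism section is completely positive and contractive, so a split exact sequence is semi-split, and $L_{h,K,!}$ is semiexact for $!\in\{\se,\exa\}$ by \cref{wreoigwtegwtgw9}.\ref{wergiuhweriugrewf2} — after which the left-exact functors $-\otimes A$ and $E$ carry the resulting fibre sequence to a fibre sequence. Thus \cref{wtriojgopwrtgrwewfref} applies and yields $F(\cT_{0})\simeq 0$, completing the argument.

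The proof is formal once this reduction is set up; the only delicate point is the bookkeeping ensuring that, under the loop/suspension identifications \eqref{terhertgrtgetrgetrgt} used to write \eqref{svfsfvfdvfvdsv}, the morphism $\beta_{!,A}$ is literally the fibre inclusion of $A\to L_{h,K,!}(\cT_{0})\otimes A$, so that the third term of the $E$-image sequence is genuinely $F(\cT_{0})$ and not merely equivalent to it by an uncontrolled map. This compatibility is exactly what bi-left-exactness of $\otimes_{?}$ supplies. The restriction $!\in\{\se,\exa\}$, forced by the Toeplitz sequence being only semi-split and not split, enters solely in the upstream definition of $\beta_{!}$ and plays no further role in this deduction.
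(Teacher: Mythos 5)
Your proof is correct and follows essentially the same route as the paper: both introduce the auxiliary functor $F(-)=E(-\otimes A)$, verify via bi-left-exactness of $\otimes_{?}$ that its pull-back to $\nCalg$ is homotopy invariant, stable, split-exact and group-valued so that \cref{wtriojgopwrtgrwewfref} gives $F(L_{h,K,!}(\cT_{0}))\simeq 0$, and then conclude from the image of the fibre sequence \eqref{svfsfvfdvfvdsv}. The only difference is presentational (you tensor the sequence with $A$ before applying $E$, the paper applies $F$ directly), which amounts to the same computation.
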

\begin{proof}\mbox{}
We consider the functor 
$  F(-):=E(-\otimes_{\max} A)$.
Then $  F(\beta_{ !,\C})\simeq E(\beta_{!,A})$. Using \cref{ewrogpwergfrefwrefwref}.\ref{wrthgioweiogiowerg1}
 we observe that $ L_{h,K,!}^{*}  F$ belongs to $\Fun^{h,s,!}(\nCalg ,\bC)$. Since it also
  takes values in group objects we can apply  \cref{wtriojgopwrtgrwewfref}  in order to conclude that
 $ F(L_{h,K,!}(\cT_{0}))\simeq 0$.
 
  The functor $ F$ sends the fibre sequence  
\eqref{svfsfvfdvfvdsv}   to a  fibre sequence
$$   F( \Omega^{2}L_{ h,K,!}(\C))\xrightarrow{F(\beta_{ !,\C})}  F(L_{h,K,!}(\C))\to   F(L_{ h,K,!}(\cT_{0}))\ .$$
 Hence
 $ F(\beta_{!,\C})$ is an equivalence.
 \end{proof}

 Recall that $!$ is in $\{\se, \exa\}$.
\begin{kor}\label{erjigowergerrqf9}
If $A$ is a group object in $ L_{K}\nCalg_{h,!} $, then
$\beta_{ !,A}:\Omega^{2}_{!}(A)\to A$ is an equivalence.
\end{kor}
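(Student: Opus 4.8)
The plan is to verify the equivalence objectwise by the Yoneda lemma and then to invoke \cref{wtriojgopwrtgrwewfref222}. Recall that a morphism in $L_{K}\nCalg_{h,!}$ is an equivalence precisely when it induces an equivalence on all mapping spaces corepresented by the objects $C$ of $L_{K}\nCalg_{h,!}$. So first I would fix an arbitrary such $C$ and consider the functor
$$E:=\Map_{L_{K}\nCalg_{h,!}}(C,-):L_{K}\nCalg_{h,!}\to \CMon(\Spc)\ ,$$
which lands in commutative monoids because its source is semi-additive (\cref{erkgowegergrewfwerf9}) and which is left-exact since mapping objects preserve limits. As the forgetful functor $\CMon(\Spc)\to\Spc$ is conservative, it suffices to show that $E(\beta_{!,A})$ is an equivalence for every $C$.

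In order to apply \cref{wtriojgopwrtgrwewfref222} to this $E$, I must check that $E(-\otimes A)$ takes values in group objects, i.e. that $\Map_{L_{K}\nCalg_{h,!}}(C,D\otimes A)$ is a commutative group for every object $D$. By the criterion recorded in \cref{erkgowegergrewfwerf9} — an object is a group object if and only if all its corepresented mapping spaces are groups — this reduces to showing that $D\otimes A$ is a group object whenever $A$ is. Here I would use that the endofunctor $D\otimes_{?}-$ is bi-left exact (\cref{ewrogpwergfrefwrefwref}.\ref{wrthgioweiogiowerg1}), hence preserves the zero object and finite products; since finite products and coproducts coincide in the semi-additive category $L_{K}\nCalg_{h,!}$, the functor $D\otimes-$ preserves biproducts and is therefore additive. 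An additive endofunctor sends group objects to group objects, because the shear map of $D\otimes A$ is the image under $D\otimes-$ of the shear map of $A$, and the latter is an equivalence by the hypothesis that $A$ is a group object.

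With this verified, \cref{wtriojgopwrtgrwewfref222} applied to $E=\Map_{L_{K}\nCalg_{h,!}}(C,-)$ yields that $E(\beta_{!,A})$ is an equivalence for every $C$, and the Yoneda reduction above then shows that $\beta_{!,A}$ itself is an equivalence. The only genuinely substantive point is the closure of group objects under tensoring, and since that follows formally from additivity of $D\otimes-$, I expect the whole argument to be short: the real content has already been absorbed into \cref{wtriojgopwrtgrwewfref222}, and ultimately into the Toeplitz computation \cref{qerkghqeriffewq}.
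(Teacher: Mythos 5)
Your proposal is correct and follows essentially the paper's route: both arguments amount to \cref{wtriojgopwrtgrwewfref222} combined with closure of group objects under $D\otimes-$, the only difference being that the paper applies \cref{wtriojgopwrtgrwewfref222} directly to the identity functor of $L_{K}\nCalg_{h,!}$ (itself a left-exact functor to a semi-additive $\infty$-category), which makes your Yoneda detour through the corepresentables $\Map_{L_{K}\nCalg_{h,!}}(C,-)$ unnecessary. Your explicit verification that $D\otimes A$ remains a group object, via additivity of $D\otimes-$, is a useful supplement, since the paper asserts this fact without proof.
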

\begin{proof}
We apply \cref{wtriojgopwrtgrwewfref222} to the identity functor in place of $E$.
We further use that if  $A$ is a group object, then so is $B\otimes A$ for every $B$ in $ L_{K}\nCalg_{h,!}$.
  \end{proof}

  The statements of  \cref{qerkghqeriffewq}, \cref{wtriojgopwrtgrwewfref},   \cref{wtriojgopwrtgrwewfref222}, and  \cref{erjigowergerrqf9} all have  separable versions which are obtained by adding subscripts $\sepa$ appropriately.

\section{Group objects and $\KK_{\sepa}$ and $\EE_{\sepa}$}\label{werigjowergerwg9}

In this section we consider the full subcategories of group objects in the
semi-additive $\infty$-categories  $L_{K}\nCalg_{h,!}$ for $!$ in $\{\se ,\exa\}$ and their separable versions. 
They are the targets of the two-fold loop functor and turn out the be stable $\infty$-categories.
 This two-fold loop functor is the right-adjoint of a right Bousfield localization. It is the last step of the chain of localizations described in \cref{rgijrweiogwrwrefwf}. 
In the separable case,  the composition of all four localizations yields 
 the functors  $\kk_{\sepa}:\nCalg_{\sepa}\to \KK_{\sepa}$  and 
$\ee_{\sepa}:\nCalg_{\sepa}\to \KK_{\sepa}$ whose universal properties will be stated in 
 \eqref{vdfvqr3ferffafafds}
 and \eqref{vdfvqr3ferffafafds1}.

Dually to the situation described at the beginning of \cref{lhK} let $\bC$ be an $\infty$-category with an endofunctor $R:\bC\to \bC$ and a natural transformation 
$\beta:R\to \id_{\bC}$. If for every object $C$ the morphisms \[\beta_{R(C)}, R(\beta_{C}):R(R(C))\to R(C)\] are equivalences, then $R$ is the right-adjoint of a right Bousfield localization with counit $\beta$. The functor $R:\bC \to R(\bC)$ is also a Dwyer-Kan localization 
at the set of morphisms $W_{R}:=\{\beta_{C}\mid C\in \bC\}$.  
If $\bC$ is left-exact, then the localization $R$ is automatically left exact.

If $\bC$ is semi-additive, then we let $\bC^{\group}$ denote the full subcategory of group objects in $\bC$.
A full subcategory of a semi-additive $\infty$-category which is closed under products is again semi-additive.
A semi-additive $\infty$-category is called {\em additive} if all its objects are groups.  If $\bC$ is semi-additive, then $\bC^{\group}$ is additive.  

\begin{ex}
A   stable $\infty$-category is additive. \hB
\end{ex}


We consider 
$!$ in $\{\se,\exa\}$ and $?$ in $\{\min,\max\}$  allowing the  following combinations: \bigskip

\centerline{\begin{tabular}{c|c|c} ! $\backslash$ ?&min&max\\ \hline 
se&yes&yes\\
ex&no&yes
\end{tabular}}
 We consider the  two-fold loop endofunctor  $\Omega^{2}_{!}$ on $ L_{K}\nCalg_{h,!}$ and the natural transformation  $\beta_{!} :\Omega_{!}^{2}\to \id_{L_{K}\nCalg_{h,!} }$  from 
 \cref{qeorigjiowergwerfwfrewfwr}.  
\begin{prop}\label{wtgkwotpgkelrf09i0r4fwefwerf}\mbox{}
\begin{enumerate} \item \label{wetiogjwegferwferfwrefw8805} The essential image of $\Omega_{!}^{2} $ is $ L_{K}{\nCalg_{h,!}}^{\group}$. 
\item \label{wetiogjwegferwferfwrefw88}The functor $\Omega_{!}^{2}$
is the right-adjoint of a right Bousfield localization with counit $\beta_{!}:\Omega_{ !}^{2}\to \id_{L_{K}\nCalg_{ h,!} }$.
In addition, the localization $\Omega_{ !}^{2}$ is left-exact.

\item  \label{wetiogjwegferwferfwrefw881}The $\infty$-category $ L_{K}{\nCalg_{h,!}}^{\group}$ is stable.
\item  \label{wetiogjwegferwferfwrefw883}For every left-exact and additive $\infty$-category  $\bD$ we have an equivalence \begin{equation}\label{gwerfgewrferferwfwrefwref} (\Omega_{!}^{2}\circ  L_{h,K,!})^{*}:\Fun^{\lex}(L_{K}{\nCalg_{h,!}}^{\group},\bD)\stackrel{\simeq}{\to}\Fun^{h,s,!+Sch}(\nCalg, \bD)\ .\end{equation}  
\item \label{wetiogjwegferwferfwrefw884} The localization $\Omega_{!}^{2}$ admits a symmetric monoidal refinement.
\item  \label{wetiogjwegferwferfwrefw885}The functor $$-\otimes_{?}-:L_{K}{\nCalg_{h,!}}^{\group}\times L_{K}{\nCalg_{h,!}}^{\group}\to L_{K}{\nCalg_{h,!}}^{\group}$$ is bi-exact.
\item  \label{wetiogjwegferwferfwrefw886}For every symmetric monoidal,   left-exact and additive $\infty$-category $\bD$ we have an equivalence \begin{equation}\label{vasdvasvdsavasdvasvsadvds}  (\Omega_{!}^{2}\circ  L_{h,K,!})^{*}:\Fun^{\lex}_{\otimes/\lax}(L_{K}{\nCalg_{h,!}}^{\group},\bD)\stackrel{\simeq}{\to}\Fun_{\otimes/\lax}^{h,s,!+Sch}(\nCalg, \bD)\ .\end{equation}  
\end{enumerate}
\end{prop}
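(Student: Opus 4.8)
The plan is to reduce everything to Assertions \ref{wetiogjwegferwferfwrefw8805} and \ref{wetiogjwegferwferfwrefw88}, treating the two Bott-periodicity corollaries \cref{erjigowergerrqf9} and \cref{wtriojgopwrtgrwewfref222} as the only non-formal inputs. For the essential image I would note that $\Omega_!$ already lands in commutative group objects by \cref{wgtojwregpokerpgergwefewferf}, since $L_{K}\nCalg_{h,!}$ is semi-additive and left-exact; hence so does $\Omega_!^2$, giving one inclusion. Conversely \cref{erjigowergerrqf9} says $\beta_{!,A}\colon\Omega_!^2(A)\to A$ is an equivalence whenever $A$ is a group object, so every group object lies in the essential image. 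For the right Bousfield localization I would verify the two conditions recalled at the start of the section for $R=\Omega_!^2$ with counit $\beta_!$: the map $\beta_{!,\Omega_!^2(C)}$ is an equivalence by \cref{erjigowergerrqf9} (as $\Omega_!^2(C)$ is a group object), while $\Omega_!^2(\beta_{!,C})$ is an equivalence by \cref{wtriojgopwrtgrwewfref222} applied to the endofunctor $E:=\Omega_!^2$, which is left-exact (tensoring by a fixed object) and whose composites $E(-\otimes C)$ take values in group objects. Left-exactness of the localization is then automatic since $L_{K}\nCalg_{h,!}$ is left-exact.

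For stability I would write $\bC':=L_{K}{\nCalg_{h,!}}^{\group}$ and observe that on $\bC'$ the counit $\beta_!$ is an equivalence, so $\Omega_!^2\simeq\id_{\bC'}$. Because $\Omega_!$ of any object is a group object, $\Omega_!$ carries $\bC'$ into itself, and the equivalence $\Omega_!\circ\Omega_!\simeq\id_{\bC'}$ exhibits $\Omega_!|_{\bC'}$ as a self-inverse equivalence. As the target of a left-exact localization, $\bC'$ is pointed and admits finite limits, so the standard loop-invertibility criterion \cite[Prop.~1.4.2.11]{HA} yields that $\bC'$ is stable.

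The universal properties are then formal, the one real point being that an additive target makes the localization constraint vacuous. For left-exact $\bD$ I would factor $(\Omega_!^2\circ L_{h,K,!})^*$ as the left-exact localization equivalence $(\Omega_!^2)^*\colon\Fun^{\lex}(\bC',\bD)\xrightarrow{\simeq}\Fun^{\lex,W_R}(L_{K}\nCalg_{h,!},\bD)$, with $W_R=\{\beta_{!,C}\}$, followed by \eqref{ewfwqedqewdedqwd}. When $\bD$ is additive, every left-exact functor into $\bD$ takes values in group objects, so \cref{wtriojgopwrtgrwewfref222} forces it to invert all $\beta_{!,C}$; hence the superscript $W_R$ is redundant and the composite yields \eqref{gwerfgewrferferwfwrefwref}. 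Assertion \ref{wetiogjwegferwferfwrefw886} runs the identical argument with the (lax) symmetric monoidal refinements, replacing \eqref{ewfwqedqewdedqwd} by \eqref{ewfwqedqewdedqwd1} to obtain \eqref{vasdvasvdsavasdvasvsadvds}.

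It remains to supply the monoidal statements. For the symmetric monoidal refinement it suffices, by the Hinich criterion recalled in \cref{lh}, that $D\otimes_?-$ preserve $W_R$; since $\beta_!=\beta_{!,\C}\otimes_?-$, one has $D\otimes_?\beta_{!,C}\simeq\beta_{!,D\otimes_? C}\in W_R$, which is exactly this. For bi-exactness I would first check that a tensor product of group objects is again a group object --- writing one factor as $\Omega_!^2(X')\simeq S^2(\C)\otimes_? X'$ and commuting the tensor past the suspension shows $X\otimes_? Y$ lies in the essential image of $\Omega_!^2$ --- so the bi-left-exact ambient tensor restricts to $\bC'$, and since $\bC'$ is closed under finite limits and stable, left-exactness in each variable upgrades automatically to exactness. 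I expect the genuine difficulty to be bookkeeping rather than conceptual: all the analytic content is already isolated in the Bott-periodicity corollaries, so the one thing that must be got right is the matching of the localizing set $W_R$ across the plain, left-exact, and lax symmetric monoidal universal properties, together with the observation that additivity of $\bD$ is precisely what licenses dropping the $W_R$-decoration in Assertions \ref{wetiogjwegferwferfwrefw883} and \ref{wetiogjwegferwferfwrefw886}.
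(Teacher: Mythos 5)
Your proposal is correct and follows essentially the same route as the paper: both reduce everything to the Bott-periodicity corollaries (\cref{erjigowergerrqf9} and \cref{wtriojgopwrtgrwewfref222}), verify the two right-Bousfield conditions for $\Omega_{!}^{2}$ with counit $\beta_{!}$, deduce stability from $\Omega_{!}$ being self-inverse on group objects, and obtain the universal properties by observing that additivity of $\bD$ makes inverting $W_{\Omega_{!}^{2}}$ automatic. The only cosmetic difference is that for the condition $\Omega_{!}^{2}(\beta_{!,C})$ being an equivalence you invoke \cref{wtriojgopwrtgrwewfref222} with $E=\Omega_{!}^{2}$, whereas the paper uses the symmetry $\Omega_{!}^{2}(\beta_{!,A})\simeq \beta_{!,\Omega_{!}^{2}(A)}$ together with \cref{erjigowergerrqf9}; both are valid and rest on the same inputs.
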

\begin{proof}
 We start with Assertion \ref{wetiogjwegferwferfwrefw8805}.
By \cref{wgtojwregpokerpgergwefewferf} the functor $\Omega_{!}^{2}$ takes values in group objects.
If $A$ belongs to $ L_{K}{\nCalg_{h,!}}^{\group}$, then $\beta_{!,  A}:\Omega_{!}^{2}(A)\to A$ is an equivalence by \cref{erjigowergerrqf9}. Hence the essential image of $\Omega_{!}^{2} $ is precisely $ L_{K}{\nCalg_{h,!}}^{\group}$.

For Assertion \ref{wetiogjwegferwferfwrefw88} we first note that 
  $\beta_{!,\Omega_{!}^{2}(A)}$ is an equivalence again by \cref{erjigowergerrqf9}. 
We furthermore employ   the symmetry  of the tensor product and $\beta_{ !,A}\simeq \beta_{!, \C}\otimes A$ in order to see that 
$\Omega_{!}^{2}(\beta_{!,A})\simeq \beta_{!,\Omega_{!}^{2}(A)} $ is an equivalence, too.

In order to show Assertion \ref{wetiogjwegferwferfwrefw881} we show that the loop functor
\[\Omega_{!}:L_{K}{\nCalg_{h,!}}^{\group}\to L_{K}{\nCalg_{h,!}}^{\group}\] is an equivalence. Indeed,  by \cref{erjigowergerrqf9} the restriction of the natural transformation   $\beta_{!}:\Omega_{!}^{2}\to \id$ from \cref{qeorigjiowergwerfwfrewfwr}
to group objects is an equivalence which exhibits $\Omega_{!}$ as its own inverse.

  Assertion \ref{wetiogjwegferwferfwrefw883} follows from 
\begin{eqnarray}
\Fun^{\lex}(L_{K}{\nCalg_{ h,!}}^{\group},\bD) &\stackrel{\Omega^{2,*}_{ !}}{\simeq}&\Fun^{\lex}(L_{K}\nCalg_{ h,!} ,\bD)\label{ekogpwefrfwvgre}\\
&\stackrel{ L_{ h,K,!}^{*}}{\simeq}&\Fun^{h,s,!+Sch}(\nCalg ,\bD) \ . \nonumber
\end{eqnarray}
The functor $\Omega_{!}^{2,*}$   preserves left-exact functors by Assertion \ref{wetiogjwegferwferfwrefw88}. It is fully faithful and essentially surjective  since any  left-exact functor from $L_{K}\nCalg_{ h,!} $ to an additive category $\bD$ automatically
inverts by \cref{wtriojgopwrtgrwewfref222} the set $W_{\Omega^{2}_{!}} :=\{\beta_{A,!}| A\in L_{K}\nCalg_{ h,!}\}$  of generators of the Dwyer-Kan localization $\Omega_{!}^{2}$.
 The second equivalence in \eqref{ekogpwefrfwvgre} is 
 \eqref{ewfwqedqewdedqwd}.


For Assertion \ref{wetiogjwegferwferfwrefw884} we observe that  
the equivalence $\id_{A}\otimes \beta_{!,B}\simeq \beta_{!,A\otimes B}$ for all $A$ and $B$  in $L_{K}\nCalg_{h,!}$ implies that the endofunctor
$A\otimes-$ of $ L_{K}\nCalg_{h,!} $ preserves  the set  $W_{\Omega^{2}_{!}}$.   Consequently,
$\Omega^{2}_{ !}$ admits a symmetric monoidal refinement.  Furthermore, for 
 $A$ in $ L_{K}\nCalg_{ h,!} $  the endofunctor $A\otimes-$ descends   to a left-exact  endofunctor on
 $ L_{K}{\nCalg_{h,!}}^{\group}$. This implies Assertion \ref{wetiogjwegferwferfwrefw885}.
 
 Finally, Assertion \ref{wetiogjwegferwferfwrefw886} follows from
\begin{eqnarray*}
\Fun^{\lex}_{\otimes/\lax}(L_{K}{\nCalg_{h,!}}^{\group},\bD) &\stackrel{\Omega_{!}^{2,*}}{\simeq}&\Fun^{\lex}_{\otimes/\lax}(L_{K}\nCalg_{h,!} ,\bD)\\
&\stackrel{ L_{h,K,!}^{*}}{\simeq}&\Fun^{h,s,!+Sch}_{\otimes/\lax}(\nCalg,\bD)\ ,
\end{eqnarray*}
where the first follows from the left-exactness of $\Omega_{!}^{2}$ shown in Assertion \ref{wetiogjwegferwferfwrefw884}, and the second is \eqref{ewfwqedqewdedqwd1}. 
\end{proof}

The \cref{wtgkwotpgkelrf09i0r4fwefwerf} has a separable version which we state for later reference.
\begin{prop}\label{wtgkwotpgkelrf09i0r4fwefwerfsepa}\mbox{}
\begin{enumerate} \item \label{wetiogjwegferwferfwrefw8805sepa} The essential image of $\Omega_{\sepa,!}^{2} $ is $ L_{K}{\nCalg_{\sepa,h,!}}^{\group}$. 
\item \label{wetiogjwegferwferfwrefw88sepa}The functor $\Omega_{\sepa,!}^{2}$
is the right-adjoint of a right Bousfield localization with counit $\beta_{\sepa,!}:\Omega_{ \sepa,!}^{2}\to \id_{L_{K}\nCalg_{\sepa, h,!} }$.
In addition, the localization $\Omega_{ \sepa,!}^{2}$ is left-exact.

\item  \label{wetiogjwegferwferfwrefw881sepa}The $\infty$-category $ L_{K}{\nCalg_{\sepa,h,!}}^{\group}$ is stable.
\item  \label{wetiogjwegferwferfwrefw883sepa}For every left-exact and additive $\infty$-category  $\bD$ we have an equivalence \begin{equation}\label{gwerfgewrferferwfwrefwrefsepa} (\Omega_{\sepa,!}^{2}\circ  L_{\sepa,h,K,!})^{*}:\Fun^{\lex}(L_{K}{\nCalg_{\sepa,h,!}}^{\group},\bD)\stackrel{\simeq}{\to}\Fun^{h,s,!+Sch}(\nCalg_{\sepa}, \bD)\ .\end{equation}  
\item \label{wetiogjwegferwferfwrefw884sepa} The localization $\Omega_{\sepa,!}^{2}$ admits a symmetric monoidal refinement.
\item  \label{wetiogjwegferwferfwrefw885sepa}The functor $$-\otimes_{?}-:L_{K}{\nCalg_{\sepa,h,!}}^{\group}\times L_{K}{\nCalg_{\sepa,h,!}}^{\group}\to L_{K}{\nCalg_{\sepa,h,!}}^{\group}$$ is bi-exact.
\item  \label{wetiogjwegferwferfwrefw886sepa}For every symmetric monoidal,   left-exact and additive $\infty$-category $\bD$ we have an equivalence \begin{equation}\label{vasdvasvdsavasdvasvsadvds1}  (\Omega_{\sepa,!}^{2}\circ  L_{\sepa,h,K,!})^{*}:\Fun^{\lex}_{\otimes/\lax}(L_{K}{\nCalg_{\sepa,h,!}}^{\group},\bD)\stackrel{\simeq}{\to}\Fun_{\otimes/\lax}^{h,s,!+Sch}(\nCalg_{\sepa}, \bD)\ .\end{equation}  
\end{enumerate}
\end{prop}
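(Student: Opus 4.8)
The plan is to transcribe the proof of \cref{wtgkwotpgkelrf09i0r4fwefwerf} verbatim, replacing every category, functor and natural transformation by its separable analogue, and to check that each structural input used in the non-separable argument is available in the separable setting. These inputs are all at hand: by \cref{wreoigwtegwtgw9111} the $\infty$-category $L_{K}\nCalg_{\sepa,h,!}$ is semi-additive and left-exact, the localization $L_{\sepa,!}$ admits a symmetric monoidal refinement with bi-left-exact tensor product, and the universal properties \eqref{ewfwqedqewdedqwd111} and \eqref{ewfwqedqewdedqwd1111} hold. As recorded in the text preceding this proposition, \cref{qerkghqeriffewq}, \cref{wtriojgopwrtgrwewfref}, \cref{wtriojgopwrtgrwewfref222} and \cref{erjigowergerrqf9} all have separable versions. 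Finally, since \cref{wgtojwregpokerpgergwefewferf} is a statement about arbitrary semi-additive left-exact $\infty$-categories, it applies directly to $L_{K}\nCalg_{\sepa,h,!}$.

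First I would prove Assertion \ref{wetiogjwegferwferfwrefw8805sepa}: \cref{wgtojwregpokerpgergwefewferf} gives that $\Omega^{2}_{\sepa,!}$ lands in group objects, while the separable form of \cref{erjigowergerrqf9} shows that $\beta_{\sepa,!,A}\colon \Omega^{2}_{\sepa,!}(A)\to A$ is an equivalence whenever $A$ is already a group object; together these identify the essential image with $L_{K}{\nCalg_{\sepa,h,!}}^{\group}$. For Assertion \ref{wetiogjwegferwferfwrefw88sepa} I would verify the two defining conditions of a right Bousfield localization, namely that $\beta_{\sepa,!,\Omega^{2}_{\sepa,!}(A)}$ and $\Omega^{2}_{\sepa,!}(\beta_{\sepa,!,A})$ are equivalences: the first is again the separable \cref{erjigowergerrqf9} applied to the group object $\Omega^{2}_{\sepa,!}(A)$, and the second follows from $\beta_{\sepa,!,A}\simeq \beta_{\sepa,!,\C}\otimes A$ together with symmetry of the tensor product. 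Left-exactness is automatic for a right Bousfield localization of a left-exact $\infty$-category. Assertion \ref{wetiogjwegferwferfwrefw881sepa} then follows because the separable \cref{erjigowergerrqf9} exhibits $\Omega_{\sepa,!}$ as its own inverse on group objects, so the loop functor on $L_{K}{\nCalg_{\sepa,h,!}}^{\group}$ is an equivalence and the category is stable.

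For the universal property in Assertion \ref{wetiogjwegferwferfwrefw883sepa} I would compose two equivalences exactly as in \eqref{ekogpwefrfwvgre}: restriction along $\Omega^{2}_{\sepa,!}$ is fully faithful and essentially surjective because the separable version of \cref{wtriojgopwrtgrwewfref222} forces every left-exact functor to an additive category to invert the generating set $W_{\Omega^{2}_{\sepa,!}}$, and restriction along $L_{\sepa,h,K,!}$ is the equivalence \eqref{ewfwqedqewdedqwd111}. The symmetric monoidal refinement of Assertion \ref{wetiogjwegferwferfwrefw884sepa} and the bi-exactness of Assertion \ref{wetiogjwegferwferfwrefw885sepa} come from the identity $\id_{A}\otimes\beta_{\sepa,!,B}\simeq\beta_{\sepa,!,A\otimes B}$, which shows that $A\otimes-$ preserves $W_{\Omega^{2}_{\sepa,!}}$ and descends to a left-exact endofunctor of the group objects; Assertion \ref{wetiogjwegferwferfwrefw886sepa} is then the monoidal analogue of the chain in Assertion \ref{wetiogjwegferwferfwrefw883sepa}, using \eqref{ewfwqedqewdedqwd1111} in place of \eqref{ewfwqedqewdedqwd111}.

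The main (and essentially the only) obstacle is bookkeeping rather than new mathematics: one must confirm that the separable analogues of \cref{erjigowergerrqf9} and \cref{wtriojgopwrtgrwewfref222}—which ultimately rest on the separable form of the Toeplitz--Bott argument of \cref{qerkghqeriffewq}—really are available. This is unproblematic because the Toeplitz algebra $\cT$ is itself separable and nuclear, so the entire Bott-periodicity chain restricts to separable $C^{*}$-algebras without any change. Granting this, the argument is purely formal and produces no content beyond that of \cref{wtgkwotpgkelrf09i0r4fwefwerf}.
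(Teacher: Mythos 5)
Your proposal is correct and takes essentially the same route as the paper: the paper gives no separate proof of \cref{wtgkwotpgkelrf09i0r4fwefwerfsepa}, stating only that it is the separable version of \cref{wtgkwotpgkelrf09i0r4fwefwerf} (with the remark at the end of the Bott periodicity section that \cref{qerkghqeriffewq}, \cref{wtriojgopwrtgrwewfref}, \cref{wtriojgopwrtgrwewfref222} and \cref{erjigowergerrqf9} all have separable versions obtained by adding subscripts $\sepa$), which is precisely the transcription you carry out. Your verification that the needed inputs — \cref{wreoigwtegwtgw9111}, the separable Bott-periodicity chain resting on the separability and nuclearity of $\cT$, and the purely categorical \cref{wgtojwregpokerpgergwefewferf} — survive the restriction is exactly the bookkeeping the paper leaves implicit.
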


We note that $L_{K}{\nCalg_{h,!}}^{\group}$ is a large $\infty$-category while $L_{K}{\nCalg_{\sepa,h,!}}^{\group}
$ is essentially small and locally small.

\begin{rem} \label{werogwegferfwefref} If $\bD$ is in addition stable, then by   \cref{reijgoerfeffrfqrferfe},
 in the right-hand sides of  \eqref{gwerfgewrferferwfwrefwref}, \eqref{vasdvasvdsavasdvasvsadvds}, \eqref{gwerfgewrferferwfwrefwrefsepa} and \eqref{vasdvasvdsavasdvasvsadvds1} we can omit the superscript $+Sch$. \hB
\end{rem}

 \begin{prop}\label{ergiojeroigwerfwrefrefdvs}
 The functors \[ \hspace{-0.2cm}\Omega_{ !}^{2}\circ L_{ h,K,!}:\nCalg \to {L_{K}\nCalg_{h,!}}^{\group}\ ,\quad \Omega_{\sepa,!}^{2}\circ L_{\sepa,h,K,!}:\nCalg_{\sepa}\to  {L_{K}\nCalg_{\sepa,h,!}}^{\group}\]  are Dwyer-Kan localizations.
 \end{prop}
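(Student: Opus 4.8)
The plan is to recognise each of the two displayed functors as a composite of Dwyer–Kan localizations that have already been constructed in the previous sections, and then to invoke the fact that a composite of Dwyer–Kan localizations is again a Dwyer–Kan localization. I would carry this out in detail for the functor $\Omega_{!}^{2}\circ L_{h,K,!}$, the separable functor $\Omega_{\sepa,!}^{2}\circ L_{\sepa,h,K,!}$ being treated word for word by replacing every constituent by its separable version. The whole statement thus reduces to one input about localizations, so most of the work is bookkeeping of what is already proved.

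First I would assemble the constituents. The functor
\[
L_{h,K,!}=L_{!}\circ L_{K}\circ L_{h}:\nCalg\longrightarrow L_{K}\nCalg_{h,!}
\]
is a composite of three Dwyer–Kan localizations: $L_{h}$ is the Dwyer–Kan localization at the homotopy equivalences by \cref{erjgiowergrwefwerfwref}; $L_{K}$ is the left adjoint of a left Bousfield localization by \cref{wreogkpwegrewferf}.\ref{wegergoijiowerfgwerferfrewfrfwefrf}, hence a Dwyer–Kan localization at $W_{K}$; and $L_{!}$ is by \cref{wegjwoergerwfrferfrwefwf} the Dwyer–Kan localization at $W_{!}$. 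Finally, $\Omega_{!}^{2}:L_{K}\nCalg_{h,!}\to L_{K}{\nCalg_{h,!}}^{\group}$ is the right adjoint of a right Bousfield localization by \cref{wtgkwotpgkelrf09i0r4fwefwerf}.\ref{wetiogjwegferwferfwrefw88}, hence a Dwyer–Kan localization at $W_{\Omega_{!}^{2}}=\{\beta_{!,A}\mid A\in L_{K}\nCalg_{h,!}\}$. Thus $\Omega_{!}^{2}\circ L_{h,K,!}$ is a four-fold composite of Dwyer–Kan localizations; in the separable case one invokes the separable analogues \cref{weogpwergerfgwerfwerf9}.\ref{powekgpowegref}, \cref{wreogkpwegrewferf1}.\ref{werigjweogwergwreg}, the localization \eqref{vervweriuheiucececwewce}, and \cref{wtgkwotpgkelrf09i0r4fwefwerfsepa}.\ref{wetiogjwegferwferfwrefw88sepa}.

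It remains to record the closure statement: if $F:\bC\to\bD$ and $G:\bD\to\bE$ are Dwyer–Kan localizations with inverted classes $W_{F}$ and $W_{G}$, then $G\circ F$ is a Dwyer–Kan localization, with inverted class the preimage $W:=(G\circ F)^{-1}(\mathrm{equiv})=F^{-1}(W_{G})$. For the universal property I would note that for any $\infty$-category $\bX$ the restriction $(GF)^{*}=F^{*}\circ G^{*}$ is a composite of the fully faithful functors $G^{*}$ and $F^{*}$, hence fully faithful, and its essential image is $F^{*}\big(\Fun^{W_{G}}(\bD,\bX)\big)$, consisting of those $H:\bC\to\bX$ which invert $W_{F}$ and whose induced extension $\bar H:\bD\to\bX$ inverts $W_{G}$. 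The containment of this image in $\Fun^{W}(\bC,\bX)$ is immediate. The hard part — and the main obstacle of the whole proof — is the reverse, essential‑surjectivity, direction: given $H$ inverting $W$, it inverts $W_{F}\subseteq W$ and so factors as $H\simeq\bar H\circ F$, and one must show that $\bar H$ inverts \emph{all} of $W_{G}$, not merely those members of $W_{G}$ that happen to lie in the image of $F$ (note that $\beta_{!,A}$ is a connecting map built from the fibre sequence \eqref{svfsfvfdvfvdsv}, not the image of a single $C^{*}$-algebra homomorphism).

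I would dispatch this obstacle at the level of presentations rather than by a zig‑zag manipulation, which does not suffice since the individual legs of a zig‑zag representing an element of $W_{G}$ need not lie in $W$. A Dwyer–Kan localization at a set of morphisms is presented by the left Bousfield localization of the cartesian (Joyal) model structure at that set, and left Bousfield localizations of a fixed model structure compose: localizing at $W_{F}$ and then at a lift of $W_{G}$ is again a single left Bousfield localization, at the union of the two marking sets, determined by the saturation of the inverted morphisms. This yields both the closure property and the identification of the inverted class as the saturation of $W$; I would cite \cite{Cisinski:2017} and \cite{hinich} for this standard fact. Applying it to the four‑fold composite then gives the Proposition, the inverted morphisms being exactly the $C^{*}$-algebra homomorphisms sent to equivalences by $\Omega_{!}^{2}\circ L_{h,K,!}$ — equivalently, in the separable case, the $\kk_{\sepa}$- and $\ee_{\sepa}$-equivalences.
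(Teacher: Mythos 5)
You have the right decomposition into four constituent localizations, and you have correctly put your finger on the crux (that $\bar H$ must invert all of $W_{G}$, not only the part lying in the image of $F$); but the device you use to dispatch that crux is where the proof breaks down. The ``standard fact'' you want to cite --- that a composite of Dwyer--Kan localizations is again a Dwyer--Kan localization --- is simply false. Take $\bC$ to be the free $1$-category on $a\xrightarrow{u}b\xleftarrow{s}b'\xrightarrow{u'}c$, let $F:\bC\to\bD:=\bC[\{s\}^{-1}]$, and let $G:\bD\to\bD[\{g\}^{-1}]$ be the localization at the single zig-zag morphism $g:=F(u')\circ F(s)^{-1}\circ F(u)$. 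The functor to $\mathbf{Set}$ with $\Phi(a)=\Phi(c)=\ast$, $\Phi(b)=\Phi(b')=\{1,2\}$, $\Phi(s)=\id$, $\Phi(u)$ an injection and $\Phi(u')$ constant inverts $s$ and $g$ but neither $u$ nor $u'$, so the only nonidentity morphism of $\bC$ inverted by $G\circ F$ is $s$; if $G\circ F$ were a Dwyer--Kan localization it would therefore have to coincide with $F$, forcing $G$ to be an equivalence --- which it is not, since the functor induced by $\Psi(a)=\Psi(b)=\Psi(b')=\ast$, $\Psi(c)=\{1,2\}$, $\Psi(s)=\Psi(u)=\id$, $\Psi(u')$ a non-surjective injection does not invert $g$. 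What is true, and what is actually needed here, is the weaker statement: a composite of Dwyer--Kan localizations is a Dwyer--Kan localization \emph{provided} the generating morphisms of each successive localization are equivalent, in the arrow category, to images of morphisms of the original category (then $\bar H$ inverts $W_{G}$ if and only if $H$ inverts the chosen preimages, and the universal properties concatenate). Your model-categorical reformulation does not circumvent this hypothesis: the phrase ``a lift of $W_{G}$'' presupposes exactly such images, and the ``union of the two marking sets'' is not defined, because the second set consists of edges of a fibrant replacement (zig-zags) which in general admit no lift to edges of the original marked simplicial set --- as the counterexample above shows.

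Producing these lifts is precisely the content of the paper's proof, and it is nontrivial exactly at the point you yourself flag: the generators $\beta_{\sepa,!,A}$ of the last localization exist only in $L_{K}\nCalg_{\sepa,h,!}$. The paper exhibits, for each $A$, a commutative square identifying $\beta_{\sepa,!,A}$ with the image $L_{\sepa,h,K,!}(\lambda_{A})$ of the $C^{*}$-algebra morphism $\lambda_{A}:S^{2}(A)\to C(\pi_{A})$, the two remaining sides being the equivalences $L_{\sepa,h,K,!}(\kappa_{A})$ (corner inclusion) and $L_{\sepa,h,K,!}(\iota_{\pi_{A}})$ (cone inclusion for the semi-split sequence $0\to A\otimes K\to A\otimes\cT_{0}\xrightarrow{\pi_{A}}S(A)\to 0$, the $A$-tensored left column of \eqref{dfbpojpwortbfdgbdfgbdgfb}); since these equivalences are themselves images, $\Omega^{2}_{\sepa,!}$ is also the Dwyer--Kan localization at the set $\{L_{\sepa,h,K,!}(\lambda_{A})\}_{A\in\nCalg_{\sepa}}$, and only after this replacement does the composite become a localization at a collection of morphisms of $\nCalg_{\sepa}$. (A smaller instance of the same issue hides in your third stage: $W_{!}$ of \cref{wegjwoergerwfrferfrwefwf} is by definition closed under pullbacks along arbitrary morphisms of $L_{K}\nCalg_{h}$, and such pullbacks are not images either; here one reduces to images using the mapping-cylinder replacement of \cref{werigowrgerfewrfwerfwerf}.) Without these concrete replacements your argument has a genuine gap; supplying them turns it into the paper's proof.
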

 \begin{proof} We write out the details in the separable case. The non-separable case is analogous.  The functor $L_{\sepa,h,K,!}:\nCalg_{\sepa}\to L_{K}\nCalg_{\sepa,h,!}$
 is constructed as an iterated Dwyer-Kan localization at sets of morphisms in
 $\nCalg_{\sepa}$.  By definition, the last step $\Omega_{\sepa,!}^{2}:L_{K}\nCalg_{\sepa,h,!}\to {L_{K}\nCalg_{\sepa,h,!}}^{\group}$ is a Dwyer-Kan localization
 at the set of morphisms $\beta_{\sepa,!,A}:L_{\sepa,h,K,!}(S^{2}(A))\to L_{\sepa,h,K,!}(A)$ for all $A$ in $\nCalg_{\sepa}$. These morphisms only exist in the localization $ L_{K}\nCalg_{\sepa,h,!}$.
 But we can replace them by a collection of images of  morphisms  in $\nCalg_{\sepa} $.  For $A$ in $\nCalg_{\sepa}$ we  have the commutative diagram
 \[\xymatrix{L_{\sepa,h,K,!}(S^{2}(A))\ar[d]^{L_{\sepa,h,K,!}(\lambda_{A})}\ar[rr]^{\beta_{\sepa,!,A}}&& L_{\sepa,h,K,!}( A)\ar[d]_{L_{\sepa,h,K,!}(\kappa_{A})}^{\simeq}\\L_{\sepa,h,K,!}( C(\pi_{A})) &&\ar[ll]^{L_{\sepa,h,K,!}(\iota_{\pi_{A}})}_{\simeq} L_{\sepa,h,K,!}(A\otimes K) }\]
  in $L_{K}\nCalg_{\sepa,h,!}$, 
 where $\kappa_{A}$ is the left upper corner inclusion \eqref{oiwjveoivjowevsvdfvsfdv}, $\iota_{\pi_{A}} $ is the canonical inclusion is associated to  the semi-split exact sequence
 \[0\to A\otimes K\to A\otimes \cT_{0}\xrightarrow{\pi_{A}} S(A)\to 0\] (the tensor product of the left column in \eqref{dfbpojpwortbfdgbdfgbdgfb} with $A$) as in \eqref{tbrtbbgdbfert}, and $\lambda_{A}:S^{2}(A)\to C(\pi_{A})$ is the canonical inclusion.
 Hence $\Omega_{\sepa,!}^{2}$ is also the Dwyer-Kan localization at the collection of morphisms
 $(L_{\sepa,h,K,!}(\lambda_{A}))_{A\in \nCalg_{\sepa}}$.
 We can conclude that the composition $ \Omega_{\sepa,!}^{2}\circ L_{\sepa,h,K,!}$ is a Dwyer-Kan localization.
  \end{proof}

  
\begin{ddd}We define the {\em $KK$-theory functor for separable $C^{*}$-algebras}
  \begin{equation}\label{fweeqwdewdewdqwed} \kk_{\sepa}:\nCalg_{\sepa}\to \KK_{\sepa}\end{equation}  to be the functor $$\Omega^{2}_{\sepa,\se}\circ L_{\sepa,h,K,\se}:\nCalg_{\sepa}\to {L_{K}\nCalg_{\sepa,h,\se}}^{\group}\ .$$
  \end{ddd}
So $\KK_{\sepa}$ is a locally small and essentially small stable $\infty$-category.
The functor $\kk_{\sepa}$ has the universal property that 
\begin{equation}\label{vdfvqr3ferffafafds} \kk_{\sepa}^{*}:\Fun^{\lex}(\KK_{\sepa},\bD)\stackrel{\simeq}{\to} \Fun^{h,s,se+Sch}(\nCalg_{\sepa},\bD)\end{equation} 
for any left-exact and additive   $\infty$-category $\bD$. 
Since we know by \cref{wtgkwotpgkelrf09i0r4fwefwerfsepa}.\ref{wetiogjwegferwferfwrefw881sepa}  that $\KK_{\sepa}$ is stable the restriction of this universal property to stable $\infty$-categories $\bD$ (where by \cref{werogwegferfwefref} we can omit the superscript $+Sch$) already characterizes the functor $\kk_{\sepa}:\nCalg_{\sepa}\to \KK_{\sepa}$ up to equivalence. 
\begin{rem}By \cite[Thm. 1.5]{KKG} the functor 
 denoted by the same symbol  in \cite[Def. 1.2]{KKG} (for the trivial group $G$)
 has the same universal property and therefore 
 is canonically equivalent to the functor defined above.
We can   conclude by  \cite[Thm. 1.3]{KKG} that the functor
 \[\ho\circ \kk_{\sepa}:\nCalg_{\sepa}\to \ho\KK_{\sepa}\]
 with values in the triangulated category  $\ho\KK_{\sepa}$ is canonically equivalent to 
 the classical functor considered in   \cite{MR2193334} and the $C^{*}$-literature elsewhere.  As explained in the introduction  in the present paper we will give an independent proof for this fact, see \cref{wrtgowtgwgrrwfrefw}.
 \hB\end{rem}

\begin{rem}Since $\KK_{\sepa}$ is stable it admits all finite colimits. After some hard work,  in \cref{weijotgwegferfwrefw} below we will see that $\KK_{\sepa}$ admits countable colimits and is thus  idempotent complete. We do not have a direct proof of this fact just from the constructions. 
 \hB
\end{rem}

\begin{ddd}\label{wopgrgerwfgewrfw}We define the {\em $E$-theory functor for separable $C^{*}$-algebras}  
  \begin{equation}\label{fweeqwdewdewdqwed1} \ee_{\sepa}:\nCalg_{\sepa}\to \EE_{\sepa}\end{equation}  to be the functor $$\Omega^{2}_{\sepa,\exa}\circ L_{\sepa,h,K,\exa}:\nCalg_{\sepa}\to {L_{K}\nCalg_{\sepa,h,\exa}}^{\group}\ .$$
  \end{ddd}
  So $\EE_{\sepa}$ is a  locally small and essentially small stable $\infty$-category.
The functor in \eqref{fweeqwdewdewdqwed1} is the inital homotopy invariant, stable and exact functor to a left-exact and additive $\infty$-category, i.e.  for every any  left-exact and additive $\infty$-category $\bD$  we have the equivalence \begin{equation}\label{vdfvqr3ferffafafds1} \ee_{\sepa}^{*}:\Fun^{\lex}(\EE_{\sepa},\bD)\stackrel{\simeq}{\to} \Fun^{h,s,ex+Sch}(\nCalg_{\sepa},\bD)\ .\end{equation} 
  
\begin{rem}
The justification for calling the functor defined in \cref{wopgrgerwfgewrfw} the $E$-theory functor is that it has an analogous universal property as the additive $1$-category-valued $E$-theory functors considered 
\cite{MR1068250}, \cite{zbMATH04182148}. 
In fact, in  \cref{qiurhfgiuewrgwrfrefrfwrefw} we show that after going to the homotopy category the functor
$\ee_{\sepa}$ becomes equivalent to the classical $E$-theory functor for separable algebras.

Asymptotic morphisms will be discussed in \cref{ewriogjowegregrewf99} below. \hB
 \end{rem}

\begin{rem} Since $\KK_{\sepa}$ and $\EE_{\sepa}$ are stable, at a first glance  it looks more natural
to 
  formulate the universal properties for stable targets $\bD$. But we will take advantage of the more general version for left-exact additive $\infty$-categories in \cref{reoijqoeirfewfqwdewd} below. \hB
\end{rem}

 \section{$s$-finitary functors}\label{fbweroibjdfbsfdbsfb}
 
 In this section we extend the  $KK$ and $E$-theory functors from separable to   all $C^{*}$-algebras and
 characterize these extensions by their universal properties.

To any essentially small and locally small  stable  $\infty$-category $\bC$  we can associate its $\Ind$-completion \[y:\bC\to \Ind(\bC)\ .\]
 As a model, using that $\bC$ has mapping spectra,  one can take 
the Yoneda embedding \begin{equation}\label{erguhiurfhirefrfwdewd}y:\bC\to \Fun^{\lex}(\bC,\Sp)\ .
\end{equation} 
  The large stable $\infty$-category $\Ind(\bC)$ is presentable, and the fully faithful and exact functor $y$ has the universal property that for any cocomplete stable  $\infty$-category $\bD$ the pull-back along $y$ is an equivalence 
 \begin{equation}\label{ewqfwedqwededqdedq}  y^{*}:\Fun^{\colim}(\Ind(\bC),\bD)\stackrel{\simeq}{\to} \Fun^{\lex}(\bC,\bD)\ , \end{equation}
 where the supercript $\colim$ indicates small colimit preserving functors.
 If  $\bC$  has a bi-exact symmetric monoidal structure, then $  \Ind(\bC)$ has a natural symmetric monoidal structure and
 $y$ has a  symmetric monoidal refinement such that for any cocomplete bi-cocontinuous symmetric monoidal $\infty$-category  $\bD$ the pull-back along $y^{*}$ induces an equivalence 
 \begin{equation}\label{ewqfwedqwededqdedq1}y^{*}:\Fun^{\colim}_{\otimes/\lax}(\Ind(\bC),\bD)\stackrel{\simeq}{\to} \Fun^{\lex}_{\otimes/\lax}(\bC,\bD)\ .\end{equation}
The inverse of the restriction is given by left Kan-extension.
In the model \eqref{erguhiurfhirefrfwdewd}  the symmetric monoidal structure on the functor category  is the Day convolution structure on the functor category.

Let $!$ be in $\{\se,\exa\}$ and $?$ be in $\{\min,\max\}$. As before we allow  the  following combinations: \bigskip

\centerline{\begin{tabular}{c|c|c} ! $\backslash$ ?&min&max\\ \hline 
se&yes&yes\\
ex&no&yes
\end{tabular}}

For the moment we  use the abbreviations
\begin{equation}\label{vsddfkvopsdfvfsdvsdfvsfdvsdv}  \KK_{\sepa,!}:=L_{K}{\nCalg_{\sepa,h,!}}^{\group}\ , \quad  \kk_{\sepa,!}:=\Omega^{2}_{\sharp,!}\circ L_{\sepa,h,K,!}:\nCalg_{\sepa}\to \KK_{\sepa,!} \end{equation}   instead of $\KK_{\sepa}$ or $\EE_{\sepa}$ 
 order to discuss $KK$ and $E$-theory in a parallel manner.
%
%
 \begin{ddd}\label{iejgowergrfrfwerfref}
 We define $\KK_{!}:=\Ind(\KK_{\sepa,!})$ and the functor
 \[\kk_{!}:\nCalg\to \KK_{!}\] as the left Kan-extension 
 \[\xymatrix{\nCalg_{\sepa}\ar[dr]^{\incl}\ar[r]^{\kk_{\sepa,!}}&\KK_{\sepa,!}\ar[r]^{y}&\KK_{!}\\&\nCalg\ar@{..>}[ur]^{\kk_{!}}&}\]
 of $y\circ \kk_{\sepa,!}$ along $\incl$.
 \end{ddd}
 Since the inclusion functor $\incl$ is fully faithful the triangle commutes up to a natural equivalence. 

The following properties of the functor $\kk_{!}:\nCalg\to \KK_{!}$ are immediate from the definition.
 \begin{kor}\label{weoktgpwergwerfwrf}\mbox{}
\begin{enumerate}
\item \label{ergijowerfwerfwerf}  We have an equivalence $  \kk_{!}\circ \incl\simeq y\circ \kk_{\sepa,!}:\nCalg_{\sepa}\to \KK_{!}$.
\item $\KK_{!}$ is a large presentable stable $\infty$-category compactly generated by the image of $y$.
\item The functor  $\kk_{!}:\nCalg\to \KK_{!}$ is $s$-finitary.
\item $\KK_{!}$ admits a bi-cocontinuous symmetric monoidal structure $\otimes_{?}$ and $y^{*}$ has a symmetric monoidal refinement such that
\[y^{*}: \Fun^{\colim}_{\otimes/\lax}( \KK_{!},\bC)\to \Fun^{\otimes,\lex}_{\otimes/\lax}(\KK_{\sepa,!},\bD)\] is an equivalence  for any cocomplete bi-cocontinuous symmetric monoidal $\infty$-category with $\bD$.
\item \label{goijewrgoiwrfwfrefwrf} The tensor product with the  image  \begin{equation}\label{vewqcercerwce}b:\kk_{!}(S^{2}(\C))\to \kk_{!}(\C)  
\end{equation}of the equivalence $ \beta_{!,\C}$ in $\KK_{!}$  induces an equivalence $\Omega^{2}\stackrel{\simeq}{\to}\id_{\KK_{!}}$
of endofunctors of $\KK_{!}$
\end{enumerate}
\end{kor}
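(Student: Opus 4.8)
The plan is to treat \cref{weoktgpwergwerfwrf} as a collection of formal consequences of the general $\Ind$-completion theory recalled around \eqref{ewqfwedqwededqdedq}--\eqref{ewqfwedqwededqdedq1}, together with the structural properties of $\KK_{\sepa,!}$ from \cref{wtgkwotpgkelrf09i0r4fwefwerfsepa}. For the first assertion \ref{ergijowerfwerfwerf} I would invoke the standard fact that a left Kan-extension along a fully faithful functor into a cocomplete $\infty$-category restricts back to the original functor; since $\incl$ is fully faithful and $\KK_!$ is cocomplete, this yields $\kk_!\circ\incl\simeq y\circ\kk_{\sepa,!}$, which is the commutativity of the triangle in \cref{iejgowergrfrfwerfref}. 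For presentability and compact generation I would use that $\KK_{\sepa,!}$ is essentially small, locally small and stable (the latter by \cref{wtgkwotpgkelrf09i0r4fwefwerfsepa}.\ref{wetiogjwegferwferfwrefw881sepa}), so that $\Ind(\KK_{\sepa,!})$ is presentable stable and compactly generated by the image of $y$ by the recalled properties of $\Ind(-)$. The $s$-finitary assertion is the quickest: by \cref{iejgowergrfrfwerfref} the functor $\kk_!$ is by construction the left Kan-extension of its restriction to $\nCalg_{\sepa}$ along $\incl$, which by the criterion in \cref{wergijwerogerwfewrferwfw} is exactly the property of being $s$-finitary.

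For the symmetric monoidal assertion I would appeal to the monoidal refinement of $\Ind$: the tensor product on $\KK_{\sepa,!}$ is bi-exact by \cref{wtgkwotpgkelrf09i0r4fwefwerfsepa}.\ref{wetiogjwegferwferfwrefw885sepa}, so the construction recalled at \eqref{ewqfwedqwededqdedq1} furnishes $\KK_!$ with a bi-cocontinuous symmetric monoidal structure and $y$ with a symmetric monoidal refinement satisfying the stated universal property.

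The only assertion \ref{goijewrgoiwrfwfrefwrf} requiring a genuine, if modest, argument --- and hence where I expect the main obstacle to lie --- concerns the Bott map $b$ from \eqref{vewqcercerwce}. First I would note that $b$ is the image under the symmetric monoidal, fully faithful embedding $y$ of the Bott morphism in $\KK_{\sepa,!}$, which is an equivalence by \cref{erjigowergerrqf9} since every object of $\KK_{\sepa,!}=L_K{\nCalg_{\sepa,h,!}}^{\group}$ is a group object; as functors preserve equivalences, $b$ is an equivalence in $\KK_!$. The remaining point is to identify the endofunctor $\kk_!(S^2(\C))\otimes-$ with $\Omega^2$ on the stable symmetric monoidal $\infty$-category $\KK_!$. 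Here I would use that $\kk_{\sepa,!}$ carries the suspension $S^2$ to $\Omega^2$ (via \eqref{terhertgrtgetrgetrgt} and the left-exactness of the intermediate localizations, cf. \cref{qerigqerfewfq9}) and that $y$ preserves finite limits, giving $\kk_!(S^2(\C))\simeq\Omega^2(\kk_!(\C))$; since $\otimes$ is bi-cocontinuous and $\KK_!$ is stable, the exact functor $-\otimes X$ commutes with $\Omega^2$, so tensoring $b$ with the identity produces a natural transformation $\Omega^2\to\id_{\KK_!}$, which is an equivalence because it is obtained by tensoring with the equivalence $b$. The only bookkeeping that deserves explicit care is matching the tensor-unit constraint $\kk_!(\C)\otimes-\simeq\id_{\KK_!}$ with the identification $\Omega^2(\kk_!(\C))\simeq\kk_!(S^2(\C))$, so that the resulting transformation is genuinely the asserted one.
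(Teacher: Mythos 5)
Your proposal is correct and takes essentially the same route as the paper, which declares all five assertions ``immediate from the definition'' — i.e.\ from the recalled $\Ind$-completion facts \eqref{ewqfwedqwededqdedq}, \eqref{ewqfwedqwededqdedq1} and the separable-case results of \cref{wtgkwotpgkelrf09i0r4fwefwerfsepa}, exactly the ingredients you invoke. Your elaboration of assertion \ref{goijewrgoiwrfwfrefwrf} (invertibility of $b$ via \cref{erjigowergerrqf9} at group objects, plus the identification $\kk_{!}(S^{2}(\C))\otimes-\simeq\Omega^{2}$ using exactness of $y$ and bi-cocontinuity of $\otimes$) is precisely the intended reasoning.
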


\begin{rem}
The main point of  \cref{weoktgpwergwerfwrf}.\ref{goijewrgoiwrfwfrefwrf}
is that the loop functor on $\KK_{!}$ is two-periodic, and that this periodicity is implemented
by the product with an (necessarily invertible) element $b$ in $\pi_{-2}\KK_{!}(\C,\C)\simeq \pi_{0}\KK_{!}(S^{2}(\C),\C)$,
where $\KK_{!}(\C,\C)$ is the commutative endomorphism ring spectrum of the tensor unit of $\KK_{!}$.
This will be used for the calculation of the ring spectrum in \cref{qeorigjiowergwerfwfrewfwreee}.
\hB\end{rem}

The following results prepare the verification of the universal property of the functor $\kk_{!}$.
%
%
%
%
%
We consider  a functor $F_{\sepa}:\nCalg_{\sepa}\to \bC$ and assume that it admits a 
left Kan-extension 
 \[\xymatrix{\nCalg_{\sepa}\ar[rr]^{F_{\sepa}}\ar[dr]_{\incl}&&\bC\\
 &\nCalg\ar@{..>}[ur]^{F}&}\ .\]
By  \cref{wergijwerogerwfewrferwfw}, the functor  $F$ is 
 $s$-finitary. 
     Recall the notions introduced in  \cref{wrogjpwrgrfrewf}.
 \begin{prop}\label{weropgkrpeogrfw}$F$ inherits the following properties from $F_{\sepa}$:
 \begin{enumerate}
 \item\label{gwtihgwtigowelrfre} homotopy invariance
 \item\label{gwtihgwtigowelrfre1}  stability
 \item\label{gwtihgwtigowelrfr2}  $!$-exactness for $!$ in  $\{\splt,\se,\exa\}$, provided  in $\bC$ filtered colimits preserve fibre sequences.
 \end{enumerate}
 \end{prop}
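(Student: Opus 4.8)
The plan is to exploit that $F$, being the left Kan-extension of $F_{\sepa}$ along $\incl$, is $s$-finitary by \cref{wergijwerogerwfewrferwfw}, so that for every $C^{*}$-algebra $A$ there is a natural equivalence $F(A)\simeq \colim_{A'\subseteq_{\sepa}A}F_{\sepa}(A')$ over the filtered poset $\mathrm{Sep}(A)$ of separable subalgebras, and moreover $F\circ\incl\simeq F_{\sepa}$ since $\incl$ is fully faithful. Each of the three assertions will be reduced to the claim that a suitably chosen system of separable subalgebras is cofinal, turning the relevant property of $F$ into a filtered colimit of the corresponding property of $F_{\sepa}$, using that filtered colimits of equivalences are equivalences and, for the last assertion, the standing hypothesis that filtered colimits preserve fibre sequences in $\bC$.

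For homotopy invariance I would use the criterion of \cref{wergijwerogerwfewrferwfw} and show that $F(A)\to F(C([0,1])\otimes A)$ is an equivalence. The key point is that the subalgebras $C([0,1])\otimes A'$, for $A'$ ranging over $\mathrm{Sep}(A)$, are cofinal in $\mathrm{Sep}(C([0,1])\otimes A)$: any separable subalgebra is generated by countably many continuous functions $[0,1]\to A$ whose separable ranges generate a separable $A'$ with $C'\subseteq C([0,1])\otimes A'$. Hence $F(C([0,1])\otimes A)\simeq\colim_{A'}F_{\sepa}(C([0,1])\otimes A')$, and the map in question is the filtered colimit of the equivalences $F_{\sepa}(A')\to F_{\sepa}(C([0,1])\otimes A')$ supplied by homotopy invariance of $F_{\sepa}$. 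Stability is entirely analogous: the subalgebras $K\otimes A'$ are cofinal in $\mathrm{Sep}(K\otimes A)$ (approximate generators by finite elementary tensors and collect the $A$-factors to form $A'$), $\kappa_{A}$ is the filtered colimit of the corner inclusions $\kappa_{A'}$, and $F(\kappa_{A})$ is the colimit of the equivalences $F_{\sepa}(\kappa_{A'})$.

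For exactness let $0\to I\xrightarrow{i}B\xrightarrow{\pi}Q\to 0$ be a sequence of type $!$. Reducedness is immediate, as $F(0)\simeq F_{\sepa}(0)\simeq 0$. The core construction assigns to a separable subalgebra $B''\subseteq B$ the subsequence $0\to I''\to B''\to Q''\to 0$ with $I'':=I\cap B''$ and $Q'':=\pi(B'')$, which is always exact. For $!=\exa$ I take the indexing poset $\mathcal{J}:=\mathrm{Sep}(B)$; for $!\in\{\splt,\se\}$, given a section $\sigma$ of $\pi$ (a homomorphism, resp.\ a cpc map), I take $\mathcal{J}$ to be the subposet of $\sigma\pi$-invariant separable subalgebras, built from any $B'$ by the iteration $B_{0}:=B'$, $B_{n+1}:=C^{*}\bigl(B_{n}\cup\sigma(\pi(B_{n}))\bigr)$, $B'':=\overline{\bigcup_{n}B_{n}}$; continuity of $\sigma$ and $\pi$ gives $\sigma\pi(B'')\subseteq B''$, so $\sigma$ restricts to a section $\sigma'':Q''\to B''$ of the same type (a homomorphism, resp.\ still completely positive and contractive), and each subsequence is of type $!$. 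In all cases $\mathcal{J}$ is directed and cofinal in $\mathrm{Sep}(B)$, and the assignments $B''\mapsto I''$, $B''\mapsto B''$, $B''\mapsto Q''$ are cofinal functors into $\mathrm{Sep}(I)$, $\mathrm{Sep}(B)$, $\mathrm{Sep}(Q)$ respectively (each relevant comma poset is nonempty and directed). This yields $\colim_{\mathcal{J}}F_{\sepa}(I'')\simeq F(I)$ and likewise for $B$ and $Q$. Since $F_{\sepa}$ is $!$-exact, each $F_{\sepa}(I'')\to F_{\sepa}(B'')\to F_{\sepa}(Q'')$ is a fibre sequence; taking the filtered colimit over $\mathcal{J}$ and invoking the hypothesis on $\bC$ shows that $F(I)\to F(B)\to F(Q)$ is a fibre sequence.

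I expect the main obstacle to be the split and semi-split cases of exactness: in contrast to the bare exact case, the naive subsequences need not retain a splitting, so the $\sigma\pi$-invariant subalgebra construction is essential, and one must verify both that the restricted section $\sigma''$ is again of the correct type and that the invariant subalgebras remain cofinal. The only genuinely $\infty$-categorical input is the cofinality bookkeeping identifying the three reindexed colimits with $F(I)$, $F(B)$, $F(Q)$, together with the standing filtered-colimit hypothesis; everything else is elementary $C^{*}$-algebra.
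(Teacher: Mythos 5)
Your proposal is correct and is essentially the argument that the paper itself defers to \cite[Lemma 3.2]{KKG} (see \cref{weroigwergfewrfrwef}): identify $F$ with the $s$-finitary extension, reduce each property to a cofinality statement about separable subalgebras, and, in the split and semi-split cases, close a separable subalgebra up under $\sigma\pi$ so that the section restricts. Your uniform treatment of $!=\splt$ and $!=\se$ also matches the paper's remark that the split case is analogous to the semi-split one.
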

\begin{proof}  This is  shown in  \cite[Lemma 3.2]{KKG}, see also \cref{weroigwergfewrfrwef}.
The case of $!=\splt$ (not discussed in the reference) is analoguous to the case $!=\se$.
 \end{proof}

 \begin{theorem}\label{wergijweogrefwrferwfw}
 \mbox{}
 \begin{enumerate}
 \item \label{qrigjfoqrefwedqew9} The functor $\kk_{!}$ is homotopy invariant, stable, and $!$-exact.
 \item \label{qrigjfoqrefwedqew91}
 The restriction along $\kk_{!}$ induces for every cocomplete stable $\infty$-category $\bD$ an equivalence
 $$\kk_{!}^{*}:\Fun^{\colim}(\KK_{!},\bD)\stackrel{\simeq}{\to} \Fun^{h,s,!,sfin}(\nCalg,\bD)\ .$$
 \item \label{qrigjfoqrefwedqew92} $\kk_{!}$ has a natural symmetric monoidal refinement such that restriction anlong $\kk_{!}$
    induces for every cocomplete symmetric monoidal  stable $\infty$-category $\bD$ with bi-cocontinuous symmetric monoidal structure an equivalence
 $$\kk_{!}^{*}:\Fun_{\otimes/\lax}^{\colim}(\KK_{!},\bD)\stackrel{\simeq}{\to} \Fun^{h,s,!,sfin}_{(\otimes/\lax}(\nCalg,\bD)\ .$$
\end{enumerate}
 \end{theorem}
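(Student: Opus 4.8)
The plan is to assemble the statement from universal properties already in hand, pivoting on the fact that $\kk_!$ is by \cref{iejgowergrfrfwerfref} the left Kan extension $\incl_!(y\circ\kk_{\sepa,!})$ along the fully faithful symmetric monoidal inclusion $\incl\colon\nCalg_\sepa\to\nCalg$.

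For Assertion \ref{qrigjfoqrefwedqew9} I would first record that $y\circ\kk_{\sepa,!}$ is homotopy invariant, stable and $!$-exact: $\kk_{\sepa,!}$ has these properties by construction and $y$ is fully faithful and exact, hence preserves equivalences and fibre sequences. Since $\KK_!=\Ind(\KK_{\sepa,!})$ is stable, every fibre sequence there is a cofibre sequence and is therefore preserved by all colimits, in particular by filtered ones; thus the standing hypothesis in \cref{weropgkrpeogrfw}.\ref{gwtihgwtigowelrfr2} is automatic, and that proposition shows that $\kk_!=\incl_!(y\circ\kk_{\sepa,!})$ inherits homotopy invariance, stability and $!$-exactness.

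The core of Assertion \ref{qrigjfoqrefwedqew91} is the claim that restriction along $\incl$ is an equivalence
\[\incl^*\colon\Fun^{h,s,!,sfin}(\nCalg,\bD)\xrightarrow{\ \simeq\ }\Fun^{h,s,!}(\nCalg_\sepa,\bD)\]
for every cocomplete stable $\bD$. As $\incl$ is fully faithful, the unit $\id\to\incl^*\incl_!$ is an equivalence, so $\incl^*$ and $\incl_!$ are mutually inverse between the $s$-finitary functors on $\nCalg$ (those $F$ with $F\simeq\incl_!\incl^*F$, cf. \cref{wergijwerogerwfewrferwfw}) and all functors on $\nCalg_\sepa$; it then remains only to match the decorations. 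Restriction $\incl^*$ preserves homotopy invariance, stability and $!$-exactness because homotopy equivalences, left upper corner inclusions and $!$-exact sequences of separable algebras remain such in $\nCalg$ (and the relevant targets stay separable, as $K$ is separable), while $\incl_!$ sends homotopy invariant, stable, $!$-exact functors to $s$-finitary ones of the same type by \cref{weropgkrpeogrfw}, the filtered-colimit hypothesis again being automatic by stability of $\bD$. Granting this, I would chain
\[\Fun^{\colim}(\KK_!,\bD)\xrightarrow[\ \simeq\ ]{y^*}\Fun^{\lex}(\KK_{\sepa,!},\bD)\xrightarrow[\ \simeq\ ]{\kk_{\sepa,!}^*}\Fun^{h,s,!}(\nCalg_\sepa,\bD),\]
the first arrow being the $\Ind$-completion equivalence \eqref{ewqfwedqwededqdedq} and the second the universal property of $\kk_{\sepa,!}$ (\cref{wtgkwotpgkelrf09i0r4fwefwerfsepa}.\ref{wetiogjwegferwferfwrefw883sepa}, with $+Sch$ dropped by \cref{werogwegferfwefref} since $\bD$ is stable). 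Because $\incl^*\kk_!^*G=G\circ\kk_!\circ\incl\simeq G\circ y\circ\kk_{\sepa,!}=\kk_{\sepa,!}^*y^*G$ by \cref{weoktgpwergwerfwrf}.\ref{ergijowerfwerfwerf}, the composite of these two equivalences equals $\incl^*\circ\kk_!^*$; since $\kk_!^*$ lands in $\Fun^{h,s,!,sfin}$ (by Assertion \ref{qrigjfoqrefwedqew9} together with the $s$-finitarity of $\kk_!$ and exactness of the colimit-preserving $G$) and $\incl^*$ is an equivalence there, $\kk_!^*$ is an equivalence.

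For Assertion \ref{qrigjfoqrefwedqew92} I would rerun exactly this chain in the $\otimes/\lax$ setting, using the Day-convolution $\Ind$-equivalence \eqref{ewqfwedqwededqdedq1} and the symmetric monoidal universal property of $\kk_{\sepa,!}$ (\cref{wtgkwotpgkelrf09i0r4fwefwerfsepa}.\ref{wetiogjwegferwferfwrefw886sepa}), after upgrading the restriction equivalence $\incl^*$ of the previous paragraph to a symmetric monoidal statement. I expect this upgrade to be the main obstacle: one must check that the left Kan extension along the symmetric monoidal functor $\incl$ carries a canonical $(\otimes/\lax)$-refinement compatible with the $s$-finitary condition. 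This rests on the bi-cocontinuity of $\otimes_?$ (so that it commutes with the filtered colimits defining $s$-finitarity) and on the fact that the separable subalgebras of $A\otimes_? B$ are cofinally generated by tensor products of separable subalgebras of $A$ and of $B$, so that the Day-convolution extension agrees pointwise with the $s$-finitary one. Everything else is formal reshuffling of the already-proved universal properties.
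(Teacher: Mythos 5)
Your proposal follows the paper's proof essentially verbatim: the same commutative square of equivalences $y^{*}$, $\kk_{\sepa,!}^{*}$ and $\incl^{*}$ (with inverse the left Kan extension, via \cref{weropgkrpeogrfw}) for Assertions \ref{qrigjfoqrefwedqew9} and \ref{qrigjfoqrefwedqew91}, and the same square in the $\otimes/\lax$ setting for Assertion \ref{qrigjfoqrefwedqew92}, with your explicit check that stability of the target makes the filtered-colimit hypothesis of \cref{weropgkrpeogrfw} automatic being a welcome addition the paper leaves implicit. The only divergence is the symmetric monoidal refinement of the left Kan extension, which the paper defers to \cite[Lem.~3.6, Prop.~3.8]{KKG}; your cofinality heuristic there is fine for $\otimes_{\min}$ but glosses over the fact that $A'\otimes_{\max}B'\to A\otimes_{\max}B$ need not be injective, which is exactly why the paper flags the maximal case as requiring the non-standard \cite[Lem.~7.18]{KKG}.
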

 \begin{proof}
 
In order to see  Assertion \ref{qrigjfoqrefwedqew9} note that  
$ \kk_{\sepa,!}$ is homotopy invariant, stable, and   !-exact by  \cref{wtgkwotpgkelrf09i0r4fwefwerf}.\ref{wetiogjwegferwferfwrefw883}. Since
$y$ is exact, the  composition $y\circ  \kk_{\sepa,!}$ has these properties, too. The Assertion now follows from \cref{weropgkrpeogrfw}.
  
 Assertion   \ref{qrigjfoqrefwedqew91} follows from the commutativity of
 \[\xymatrix{\Fun^{\colim}(\KK_{!},\bD)\ar[r]_{\simeq, \eqref{ewqfwedqwededqdedq}}^{y^{*}}\ar[d]^{\kk_{!}^{*}}&\Fun^{\lex}(\KK_{\sepa,!},\bD)\ar[d]^{\kk_{\sepa,!}^{*}}_{\simeq,\eqref{gwerfgewrferferwfwrefwref}}\\ \Fun^{ h,s,!,sfin} (\nCalg,\bD)\ar[r]_{\simeq}^{\incl^{*}}&\Fun^{ h,s,!} (\nCalg_{\sepa},\bD)}\ ,\]
 where by \cref{weropgkrpeogrfw} the inverse of the lower horizontal functor is the left Kan-extension functor along $\incl$.
 
 The functor $\kk_{!}$ is defined as a left-Kan extension of a symmetric monoidal functor $y\circ \kk_{\sepa,!}$ along another symmetric monoidal functor $\incl$. It  therefore (see \cite[Lem. 3.6]{KKG}) has a 
  lax symmetric monoidal refinement.   As  shown in \cite[Prop. 3.8]{KKG} this structure
 is actually symmetric monoidal.
 Assertion   \ref{qrigjfoqrefwedqew92} now follows  from the commutativity of
 \[\xymatrix{\Fun_{\otimes/\lax}(\KK_{!},\bD)\ar[r]_{\simeq,\eqref{ewqfwedqwededqdedq1}}^{y^{*}}\ar[d]^{\kk_{!}^{*}}&\Fun_{\otimes/\lax}^{\lex}(\KK_{\sepa,!},\bD)\ar[d]^{\kk_{\sepa,!}^{*}}_{\simeq,\eqref{vasdvasvdsavasdvasvsadvds}}\\ \Fun_{\otimes/\lax}^{ h,s,!,sfin} (\nCalg,\bD)\ar[r]_{\simeq}^{\incl^{*}}&\Fun_{\otimes/\lax}^{ h,s,!} (\nCalg_{\sepa},\bD)}\ .\]
The inverse of the lower horizontal morphism is the left-Kan extension functor.
It preserves symmetric monoidal functors by same argument as  for \cite[Prop. 3.8]{KKG}.
  \end{proof}

 \begin{ddd}\label{ojtowpgerwferfw1}\mbox{}\begin{enumerate}
 \item\label{ojtowpgerwferfw} We define the $KK$-theory for $C^{*}$-algebras by $$\KK:=\KK_{\se} \ , \qquad \kk:=\kk_{\se}:\nCalg\to \KK\ .$$
  \item We define the $E$-theory for $C^{*}$-algebras by $$\EE:=\KK_{\exa} \ , \qquad \ee:=\kk_{\exa}:\nCalg\to \EE\ .$$
 \end{enumerate}
 \end{ddd}

 \begin{rem}The universal properties of $KK$ and $E$-theory are given by Theorem \ref{wergijweogrefwrferwfw}.

 Thus $\kk:\nCalg\to \KK$ is the universal functor to a cocomplete stable $\infty$-category which is
 homotopy invariant, stable, semiexact and s-finitary. The category $\KK$ has  presentably  symmetric monoidal structures 
 $\otimes_{?}$ for $?$ in ${\min,\max}$, and the functor $\kk$ has corresponding symmetric monoidal refinements
 which have an analoguous universal property for cocomplete stable  test categories  with bi-cocontinuous symmetric monoidal structures.

The functor   $\ee:\nCalg\to \EE$ is the universal functor to a cocomplete stable $\infty$-category which is
 homotopy invariant, stable,  exact and s-finitary. 
  The category $\EE$ has a presentably  symmetric monoidal structure
 $\otimes_{\max}$, and the functor $\ee$ has a corresponding symmetric monoidal refinement 
 which has an analoguous universal property for cocomplete stable  test categories  with bi-cocontinuous symmetric monoidal structures. 
 
Since exactness is a stronger condition than semiexactness we have a  canonical  comparison functor fitting in to a triangle
\[\xymatrix{&\nCalg\ar[dr]^{\ee}\ar[dl]_{\kk}\\ \KK\ar[rr]&&\EE}\]
 which commutes up to a natural transformation.
Under certain conditions it induces an equivalence on mapping spaces, see \cref{wtrkgoerfrefwerfwerf} for a detailed statement.

   If $A, B$ are separable $C^{*}$-algebras, then by \cref{weoktgpwergwerfwrf}.\ref{ergijowerfwerfwerf} we have equivalences 
 \begin{equation}\label{fvwiuehviuervwvc}\KK_{\sepa}(A,B)\simeq \KK(A,B)\ , \quad \EE_{\sepa}(A,B)\simeq \EE(A,B)\ .
\end{equation} 
  \hB
\end{rem}


\begin{rem}\label{weroigwergfewrfrwef}
We apologize for introducing an incompleteness of the presentation by  deferring the proof of \cref{weropgkrpeogrfw} to the reference
 \cite[Lemma 3.2]{KKG}.
But let us point out that the argument for \cite[Lemma 3.2]{KKG}  only employs elementary facts about $C^{*}$-algebras and their tensor products
and not any deeper parts from $KK$-theory. It therefore should be directly accessible for readers
having reached this point of the present paper.
The same applies to the argument that $\kk_{!}$ is actually symmetric mononidal (in contrast to being lax symmetric monoidal) which is deferred to \cite[Prop. 3.8]{KKG}.
This argument is also by elementary $C^{*}$-algebra theory, but the case of the maximal tensor product is more involved since it uses \cite[Lemma 7.18]{KKG} which does not seem to be so standard. \hB
 \end{rem}

 \section{$K$-theory and the  stable group of unitaries} \label{wreijgowertgwrgrwfrefwrf}

 Using $E$-theory we can give  a simple construction of a highly structured version of the topological $K$-functor for $C^{*}$-algebras. The main goal of this section is to relate this $K$-theory functor with the stable 
unitary group functor from  \eqref{vfevsdfvrqfccscsdc}.   
 
 In order to construct the $K$-theory functor we shall use the following general facts.
 \begin{rem}
 If $\bC$ is a stable symmetric monoidal $\infty$-category with tensor unit $\beins$,
 then the functor $\map_{\bC}(\beins,-):\bC\to \Sp$ is lax symmetric monoidal. 
Since $\beins$ is naturally a commutative algebra object  in $\bC$  we get a commutative ring spectrum
$R:=\map_{\bC}(\beins,\beins)$. The $\infty$-category  $\bC$ has then a natural $R$-linear structure.
In particular, its mapping spectra $\map_{\bC}(C,D)$ naturally  refine to  objects of $\Mod(R)$ such that the composition is $R$-bilinear. \hB
 \end{rem}

We will apply this to the symmetric monoidal $\infty$-category $\EE$. Its tensor unit is given by 
 $\beins_{\EE}:=\ee(\C)$. \begin{ddd}\label{refjweoirgwrefffwerfwrf} We define the commutative ring spectrum $\KU:=\EE(\C,\C)$ in $\CAlg(\Sp)$. \end{ddd} 
 We refer to \cref{qeorigjiowergwerfwfrewfwreee} for a justification of the notation. The stable $\infty$-category 
 $\EE$ becomes a $\KU$-linear stable $\infty$-category.  In particular, its mapping spectra $\EE(A,B)$ naturally belong to $\Mod(\KU)$.
 
 For the moment   we consider the maximal tensor product on $\nCalg$.
In order to incorporate the minimal tensor product see \cref{dbsibjiowferfewrfref}.

 \begin{ddd}\label{wtgijwoergferwfrefw}
 The lax symmetric monoidal {\em topological $K$-theory functor} for $C^{*}$-algebras is defined by
$$K:=\EE(\C, -):\nCalg\to \Mod(\KU)\ .$$
 \end{ddd}
 By  construction, $K$ is homotopy invariant, stable and  exact.
Since $\C$  is separable, the object $\ee(\C)$ in $\EE$ is compact. Hence, $s$-finitaryness of  $\ee:\nCalg\to \EE$ implies
that the
$K$-theory functor  is also $s$-finitary.

Recall  the stable unitary group functor  $\cU^{s}$ from \eqref{vfevsdfvrqfccscsdc}.   
\begin{prop}\label{trgjkowergferfweferfw}
We have a canonical equivalence  of functors
\begin{equation}\label{regfwerfrefweff}\  \cU^{s}\simeq \Omega^{\infty-1}K:\nCalg \to  \CGroups(\Spc)\ .\end{equation} 
\end{prop}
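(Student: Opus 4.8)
The plan is to identify both sides with the same mapping space in $\EE$ and compare along the localisation tower. For the target, since $\ee(\C)$ is the tensor unit we have $K(A)=\EE(\C,A)=\map_{\EE}(\ee(\C),\ee(A))$, so using suspension of the mapping spectrum, Bott periodicity (\cref{weoktgpwergwerfwrf}.\ref{goijewrgoiwrfwfrefwrf}) and the identification $\Omega\ee(\C)\simeq\ee(S(\C))$ coming from \cref{qerigqerfewfq9}, there are natural equivalences
\begin{equation*}
\Omega^{\infty-1}K(A)=\Omega^{\infty}\Sigma\map_{\EE}(\ee(\C),\ee(A))\simeq\Omega^{\infty}\map_{\EE}(\Omega\ee(\C),\ee(A))\simeq\Map_{\EE}(\ee(S(\C)),\ee(A))
\end{equation*}
in $\CGroups(\Spc)$. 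Thus it suffices to produce a natural equivalence $\cU^{s}(A)=\Map_{L_{K}\nCalg_{h}}(S(\C),A)\simeq\Map_{\EE}(\ee(S(\C)),\ee(A))$ refining the canonical map induced by $\nCalg\to\EE$.

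Both sides are $s$-finitary in $A$: the functor $A\mapsto\Map_{\EE}(\ee(S(\C)),\ee(A))$ is $s$-finitary because $\ee$ is $s$-finitary and $\ee(S(\C))$ is a compact object of $\EE$, while $\cU^{s}(A)\simeq\ell\,\underline{\Hom}(S(\C),K\otimes A)$ is $s$-finitary since $S(\C)$ and $K$ are separable, so every homomorphism and every homotopy factors through $K\otimes A'$ for some $A'\subseteq_{\sepa}A$. Hence I may assume $A$ separable and work inside the separable tower, where $\ee(A)\simeq y(\ee_{\sepa}(A))$ with $\ee_{\sepa}=\Omega^{2}_{\sepa,\exa}\circ L_{\sepa,h,K,\exa}$, where $\Map_{\EE}=\Map_{\EE_{\sepa}}$ on separable objects as $y$ is fully faithful, and where $\cU^{s}(A)=\Map_{L_{K}\nCalg_{\sepa,h}}(S(\C),A)$ by the fully faithful inclusion of the separable subcategory.

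Now I peel off the two outer localisations. Since $\cU^{s}$ takes values in $\CGroups(\Spc)$, the criterion of \cref{erkgowegergrewfwerf9} shows that the image of $S(\C)$ is a group object in the semi-additive category $L_{K}\nCalg_{\sepa,h}$; as $L_{\sepa,\exa}$ is reduced and left-exact, its image $X$ is a group object of $L_{K}\nCalg_{\sepa,h,\exa}$, so by \cref{wtgkwotpgkelrf09i0r4fwefwerfsepa}.\ref{wetiogjwegferwferfwrefw8805sepa} and \cref{erjigowergerrqf9} the counit identifies $X\simeq\ee_{\sepa}(S(\C))$. Because $\Omega^{2}_{\sepa,\exa}$ is the right adjoint of a right Bousfield localisation (\cref{wtgkwotpgkelrf09i0r4fwefwerfsepa}.\ref{wetiogjwegferwferfwrefw88sepa}), mapping out of the group object $X$ is insensitive to it, giving
\begin{equation*}
\Map_{\EE_{\sepa}}(\ee_{\sepa}S(\C),\ee_{\sepa}A)\simeq\Map_{L_{K}\nCalg_{\sepa,h,\exa}}(S(\C),L_{\sepa,\exa}A).
\end{equation*}
It therefore remains to show that $L_{\sepa,\exa}$ preserves the mapping space out of $S(\C)$, i.e. that the corepresentable functor $\cU^{s}=\Map_{L_{K}\nCalg_{\sepa,h}}(S(\C),-)$ inverts the generators of $W_{\sepa,\exa}$; being corepresentable it automatically preserves finite limits and has the $2$-out-of-$3$ property, so this is the whole content.

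The generators are the inclusions $i\colon I\to A$ belonging to exact sequences $0\to I\to A\to Q\to0$ with $Q$ contractible, and since $\cU^{s}$ is homotopy invariant, $\cU^{s}(Q)\simeq 0$. Thus I must prove that such an $i$ induces an equivalence $\cU^{s}(I)\simeq\cU^{s}(A)$, i.e. half-exactness of the stable unitary group functor on these sequences. This is the main obstacle and the one genuinely analytic point: I would show that for the surjection $K\otimes A\to K\otimes Q$ the induced map $U^{s}(A)\to U^{s}(Q)$ is a quasifibration with fibre $U^{s}(I)$, lifting paths and homotopies of stable unitaries by choosing a Bartle--Graves continuous section of the surjection and correcting the resulting near-unitary lifts by polar decomposition and functional calculus. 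Applying $\ell$, which sends Serre-fibrant cartesian squares to cartesian squares, then yields a fibre sequence $\cU^{s}(I)\to\cU^{s}(A)\to\cU^{s}(Q)$, and as $\cU^{s}(Q)\simeq 0$ the map $\cU^{s}(I)\to\cU^{s}(A)$ is an equivalence. Should the space-level quasifibration prove awkward, one can instead import the classical six-term exact sequence of topological $K$-theory together with the classical identification $\pi_{n}U^{s}(A)\cong K^{\mathrm{top}}_{n+1}(A)$ to see that $i$ induces an isomorphism on all homotopy groups, which suffices since both spaces are grouplike. Combining the four steps gives the asserted natural equivalence $\cU^{s}\simeq\Omega^{\infty-1}K$, with all intermediate maps being maps of commutative groups in spaces.
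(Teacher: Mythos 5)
Your proposal is correct and follows essentially the same route as the paper's own proof: identify $\Omega^{\infty-1}K$ with $\Map_{\EE}(\ee(S(\C)),\ee(-))$ via stability and Bott periodicity, reduce to separable algebras by $s$-finitariness of both sides, use that $S(\C)$ is a (co)group object to pass through the right Bousfield localization $\Omega^{2}_{\sepa,\exa}$, and establish colocality of $S(\C)$ for $W_{\sepa,\exa}$ by reducing to the generators $\hat W_{\sepa,\exa}$ and proving that $U^{s}(B)\to U^{s}(C)$ is a Serre fibration with fibre $U^{s}(A)$ for exact sequences. The only deviations are technical rather than structural: the paper proves the stronger statement that $\cU^{s}$ preserves all small filtered colimits (needed later for \cref{erigoeggerfwerf9}) via an approximation lemma, where your direct factorization through separable subalgebras suffices for $s$-finitariness, and its path-lifting argument uses the exp/log functional calculus on short paths rather than a Bartle--Graves section.
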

\begin{proof}
Using \cref{wtgijwoergferwfrefw}, stability of the $\infty$-category $\EE$ and  \cref{qerigqerfewfq9}
 we get an equivalence
\begin{equation}\label{ferwfrefrefefwrfrfwf}\Map_{\EE}(S(\C),- ) \simeq \Omega^{\infty-1}   \EE(\C,-) \simeq \Omega^{\infty-1}K(-) \ .
\end{equation} 
We furthermore have a transformation of $\CGroups(\Spc)$-valued functors
\begin{equation}\label{regerwfrefrefwerf}\hspace{-0.5cm} \cU^{s}_{\sepa}(-)\stackrel{ \eqref{vfevsdfvrqfccscsdc}}{\simeq} \Map_{L_{K}\nCalg_{\sepa}} (S(\C),-) \xrightarrow{\Omega^{2}_{\sepa,\exa}\circ L_{\sepa,\exa}}  \Map_{\EE_{\sepa}} (S(\C),-)\stackrel{\eqref{fvwiuehviuervwvc}}{\simeq}    \Map_{\EE}(S(\C),-)_{|\nCalg_{\sepa}}\ ,
\end{equation} 
where $\cU^{s}_{\sepa}$ is the restriction of $\cU^{s}$ to separable algebras.
 We now employ the following facts.
\begin{lem}\label{wetgijowgrfrefwf}
The composition \eqref{regerwfrefrefwerf} is an equivalence.
\end{lem}
\begin{lem}\label{weojgiwoerfrefweferf}
The functor $\cU^{s}$ preserves small filtered colimits and is in particular  $s$-finitary.
\end{lem}
Combining both results we get the desired equivalence \eqref{regfwerfrefweff}
by left-Kan extending the equivalence \eqref{regerwfrefrefwerf} 
along $\nCalg_{\sepa}\to \nCalg$ and composing with  \eqref{ferwfrefrefefwrfrfwf}.
\end{proof}


  \begin{kor}\label{erigoeggerfwerf9}
The $K$-theory functor $K:\nCalg\to \Sp$ preserves small filtered colimits.
\end{kor}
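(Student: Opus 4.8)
The plan is to reduce the statement to the two facts that have just been assembled, namely the identification $\cU^{s}\simeq \Omega^{\infty-1}K$ of \cref{trgjkowergferfweferfw} together with the preservation of filtered colimits by $\cU^{s}$ from \cref{weojgiwoerfrefweferf}, using Bott periodicity to pass from the single infinite loop space $\Omega^{\infty-1}K$ back to the whole spectrum. The point worth stressing is that $\ee$ itself is only known to be $s$-finitary and is \emph{not} known to preserve arbitrary filtered colimits, so one cannot simply invoke compactness of $\ee(\C)$ in $\EE$; the detour through $\cU^{s}$ is essential, and the remaining steps are bookkeeping. (We argue with the underlying spectrum-valued functor, which is harmless since the forgetful functor $\Mod(\KU)\to \Sp$ is conservative and preserves filtered colimits.)

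First I would record the periodicity. By \cref{weoktgpwergwerfwrf}.\ref{goijewrgoiwrfwfrefwrf} multiplication by $b$ implements an equivalence $\Omega^{2}\simeq \id_{\EE}$, whence for every $A$ a natural equivalence $K(A)\simeq \Omega^{2}K(A)$ of spectra, i.e. a natural isomorphism $\pi_{n}K(A)\cong \pi_{n+2}K(A)$ for all $n\in\IZ$. Consequently each functor $\pi_{n}K\colon \nCalg\to \mathbf{Ab}$ is naturally isomorphic to $\pi_{0}K$ (for $n$ even) or to $\pi_{-1}K$ (for $n$ odd), so it suffices to show that $\pi_{0}K$ and $\pi_{-1}K$ send filtered colimits to colimits of abelian groups.

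Next I would feed in the two cited results. From \cref{trgjkowergferfweferfw} we have $\cU^{s}\simeq \Omega^{\infty-1}K=\Omega^{\infty}(\Sigma K)$ in $\CGroups(\Spc)$, so taking homotopy groups yields natural isomorphisms $\pi_{m}\cU^{s}(A)\cong \pi_{m-1}K(A)$ for all $m\geq 0$; in particular $\pi_{1}\cU^{s}\cong \pi_{0}K$ and $\pi_{0}\cU^{s}\cong \pi_{-1}K$. By \cref{weojgiwoerfrefweferf} the functor $\cU^{s}$ preserves small filtered colimits, and since filtered colimits of commutative group objects in $\Spc$ (equivalently, of connective spectra) are computed on homotopy groups, each $\pi_{m}\cU^{s}$ preserves filtered colimits. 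Combined with the isomorphisms above, $\pi_{0}K$ and $\pi_{-1}K$ preserve filtered colimits, and hence so does every $\pi_{n}K$ by the periodicity of the first step.

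Finally I would conclude in $\Sp$. For a small filtered diagram $(A_{i})$ in $\nCalg$ the canonical map $\colim_{i}K(A_{i})\to K(\colim_{i}A_{i})$ induces on $\pi_{n}$, using that $\pi_{n}$ commutes with filtered colimits of spectra, exactly the comparison map $\colim_{i}\pi_{n}K(A_{i})\to \pi_{n}K(\colim_{i}A_{i})$, which is an isomorphism by the previous paragraph. As this holds for every $n\in\IZ$, the map is an equivalence of spectra, so $K$ preserves small filtered colimits. The only genuinely non-formal inputs are the two cited lemmas; the one conceptual obstacle is recognizing that knowing preservation of filtered colimits for the single ``odd'' space $\Omega^{\infty-1}K$ is already enough, which is precisely what Bott periodicity supplies.
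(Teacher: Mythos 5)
Your proposal is correct and follows exactly the paper's own (one-sentence) proof: the paper also combines \cref{weojgiwoerfrefweferf}, \cref{trgjkowergferfweferfw}, and two-periodicity, and your write-up is simply the careful expansion of that sentence (reduce to $\pi_{0}K$ and $\pi_{-1}K$ via Bott periodicity, identify these with $\pi_{1}\cU^{s}$ and $\pi_{0}\cU^{s}$, and use that homotopy groups commute with filtered colimits). Your side remark that one cannot just invoke compactness of $\ee(\C)$ is also consistent with the paper, which only mentions such an argument as a conditional alternative requiring classical $E$-theory input.
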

\begin{proof}
We combine \cref{weojgiwoerfrefweferf} with   \cref{trgjkowergferfweferfw}
and two-periodicity. 
\end{proof}

\begin{rem}
Using that classical $E$-theory for separable $C^{*}$-algebras preserves countable sums  \cite[Prop. 7.1]{Guentner_2000} one can show using \cref{qiurhfgiuewrgwrfrefrfwrefw}
that $\ee_{\sepa}$ preserves countable sums. This implies by 
\cite[3.17]{Bunke:2024aa} that $\ee_{\sepa}$ preserves all countable filtered colimits.  Since $\ee(\C)$ is a compact object of $\EE$ this would give an alternative argument for the fact that $K$ preserves filtered small colimits. \hB
\end{rem}


\begin{rem}\label{weroijgwergrefwrfrwef}
In the proof of \cref{wetgijowgrfrefwf} we will employ the following general fact about mapping spaces in a Dwyer-Kan localization  $\ell:\bC\to \bC[W^{-1}]$ of $\infty$-categories.  We call an object   $C$  of $\bC$  {\em colocal for $W$}
if the functor $\Map_{\bC}(C,-)$ sends the elements of $W$ to equivalences. 
The following assertion is an easy consequence of the Yoneda lemma.
If $C$ is colocal for $W$, then 
$\ell:\Map_{\bC}(C,-)\to \Map_{\bC[W^{-1}]}(\ell(C),\ell(-))$ is an equivalence of functors from
$\bC$ to $\Spc$.
\hB
\end{rem}

\begin{proof}[Proof of   \cref{wetgijowgrfrefwf}]
We must show that the composition  
\begin{equation*}\Map_{L_{K}\nCalg_{\sepa}} (S(\C),-) \xrightarrow{  L_{\sepa,\exa}} 
 \Map_{L_{K}\nCalg_{\sepa,h,\exa}} (S(\C),-) \xrightarrow{ \Omega^{2}_{\sepa,\exa}} 
 \Map_{\EE_{\sepa}} (S(\C),-)\end{equation*}
 is an equivalence. Since $S(\C)$ represents  a group in $L_{K}\nCalg_{\sepa,h,\exa}$
 and $\Omega^{2}_{\sepa,\exa}$ is by \cref{wtgkwotpgkelrf09i0r4fwefwerf} a right Bousfield localization at the groups  the second morphism is an equivalence.
 By \cref{weroijgwergrefwrfrwef}, in order to show that the Dwyer Kan localization  $ L_{\sepa,\exa}$ induces  an equivalence  of mapping spaces
 it suffices to show that $L_{\sepa,h,K}(S(\C))$  is colocal for $W_{\sepa,\exa}$.
 Since $\Map_{L_{K}\nCalg_{\sepa}} (S(\C),-)$ is left-exact, by \eqref{erfwerferfwef} it suffices to show that
 $L_{\sepa,h,K}(S(\C))$ is colocal for $\hat W_{\sepa,\exa}$ from \eqref{erijoglerfqwfwerfrefwrf}.
 By \cref{wetigowergferferferwfw} it suffices to show that
 $\Map_{L_{K}\nCalg_{\sepa}} (S(\C),-)$  sends exact sequences to fibre sequences.
 In view of \eqref{vfevsdfvrqfccscsdc} 
 this follows from the following Lemma since
 $\ell$ sends Serre fibre sequences to fibre sequences.
 
 \begin{lem}\label{wejiogwergwerfrrewf}
 If $0\to A\to B\to C\to 0$ is an exact sequence in $\nCalg$, then
 $U^{s}(B)\to U^{s}(C)$ is a  Serre fibration with fibre 
 $U^{s}(A)$.
 \end{lem}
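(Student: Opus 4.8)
The plan is to work throughout with the norm topology. By \eqref{qefeoijoi4r34r34} the space $U^{s}(D)$ is identified with $\underline{\Hom}(S(\C),K\otimes D)$, and since the unitalisation $C(S^{1})$ of $S(\C)$ is generated by the single unitary $u$, the point-norm topology on these unit-preserving homomorphisms agrees with the norm topology on the image $U=f(u)$. Thus a continuous map from a compact space into $U^{s}(D)$ is exactly a norm-continuous family of unitaries $U$ with $U-1\in K\otimes D$. Since $K$ is nuclear, tensoring the given sequence with $K$ yields a short exact sequence $0\to K\otimes A\to K\otimes B\xrightarrow{\pi}K\otimes C\to 0$, and I write $\pi\colon(K\otimes B)^{u}\to(K\otimes C)^{u}$ also for its unital extension. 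The fibre identification is then immediate: a point $U=1+b$ of $U^{s}(B)$ with $b\in K\otimes B$ maps to $1\in U^{s}(C)$ iff $\pi(b)=0$, i.e. iff $b\in K\otimes A$; as unitarity of $1+b$ is an equation inside the subalgebra $(K\otimes A)^{u}$, the fibre over $1$ is precisely $U^{s}(A)$ with its given topology.

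For the Serre fibration property I would verify the covering homotopy property against the cells $D^{n}$. So suppose given a norm-continuous homotopy $H\colon D^{n}\times[0,1]\to U^{s}(C)$ together with a lift $\tilde h\colon D^{n}\to U^{s}(B)$ of $H(-,0)$. Using that $U^{s}(C)$ is a topological group, replace $H$ by $V(x,t):=H(x,0)^{*}H(x,t)$, a homotopy with $V(-,0)\equiv 1$; a lift $\tilde V$ of $V$ with $\tilde V(-,0)\equiv 1$ then produces the desired lift of $H$ by $\tilde H(x,t):=\tilde h(x)\,\tilde V(x,t)$. Hence it suffices to lift homotopies that start at the constant unitary $1$.

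To lift such a $V$ I would use a continuous section of $\pi$ on the Banach--space level. By the Bartle--Graves selection theorem the surjective bounded linear map $\pi\colon K\otimes B\to K\otimes C$ admits a continuous section $s_{0}$ with $s_{0}(0)=0$; symmetrising, $s(c):=\tfrac12\big(s_{0}(c)+s_{0}(c^{*})^{*}\big)$ is a continuous section carrying self-adjoint elements to self-adjoint elements. By uniform continuity of $V$ on the compact set $D^{n}\times[0,1]$, choose $0=t_{0}<\dots<t_{m}=1$ so that $\|V(x,t)-V(x,t_{j-1})\|<1$ for $t\in[t_{j-1},t_{j}]$, uniformly in $x$. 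Then $W_{j}(x,t):=V(x,t_{j-1})^{*}V(x,t)$ is a unitary in $(K\otimes C)^{u}$ whose spectrum is bounded away from $-1$, so the principal logarithm gives a self-adjoint $a_{j}(x,t):=-i\log W_{j}(x,t)$, continuous in $(x,t)$ and lying in $K\otimes C$ (the character $(K\otimes C)^{u}\to\C$ sends $W_{j}$ to $1$, hence $\log W_{j}$ to $0$), with $a_{j}(x,t_{j-1})=0$. Setting $\tilde W_{j}(x,t):=\exp\!\big(i\,s(a_{j}(x,t))\big)$ yields a norm-continuous family in $U^{s}(B)$ with $\pi\tilde W_{j}=W_{j}$ and $\tilde W_{j}(-,t_{j-1})\equiv1$. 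Finally I assemble $\tilde V$ inductively over the subintervals by $\tilde V(x,t):=\tilde V(x,t_{j-1})\tilde W_{j}(x,t)$ for $t\in[t_{j-1},t_{j}]$; continuity at the knots holds because $\tilde W_{j}(-,t_{j-1})\equiv1$, one checks $\pi\tilde V=V$ by induction, and $\tilde V(-,0)\equiv1$.

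The genuinely non-formal input is the existence of the continuous self-adjoint-preserving section $s$: this is where the surjectivity of $\pi$ as a map of Banach spaces --- rather than any split-exactness hypothesis --- is used, via Bartle--Graves. The remaining ingredients (the uniform choice of partition, and continuity of the principal logarithm on unitaries with spectrum away from $-1$) are routine consequences of compactness and functional calculus. I expect the main care to be needed in checking that all functional-calculus operations are norm-continuous \emph{uniformly} in the cell parameter $x$, which the uniform partition is designed to guarantee.
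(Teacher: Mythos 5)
Your proof is correct, and the fibre identification and the subdivision-into-short-segments scheme coincide with the paper's; but the two key organizational and analytic steps are genuinely different. First, the paper does not keep the test space as an external parameter: using \eqref{fqwfewfdwedewdwdq} and \eqref{qefeoijoi4r34r34} it converts a lifting problem over $X\times[0,1]$ for $U^{s}(B)\to U^{s}(C)$ into a \emph{path} lifting problem for $U^{s}(C(X)\otimes B)\to U^{s}(C(X)\otimes C)$, so all uniformity in the parameter is absorbed into the coefficient algebra, whereas you work over the cell $D^{n}$ directly and secure uniformity by uniform continuity on the compact cell. Second, and more substantially, the paper lifts a short path $\sigma$ in one stroke: $\log(\sigma)$ is viewed as an anti-selfadjoint element of the cone algebra $C_{0}((0,1])\otimes K\otimes C$, and it is lifted through the $*$-homomorphism $C_{0}((0,1])\otimes K\otimes B\to C_{0}((0,1])\otimes K\otimes C$, which is surjective (also on anti-selfadjoint elements) because tensoring with a nuclear algebra preserves surjections; no selection theorem enters. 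You instead lift pointwise in $(x,t)$ by means of a Bartle--Graves continuous section of the Banach-space surjection $K\otimes B\to K\otimes C$, symmetrized to preserve selfadjointness. Both routes are sound, but note that your family $(x,t)\mapsto a_{j}(x,t)$ is precisely a selfadjoint element of $C(D^{n})\otimes C_{0}((t_{j-1},t_{j}])\otimes K\otimes C$ vanishing at $t=t_{j-1}$, so the paper's device (surjectivity of $C^{*}$-quotients on selfadjoint parts, applied to a cone/path algebra) would let you dispense with Bartle--Graves entirely: the continuity of the choice of preimages, which the selection theorem provides for you, is obtained there for free by lifting the whole parametrized family as a single algebra element. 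What your version buys in exchange is that it avoids the $C(X)\otimes-$ re-encoding and keeps the homotopy-lifting bookkeeping (initial condition, gluing at the knots) completely explicit at the level of families of unitaries; what the paper's version buys is self-containedness within elementary $C^{*}$-algebra theory, in keeping with its repeated use of the same absorption trick elsewhere (e.g.\ in the proof of \cref{wtogjgergregw9}).
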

 \begin{proof}
 This lemma is surely well-known in $C^{*}$-algebra theory. For completeness of the presentation we   add a proof.
 
 It is clear from the definition \eqref{urfuherufhwerjkfekrf89} that $U^{s}(A)$ is the fibre of the map $U^{s}(B)\to U^{s}(C)$. In order to show that this map is a Serre fibration
  we will solve lifting problem 
\[\xymatrix{X\ar[r]\ar[d]_{x\mapsto (0,x)}&U^{s}(B)\ar[d]\\ [0,1]\times X\ar[r]\ar@{..>}[ur]&U^{s}(C)}\]
for all compact $X$. By \eqref{fqwfewfdwedewdwdq} and \eqref{qefeoijoi4r34r34}  this lifting problem is equivalent to 
\[\xymatrix{\{0\}\ar[r]\ar[d] &U^{s}(C(X)\otimes B)\ar[d]\\ [0,1 ]\ar[r]\ar@{..>}[ur]&U^{s}(C(X)\otimes C)}\ .\]
It thus suffices to solve 
 the
 path lifting problems \begin{equation}\label{eqwfq98whuqwe9dqedqd}\xymatrix{\{0\}\ar[r]^{u}\ar[d] &U^{s}(B)\ar[d] \\ [0,1] \ar[r]^{\gamma}\ar@{..>}[ur]^{\tilde \gamma}&U^{s}(C)}
\end{equation}
 for all surjections $B\to C$.

We call 
a  path $\sigma:[0,1]\to U^{s}(C)$  
short if  $\sigma(0)=1$ and
 $\|\sigma(t)-1\|<1$ for all $t$.
For the moment we 
assume that we can lift short paths to paths that  start in $1$ in $U^{s}(B)$.
 
Let $\gamma:[0,1]\to U^{s}(C)$  be a general path.  
 Then we can find  $n$  in $\nat$ such that the segment $\gamma(i/n)^{-1}\gamma_{|[i/n,(i+1)/n]}$ is short for all $i=0,\dots,n-1$  (we implicitly reparametrize).
We can now lift $\gamma$ inductively.
We are given the lift  
 $u$ of $\gamma_{|[0,0]}$. Assume  that 
have found a lift  $\tilde \gamma$  of $\gamma_{|[0,i/n]}$.
 We choose a lift  $\tilde \sigma$ of the  short path $\gamma^{-1}(i/n) \gamma_{|[i/n,(i+1)/n]}$ and
define an extension of $\tilde \gamma$ on $[i/n,(i+1)/n]$ by $\tilde \gamma(i/n) \tilde \sigma$.

 It remains to  solve the  lifting problem for short paths. To this end we observe that 
%
%
%
%
%
%
%
%
%
%
%
%
the exponential map of $(K\otimes A)^{u}$ restricts to $\exp:i(K\otimes A)^{\sa}\to U^{s}(A)$ with the  partial inverse 
$\log:\{U\in U^{s}(A)\mid \|U-1\|<1\}\to i(K\otimes A)^{\sa}$.
Since the tensor product preserves surjections,
the
  map  
 $C_{0}((0,1])\otimes K\otimes B\to C((0,1])\otimes K\otimes C$  and its restriction to anti-selfadjoint elements are surjective. 
 We 
interpret $\log(\sigma)$ as  element in $i(C_{0}((0,1])\otimes K\otimes C)^{\sa}$
and can thus 
choose a lift $\widehat{\log(\sigma)}$ in  $ i(C_{0}((0,1])\otimes K\otimes B)^{\sa}$.
Then
$\tilde \sigma:=\exp(\widehat{\log(\sigma)}):[0,1]\to U^{s}(B)$ is the desired lift of $\sigma$.
We have thus shown \cref{wejiogwergwerfrrewf}.
  \end{proof}
  This finishes the proof of \cref{wetgijowgrfrefwf}
  \end{proof}
  
  \begin{rem}
  Using that $S(\C)$ is a semi-projective $C^{*}$-algebra on could deduce the  path lifting  in \eqref{eqwfq98whuqwe9dqedqd} from \cite[Thm. 5.1]{Blackadar_2016}. \hB \end{rem}

 \begin{proof}[Proof of  \cref{weojgiwoerfrefweferf}]
 In view of \cref{wergijwerogerwfewrferwfw} it suffices to show that 
 the functor $\cU^{s}:\nCalg\to \CGroups(\Spc)$ preserves small filtered colimits.
Since the forgetful functor $\CGroups(\Spc)\to \Spc$ preserves small filtered colimits and is conservative it suffices to show that  the underlying $\Spc$-valued functor of $\cU^{s}$ preserves small filtered colimits.
 
 Let  $I$ be a small filtered poset,   $(B_{i})_{i\in I}$ be  an $I$-indexed family in $\nCalg$, and set 
 $B:=\colim_{i\in I} B_{i}$ in $\nCalg$. Then we must show that the canonical maps
\[ \pi_{n}(\colim_{i\in I}  \ell U^{s}(B_{i}))\to  \pi_{n}(\ell U^{s}(B))\] are isomorphisms at all choices of base points and for all $n$ in $\nat$. 
Since taking      
  homotopy groups/sets on $\Spc$ commute with filtered colimits 
 it suffices to 
 show that the canonical maps  $\colim_{i\in I}  \pi_{n}(  U^{s}(B_{i}))\to  \pi_{n}(U^{s}(B))$ are isomorphisms.
 This will be an immediate consequence of \cref{wtgijrtogwrefrefr} below applied to
 the inclusions $S^{n}\to D^{n+1}$ or $\emptyset\to S^{n}$.
 
 For   $i,j$ in $I$ with  $i\le j$
let
 $\phi_{j,i}:(K\otimes B_{i})^{u}\to (K\otimes B_{j})^{u}$  and
 $\phi_{i}: (K\otimes B_{i})^{u}\to  (K\otimes B )^{u}$  denote   the   connecting map and the canonical homomorphism.
 
 Let  $X$  be any  compact metrizable  space and
  $Y$ be a closed subspace.   
  We fix $i_{0}$ in $I$  and assume that we are given  a square
  $$ \xymatrix{Y\ar[r]^-{f}\ar[d] &U^{s}(B_{i_{0}}) \ar[d]^{\phi_{i_{0}}} \\X \ar[r]^-{g} & U^{s}(B) } \ .$$

 \begin{lem}\label{wtgijrtogwrefrefr}
 There exists  $i$ in $I$ with $i\ge i_{0}$  and $h:X\to  U^{s}(B_{i})$ such that $h_{|Y}=\phi_{i,i_{0}}\circ f$ and
 $\phi_{i}\circ h$ is homotopic to $g$  rel $Y$.    \end{lem}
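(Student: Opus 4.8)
The plan is to translate the whole lifting problem into the $C^{*}$-algebraic picture and then run a standard filtered-colimit approximation argument, handling the boundary condition on $Y$ by a relative lifting of a logarithm. Using the adjunction \eqref{fqwfewfdwedewdwdq} together with the identification \eqref{qefeoijoi4r34r34}, a map $X\to U^{s}(B)$ is the same datum as a stable unitary $G\in U^{s}(C(X)\otimes B)$, i.e.\ a unitary in $(K\otimes C(X)\otimes B)^{u}$ with $G-1\in K\otimes C(X)\otimes B$; similarly $f$ corresponds to $F\in U^{s}(C(Y)\otimes B_{i_{0}})$, and commutativity of the given square says exactly that the restriction of $G$ to $Y$ agrees with the image of $F$ under $B_{i_{0}}\to B$. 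I would then exploit that $K\otimes C(X)\otimes -$ preserves filtered colimits, so that $\bigcup_{i}\operatorname{im}\!\big(K\otimes C(X)\otimes B_{i}\to K\otimes C(X)\otimes B\big)$ is dense and the usual descent properties along a filtered colimit of $C^{*}$-algebras hold: density of images, descent of invertibility to a finite stage, and descent of norm-smallness to a finite stage.

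First I would produce a global approximation with values in some $B_{i}$. Approximating $G-1$ by an element $\phi_{i_{1}}(a)$ with $a\in K\otimes C(X)\otimes B_{i_{1}}$, the element $1+a$ becomes invertible after passing to some $i_{2}\ge i_{1}$ (invertibility descends in a filtered colimit); its unitary part $v:=(1+b)\,|1+b|^{-1}$, with $b:=\phi_{i_{2},i_{1}}(a)$, obtained by continuous functional calculus, is then a stable unitary in $U^{s}(C(X)\otimes B_{i_{2}})$ whose image $\phi_{i_{2}}(v)$ is uniformly close to $G$. Here one uses that $|1+b|^{\pm 1/2}$ again lies in $1+(K\otimes C(X)\otimes B_{i_{2}})$, so $v-1$ stays in the ideal.

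The core step is enforcing the exact boundary value. Over $Y$ the restriction of $\phi_{i_{2}}(v)$ is close to $\phi_{i_{0}}(F)$, so after passing to some $i\ge i_{2}$ the two stable unitaries $v|_{Y}$ and $\phi_{i,i_{0}}(F)$ in $U^{s}(C(Y)\otimes B_{i})$ are within distance $<1$; hence $z:=\log\!\big((v|_{Y})\,(\phi_{i,i_{0}}(F))^{-1}\big)$ is a well-defined anti-self-adjoint element of $i(K\otimes C(Y)\otimes B_{i})^{\sa}$ of small norm. Since $C(X)\to C(Y)$ is surjective (Tietze) and $C(X)$ is nuclear, the map $K\otimes C(X)\otimes B_{i}\to K\otimes C(Y)\otimes B_{i}$ is surjective and restricts to a surjection on anti-self-adjoint parts; I would lift $z$ to $\tilde z\in i(K\otimes C(X)\otimes B_{i})^{\sa}$ of comparably small norm and set $H:=\exp(-\tilde z)\,\phi_{i,i_{2}}(v)$. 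By construction $H$ is a stable unitary with $H|_{Y}=\phi_{i,i_{0}}(F)$, which yields the required equality $h|_{Y}=\phi_{i,i_{0}}\circ f$.

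It then remains to check $\phi_{i}\circ h\simeq g$ rel $Y$. Here $\phi_{i}(H)=\exp(-\phi_{i}(\tilde z))\,\phi_{i_{2}}(v)$ is a small perturbation of $\phi_{i_{2}}(v)$, hence close to $G$, and on $Y$ it equals $\phi_{i_{0}}(F)=G|_{Y}$ by the compatibility. Thus $\phi_{i}(H)^{-1}G$ is a stable unitary that is $<1$ away from $1$ and equal to $1$ over $Y$; its logarithm lies in $K\otimes C(X)\otimes B$ and vanishes on $Y$, so the path $t\mapsto \phi_{i}(H)\exp\!\big(t\log(\phi_{i}(H)^{-1}G)\big)$ is a homotopy in $U^{s}$ from $\phi_{i}(H)$ to $G$ that is constant on $Y$, and translating back through the adjunction gives the homotopy rel $Y$. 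I expect the main obstacle to be precisely this bookkeeping of the relative condition—arranging exact agreement on $Y$ while only controlling $G$ approximately over $X$—which is resolved by the surjective lifting of the small logarithm $z$; the remaining inputs (density, descent of invertibility and of norm estimates along the filtered colimit, polar decomposition, and $\exp$/$\log$ staying in $1+{}$ideal, together with the surjectivity already used in \cref{wejiogwergwerfrrewf}) are routine $C^{*}$-algebra.
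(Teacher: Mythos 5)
Your argument is correct in substance, but it resolves the key difficulty --- achieving \emph{exact} agreement with $\phi_{i,i_{0}}\circ f$ on $Y$ while only controlling $g$ approximately --- by a genuinely different mechanism than the paper. The paper first extends $f$ over all of $X$ by Dugundji's extension theorem (as a map into the Banach space $K\otimes B_{i_{0}}$, with no unitarity requirement), so that the error $g-\phi^{u}_{i_{0}}(f_{0})$ lies in $C_{0}(X\setminus Y)\otimes K\otimes B$; this error is then approximated at a finite stage by an element $r$ of $C_{0}(X\setminus Y)\otimes K\otimes B_{i_{1}}$, which vanishes on $Y$ by construction, so the exact boundary value is never disturbed; finally the polar retraction $W:G^{s}\to U^{s}$ is applied, which fixes the already unitary boundary value, and the homotopy rel $Y$ is the straight-line path $(1-s)\phi_{i}(h)+sg$ pushed into $U^{s}$ by $W$. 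You instead approximate $G$ globally first (unitary part of $1+b$ at a finite stage) and then repair the boundary value \emph{multiplicatively}, lifting the logarithm $z$ through the surjection $K\otimes C(X)\otimes B_{i}\to K\otimes C(Y)\otimes B_{i}$ and correcting by $\exp(-\tilde z)$, with an exponential path as the homotopy. The paper's route avoids logarithms and any multiplicative correction at the cost of invoking Dugundji; your route replaces Dugundji by the more elementary facts that surjections of $C^{*}$-algebras are preserved under tensoring and that (anti-)self-adjoint elements lift through quotients with the same norm, at the cost of the $\exp$/$\log$ bookkeeping. Both rely on the same filtered-colimit inputs (density of the images of the stages, norms computed as limits over stages, descent of invertibility) and on polar decomposition.

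One quantitative point should be tightened. From ``within distance $<1$'' on $Y$ you only get $\|z\|\le \pi/3$, hence $\|\exp(-\tilde z)-1\|$ can be as large as $1$, and your claim that $\phi_{i}(H)^{-1}G$ is ``$<1$ away from $1$'' does not follow as stated. This is harmless --- the logarithm and the exponential path only require the spectrum of this unitary to avoid $-1$, i.e.\ distance $<2$ from $1$, which your estimates do give --- but it is cleaner to fix a small $\delta$ at the outset and use descent of norm-smallness along the colimit (which you already list among your tools) to arrange distance $<\delta$ at stage $i$; then all constants are genuinely small and the proof closes exactly as written.
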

\begin{proof}\mbox{}
For any $C^{*}$-algebra $A$ we set  $G^{s}(A):=\{a\in GL_{1}((K\otimes A)^{u})\mid  1-a\in K\otimes A\}$. Then 
$U^{s}(A)\subseteq G^{s}(A)$ and 
the polar decomposition provides a retraction $W:G^{s}(A)\to U^{s}(A)$.
There exists a 
  $c$ in $(0,1)$   such that $\max\{\|a^{*}a-1\|, \|aa^{*}-1\|\}\|\le c $ implies $\|W(a)-a \|\le 1/10$.
We interpret a map $X\to G^{s}(A)$ as a point in $G^{s}(C(X)\otimes A )$.
We will write $\phi_{ij}$ and $\phi_{i}$ instead of $\id_{C(X)}\otimes \phi_{ij}$ and $\id_{C(X)}\otimes \phi_{i}$.
We use the general fact  that a 
filtered colimit  in $\nCalg$  is formed by taking the completion of the pre-$C^{*}$-algebra  given by  the  filtered colimit of underlying sets   equipped with the induced algebraic  structures, see \eqref{fvrvfevefcvsdfvsfdvsfdvsv}. We furthermore use that 
 the (maximal) tensor product preserves filtered colimits, and 
that we can calculate norms in a filtered colimit as limits of norms.  The last statement says e.g.  that
  for $h$ in $C(X)\otimes K\otimes B_{i}  $ we have  $  \|\phi_{i}(h)\|=\lim_{j\in I, i\le j} \|\phi_{j,i}(h)\|$.
 
 We use Dugundji's extension theorem  in order to   find an extension $f_{0}$ of $f$ in $ (C(X)\otimes K \otimes B_{i_{0}} )^{u}$ such that $  f_{0}-1\in C(X) \otimes K\otimes  B_{i_{0}} $. Then $g_{0}:=g-\phi^{u}_{i_{0}}(f_{0}) \in C_{0}(X\setminus Y)\otimes K\otimes B$. 
 We can now find 
 $i_{1}$ in $I$ such that there exists
 $r$ in $ C_{0}(X\setminus Y)\otimes K\otimes B_{i_{1}}$  with
 $\|\phi_{i_{1}}(r)- g_{0}\|\le c/100$.
 We set
 $f_{1}:=\phi^{u}_{i_{1},i_{0}}(f_{0})+r$.
 Then $\|\phi_{i_{1}}(f_{1})-g\|\le c/100$ and hence
   $\max\{\|\phi_{i_{1}}(f_{1}f_{1}^{*})-1\|,\|\phi_{i_{1}}(f_{1}^{*}f_{1})-1\|\} \le c/10$.
 We can then find   $i$ in $I$ with $i\ge  i_{1}$ such that
  $\max\{\|\phi_{i,i_{1}}(f_{1})\phi_{i,i_{1}}(f_{1}^{*})-1\|,\|\phi_{i,i_{1}}(f_{1}^{*})\phi_{i,i_{1}}(f_{1})-1\|\} \le c/3$.
 We define
   $h:=W(\phi_{i,i_{1}}(f_{1}))$
  in $ U^{s}(C(X)\otimes B_{i})$.
 Then 
  $h_{|Y}=\phi_{i,i_{0}}(f)$
 and 
 $\|\phi_{i}(h)-g\|\le 1/2$. We get a  homotopy 
  $(W((1-s)\phi_{i_{0}}(h)+sg))_{s\in [0,1]}$  from $\phi_{i}(h)$ to $g$ rel $Y$
  in $U^{s}(B)$.
  This finishes the proof of \cref{wtgijrtogwrefrefr}.
 \end{proof}
 
 \begin{rem}
 Using that $S(\C)$ is semi-projective we could deduce \cref{wtgijrtogwrefrefr} directly
 from the proof of \cite[Prop. 3.8]{Bunke:2024aa}, in particular from the existence of the lift in 
  \cite[Eq. (3.6)]{Bunke:2024aa}. \hB
 \end{rem}

We have now finished the proof of \cref{weojgiwoerfrefweferf}.
\end{proof}

 The reason for  using $E$-theory in order to construct the $K$-theory functor for 
$C^{*}$-algebras  was that then exactness of the latter is true by construction.
We have a canonical  transformation
\[\Omega^{\infty-1}\KK(\C,-)\to \Omega^{\infty-1}\EE(\C,-)\simeq  \Omega^{\infty-1}K(-)\ .\]
The arguments above work equally well for  $KK$-theory (just replace $\exa$ by $\se$) and show that the canonical transformation
$\cU^{s}(-)\to \Omega^{\infty-1}\KK(\C,-)$ is an equivalence.
Using Bott periodicity we can then conclude an equivalence of functors 
\begin{equation}\label{fewqifjeqwfewfqewfd}\KK(\C,-)\stackrel{\simeq}{\to} K(-):\nCalg\to \Sp
\end{equation}  
As a consequence, the stable $\infty$-category  $\KK$  also naturally acquires a  $KU$-linear structure. Furthermore we conclude:
\begin{kor}\label{dbsibjiowferfewrfref}
The $K$-theory functor for $C^{*}$-algebras $K:\nCalg\to \Mod(\KU)$
has a lax symmetric monoidal refinement for the minimal tensor product on $\nCalg$.
\end{kor}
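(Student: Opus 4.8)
The obstacle to a direct argument is that the category $\EE$ carries only the symmetric monoidal structure $\otimes_{\max}$, the combination $(\exa,\min)$ having been excluded because the minimal tensor product does not preserve exact sequences; thus $K=\EE(\C,-)$ is lax symmetric monoidal for $\otimes_{\max}$ essentially by construction (\cref{wtgijwoergferwfrefw}), but this gives no information about $\otimes_{\min}$. The plan is to route the argument through $\KK$, which by \cref{wergijweogrefwrferwfw}.\ref{qrigjfoqrefwedqew92} does admit a bi-cocontinuous symmetric monoidal refinement of $\otimes_{\min}$, and then to transport the resulting structure across the equivalence \eqref{fewqifjeqwfewfqewfd}.

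First I would equip $\KK$ with its $\otimes_{\min}$-symmetric monoidal structure, whose tensor unit is $\kk(\C)$, recalling from \cref{wergijweogrefwrferwfw}.\ref{qrigjfoqrefwedqew92} that $\kk\colon(\nCalg,\otimes_{\min})\to(\KK,\otimes_{\min})$ is symmetric monoidal. By the general fact that in a stable symmetric monoidal $\infty$-category the mapping-spectrum functor out of the tensor unit is lax symmetric monoidal (recalled in the remark preceding \cref{refjweoirgwrefffwerfwrf}), the functor $\map_{\KK}(\kk(\C),-)\colon\KK\to\Sp$ is lax symmetric monoidal and its values refine naturally to modules over the commutative ring spectrum $R:=\map_{\KK}(\kk(\C),\kk(\C))=\KK(\C,\C)$. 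Precomposing with the symmetric monoidal $\kk$ then exhibits
\[\KK(\C,-)=\map_{\KK}(\kk(\C),-)\circ\kk\colon\nCalg\to\Mod(R)\]
as a lax symmetric monoidal functor for $\otimes_{\min}$.

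It remains to transfer this structure along \eqref{fewqifjeqwfewfqewfd}. Evaluating that equivalence at $\C$ identifies $R=\KK(\C,\C)$ with $\KU=\EE(\C,\C)$ as commutative ring spectra: the map is induced by the comparison functor $\KK\to\EE$, which preserves composition and the tensor unit, and on the unit the minimal and maximal structures agree since $\C$ is nuclear; hence $\Mod(R)\simeq\Mod(\KU)$. Finally I would transport the lax symmetric monoidal structure from $\KK(\C,-)$ to $K$ along the equivalence of underlying functors \eqref{fewqifjeqwfewfqewfd}. The one point needing care — and the main obstacle — is precisely that this transport must use only an equivalence of \emph{underlying} $\Sp$-valued functors rather than a monoidal equivalence: the forgetful functor from lax symmetric monoidal functors $(\nCalg,\otimes_{\min})\to(\Sp,\wedge)$ to all functors $\nCalg\to\Sp$ is an isofibration, so an equivalence in the target lifts to the lax level. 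This is unavoidable, since the comparison functor $\KK\to\EE$ cannot be $\otimes_{\min}$-monoidal on the target, there being no $\otimes_{\min}$-structure on $\EE$ to receive it.
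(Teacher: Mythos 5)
Your proof is correct and follows essentially the same route as the paper: the paper likewise deduces the corollary from the equivalence \eqref{fewqifjeqwfewfqewfd} $\KK(\C,-)\simeq K(-)$ established immediately beforehand, combined with the fact that $\KK$ (unlike $\EE$) carries a $\otimes_{\min}$-symmetric monoidal structure, so that $\KK(\C,-)=\map_{\KK}(\kk(\C),\kk(-))$ is lax symmetric monoidal for $\otimes_{\min}$ with values in $\Mod(\KU)$ and this structure transports to $K$. Your additional care about identifying $\KK(\C,\C)\simeq\KU$ as commutative ring spectra and about lifting the equivalence of underlying $\Sp$-valued functors to the lax symmetric monoidal level only makes explicit what the paper leaves implicit.
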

The following is true for both  tensor products on $\KK$.
We  have a limit-preserving  symmetric monoidal  functor \[\cK:=\KK(\C,-):\KK\to \Mod(\KU)\]
such that $K\simeq \cK\circ \kk$.  Since  $\kk(\C)$ is compact in $\KK$ this functor also preserves colimits. It is the right-adjoint of a symmetric monoidal right Bousfield localization
\begin{equation}\label{wefrfwgergegrfwrefefwrefweferfw}\cB:\Mod(\KU)  \leftrightarrows \KK  :\cK\ .
\end{equation} 
\begin{ddd}
The essential image of the left-adjoint $\cB$ in \eqref{wefrfwgergegrfwrefefwrefweferfw} is called the {\em $\UCT$ class}. 
 \end{ddd}
Equivalently, the  $\UCT$-class is the localizing subcategory of $\KK$ generated by the tensor unit $\kk(\C)$.  \begin{kor}\label{wtrkgoerfrefwerfwerf}\mbox{}
If $B$ is in the  $\UCT$-class, then we have  the following assertions:
\begin{enumerate}
\item \label{qrfijqiofdewdqwed} The natural transformation
$-\otimes_{\max}B\to -\otimes_{\min} B$ of endofunctors on $\KK$ is an equivalence.
\item \label{qrfijqiofdewdqwed1}  The  transformation
$\KK(B,-)\to \EE(B,-)$ of functors $\nCalg\to \Mod(\KU)$ is an equivalence.
\item (UCT) We have an equivalence $\KK(B,-)\simeq \map_{\Mod(\KU)}(\cK(B),\cK(-))$ of functors from $\KK$ to $\Mod(\KU)$.
\item (Künneth formula)
We have an equivalence $\cK(-)\otimes_{\KU} \cK(B)\simeq \cK(-\otimes B)$
of functors from $\KK$ to $\Mod(\KU)$.
\end{enumerate}
\end{kor}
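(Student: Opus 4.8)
The plan is to reduce every one of the four assertions to the single generator $\kk(\C)$ of the $\UCT$-class. Recall that by definition the $\UCT$-class is the localizing subcategory of $\KK$ generated by the tensor unit $\kk(\C)=\cB(\KU)$, and that $\cB\colon\Mod(\KU)\to\KK$ is fully faithful with essential image the $\UCT$-class and $\cK\circ\cB\simeq\id_{\Mod(\KU)}$. For each of the four natural transformations in the statement I would consider the full subcategory of those $B$ in $\KK$ for which it is an equivalence, show that this subcategory is localizing, and check that it contains $\kk(\C)$.

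First I would dispose of the base case $B=\kk(\C)$ for all parts. For \ref{qrfijqiofdewdqwed} the map $-\otimes_{\max}\kk(\C)\to-\otimes_{\min}\kk(\C)$ is the unit constraint of both symmetric monoidal structures, hence an equivalence. For \ref{qrfijqiofdewdqwed1} the transformation specialises to $\KK(\C,-)\to\EE(\C,-)$; since $\KK(\C,-)\simeq K$ by \eqref{fewqifjeqwfewfqewfd} and $K=\EE(\C,-)$ by \cref{wtgijwoergferwfrefw}, and the comparison functor $\KK\to\EE$ is compatible with these identifications, this is an equivalence. For the Künneth assertion the lax structure map on $B=\kk(\C)$ reads $\cK(-)\otimes_{\KU}\KU\to\cK(-\otimes\kk(\C))\simeq\cK(-)$ and is again an equivalence.

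Next I would run the closure arguments, which are formal once one records the appropriate (co)continuity. For \ref{qrfijqiofdewdqwed} the endofunctors $-\otimes_{\max}B$ and $-\otimes_{\min}B$ are cocontinuous in $B$ because the monoidal structures on $\KK$ are bi-cocontinuous, so the locus of equivalence is closed under colimits, shifts and retracts and is therefore localizing. Likewise, for the Künneth assertion both $B\mapsto\cK(-\otimes B)$ and $B\mapsto\cK(-)\otimes_{\KU}\cK(B)$ are exact and preserve colimits in $B$, so the locus of equivalence is localizing. For \ref{qrfijqiofdewdqwed1}, with the test object fixed, both $B\mapsto\KK(B,-)$ and $B\mapsto\EE(B,-)$ are exact contravariant functors sending colimits to limits, so once more the locus of equivalence is localizing. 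In each case it contains $\kk(\C)$ and hence the whole $\UCT$-class.

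For the $\UCT$-formula I would argue directly rather than by generation. Since $B$ lies in the essential image of $\cB$, the counit of the adjunction \eqref{wefrfwgergegrfwrefefwrefweferfw} exhibits $B\simeq\cB(\cK(B))$, and the $\Mod(\KU)$-enriched adjunction $\cB\dashv\cK$ then gives
\[
\KK(B,-)\simeq\KK(\cB(\cK(B)),-)\simeq\map_{\Mod(\KU)}(\cK(B),\cK(-)),
\]
as desired. I expect the only real difficulty to be bookkeeping rather than homotopy theory: one must check that the four maps appearing in the statement are the canonical ones, and in particular for \ref{qrfijqiofdewdqwed1} that on the generator the comparison $\KK(\C,-)\to\EE(\C,-)$ is precisely the $K$-theory identification and not merely some abstract equivalence of spectra. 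Constructing the natural transformation $-\otimes_{\max}-\to-\otimes_{\min}-$ of the two monoidal structures on $\KK$ out of the canonical maps $A\otimes_{\max}B\to A\otimes_{\min}B$ on $\nCalg$, and verifying its compatibility with the localizations, is the other point demanding care.
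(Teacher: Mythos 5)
Your proposal is correct and is essentially the paper's own proof, which consists precisely of the argument you give: for each of the four transformations one observes that the full subcategory of objects $B$ of $\KK$ on which it is an equivalence is localizing (using bi-cocontinuity of the tensor products, compactness of $\kk(\C)$, and colimit-preservation of the comparison functor $\KK\to\EE$) and that it contains the generator $\kk(\C)$. Your only deviation is treating the UCT assertion directly via the $\Mod(\KU)$-enriched adjunction $\cB\dashv\cK$ and the counit equivalence $\cB(\cK(B))\simeq B$ for $B$ in the $\UCT$-class; the paper runs the same generation argument there as well, and both routes are valid.
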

\begin{proof}
In all cases the equivalence is induced by an obvious natural transformation. 
One argues that the full subcategory of objects $B$ in 
$\KK$ for which is transformation is an equivalence is localizing,  and that it contains  $\kk(\C)$.
  \end{proof}

\begin{rem}
The
  $\kk$-functor from \cref{ojtowpgerwferfw1}.\ref{ojtowpgerwferfw} is not compatible with filtered colimits on the level of $C^{*}$-algebras. It does not even preserve countable sums. The reason is that the functor $y:\KK_{\sepa}\to \KK$ does not preserve countable sums. 
One could improve on this point  by observing  that  $\kk_{\sepa}$     preserves countable sums \cref{weijotgwegferfwrefw}.\ref{fhqdewjfioqwefqdwedqwed1}, and then
working with the $\aleph_{1}$-Ind-completion instead of the Ind-completion in \cref{iejgowergrfrfwerfref}. We refer to  
\cite[Sec. 3.4]{Bunke:2024aa} where the details of such a construction have been worked out in the case of $E$-theory, see also \cite[Rem. 3.4]{KKG}

In the context of the present paper, in order to discuss the relation of the $\UCT$-class with the classical definition, it is better to consider
the separable version $\UCT_{\sepa}$ defined as a smallest countably cocomplete stable $\infty$-category of $\KK_{\sepa}$ (which is known to be countably cocomplete by   \cref{weijotgwegferfwrefw}.\ref{fhqdewjfioqwefqdwedqwed}) containing the tensor unit, see also \cite[Sec. 5.5]{Bunke:2023ab}.
It is a famous question whether for every nuclear separable algebra $A$ we have  $\kk_{\sepa}(A)\in \UCT_{\sepa}$.

The analogs of the statements of \cref{wtrkgoerfrefwerfwerf} in the separable case hold true.
\hB
\end{rem}

 \begin{rem}
 In the  statement \cref{wtrkgoerfrefwerfwerf}.\ref{qrfijqiofdewdqwed}
we can replace the condition that $B$ is in the $\UCT$-class by the condition that $B$ is represented by a separable and nuclear $C^{*}$-algebra. Indeed, by definition of nuclearity 
 the transformation $-\otimes_{\max}B\to \otimes_{\min}B$ of endofunctors
of $\nCalg_{\sepa}$ is an isomorphism.

 Classically it is known that for separable algebras the map from $KK$ to $E$-theory
 is an isomorphism if the first argument is a nuclear algebra 
 \cite[Thm. 3.5]{MR1068250}.
By stability and \cref{qiurhfgiuewrgwrfrefrfwrefw} we can conclude that in 
   \cref{wtrkgoerfrefwerfwerf}.\ref{qrfijqiofdewdqwed1} we
 can replace the $\UCT$-condition on $B$ by the condition that $B$ is represented by a separable and nuclear $C^{*}$-algebra. 
\hB
     \end{rem}

\begin{rem}\label{qeorigjiowergwerfwfrewfwreee}
In this remark we justify \cref{refjweoirgwrefffwerfwrf}.
By a similar argument as in the proof of   \cref{weojgiwoerfrefweferf} we get  a weak equivalence 
\[\colim_{n\in \nat} \ell U(n)\cong \cU^{s}(\C)\]
in $\Groups(\Spc)$.
The left-hand side is $\Omega^{\infty-1}$ of one of the  classical versions of the $\KU$-spectrum.

We continue the justification of  \cref{refjweoirgwrefffwerfwrf} by calculating the ring $\pi_{*}\KU$.    Since the homotopy groups of $ \cU^{s}(\C)\simeq \Omega^{\infty-1}K(\C)$ are two-periodic, so are the homotopy groups on the left-hand side. We thus deduce the classical Bott periodicity theorem. For an explicit calculation
 we use the following  additional information from classical topology:
  \[\pi_{i}(\colim_{n\in \nat} U(n))\cong \left\{\begin{array}{cc} *&i=0 \\ \Z&i=1\\0&i=2  \end{array} \right.\]  in order to conclude that 
 \begin{equation}\label{qefoihwefioewdqewdqwedwqedwqedwdwed} \pi_{i} \Omega^{\infty}K(\C)\cong \left\{\begin{array}{cc} \Z&i\in 2\nat \\ 0&i\in 2\nat+1   \end{array} \right.\ .
 \end{equation}   We know further that $\pi_{*} \KU$ is a ring and that the Bott periodicity is implemented by the multiplication with
  the invertible element $b$ in $\pi_{-2} \KU\simeq \pi_{0}\KK(S^{2}(\C),\C)$ from \eqref{vewqcercerwce}.
  Consequently, $b^{-1}$ must be a generator of
  $\pi_{2}\KU$ and we get a ring  isomorphism
  \[\Z[b,b^{-1}]\stackrel{\cong}{\to} \pi_{*} \KU\ .\]
  \hB
 \end{rem}

\section{$K$-theory and the group completion of the space of projections}\label{reoijqoeirfewfqwdewd}
 
For a unital  $C^{*}$-algebra $A$ the abelian  group   $K_{0}(A)$ is classically defined
as the Grothendieck group of the monoid of unitary equivalence classes of projections in $K\otimes A$, where the unitaries belong to the multiplier algebra $U(K\otimes A)$. Thereby the  monoid operation is induced by the block sum.
  One then observes that the relation of   unitary equivalence between projections is equivalent to homotopy. 
  Using the notation from \cref{poregjopewrgerg} we thus get an isomorphism
  \begin{equation}\label{fdsvdsvfsvwevdfsvfdv} \pi_{0} (\cProj^{s}(A))^{\group} \cong  K_{0}(A)\ .
\end{equation} 
  In this section   we will show a space-level refinement of  this isomorphism.
  Unfolding definitions we obtain  a   natural map
  \[\cProj^{s}(A)\to \Omega^{\infty}K(A)\]
   of commutative monoids in spaces. 
 Then \cref{erijgoerwgregffwfrefw}    asserts that 
  this map presents its target as a group completion.
 Note that the unitality assumption on $A$ is crucial for this statement, see \cref{qvpojsdfdvfdvsv}.
 The modification for general algebras 
 is formulated as \cref{wogkpwgerfwrefwf}. Though it looks like an obvious  $K$-theoretic statement 
 its detailed verification is surprisingly long.

\begin{ex} \label{qvpojsdfdvfdvsv}
Let $X$ be a locally compact Hausdorff space which is connected and not compact. 
 Then we have $\Proj^{s}(C_{0}(X))=\{0\}$. Indeed, a projection 
 $p$ in $\Proj^{s}(C(X) )$ can be interpreted as a  
 function $p:X\to \Proj^{s}(\C)$. The function
 $x\mapsto \|p(x)\|$ is continuous and takes values in $\{0,1\}$. Since it vanishes at infinity, the assumptions on $X$ imply that it vanishes identically.
 
 We know from \cref{qeorigjiowergwerfwfrewfwreee} that $K_{0}(C_{0}(\R^{2}))\cong \Z$, and this contradicts
 \eqref{fdsvdsvfsvwevdfsvfdv} whose left-hand side would be the zero group.
 \hB\end{ex}

The following two statements enable us to study the space of projections within the   homotopy theory developed in the present notes.   They are the analogs of \cref{wejiogwergwerfrrewf} and \cref{weojgiwoerfrefweferf}.
Recall from \cref{poregjopewrgerg} that $\Proj(A):=\underline{\Hom}(\C,A)$ denotes the topological space of projections in $A$ and that $\cProj(A):=\ell\Proj(A)$ is the associated space.
\begin{prop}\label{wtogjgergregw9}
For every  surjective map   $  B\to C $ of $C^{*}$-algebras the map $\Proj(B)\to \Proj(C)$ of topological spaces is a Serre fibration. 
\end{prop}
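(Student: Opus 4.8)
The plan is to verify the homotopy lifting property directly, following the pattern of \cref{wejiogwergwerfrrewf}. Since a map of spaces is a Serre fibration as soon as it has the homotopy lifting property against the disks $D^{n}$, and $D^{n}\times\{0\}\hookrightarrow D^{n}\times[0,1]$ is a cofibration, it suffices to solve lifting problems of the form
\[\xymatrix{D^{n}\ar[r]^{p}\ar[d]&\Proj(B)\ar[d]\\ D^{n}\times[0,1]\ar[r]_{\bar q}\ar@{..>}[ur]&\Proj(C)}\ .\]
First I would absorb the disk into the algebra: by \eqref{fqwfewfdwedewdwdq} and \cref{poregjopewrgerg} the datum $p$ is a projection in $C(D^{n})\otimes B$, while $\bar q$ is a path $[0,1]\to\Proj(C(D^{n})\otimes C)$ starting at the image of $p$, using $C(D^{n}\times[0,1])\otimes C\cong C([0,1])\otimes(C(D^{n})\otimes C)$. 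Because $C(D^{n})$ is commutative, hence nuclear, the map $\id\otimes\pi\colon C(D^{n})\otimes B\to C(D^{n})\otimes C$ is again surjective (as in the proof of \cref{wejiogwergwerfrrewf}). Thus it is enough to prove the following path-lifting statement for an \emph{arbitrary} surjection $\pi\colon B\to C$: given a path $\gamma\colon[0,1]\to\Proj(C)$ and a projection $p_{0}\in\Proj(B)$ with $\pi(p_{0})=\gamma(0)$, there is a path $\tilde\gamma\colon[0,1]\to\Proj(B)$ with $\tilde\gamma(0)=p_{0}$ and $\pi\circ\tilde\gamma=\gamma$.

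Exactly as in \cref{wejiogwergwerfrrewf} I would call a path short if $\|\gamma(t)-\gamma(0)\|$ stays below a fixed small $\delta$, and reduce to lifting short paths by subdividing $[0,1]$ into finitely many segments on which $\gamma$ varies by less than $\delta$ (possible by uniform continuity), lifting segment by segment and using the endpoint of the previous lift as the new starting projection. To lift a short path $\sigma$ with starting lift $p_{0}$, I would implement $\sigma$ by a conjugating path of unitaries: for projections with $\|p-q\|<1$ the element $z=qp+(1-q)(1-p)$ of the unitization satisfies $zp=qz=qp$, is invertible, and its unitary polar part $u$ satisfies $upu^{*}=q$, lies within a controlled distance of $1$, and depends norm-continuously on $(p,q)$. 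Applying this to $(\gamma(0),\sigma(t))$ yields a continuous path $u(t)$ of unitaries in $C^{\sim}$ with $u(0)=1$, $u(t)-1\in C$, and $u(t)\gamma(0)u(t)^{*}=\sigma(t)$; shrinking $\delta$ keeps each $u(t)$ in the domain of the logarithm, so $a(t):=\log u(t)$ is a continuous path of anti-self-adjoint elements of $C$ with $a(0)=0$.

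I would then view $a$ as an anti-self-adjoint element of $C_{0}((0,1])\otimes C$ and lift it. Since $C_{0}((0,1])$ is nuclear the map $C_{0}((0,1])\otimes B\to C_{0}((0,1])\otimes C$ is a surjection of $C^{*}$-algebras, hence restricts to a surjection on anti-self-adjoint parts; choosing an anti-self-adjoint preimage $\hat a$ gives a continuous path $\hat a(t)\in B$ with $\hat a(0)=0$ and $\pi(\hat a(t))=a(t)$. Setting $\tilde\gamma(t):=\exp(\hat a(t))\,p_{0}\,\exp(-\hat a(t))$ produces a norm-continuous path of projections in $B$ (conjugation of $p_{0}$ by a unitary of $B^{\sim}$ keeps it in the ideal $B$) with $\tilde\gamma(0)=p_{0}$ and $\pi(\tilde\gamma(t))=u(t)\gamma(0)u(t)^{*}=\sigma(t)$, as required. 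The inductive concatenation over the subdivision then lifts the general path $\gamma$, completing the argument.

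The formal skeleton — the reduction to path lifting and the subdivision into short paths — is routine and parallel to \cref{wejiogwergwerfrrewf}. The substantive point I expect to be the main obstacle is the short-path step: producing the continuously varying conjugating unitary $u(t)$ from the standard $z=qp+(1-q)(1-p)$ construction with the correct continuity and proximity-to-$1$ estimates, and checking that the logarithm lands in $C$ (not merely in $C^{\sim}$) so that the lifting takes place inside the ideals $C_{0}((0,1])\otimes B\to C_{0}((0,1])\otimes C$, where surjectivity on anti-self-adjoint elements is available. Once these analytic estimates are in place, the conjugation construction delivers the lift verbatim.
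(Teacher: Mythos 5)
Your proposal is correct and follows essentially the same route as the paper: reduce to path lifting via the adjunction \eqref{fqwfewfdwedewdwdq}, subdivide into short paths, and lift a short path by taking the unitary polar part of the standard invertible $z=qp+(1-q)(1-p)$ (which is exactly the paper's $v_{p}(q)=\tfrac12((2q-1)(2p-1)+1)$), then logging, lifting the anti-selfadjoint path through the surjection $C_{0}((0,1])\otimes B\to C_{0}((0,1])\otimes C$, exponentiating and conjugating. The only cosmetic difference is that you verify $upu^{*}=q$ directly from $zp=qz$ and $z^{*}z=1-(p-q)^{2}$, where the paper isolates this as \cref{rihugoewrgrwefrf9} and \cref{qeirjgoqerfqewfqwf9}.
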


\begin{prop}\label{wtogjgergregw91}
The functor $\cProj:\nCalg\to \Spc$ preserves small filtered colimits.
\end{prop}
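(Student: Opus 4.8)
The plan is to follow the proof of \cref{weojgiwoerfrefweferf} essentially verbatim, replacing the polar-decomposition retraction onto unitaries by a continuous-functional-calculus retraction onto projections; the analogue of the Serre-fibration input is \cref{wtogjgergregw9}, but as in the unitary case the real work is a relative approximation statement for the diagram rather than that fibration property.

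First I would reduce to homotopy groups. Let $I$ be a small filtered poset, $(B_i)_{i\in I}$ an $I$-indexed family in $\nCalg$, and $B := \colim_{i\in I} B_i$; write $\phi_{j,i}\colon B_i\to B_j$ and $\phi_i\colon B_i\to B$ for the structure maps. To see that the canonical map $\colim_{i\in I}\cProj(B_i)\to\cProj(B)$ is an equivalence in $\Spc$ it suffices to check it on homotopy groups. Since homotopy groups commute with filtered colimits in $\Spc$ and $\pi_n\ell\Proj(A)\cong\pi_n\Proj(A)$, it suffices to prove that $\colim_{i\in I}\pi_n(\Proj(B_i))\to\pi_n(\Proj(B))$ is a bijection for every $n\in\nat$ and every base point. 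Representing classes by maps out of spheres and by relative maps out of disks, both surjectivity (via $(X,Y)=(S^n,\{*\})$, and the case $X=\{\mathrm{pt}\}$ for base points) and injectivity (via $(X,Y)=(D^{n+1},S^n)$) follow from the following relative lemma, exactly as the inclusions $S^n\to D^{n+1}$ and $\emptyset\to S^n$ are used in \cref{weojgiwoerfrefweferf}: for every compact metrizable pair $(X,Y)$, every $i_0\in I$, and every commuting square given by $f\colon Y\to\Proj(B_{i_0})$ and $g\colon X\to\Proj(B)$ with $g|_Y=\phi_{i_0}\circ f$, there exist $i\ge i_0$ and $h\colon X\to\Proj(B_i)$ with $h|_Y=\phi_{i,i_0}\circ f$ and $\phi_i\circ h\simeq g$ rel $Y$.

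To prove the lemma I would run the analytic approximation of \cref{wtgijrtogwrefrefr} with the projection retraction in place of $W$. Identify a map $X\to\Proj(A)$ with a projection in $C(X)\otimes A$. Fix a small $c>0$ and, by continuous functional calculus, a continuous $\chi\colon\IR\to\IR$ that is $0$ near $0$ and $1$ near $1$, so that $W(a):=\chi(a)$ is a projection whenever $a=a^*$ satisfies $\|a^2-a\|\le c$, with $\|W(a)-a\|$ as small as desired and $W(p)=p$ for genuine projections $p$. Extend $f$ to a self-adjoint $f_0\in C(X)\otimes B_{i_0}$ (Tietze/Dugundji); then $g_0:=g-\phi_{i_0}(f_0)$ is self-adjoint and lies in $C_0(X\setminus Y)\otimes B$. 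Using that the maximal tensor product commutes with filtered colimits and that norms in a colimit are limits of norms, choose $i_1\ge i_0$ and a self-adjoint $r\in C_0(X\setminus Y)\otimes B_{i_1}$ with $\|\phi_{i_1}(r)-g_0\|$ small, and set $f_1:=\phi_{i_1,i_0}(f_0)+r$. Then $f_1|_Y=\phi_{i_1,i_0}(f)$ is a projection and $\|\phi_{i_1}(f_1)-g\|$ is small, whence $\|\phi_{i_1}(f_1^2-f_1)\|$ is small; pushing along the diagram I can find $i\ge i_1$ with $\|\phi_{i,i_1}(f_1)^2-\phi_{i,i_1}(f_1)\|\le c$ and set $h:=W(\phi_{i,i_1}(f_1))$, a projection in $C(X)\otimes B_i$. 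Since $W$ fixes projections, $h|_Y=\phi_{i,i_0}\circ f$; and since $\|\phi_i(h)-g\|$ is small, the self-adjoint straight-line path $s\mapsto(1-s)\phi_i(h)+sg$ stays approximately idempotent, so $s\mapsto W((1-s)\phi_i(h)+sg)$ is a homotopy rel $Y$ from $\phi_i(h)$ to $g$.

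The main obstacle is the analytic bookkeeping in this lemma: one must fix $c$ and the successive approximation tolerances coherently so that (i) $W$ is defined on every element produced, (ii) the convex-combination path $(1-s)\phi_i(h)+sg$ remains within $\{\,a=a^*\mid\|a^2-a\|\le c\,\}$ (which needs $\|\phi_i(h)-g\|$ controlled, since for projections $p,q$ one computes $a_s-a_s^2=s(1-s)\,(p+q-pq-qp)$ with $\|p+q-pq-qp\|\le 2\|p-q\|$), and (iii) the restriction to $Y$ is preserved on the nose, which is why $W$ must fix genuine projections. Since the target is only $\Spc$ there is no group structure to invoke, which slightly simplifies the reduction compared with \cref{weojgiwoerfrefweferf}.
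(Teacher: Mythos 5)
Your proposal is correct and follows essentially the same route as the paper's proof: reduction to homotopy groups, the same relative approximation lemma for compact metrizable pairs $(X,Y)$ (cf.\ \cref{wtgijrtogwrefrefr22222}), the functional-calculus retraction $W=\chi(-)$ onto projections, Dugundji extension, approximation of $g-\phi_{i_0}(f_0)$ at a finite stage of the colimit, and the straight-line homotopy $s\mapsto W((1-s)\phi_i(h)+sg)$ rel $Y$. Your explicit estimate $a_s-a_s^2=s(1-s)(p+q-pq-qp)$ with $\|p+q-pq-qp\|\le 2\|p-q\|$ even fills in a detail the paper leaves implicit.
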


We defer the technical proofs of these statements to the end of the section.

\begin{prop}\label{wejkgoowpfrefwfrefwef}
The functor $\cProj^{s}:\nCalg\to \CMon(\Spc)$ is homotopy invariant, stable,   Schochet exact, and $s$-finitary.
It furthermore sends cartesian squares 
\[\xymatrix{A\ar[r]\ar[d] & B\ar[d]^{f} \\C \ar[r] & D} \] in $\nCalg$ with the property that
  $f$ a surjection to cartesian squares.
\end{prop}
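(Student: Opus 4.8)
The plan is to exploit the identity $\cProj^{s}(-)\simeq \Map_{L_{K}\nCalg_{h}}(\C,L_{h,K}(-))$ supplied by \cref{poregjopewrgerg} and \eqref{vwihviuewhviwevewre}, which realizes $\cProj^{s}$ as the corepresentable functor $\Map_{L_{K}\nCalg_{h}}(\C,-)$ on the semi-additive $\infty$-category $L_{K}\nCalg_{h}$ (hence naturally $\CMon(\Spc)$-valued, see \cref{weotgpwergferferfwref}) precomposed with $L_{h,K}$. Homotopy invariance and stability are then immediate: $L_{h,K}=L_{K}\circ L_{h}$ inverts homotopy equivalences and left-upper corner inclusions by construction, so $\cProj^{s}$ sends these to equivalences. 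For Schochet exactness I would use that $L_{h,K}$ sends Schochet fibrant cartesian squares to cartesian squares (\cref{weokjgpwerferfwef}.\ref{eiojfgrgfreaedc1} together with the left-exactness of $L_{K}$ from \cref{wreogkpwegrewferf}.\ref{wkogpwtgwerfwrefwr}) and that the corepresentable functor $\Map_{L_{K}\nCalg_{h}}(\C,-)$ preserves all limits; since the forgetful functor $\CMon(\Spc)\to\Spc$ creates limits, this preservation holds already at the level of $\CMon(\Spc)$.

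For the $s$-finitary property I would prove the stronger statement that $\cProj^{s}$ preserves small filtered colimits and then invoke \cref{wergijwerogerwfewrferwfw}. Here I use $\Proj^{s}(A)=\underline{\Hom}(\C,K\otimes A)=\Proj(K\otimes A)$, so that $\cProj^{s}(-)\simeq\cProj(K\otimes-)$. Since the maximal tensor product preserves filtered colimits, $K\otimes\colim_{i}A_{i}\simeq\colim_{i}(K\otimes A_{i})$, and \cref{wtogjgergregw91} gives that $\cProj$ preserves filtered colimits; as the forgetful functor $\CMon(\Spc)\to\Spc$ creates filtered colimits, the monoidal refinement preserves them as well.

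The substantive assertion is the last one, about cartesian squares with surjective $f$. Given such a square $A\simeq B\times_{D}C$ with $f\colon B\to D$ surjective, I would first apply $K\otimes-$. Because $K$ is nuclear, $K\otimes-$ is exact; writing the pullback via the exact sequence $0\to\ker f\to A\to C\to 0$ and comparing it with $0\to K\otimes\ker f\to(K\otimes B)\times_{K\otimes D}(K\otimes C)\to K\otimes C\to 0$ shows that $K\otimes-$ carries the square to the cartesian square $K\otimes A\simeq(K\otimes B)\times_{K\otimes D}(K\otimes C)$, with $\id_{K}\otimes f$ still surjective. Next I apply $\Proj=\underline{\Hom}(\C,-)$: a projection in a pullback $C^{*}$-algebra is exactly a compatible pair of projections, so $\Proj$ carries a strict pullback in $\nCalg$ to a strict pullback in $\Top$, and by \cref{wtogjgergregw9} the map $\Proj(K\otimes B)\to\Proj(K\otimes D)$ is a Serre fibration. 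Thus the resulting square in $\Top$ is Serre fibrant cartesian, and applying $\ell$ — which sends Serre fibrant cartesian squares to cartesian squares — yields a cartesian square of spaces. Finally I would lift to $\CMon(\Spc)$ using that the forgetful functor creates pullbacks.

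The main obstacle in this proposition is really outsourced to the two deferred statements \cref{wtogjgergregw9} and \cref{wtogjgergregw91}; granting those, the only genuine checks are that $K\otimes-$ preserves the cartesian square along a surjection (handled by exactness of tensoring with the nuclear algebra $K$) and the bookkeeping needed to promote the three separate properties from $\Spc$ to $\CMon(\Spc)$, which is harmless because the forgetful functor creates the relevant limits and filtered colimits. I therefore expect no difficulty beyond assembling these pieces carefully and keeping track of the commutative monoid structure throughout.
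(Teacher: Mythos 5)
Your proposal is correct and follows essentially the same route as the paper: corepresentability $\cProj^{s}\simeq\Map_{L_{K}\nCalg_{h}}(\C,L_{h,K}(-))$ for homotopy invariance, stability and Schochet exactness; the identification $\cProj^{s}\simeq\cProj(K\otimes-)$ together with \cref{wtogjgergregw91} for preservation of filtered colimits (hence $s$-finitarity); and for the last assertion, tensoring with $K$ (preserving the cartesian square and surjectivity), applying $\Proj$ to get a Serre fibrant cartesian square via \cref{wtogjgergregw9}, and then applying $\ell$. The only cosmetic differences are that you re-derive the preservation of exact cartesian squares under $K\otimes-$ from nuclearity, where the paper cites its standing facts from \cref{qrfuhqeiurfqwfdewfdqwef9}, and that you make explicit the (harmless) bookkeeping of lifting limits and filtered colimits along the forgetful functor $\CMon(\Spc)\to\Spc$.
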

\begin{proof}
The functor $\cProj^{s}(-)\stackrel{\eqref{vwihviuewhviwevewre}}{\simeq} \Map_{L_{K}\nCalg_{h}}(\C,-)$ is homotopy invariant and stable
by definition. It furthermore sends Schochet fibrant cartesian squares to cartesian squares
since $L_{h,K}$ does so.

Since the functor $K\otimes -$ preserves   filtered colimits, it follows from   \cref{wtogjgergregw91}
that $\cProj^{s}(-)\simeq \cProj(K\otimes -)$  preserves   filtered colimits and is in particular   $s$-finitary.

Since
\[\xymatrix{K\otimes A\ar[r]\ar[d] & K\otimes B\ar[d]^{K\otimes f} \\K\otimes C \ar[r] & K\otimes D} \]
is again cartesian and $K\otimes f$ is still surjective,  the functor $\Proj:=\underline{\Hom}(\C,-)$  sends this square to a  cartesian square in $\Top$ which is in addition Serre fibrant by \cref{wtogjgergregw9}.
We now apply 
 $\ell$ and   get the desired cartesian square 
\[\xymatrix{\cProj^{s}(A)\ar[r]\ar[d] & \cProj^{s}(B)\ar[d]^{\cProj^{s}(f)} \\\cProj^{s}(C) \ar[r] & \cProj^{s}(D)} \ .\]
\end{proof}

  The group completion functor $(-)^{\group}$ is defined as the left-adjoint of a 
  Bousfield localization 
\begin{equation}\label{fdvdsvsfdvdfv3ffewerfreferwf}(-)^{\group}:\CMon(\Spc)\leftrightarrows \CGroups(\Spc):\incl
\ .\end{equation} 
Recall that to any $C^{*}$-algebra $A$ we can functorially associate the split unitalization sequence
\begin{equation}\label{afoiahjfoiaf}0\to A\to A^{u}\to \C\to 0\ .
\end{equation}
\begin{ddd} We define the functor  
\begin{equation}\label{vfdvdfsvq3fdfvfvvfvdfsvsvfvs}\rProj^{s}:\nCalg \to \CGroups(\Spc)\ , \qquad A\mapsto \Fib(\cProj^{s} (A^{u})^{\group}\to \cProj^{s} (\C)^{\group})
\end{equation} 
\end{ddd}

\begin{rem}
Observe that in the definition of $\rProj^{s}$ we  take the group completion  first  and then the fibre.
We can not reverse the order since the group completion does not preserve fibre sequences in general. \hB
 \end{rem}

We define the natural transformation
\begin{equation}\label{vsdfhuiwhiuvhjfuivsdfvsfdvs}e_{h}:\cProj^{s}(-)\simeq \Map_{L_{K}\nCalg_{ h,K}}(\C,-)\xrightarrow{\Omega^{2}_{\exa}\circ L_{ \exa}}  \Map_{\EE}(\C,-)\simeq \Omega^{\infty}K(-)
\end{equation} 
of functors from $\nCalg $ to $\CMon(\Spc)$.
Since $\Omega^{\infty}K$ takes values in groups,   by the universal  property of the  group completion   we get the dotted arrow in the commutative diagram
of functors $\nCalg\to \CMon(\Spc)$
\[\xymatrix{\cProj^{s} \ar[dr]\ar[rr]^{\ee_{h}}&&\Omega^{\infty}K  \\&\cProj^{s,\group}\ar@{..>}[ur]^{\hat \ee_{h}}& }\ ,\]
where the down-right arrow is  the  counit of the adjunction \eqref{fdvdsvsfdvdfv3ffewerfreferwf}.

We next 
construct  a natural transformation \begin{equation}\label{gwegergergreffewrwf}\tilde \ee_{h}:\rProj^{s} \to \Omega^{\infty}K \ .\end{equation}  
Applying the exact   functor 
 $\Omega^{\infty}K$  to the split-exact unitalization sequence \eqref{afoiahjfoiaf}     we get the split fibre sequence \[\Omega^{\infty}K(A) \to   \Omega^{\infty}K(A^{u})\to \Omega^{\infty}K(\C)\]
  in $\CGroups(\Spc)$. 
We now form the 
 diagram of vertical fibre sequences of functors from $\nCalg$ to $\CGroups(\Spc)$ 
\begin{equation}\label{vervwerwrfwerfrefrewfefw}\xymatrix{\rProj^{s}(-)\ar[d]\ar@{..>}[r]^{\tilde \ee_{h}}&\Omega^{\infty}K(-)\ar[d]\\  \cProj((-)^{u})^{\group}\ar[r]^{\hat \ee_{h}} \ar[d]&\ar[d]\Omega^{\infty}K((-)^{u})\\ \cProj(\C)^{\group}\ar[r]^{\hat \ee_{h}}&\Omega^{\infty}K(\C)}
\end{equation} 
 defining $\tilde \ee_{h}$ as the natural extension of the lower square to a map of fibres.

The following is the main theorem of the present section. 

 \begin{theorem}\label{wogkpwgerfwrefwf}
  The natural transformation $\tilde \ee_{h}:\rProj^{s}\to \Omega^{\infty}K$
  is an equivalence.  
 \end{theorem}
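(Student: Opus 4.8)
The plan is to reduce the statement to a group-completion theorem for \emph{unital} algebras and then import the already established computation of the stable unitary group.

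First I would exploit the defining diagram \eqref{vervwerwrfwerfrefrewfefw}. By construction $\tilde\ee_h$ is the map induced on vertical fibres by $\hat\ee_h$ on the total objects $\cProj^s((-)^u)^{\group}$, $\Omega^\infty K((-)^u)$ and the bases $\cProj^s(\C)^{\group}$, $\Omega^\infty K(\C)$; the right-hand column is a fibre sequence because $\Omega^\infty K$ is exact and the unitalization sequence \eqref{afoiahjfoiaf} is split exact. Since $A^u$ and $\C$ are unital, it therefore suffices to prove that for every \emph{unital} $B$ the map $\hat\ee_h(B)\colon \cProj^s(B)^{\group}\to\Omega^\infty K(B)$ is an equivalence. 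This is the unital group-completion statement.

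Next I would reduce to separable unital $B$ and strip off the exactness-forcing localization. Both sides are $s$-finitary: $\cProj^s$ preserves filtered colimits by \cref{wejkgoowpfrefwfrefwef} (using \cref{wtogjgergregw91}), group completion is a left adjoint, and finite limits commute with filtered colimits in $\Spc$, so $\rProj^s$ and $\Omega^\infty K$ preserve filtered colimits; writing a unital algebra as the filtered colimit of its separable unital subalgebras reduces us to separable unital $B$. Now $\cProj^s$ sends every exact sequence $0\to A\to B\to C\to 0$ to a fibre sequence, since the square with top row $A\to B$, bottom row $0\to C$ and surjective right vertical map $B\to C$ is cartesian, so \cref{wejkgoowpfrefwfrefwef} gives $\cProj^s(A)\simeq\Fib(\cProj^s(B)\to\cProj^s(C))$. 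By the colocality criterion \cref{weroijgwergrefwrfrwef}, the reduction of \cref{wetgijowgrfrefwf} through \cref{wetigowergferferferwfw} and \eqref{erfwerferfwef}, this makes $L_{\sepa,h,K}(\C)$ colocal for $\hat W_{\sepa,\exa}$, so $L_{\sepa,\exa}$ induces an equivalence $\cProj^s(B)\xrightarrow{\simeq}\Map_{L_K\nCalg_{\sepa,h,\exa}}(\C,B)$. Hence $\hat\ee_h(B)$ is exactly the map coming from the group-object localization $\Omega^2_{\sepa,\exa}$, i.e. precomposition with the Bott map $\beta_{\exa,\C}\colon\Omega^2_{\exa}\C\to\C$ of \cref{qeorigjiowergwerfwfrewfwr}, landing in the grouplike $\Map_\EE(\C,B)=\Omega^\infty K(B)$; all the content is now that this is a group completion of $E_\infty$-monoids.

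For this last step I would bring in the stable unitaries. By \cref{trgjkowergferfweferfw} we have $\cU^s\simeq\Omega^{\infty-1}K$, hence $\Omega\,\cU^s(B)\simeq\Omega^\infty K(B)$, so the claim becomes the delooping statement $\cProj^s(B)^{\group}\simeq\Omega\,\cU^s(B)$ for unital $B$. I would prove this by a group-completion theorem: use the Serre fibration property of projection spaces (\cref{wtogjgergregw9}) to build the bar/classifying construction exhibiting $\cU^s(B)$ as a delooping of the block-sum monoid $\cProj^s(B)$, and then apply the McDuff--Segal group completion theorem to match the homology of $\cProj^s(B)^{\group}$ with that of $\Omega\,\cU^s(B)$ and conclude that the comparison map is an equivalence.

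The main obstacle is precisely this group-completion/delooping identification, which is non-formal: unitality enters essentially, providing the unit projection and enough projections for the block-sum monoid to be cofinal, exactly what fails for $C_0(\R^2)$ in \cref{qvpojsdfdvfdvsv} (where $\cProj^s$ is trivial but the $K$-theory space is not). This analytic argument, and not any of the formal localization manipulations, is the source of the length anticipated before the statement.
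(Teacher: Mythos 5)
Your reductions are essentially sound, and parts of them are genuinely nice: reducing to unital $B$ by passing to fibres in \eqref{vervwerwrfwerfrefrewfefw}, the $s$-finitary reduction to separable $B$, and the observation that $\cProj^{s}$ sends exact sequences to fibre sequences (apply \cref{wejkgoowpfrefwfrefwef} to the cartesian square with corners $A,B,0,C$), so that $L_{\sepa,h,K}(\C)$ is colocal for $W_{\sepa,\exa}$ and $\cProj^{s}(B)\simeq \Map_{L_{K}\nCalg_{\sepa,h,\exa}}(\C,B)$ — this is the correct analogue for $\C$ of what the paper does for $S(\C)$ in \cref{wetgijowgrfrefwf}. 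The gap is entirely in your final step. The group completion theorem (McDuff--Segal, or the formal statement for commutative monoids in $\Spc$) only gives $\cProj^{s}(B)^{\group}\simeq \Omega\,\mathrm{Bar}(\cProj^{s}(B))$, where $\mathrm{Bar}$ denotes the bar construction; it says nothing about what $\mathrm{Bar}(\cProj^{s}(B))$ is. The identification $\mathrm{Bar}(\cProj^{s}(B))\simeq \cU^{s}(B)$ that your argument needs is not produced by the Serre fibration property \cref{wtogjgergregw9} or by any bar-construction formalism: on low homotopy groups it asserts $K_{0}(B)\cong \pi_{1}(U^{s}(B))\cong \pi_{0}(U^{s}(S(B)))=K_{1}(S(B))$, i.e.\ it \emph{is} the Bott periodicity isomorphism for $B$ (already for $B=\C$ it is the statement $\Omega U\simeq \Z\times BU$). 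So the step you describe as "non-formal but doable by fibrations plus group completion" is in fact the whole theorem, assumed rather than proved.

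This is precisely what the paper's proof is engineered to avoid. It never identifies a delooping of $\cProj^{s}$ directly; instead it proves (\cref{wiergo0wergferwferwf9}) that $\rProj^{s}$ is homotopy invariant, stable and \emph{exact} — the exactness being the analytic core, obtained from the filtered-colimit description of group completion via cofinal elements (\cref{erijgowergfrefwrfwref}, \cref{giwjrgoergerfrewferwfwrefw}) applied to the unitalization square \eqref{vkjnwkelfnlvfevfdvsfvsvsfvs} — so that the universal property \eqref{vdfvqr3ferffafafds1} of $\ee_{\sepa}$ yields a factorization $\bProj$ of $\rProj^{s}_{\sepa}$ through $\EE_{\sepa}$. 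Bott periodicity is then supplied by the structure of $\EE_{\sepa}$ itself (it was built in through the Toeplitz extension in \cref{erjigowergerrqf9}), and the Yoneda lemma produces the inverse transformation and the two composite identities (\cref{wreigowegregwre9}). To repair your route you would have to replace the appeal to McDuff--Segal by an actual proof that $\cU^{s}(B)$ deloops $\cProj^{s}(B)$ for every unital separable $B$; that is a proof of Bott periodicity in its own right, and nothing in the ingredients you cite provides it.
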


 Before we start with the proof we consider the specialization to unital algebras.
 
 \begin{kor}\label{erijgoerwgregffwfrefw} For every unital $C^{*}$-algebra $A$
 the map
 \begin{equation}\label{bdsbfsfvfvsfdvrferfe}\hat \ee_{h,A}:\cProj^{s}(A)\to \Omega^{\infty}K(A)
\end{equation} 
 presents its target as a group completion.
 \end{kor}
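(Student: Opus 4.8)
The plan is to deduce the statement from \cref{wogkpwgerfwrefwf} by identifying, for unital $A$, the functor $\rProj^{s}$ with the group completion of $\cProj^{s}$ in a way compatible with the natural transformations into $\Omega^{\infty}K$. The assertion that $\hat\ee_{h,A}$ presents $\Omega^{\infty}K(A)$ as the group completion of $\cProj^{s}(A)$ is equivalent to the statement that the canonical factorization $\cProj^{s}(A)^{\group}\to \Omega^{\infty}K(A)$ of $\ee_{h,A}$ is an equivalence. So it suffices to produce an equivalence $\cProj^{s}(A)^{\group}\simeq \rProj^{s}(A)$ under which this factorization corresponds to $\tilde\ee_{h,A}$, and then invoke the Theorem.

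First I would use that for a unital $C^{*}$-algebra $A$ the unitalization splits as a direct sum $A^{u}\cong A\oplus \C$ of $C^{*}$-algebras: in $A^{u}=A\oplus\C$ the element $(1_{A},0)$ is a central projection, and under the resulting decomposition the augmentation $A^{u}\to\C$ of \eqref{afoiahjfoiaf} is precisely the projection onto the second factor, with the unital inclusion $\C\to A^{u}$ as its splitting. Next, recall that $\cProj^{s}=\Map_{L_{K}\nCalg_{h}}(\C,-)$ sends finite products of $C^{*}$-algebras to products of commutative monoids, since $L_{h}$ and $L_{K}$ preserve finite products (\cref{weoijgowefgrefwrefrf}, \cref{wreogkpwegrewferf}) and mapping spaces preserve products; the same holds for the exact functor $\Omega^{\infty}K$. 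Moreover group completion, being a left adjoint \eqref{fdvdsvsfdvdfv3ffewerfreferwf}, preserves finite products.

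Applying these facts to $A^{u}\cong A\oplus\C$ and using that the augmentation becomes a split projection, I conclude that the map $\cProj^{s}(A^{u})^{\group}\to \cProj^{s}(\C)^{\group}$ appearing in the definition \eqref{vfdvdfsvq3fdfvfvvfvdfsvsvfvs} of $\rProj^{s}(A)$ is the group completion of a split projection $\cProj^{s}(A)\times\cProj^{s}(\C)\to\cProj^{s}(\C)$; hence its fibre is canonically the complementary factor, giving $\rProj^{s}(A)\simeq \cProj^{s}(A)^{\group}$.

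Finally I would check that under this identification $\tilde\ee_{h,A}$ agrees with the factorization $\hat\ee_{h,A}\colon\cProj^{s}(A)^{\group}\to\Omega^{\infty}K(A)$. This is the step requiring care: one traces the construction of $\tilde\ee_{h,A}$ through the diagram of fibre sequences \eqref{vervwerwrfwerfrefrewfefw} and uses the naturality of $\ee_{h}$ together with the product decompositions of both vertical columns induced by $A^{u}\cong A\oplus\C$, which are mutually compatible because every functor involved ($\cProj^{s}$, $\Omega^{\infty}K$, and group completion) preserves finite products and the splitting $\C\to A^{u}$ is natural. Granting this compatibility, \cref{wogkpwgerfwrefwf} shows that $\tilde\ee_{h,A}$, and therefore $\hat\ee_{h,A}$, is an equivalence; since $\ee_{h,A}$ is the composite of the group-completion unit with $\hat\ee_{h,A}$, this exhibits $\Omega^{\infty}K(A)$ as the group completion of $\cProj^{s}(A)$. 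The main obstacle is exactly this last bookkeeping, namely verifying that the splitting of the unitalization sequence is carried compatibly through the construction of $\tilde\ee_{h}$ after group completion; the reduction of the corollary to the Theorem is otherwise formal.
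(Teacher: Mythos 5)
Your proposal is correct and takes essentially the same route as the paper: the paper likewise reduces to \cref{wogkpwgerfwrefwf} by identifying $\rProj^{s}(A)\simeq\cProj^{s}(A)^{\group}$ for unital $A$ — there via the canonical map $i_{A}$ coming from the universal property of the fibre, inverted using the retraction $A^{u}\to A$, which is exactly the same data as your central-projection splitting $A^{u}\cong A\oplus\C$ — and then checks, as you do, that under this identification $\tilde\ee_{h,A}$ matches $\hat\ee_{h,A}$. One small repair to your justification: group completion preserves finite products not merely because it is a left adjoint (that only gives coproducts), but because $\CMon(\Spc)$ and $\CGroups(\Spc)$ are semi-additive, so finite products and finite coproducts agree in both source and target.
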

 \begin{proof}
 Since the composition \[\cProj^{s}(A)^{\group}\to \cProj^{s}(A^{u})^{\group}\to \cProj^{s}(\C)^{\group}\] vanishes, by the universal property of the fibre in \eqref{vfdvdfsvq3fdfvfvvfvdfsvsvfvs} we get a canonical morphism
\begin{equation}\label{qewfiuhquiewfqedqwdewd}i_{A}:\cProj^{s}(A)^{\group}\to \rProj^{s}(A)\ .
\end{equation}   If $A$ is unital, then the identity of $A$ canonically extends to a homomorphism $A^{u}\to A$. The composition $\rProj^{s}(A)\to  \cProj^{s}(A^{u})^{\group} \to
  \cProj^{s}(A)^{\group}$ provides an inverse of $i_{A}$. 
  The map in \eqref{bdsbfsfvfvsfdvrferfe} is then equivalent to
  \[\cProj^{s}(A)\to \cProj^{s}(A)^{\group}\stackrel{i_{A}}{\simeq} \rProj^{s}(A)\stackrel{\tilde e_{h,A}}{\simeq} \Omega^{\infty}K(A)\ ,\]
  where the last equivalence is given by \cref{wogkpwgerfwrefwf}.
  This shows the assertion of \cref{erijgoerwgregffwfrefw}.
\end{proof}

All of the above  has a version for separable algebras. The following is the separable version of  \cref{wogkpwgerfwrefwf}.

\begin{prop}\label{wogkpwgerfwrefwf1}
 The natural transformation 
  $\tilde \ee_{\sepa,h}:\rProj^{s}_{\sepa}\to \Omega^{\infty} K_{\sepa}$  is an equivalence.
 \end{prop}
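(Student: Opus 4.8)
The plan is to prove the separable statement \emph{directly}, since the general \cref{wogkpwgerfwrefwf} then follows from it by an $s$-finitary left Kan extension: $\rProj^{s}$, $\Omega^{\infty}K$ and the transformation $\tilde\ee_{h}$ between them all preserve filtered colimits, and every $C^{*}$-algebra is the filtered colimit of its separable subalgebras. Throughout I work with the separable functors, freely identifying $\cProj^{s}_{\sepa}(A)\simeq\Map_{L_{K}\nCalg_{\sepa,h}}(\C,A)$ and $\Omega^{\infty}K_{\sepa}(A)\simeq\Map_{\EE_{\sepa}}(\C,A)$. The first reduction is to the \emph{unital} case. The target $\Omega^{\infty}K_{\sepa}$ is exact, so applying it to the split unitalization sequence \eqref{afoiahjfoiaf} exhibits $\Omega^{\infty}K_{\sepa}(A)\simeq\Fib(\Omega^{\infty}K_{\sepa}(A^{u})\to\Omega^{\infty}K_{\sepa}(\C))$; by the very definition \eqref{vfdvdfsvq3fdfvfvvfvdfsvsvfvs} the source $\rProj^{s}_{\sepa}(A)$ is the corresponding fibre built from $\hat\ee_{h}$, and $\tilde\ee_{\sepa,h}$ is the induced map of fibres in \eqref{vervwerwrfwerfrefrewfefw}. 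Since $A^{u}$ and $\C$ are unital, it therefore suffices to show that $\hat\ee_{h,B}\colon\cProj^{s}(B)^{\group}\to\Omega^{\infty}K_{\sepa}(B)$ is an equivalence for every unital separable $B$; equivalently, that $\ee_{h,B}\colon\cProj^{s}(B)\to\Omega^{\infty}K_{\sepa}(B)$ presents the target as the group completion of the monoid of stable projections. That unitality is genuinely indispensable here is visible from \cref{qvpojsdfdvfdvsv}.

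Next, mirroring the stable-unitary computation of \cref{wetgijowgrfrefwf}, I would unwind $\ee_{h,B}=\Omega^{2}_{\sepa,\exa}\circ L_{\sepa,\exa}$ on mapping spaces. For the Dwyer--Kan step, I claim $L_{\sepa,\exa}$ induces an equivalence $\cProj^{s}(B)\simeq\Map_{L_{K}\nCalg_{\sepa,h,\exa}}(\C,B)$: by the colocality criterion \cref{weroijgwergrefwrfrwef} it is enough that $\C$ be colocal for $W_{\sepa,\exa}$, and since $\cProj^{s}=\Map_{L_{K}\nCalg_{\sepa,h}}(\C,-)$ is left-exact it suffices, by \eqref{erfwerferfwef} and \cref{wetigowergferferferwfw}, to invert the generators $\hat W_{\sepa,\exa}$. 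For an exact sequence $0\to I\to A\to Q\to 0$ with $Q$ contractible, \cref{wejkgoowpfrefwfrefwef} (which rests on the Serre fibration \cref{wtogjgergregw9}) applied to the cartesian square $I=A\times_{Q}0$ gives $\cProj^{s}(I)\simeq\cProj^{s}(A)\times_{\cProj^{s}(Q)}\ast\simeq\cProj^{s}(A)$, using $\cProj^{s}(Q)\simeq\ast$ by homotopy invariance. For the right Bousfield step, $\Omega^{2}_{\sepa,\exa}$ is the localization at the group objects (\cref{wtgkwotpgkelrf09i0r4fwefwerfsepa}) and $\Omega^{2}_{\sepa,\exa}\C\simeq S^{2}(\C)$ (\cref{qerigqerfewfq9}) is a group object whereas $\C$ is not; this identifies $\Omega^{\infty}K_{\sepa}(B)\simeq\Map_{L_{K}\nCalg_{\sepa,h,\exa}}(\Omega^{2}_{\sepa,\exa}\C,B)$, with the comparison map becoming precomposition with the Bott counit $\beta_{\sepa,\exa,\C}\colon\Omega^{2}_{\sepa,\exa}\C\to\C$ from \cref{qeorigjiowergwerfwfrewfwr}. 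Thus the unital core reduces to the single assertion: for unital separable $B$, the map $\beta_{\sepa,\exa,\C}^{*}$ presents the group object $\Map(\Omega^{2}_{\sepa,\exa}\C,B)$ as the group completion of the monoid $\Map(\C,B)\simeq\cProj^{s}(B)$.

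The remaining group-completion statement is the main obstacle, and I would attack it by reduction to the already-established stable-unitary equivalence rather than by re-proving a McDuff--Segal theorem from scratch. The separable version of \cref{trgjkowergferfweferfw} yields $\cU^{s}(S(B))\simeq\Omega^{\infty-1}K_{\sepa}(S(B))\simeq\Omega^{\infty}K_{\sepa}(B)$, using $K_{\sepa}(S(B))\simeq\Omega K_{\sepa}(B)$ and Bott periodicity. One then constructs the natural exponential comparison $\cProj^{s}(B)\to\cU^{s}(S(B))$, sending a stable projection $p$ to the unitary loop $t\mapsto\exp(2\pi i t p)$ (this is a loop because $p$ is a projection, it carries block sum of orthogonal projections to the product of loops, and it is compatible with $\hat\ee_{h}$), and must prove that it becomes an equivalence after group completion of the source. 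This is precisely where the genuine content lies: one has to verify that the induced map $\pi_{0}(\cProj^{s}(B)^{\group})\to\pi_{0}\Omega^{\infty}K_{\sepa}(B)$ realizes the Grothendieck-group comparison \eqref{fdsvdsvfsvwevdfsvfdv} and that all higher homotopy groups agree, entirely within the present framework and only for unital $B$ (for non-unital $B$ the group completion and the fibre in \eqref{vervwerwrfwerfrefrewfefw} fail to commute). The classical topological input is thereby confined to the unitary calculation already carried out, and the fibre sequence \eqref{svfsfvfdvfvdsv} can be used to keep track of the basepoint components. Once this unital group completion is in hand, the fibre square \eqref{vervwerwrfwerfrefrewfefw} immediately gives that $\tilde\ee_{\sepa,h}$ is an equivalence, which is the assertion of \cref{wogkpwgerfwrefwf1}; the unital specialization \cref{erijgoerwgregffwfrefw} then also drops out.
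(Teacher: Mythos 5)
Your reductions are all sound and genuinely parallel the paper's own treatment of unitaries: the passage to unital $B$ through the fibre definition of $\rProj^{s}_{\sepa}$, the colocality of $\C$ for $W_{\sepa,\exa}$ (which works because $\cProj^{s}$ sends exact sequences to fibre sequences, by \cref{wejkgoowpfrefwfrefwef} and \cref{wtogjgergregw9}), and the identification of the comparison map with precomposition by the Bott counit $\beta_{\sepa,\exa,\C}^{*}\colon\Map(\C,B)\to\Map(\Omega^{2}_{\sepa,\exa}\C,B)$ are all correct. But the proof then stops exactly where the theorem actually lives: you reduce everything to the assertion that $\beta_{\sepa,\exa,\C}^{*}$ presents its target as the group completion of $\cProj^{s}(B)$ for unital $B$, declare this to be ``where the genuine content lies,'' and do not prove it. The exponential-map route you sketch does not close the gap: showing that $p\mapsto\exp(2\pi i t p)$ induces an equivalence $\cProj^{s}(B)^{\group}\to\cU^{s}(S(B))$ is a space-level Bott periodicity statement relating projections to loops of unitaries, of essentially the same depth as the proposition itself; it is \emph{not} a consequence of \cref{trgjkowergferfweferfw}, which only identifies $\cU^{s}$ with $\Omega^{\infty-1}K$ and says nothing about projections. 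Moreover, ``verifying that all higher homotopy groups agree'' is precisely the problem: without a concrete model of the group completion one cannot compute $\pi_{i}$ of $\cProj^{s}(B)^{\group}$ at all.

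The paper fills this gap by a different mechanism, and it is worth seeing why. First it proves that $\rProj^{s}$ is \emph{exact} (\cref{wiergo0wergferwferwf9}); this is where the hard work sits, and it requires the telescope model of group completion (\cref{weriojgogewrweg9}), the cofinality of $t_{A}=[e\otimes 1_{A}]$ for unital $A$ (\cref{giwjrgoergerfrewferwfwrefw}), and the abelianness of fundamental groups of the telescope (\cref{ewrgijwerogrwewfwe9}), so that group completion becomes a filtered colimit and hence commutes with the relevant cartesian squares. Exactness then lets $\rProj^{s}_{\sepa}$ factor through $\EE_{\sepa}$ via the universal property \eqref{vdfvqr3ferffafafds1}, and the Yoneda lemma applied to the point $a_{*}\in\bProj(\C)$ produces an explicit inverse transformation $a\colon\Omega^{\infty}K_{\sepa}\to\rProj^{s}_{\sepa}$; checking $b\circ a\simeq\id$ is formal Yoneda and $a\circ b\simeq\id$ reduces to a $\pi_{0}$ diagram chase. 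In other words, the universal property plus Yoneda is what replaces the McDuff--Segal-type recognition principle your plan would need. If you want to salvage your route, you would have to import the telescope model and exactness argument anyway—at which point you are better off running the paper's Yoneda argument, since producing an inverse map is otherwise out of reach.
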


\begin{proof}[Proof of \cref{wogkpwgerfwrefwf} assuming \cref{wogkpwgerfwrefwf1}]
We claim that $\rProj^{s}$ is $s$-finitary. Since $\Omega^{\infty}K$ is also $s$-finitary
we then obtain the equivalence in \cref{wogkpwgerfwrefwf} as a left Kan-extension of the equivalence 
in \cref{wogkpwgerfwrefwf1} along the inclusion $\nCalg_{\sepa}\to \nCalg$.

In order to see the claim  we note that $(-)^{\group}$ in \eqref{fdvdsvsfdvdfv3ffewerfreferwf}  is left adjoint and preserves all colimits. By \cref{wejkgoowpfrefwfrefwef}
the functor $\cProj^{s,\group} $ is also $s$-finitary.
Finally we use that 
the fibre of a filtered colimit of maps in $\CGroups(\Spc)$  is the filtered colimit of the fibres.
 \end{proof}

The following result prepares the proof of \cref{wogkpwgerfwrefwf1}.
 \begin{prop}\label{wiergo0wergferwferwf9}
 The functor $\rProj^{s} :\nCalg \to \CGroups(\Spc)$ is  homotopy invariant, stable and 
 exact.
  \end{prop}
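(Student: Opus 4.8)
The plan is to deduce all three properties from the behaviour of the $\CMon(\Spc)$-valued functor $\cProj^{s}$ recorded in \cref{wejkgoowpfrefwfrefwef}, combined with the interaction of unitalization and group completion. Throughout I use the split unitalization sequence \eqref{afoiahjfoiaf}, its augmentation $A^{u}\to \C$ together with the unit $\C\to A^{u}$ splitting it, and the fact that $0^{u}\cong \C$. Two preliminary points are essentially formal. First, $\rProj^{s}$ is reduced: since $0^{u}\cong \C$, the map $\cProj^{s}((0)^{u})^{\group}\to \cProj^{s}(\C)^{\group}$ defining \eqref{vfdvdfsvq3fdfvfvvfvdfsvsvfvs} is the identity, so its fibre vanishes. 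Second, unitalization preserves homotopy equivalences: a homotopy $A\to C([0,1])\otimes B$ induces $A^{u}\to (C([0,1])\otimes B)^{u}\to C([0,1])\otimes B^{u}$ through the homomorphism sending the adjoined unit to the constant function; hence $\cProj^{s}((-)^{u})$ is homotopy invariant by \cref{wejkgoowpfrefwfrefwef}, and both the group completion \eqref{fdvdsvsfdvdfv3ffewerfreferwf} and the formation of $\Fib$ over the constant functor $\cProj^{s}(\C)^{\group}$ preserve equivalences.

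For stability I would argue \emph{before} group-completing, which sidesteps all difficulties. The surjection clause of \cref{wejkgoowpfrefwfrefwef} applied to the cartesian square attached to \eqref{afoiahjfoiaf} shows that $\cProj^{s}(A)\simeq \Fib(\cProj^{s}(A^{u})\to \cProj^{s}(\C))$ in $\CMon(\Spc)$, naturally in $A$. For a left upper corner inclusion $\kappa_{A}\colon A\to K\otimes A$ the induced map of these two fibre sequences is $\cProj^{s}(\kappa_{A})$ on the fibres, which is an equivalence by stability of $\cProj^{s}$, and the identity on the bases $\cProj^{s}(\C)$. Comparison of fibre sequences forces the map on total objects $\cProj^{s}((\kappa_{A})^{u})$ to be an equivalence; applying group completion and passing to fibres over the constant $\cProj^{s}(\C)^{\group}$ then shows $\rProj^{s}(\kappa_{A})$ is an equivalence. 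Thus $\rProj^{s}$ is stable.

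Exactness is the substantial point. Given an exact sequence $0\to I\to B\to Q\to 0$, unitalization yields the exact sequence $0\to I\to B^{u}\to Q^{u}\to 0$ (whose kernel is $I$, not $I^{u}$) together with compatible augmentations to $\C$. Since $B^{u}\to Q^{u}$ is surjective, \cref{wtogjgergregw9} and the surjection clause of \cref{wejkgoowpfrefwfrefwef} produce a fibre sequence $\cProj^{s}(I)\to \cProj^{s}(B^{u})\to \cProj^{s}(Q^{u})$ in $\CMon(\Spc)$. Because the augmentations and their unit sections are compatible with $B^{u}\to Q^{u}$, after group completion the constant summand $\cProj^{s}(\C)^{\group}$ splits off of both the base and the total object, so that $\Fib(\rProj^{s}(B)\to \rProj^{s}(Q))\simeq \Fib\bigl(\cProj^{s}(B^{u})^{\group}\to \cProj^{s}(Q^{u})^{\group}\bigr)$. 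Exactness therefore reduces to showing that the natural map $\rProj^{s}(I)\to \Fib\bigl(\cProj^{s}(B^{u})^{\group}\to \cProj^{s}(Q^{u})^{\group}\bigr)$ is an equivalence.

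This last step is the main obstacle: group completion does not preserve fibre sequences in general (the warning following \eqref{vfdvdfsvq3fdfvfvvfvdfsvsvfvs}), and indeed the fibre of the \emph{un}-completed sequence is $\cProj^{s}(I)$, whose completion $\cProj^{s}(I)^{\group}$ differs from $\rProj^{s}(I)$ for non-unital $I$ (compare \cref{qvpojsdfdvfdvsv}); the whole content is that this failure precisely repairs the discrepancy. I expect to resolve it by exploiting the block-sum structure of $K\otimes(-)$: the infinitely many mutually orthogonal slots in $K$ make the translation action on the relevant monoids absorbing, which is exactly the situation in which a group-completion theorem keeps the Serre fibration $\Proj(K\otimes B^{u})\to \Proj(K\otimes Q^{u})$ of \cref{wtogjgergregw9} a fibre sequence after group completion, now with the corrected fibre $\rProj^{s}(I)$. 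As an alternative route one can invoke \cref{wtigowergferferferwfw}, whereby exactness of the homotopy invariant reduced functor $\rProj^{s}$ follows once it sends mapping-cone sequences to fibre sequences and inverts inclusions of ideals with contractible quotient; the contractible-quotient case is again where the same group-completion input is indispensable.
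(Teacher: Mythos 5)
Your stability argument contains a false step. From the map of cartesian squares induced by $\kappa_{A}$ you conclude that, because the induced maps on the fibre ($\cProj^{s}(A)\to\cProj^{s}(K\otimes A)$) and on the base ($\id_{\cProj^{s}(\C)}$) are equivalences, the map on total objects $\cProj^{s}((\kappa_{A})^{u})$ must be one. This inference runs the wrong way: a cartesian square determines its pullback corner from the other three, not $\cProj^{s}(A^{u})$ from fibre and base, and since $\cProj^{s}(\C)$ is far from connected, the fibre over the zero projection sees nothing over the other components. In fact the intermediate claim is false. Take $A=\C$, so that $(\kappa_{\C})^{u}\colon \C^{u}\cong\C\oplus\C\to K^{u}$ and the map in question is $\ell\Proj(K\oplus K)\to\ell\Proj(K\otimes K^{u})$, $(p,q)\mapsto p\otimes e+q\otimes(1_{K^{u}}-e)$. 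On $\pi_{0}$ a projection in $K\otimes K^{u}$ is classified by the rank $n$ of its image in the quotient $K$ together with its relative index $m\in\Z$ against that image, and for every $n\ge 1$ every $m\in\Z$ occurs (add orthogonal finite-rank projections, or subtract finite-rank subprojections lying in $K\otimes K$); but every class in the image of the map satisfies $m=\mathrm{rank}(p)-\mathrm{rank}(q)\ge -n$. So $\cProj^{s}((\kappa_{A})^{u})$ is not even $\pi_{0}$-surjective; stability of $\rProj^{s}$ is true only because group completion repairs precisely this defect, so it cannot be obtained ``before group-completing''.

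For exactness, your reduction to showing that $\rProj^{s}(I)\to\Fib\bigl(\cProj^{s}(B^{u})^{\group}\to\cProj^{s}(Q^{u})^{\group}\bigr)$ is an equivalence is correct, but you then stop at exactly the step that carries all the content of the proposition, deferring it to an unspecified group-completion theorem. The paper supplies this missing input by a route that never has to identify a ``corrected fibre'' at all. First it establishes a colimit model for group completion: if $\pi_{0}X$ has a cofinal element $t$ and the translation colimit has abelian fundamental groups (verified by the rotation/Eckmann--Hilton argument of \cref{ewrgijwerogrwewfwe9}), then $X\to X^{\group}$ is computed by the filtered colimit $X\to\colim(X\xrightarrow{+t}X\xrightarrow{+t}\cdots)$ (\cref{weriojgogewrweg9}, \cref{erijgowergfrefwrfwref}); for unital $B$ the element $t_{B}=[e\otimes 1_{B}]$ is cofinal (\cref{giwjrgoergerfrewferwfwrefw}), yielding the functor $\cF$ of \eqref{erg2regwregwefwrefw} with $\cF\simeq(-)^{\group}$ on $\Calg$. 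Second---and this is the idea you are missing---instead of the sequence $0\to I\to B^{u}\to Q^{u}\to 0$, whose kernel is not unital, the paper uses the cartesian square \eqref{vkjnwkelfnlvfevfdvsfvsvsfvs} with corners $I^{u},\C,B^{u},Q^{u}$, all unital and with unital maps. Applying $\cProj^{s}$ keeps it cartesian by \cref{wejkgoowpfrefwfrefwef}; applying $\cF$ is a filtered colimit and therefore also keeps it cartesian; since $\CGroups(\Spc)\to\Spc$ detects limits, the group-completed square is cartesian, and taking fibres of the augmentations over $\cProj^{s,\group}(\C)$ gives exactly the fibre sequence $\rProj^{s}(I)\to\rProj^{s}(B)\to\rProj^{s}(Q)$. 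Your absorbing-translation heuristic is indeed the reason $t_{B}$ is cofinal, but turning it into a proof requires both the colimit model of $(-)^{\group}$ and the all-unital cartesian square; without the latter, the non-unital ideal $I$ blocks any direct application of such a model to the fibre term.
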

  \begin{proof}
  Homotopy invariance and stability are obvious from the definition. Exactness is much deeper. It can not be concluded simply from the exactness properties of $\cProj^{s}$ stated in \cref{wejkgoowpfrefwfrefwef} since
the group completion functor does not preserve  fibre sequences in general.
The basic insight in our special case is that for unital algebras $A$  the  group completion of $\cProj^{s}(A)$ can be expressed by a specific filtered colimit.

 We first recall  some generalities on group completions following \cite{niko}, \cite{rwo}.
We consider a commutative monoid $X$
  in $\CMon(\Spc)$. For an element $s$ in $X$ we can form  
 \[X_{s}:=\colim (X\xrightarrow{+s}X\xrightarrow{+s}X\xrightarrow{+s}\dots )\]
in $X$-modules. For any 
  finite  ordered set  $\{s_{1},\dots,s_{n}\}$  of elements in  $X$ we define inductively $X$-modules
\[X_{\{s_{1},\dots,s_{n}\}}:=(X_{\{s_{1},\dots,s_{n-1}\}})_{s_{n}}\ .\]
We choose a 
 well-ordering on $\pi_{0}(X)$ and a representative $s$ in $X$ for any component. We then define the space 
  \begin{equation}\label{vafaewqdcadscqewfwefewdewdewdq}X_{\infty}:=\colim_{S\subseteq\pi_{0}(X)} X_{S} .
\end{equation}

\begin{prop}[{\cite[Prop. 6]{niko}}]\label{weriojgogewrweg9}
If the fundamental group of every component of $X_{\infty}$  is abelian, then
$X\to X_{\infty}$ is equivalent to the underlying map of  $X\to X^{\group}$.
\end{prop}

An element 
   $t$ in   $\pi_{0}(X)$ is called 
{\em cofinal } if for every $s$ in $\pi_{0}(X)$ there exists $s'$ in $\pi_{0}(X)$  and $n$ in $\nat $ such that $s+s'=nt$.
 One easily checks that if $t$ is cofinal, then 
the canonical map $X_{t}\to X_{\infty}$ is an equivalence.
In particular if the fundamental groups of all components of $X_{t}$ are abelian, then
 $X\to X_{t}$ is equivalent to the underlying map of the group completion $X\to X^{\group}$.
 
The following results enable us to apply \cref{weriojgogewrweg9} to our problem.
For $C^{*}$-algebras  $A$ and $B$  we have the commutative monoid
 $\Map_{L_{K}\nCalg_{h}}(A,B)$ and can form the space 
  $\Map_{L_{K}\nCalg_{h}}(A,B)_{\infty}$ as in \eqref{vafaewqdcadscqewfwefewdewdewdq}
  \begin{lem}\label{ewrgijwerogrwewfwe9}
If $\pi_{0}\Map_{L_{K}\nCalg_{h}}(A,B)$ contains a cofinal element, then
the fundamental groups of the components of $\Map_{L_{K}\nCalg_{h}}(A,B)_{\infty}$ are abelian.
\end{lem}
\begin{proof}
In the following we use that $\ell\underline{\Hom} (A,K\otimes B)\simeq \Map_{L_{K}\nCalg_{h}}(A,B)$.
 Let $[t]$ be a cofinal element in $\pi_{0}\Map_{L_{K}\nCalg_{h}}(A,B)$ represented by a map 
$t$ in $\underline{\Hom} (A,K\otimes B)$.
Then \[\Map_{L_{K}\nCalg_{h}}(A,B)_{\infty}\simeq \colim (\Map_{L_{K}\nCalg_{h}}(A,B) \xrightarrow{[t]+-}\Map_{L_{K}\nCalg_{h}}(A,B)\xrightarrow{[t]+-} \dots )\ .\]
 We consider a component $x$ in $\pi_{0}\Map_{L_{K}\nCalg_{h}}(A,B)_{\infty}$. Then
there exists $k$ in $\nat$ and $f$ in  the topological space $\underline{\Hom} (A,K\otimes B)$   contributing to $\Map_{L_{K}\nCalg_{h}}(A,B)$
 in the $k$-th stage of the $\nat$-indexed  diagram above which represents  $x$.

 We now  consider elements   $[\gamma]$ and $[\sigma]$ in $\pi_{1}(\Map_{L_{K}\nCalg_{h}}(A,B)_{\infty}, x)$.  We must show that $[\gamma]\circ[\sigma]= [\sigma]\circ [\gamma]$.  After going further in the diagram we can assume that $[\gamma]$ and $[\sigma]$ are represented by loops $\gamma$, $\sigma$ at $f$ in  $\underline{\Hom} (A,K\otimes B)$. 
  
  By cofinality of $[t]$ there exists a map $f'$ and an integer $n$ such that
  $[f]+[f']=n[t]$ in $\pi_{0}\Map_{L_{K}\nCalg_{h}}(A,B)$.
  The $+$-sign in the following denotes choices of block sums.
We know that   $\gamma+nt$ and $\sigma+nt$ viewed as points in 
 $\underline{\Hom} (A,K\otimes B)$  contributing to  $\Map_{L_{K}\nCalg_{h}}(A,B)$ in the $(k+n)$-th stage of the diagram  also represent   $[\gamma]$ and $[\sigma]$.
We now have homotopies
$\gamma+nt\sim \gamma+f+f'$ and $\sigma+nt\sim \sigma+f+f'$.
 It thus suffices to show that $(\gamma+f)\sharp (\sigma+f)\sim (\sigma+f)\sharp (\gamma+f)$, where $\sharp$ denotes concatenation.  

Conjugating $\sigma+f$ with a two-dimensional rotation of blocks we get a homotopy between 
$(\gamma+f)\sharp (\sigma+f)$ and $ (\gamma+f)\sharp (f+\sigma)$.  Now
$  (\gamma+f)\sharp (f+\sigma)$ is homotopic to 
$\gamma+\sigma$.
Using the commutativity of $+$ up to homotopy we get a homotopy 
$ \gamma+\sigma \sim \sigma+\gamma$. 
By  reversing the first part  we finally get  a homotopy from $ \gamma+\sigma$ to   $(\sigma+f)\sharp (\gamma+f)$.
 \end{proof}

\begin{kor}\label{erijgowergfrefwrfwref}If $\pi_{0}\Map_{L_{K}\nCalg_{h}}(A,B)$ contains a cofinal element, then
the map of spaces \[\Map_{L_{K}\nCalg_{h}}(A,B) \to \Map_{L_{K}\nCalg_{h}}(A,B)_{\infty}\] is equivalent to the underlying  map of the group completion \[\Map_{L_{K}\nCalg_{h}}(A,B) \to \Map_{L_{K}\nCalg_{h}}(A,B)^{\group}\ .\]
\end{kor}

Let now $A$ be a unital $C^{*}$-algebra with unit $1_{A}$. 
In the following we show that we can apply  \cref{erijgowergfrefwrfwref} to $\cProj^{s}(A)\simeq \Map_{L_{K}\nCalg_{h}}(\C,A)$ by exhibiting a cofinal component.
Let   $e$ be a minimal projection in $K$.
 Then we define  $t_{A}:=[e\otimes 1_{A} ]$ in $\pi_{0} \cProj^{s}(A )$.
The following lemma is well-known.
\begin{lem}\label{giwjrgoergerfrewferwfwrefw}
 The  element $t_{A}$  is cofinal.
\end{lem}
\begin{proof}\mbox{}
We consider the separable Hilbert space  $H:=L^{2}(\nat)$ and let 
 $K:=K(H)$. For $n$ in $\nat$ we let $e_{n}$ in $K(H)$  denote the   projection onto the $n$th basis vector of $H$. We further consider the projection $P_{n}=\sum_{i=0}^{n}e_{n} $.
 
 We consider a component  $[p]$ in $\pi_{0}\cProj^{s}(A )$ with $p$ in $\Proj(K\otimes A)$. Then 
 there exists $n$ in $\nat$ such that $\|p- ( P_{n} \otimes 1_{A} )p( P_{n}\otimes 1_{A}  )\|<1/2$.
Using function calculus we get a homotopy between $p$  and a projection  $p'$ with $p'= (P_{n}\otimes 1_{A})p'(P_{n} \otimes 1_{A}  )$. We then  have $[p]=[p']$ in $\pi_{0}\cProj^{s}(A )$
 and with 
  $q':= (  P_{n} \otimes 1_{A} )-p'$
 we get  $[p']+[q']=[P_{n} \otimes 1_{A}  ]=nt_{A}$.
 \end{proof}

 To any unital $C^{*}$-algebra $A$ we can functorially (for unital morphisms) associate the  $\nat$-indexed diagram
\[\hat \cF(A):\quad \cProj^{s}(A)\xrightarrow{-+t_{A}}\cProj^{s}(A)\xrightarrow{-+t_{A}}\cProj^{s}(A)\xrightarrow{-+t_{A}}\cProj^{s}(A)\xrightarrow{-+t_{A}}\dots\] of spaces.
We furthermore define the functor \begin{equation}\label{erg2regwregwefwrefw}  \cF:=\colim_{\nat}\hat \cF:\Calg\to \Spc\ .\end{equation} 
We conclude that the natural transformation
$\cProj^{s}(-)\to \cF(-)$ of $\Spc$-valued functors  is equivalent to the transformation
$\cProj^{s}\to \cProj^{s,\group}$ of $\CMon(\Spc)$-valued functors after forgetting the commutative monoid structure.

We can now finally show the asserted exactness of the functor $\rProj^{s}$.
We must show that this functor sends 
  an exact sequence \[0\to A\xrightarrow{i} B\xrightarrow{p} C\to 0\] of $C^{*}$-algebras 
  to a fibre sequence in $\CGroups(\Spc)$.
 We first  
  form the square \begin{equation}\label{vkjnwkelfnlvfevfdvsfvsvsfvs}
\xymatrix{ A^{u} \ar[r]^{\pi}\ar[d]^{i^{u}} & \C \ar[d]^{j}\\ B^{u} \ar[r]^{p^{u}}& C^{u} } \end{equation}
in $\Calg_{/\C}$,  
 where $\pi$ is induced by  the canonical projection of the unitalization sequence  \eqref{afoiahjfoiaf}, the homomorphism $j:\C\to C^{u}$ is induced by the identity of $C^{u}$, and we do not write the  structure maps to $\C$
 given by the projections of the unitalization sequences of $A$, $B$ and $C$, respectively, and the identity of $\C$. This square is cartesian and $p^{u}$ is surjective. Applying the functor $\cProj^{s}$ we get a   diagram in $ \Spc_{/\cProj^{s}(\C)}$, and by  \cref{wejkgoowpfrefwfrefwef}  a cartesian square 
  \begin{equation}\label{sdgsgwrgweergwergwr}  \xymatrix{\cProj^{s}(A^{u})\ar[rr]^{\cProj^{j}(\pi)}\ar[d]^{\cProj^{s}(i^{u})}&&\cProj^{s}(\C)\ar[d]^{\cProj^{s}(j)}\\\cProj^{s}(B^{u})\ar[rr]^{\cProj^{s}(p^{u})}&&\cProj^{s}(C^{u})}\ .\end{equation} 
%

We now apply the functor $\cF$ from \eqref{erg2regwregwefwrefw} to the square \eqref{vkjnwkelfnlvfevfdvsfvsvsfvs}. This amounts to forming a filtered colimit of a diagram of squares of the form \eqref{sdgsgwrgweergwergwr}.  
 Since a filtered colimit of cartesian squares in $ \Spc$ is again a cartesian square and since the forgetful functor $\CGroups(\Spc)\to \Spc$ detects limits we can conclude that
   \[\xymatrix{\cProj^{s,\group}(A^{u})\ar[rr]^{\cProj^{j}(\pi)}\ar[d]^{\cProj^{s,\group}(i^{u})}&&\cProj^{s,\group}(\C)\ar[d]^{\cProj^{s,\group}(j)}\\\cProj^{s,\group}(B^{u})\ar[rr]^{\cProj^{s,\group}(p^{u})}&&\cProj^{s,\group}(C^{u})}\]
   is a cartesian square in $\CGroups(\Spc)$. Together with its non-written structure maps 
   it is also a diagram in 
  $\CGroups(\Spc)_{/\cProj^{s,\group}(\C)}$. 
   We finally take the fibre of the structure maps to $\cProj^{s,\group}(\C)$ and get 
   the desired cartesian square (or fibre sequence) 
    \[\xymatrix{\rProj^{s}(A)\ar[rr] \ar[d]^{\rProj^{s}(i)}&&0\ar[d] \\\rProj^{s}(B)\ar[rr]^{\rProj^{s}(p)}&&\rProj^{s }(C)}\ .\]

This finishes the proof of \cref{wiergo0wergferwferwf9}.
  \end{proof}

\begin{proof}[Proof of  \cref{wogkpwgerfwrefwf1}]
In order to define an inverse transformation $\Omega^{\infty}K\to \rProj^{s}$ we plan to apply the Yoneda lemma for $\EE_{\sepa}$.  We therefore need a factorization  of $ \rProj^{s}$ through a functor $\bProj^{s}$ defined on $\EE_{\sepa}$. Since $\CGroups(\Spc)$ is left-exact and additive, the universal property 
\eqref{vdfvqr3ferffafafds1} of $\ee_{\sepa}$   and \cref{wiergo0wergferwferwf9} together provide the dotted arrow 
in 
\begin{equation}\label{vfdsvdfvhosvsdfvfsdv}\xymatrix{\nCalg_{\sepa}\ar[rr]^{\rProj_{\sepa}^{s}}\ar[dr]^{\ee_{\sepa}}&&\CGroups(\Spc)\\&\EE_{\sepa}\ar@{..>}[ur]_{\bProj}&}\ .
\end{equation}  Furthermore, using \cref{wtgijwoergferwfrefw} the pull-back along $\ee_{\sepa}$  induces an equivalence   \begin{equation}\label{vwercerfrefdeqe423r44}\ee_{\sepa}^{*}:\Nat(\bProj(-) , \Map_{\EE_{\sepa}}(\C,-) ) \stackrel{\simeq}{\to}\Nat(\rProj^{s}_{\sepa}(-)  , \Omega^{\infty}K_{ \sepa}(-))\ .
\end{equation}

We define a
point $a_{*}$ in $\bProj(\C) $  as the image of $\id_{\C}$ under
\begin{equation}\label{fvefvwevvwefrwfrefefwr}\Map_{L_{K}\nCalg_{\sepa,h}}( \C , \C )\simeq \cProj^{s}_{\sepa}(\C)\to \cProj^{s}_{\sepa}(\C)^{\group}\xrightarrow{i_{\C}, \eqref{qewfiuhquiewfqedqwdewd}} \rProj_{\sepa}^{s}(\C)\stackrel{\eqref{vfdsvdfvhosvsdfvfsdv}}{\simeq} \bProj(\C) \ .
\end{equation} 
Via the Yoneda Lemma this point determines a natural transformation 
\[\tilde a:  \Map_{\EE_{\sepa}}(\C,-)\to \bProj(-)\]  
of functors  from $\EE_{\sepa}$ to $\CGroups(\Spc)$ characterized by $\tilde a_{\C}(\ee_{\sepa}(\id_{\C}))\simeq a_{*}$.
 Its pull-back along $ \ee_{\sepa}$ is a natural transformation 
\[a:=\ee^{*}_{\sepa}\tilde a:\Omega^{\infty}K_{\sepa}\to \rProj_{\sepa}\]
of functors from $\nCalg_{\sepa}$ to $\CGroups(\Spc)$.
 We have already  a natural transformation \[b:=\tilde \ee_{\sepa,h }:\rProj_{\sepa} \to \Omega^{\infty}K_{\sepa}\]
defined by means of the diagram \eqref{vervwerwrfwerfrefrewfefw}. In view of \eqref{vwercerfrefdeqe423r44}
 there is an essentially unique natural transformatiom \[\tilde b:\bProj\to  \Map_{\EE_{\sepa}}( \C,-)\] 
such that   $\ee_{\sepa}^{*}\tilde b\simeq b$.
 
 The following proposition implies    \cref{wogkpwgerfwrefwf1} asserting that $b$ is an equivalence.
   \begin{prop}\label{wreigowegregwre9} The natural transformations 
$a$ and $b$ are mutually inverse to each other.
\end{prop}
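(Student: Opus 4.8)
The plan is to reduce the statement to a computation inside the functor $\infty$-category over $\EE_{\sepa}$ and then to dispatch the two composites separately. First I would observe that, applying the universal property \eqref{vdfvqr3ferffafafds1} of $\ee_{\sepa}$ to the left-exact additive target $\CGroups(\Spc)$, the restriction $\ee_{\sepa}^{*}\colon\Fun^{\lex}(\EE_{\sepa},\CGroups(\Spc))\to \Fun^{h,s,ex+Sch}(\nCalg_{\sepa},\CGroups(\Spc))$ is an equivalence of $\infty$-categories, and under it $\tilde a\mapsto a$, $\tilde b\mapsto b$, $\bProj\mapsto\rProj^{s}_{\sepa}$, $\Map_{\EE_{\sepa}}(\C,-)\mapsto \Omega^{\infty}K_{\sepa}$. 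Since an equivalence of $\infty$-categories preserves and reflects identities of composites, it suffices to prove that $\tilde a$ and $\tilde b$ are mutually inverse in $\Fun^{\lex}(\EE_{\sepa},\CGroups(\Spc))$.

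For the composite $\tilde b\circ\tilde a$, which is an endomorphism of the corepresentable functor $\Map_{\EE_{\sepa}}(\C,-)$, I would invoke the Yoneda lemma for the stable $\infty$-category $\EE_{\sepa}$: the endomorphism is determined by the image of the universal element $\ee_{\sepa}(\id_{\C})\in\Map_{\EE_{\sepa}}(\C,\C)$, and $\tilde b\circ\tilde a=\id$ is equivalent to $\tilde b_{\C}(a_{*})\simeq\ee_{\sepa}(\id_{\C})$, where $a_{*}=\tilde a_{\C}(\ee_{\sepa}(\id_{\C}))$ by \eqref{fvefvwevvwefrwfrefefwr}. This last equality is an unwinding of definitions: $\tilde b$ restricts along $\ee_{\sepa}$ to $b=\tilde\ee_{\sepa,h}$, and the defining diagram \eqref{vervwerwrfwerfrefrewfefw} specializes at the \emph{unital} algebra $\C$, via the equivalence $i_{\C}$ of \eqref{qewfiuhquiewfqedqwdewd} guaranteed by \cref{erijgoerwgregffwfrefw}, to the group completion $\hat\ee_{\sepa,h}$ of $\ee_{h}$; and by \eqref{vsdfhuiwhiuvhjfuivsdfvsfdvs} the map $\ee_{h,\C}$ carries the identity projection $\id_{\C}\in\cProj^{s}(\C)$ to $\ee_{\sepa}(\id_{\C})$. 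Tracing $a_{*}$, the group-completion image of $\id_{\C}$ transported through $i_{\C}$, backwards through these identifications then yields $\tilde b_{\C}(a_{*})\simeq\ee_{\sepa}(\id_{\C})$, so $\tilde b\circ\tilde a\simeq\id$.

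The composite $\tilde a\circ\tilde b$ is the main obstacle, since $\bProj$ is not corepresentable and Yoneda does not apply directly; from the previous paragraph we only learn that $\tilde a$ is a split monomorphism and that $e:=\tilde a\circ\tilde b$ is an idempotent, and the real work is to upgrade this to $e\simeq\id$. My plan is to verify $\tilde a\circ\tilde b\simeq\id_{\bProj}$ objectwise on homotopy groups after transporting back to $\nCalg_{\sepa}$: fixing $A$, the equivalence $b_{A}\circ a_{A}\simeq\id$ already makes $b_{A}$ surjective and $a_{A}$ injective on every $\pi_{n}$, so it remains to prove that $a_{A}$ is also surjective (equivalently $b_{A}$ injective). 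On $\pi_{0}$ this is the classical identification $\pi_{0}\rProj^{s}(A)\cong K_{0}(A)$: using that $t_{A}=[e\otimes 1_{A}]$ is cofinal (\cref{giwjrgoergerfrewferwfwrefw}) together with the colimit model of the group completion from \cref{erijgowergfrefwrfwref} and \eqref{vafaewqdcadscqewfwefewdewdewdq}, every class in the target is a formal difference of classes of stable projections, each of which lies in the image of $a_{A}$; for $n\ge 1$ one argues identically, computing the homotopy of the filtered colimit \eqref{vafaewqdcadscqewfwefewdewdewdq} from that of $\cProj^{s}$.

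Once $a_{A}$ and $b_{A}$ are isomorphisms on all $\pi_{n}$ and satisfy $b_{A}\circ a_{A}\simeq\id$, each is a homotopy equivalence with the other as inverse, whence $a_{A}\circ b_{A}\simeq\id$ as well; naturality in $A$ and the reduction of the first paragraph then promote this to $\tilde a\circ\tilde b\simeq\id_{\bProj}$, finishing the proof. I expect the homotopy-group surjectivity of the third paragraph, namely reconciling the abstract fibre-of-group-completion definition of $\rProj^{s}$ with the concrete cofinal colimit model, to be precisely the surprisingly long technical verification the section warns about, and it is the step I would spend the most care on.
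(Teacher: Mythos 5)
Your handling of the composite $\tilde b\circ\tilde a$ (Yoneda in $\EE_{\sepa}$, reduction to $\tilde b_{\C}(a_{*})\simeq \ee_{\sepa}(\id_{\C})$, verified by unwinding \eqref{vervwerwrfwerfrefrewfefw} and \eqref{fvefvwevvwefrwfrefefwr}) is exactly the paper's proof of \cref{wekjgowopergwfwrefrefewrf}, so that half is fine. The other composite is where your proposal has a genuine gap: the assertion that every class of a stable projection ``lies in the image of $a_{A}$'' is not an observation but is precisely the nontrivial content of the paper's argument. First, the claim does not literally typecheck: for $p:\C\to A^{u}\otimes K$ the class $[p]$ lives in $\pi_{0}\cProj^{s}(A^{u})^{\group}$, not in $\pi_{0}\rProj^{s}(A)$, so the unitalization bookkeeping cannot be bypassed. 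Second, to show that $a$ hits such classes one must actually compute $a$ on $\ee_{\sepa}(p)$: naturality/Yoneda gives $a_{A^{u}}(\ee_{\sepa}(p))\simeq \pi_{0}\rProj^{s}_{\sepa}(p)(a_{*})$, and one must then chase $a_{*}$ through the unitalization maps (using the square $i_{A^{u}\otimes K}\circ p=p^{u}\circ i_{\C}$) to identify the result with the canonical class of $p$; the reduction to such generators uses the universal property of the \emph{algebraic} group completion $\pi_{0}\cProj^{s}\to\pi_{0}\cProj^{s,\group}$. This chase is the paper's ``surprisingly long'' verification. Your diagnosis of the difficulty is also off target: the cofinal-element/telescope model of \cref{erijgowergfrefwrfwref} and \eqref{vafaewqdcadscqewfwefewdewdewdq} is used in the proof of exactness of $\rProj^{s}$ (\cref{wiergo0wergferwferwf9}) and plays no role in \cref{wreigowegregwre9}; and appealing to ``the classical identification $\pi_{0}\rProj^{s}(A)\cong K_{0}(A)$'' risks circularity, since here $K_{0}(A)=\pi_{0}\EE(\C,A)$ is the homotopy-theoretic object and establishing that identification is exactly what is being proved.

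Two further inferences need repair. For $\pi_{n}$ with $n\ge 1$ the paper simply replaces $A$ by $S^{n}(A)$ and uses that both functors are left-exact and group-valued, so that $\pi_{n}F(A)\cong\pi_{0}F(S^{n}(A))$; your alternative of computing homotopy groups of the telescope would in addition require the identification above at the space level, i.e.\ a natural equivalence between $a\circ \ee_{h}$ and the canonical transformation $\cProj^{s}_{\sepa}\to\rProj^{s}_{\sepa}$, which you do not establish (it can be obtained by Yoneda in $L_{K}\nCalg_{\sepa,h}$, but it is an extra lemma, not ``arguing identically''). Finally, the step ``naturality in $A$ \ldots{} promote this to $\tilde a\circ\tilde b\simeq\id_{\bProj}$'' is not valid as stated: a transformation that is objectwise homotopic to the identity need not be equivalent to the identity in the functor category. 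The correct route uses only ingredients you already have: since $a\circ b$ is a natural transformation that is objectwise an equivalence, it is invertible in $\Fun(\nCalg_{\sepa},\CGroups(\Spc))$; combining this with the \emph{natural} equivalence $b\circ a\simeq\id$ from your second paragraph, the two-sided-inverse argument yields $a\circ b\simeq\id$ as natural transformations, and the pointwise statement $a_{A}\circ b_{A}\simeq\id$ is never needed separately.
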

\begin{proof}
The assertion follows from the following two lemmas.
\begin{lem}\label{wekjgowopergwfwrefrefewrf}
We have $b\circ a\simeq \id_{\Omega^{\infty}K_{\sepa}}$.
\end{lem}
\begin{proof}
It suffices to show \[\tilde b\circ \tilde a:  \Map_{\EE_{\sepa}}( \C ,-)\to   \Map_{\EE_{\sepa}}( \C ,-)\] is  equivalent to identity. Via the Yoneda Lemma this map is determined by 
the point $\tilde b_{ \C}(a_{*})$ in $   \Map_{\EE_{\sepa}}( \C,\C)$.
We therefore must show that $ \tilde b_{ \C}(a_{*})\simeq  \ee_{\sepa}(\id_{\C})$.
 In order to  verify this equivalence we consider the diagram
\[\xymatrix{\cProj^{s}_{\sepa}(\C)\ar[r]\ar@{..>}[drr]^{\cProj^{s}(j)}_{\id_{\C}\mapsto \tilde a_{*}}&\cProj^{s}_{\sepa}(\C)^{\group}\ar[r]^{i_{\C}} & \bProj^{s}_{\sepa}(\C) \ar[d]\ar[r]^-{ \tilde b_{\C}}&    \Map_{\EE_{\sepa}}( \C , \C )\ar[d]^{j_{*}}\\&&\cProj^{s}_{\sepa}(\C^{u})^{\group} \ar[r]^-{\hat \ee_{\sepa,h,\C}}\ar[d]& \Map_{\EE_{\sepa}}( \C , \C^{u})\ar[d]\\&& \cProj_{\sepa}(\C)^{\group} \ar[r]^-{\hat \ee_{\sepa, h,\C}}& \Map_{\EE_{\sepa}}( \C , \C )}\]
obtained by merging \eqref{vervwerwrfwerfrefrewfefw} with \eqref{fvefvwevvwefrwfrefefwr},
where  $j:\C\to \C^{u}$ is the inclusion.

The 
  two upper left horizontal arrows  send $ \id_{\C} $ to $a_{*}$.
  We let $\tilde a_{*}$ denote its image  in $ \cProj^{s}_{\sepa}(\C^{u})^{\group}$.
Since  $\hat \ee_{\sepa,h,\C}(\tilde a_{*}) \simeq j_{*}(\ee_{\sepa}(\id_{\C}))$ and $j_{*}$ is a monomorphism  we can conclude that $\tilde b_{\C}(a_{*})\simeq \ee_{\sepa}(\id_{\C})$.
   \end{proof}

\begin{lem}
We have $a\circ b\simeq \id_{\rProj^{s}_{\sepa}}$
\end{lem}
\begin{proof} 
 It suffices to show for every $A$ in $\nCalg_{\sepa}$ that \[ \pi_{0} \rProj_{\sepa}(A)\xrightarrow{b_{A}}  \pi_{0}   \Omega^{\infty }K_{\sepa}(A) \xrightarrow{a_{A}}  \pi_{0}\rProj_{\sepa}(A)\] is an  isomorphism.
In fact, in order to deduce the isomorphism for $\pi_{i}$ with $i>0$ we apply this result
for $A$ replaced by $S^{i}(A)$ and use the left-exactness of the functors and the fact that they take values in groups.

For the calculation in $\pi_{0}$ introduce the following simplified notation (borrowed from \cite[5.3]{blackadar}):
$$K_{0}:=\pi_{0}\Omega^{\infty}K_{\sepa}\ , \quad \tilde K_{0}:=\pi_{0}\rProj^{s}_{\sepa}$$
and
$$V_{00}:=\pi_{0}\cProj^{s}_{\sepa}\ , \quad 
\tilde K_{00}:=\pi_{0} \cProj_{\sepa}^{s,\group}\ .$$
We form the commutative diagram 
 \[\xymatrix{&\tilde K_{0}(A)\ar[r]^-{b_{A}}\ar[d]&\ar[d] K_{0}(A)\ar[r]^-{a_{A}}&\tilde K_{0}(A)\ar[d]\\ V_{00}(A^{u})\ar[r]\ar[d]^{(3)} & \tilde K_{00}(A^{u}) \ar[d] \ar[r]^-{\hat e_{h,\sepa}}& K_{0}(A^{u})\ar[r]^-{ a_{A^{u}}} &\tilde K_{0}(A^{u})\ar[d]^{(2)}&\\V_{00}((A^{u}\otimes K)^{u})\ar[r]^{(1)}& \tilde K_{00}((A^{u}\otimes K)^{u})\ar@{=}[rr]&
&\tilde K_{00}((A^{u}\otimes K)^{u})&  }\]
The vertical maps are 
 inclusion of summands and
all cells except the lower right commute obviously.
We will show that this cell also commutes.
We can then conclude that
$a_{A}\circ b_{A}=\id_{\tilde K_{0}(A)}$.

The transformation 
  $V_{00}\to \tilde K_{00}$ is the algebraic group completion.
  In view of its universal property it
   suffices to show that the composition of the two  lower cells commutes.

 We consider a point    $[p]$ in $V_{00}(A^{u})$  
 given by map $p: \C\to A^{u}\otimes K$. 
 We first calculate its image $k_{rd}$ under the right-down composition. The horizontal map sends it to the point in
 $\tilde K_{0}(A^{u})$  given by  
 $a_{A^{u}}(\ee_{\sepa}(p))\simeq \tilde K_{0}(p)(a_{*})$.  
 The element $k_{rd}$ is the image of $\tilde K_{0}(p)(a_{*})$   in $\tilde K_{00}((A^{u}\otimes K)^{u})$ under the   map $(2)$.
As seen in the proof of \cref{wekjgowopergwfwrefrefewrf} the map 
 $\tilde K_{0}(\C)\to \tilde K_{00}(\C^{u})$ sends $a_{*}$ to image of $i_{\C}:\C\to \C^{u}$ in $V_{00}(\C^{u})$ under group completion map
$V_{00}(\C^{u})\to  \tilde K_{00}(\C^{u})$.
The image $k_{rd}$ of $\tilde K_{0}(p)(a_{*})$ in $\tilde K_{00}((A^{u}\otimes K)^{u})$ is then the image of 
$p^{u}\circ i_{\C} :\C\to \C^{u}\to (A^{u}\otimes K)^{u}$ in $V_{00}((A^{u}\otimes K)^{u})$ under the  map $(1)$.

We now calculate the image $k_{dr}$ of $p$ under the down-right composition.
The  image of $p$   under $(3)$   
  is $i_{A^{u}\otimes K}\circ p$ in $V_{00}((A^{u}\otimes K)^{u})$. We now use that
 the  following square commutes:
\[\xymatrix{\C\ar[r]^{i_{\C}}\ar[d]^{p} &\C^{u} \ar[d]^{p^{u}} \\ A^{u}\otimes K \ar[r]^{i_{A^{u}\otimes K}} &(A^{u}\otimes K)^{u} } \ .\]
Consequently  we have $i_{A^{u}\otimes K}\circ p\simeq p^{u}\circ i_{\C}$ in $V_{00}((A^{u}\otimes K)^{u})$.
This implies that $k_{rd}\simeq k_{dr}$.
\end{proof}
This finishes the proof of \cref{wreigowegregwre9}.
\end{proof}
We finally have completed the proof of \cref{wogkpwgerfwrefwf1}.
\end{proof}

%
%
%
%
%
%
%
%
%
We finish this section with the proofs of the two technical results \cref{wtogjgergregw9} and
\cref{wtogjgergregw91}.

 \begin{proof}[Proof of \cref{wtogjgergregw9}]
  The  statement of \cref{wtogjgergregw9} is surely a known fact in $C^{*}$-algebra theory. But for the sake of completeness we will provide an argument.
 
 We start with two facts taken from \cite[Sec. 4.3]{blackadar}.
 We consider a $C^{*}$-algebra $B$  and a projection $p$ in $B$. Note that projections are always assumed to be selfadjoint.
Assume that $b$ is an invertible element in the multiplier algebra $M(B)$ and $b=ur$ its polar decomposition.

\begin{lem}\label{rihugoewrgrwefrf9}
If $bpb^{-1}$ is a projection, then $bpb^{-1}=upu^{*}$.
\end{lem}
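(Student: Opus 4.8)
The plan is to reduce the assertion to the single commutation relation $pr = rp$, and then to extract that relation from the hypothesis by a short functional-calculus argument. First I would record that $q := bpb^{-1}$ is \emph{automatically} an idempotent, since $q^2 = bp^2b^{-1} = bpb^{-1} = q$. Hence the content of the hypothesis ``$bpb^{-1}$ is a projection'' is exactly the extra condition of selfadjointness, $q = q^*$. Writing out $q^*$ using $p^* = p$ gives
\[
bpb^{-1} = (bpb^{-1})^{*} = (b^{*})^{-1} p\, b^{*}\ .
\]

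Next I would multiply this identity on the left by $b^{*}$ and on the right by $b$. The left-hand side becomes $b^{*}b\,p\,(b^{-1}b) = b^{*}b\,p$, while the right-hand side becomes $\bigl(b^{*}(b^{*})^{-1}\bigr) p\, (b^{*}b) = p\,b^{*}b$. Thus $b^{*}b\,p = p\,b^{*}b$, i.e.\ $p$ commutes with the positive element $r^{2} = b^{*}b$ of $M(B)$. The key conceptual step is then to upgrade this to commutation with $r = |b| = (b^{*}b)^{1/2}$ itself: since $r = (r^{2})^{1/2}$ lies in the $C^{*}$-subalgebra of $M(B)$ generated by $r^{2}$ (the square-root function is a uniform limit of polynomials on the spectrum of $r^{2}$; here invertibility of $b$ even keeps the spectrum bounded away from $0$), anything commuting with $r^{2}$ commutes with $r$. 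Hence $pr = rp$.

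Finally I would substitute the polar decomposition $b = ur$, so $b^{-1} = r^{-1}u^{*}$ with $u$ unitary, and compute
\[
bpb^{-1} = u\,r\,p\,r^{-1}\,u^{*} = u\,(r p r^{-1})\,u^{*} = u\,p\,u^{*}\ ,
\]
where the last equality uses $rpr^{-1} = p$, which is just the commutation $pr = rp$ rearranged. This is precisely the claimed equality.

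The computation is entirely routine, so there is no real obstacle; the only step requiring care is the passage from ``$p$ commutes with $r^{2}$'' to ``$p$ commutes with $r$'', which is where the continuous functional calculus (and implicitly the invertibility of $b$, guaranteeing a well-behaved square root and a genuine unitary $u$) enters. Everything takes place in the multiplier algebra $M(B)$, so I would only need to be mindful that $r$, $u$, and their inverses are elements of $M(B)$ rather than of $B$, which poses no difficulty.
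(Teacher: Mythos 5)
Your proof is correct and follows essentially the same route as the paper: extract selfadjointness of $q$, deduce $r^{2}p=pr^{2}$ (equivalently $b^{*}bp=pb^{*}b$), upgrade to $rp=pr$ via the square root $r=\sqrt{r^{2}}$, and conclude $bpb^{-1}=upu^{*}$ from the polar decomposition. The only difference is cosmetic — you multiply the selfadjointness identity by $b^{*}$ and $b$, while the paper conjugates by $u$ first and works with $rpr^{-1}=r^{-1}pr$ — but these are the same computation.
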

\begin{proof}
 By assumption
 $q:=bpb^{-1}$ is a projection, so it is in particular selfadjoint.
 Writing 
   $q=urpr^{-1}u^{*}$ we see that 
$u^{*}qu=rpr^{-1}$ is selfadjoint, too. This implies
 $rpr^{-1}=r^{-1}pr$. We 
 multiply with $r$ from left and right
and get the  equality  $r^{2}p=pr^{2}$.
We now use that $r$ is  positive and therefore  $r=\sqrt{r^{2}}$.
We conclude that
also 
$rp=pr$ holds, and this implies  
$q=upu^{*}$.
\end{proof}

In the following we let $B(b,r)$ denote the open $r$-ball at $b$ in $B$.
\begin{lem}\label{qeirjgoqerfqewfqwf9}
There exists a constant  $c$ in $(0,1/2)$ and a map
\[w_{p}:\Proj(B)\cap B(p,c)\to U(B^{u})\cap B(1,1)\]    such that
\[w_{p}(q)p w_{p}(q)^{-1}=q\] for all $q$ in $\Proj(B)\cap B(p,c)$.
\end{lem}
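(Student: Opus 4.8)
The plan is to produce $w_{p}(q)$ by symmetrizing the standard intertwiner between the two nearby projections $p$ and $q$ and then extracting its unitary part. Concretely, for $q$ in $\Proj(B)$ I would set
\[z:=z(q)=qp+(1-q)(1-p)\in B^{u},\]
observing that $z-1\in B$. A direct computation using $p^{2}=p$ and $q^{2}=q$ gives the intertwining relations $zp=qp=qz$, so that whenever $z$ is invertible one automatically has $zpz^{-1}=q$.

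First I would establish invertibility of $z$ for $q$ close to $p$. Expanding and using $q^{2}=q$ one checks the identity $1-z=(1-2q)(p-q)$; since $1-2q$ is a self-adjoint unitary (a symmetry), this yields the estimate $\|1-z\|\le\|p-q\|$. Hence for $\|p-q\|<1$ the element $z$ is invertible in $B^{u}$ by a Neumann series, and $zpz^{-1}=q$ is a projection.

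Next I would pass to the polar decomposition $z=ur$ with $r=(z^{*}z)^{1/2}$ positive and $u=zr^{-1}$ unitary in $B^{u}$. Since $zpz^{-1}$ is a projection, \cref{rihugoewrgrwefrf9} applies with $b=z$ and gives exactly $upu^{*}=zpz^{-1}=q$. Thus $u$ implements the required conjugation, and I would define $w_{p}(q):=u$.

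The main work — and the reason for shrinking the radius from $1$ to some $c\in(0,1/2)$ — is to guarantee that $u$ lands in $B(1,1)$ and that $q\mapsto w_{p}(q)$ is a well-defined continuous map. From $\|1-z\|\le\|p-q\|$ one gets $\|z^{*}z-1\|\le\|p-q\|\,(2+\|p-q\|)$, and continuity of the functional-calculus maps $t\mapsto t^{1/2}$ and $t\mapsto t^{-1}$ near $1$ shows that $\|r-1\|$ and $\|r^{-1}-1\|$, and hence $\|u-1\|=\|zr^{-1}-1\|$, tend to $0$ as $\|p-q\|\to0$. Choosing $c$ small enough forces $\|u-1\|<1$, and the same continuity of the algebraic operations together with the square root makes $w_{p}$ continuous on $\Proj(B)\cap B(p,c)$. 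The expected obstacle is precisely this quantitative functional-calculus estimate rather than any conceptual difficulty, the intertwining relation and \cref{rihugoewrgrwefrf9} doing the structural work.
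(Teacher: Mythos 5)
Your proof is correct and follows essentially the same route as the paper: your intertwiner $z=qp+(1-q)(1-p)=2qp-p-q+1$ is literally the same element as the paper's $v_{p}(q)=\tfrac{1}{2}((2q-1)(2p-1)+1)$, and both arguments proceed by the estimate $\|1-z\|\le\|p-q\|$, invertibility, the intertwining relation, polar decomposition, \cref{rihugoewrgrwefrf9}, and a continuity argument to shrink $c$ so the unitary lands in $B(1,1)$. Your write-up merely makes the final continuity estimate more quantitative than the paper's one-line appeal to continuity of $w_{p}$ with $w_{p}(p)=1$.
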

\begin{proof}\mbox{}
We define  $v_{p}(q):=\frac{1}{2}((2q-1)(2p-1)+1)$
and note that 
 $v_{p}(q)\in B^{u}$. If  $\|p-q\|<1/2$, then 
$\|v_{p}(q)-1\|<1$ and $v_{p}(q)$ is invertible.
Furthermore we have  $v_{p}(p)=1$ and 
$v_{p}(q)pv_{p}^{-1}(q)=q $.
We now form the  polar decomposition $v_{p}(q)=w_{p}(q) r_{p}(q)$ in $B^{u}$.
By  \cref{rihugoewrgrwefrf9} we then have $w_{p}(q)p w_{p}(q)^{-1}=q$.
By continuity of $w_{p}$ and $w_{p}(p)=1$ we can find a constant $c$ in $(0,1)$ such that
$w_{p}(B(p,c))\subseteq B(1,1)$.
 \end{proof}

We consider  a surjection  $B\to C$ of $C^{*}$-algebras.
 We must show that the induced map
 $ \Proj(B)\to \Proj(C)$ is a Serre  fibration. 
 To this end 
we will solve the lifting problem
\[\xymatrix{X\ar[r]\ar[d]_{x\mapsto (0,x)}&\Proj(B)\ar[d]\\ [0,1]\times X\ar[r]\ar@{..>}[ur]&\Proj(C)}\]
for all compact spaces $X$.
This problem is equivalent to the lifting problem
\[\xymatrix{\{0\}\ar[r]\ar[d] &\Proj(C(X)\otimes B)\ar[d]\\ [0,1] \ar[r]\ar@{..>}[ur]&\Proj(C(X)\otimes C)}\ .\]
We must therefore solve the path lifting problem
\[\xymatrix{\{0\}\ar[r]\ar[d]&\Proj(B)\ar[d]\\ [0,1] \ar[r]^{\gamma}\ar@{..>}[ur]^{\tilde \gamma}&\Proj(C)}\]
for all surjections $B\to C$.

 A path  $\sigma: [0,1]\to \Proj(C)$  is called short if $\|\sigma(t)-\sigma(0)\|< c$ and $\|w_{\sigma(0)}(\sigma(t))-1\|<1$ for all $t$ in $[0,1]$ (with $c$ and $w_{-}(-)$ from \cref{qeirjgoqerfqewfqwf9})  .
  For the moment we assume that  we have  a solution for the lifting problem   \begin{equation}\label{qewfqedewdwedewdqdqdwed}\xymatrix{\{0\}\ar[r]^{\tilde \sigma(0)}\ar[d] &\Proj(B )\ar[d]\\ [0,1] \ar[r]^{\sigma}\ar@{..>}[ur]^{\tilde \sigma}&\Proj(  C)}
\end{equation}  for  all short  paths $\sigma$.

By continuity and compactness of the interval there exists $n$ in $\nat$  such that $\gamma_{|[i/n,(i+1)/n]}$ is short for all $i=0,\dots,n-1$.
We then solve lifting problem  inductively by solving the lifting problem for the short paths $\gamma_{|[i/n,(i+1)/n]}$  with initial $\tilde \gamma(i/n)$.

We finally solve the lifting problem \eqref{qewfqedewdwedewdqdqdwed} for short paths  $\sigma$.
 We set   $u(t):=w_{\sigma(0)}(\sigma(t)$. Then  
 $u(0)=1$,   $\sigma(t)=u(t)\sigma(0) u(t)^{*}$, and 
$\|u(t)-1\|<1$.
 We get a 
  path $\log u:[0,1]\to iC^{\sa}$ with $\log u(0)=0$.
  We interpret 
$\log u$ as an element $  i(C_{0}((0,1])\otimes C)^{\sa}$.
Since  $ C_{0}((0,1])\otimes B \to C_{0}((0,1])\otimes C$ is surjective
 we can find a preimage    $b$ in $i(C_{0}((0,1])\otimes B)^{\sa}$.
 We then set   $v:=\exp(b):[0,1]\to U(B^{u})$. Then 
 $\tilde \sigma(t):=v(t)\tilde \sigma(0) v(t)^{*}$ is the desired lift of $\sigma$ with initial $\tilde \sigma(0)$ in \eqref{qewfqedewdwedewdqdqdwed}.
\end{proof}

  \begin{rem}
  Using that $\C$ is a semi-projective $C^{*}$-algebra on could deduce the  path lifting  in \eqref{qewfqedewdwedewdqdqdwed} from \cite[Thm. 5.1]{Blackadar_2016}. \hB \end{rem}

\begin{proof}[Proof of \cref{wtogjgergregw91}]
Similarly as in the proof of \cref{weojgiwoerfrefweferf} we reduce the assertion to a consideration of homotopy groups and eventually to the following lemma.

 We consider a   small filtered family  $(B_{i})_{i\in I}$ of $C^{*}$-algebras indexed by a poset
and set $B:=\colim_{i\in I} B_{i}$. 
 For $i,j$ in $I$, $i\le j$ we ket
$\phi_{j,i}:B_{i} \to B_{j} $ be the structure map and   
 $\phi_{i}: B_{i} \to  B  $ be the canonical homomorphism.

Let $X$ be a   compact  metrizable space and
 $Y$ be a closed subspace. Let 
  $i_{0}$ be in $I$  and assume that we are
given a square 
\[ \xymatrix{Y\ar[r]^-{f}\ar[d] &\Proj(B_{i_{0}}) \ar[d]^{\phi_{i_{0}}} \\X \ar[r]^-{g} & \Proj(B) } \ .\]

 \begin{lem}\label{wtgijrtogwrefrefr22222}
 There exists  $i$ in $I$ with $i\ge i_{0}$  and $h:X\to  \Proj(B_{i})$ such that $h_{|Y}=\phi_{i,i_{0}}\circ f$ and
 $\phi_{i}\circ h$ is homotopic to $g$  rel $Y$.    \end{lem}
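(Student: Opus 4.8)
The plan is to follow the proof of \cref{wtgijrtogwrefrefr} essentially line by line, replacing the polar-decomposition retraction onto stable unitaries by a continuous-functional-calculus retraction onto projections. Concretely, I would first fix a constant $c\in(0,1/4)$ and, for every $C^{*}$-algebra $A$, a retraction $P\colon\{a\in A^{\sa}\mid \|a^{2}-a\|<c\}\to\Proj(A)$ given by $P(a):=\chi(a)$, where $\chi$ is a fixed continuous function equal to $0$ near $0$ and to $1$ near $1$; the bound $\|a^{2}-a\|<c<1/4$ forces the spectrum of the self-adjoint element $a$ to avoid a neighbourhood of $1/2$, so $P(a)$ is a genuine projection. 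This $P$ is natural in $A$ (functional calculus commutes with homomorphisms), restricts to the identity on $\Proj(A)$, and satisfies $\|P(a)-a\|\le\eta(c)$ with $\eta(c)\to0$ as $c\to0$. These are standard facts of $C^{*}$-algebra theory.

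The body of the argument then runs as in the isometry case. Using Dugundji's extension theorem I would extend $f$ to a map $f_{0}\colon X\to(B_{i_{0}})^{\sa}$ with values in the convex hull of projections (in particular still self-adjoint), and interpret $f_{0}$ and $g$ as self-adjoint elements of $C(X)\otimes B_{i_{0}}$ and $C(X)\otimes B$. By commutativity of the given square the difference $g-\phi_{i_{0}}(f_{0})$ vanishes on $Y$, hence lies in $C_{0}(X\setminus Y)\otimes B$. Since $C(X)$ is nuclear and the tensor product preserves filtered colimits, and since norms in a filtered colimit are limits of norms (see \eqref{fvrvfevefcvsdfvsfdvsfdvsv}), I can choose $i_{1}\ge i_{0}$ and a self-adjoint $r\in C_{0}(X\setminus Y)\otimes B_{i_{1}}$ with $\|\phi_{i_{1}}(r)-(g-\phi_{i_{0}}(f_{0}))\|$ as small as desired, and set $f_{1}:=\phi_{i_{1},i_{0}}(f_{0})+r$. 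Then $\phi_{i_{1}}(f_{1})$ is norm-close to the projection $g$, so $\|\phi_{i_{1}}(f_{1})^{2}-\phi_{i_{1}}(f_{1})\|$ is small; using again that this norm equals $\lim_{j\ge i_{1}}\|\phi_{j,i_{1}}(f_{1}^{2}-f_{1})\|$, I pass to some $i\ge i_{1}$ with $\|\phi_{i,i_{1}}(f_{1})^{2}-\phi_{i,i_{1}}(f_{1})\|<c$ and define $h:=P(\phi_{i,i_{1}}(f_{1}))\colon X\to\Proj(B_{i})$.

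It remains to verify the two required properties. Since $r$ vanishes on $Y$ and $f_{0}|_{Y}=f$ takes values in projections, the element $\phi_{i,i_{1}}(f_{1})|_{Y}=\phi_{i,i_{0}}(f)$ is itself a projection, whence $h|_{Y}=\phi_{i,i_{0}}(f)$ because $P$ fixes projections. By naturality of $P$ one has $\phi_{i}\circ h=P(\phi_{i_{1}}(f_{1}))$, and as $\phi_{i_{1}}(f_{1})$ is close to the projection $g$ this may be made as close to $g$ as we please. To produce the homotopy rel $Y$ I would use the straight-line path of self-adjoints: for projections $p,q$ a direct computation gives $\bigl((1-s)p+sq\bigr)^{2}-\bigl((1-s)p+sq\bigr)=-s(1-s)(p-q)^{2}$, whose norm is at most $\tfrac14\|p-q\|^{2}$, so once $\|p-q\|$ is small enough the path $s\mapsto P\bigl((1-s)p+sq\bigr)$ stays in the domain of $P$ and joins $p$ to $q$ through projections. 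Applied with $p=\phi_{i}(h)$ and $q=g$ this yields the desired homotopy, which on $Y$ is constant — there $\phi_{i}(h)$ and $g$ both equal $\phi_{i_{0}}(f)$ — and hence rel $Y$.

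The only genuinely new ingredient compared with \cref{wtgijrtogwrefrefr} is the functional-calculus retraction $P$ together with its naturality, and I expect this to be routine, in fact somewhat cleaner than the polar-decomposition retraction used there. Accordingly the main labour will be purely bookkeeping: arranging the thresholds $c$ and $\eta(c)$ and the successive filtered-colimit approximations so that all the relevant norms end up below the tolerances demanded by $P$ and by the straight-line homotopy, rather than any conceptual obstacle.
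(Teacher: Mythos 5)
Your proof is correct and follows essentially the same route as the paper's: a functional-calculus retraction from almost-projections onto projections, a Dugundji extension of $f$, a two-stage filtered-colimit approximation (using that norms in the colimit are limits of norms), and a straight-line homotopy pushed through the retraction, constant on $Y$. The only difference is cosmetic and welcome: you make explicit the identity $\bigl((1-s)p+sq\bigr)^{2}-\bigl((1-s)p+sq\bigr)=-s(1-s)(p-q)^{2}$, which the paper leaves implicit when asserting that $W\bigl((1-s)\phi_{i}(h)+sg\bigr)$ is a well-defined path of projections.
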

\begin{proof} 

Let $A$ be any $C^{*}$-algebra. For a selfadjoint element $q$ in $A$ we let $\sigma(q)$ denote the spectrum of $q$. We fix a number
$c$ in $(0,\frac{1}{2})$ and 
  consider the subspace
 \[P(A):=\{q\in  A^{\sa}\mid d(\sigma(q),\frac{1}{2})>c\}  \] of    selfadjoints in $A$   with a spectral gap at $\frac{1}{2}$. We observe that 
 it contains the space of projections  $\Proj(A)$.
We fix a 
 function $\chi\in C(\R)$ with $\chi_{|(-\infty,\frac{1}{2}-c]}\equiv 0$ and  $\chi_{|[\frac{1}{2}+c,\infty)}\equiv 1$.
 The map $q\mapsto \chi(q)$  defined using the function calculus is a  retraction $W:P(A)\to \Proj(A)$.
By continuity we can choose a 
  constant $c_{1}$ in $(0,\infty)$ such that $\|q^{2}-q \|\le c_{1} $ implies $\|W(q)-q \|\le  c$.

 We  interpret  $ f$ as a function $Y\to B_{i_{0}}^{\sa}$. 
 Using Dugundji's extension theorem 
 we find an extension $h_{0}:X\to B_{i_{0}}^{\sa}$.
 
 We then have $g-\phi_{i_{0}}(h_{0})\in (C_{0}(X\setminus Y)\otimes B)^{\sa}$.
We can thus find  $i_{1}$ in $I$ with $i_{1}\ge i_{0}$  such that
there exists $r$ in $(C_{0}(X\setminus Y)\otimes B_{i_{1}})^{\sa}$  
with 
 $\|g-\phi_{i_{0}}(h_{0})-\phi_{i_{1}}(r)\|\le c_{1}/20$.
We set $h_{1}:=\phi_{i_{1},i_{0}}(h_{0})+r$ in $(C(X)\otimes B_{i_{1}})^{\sa}$.
Since $g$ is a projection, we have
 $  \|\phi_{i_{1}}(h_{1})^{2}-\phi_{i_{1}}(h_{1})\|  \le c_{1}/2$.
 We now find 
  $i$ in $I$ with $i\ge  i_{1}$ such that 
   such that
  $ \| \tilde h^{2}-\tilde h  \|  \le c_{1}$, where $\tilde h:=\phi_{i,i_{1}}(h_{1})$.
 
We finally  define     
 $h:=W(\tilde h)$ in 
  $   \Proj^{s}(C(X)\otimes B_{i})$. By construction we have 
 $h_{|Y}=\phi_{i,i_{0}}(f)$ and 
 $\|\phi_{i}(h)-g\|\le c$. Then 
  $(W((1-s)\phi_{i}(h)+sg))_{s\in [0,1]}$ is a homotopy from $\phi_{i}(h)$ to $g$ rel $Y$.
 This finishes the proof of \cref{wtgijrtogwrefrefr22222}
 \end{proof}
 
  \begin{rem}
 Using that $\C$ is semi-projective we could deduce \cref{wtgijrtogwrefrefr22222} directly
 from the proof of \cite[Prop. 3.8]{Bunke:2024aa}, in particular from the existence of the lift in 
  \cite[Eq. (3.6)]{Bunke:2024aa}. \hB
 \end{rem}

Hence we have completed the proof of \cref{wtogjgergregw91}.
\end{proof}

 \section{The $q$-construction}\label{efgijoerfreferfwrf}
 
 The {\em $q$-construction} introduced by  Cuntz  \cite{MR899916} is an effective tool to capture  Kasparov modules in terms of homomorphisms, see \cref{wtokgwopgfrefreferfw}. Using the $q$-construction one can express the classical $KK$-theory groups in  terms of homotopy classes of maps.  
The crucial formula states that 
  for  two separable $C^{*}$-algebras
 $A$ and $B$ we have \begin{equation}\label{vefdvvfdvsdvsfdvdfcl} \pi_{0} \underline{\Hom}(qA,K\otimes B)\cong \KK^{\class}_{\sepa,0}(A,B) \ .
\end{equation}  
In \cite[Def. 1.5]{MR899916}  this isomorphism is actually   the definition  of the right-hand side. From this formula  the composition
\[ \KK^{\class}_{\sepa,0}(A,B) \otimes  \KK^{\class}_{\sepa,0}(B,C)\to  \KK^{\class}_{\sepa,0}(A,C)\]
is not obvious. But one can show that the left-hand side in \eqref{vefdvvfdvsdvsfdvdfcl} is naturally isomorphic to 
$\pi_{0}\underline{\Hom}(K\otimes qA,K\otimes qB)$ and this makes the composition obvious.

The final goal of the present section and \cref{weijgiowergfrfreferfwr} together
is to give a selfcontained proof that for all separable $C^{*}$-algebras $A$ and $B$
 \begin{equation}\label{vefdvvfdvsdvsfdvdf} \pi_{0} \underline{\Hom}(qA,K\otimes B)\cong \pi_{0}\KK_{\sepa}(A,B) \ .
\end{equation}  
\begin{rem}The comparison of \eqref{vefdvvfdvsdvsfdvdfcl} and \eqref{vefdvvfdvsdvsfdvdf} provides a proof of the $KK$-theory version of \cref{qiurhfgiuewrgwrfrefrfwrefw} below
which does not depend on the knowledge of the universal property of $\kk^{\class}_{\sepa}$.\hB
\end{rem}

We will start this section with recalling the  $q$-construction. 
 We then continue to study those of its homotopical properties
that are easily accessible without going deeper into $C^{*}$-algebra theory. 
We shall see that inverting the images in $L_{K}\nCalg_{h}$ of the canonical morphisms $\iota_{A}:qA\to A$ for all separable $C^{*}$-algebras
produces a Dwyer-Kan localization of \[L_{q}:L_{K}\nCalg_{h}\to L_{K}\nCalg_{h,q}\] which is equivalent to composition of the localizations $L_{\splt}$ from \cref{wegjwoergerwfrferfrwefwf}   (enforcing  split-exactness) and the right Bousfield localization at the subcategory of group objects, see 
\cref{wetgjowoegrewfwref} and \cref{rthogijprtgfrefrfw}.

All of the above has a separable version.
 At the end of the present section we go deeper into $C^{*}$-algebra theory. In \cref{erigjwoergerfwerfwref} we reproduce the proof of \cite[Thm. 1.6]{MR899916}.
As a consequence, for separable $C^{*}$-algebras $A$ and $B$  we can simplify the formula for the  mapping spaces in $L_{K}\nCalg_{\sepa,h,q}$ to
\begin{equation}\label{vweriuvbvjbjkfvfvfsfvsdfv} \ell \underline{\Hom}(qA, K\otimes B) \simeq
\Map_{L_{K}\nCalg_{\sepa,h,q}}(A,B)   \end{equation} 
which is already very close to \eqref{vefdvvfdvsdvsfdvdf}.
The final step  towards this formula, discussed in   \cref{weijgiowergfrfreferfwr}, is to show that the canonical functor $L_{K}\nCalg_{\sepa,h,q}\to \KK_{\sepa}$ is an equivalence.

 We now start with the description of Cuntz'  $q$-construction.
 To every $C^{*}$-algebra $A$ we can functorially associate
  a diagram \begin{equation}\label{wrtbvwvwvsdfvfv}\xymatrix{0\ar[r]&qA\ar[r]^{i}&A*A\ar[r]^{d }&A\ar@{=}[d]\ar[r]&0\\0\ar[r]&qA\ar[u]^{\bar \sigma }\ar[dr]^{\iota_{A}}\ar[r]^{i }&\ar[d]^{p_{0}}A*A\ar[u]^{\sigma }\ar[r]^{d }&\ar@/^-1cm/@{..>}[l]^{\iota_{i}}A\ \ar[r]&0\\&&A&&}\ ,
\end{equation}
  where the horizontal sequences are exact.
  Recall that the free product $A*A$ together with the two canonical maps $\iota_{i}:A\to A*A$, $i=0,1$ represents the coproduct in $\nCalg$. The map $d$ (often called the fold map) is determined via the universal property of the free product by $d\circ \iota_{i}=\id_{A}$ for $i=0,1$. The $C^{*}$-algebra $qA$ is defined as the kernel of $d$.
 The two maps $\iota_{i}$  determine splits of the exact sequence. 
 The  maps
 $p_{i}:A*A\to A $ are determined by the conditions
 $p_{i}\circ \iota_{i}  =\id_{A}$ and $p_{i}\circ \iota_{1-i}=0$. We can then define   the  map
 $\iota_{A}:=p_{0}\circ i :qA\to A$. 
    The flip of the two factors of the free product defines an automorphism $\sigma:A*A\to A*A$. Since $d \circ \sigma=d$
it restricts to an  involutive automorphism $\bar \sigma  :qA\to qA$.
 In principle we should add an index $A$ also to the notation for the maps $d,i,\sigma,\dots$ as they are all components of natural transformations but we refrain from doing so in order to shorten the notation. 

Since $qA$ is an ideal in $A*A$ we have a canonical map $m:A*A\to M(qA)$, where $M(qA)$ denotes the multiplier algebra of $qA$.
We define $m_{i}:=m\circ \iota_{i}$.

\begin{rem}\label{4tjigortgwrefrwefref}
Let $B$ be any $C^{*}$-algebra. In order to give a map
$f:qA\to B$ we could give a map $\hat f:A*A\to M(B)$ and set
 $f:=\hat f\circ i$.  We must ensure that this composition takes values in the  ideal $B$ of $M(B)$. To this end 
we consider the {\em components} $\hat f_{i}:=\hat f\circ \iota_{i}$  of $\hat f$.
 We must require that  $\hat f_{1}(a)-\hat f_{0}(a)\in B$ for all $a$ in $A$. Under this condition  $f$ is a well-defined homomorphism with values in $B$. Moreover, if   
 $\hat f_{0}=\hat f_{1}$, then it follows that $f=0$. 
 We will call $\hat f$ also the {\em associated homomorphism}.
 
  This construction can be reversed. Assume that $f:qA\to B$ is a homomorphism. We define  $B':=f(qA)$  and the map   $\hat f:A*A\to M(qA)\xrightarrow{M(f)}  M(B')$, where  we must restrict the codomain of $f$  to $B'$ in order be able to apply the multiplier algebra functor $M$ which is  only functorial for non-degenerate morphisms. The components of $\hat f$ are then given by   $\hat f_{i}:=\hat f\circ \iota_{i}:A\to M(B')$.
 The datum \[A\rightrightarrows^{\hat f_{0}}_{\hat f_{1}} M(B')\triangleright B'\to B\]
 is called a pre-quasihomomorphism in \cite{MR899916}.
 \hB
\end{rem}

The functor $q:\nCalg\to \nCalg$ is continuous with respect to the topological enrichment of $\nCalg$. 
 It therefore preserves homotopy equivalences and descends to a functor  $q:\nCalg_{h}\to \nCalg_{h}$. 
Since $q$ does not preserve $K$-stability we are led to define the  functor
\begin{equation}\label{fqwefiojiofjowqefqewdwdqwed}q^{s}: \nCalg_{h}\to  L_{K}\nCalg_{h}\ , \quad A\mapsto K\otimes qA\ .
\end{equation} 
In the following we use the notation \begin{equation}\label{gugwerg34wrefwferf}(-)^{s}:=L_{K}(-):\nCalg_{h}\to L_{K}\nCalg_{h}
\end{equation}
for the stabilization functor from \cref{wreogkpwegrewferf}.\ref{wegergoijiowerfgwerferfrewfrfwefrf}. 
%
%
%
We  define the natural transformation $\iota^{s}:q^{s}\to  (-)^{s}$  such that its component at $A$ in $\nCalg_{h}$ is given by \begin{equation}\label{ewfqewwdweqdewdqwdeqdwdewdeqd}\iota^{s}_{A}: q^{s}A=K\otimes qA \xrightarrow{\id_{K}\otimes \iota_{A}}   K\otimes A= A^{s}\ .
\end{equation} 

Recall from \cref{weotgpwergferferfwref} that $L_{K}\nCalg_{h}$ is semi-additive. By \cref{erkgowegergrewfwerf9}   every object in this category is naturally a  commutative monoid object.
For  $A$  in $\nCalg_{h}$   the object $q^{s}A=K\otimes qA$  is thus a commutative monoid object of $L_{K}\nCalg_{h}$. We furthermore
 define $\bar \sigma^{s}:=\id_{K}\otimes \bar \sigma:q^{s}A\to q^{s}A$, where $\bar \sigma$ is as in \eqref{wrtbvwvwvsdfvfv}.

The following is a version of \cite[Prop. 1.4]{MR899916}.
\begin{lem}\label{wtgijoweoifrefwerferfwrf} For $A$ in $\nCalg_{h}$ the object
$q^{s}A$ is a commutative group in $L_{K}\nCalg_{h}$ whose inversion map is given by $ \bar \sigma^{s}$.
\end{lem}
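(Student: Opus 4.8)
The plan is to exploit the semi-additivity of $L_{K}\nCalg_{h}$ (\cref{weotgpwergferferfwref}) to reduce the group property to one identity in the endomorphism monoid of $q^{s}A$. By \cref{erkgowegergrewfwerf9} the object $q^{s}A$ carries a canonical commutative monoid structure, and since composition in a semi-additive $\infty$-category is additive, it suffices to exhibit a two-sided inverse of $\id_{q^{s}A}$ in the commutative monoid $\Map_{L_{K}\nCalg_{h}}(q^{s}A,q^{s}A)$. Concretely I would prove
\[\id_{q^{s}A}+\bar\sigma^{s}\simeq 0\]
as endomorphisms of $q^{s}A$; then for any $f\colon D\to q^{s}A$ additivity of composition gives $f+\bar\sigma^{s}\circ f\simeq 0$, so $\bar\sigma^{s}\circ(-)$ inverts every element of $\Map_{L_{K}\nCalg_{h}}(D,q^{s}A)$. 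By \cref{eriojgwegwerfrfw} each such mapping monoid is then a group, whence $q^{s}A$ is a commutative group with inversion $\bar\sigma^{s}$, as asserted.

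To compute the sum I would transport along the corner inclusion $\kappa_{qA}\colon qA\to K\otimes qA=q^{s}A$, an equivalence in $L_{K}\nCalg_{h}$ by stability, so that $\Map_{L_{K}\nCalg_{h}}(q^{s}A,q^{s}A)\simeq\ell\underline{\Hom}(qA,K\otimes qA)$ by \eqref{vwihviuewhviwevewre}. Realizing $\Mat_{2}(qA)$ inside $K\otimes qA$ through two orthogonal minimal projections, $\id_{q^{s}A}$ and $\bar\sigma^{s}$ are represented by the orthogonal homomorphisms $f_{0}\colon x\mapsto\diag(x,0)$ and $f_{1}\colon x\mapsto\diag(0,\bar\sigma(x))$ (using that corner inclusions attached to homotopic minimal projections coincide in $\ell\underline{\Hom}$). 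Applying \cref{ewfiuhgwqeuifdewdewqdqewdqd} to this orthogonal pair yields
\[\id_{q^{s}A}+\bar\sigma^{s}\simeq L_{h,K}(g),\qquad g(x)=\diag\bigl(x,\bar\sigma(x)\bigr)\in\Mat_{2}(qA),\]
so the problem reduces to showing that $g\colon qA\to\Mat_{2}(qA)$ is null-homotopic.

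For this I would use the pre-quasihomomorphism description of \cref{4tjigortgwrefrwefref}: $g$ is the restriction along $i\colon qA\to A*A$ of the homomorphism $A*A\to\Mat_{2}(M(qA))$ with components $\hat g_{0}=\diag(m_{0},m_{1})$ and $\hat g_{1}=\diag(m_{1},m_{0})$, and $\hat g_{1}(a)-\hat g_{0}(a)=\diag\bigl((m_{1}-m_{0})(a),(m_{0}-m_{1})(a)\bigr)\in\Mat_{2}(qA)$. Keeping $\hat g_{0}$ fixed and conjugating the second component by the rotation unitaries $U_{t}$ of \eqref{qrfqfoiue9foqwefqwedwqewdqewd} produces the path of component pairs $\bigl(\hat g_{0},\,U_{t}^{*}\hat g_{1}U_{t}\bigr)$; a short computation shows every entry of $\hat g_{0}(a)-U_{t}^{*}\hat g_{1}(a)U_{t}$ is a scalar multiple of $(m_{1}-m_{0})(a)\in qA$, so the pair is a pre-quasihomomorphism for every $t$ and, by \cref{4tjigortgwrefrwefref}, defines a continuous path of homomorphisms $g_{t}\colon qA\to\Mat_{2}(qA)$. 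At $t=0$ this is $g$; at $t=1$ the rotation turns $\hat g_{1}$ into $\diag(m_{0},m_{1})=\hat g_{0}$, so the two components agree and $g_{1}=0$. Hence $L_{h}(g)\simeq 0$, and composing with $L_{K}$ and the inclusion $\Mat_{2}(qA)\to K\otimes qA$ gives $L_{h,K}(g)\simeq 0$, which finishes the argument.

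The main obstacle is the bookkeeping in the last step: one must guarantee that the rotation homotopy, which a priori takes values in matrices over $A*A$ (or over the multiplier algebra), stays inside genuine homomorphisms $qA\to\Mat_{2}(qA)$ for all $t$ — that is, that the difference of components remains in the ideal $\Mat_{2}(qA)$ continuously. This is exactly the verification $\hat g_{0}(a)-U_{t}^{*}\hat g_{1}(a)U_{t}\in\Mat_{2}(qA)$, which rests on the defining relation $m_{1}(a)-m_{0}(a)\in qA$ of the $q$-construction in \eqref{wrtbvwvwvsdfvfv}; everything else is the formal semi-additive translation of Cuntz' \cite[Prop. 1.4]{MR899916}.
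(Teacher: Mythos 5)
Your proposal is correct and follows essentially the same route as the paper's proof: the same reduction via semi-additivity to the single identity $\id_{q^{s}A}+\bar\sigma^{s}\simeq 0$, the same identification of this sum with $\diag(\id_{qA},\bar\sigma)\colon qA\to\Mat_{2}(qA)$ through \eqref{vwihviuewhviwevewre}, and the same rotation homotopy on the second component of the associated pre-quasihomomorphism, resting on $m_{0}(a)-m_{1}(a)\in qA$. The only (immaterial) difference is that the paper conjugates by $U_{t}(\,\cdot\,)U_{t}^{*}$ where you use $U_{t}^{*}(\,\cdot\,)U_{t}$, and that you cite \cref{ewfiuhgwqeuifdewdewqdqewdqd} explicitly where the paper asserts the block-sum description directly.
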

\begin{proof}
We must show that $\id_{q^{s}A}+ \bar \sigma^{s}\simeq 0$. We have the following chain of equivalences
  \begin{eqnarray*}
\Map_{L_{K}\nCalg_{h}}(q^{s}A,q^{s}A)
&\simeq&
\Map_{\nCalg_{h}}( K\otimes qA,K\otimes qA)\\& \simeq&
\Map_{\nCalg_{h}}(qA,K\otimes qA)\\&
\stackrel{\cref{erigjeroferferfrefwerf}}{\simeq} &  \ell\underline{\Hom}(qA,K\otimes qA) \ .
\end{eqnarray*}
The first  reflects the definition of $q^{s}A$ and that 
$L_{K}$ is a left Bousfield localization \cref{wreogkpwegrewferf}.\ref{wegergoijiowerfgwerferfrewfrfwefrf}. The second equivalence is induced by 
 left upper corner inclusion $qA\to K\otimes qA$ which induces an equivalence
since $K\otimes qA$ is a local object in this localization. 
Under this equivalence the sum
is determined by the block sum of morphisms $qA\to K\otimes qA$.
Hence $\id_{q^{s}A}+\bar \sigma^{s}$ is induced by the composition
\[\diag(\id_{qA},\bar \sigma):qA\to \Mat_{2}(qA)\to K\otimes qA\ .\]
It suffices to show that
$\diag(\id_{qA},\bar \sigma):qA\to \Mat_{2}(qA)$ is homotopic to zero.

The composition $m\circ i:qA\to M(qA)$ is the inclusion, hence it has the associated homomorphism $\widehat{\id_{qA}}=m:A*A\to M(qA)$ (using the notation introduced in \cref{4tjigortgwrefrwefref}) and the components
 ${\widehat{\id_{qA}}}_{\ i}=m_{i}:A\to M(qA)$. 
 Furthermore, the associated homomorphism  $\widehat{\bar \sigma}=i\circ \sigma$  of $\bar \sigma$ has the components
${\widehat{\bar \sigma}}_{\ i}=m_{1-i}$.
We identify $M(\Mat_{2}(qA))\cong \Mat_{2}(M(qA))$ in the natural way.
Then $\diag(\id_{qA},\bar \sigma)$  has the associated homomorphism \[ \widehat{\diag(\id_{qA},\bar \sigma)}=\diag(m,m\circ \sigma):A*A\to  M(\Mat_{2}(qA))\]
 and the components
$ {\widehat{\diag(\id_{qA},\bar \sigma)}}_{i}=\diag(m_{i},m_{1-i} ):A\to  \Mat_{2}(M(qA))$.

For $t$ in $[0,1]$ we consider the scalar unitary  \[U_{t}:=\left( \begin{array}{cc} \cos( \pi t/2)&-\sin(\pi t/2) \\ \sin(\pi t/2) &\cos(\pi t/2) \end{array} \right) \] in 
$\Mat_{2}(M(qA))$. Since $m_{0}(a)-m_{1}(a)\in qA$ for all $a$ in $A$ 
we have
\[U_{t}\diag(m_{1}(a),m_{0}(a) )U_{t}^{*}-\diag(m_{0}(a),m_{1}(a) ) \in \Mat_{2}(qA)\]
for all $t$ in $[0,1]$.
Hence for every $t$ in $[0,1]$, as explained in  \cref{4tjigortgwrefrwefref}, the pair 
\[\diag(m_{0},m_{1}),U_{t}\diag(m_{1},m_{0} )U_{t}^{*}:A\to \Mat_{2}(M(qA))\]  gives the components of
 a map   $h_{t}:qA\to \Mat_{2}(qA)$. We have
$h_{0}=\diag(\id_{qA},\bar \sigma)$ and
$h_{1}$ is the map with equal components
$(\hat h_{1})_{i}=
  \diag(m_{0},m_{1})$ for $i=0,1$.
Hence $h_{1}=0$.
 \end{proof}

 Let $E:L_{K}\nCalg_{h}\to \bC$ be  a left-exact functor to a semi-additive $\infty$-category.
We say that   that $E$ is split-exact if it sends the images under $L_{h,K}$ of split-exact sequences of $C^{*}$-algebras to fibre sequences. 
 
   The following lemma is a version of \cite[Prop. 3.1.b]{MR899916}. Let   $A$ be a $K$-stable $C^{*}$-algebra. We will use the same symbol for the corresponding object of  $L_{K}\nCalg_{h}$, i.e., we will write $A$ instead of $L_{h,K}(A)$ in order to simplify the notation.
\begin{lem}\label{wergkooprefrwefrefre} 
If $E$ is split-exact and $E(A)$ is a group, then
the map  $\iota^{s}_{A}: q^{s}A\to A^{s} $ from \eqref{ewfqewwdweqdewdqwdeqdwdewdeqd}  induces an equivalence $E(\iota^{s}_{A}):E(q^{s}A)\stackrel{\simeq}{\to} E(A^{s})$.
\end{lem}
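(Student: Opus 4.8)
\emph{Plan.} First I would feed the split-exact sequence $0\to qA\xrightarrow{i}A*A\xrightarrow{d}A\to 0$ from \eqref{wrtbvwvwvsdfvfv}, which is split by the section $\iota_{0}$, into $E$. Its image under $L_{h,K}$ is the sequence $q^{s}A\to (A*A)^{s}\to A^{s}$ with section $\iota_{0}^{s}$, and since $E$ is split-exact it is sent to a fibre sequence in $\bC$; being split, and all three terms being group objects, it splits as a biproduct $E((A*A)^{s})\simeq E(q^{s}A)\oplus E(A^{s})$. Indeed $E(A^{s})$ is a group by hypothesis, $q^{s}A$ is a commutative group with inversion $\bar\sigma^{s}$ by \cref{wtgijoweoifrefwerferfwrf}, and the left-exact functor $E$ preserves group objects, so $E(q^{s}A)$ is a group with inversion $F(\bar\sigma^{s})$. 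Here $F(i^{s})$ is the inclusion of the fibre, $F(\iota_{0}^{s})$ the other inclusion, $E(d)$ the projection onto $E(A^{s})$, and I write $\pi$ for the complementary retraction onto $E(q^{s}A)$. Since $\bC$ is semi-additive (\cref{erkgowegergrewfwerf9}) all relevant mapping monoids are abelian groups, so differences of morphisms make sense. Throughout I suppress the superscript $s$ on the auxiliary maps $p_{0},d,i,\iota_{0},\iota_{1},\bar\sigma$, which denote the stabilizations $\id_{K}\otimes(-)$.

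Next I would construct a candidate inverse of $F(\iota_{A}^{s})$. Because $d\circ\iota_{1}=d\circ\iota_{0}=\id_{A}$, the difference $F(\iota_{1})-F(\iota_{0})\colon E(A^{s})\to E((A*A)^{s})$ is annihilated by $E(d)$ and hence factors through the fibre, giving $s:=\pi\circ F(\iota_{1})$ with $F(i)\circ s\simeq F(\iota_{1})-F(\iota_{0})$. From $p_{0}\circ i=\iota_{A}$, $d\circ i=0$, $p_{0}\circ\iota_{0}=\id$ one gets the formal identity $F(\iota_{A}^{s})\circ\pi\simeq F(p_{0})-E(d)$ on the biproduct; composing with $F(\iota_{1})$ and using $p_{0}\circ\iota_{1}=0$, $d\circ\iota_{1}=\id$ yields the easy composite $F(\iota_{A}^{s})\circ s\simeq-\id_{E(A^{s})}$. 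Thus $F(\iota_{A}^{s})$ is a split epimorphism, and it remains to pin down the other composite.

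For $s\circ F(\iota_{A}^{s})$, since $F(i)$ is a split monomorphism it suffices to evaluate $F(i)\circ s\circ F(\iota_{A}^{s})\simeq F(\iota_{1}\iota_{A})-F(\iota_{0}\iota_{A})$. Now I would rewrite one summand through the inversion, using that the \emph{domain} is the group $E(q^{s}A)$: by \cref{wtgijoweoifrefwerferfwrf} one has $-F(\iota_{0}\iota_{A})\simeq F(\iota_{0}\iota_{A}\bar\sigma^{s})$, and $\iota_{A}\circ\bar\sigma=p_{1}\circ i$ gives $\iota_{0}\iota_{A}\bar\sigma=\iota_{0}p_{1}i$. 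So the task reduces to the additivity identity $F(\iota_{1}p_{0}i)+F(\iota_{0}p_{1}i)\simeq F(i)\circ F(\bar\sigma^{s})$ of morphisms $E(q^{s}A)\to E((A*A)^{s})$. On the generators $\iota_{0}(a)-\iota_{1}(a)$ of $qA$ the two summands land in the two copies $\iota_{1}(A)$ and $\iota_{0}(A)$ and add up to $-i$, so this is the exact analogue of the additivity statement proved in \cref{weroigjowerfrefw}. Granting it, $s\circ F(\iota_{A}^{s})\simeq-\id_{E(q^{s}A)}$, whence $-s$ is a two-sided inverse and $E(\iota_{A}^{s})$ is an equivalence.

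\textbf{The main obstacle} is precisely this last identity. Because the two copies of $A$ inside the free product $A*A$ are \emph{not} orthogonal, \cref{ewfiuhgwqeuifdewdewqdqewdqd} does not apply to $\iota_{1}p_{0}i$ and $\iota_{0}p_{1}i$ directly. Instead I would pass to $\Mat_{2}$ (equivalently, to orthogonal corners of $K\otimes(A*A)$), where $\diag(\iota_{1}p_{0}i,0)$ and $\diag(0,\iota_{0}p_{1}i)$ have orthogonal images, so that \cref{ewfiuhgwqeuifdewdewqdqewdqd} gives
\[
F(\iota_{1}p_{0}i)+F(\iota_{0}p_{1}i)\simeq F\bigl(x\mapsto\diag(\iota_{1}p_{0}i(x),\iota_{0}p_{1}i(x))\bigr),
\]
and then produce an explicit rotation homotopy of isometries, of exactly the type used in the proof of \cref{weroigjowerfrefw}, connecting this diagonal homomorphism to the corner inclusion $x\mapsto\diag((i\bar\sigma)(x),0)$ representing $F(i)\circ F(\bar\sigma^{s})$. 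This is the one genuinely $C^{*}$-algebraic step; everything surrounding it is the formal bookkeeping of biproducts and group objects described above.
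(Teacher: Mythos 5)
Your argument is correct, but it ends differently from the paper's. The paper never constructs an explicit inverse: it composes the splitting $E(i^{s})+E(\iota_{1}^{s})\colon E(q^{s}A)\times E(A^{s})\to E((A*A)^{s})$ with the equivalence $(E(p_{0}^{s}),E(p_{1}^{s}))\colon E((A*A)^{s})\stackrel{\simeq}{\to} E(A^{s})\times E(A^{s})$ supplied by \cref{weroigjowerfrefw}, observes that the composite is lower triangular with diagonal entries $E(\iota_{A}^{s})$ and $\id_{E(A^{s})}$, and concludes invertibility of $E(\iota_{A}^{s})$ from the group structures; the only $C^{*}$-algebraic input is \cref{weroigjowerfrefw}, which is already proven. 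You instead exhibit the inverse $-s$ with $s=\pi\circ E(\iota_{1}^{s})$, check $E(\iota_{A}^{s})\circ s\simeq -\id$ formally, and reduce $s\circ E(\iota_{A}^{s})\simeq -\id$ to the stabilized homotopy identity $E(\iota_{1}p_{0}i)+E(\iota_{0}p_{1}i)\simeq E(i)\circ E(\bar\sigma^{s})$, for which you propose a new rotation homotopy. That identity is true, and your homotopy exists and is even easier than you fear: keep the first component $\diag(\iota_{1},0)$ of the quasi-homomorphism pair fixed and conjugate the second from $\diag(\iota_{0},0)$ to $\diag(0,\iota_{0})$ by rotation unitaries; well-definedness at every time $t$ is automatic because both components take values in $\Mat_{2}(A*A)$ itself (not merely in multipliers), and point-norm continuity follows exactly as in the proof of \cref{weroigjowerfrefw}. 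So your route is self-contained at the price of one more hands-on homotopy, while the paper's reuses \cref{weroigjowerfrefw} and stays purely formal afterwards. In fact you could have both advantages at once: since $(E(p_{0}^{s}),E(p_{1}^{s}))$ is an equivalence by \cref{weroigjowerfrefw}, your identity can be verified after composing with $E(p_{0}^{s})$ and $E(p_{1}^{s})$, where it collapses to the purely algebraic relations $p_{j}\iota_{1-j}=0$, $p_{j}\iota_{j}=\id_{A}$, $p_{1}i=\iota_{A}\bar\sigma$ and $\bar\sigma^{2}=\id_{qA}$, so no new homotopy is needed. One small gloss you should repair: you assert that all three terms of the split fibre sequence are groups, but for the middle term $E((A*A)^{s})$ this requires a sentence — either cite \cref{weroigjowerfrefw} so that it becomes a product of groups (as the paper does), or note that in a split fibre sequence whose outer terms are groups every point of the middle mapping space decomposes as $E(i)(f)+E(\iota_{0})(y)$ and is therefore invertible.
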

\begin{proof}    
   The middle horizontal exact sequence in 
\eqref{wrtbvwvwvsdfvfv} is split and induces the fibre sequence
\begin{equation}\label{fadscdscasdascd} E (q^{s}A)\xrightarrow{E(i^{s})} E( (A*A)^{s})\xrightarrow{E (d^{s})} E ( A^{s})\ . \end{equation} 
Since $A$ is $K$-stable we have $A\simeq A^{s}$ and hence $E(A)\simeq E(A^{s})$. 
In view of our assumptions 
 $E(A^{s}) $ is a   group. Since $q^{s}A$ is a group by \cref{wtgijoweoifrefwerferfwrf} and
$E$ preserves products (since it is split-exact) we see that also $E(q^{s}A)$ is a group. Finally, by \cref{weroigjowerfrefw} we have an equivalence
\[E((A*A)^{s})\xrightarrow{E (p^{s}_{0})\times E (p^{s}_{1})} E(A^{s})\times E( A^{s})\] whose inverse is
$E (\iota^{s}_{0})\circ \pr_{0}+E(\iota^{s}_{1})\circ \pr_{1}$.
This implies that  that $E((A*A)^{s})$ is a group as well. 
Using that the objects in the sequence \eqref{fadscdscasdascd} are commutative groups we see that 
 \[E(q^{s}A)\times E(A^{s})\xrightarrow{E(i^{s}_{A})+E(\iota_{1})}E( (A*A)^{s})\]  is an equivalence.
Hence
\[\hspace{-0.2cm}\left( \begin{array}{cc} E(\iota^{s}_{A})&0 \\ E(p^{s}_{1}\circ i^{s})  &\id_{E(A^{s})} \end{array} \right): E(q^{s}A)\times E(A^{s})  \xrightarrow{E(i^{s})+E(\iota^{s}_{1})}E((A*A)^{s})\xrightarrow{E (p^{s}_{0})\times E (p^{s}_{1})} E(A^{s})\times E(A^{s})\] is an equivalence.
Again using that the factors are   groups this implies that $E(\iota^{s}_{A})$ is an equivalence.
\end{proof}

We consider the set of morphisms \begin{equation}\label{wergerwgerferfrefwerf}  \hat W_{q}:=\{\iota^{s}_{A}:q^{s}A\to A^{s}\mid A\in  \nCalg_{h}\} \end{equation}  in $L_{K}\nCalg_{h}$.
 \begin{ddd} We define the   Dwyer-Kan localization  \begin{equation}\label{eqwfwdewqdewdq}L_{q}:L_{K}\nCalg_{h}\to L_{K}\nCalg_{h,q}
\end{equation} 
at the set $\hat W_{q}$.
\end{ddd}

We consider the composition \begin{equation}\label{vsdfvdfsjvposdvsfdvsfdvsfdvsfdv} 
   L_{h,K,q}:\:\:\: \nCalg\xrightarrow{L_{h}}\nCalg_{h}\xrightarrow{L_{K}}L_{K}\nCalg_{h}\xrightarrow{L_{q}} L_{K}\nCalg_{h,q}\ .  \end{equation}

\begin{prop}\label{eigjohwergerfgerwfrwef}
The functor  $L_{h,K,q}:\nCalg\to L_{K}\nCalg_{h,q}$ is a Dwyer-Kan localization.
\end{prop}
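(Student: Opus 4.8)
The plan is to realise $L_{h,K,q}$ as a Dwyer--Kan localization of $\nCalg$ directly, by invoking the general principle that a composition of Dwyer--Kan localizations is again one, and then exhibiting an explicit generating set of morphisms living already in $\nCalg$. Concretely, I expect $L_{h,K,q}$ to be the Dwyer--Kan localization at the union of the homotopy equivalences, the left upper corner inclusions, and the canonical maps $\iota_{A}\colon qA\to A$.

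First I would record the following formal lemma. Suppose $\bC\xrightarrow{L_{1}}\bC_{1}\xrightarrow{L_{2}}\bC_{2}$, where $L_{1}$ is the Dwyer--Kan localization at a set $W_{1}$ of morphisms of $\bC$ and $L_{2}$ the Dwyer--Kan localization at a set $W_{2}$ of morphisms of $\bC_{1}$, and assume that each element of $W_{2}$ is equivalent to $L_{1}(w)$ for some morphism $w$ of $\bC$; let $\hat W_{2}$ be a set of such lifts. Then $L_{2}\circ L_{1}$ is the Dwyer--Kan localization of $\bC$ at $W_{1}\cup\hat W_{2}$. The proof is a manipulation of universal properties: for any $\infty$-category $\bD$, composing the equivalence \eqref{vfdsviuhsviisvfdvsvsdv} for $L_{2}$, the fully faithful inclusion $\Fun^{W_{2}}(\bC_{1},\bD)\hookrightarrow\Fun(\bC_{1},\bD)$, and the equivalence \eqref{vfdsviuhsviisvfdvsvsdv} for $L_{1}$, identifies $\Fun(\bC_{2},\bD)$ with the full subcategory of $\Fun^{W_{1}}(\bC,\bD)$ of those $W_{1}$-inverting functors whose factorization through $\bC_{1}$ inverts $W_{2}$. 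Since a functor $G$ inverts $L_{1}(w)$ exactly when $G\circ L_{1}$ inverts $w$, this subcategory is precisely $\Fun^{W_{1}\cup\hat W_{2}}(\bC,\bD)$, which is the universal property sought.

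Second I would apply the lemma twice. Writing $W_{h}$ for the homotopy equivalences and $W_{K}=\{\kappa_{B}\mid B\in\nCalg\}$ for the left upper corner inclusions, the functor $L_{h}$ is the localization at $W_{h}$ by \eqref{vfdvsdfvwr3fes}, and $L_{K}$ is the localization of $\nCalg_{h}$ at $\{L_{h}(\kappa_{B})\}$ as a left Bousfield localization. Each $\kappa_{B}$ is the image under $L_{h}$ of an honest homomorphism of $C^{*}$-algebras, so the lemma gives that $L_{h,K}=L_{K}\circ L_{h}$ is the Dwyer--Kan localization at $W_{h}\cup W_{K}$, consistent with \eqref{qwfdqedewdqwedd}.

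The decisive step, and the only point beyond formal bookkeeping, is the lifting of the generators of $L_{q}$. By the definitions \eqref{fqwefiojiofjowqefqewdwdqwed} and \eqref{ewfqewwdweqdewdqwdeqdwdewdeqd}, together with the facts that $q^{s}A=L_{K}(qA)$, that $A^{s}=L_{K}(A)$, and that $L_{K}$ acts on a homomorphism $f$ by $\id_{K}\otimes f$, the generator $\iota^{s}_{A}\colon q^{s}A\to A^{s}$ of $\hat W_{q}$ from \eqref{wergerwgerferfrefwerf} is exactly $L_{h,K}(\iota_{A})$, where $\iota_{A}\colon qA\to A$ is the canonical homomorphism from \eqref{wrtbvwvwvsdfvfv}; one must check here that no spurious extra copy of $K$ appears, so that source and target match on the nose rather than merely up to a stability equivalence. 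Granting this identity, applying the lemma to $L_{q}\circ L_{h,K}$ with lifts $\{\iota_{A}\mid A\in\nCalg\}$ shows that $L_{h,K,q}$ is the Dwyer--Kan localization of $\nCalg$ at
\[
W_{h}\ \cup\ W_{K}\ \cup\ \{\iota_{A}\colon qA\to A\mid A\in\nCalg\}\ ,
\]
which proves the proposition.
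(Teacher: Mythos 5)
Your proposal is correct and takes essentially the same approach as the paper: the paper's (much terser) proof likewise observes that $L_{h,K,q}$ is a composition of Dwyer--Kan localizations, each generated by images of collections of morphisms already living in $\nCalg$ --- homotopy equivalences, the corner inclusions $\kappa_{A}$, and the maps covering $\iota^{s}_{A}$ --- implicitly invoking exactly the formal lemma you spell out. The only cosmetic difference is that you lift the generators of $L_{q}$ to $\iota_{A}:qA\to A$ (which indeed satisfies $L_{h,K}(\iota_{A})=\iota^{s}_{A}$ on the nose, since $L_{K}$ acts by $\id_{K}\otimes-$), whereas the paper lists the equivalent lifts $\id_{K}\otimes\iota_{A}:K\otimes qA\to K\otimes A$.
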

\begin{proof}
This follows from the fact that $L_{h,K,q}$ is a composition of Dwyer-Kan localizations which are
all determined by images of collections of morphisms in $\nCalg$, namely homotopy equivalences, left upper corner inclusions $\kappa_{A}$ from \eqref{oiwjveoivjowevsvdfvsfdv}, and the morphisms $\id_{K}\otimes \iota_{A}  :K\otimes qA \to K\otimes A$ 
for all $A$ in $\nCalg$.
\end{proof}

In \cref{wetgjowoegrewfwref}.\ref{wetigjwoerfwerfwrefwrf} we will show that the  mapping spaces in $  L_{K}\nCalg_{h,q}$   can be easily understood   in terms of a calculus of fractions. In a sense this result is of intermediate nature since 
in  \cref{wejogiwerfgerwfrewfwerfw} and  \cref{erugheiwugwefrefwerfewr}  we will state  a much better result at the cost of using more of $C^{*}$-algebra theory.

For every $A$ in $L_{K}\nCalg_{h}$ we consider the diagram $W(A):\nat^{\op}\to   L_{K}\nCalg_{h}$
\begin{equation}\label{gregfqweewdweqdqwed} \dots  \xrightarrow{\iota^{s}_{q^{3}A} } (q^{3}A)^{s}\xrightarrow{\iota^{s}_{q^{2}A}} (q^{2}A)^{s}\xrightarrow{\iota^{s}_{qA}} (qA)^{s}\xrightarrow{\iota^{s}_{A}} A^{s}\ .\end{equation} 
 We furthermore  let 
  $W_{q}$ be the subcategory of $L_{K}\nCalg_{h}$ generated by $\hat W_{q}$.
The diagram $W(A)$ is a {\em putative right calculus of fractions at $A$} for $W_{q}$ in the sense of \cite[Def. 7.2.2]{Cisinski:2017}. 
In fact, $A\simeq A^{s}$ is the final object of the diagram and all morphisms belong to $W_{q}$.

 We fix $A$ in $L_{K}\nCalg_{h}$  and
  consider the functor \begin{equation}\label{qewfwdwededwqed}H_{A}:L_{K}\nCalg_{h}\to \CGroups(\Spc)\ , \quad B\mapsto \colim_{n\in \nat}\Map_{L_{K}\nCalg_{h}}((q^{n}A)^{s},B) \ , \end{equation}
i.e., we insert the diagram \eqref{gregfqweewdweqdqwed} into the first argument of the mapping space and take the colimit.
The following result is a version of \cite[Prop. 2.1]{MR899916}.
\begin{prop}\label{weiojgowergfwerfwerf}
The functor $H_{A}$ is split-exact.
\end{prop}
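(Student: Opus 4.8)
The plan is to reduce the assertion, via suspension, to the level of homotopy groups, where it becomes the translation into the present framework of Cuntz' argument for \cite[Prop. 2.1]{MR899916}. First note that $H_{A}$ is reduced, and that since each $(q^{n}A)^{s}=q^{s}(q^{n-1}A)$ with $n\ge 1$ is a commutative group object in the semi-additive $\infty$-category $L_{K}\nCalg_{h}$ by \cref{wtgijoweoifrefwerferfwrf}, the monoids $\Map_{L_{K}\nCalg_{h}}((q^{n}A)^{s},B)$ are commutative groups for $n\ge 1$ by \cref{erkgowegergrewfwerf9}. As a filtered colimit which is eventually group-valued, $H_{A}(B)$ therefore indeed lands in $\CGroups(\Spc)$.

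Fix a split-exact sequence $0\to I\xrightarrow{j}B\xrightarrow{\pi}Q\to 0$ with section $s$. Since $H_{A}(s)$ is a section of $H_{A}(\pi)$ in the additive $\infty$-category $\CGroups(\Spc)$, the statement that $H_{A}(j)\colon H_{A}(I)\to\Fib(H_{A}(\pi))$ is an equivalence is equivalent to the statement that $(H_{A}(j),H_{A}(s))\colon H_{A}(I)\times H_{A}(Q)\to H_{A}(B)$ is an equivalence. An equivalence of commutative group objects in $\Spc$ is detected on all homotopy groups, and since homotopy groups commute with filtered colimits and $\Map_{L_{K}\nCalg_{h}}((q^{n}A)^{s},-)$ is left-exact, the identification $\Omega^{k}L_{h}(B)\simeq \Omega^{k}L_{h}\simeq L_{h}\circ S^{k}$ from \eqref{terhertgrtgetrgetrgt} (carried through $L_{K}$) gives
\[\pi_{k}H_{A}(B)\cong \colim_{n}\pi_{0}\,\ell\underline{\Hom}(q^{n}A,K\otimes S^{k}B)\ .\]
As $S^{k}=C_{0}(\R^{k})\otimes-$ preserves split-exact sequences together with their sections, the $\pi_{k}$-statement for $(I,B,Q)$ coincides with the $\pi_{0}$-statement for the split-exact sequence $0\to S^{k}I\to S^{k}B\to S^{k}Q\to 0$. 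It therefore suffices to prove, for \emph{every} split-exact sequence $0\to I\to B\to Q\to 0$, that the sequence $0\to \pi_{0}H_{A}(I)\to \pi_{0}H_{A}(B)\to \pi_{0}H_{A}(Q)\to 0$ of abelian groups is (split) short exact, where $\pi_{0}H_{A}(B)=\colim_{n}\pi_{0}\,\ell\underline{\Hom}(q^{n}A,K\otimes B)$.

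The section splits the sequence, so only exactness in the middle is at issue. Given a homomorphism $f\colon q^{n}A\to K\otimes B$ whose composite with $K\otimes\pi$ is null-homotopic, I must show that its class lies in the image of $H_{A}(j)$. For this I pass to the pre-quasihomomorphism picture of \cref{4tjigortgwrefrwefref}, representing the class by a pair of homomorphisms $q^{n-1}A\to M(K\otimes B)$ whose difference takes values in the ideal $K\otimes B$. After extending $s$ to the multiplier algebras, the chosen null-homotopy of the $Q$-component together with a rotation homotopy — built from the scalar rotation unitaries $U_{t}$ as in the proof of \cref{wtgijoweoifrefwerferfwrf}, carried out in the $2\times 2$ room provided by a block sum — can be used to homotope the representing pair into one whose two components both land in $M(K\otimes I)$. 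Such a pair represents a homomorphism factoring through $K\otimes I$, exhibiting $[f]$ in the image of $H_{A}(j)$. The same rotation argument applied to a class of $K\otimes I$ which becomes null in $K\otimes B$ yields injectivity.

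The main obstacle is this last paragraph: the purely $C^{*}$-algebraic manipulation of pre-quasihomomorphisms. Concretely one must extend the section $s$ to a strictly continuous homomorphism of multiplier algebras and then produce an explicit homotopy, living in $M(\Mat_{2}(K\otimes B))$ and given by conjugation with the rotation unitaries $U_{t}$, which simultaneously exploits the null-homotopy of $(K\otimes\pi)_{*}f$ to cancel the $Q$-part and pushes both components into the ideal. This is the step that is genuinely analytic and where the argument follows \cite[Prop. 2.1]{MR899916} closely; everything preceding it is formal homotopy theory internal to the localizations constructed in the previous sections.
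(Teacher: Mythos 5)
Your reduction to $\pi_{0}$-statements for arbitrary split-exact sequences via suspension is correct and is exactly the paper's first step. The problem is the second half. You yourself flag the key step --- middle exactness and injectivity --- as ``the main obstacle,'' and what you sketch there is not Cuntz's argument and faces real obstructions. A splitting \emph{homomorphism} $s:Q\to B$ is in general degenerate ($s(Q)B$ is not dense in $B$), so it does not extend to multiplier algebras; and lifting the given null-homotopy of $(K\otimes \pi)\circ f$ to a homotopy of $f$ itself is essentially the statement you are trying to prove --- a rotation unitary in $M(\Mat_{2}(K\otimes B))$ conjugating the pre-quasihomomorphism does not by itself produce such a lift. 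So as written the proof has a genuine gap at its center. (You also say ``only exactness in the middle is at issue,'' but injectivity of $\pi_{0}H_{A}(j)$ is not automatic from the section and is an essential part of the claim.)

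The actual argument, which is \cite[Prop. 2.1]{MR899916} and the paper's proof, needs no homotopies, no multiplier algebras and no rotations: it is one more application of the $q$-construction. Given $f:q^{n}A\to B^{s}$, the universal property of the free product yields the homomorphism $(f,s^{s}\pi^{s}f):q^{n}A * q^{n}A\to B^{s}$, and since $\pi s=\id_{Q}$ its two components have \emph{equal} composition with $\pi^{s}$; by \cref{4tjigortgwrefrwefref} the restriction to $q^{n+1}A$ therefore lands in $\ker(\pi^{s})=I^{s}$. This defines an exact, point-set level map
\[ l_{n}:\underline{\Hom}(q^{n}A,B^{s})\to \underline{\Hom}(q^{n+1}A,I^{s})\ ,\qquad l_{n}(f):=(f,s^{s}\pi^{s}f)\circ i_{q^{n}A}\ , \]
satisfying $l_{n}(j^{s}\circ g)=g\circ \iota_{q^{n}A}$ and $l_{n}(s^{s}\circ h)=0$. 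The first identity gives injectivity of $\pi_{0}H_{A}(j^{s})$ (apply $l_{m}$ to a trivialization of $j^{s}\circ g\circ\iota\circ\cdots$), injectivity then makes $l:=[l_{n}]$ well defined on $\pi_{0}H_{A}(B^{s})$, and one reads off the retraction formula $H_{A,0}(j^{s})\circ l+H_{A,0}(s^{s})\circ H_{A,0}(\pi^{s})=\id$, which together with $l\circ H_{A,0}(j^{s})=\id$ and $l\circ H_{A,0}(s^{s})=0$ forces the split short exactness. The shift from $q^{n}A$ to $q^{n+1}A$ is harmless precisely because $H_{A}$ is defined as the colimit over the tower \eqref{gregfqweewdweqdqwed}; this is the structural reason the colimit appears in the statement at all, and it is what lets the entire proof stay formal.
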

\begin{proof}
 We consider a split-exact sequence 
\begin{equation}\label{sfdvsdfvdfvfdcsdc}0\to I\xrightarrow{j} B\xrightarrow{\pi} Q\to 0
\end{equation}  of $C^{*}$-algebras 
  with split $s:Q\to B$. We must show that
  \[H_{A}(I^{s})\oplus H_{A}(Q^{s})\xrightarrow{H_{A}(j^{s})+ H_{A}(s^{s})} H_{A}(B^{s})\]
  is an equivalence, where the superscript $s$ stands for $K$-stabilization as in \eqref{gugwerg34wrefwferf}.
  
  We first observe that it suffices to show that this map induces an isomorphism of groups of connected components.
  For $i$ in $\nat$ we write $H_{A,i}(-):=\pi_{i}H_{A}(-)$ for the corresponding abelian group valued functor.  For every $i$ in $\nat$ we have 
 a canonical isomorphism  $H_{A,i}( -)\cong H_{A,0}(S^{i}(-))$, where $S^{i}(-):=C_{0}(\R^{i})\otimes-$ is the $i$-fold suspension functor. Using the fact that the functor
  $S^{i}(-)$ preserves exact sequences we see that
  it suffices to show
  that \begin{equation}\label{sdbdfverwvfdvsdfvdfvsfv}H_{A,0}(I^{s})\oplus H_{A,0}(Q^{s})\xrightarrow{H_{A,0}(j^{s})\oplus H_{A,0}(s^{s})} H_{A,0}(B^{s})
\end{equation} 
  is an isomorphism for all split-exact sequences \eqref{sfdvsdfvdfvfdcsdc}.
    
    We now use that $L_{K}$ is a left Bousfield localization
    and  \cref{erigjeroferferfrefwerf},  or  directly \eqref{vwihviuewhviwevewre},
    in order see that  \begin{equation}\label{ervekjvhefjkvnefiuvevfvs}\Map_{L_{K}\nCalg_{h}}((q^{n}A)^{s},(-)^{s})\simeq \Map_{\nCalg_{h}}(q^{n}A,(-)^{s}) \simeq \ell \underline{\Hom}(q^{n}A,(-)^{s})
\end{equation}       as a functors from $\nCalg$ to  $\Spc$.  Combining this with \eqref{qewfwdwededwqed}
    we get an equivalence
    \[H_{A}(B)\simeq \colim \left(\ell\underline{\Hom}(A,B^{s})\xrightarrow{\iota_{A}^{*}} \ell\underline{\Hom}(qA,B^{s})\xrightarrow{\iota_{qA}^{*}} \ell\underline{\Hom}(q^{2}A,B^{s})  \xrightarrow{\iota_{q^{2}A}^{*}} \dots\right)\ .\]
     We define a map  
   \[l_{n} :\underline{\Hom}(q^{n}A,B^{s})\to \underline{\Hom}(q^{n+1}A,I^{s})\] as follows.
    Let   $f:q^{n}A \to B^{s}$ be an element of  $\underline{\Hom} ( q^{n}A ,B^{s})$. Then we  get an associated map
      $(f, s^{s}\pi^{s}f):	q^{n}A*q^{n}A\to B^{s}$ with the components as indicated, see    \cref{4tjigortgwrefrwefref}. We observe that
    $\pi^{s}\circ (f, s^{s}\pi^{s}f)\circ i_{q^{n}A}=0$, where $i_{q^{n}A}:q^{n+1}A\to q^{n}A*q^{n}A$ is the canonical inclusion. Therefore we  can define
    $$ l_{n}(f):=(f, s^{s}\pi^{s}f)\circ i_{q^{n}A}:q^{n+1}A\to I^{s}\ .$$
    
   For $g:q^{n}A\to I^{s}$ we have
    $l_{n}(j^{s}\circ g)= g\circ \iota_{q^{n}A}$.
  Finally, for $h:q^{n}A\to Q^{s}  $ we have
  $l_{n}(s^{s}\circ h)=  (s^{s}\circ h,s^{s}\circ h)  \circ i_{q^{n}A}    =0$.
    
   We next show that   $H_{A,0}(j^{s})$ is injective.
   Let $g:q^{n}A\to I^{s}$ represent an element $[g]$ in $H_{A,0}(I^{s})$ such that $H_{A,0}(j^{s})([g])=0$.
   Then there exists $m$ in $\nat$ with $n\le m$ such that
   $[j^{s}\circ g\circ \iota_{q^{n}A}\circ \dots \iota_{q^{m-1}A}]=0$ in $\pi_{0}\underline{\Hom}(q^{m}A,B^{s})\ .$ But then
   \[0=[l_{m}(j^{s}\circ g\circ \iota_{q^{n}A}\circ \dots \iota_{q^{m-1}A})]
       =[ g\circ \iota_{q^{n}A}\circ \dots \iota_{q^{m-1}A}\circ  \iota_{q^{m}A}]
    \]
    in $\pi_{0}\underline{\Hom}(q^{m+1}A,I^{s})$. This implies that $[g]=0$.

    We now show that the family of  maps $(l_{n})_{n}$ determines a well-defined map
    $l:H_{A,0}(B^{s})\to H_{A,0}(I^{s})$. Let  $[f]$  in $H_{A,0}(B^{s})$ be represented by a map 
    $f:q^{n}A\to B^{s}$. Then \[H_{A,0}(j^{s})([l_{n}(f)])=[f]-H_{A,0}(s^{s}\circ \pi^{s})([f])\ .\] The right-hand side does not depend on the choice of the representative. By the injectivity of statement above we conclude that 
   $ [l_{n}(f)]$ is well-defined.
   
 We thus have \[H_{A,0}(j^{s})\circ l+ H_{A,0}(s^{s})\circ H_{A,0}(\pi^{s})=\id_{H_{A,0}}(B) \ ,\] \[l\circ H_{A,0}(j^{s})=\id_{H_{A,0}(I^{s})}\ , \qquad 
 l\circ  H_{A,0}(s^{s})=0\ ,  \qquad  H_{A,0}(p^{s})\circ H_{A,0}(s^{s})=\id_{H_{A,0}(Q^{s})}\ .\]  These equalities imply  that \eqref{sdbdfverwvfdvsdfvdfvsfv} is an isomorphism.
  \end{proof}

  The putative right calculus of fractions is called a {\em right calculus of fractions} in the sense of \cite[Def. 7.2.6]{Cisinski:2017} 
   if 
  the functor $H_{A}(-)$ from \eqref{qewfwdwededwqed} sends the morphisms from $W_{q}$ to equivalences.
   
 \begin{prop}\label{wetgjowoegrewfwref}\mbox{}
\begin{enumerate}
\item \label{werogjpegrrewwferwfrefw}
$W(A)$ is a right-calculus of fractions.  
 \item\label{wetigjwoerfwerfwrefwrf} We have
\begin{equation}\label{vsdfvfsvqrferf}H_{A}(B)\simeq \Map_{L_{K}\nCalg_{h,q}}(A,B)\ . \end{equation} 
\item\label{woitgjwoerfrfwfrefefw} The $\infty$-category  $L_{K}\nCalg_{h,q}$ is additive.
\item\label{wetigjwoerfwerfwrefwrf1}  The localization $L_{q}$ is left-exact.
\item \label{wetigjwoerfwerfwrefwrf2} 
We have an essentially unique  commutative diagram \begin{equation}\label{qewfdqewdqewkjnjkrfruiw}\xymatrix{&\nCalg\ar[d]^{L_{h,K}}\ar[ddl]_{L_{h,K,\splt}}\ar[ddr]^{L_{h,K,q}}&\\&L_{K}\nCalg_{h}\ar[dr]^{L_{q}}\ar[dl]_{L_{\splt}}&\\ L_{K}\nCalg_{h,\splt}\ar@{..>}[rr]^{L}&&L_{K}\nCalg_{h,q}}\ ,
\end{equation}  
where $L$ is a left exact functor.

\end{enumerate}
 \end{prop}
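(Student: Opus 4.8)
The plan is to treat the five assertions in the order \ref{werogjpegrrewwferwfrefw}, \ref{wetigjwoerfwerfwrefwrf}, \ref{wetigjwoerfwerfwrefwrf1}, \ref{woitgjwoerfrfwfrefefw}, \ref{wetigjwoerfwerfwrefwrf2}, reducing everything to the split-exactness of $H_{A}$ from \cref{weiojgowergfwerfwerf} together with Cuntz's \cref{wergkooprefrwefrefre}. For Assertion \ref{werogjpegrrewwferwfrefw} I first record that $H_{A}$ is $\CGroups(\Spc)$-valued: for $n\geq 1$ the object $(q^{n}A)^{s}=q^{s}(q^{n-1}A)$ is a commutative group, hence a cogroup, in $L_{K}\nCalg_{h}$ by \cref{wtgijoweoifrefwerferfwrf} and \cref{erkgowegergrewfwerf9}, so each mapping space out of it is a group and the filtered colimit \eqref{qewfwdwededwqed} inherits this. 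Moreover $H_{A}$ is left-exact, being a filtered colimit of corepresentable functors. Since $H_{A}$ is split-exact by \cref{weiojgowergfwerfwerf} and takes values in groups, \cref{wergkooprefrwefrefre} applies (every object of $L_{K}\nCalg_{h}$ is $K$-stable) and shows that $H_{A}$ inverts every generator $\iota^{s}_{B}$ of $\hat W_{q}$ from \eqref{wergerwgerferfrefwerf}. As the morphisms inverted by a functor form a subcategory, $H_{A}$ inverts all of $W_{q}$, which is exactly the condition defining a right calculus of fractions. Assertion \ref{wetigjwoerfwerfwrefwrf} is then the mapping-space formula for a right calculus of fractions \cite[Sec. 7.2]{Cisinski:2017} applied to the diagram \eqref{gregfqweewdweqdqwed}.

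The main obstacle is Assertion \ref{wetigjwoerfwerfwrefwrf1}, that $L_{q}$ is left-exact, since $\hat W_{q}$ is not visibly closed under pull-backs. Here I would introduce the closure $\overline{W}_{q}$ of $\hat W_{q}$ under pull-backs and the $2$-out-of-$3$ property, so that the localization of $L_{K}\nCalg_{h}$ at $\overline{W}_{q}$ is left-exact by \cref{qerighqofqwefqew9}. Because $H_{A}$ is left-exact, the morphisms it inverts form a pull-back– and $2$-out-of-$3$–closed class containing $\hat W_{q}$, hence containing $\overline{W}_{q}$; thus $W(A)$ is a right calculus of fractions for $\overline{W}_{q}$ as well, and \cite[Sec. 7.2]{Cisinski:2017} computes $\Map_{L_{K}\nCalg_{h}[\overline{W}_{q}^{-1}]}(A,B)\simeq H_{A}(B)$, the same colimit as in Assertion \ref{wetigjwoerfwerfwrefwrf}. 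The canonical comparison functor $L_{K}\nCalg_{h,q}\to L_{K}\nCalg_{h}[\overline{W}_{q}^{-1}]$ commutes with the two localizations and therefore induces the identity on these colimits; being essentially surjective and fully faithful it is an equivalence, so $L_{q}$ inherits left-exactness.

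Granting left-exactness, Assertion \ref{woitgjwoerfrfwfrefefw} follows formally: $L_{q}$ is a left-exact localization of the semi-additive, left-exact category $L_{K}\nCalg_{h}$, so $L_{K}\nCalg_{h,q}$ is semi-additive by the same general fact used in \cref{wreoigwtegwtgw9}.\ref{wergiuhweriugrewf1}; and by Assertion \ref{wetigjwoerfwerfwrefwrf} all its mapping spaces $H_{A}(B)$ are commutative groups, so every object is a group by \cref{erkgowegergrewfwerf9} and \cref{eriojgwegwerfrfw}, i.e. the category is additive.

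Finally, for Assertion \ref{wetigjwoerfwerfwrefwrf2} I would first check that $L_{q}$ is split-exact: by Assertion \ref{wetigjwoerfwerfwrefwrf} the covariant functor $D\mapsto \Map_{L_{K}\nCalg_{h,q}}(C,L_{q}(D))$ equals $H_{C}(D)$, which is split-exact, so $L_{q}$ sends split-exact sequences to fibre sequences by the Yoneda lemma. Since $L_{q}$ sends contractible algebras to the zero object, it inverts $\hat W_{\splt}$, and being left-exact it inverts the whole closure $W_{\splt}$ by \eqref{erfwerferfwef}. The universal property of the left-exact localization $L_{\splt}$ (left-exact by \cref{wreoigwtegwtgw9}.\ref{wergiuhweriugrewf}) then yields an essentially unique left-exact functor $L$ with $L\circ L_{\splt}\simeq L_{q}$, and the commutativity of \eqref{qewfdqewdqewkjnjkrfruiw} together with $L_{h,K,\splt}$ and $L_{h,K,q}$ is immediate from the definitions of these composite localizations.
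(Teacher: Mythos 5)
Your proof is correct, and for Assertions \ref{werogjpegrrewwferwfrefw}, \ref{wetigjwoerfwerfwrefwrf}, \ref{woitgjwoerfrfwfrefefw} and \ref{wetigjwoerfwerfwrefwrf2} it follows the paper's line: split-exactness of $H_{A}$ (\cref{weiojgowergfwerfwerf}) together with \cref{wergkooprefrwefrefre} gives the right calculus of fractions, Cisinski's formula gives \eqref{vsdfvfsvqrferf}, and additivity and the factorization $L$ come out essentially as in the paper. (For Assertion \ref{wetigjwoerfwerfwrefwrf2} the paper factors $L_{h,K,q}$ through the universal property \cref{wreoigwtegwtgw9}.\ref{wergiuhweriugrewf2} of $L_{h,K,\splt}$ on $\nCalg$, which is why it must record Schochet exactness of $L_{h,K,q}$; your route through the universal property of $L_{\splt}$ as a left-exact localization of $L_{K}\nCalg_{h}$ at $W_{\splt}$, using \eqref{erfwerferfwef} to pass from $\hat W_{\splt}$ to $W_{\splt}$, is equivalent and avoids that detour. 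Your derivation of semi-additivity in Assertion \ref{woitgjwoerfrfwfrefefw} is also more explicit than the paper's one-line remark.) The genuine divergence is Assertion \ref{wetigjwoerfwerfwrefwrf1}: the paper obtains left-exactness of $L_{q}$ in one step from the formula \eqref{vsdfvfsvqrferf}, since by \eqref{qewfwdwededwqed} the functor $H_{A}$ is a filtered colimit of corepresentable functors and filtered colimits in $\Spc$ commute with finite limits; no auxiliary localization appears. You instead pass to the closure $\overline{W}_{q}$ of $\hat W_{q}$ under pull-backs and the $2$-out-of-$3$ property, whose Dwyer-Kan localization is left-exact by \cref{qerighqofqwefqew9}, and then identify it with $L_{K}\nCalg_{h,q}$ by observing that left-exactness of $H_{A}$ makes the class of morphisms inverted by $H_{A}$ closed under pull-backs and $2$-out-of-$3$, so that both localizations carry the same calculus of fractions and the comparison functor is fully faithful and essentially surjective. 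Both arguments rest on the same ingredient (corepresentables plus filtered colimits); yours is longer but buys the extra statement that inverting $\hat W_{q}$ and inverting its pull-back/$2$-out-of-$3$ closure produce the same localization, which makes $L_{q}$ structurally parallel to the localizations $L_{!}$ of \cref{wegjwoergerwfrferfrwefwf}, while the paper's argument is the more economical one.
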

\begin{proof}
The Assertion \ref{werogjpegrrewwferwfrefw} follows from 
\cref{weiojgowergfwerfwerf} and \cref{wergkooprefrwefrefre}.

The Assertion \ref{wetigjwoerfwerfwrefwrf} follows from \ref{werogjpegrrewwferwfrefw} and  
the   general formula \cite[Def. 7.2.8]{Cisinski:2017} 
 for the mapping spaces  in a localization in the presence of a right calculus of fractions.

For Assertion \ref{woitgjwoerfrfwfrefefw} note that by \eqref{vsdfvfsvqrferf} the mapping spaces in $L_{K}\nCalg_{h,q}$ are commutative groups.

Assertion \ref{wetigjwoerfwerfwrefwrf1}  is a consequence of  the formula (combine \eqref{qewfwdwededwqed} and \eqref{vsdfvfsvqrferf})  \begin{equation}\label{wervfweoivjroivjworecmdlcmsldffd}\Map_{L_{K}\nCalg_{h,q}}(A,B)\simeq \colim_{\nat}\Map_{L_{K}\nCalg_{h}}((q^{n}A)^{s},B)
\end{equation}
for the mapping space as a colimit over the filtered  poset $\nat$ and the fact that filtered colimits in $\Spc$ commute with finite limits.

 We finally show Assertion \ref{wetigjwoerfwerfwrefwrf2}. The two upper triangles in \eqref{qewfdqewdqewkjnjkrfruiw} reflect the definitions of the maps. 
  By Assertion \ref{wetigjwoerfwerfwrefwrf} the functor $H_{A}(-)$ represents the mapping space in $L_{K}\nCalg_{h,q}$. Since it is split-exact by \cref{weiojgowergfwerfwerf}
the localization  $L_{q}$ sends (the images under $L_{h,K}$ of)  split-exact sequences
to fibre sequences.  The composition $L_{h,K,q}$ is thus homotopy invariant, stable, and Schochet- and split-exact.  By the universal property of $L_{h,K,\splt}$ stated in  \cref{wreoigwtegwtgw9}.\ref{wergiuhweriugrewf2} we get the map $L$ and the two-morphism  filling of the outer triangle 
 in \eqref{qewfdqewdqewkjnjkrfruiw}. We finally use
 the universal property \eqref{qwfdqedewdqwedd} of $L_{h,K}$ in order to define the two-morphism filling  the lower triangle.
\end{proof}

\begin{prop}\label{rthogijprtgfrefrfw}
The functor $L$ from \eqref{qewfdqewdqewkjnjkrfruiw} is  equivalent to  the right adjoint of a right Bousfield localization  $$\incl: L_{K}{\nCalg_{h,\splt}}^{\group} \leftrightarrows L_{K}\nCalg_{h,\splt}:R\ .$$  \end{prop}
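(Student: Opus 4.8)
The plan is to exhibit $L_{K}{\nCalg_{h,\splt}}^{\group}$ as the coreflective subcategory of colocal objects for the localization $L$, and then identify $L$ with the resulting coreflector by a uniqueness-of-localizations argument. Write $\bC:=L_{K}\nCalg_{h,\splt}$ and let $\bar W_{q}:=L_{\splt}(\hat W_{q})$ be the image in $\bC$ of the set $\hat W_{q}$ from \eqref{wergerwgerferfrefwerf}. First I would record that $L$ is itself the Dwyer--Kan localization of $\bC$ at $\bar W_{q}$: since $L\circ L_{\splt}=L_{q}$ and $L_{q}$ is split-exact (it sends the images under $L_{h,K}$ of split-exact sequences to fibre sequences, see \cref{wetgjowoegrewfwref}), a functor out of $L_{K}\nCalg_{h}$ inverts $\hat W_{q}$ if and only if it inverts $W_{\splt}\cup\hat W_{q}$, and comparing the universal property \eqref{qwfdqedewdqwedd} with that of $L_{q}$ shows that $L$ enjoys the universal property of the localization at $\bar W_{q}$.

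The heart of the argument is to show that every group object $G$ of $\bC$ is colocal for $\bar W_{q}$, i.e. that $\Map_{\bC}(G,L_{\splt}(\iota^{s}_{A}))$ is an equivalence for all $A$. For this I would apply \cref{wergkooprefrwefrefre} to the functor $E:=\Map_{\bC}(G,L_{\splt}(-)):L_{K}\nCalg_{h}\to\CGroups(\Spc)$. This $E$ is left-exact, and it is split-exact because $L_{\splt}$ is split-exact and $\Map_{\bC}(G,-)$ preserves fibre sequences. The one hypothesis of \cref{wergkooprefrwefrefre} that requires care is that $E(A)$ be a group; this is precisely where the group structure on $G$ enters, but in the contravariant variable. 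Since $G$ is a group object it is also a cogroup object by \cref{erkgowegergrewfwerf9}, so $\Map_{\bC}(G,-)$ takes values in $\CGroups(\Spc)$ and $E(A)$ is automatically grouplike. I expect this point -- that mapping \emph{out} of a group object already lands in groups -- to be the main subtlety of the whole proof, since a naive attempt to invert $\iota^{s}_{A}$ by mapping \emph{into} the non-group target $L_{\splt}(A^{s})$ would fail to meet the hypotheses of \cref{wergkooprefrwefrefre}.

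With colocality in hand I would construct the coreflection pointwise. Every object of $\bC$ is equivalent to some $X\simeq L_{\splt}(A^{s})$, as $L_{\splt}$ is essentially surjective, and the map $\beta_{X}:=L_{\splt}(\iota^{s}_{A}):L_{\splt}(q^{s}A)\to X$ has colocal source: indeed $q^{s}A$ is a group object by \cref{wtgijoweoifrefwerferfwrf}, and $L_{\splt}$, being left-exact, preserves group objects, so $L_{\splt}(q^{s}A)\in\bC^{\group}$ is colocal by the previous step; moreover $\beta_{X}\in\bar W_{q}$. A standard two-out-of-six argument then shows that the colocal objects are \emph{exactly} $\bC^{\group}$ (a colocal $Z$ receives an equivalence from $\beta_{Z}$), so the existence of a colocal coreflection for every object yields a right adjoint $R$ to the inclusion $\incl:\bC^{\group}\hookrightarrow\bC$; this is the asserted right Bousfield localization, and the coreflector $R:\bC\to\bC^{\group}$ is the Dwyer--Kan localization of $\bC$ at the counits $\{\beta_{X}\}=\bar W_{q}$.

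Finally, since $L$ and $R$ are both Dwyer--Kan localizations of $\bC$ at $\bar W_{q}$, they are canonically equivalent. This furnishes the equivalence $L_{K}\nCalg_{h,q}\simeq\bC^{\group}$ under which $L$ corresponds to $R$, which is exactly the statement of the proposition. The remaining verifications -- essential surjectivity of $L_{\splt}$, preservation of group objects by the left-exact $L_{\splt}$, and the formal comparison of localizations -- are routine once the colocality of group objects is established.
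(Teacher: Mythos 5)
Your proof is correct, and it reaches the right Bousfield localization by a genuinely different route than the paper. Writing $\bC:=L_{K}\nCalg_{h,\splt}$, the paper first constructs the coreflector $R$ globally: it shows that $L_{\splt}\circ q^{s}$ is split-exact and left-exact (by exhibiting it as the fibre of the fold map $L_{\splt}((A*A)^{s})\simeq L_{\splt}(A^{s})\times L_{\splt}(A^{s})\to L_{\splt}(A^{s})$ between split-exact, left-exact functors), descends it along $L_{\splt}$ to an endofunctor $R$ of $\bC$ with counit $\kappa:=L_{\splt}(\iota^{s}):R\to \id$, and then verifies the dual Bousfield criterion recalled at the beginning of \cref{werigjowergerwg9} --- that $\kappa_{R(X)}$ and $R(\kappa_{X})$ are equivalences --- by applying \cref{wergkooprefrwefrefre} \emph{covariantly}, to $E=L_{\splt}$ and to $E=R\circ L_{\splt}\simeq L_{\splt}\circ q^{s}$; the comparison with $L$ is then run through the universal property of $L_{q}$ on $L_{K}\nCalg_{h}$, producing the mutually inverse functors $L'$ and $L^{\prime,-1}$. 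You instead apply \cref{wergkooprefrwefrefre} \emph{contravariantly}, to $E=\Map_{\bC}(G,L_{\splt}(-))$ for a group object $G$; this hinges on your observation that groups and cogroups coincide in a semi-additive $\infty$-category (\cref{erkgowegergrewfwerf9}), so that mapping \emph{out} of $G$ is automatically group-valued --- and your diagnosis that mapping into the non-group $L_{\splt}(A^{s})$ would not meet the hypotheses of the lemma is exactly the right point. From colocality you assemble the adjoint pointwise, with values $L_{\splt}(q^{s}A)$ (group objects by \cref{wtgijoweoifrefwerferfwrf} and left-exactness of $L_{\splt}$), and you identify $L\simeq R$ because both are Dwyer-Kan localizations of $\bC$ at the same set $L_{\splt}(\hat W_{q})$. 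What the paper's route buys is the explicit functorial formula $R\circ L_{\splt}\simeq L_{\splt}\circ q^{s}$ --- the coreflector \emph{is} the descended $q$-construction --- which is reused afterwards, e.g.\ in the proof of \cref{erfjrgergfregrgw9} and in the symmetric monoidal refinement of $L_{q}$, where $R(A)$ is manipulated as the fibre of the fold map; your route is more formal, avoids having to check that $L_{\splt}\circ q^{s}$ inverts the full closure $W_{\splt}$, and makes the coreflective structure transparent. One shared piece of fine print, affecting both arguments equally: \cref{wergkooprefrwefrefre} is stated for $K$-stable $A$, whereas $\hat W_{q}$ is indexed by all $A$ in $\nCalg_{h}$; this is harmless, since when $E$ is group-valued everywhere the proof of that lemma never needs the passage from $E(A)$ to $E(A^{s})$, which is the only place $K$-stability enters.
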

\begin{proof} 
We call a functor on $L_{K}\nCalg_{h}$ split-exact if it sends the elements of $W_{\splt}$ to equivalences.
We first show that $L_{\splt}\circ q^{s}$ is split-exact and left-exact
 and therefore by \cref{wreoigwtegwtgw9}.\ref{wergiuhweriugrewf} descends to a left-exact functor 
\[R:L_{K}\nCalg_{h,\splt}\to L_{K}\nCalg_{h,\splt}\ .\]

The functor $L_{\splt}:L_{K}\nCalg_{h}\to L_{K}\nCalg_{h,\splt}$ is split-exact  by definition,  and it is left exact by 
\cref{wreoigwtegwtgw9}.\ref{wergiuhweriugrewf}.
  The functor
   $L_{\splt}\circ q^{s}$ sends $A$ in $L_{K}\nCalg_{h}$  to the fibre of
$L_{\splt}((A*A)^{s})\xrightarrow{d_{A}^{s}} L_{\splt}(A^{s})$, where $d_{A}:A*A\to A$ is the fold map.
Since $L_{\splt}((A*A)^{s})\simeq L_{\splt}(A^{s})\times L_{\splt}(A^{s})$ by semi-additivity
we can identify $L_{\splt}\circ q^{s}$ with the functor which sends $A$ to the fibre of  some natural map
$ L_{\splt}(A^{s})\times L_{\splt}(A^{s})\to  L_{\splt}(A^{s})$ between split-exact and left exact functors. 
We conclude that $L_{\splt}\circ q^{s} $ itself  is split-exact and left exact.
%

As a consequence of \cref{wtgijoweoifrefwerferfwrf} and the fact that $L_{\splt}$ preserves products  the functor $R$ takes values in $L_{K}{\nCalg_{h,\splt}}^{\group}$. We have a  natural transformation 
\begin{equation}\label{gbsbkmopsfbsbdffbsb}\kappa:=L_{\splt}(\iota^{s}):R\to \id_{L_{K}{\nCalg_{h,\splt}}}:L_{K}{\nCalg_{h,\splt}}\to L_{K}{\nCalg_{h,\splt}}\ .
\end{equation}
If $A$  in $ L_{K}\nCalg_{h}$  has the property that $L_{\splt}(A)$ is a group, then  $\kappa_{L_{\splt}(A)}\simeq L_{\splt}(\iota^{s}_{A})$ is an equivalence by  \cref{wergkooprefrwefrefre}
since 
$L_{\splt}$ is split-exact.
Since $L_{\splt}$, being a Dwyer-Kan localization,  is essentially surjective
we can conclude that  the essential image of $R$ is  $L_{K}{\nCalg_{h,\splt}}^{\group}$.

In particular we see that  $\kappa_{R(A)}$ is an equivalence for very $A$ in $ L_{K}\nCalg_{h,\splt}$.
Since
$R\circ L_{\splt}\simeq L_{\splt}\circ q^{s}$ is split-exact and takes values in groups we can conclude again by \cref{wergkooprefrwefrefre} that
$R(\kappa_{A})$ is also  an equivalence for every $A$.
 
As explained at the beginning of \cref{werigjowergerwg9}  this implies  that $R$
is the right-adjoint of a right Bousfield localization
\begin{equation}\label{vsfv3fvfsvrqfwfsvfwervwerfwerfrw}\incl:L_{K}{\nCalg_{h,\splt}}^{\group}\leftrightarrows L_{K}\nCalg_{h,\splt}:R
\end{equation} 
with counit $\kappa$.

The functor $L$ inverts the morphisms $\kappa_{A}$ since
$L(\kappa_{A})\simeq L_{q}(\iota^{s}_{A})$. This gives the factorization $L'$ in the diagram 
 \[\xymatrix{&L_{K}\nCalg_{h}\ar[dr]^{L_{q}}\ar[dl]_{L_{\splt}}&\\ L_{K}\nCalg_{h,\splt}\ar[rr]^{L}\ar[dr]^{R}&&L_{K}\nCalg_{h,q}\ar@/^0.4cm/@{..>}[dl]^{L^{\prime,-1}}\\&L_{K}{\nCalg_{h,\splt}}^{\group}\ar@/^0.4cm/@{..>}[ur]^{L'} &}\]
 Since $R\circ L_{\splt}$ sends    the morphisms $\iota^{s}_{A}$ to the equivalences $R(\kappa_{A})$  for all $A$ in $L_{K}\nCalg_{h}$ 
  we get an inverse to  $L'$
 from the universal property of $L_{q}$. 
\end{proof}

  Recall the definition  \eqref{vsdfvdfsjvposdvsfdvsfdvsfdvsfdv} of $ L_{h,K,q}: \nCalg\to L_{K}\nCalg_{h,q}$. 
\begin{prop}\label{erfjrgergfregrgw9}
For any left-exact and additive $\infty$-category $\bD$ we have an equivalence
\begin{equation}\label{fwefwefhhuhdiqweuhdwedwqdd} L_{h,K,q}^{*}:\Fun^{\lex}(L_{K}\nCalg_{h,q},\bD)\stackrel{\simeq}{\to} \Fun^{h,s,splt+Sch}(\nCalg,\bD)\ .\end{equation}
\end{prop}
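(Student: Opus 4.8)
The plan is to factor $L_{h,K,q}$ through the split-exact localization and reduce the claim to the universal property of $L_{h,K,\splt}$ already recorded in \cref{wreoigwtegwtgw9}.\ref{wergiuhweriugrewf2}. By the commutativity of the diagram \eqref{qewfdqewdqewkjnjkrfruiw} (\cref{wetgjowoegrewfwref}.\ref{wetigjwoerfwerfwrefwrf2}) we have $L_{h,K,q}\simeq L\circ L_{h,K,\splt}$, hence $L_{h,K,q}^{*}\simeq L_{h,K,\splt}^{*}\circ L^{*}$. Since \cref{wreoigwtegwtgw9}.\ref{wergiuhweriugrewf2} (for $!=\splt$) already gives that
\[ L_{h,K,\splt}^{*}:\Fun^{\lex}(L_{K}\nCalg_{h,\splt},\bD)\stackrel{\simeq}{\to}\Fun^{h,s,splt+Sch}(\nCalg,\bD) \]
is an equivalence for left-exact $\bD$, I would reduce the proposition to showing that, for left-exact and additive $\bD$, the functor
\[ L^{*}:\Fun^{\lex}(L_{K}\nCalg_{h,q},\bD)\longrightarrow \Fun^{\lex}(L_{K}\nCalg_{h,\splt},\bD) \]
is an equivalence.

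For this I would invoke \cref{rthogijprtgfrefrfw}, which identifies $L$ with the right-adjoint $R$ of the right Bousfield localization $\incl:L_{K}{\nCalg_{h,\splt}}^{\group}\leftrightarrows L_{K}\nCalg_{h,\splt}:R$ with counit $\kappa$. As recalled at the start of \cref{werigjowergerwg9}, such an $R$ is then a left-exact Dwyer-Kan localization at the set $W_{R}:=\{\kappa_{X}\mid X\in L_{K}\nCalg_{h,\splt}\}$. The universal property of a left-exact Dwyer-Kan localization (stated at the beginning of \cref{lhK}) then identifies $L^{*}$ with the fully faithful inclusion of the subcategory $\Fun^{\lex,W_{R}}(L_{K}\nCalg_{h,\splt},\bD)$ of left-exact functors that invert $W_{R}$. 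So the only remaining point is that, when $\bD$ is additive, this inclusion is an equality, i.e. every left-exact functor to $\bD$ automatically inverts $W_{R}$.

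To establish this automatic inversion I would take a left-exact $G:L_{K}\nCalg_{h,\splt}\to\bD$ and form $F:=G\circ L_{\splt}:L_{K}\nCalg_{h}\to\bD$. Then $F$ is left-exact, and since $L_{\splt}$ carries the $L_{h,K}$-images of split-exact sequences to fibre sequences while $G$ preserves fibre sequences, $F$ is split-exact in the sense of \cref{wergkooprefrwefrefre}. Additivity of $\bD$ makes every object $F(A)$ a group object, so \cref{wergkooprefrwefrefre} applies and yields that $F(\iota^{s}_{A}):F(q^{s}A)\to F(A^{s})$ is an equivalence for all $A$. Since $\kappa_{L_{\splt}(A)}\simeq L_{\splt}(\iota^{s}_{A})$ by \eqref{gbsbkmopsfbsbdffbsb} and $L_{\splt}$ is essentially surjective, these morphisms exhaust $W_{R}$ up to equivalence; thus $G(\kappa_{X})=F(\iota^{s}_{A})$ is an equivalence for every $X\simeq L_{\splt}(A)$, and $G$ inverts $W_{R}$ as required. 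Composing the two equivalences then gives \eqref{fwefwefhhuhdiqweuhdwedwqdd}.

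I expect the genuine content to be entirely absorbed into \cref{rthogijprtgfrefrfw} and the Cuntz-type \cref{wergkooprefrwefrefre}; granting those, the argument is the formal bookkeeping of the factorization $L_{h,K,q}\simeq L\circ L_{h,K,\splt}$ together with the automatic-inversion step. The main obstacle is precisely this automatic-inversion step, where additivity of the target is converted into invertibility of the canonical maps $\iota_{A}$; it rests essentially on \cref{wergkooprefrwefrefre}, whose own proof encodes the split-exact group-completion mechanism underlying the $q$-construction.
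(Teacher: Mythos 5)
Your proposal is correct and takes essentially the same route as the paper: the paper proves \eqref{fwefwefhhuhdiqweuhdwedwqdd} by the chain of equivalences $\Fun^{\lex}(L_{K}\nCalg_{h,q},\bD)\stackrel{L^{\prime,*}}{\simeq}\Fun^{\lex}(L_{K}{\nCalg_{h,\splt}}^{\group},\bD)\stackrel{R^{*}}{\simeq}\Fun^{\lex}(L_{K}\nCalg_{h,\splt},\bD)\stackrel{L_{h,K,\splt}^{*}}{\simeq}\Fun^{h,s,splt+Sch}(\nCalg,\bD)$, where the middle step is justified exactly by your automatic-inversion argument, namely that the counit components $\kappa_{A}\simeq L_{\splt}(\iota^{s}_{A})$ generate the Dwyer-Kan localization $R$ from \cref{rthogijprtgfrefrfw} and that, by \cref{wergkooprefrwefrefre}, any left-exact functor to an additive target inverts them. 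The only cosmetic difference is that the paper factors $L^{*}$ as $R^{*}\circ L^{\prime,*}$ instead of treating $L$ as a single left-exact localization at $W_{R}$.
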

\begin{proof}
This follows from the following chain of equivalences
\begin{eqnarray*}
\Fun^{\lex}(L_{K}\nCalg_{h,q},\bD)&\stackrel{L^{\prime,*}}{\simeq}&\Fun^{\lex}(L_{K}{\nCalg_{h,\splt}}^{\group},\bD)\\
&\stackrel{R^{*}}{\simeq}&
\Fun^{\lex}(L_{K}\nCalg_{h,\splt},\bD)\\
&\stackrel{L_{h,K,\splt}^{*}, \eqref{ewfwqedqewdedqwd}}{\simeq}&\Fun^{h,s,splt+Sch}( \nCalg ,\bD)\ .
\end{eqnarray*}
In order to see that $R^{*}$ is an equivalence note that 
the  components $\kappa_{A}:R(A)\to A$ of the natural transformation \eqref{gbsbkmopsfbsbdffbsb} generate the  Dwyer-Kan localization
$R$. As a 
 consequence of \cref{wergkooprefrwefrefre} any left exact functor 
$ L_{K}\nCalg_{h,\splt}\to \bD$ to an additive $\infty$-category $\bD$ sends theses components   to equivalences. 
%
\end{proof}

\begin{prop} For $?$ in $\{\min,\max\}$
the localization $L_{q}$ has a symmetric monoidal refinement and the tensor product $\otimes_{?}$ on
$L_{K}\nCalg_{h,q}$ is bi-left exact.
\end{prop}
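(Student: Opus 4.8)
The plan is to verify the symmetric monoidal refinement by means of the criterion of \cite[Def. 3.3.2]{hinich} recalled after \eqref{qwefqwefwefewdwedqewd}: it suffices to show that for every object $C$ of $L_{K}\nCalg_{h}$ the functor $C\otimes_{?}-$ sends $L_{q}$-equivalences to $L_{q}$-equivalences, i.e. preserves the class $\tilde W_{q}$ of morphisms inverted by $L_{q}$. Equivalently, writing $E_{C}:=L_{q}\circ(C\otimes_{?}-):L_{K}\nCalg_{h}\to L_{K}\nCalg_{h,q}$, I must show that $E_{C}$ inverts $\tilde W_{q}$. Since $L_{q}$ is the Dwyer--Kan localization at $\hat W_{q}$ from \eqref{wergerwgerferfrefwerf}, it is enough to show that $E_{C}$ inverts $\hat W_{q}$: then $E_{C}$ factors through $L_{q}$ and hence inverts every $L_{q}$-equivalence.

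The key point is that $E_{C}$ is split-exact. First I note that every object of $L_{K}\nCalg_{h}$ is $K$-stable and therefore of the form $C\simeq L_{h,K}(C')$ for a $C^{*}$-algebra $C'$. Given a split-exact sequence $0\to I\to B\to Q\to 0$ of $C^{*}$-algebras, the sequence $0\to C'\otimes_{?}I\to C'\otimes_{?}B\to C'\otimes_{?}Q\to 0$ is again split-exact, since both $\otimes_{\max}$ and $\otimes_{\min}$ preserve split-exact sequences (recalled in \cref{qrfuhqeiurfqwfdewfdqwef9}). Because $L_{h,K}$ is symmetric monoidal for $\otimes_{?}$ (combining the symmetric monoidal refinement of $L_{h}$ with \cref{wreogkpwegrewferf}) we have $(C\otimes_{?}-)\circ L_{h,K}\simeq L_{h,K}(C'\otimes_{?}-)$, and applying $L_{q}$ gives $E_{C}\circ L_{h,K}\simeq L_{h,K,q}(C'\otimes_{?}-)$. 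Since $L_{q}$ sends images under $L_{h,K}$ of split-exact sequences to fibre sequences (see the proof of \cref{wetgjowoegrewfwref}.\ref{wetigjwoerfwerfwrefwrf2}, using \cref{weiojgowergfwerfwerf}), the functor $E_{C}$ is split-exact in the sense preceding \cref{wergkooprefrwefrefre}. As the target $L_{K}\nCalg_{h,q}$ is additive (\cref{wetgjowoegrewfwref}.\ref{woitgjwoerfrfwfrefefw}), every object $E_{C}(A)$ is a group, so \cref{wergkooprefrwefrefre} shows that $E_{C}(\iota^{s}_{A})$ is an equivalence for all $A$; that is, $E_{C}$ inverts $\hat W_{q}$. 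By the reduction above this yields the symmetric monoidal refinement of $L_{q}$.

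For bi-left exactness I argue as in \cref{ewrogpwergfrefwrefwref}.\ref{wrthgioweiogiowerg1}. By the step above each $C\otimes_{?}-$ preserves $\tilde W_{q}$, so it descends to an endofunctor $C\otimes_{?}-$ of $L_{K}\nCalg_{h,q}$. Since $\otimes_{?}$ is bi-left exact on $L_{K}\nCalg_{h}$ (\cref{wreogkpwegrewferf}.\ref{ierjogoergegsegs9}) and $L_{q}$ is left-exact (\cref{wetgjowoegrewfwref}.\ref{wetigjwoerfwerfwrefwrf1}), the composite $E_{C}$ is left-exact. Under the equivalence $L_{q}^{*}\colon\Fun^{\lex}(L_{K}\nCalg_{h,q},L_{K}\nCalg_{h,q})\xrightarrow{\simeq}\Fun^{\lex,\tilde W_{q}}(L_{K}\nCalg_{h},L_{K}\nCalg_{h,q})$ attached to the left-exact localization $L_{q}$, the descended endofunctor $C\otimes_{?}-$ of $L_{K}\nCalg_{h,q}$ is the preimage of $E_{C}$ and is therefore left-exact; by symmetry of $\otimes_{?}$ the same holds for $-\otimes_{?}C$, whence $\otimes_{?}$ is bi-left exact. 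The only non-formal ingredient in the whole argument is the stability of split-exactness under $\otimes_{\max}$ and $\otimes_{\min}$; all remaining steps are bookkeeping with the universal properties already established, so I expect no serious obstacle here.
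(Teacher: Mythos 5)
Your proof is correct, but it takes a genuinely different route from the paper's. The paper exploits the factorization $L_{q}\simeq L\circ L_{\splt}$ provided by \cref{wetgjowoegrewfwref}.\ref{wetigjwoerfwerfwrefwrf2} and \cref{rthogijprtgfrefrfw}: since $L_{\splt}$ is already a symmetric monoidal localization with bi-left exact tensor product (\cref{ewrogpwergfrefwrefwref}), it suffices to treat $L$, which is identified with the right Bousfield localization $R$ onto the group objects. Its generating morphisms are the counit components $\kappa_{A}\simeq L_{\splt}(\iota^{s}_{A})$, and the description of $R(A)$ as the fibre of a natural map $A\times A\to A$ together with bi-exactness of $\otimes_{?}$ on $L_{K}\nCalg_{h,\splt}$ yields $\kappa_{A\otimes B}\simeq \kappa_{A}\otimes B$; hence $-\otimes B$ preserves the generators, and the refinement and the left-exact descent follow. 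You instead verify the criterion directly on $L_{K}\nCalg_{h}$ for the saturated class $\tilde W_{q}$, by showing that $E_{C}=L_{q}\circ(C\otimes_{?}-)$ is split-exact (via preservation of split-exact sequences by $\otimes_{?}$ and split-exactness of $L_{h,K,q}$, i.e.\ \cref{weiojgowergfwerfwerf}) and then invoking \cref{wergkooprefrwefrefre} with the additive target $L_{K}\nCalg_{h,q}$; this mirrors the paper's treatment of $L_{!}$ in \cref{ewrogpwergfrefwrefwref} rather than its treatment of $L_{q}$. Both routes rest on the same nonformal inputs --- preservation of split-exactness by $\otimes_{\max}$ and $\otimes_{\min}$, and the Cuntz-type \cref{wergkooprefrwefrefre}, which in the paper's route is hidden inside the proof of \cref{rthogijprtgfrefrfw} --- so they are of comparable depth. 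The paper's version is more modular, reducing everything to a one-line computation with the counit once \cref{rthogijprtgfrefrfw} is in place; yours is self-contained at the level of $L_{q}$ and avoids the right-Bousfield structure altogether.

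Two small points of hygiene. First, \cref{wergkooprefrwefrefre} is stated for a \emph{$K$-stable} algebra $A$, because its hypothesis ``$E(A)$ is a group'' is transported to $E(A^{s})$ through $A\simeq A^{s}$; you apply it to all $A$ in $\nCalg_{h}$, as you must, since $\hat W_{q}$ in \eqref{wergerwgerferfrefwerf} is so indexed. This is harmless --- with an additive target both $E_{C}(A^{s})$ and $E_{C}(q^{s}A)$ are automatically groups and the lemma's proof applies verbatim --- but it deserves an explicit remark. Second, the ambient setup of \cref{wergkooprefrwefrefre} assumes $E$ left-exact (what is actually used is preservation of finite products); you establish left-exactness of $E_{C}$ only in your second paragraph, so that observation should be moved before the application of the lemma.
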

\begin{proof}
Since $L_{\splt}$ has a symmetric monoidal refinement with a bi-left exact tensor product by \cref{wtgkwotpgkelrf09i0r4fwefwerf} it suffices to show that the functor $L$ has one.
As seen in the proof of \cref{rthogijprtgfrefrfw}
 we have a  functor which sends $A$ in $L_{K}\nCalg_{h,\splt}$ to the diagram
\[\xymatrix{R( A ) \ar[r]\ar[dr]^{\kappa_{A}}&  A  \times  A  \ar[r]\ar[d]^{\pr_{0}}& A   \\& A  &}\ ,\]
where the upper sequence is a fibre sequence. Since $\otimes$ is bi-exact on $L_{K}\nCalg_{h,\splt}$, for $B$ in $L_{K}\nCalg_{h,\splt}$ we get a similar diagram
\[\xymatrix{R( A )\otimes  B  \ar[r]\ar[dr]^{\kappa_{A}\otimes  B }&  (A   \times A )  \otimes  B \ar[r]\ar[d]^{\pr_{0}\otimes B}& A\otimes  B\\& A \otimes  B &}\ .\]
 We can conclude that
$\kappa_{A\otimes B}\simeq \kappa_{A}\otimes B$. 
In particular, $-\otimes B$ preserves the generators of the Dwyer-Kan localization $L$ which therefore has a symmetric monoidal refinement. Furthermore, $-\otimes B$ descends to a left-exact functor on $L_{K}\nCalg_{h,q}$.
\end{proof}

For every symmetric monoidal additive $\infty$-category we thus get an equivalence
\begin{equation}\label{fwefwefhhuhdiqweuhdwedwqdd1} L_{h,K,q}^{*}:\Fun_{\otimes/\lax}^{\lex}(L_{K}\nCalg_{h,q},\bD)\stackrel{\simeq}{\to} \Fun_{\otimes/\lax}^{h,s,splt+Sch}(\nCalg,\bD)\ .\end{equation}

\begin{rem}\label{wtokgwopgfrefreferfw}
In this remark we provide the bridge to Kasparov modules. We refer  to  \cite{kasparovinvent}, \cite{blackadar}
for a detailed theory.
Let $f:qA\to B\otimes K$ be a homomorphism with components $\hat f_{i}:A\to M(B\otimes K)$. The corresponding 
 $(A,B)$-bimodule $(H,\phi,F)$ is the $\Z/2\Z$-graded $B$-Hilbert $C^{*}$-module $L^{2}(B)\oplus L^{2}(B)$ 
 with the  odd endomorphism \[F:=\left(\begin{array}{cc}0&1\\1&0 \end{array}\right)\]
 and \begin{equation}\label{feoifjwioerfrff32}\phi:=\left(\begin{array}{cc}\hat f_{0}&0\\0& \hat f_{1} \end{array}\right):A\to \Mat_{2}(B(L^{2}(B))\ ,
\end{equation}  
 where in order to interpret \eqref{feoifjwioerfrff32}  we identify $B(L^{2}(B))\cong M(B\otimes K)$ in the canonical way.
   For separable $C^{*}$-algebras $A,B$ we
 interpret 
 $ \underline{\Hom}(qA,K\otimes B)$ as the topological space of $(A,B)$-Kasparov modules in the Cuntz picture.
Its underlying space \begin{equation}\label{tgihjoi4gfrewgrefrefw}\ell \underline{\Hom}(qA,K\otimes B)\simeq \Map_{L_{K}\nCalg_{\sepa,h}}(qA,B)\simeq \Map_{L_{K}\nCalg_{\sepa,h}}(q^{s}A,B)
\end{equation}
has a natural refinement to a commutative monoid in spaces. This monoid structure reflects the direct sum of Kasparov modules. Since groups and cogroups in a semi-additive $\infty$-category coincide  
 \cref{wtgijoweoifrefwerferfwrf} implies that $\ell\underline{\Hom}(qA,K\otimes B)$ it is actually a commutative group.
We have a natural map of commutative groups
\begin{equation}\label{fqwefiuqhduiijqqwedqwedqewdeq}\ell\underline{\Hom}(qA,K\otimes B)\xrightarrow{}  \Map_{L_{K}\nCalg_{\sepa,h,q}}(A,B)
\ ,\end{equation}
given by the composition
of \eqref{tgihjoi4gfrewgrefrefw}
  with the canonical map from the right-hand side of this equivalence to the second stage of the colimit in \eqref{wervfweoivjroivjworecmdlcmsldffd}.
  By \cref{wejogiwerfgerwfrewfwerfw}  below 
 we see that this   map is actually an equivalence presenting the commutative mapping groups in $L_{K}\nCalg_{\sepa,h,q}$  in terms of spaces of Kasparov modules. 
 \hB
\end{rem}

%
%
%
 
 For completeness of the presentation  we now  discuss  \cite[Thm. 1.6]{MR899916}. All of the above has a version for separable algebras which we will indicate by an additional subscript $\sepa$.
Let $A$ be a separable $C^{*}$-algebra.
\begin{theorem}[{\cite[Thm. 1.6]{MR899916}}]\label{erigjwoergerfwerfwref}
There exists a homomorphism $\phi:qA\to \Mat_{2}(q^{2}A)$ such that $\Mat_{2} (\iota_{qA})\circ \phi:qA\to  \Mat_{2}(qA)$
and $ \phi\circ \iota_{qA}:q^{2}A\to \Mat_{2}(q^{2}A)$ are homotopic to the left upper corner inclusions.
\end{theorem}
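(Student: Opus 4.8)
The plan is to construct $\phi$ through the quasihomomorphism description of maps out of a $q$-algebra recalled in \cref{4tjigortgwrefrwefref}, and then to verify the two homotopy statements by the same rotation-unitary mechanism used in the proofs of \cref{weroigjowerfrefw} and \cref{wtgijoweoifrefwerferfwrf}. Recall that $qA$ carries the two canonical maps $m_{0},m_{1}\colon A\to M(qA)$ with $m_{0}(a)-m_{1}(a)\in qA$, and that by naturality of the $q$-construction the algebra $qA$ itself has its own such pair, say $m'_{0},m'_{1}\colon qA\to M(q^{2}A)$, whose difference $m'_{0}(x)-m'_{1}(x)$ lies in $q^{2}A$ for every $x\in qA$. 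By the universal property, to give $\phi\colon qA\to \Mat_{2}(q^{2}A)$ is to give a pair of homomorphisms (its components) $\hat\phi_{0},\hat\phi_{1}\colon A\to \Mat_{2}(M(q^{2}A))=M(\Mat_{2}(q^{2}A))$ with $\hat\phi_{0}(a)-\hat\phi_{1}(a)\in \Mat_{2}(q^{2}A)$ for all $a$; the resulting map is then determined on the generating elements $m_{0}(a)-m_{1}(a)$ by $\hat\phi_{0}(a)-\hat\phi_{1}(a)$.

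First I would assemble the second-level maps obtained by feeding the level-one data $m_{0},m_{1}$ into $m'_{0},m'_{1}$, and place them on the diagonals of $2\times 2$ matrices so as to form $\hat\phi_{0}$ and $\hat\phi_{1}$. The diagonal placement must be arranged so that the entrywise differences are precisely expressions of the form $m'_{0}(-)-m'_{1}(-)$ evaluated on elements of $qA$, which therefore lie in $q^{2}A$; this is exactly what makes $\phi$ land in $\Mat_{2}(q^{2}A)$ rather than merely in $\Mat_{2}(M(q^{2}A))$. The first place where genuine $C^{*}$-algebraic bookkeeping enters is that the level-two maps $m'_{i}$ are defined only on $qA$ and do not extend to all of $M(qA)$ (the free-product inclusions are not non-degenerate), so the composites with $m_{0},m_{1}$ have to be built directly from the freeness of $A*A$ and of $qA*qA$ on the generating elements, keeping careful track of which combinations actually land in the ideal.

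Next I would compute the two composites on generators. For $\Mat_{2}(\iota_{qA})\circ\phi$, using that $\iota_{qA}$ collapses the canonical generator of $q^{2}A$ back to the corresponding element of $qA$, one checks that this map sends $m_{0}(a)-m_{1}(a)$ into $\Mat_{2}(qA)$ with one distinguished diagonal entry equal to $m_{0}(a)-m_{1}(a)$ and the remaining entries being the off-diagonal discrepancy terms; the computation of $\phi\circ\iota_{qA}$ on a generator of $q^{2}A$ is entirely parallel. In both cases the time-$0$ map is a block arrangement that a rotation conjugation carries to the upper corner inclusion, so I would connect it to the corner inclusion by conjugating with the scalar rotation unitaries of \eqref{qrfqfoiue9foqwefqwedwqewdqewd}, exactly as in \cref{weroigjowerfrefw} and \cref{wtgijoweoifrefwerferfwrf}. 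The point to check at each time $t$ is that the conjugated pair of components still differs by an element of $\Mat_{2}(q^{2}A)$ (respectively $\Mat_{2}(qA)$), so that the conjugation really is a point-norm continuous path of honest homomorphisms out of $qA$ (respectively $q^{2}A$); this uses that the $U_{t}$ are scalar and that the relevant differences are ideal elements throughout.

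The main obstacle I anticipate is the simultaneous bookkeeping: the single homomorphism $\phi$ has to be chosen so that \emph{both} composites rotate to the upper corner inclusion at once, and the multiplier-algebra manipulations that define the components of $\phi$ — together with the verification that they continue to agree modulo the ideal along the entire rotation — are where the argument is analytic rather than formal. Everything else, namely well-definedness via the universal property and the continuity of the rotation homotopies, is routine once the components are correctly set up. Accordingly I would isolate the construction of $\hat\phi_{0},\hat\phi_{1}$ and the identity ``$m'_{0}-m'_{1}$ maps $qA$ into $q^{2}A$'' as the technical core, and then treat the two homotopies in parallel.
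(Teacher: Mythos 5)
There is a genuine gap, and it sits exactly where you locate the ``technical core'': the rotation-unitary mechanism of \cref{weroigjowerfrefw} and \cref{wtgijoweoifrefwerferfwrf} cannot define $\phi$, nor carry either homotopy, and the paper's proof (following Cuntz) does something essentially different. First, the well-definedness problem: a naive diagonal placement of level-two composites does not produce components differing by elements of $\Mat_{2}(q^{2}A)$. For instance, with $\eta_{0},\eta_{1}:qA\to QqA:=qA*qA$ the canonical inclusions, an entrywise difference of the form $\eta_{0}(\iota_{0}(a)-\iota_{1}(a))$ lies in $QqA$ but \emph{not} in $q^{2}A=\ker(d:QqA\to qA)$, since $d\circ\eta_{0}=\id_{qA}$. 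In the paper the second component is not a naive diagonal at all: it is $\hat\phi_{1}=\sigma\left(\begin{array}{cc}\eta_{0}\circ\iota_{1}&0\\0&\eta_{1}\circ\iota_{0}\end{array}\right)$, where $\sigma$ is a non-inner automorphism of a carefully chosen subalgebra $D\subseteq\Mat_{2}(Q^{2}A)$, and it is only after this twist that $\hat\phi_{0}-\hat\phi_{1}$ lands in $\Mat_{2}(q^{2}A)$. Second, the homotopy problem: conjugation by the scalar rotations $U_{t}$ of \eqref{qrfqfoiue9foqwefqwedwqewdqewd} defines a one-parameter action on the quotient-level subalgebra $S\subseteq\Mat_{2}(QA)$ generated by $\Mat_{2}(qA)$ and the relevant diagonal elements, but this action cannot be implemented upstairs on $D$ by unitary conjugation while preserving the ideal-membership constraints; so your claim that ``the conjugated pair of components still differs by an element of the ideal throughout'' is precisely the assertion that fails.

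The missing idea is Pedersen's derivation lifting theorem. One takes the bounded derivation $\bar\delta$ of $S$ that generates the rotation action, lifts it along the surjection $\pi:D\to S$ to a derivation $\delta$ of $D$ with $\pi\circ\delta=\bar\delta\circ\pi$, and uses the flow $\sigma_{t}:=e^{t\delta}$ (with $\sigma=\sigma_{1}$) as the homotopy parameter in both composites; this flow is what replaces your $U_{t}$-conjugation, and it also enters the very definition of $\phi$ as above. This lifting step is the genuinely analytic, non-formal heart of \cref{erigjwoergerfwerfwref}, and it is exactly where separability of $A$ is used --- derivation lifting has counterexamples for non-separable algebras. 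That your proposal never invokes separability is a reliable symptom that the proposed mechanism cannot prove the statement.
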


Before we sketch the proof we derive the consequences of \cref{erigjwoergerfwerfwref}.

\begin{kor}\label{wejogiwerfgerwfrewfwerfw} 
For separable $C^{*}$-algebras $A$ and $B$ the morphism \eqref{fqwefiuqhduiijqqwedqwedqewdeq} is an equivalence.
  \end{kor}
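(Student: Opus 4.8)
The statement to be proved is that for separable $C^{*}$-algebras $A$ and $B$, the canonical map
\[
\ell\underline{\Hom}(qA,K\otimes B)\xrightarrow{\quad} \Map_{L_{K}\nCalg_{\sepa,h,q}}(A,B)
\]
from \eqref{fqwefiuqhduiijqqwedqwedqewdeq} is an equivalence. By \cref{wetgjowoegrewfwref}.\ref{wetigjwoerfwerfwrefwrf}, the target is computed by the filtered colimit
\[
\Map_{L_{K}\nCalg_{\sepa,h,q}}(A,B)\simeq \colim_{n\in\nat}\Map_{L_{K}\nCalg_{\sepa,h}}((q^{n}A)^{s},B)\simeq \colim_{n\in\nat}\ell\underline{\Hom}(q^{n}A,K\otimes B)
\]
using the identification \eqref{ervekjvhefjkvnefiuvevfvs}. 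The source is exactly the $n=1$ stage $\ell\underline{\Hom}(qA,K\otimes B)$ of this colimit (as recorded in \eqref{tgihjoi4gfrewgrefrefw}), and the map in question is the canonical structure map from that stage into the colimit, precomposed with the equivalence identifying the $n=0$ and $n=1$ stages under $\iota_A^{*}$. So the plan is to show that \emph{all} the transition maps $\iota_{q^{n}A}^{*}$ in this colimit diagram are equivalences for $n\ge 1$, whence the colimit collapses onto its first stage.

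**Reducing to a retraction statement and invoking Cuntz.** The key input is \cref{erigjwoergerfwerfwref}, Cuntz's theorem providing a homomorphism $\phi\colon qA\to\Mat_{2}(q^{2}A)$ such that $\Mat_{2}(\iota_{qA})\circ\phi$ and $\phi\circ\iota_{qA}$ are each homotopic to the respective left upper corner inclusions. First I would pass everything into $L_{K}\nCalg_{\sepa,h}$: there the left upper corner inclusion $C\to\Mat_{2}(C)$ is an equivalence (this is used at the start of the proof of \cref{weroigjowerfrefw}), and the functor $K\otimes-$ inverts it as well. Applying $\ell\underline{\Hom}(-,K\otimes B)$ and using homotopy invariance (\cref{erigjeroferferfrefwerf}), the homotopies in \cref{erigjwoergerfwerfwref} become genuine equalities of induced maps on mapping spaces. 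Thus $\phi$ induces, after stabilization, a two-sided homotopy inverse to $\iota_{qA}\colon q^{2}A\to qA$ at the level of the functors $\ell\underline{\Hom}(-,K\otimes B)$: the composite $\iota_{qA}\circ\Mat_{2}$-comparison with $\phi$ realizes $\iota_{qA}^{*}$ as an equivalence $\ell\underline{\Hom}(qA,K\otimes B)\xrightarrow{\simeq}\ell\underline{\Hom}(q^{2}A,K\otimes B)$.

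**Propagating along the tower.** Since the $q$-construction is functorial, applying \cref{erigjwoergerfwerfwref} with $A$ replaced by $q^{n-1}A$ shows that $\iota_{q^{n}A}^{*}\colon\ell\underline{\Hom}(q^{n}A,K\otimes B)\to\ell\underline{\Hom}(q^{n+1}A,K\otimes B)$ is an equivalence for every $n\ge 1$. Therefore the colimit diagram
\[
\ell\underline{\Hom}(qA,K\otimes B)\xrightarrow{\iota_{qA}^{*}}\ell\underline{\Hom}(q^{2}A,K\otimes B)\xrightarrow{\iota_{q^{2}A}^{*}}\cdots
\]
is a sequence of equivalences starting from the first stage, so the canonical map from its initial object to the colimit is an equivalence. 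Composing with the equivalence $\iota_{A}^{*}$ relating the $n=0$ and $n=1$ terms (which is the one built into the definition of the map \eqref{fqwefiuqhduiijqqwedqwedqewdeq}) yields the desired equivalence. I should be slightly careful to track the commutative-monoid/group refinements: all maps in sight are maps of commutative groups in $\Spc$ (by \cref{wtgijoweoifrefwerferfwrf} and the discussion in \cref{wtokgwopgfrefreferfw}), and the forgetful functor $\CGroups(\Spc)\to\Spc$ is conservative and preserves filtered colimits, so it suffices to check the equivalence on underlying spaces, which is what the argument above does.

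**The main obstacle.** The genuinely hard part is entirely concentrated in \cref{erigjwoergerfwerfwref} itself—the existence of the homomorphism $\phi$ with its two homotopy properties—which is Cuntz's $[\mathrm{Thm.}\ 1.6]{MR899916}$ and relies on Pedersen's derivation-lifting technique; this is the one place where real $C^{*}$-algebraic analysis enters rather than formal homotopy theory. Granting that theorem, the deduction of \cref{wejogiwerfgerwfrewfwerfw} is the essentially formal colimit-collapse argument sketched above. The only subtlety on the formal side is bookkeeping: ensuring that the $\Mat_{2}$'s introduced by $\phi$ are correctly absorbed by stability in $L_{K}\nCalg_{\sepa,h}$ and that the homotopies translate to equalities after applying $\ell$, both of which are routine given \cref{erigjeroferferfrefwerf} and the computation of the corner inclusion as an equivalence.
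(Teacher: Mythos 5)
Your proof is correct and is essentially the paper's own argument: the mapping space in $L_{K}\nCalg_{\sepa,h,q}$ is the filtered colimit \eqref{wervfweoivjroivjworecmdlcmsldffd}, its stages are identified with $\ell\underline{\Hom}(q^{n}A,K\otimes B)$ via \eqref{ervekjvhefjkvnefiuvevfvs}, and Cuntz's \cref{erigjwoergerfwerfwref} (applied to $q^{n-1}A$, with the $\Mat_{2}$'s absorbed by stability) makes every transition map from $n=1$ on an equivalence, so the colimit stabilizes at the $n=1$ stage and the structure map from that stage is an equivalence. One correction to your bookkeeping: the claim that \eqref{fqwefiuqhduiijqqwedqwedqewdeq} is "precomposed with the equivalence $\iota_{A}^{*}$ relating the $n=0$ and $n=1$ terms" is both false and unnecessary --- $\iota_{A}^{*}\colon \Map_{L_{K}\nCalg_{\sepa,h}}(A^{s},B)\to \Map_{L_{K}\nCalg_{\sepa,h}}((qA)^{s},B)$ is not an equivalence in general (Cuntz's theorem only inverts $\iota_{qA}$ and higher iterates; if $\iota_{A}^{*}$ were invertible, the localization $L_{q}$ would not alter mapping spaces out of $A$ at all). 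The map in question is simply the canonical map from the $n=1$ stage into the colimit, which is exactly what the rest of your argument establishes to be an equivalence.
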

\begin{proof}
This is an immediate consequence of \eqref{wervfweoivjroivjworecmdlcmsldffd}, 
\eqref{ervekjvhefjkvnefiuvevfvs} and \cref{erigjwoergerfwerfwref} which implies that the colimit in
\eqref{wervfweoivjroivjworecmdlcmsldffd} stabilizes from $n=1$ on.
\end{proof}

\begin{prop}\label{erugheiwugwefrefwerfewr}
The functor $q_{\sepa}^{s}:L_{K}\nCalg_{\sepa,h}\to q_{\sepa}^{s}L_{K}\nCalg_{\sepa,h}$ is the right-adjoint of a Bousfield localization
\[\incl:q_{\sepa}^{s}L_{K}\nCalg_{\sepa,h}\leftrightarrows  L_{K}\nCalg_{\sepa,h}:q_{\sepa}^{s}\]
and \[q_{\sepa}^{s}:L_{K}\nCalg_{\sepa,h}\to q_{\sepa}^{s}L_{K}\nCalg_{\sepa,h}\] represents its target as the Dwyer-Kan localization
at the set $\hat W_{\sepa,q}$ from the separable version of \eqref{wergerwgerferfrefwerf}.
\end{prop}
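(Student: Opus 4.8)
The plan is to verify the hypotheses of the dual Bousfield recognition criterion recalled at the beginning of \cref{werigjowergerwg9}, applied to the endofunctor $R:=q_{\sepa}^{s}=q^{s}\circ\incl$ of $L_{K}\nCalg_{\sepa,h}$ and the natural transformation $\beta:=\iota^{s}\circ\incl\colon q_{\sepa}^{s}\to (-)^{s}\circ\incl\simeq\id_{L_{K}\nCalg_{\sepa,h}}$, where the last equivalence uses that $\incl C$ is $K$-stable so that $L_{K}(\incl C)\simeq C$. Following the convention of \cref{werkgoewgrefrfwref} I suppress $\incl$ and write $\beta_{C}=\iota^{s}_{C}$. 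Concretely I must check that for every $C$ the two morphisms $\beta_{R(C)}=\iota^{s}_{q_{\sepa}^{s}C}$ and $R(\beta_{C})=q^{s}(\iota^{s}_{C})$ are equivalences in $L_{K}\nCalg_{\sepa,h}$; the criterion then delivers that $q_{\sepa}^{s}$ is the right adjoint of a right Bousfield localization with counit $\beta$ and is simultaneously the Dwyer--Kan localization at $W_{R}=\{\beta_{C}\}$. Since $\iota^{s}_{A}$ and $\iota^{s}_{K\otimes A}$ are related by the corner inclusions, which are already equivalences in $L_{K}\nCalg_{h}$ by stability (\cref{wreogkpwegrewferf}), the collections $\{\beta_{C}\}$ and $\hat W_{\sepa,q}$ generate the same localization, which yields the identification of the target with the localization at $\hat W_{\sepa,q}$.

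The substantive input is the equivalence $\Map_{L_{K}\nCalg_{\sepa,h}}(q^{s}A,B)\simeq\Map_{L_{K}\nCalg_{\sepa,h,q}}(A,B)$ of \cref{wejogiwerfgerwfrewfwerfw}, which rests on Cuntz' theorem \cref{erigjwoergerfwerfwref} and is realized, naturally in $A$, as the inclusion of the stage $n=1$ of the filtered colimit \eqref{wervfweoivjroivjworecmdlcmsldffd}. Feeding this naturality through the Yoneda lemma in both $L_{K}\nCalg_{\sepa,h}$ and its localization gives the key observation I would isolate first: a morphism $g$ of $L_{K}\nCalg_{\sepa,h}$ is inverted by $q^{s}$ if and only if it is inverted by $L_{q}$. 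Indeed, for fixed $B$ the commuting square of precomposition maps identifies $(q^{s}g)^{*}$ with $(L_{q}g)^{*}$ through the above equivalences, and letting $B$ vary one applies Yoneda on each side. From this the second condition is immediate: $R(\beta_{C})=q^{s}(\iota^{s}_{C})$ is an equivalence because $\iota^{s}_{C}$ lies in $\hat W_{\sepa,q}$ and is therefore inverted by $L_{q}$.

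For the first condition this observation does not apply directly, because $\iota^{s}_{q_{\sepa}^{s}C}$ is a counit component rather than $q^{s}$ of a morphism, and the $K$-stabilization built into $q^{s}$ has to be accounted for. Here I would write $D:=q(\incl C)$, so that $q_{\sepa}^{s}C=K\otimes D$, and invoke Cuntz' theorem \cref{erigjwoergerfwerfwref} to conclude that $\iota^{s}_{D}=\iota^{s}_{q(\incl C)}$ is an equivalence. I then compare $\iota^{s}_{D}$ with $\iota^{s}_{K\otimes D}=\iota^{s}_{q_{\sepa}^{s}C}$ via the naturality square of $\iota^{s}$ for the corner inclusion $\kappa_{D}\colon D\to K\otimes D$: its lower horizontal arrow $(\kappa_{D})^{s}$ is an equivalence by stability, and its upper horizontal arrow $q^{s}(\kappa_{D})$ is an equivalence by the key observation of the previous paragraph, since $\kappa_{D}$ is already an equivalence in $L_{K}\nCalg_{h}$ and hence inverted by $L_{q}$. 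The two-out-of-three property then forces $\iota^{s}_{q_{\sepa}^{s}C}$ to be an equivalence, completing the verification.

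All genuinely $C^{*}$-algebraic content is thereby concentrated in the single appeal to \cref{wejogiwerfgerwfrewfwerfw}/\cref{erigjwoergerfwerfwref}; everything else is a formal interplay of the natural mapping-space formula, the Yoneda lemma and two-out-of-three. I expect the only real obstacle to be the careful bookkeeping of the discrepancy between $\iota^{s}$ evaluated at a bare $q$-image and at its $K$-stabilization, which is exactly what the corner-inclusion naturality square in the last paragraph is designed to resolve; one should also take care that $\{\beta_{C}\}$ and $\hat W_{\sepa,q}$ define the same Dwyer--Kan localization, as indicated above.
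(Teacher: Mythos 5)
Your proposal is correct, but it takes a genuinely different route from the paper. The paper verifies the adjunction directly: for $A=q^{s}_{\sepa}A'$ in the image subcategory and arbitrary $B$ it factors the candidate adjunction map $\Map_{q^{s}_{\sepa}L_{K}\nCalg_{\sepa,h}}(A,q^{s}_{\sepa}B)\to \Map_{L_{K}\nCalg_{\sepa,h}}(\incl A,B)$ (inclusion followed by postcomposition with $\iota^{s}_{B}$) as a chain of four equivalences, using \cref{wejogiwerfgerwfrewfwerfw} twice and the fact that $L_{\sepa,q}$ inverts $\iota^{s}_{B}$; this exhibits $\iota^{s}$ as the counit of $\incl\dashv q^{s}_{\sepa}$ in one stroke, and the identification with the localization at $\hat W_{\sepa,q}$ follows by comparing counit components with \eqref{wergerwgerferfrefwerf}. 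You instead check the recognition criterion recalled at the beginning of \cref{werigjowergerwg9}, splitting the work into the two idempotence conditions. What your route buys is a clean, reusable principle --- the key observation that $q^{s}$ inverts a morphism precisely when $L_{\sepa,q}$ does, obtained by transporting the natural equivalence of \cref{wejogiwerfgerwfrewfwerfw} through Yoneda --- together with a transparent separation of the formal part (the condition on $R(\beta_{C})$, which follows from the key observation since $\iota^{s}_{C}\in\hat W_{\sepa,q}$) from the place where Cuntz' \cref{erigjwoergerfwerfwref} genuinely enters (the condition on $\beta_{R(C)}$). The cost is the $K$-stabilization bookkeeping, and here one scope correction is needed: you state the key observation for morphisms of $L_{K}\nCalg_{\sepa,h}$, but you apply it to $\kappa_{D}\colon D\to K\otimes D$ with $D=q(\incl C)$, whose source is \emph{not} $K$-stable (that $q$ destroys $K$-stability is the very reason $q^{s}$ was introduced), and likewise to $\kappa_{A}$ for arbitrary separable $A$ in the comparison of $\{\beta_{C}\}$ with $\hat W_{\sepa,q}$. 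This is not an obstruction: \cref{wejogiwerfgerwfrewfwerfw} holds for every separable algebra in the first slot, and the tower \eqref{gregfqweewdweqdqwed} together with the stage-one comparison map \eqref{fqwefiuqhduiijqqwedqwedqewdeq} is functorial in $A\in\nCalg_{\sepa,h}$ by naturality of $\iota^{s}$, so your Yoneda argument proves the key observation verbatim for morphisms $g$ of $\nCalg_{\sepa,h}$, with ``inverted by $L_{\sepa,q}$'' read as ``$L_{\sepa,q}(L_{\sepa,K}(g))$ is an equivalence''. Once the key observation is stated in that generality, all of your steps go through.
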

\begin{proof}
Let $\incl:q_{\sepa}^{s}L_{K}\nCalg_{\sepa,h}\to L_{K}\nCalg_{\sepa,h}$ denote   the inclusion of the full subcategory of $L_{K}\nCalg_{\sepa,h}$ on the image of $q_{\sepa}^{s}$. We have a natural transformation \begin{equation}\label{woierfjweriofwiofwerfwref}\iota^{s}:\incl\circ q_{\sepa}^{s}\to \id_{L_{K}\nCalg_{\sepa,h}}\ .
\end{equation} 
For $A$ in $q_{\sepa}^{s}L_{K}\nCalg_{\sepa,h}$ and $B$ be in $L_{K}\nCalg_{\sepa,h}$ the binatural
transformation
\[\hspace{-0.4cm}\Map_{q^{s}_{\sepa}L_{K}\nCalg_{\sepa,h}}(A,q^{s}_{\sepa}B)\xrightarrow{\incl}
\Map_{L_{K}\nCalg_{\sepa,h}}(\incl A,\incl(q^{s}_{\sepa}B))\xrightarrow{\iota^{s}_{B,*}}  \Map_{L_{K}\nCalg_{\sepa,h}}( \incl A, B)\] is an equivalence. To this end we set 
$A=q_{\sepa}^{s}A'$ for some $A'$ in $ L_{K}\nCalg_{\sepa,h}$ and factorize the map as a composition of equivalences
 \begin{eqnarray*}
\Map_{q^{s}_{\sepa}L_{K}\nCalg_{\sepa,h}}(A,q^{s}_{\sepa}B)&\stackrel{\incl}{\simeq}&
\Map_{L_{K}\nCalg_{\sepa,h}}(q_{\sepa}^{s}A', q_{\sepa}^{s}B)\\&\stackrel{\cref{wejogiwerfgerwfrewfwerfw}}{\simeq}&
\Map_{L_{K}\nCalg_{\sepa,h,q}}(A', q_{\sepa}^{s}B)\\&\stackrel{\iota^{s}_{B,*}}{\simeq}&
\Map_{L_{K}\nCalg_{\sepa,h,q}}(A', B)\\& \stackrel{\cref{wejogiwerfgerwfrewfwerfw}}{\simeq}&
\Map_{L_{K}\nCalg_{\sepa,h}}(\incl A, B)
\end{eqnarray*}
We conclude that  \eqref{woierfjweriofwiofwerfwref} is the counit of a right Bousfield localization.  
Since the right  Bousfield localization is a Dwyer Kan localization at the set of the components of its counit
we conclude the second assertion by a comparison with \eqref{wergerwgerferfrefwerf}.
\end{proof}

 Recall the construction \eqref{fqoiwjefioqwejfoiqwjioewdjq} of a sum of a family of $C^{*}$-algebras.

\begin{kor}\label{roijgferiogerfwerffwref}
The category $L_{K}\nCalg_{\sepa,h,q}$ admits countable coproducts which are represented by the free product and also by 
the sum in $\nCalg_{\sepa}$.
\end{kor}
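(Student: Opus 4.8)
The plan is to realize $L_{K}\nCalg_{\sepa,h,q}$ as a coreflective subcategory of $L_{K}\nCalg_{\sepa,h}$, to obtain countable coproducts there by a closure argument, and finally to identify them with the free product and the sum. By \cref{erugheiwugwefrefwerfewr} the coreflector $q^{s}_{\sepa}$ exhibits $L_{K}\nCalg_{\sepa,h,q}$ as the full subcategory of those objects of $L_{K}\nCalg_{\sepa,h}$ that are colocal for $\hat W_{\sepa,q}$, with a fully faithful inclusion $\incl$ which, being a left adjoint, preserves all colimits. The ambient category $L_{K}\nCalg_{\sepa,h}$ admits countable coproducts by \cref{wreogkpwegrewferf1}.\ref{wetokgpwerferwfwref1}, and these are represented both by the free product and, via \cref{ergoijfofefdewdqed}, by the sum \eqref{fqoiwjefioqwejfoiqwjioewdjq}. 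I shall also use the Kasparov-module description of the mapping spaces from \cref{wejogiwerfgerwfrewfwerfw}.

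First I would prove that the colocal objects are closed under countable coproducts. Given a countable family $(X_{i})_{i}$ of colocal objects and a generating morphism $w$ of $\hat W_{\sepa,q}$, the map $\Map(\coprod_{i}X_{i},w)\simeq \prod_{i}\Map(X_{i},w)$ is a product of equivalences, hence an equivalence; so $\coprod_{i}X_{i}$ is again colocal (cf. \cref{weroijgwergrefwrfrwef}). Consequently $L_{K}\nCalg_{\sepa,h,q}$ admits countable coproducts, and they are computed as the coproducts in $L_{K}\nCalg_{\sepa,h}$.

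It then remains to identify the coproduct of the objects $L_{h,K,q}(A_{i})$ with $L_{h,K,q}(\bigoplus_{i}A_{i})$; the statement for the free product follows afterwards from \cref{ergoijfofefdewdqed}. For finite index sets this is clean: the sequences $0\to \bigoplus_{i<n}A_{i}\to \bigoplus_{i\le n}A_{i}\to A_{n}\to 0$ are split exact, and since $L_{\sepa,h,K,q}$ is split exact with additive target (by \cref{wetgjowoegrewfwref} and its separable version) an induction gives $L_{h,K,q}(\bigoplus_{i\le n}A_{i})\simeq \bigoplus_{i\le n}L_{h,K,q}(A_{i})$, the finite coproduct. Writing a countable coproduct as the sequential colimit of its finite sub-coproducts and using $\bigoplus_{i}A_{i}=\colim_{n}\bigoplus_{i\le n}A_{i}$, the whole claim reduces to the assertion that $L_{\sepa,h,K,q}$ preserves this particular sequential colimit.

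The main obstacle is precisely this colimit preservation, which I would reduce to $C^{*}$-algebra and topology as follows. The functor $q$ commutes with the sequential colimit $\bigoplus_{i}A_{i}=\colim_{n}\bigoplus_{i\le n}A_{i}$, because the free product is a coproduct and hence commutes with filtered colimits, while the kernel defining $q$ commutes with filtered colimits of $C^{*}$-algebras, which are exact. Hence, writing $B_{n}:=\bigoplus_{i\le n}A_{i}$ and $C:=K\otimes B$, one has $\underline{\Hom}(q(\bigoplus_{i}A_{i}),C)\cong \lim_{n}\underline{\Hom}(qB_{n},C)$ as topological spaces, and via \cref{wejogiwerfgerwfrewfwerfw} the desired statement becomes the claim that $\ell$ carries this inverse limit to the inverse limit of the spaces $\ell\underline{\Hom}(qB_{n},C)$. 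This is the delicate point: it requires controlling the restriction tower $\underline{\Hom}(qB_{n+1},C)\to\underline{\Hom}(qB_{n},C)$. The homomorphism retractions $B_{n+1}\to B_{n}$ induce retractions $qB_{n+1}\to qB_{n}$, so the restriction maps are split surjective and the resulting tower of homotopy groups is Mittag--Leffler; the heart of the argument will be to upgrade this to the statement that $\ell$ commutes with the inverse limit, for instance by exhibiting the tower as a tower of Serre fibrations so that $\ell$ preserves the sequential homotopy limit, just as it preserves Serre fibrant cartesian squares. Once this is established the coproduct is represented by $\bigoplus_{i}A_{i}$, and by \cref{ergoijfofefdewdqed} equally by the free product $*_{i}A_{i}$.
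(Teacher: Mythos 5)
Your first paragraph is, in substance, the paper's entire proof: by \cref{erugheiwugwefrefwerfewr} the category $L_{K}\nCalg_{\sepa,h,q}$ is a right Bousfield localization of $L_{K}\nCalg_{\sepa,h}$, hence a coreflective full subcategory, so it inherits all colimits and the inclusion, being a left adjoint, preserves them; the paper then concludes directly from \cref{wreogkpwegrewferf1}.\ref{wetokgpwerferwfwref1} (and \cref{ergoijfofefdewdqed} for the sum). One small caveat on your variant: \cref{erugheiwugwefrefwerfewr} identifies $L_{K}\nCalg_{\sepa,h,q}$ with the essential image of $q^{s}_{\sepa}$, not literally with the $\hat W_{\sepa,q}$-colocal objects, so to see that the coproduct lands in the subcategory it is cleaner to run the standard counit argument for coreflective subcategories than your colocality closure (which a priori only shows colocality); this is easily patched.

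Where you go beyond the paper is the identification $\coprod_{i}L_{h,K,q}(A_{i})\simeq L_{h,K,q}(\bigoplus_{i}A_{i})$. Your instinct that this requires an argument is sound: the coreflector $q^{s}_{\sepa}$ is a right adjoint and does not formally preserve coproducts, and the formal step only exhibits the coproduct as the free product of the representing algebras $K\otimes q(K\otimes A_{i})$; the paper reads this identification off from the ambient category without further comment. However, your proof of it has a genuine gap at exactly the point you call its heart. The reduction (finite case via split-exactness, $q$ commuting with the sequential colimit, \cref{wejogiwerfgerwfrewfwerfw}) is fine, and it leaves you needing that $\ell$ carries the strict limit $\lim_{n}\underline{\Hom}(qB_{n},K\otimes B)$ to the limit of the spaces $\ell\underline{\Hom}(qB_{n},K\otimes B)$ in $\Spc$. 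Split surjectivity of the restriction maps plus Mittag--Leffler does not give this: Mittag--Leffler controls the ${\lim}^{1}$-contribution to the homotopy groups of the limit formed in $\Spc$, but says nothing about whether the \emph{strict} limit in $\Top$ has that homotopy type at all (strict limits are not homotopy invariant; a tower of contractible spaces along homotopy equivalences can even have empty limit). What is actually needed is that the maps $\underline{\Hom}(qB_{n+1},K\otimes B)\to\underline{\Hom}(qB_{n},K\otimes B)$ are Serre fibrations, i.e.\ a homotopy extension property for the split inclusions $qB_{n}\to qB_{n+1}$. Nothing in the paper supplies this --- its only fibration statements, \cref{wejiogwergwerfrrewf} and \cref{wtogjgergregw9}, are special to $S(\C)$ and $\C$ and rest on function-calculus/semiprojectivity arguments --- and split injections of $C^{*}$-algebras do not have such a property in general. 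So this step is an unproven, and in my view doubtful, claim rather than a routine upgrade.

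If you want to close your argument, a more robust route stays inside the Cuntz picture and avoids strict limits: verify the universal property on mapping spaces by constructing an explicit inverse to the restriction map $\ell\underline{\Hom}(q(\bigoplus_{i}A_{i}),K\otimes B)\to\prod_{i}\ell\underline{\Hom}(qA_{i},K\otimes B)$, sending a family given by pairs $\hat f_{i,0},\hat f_{i,1}$ as in \cref{4tjigortgwrefrwefref} to the block-diagonal pair on $\bigoplus_{i}A_{i}$ with values in $M(K\otimes K\otimes B)$ (well defined since $\|\hat f_{i,0}(a_{i})-\hat f_{i,1}(a_{i})\|\le 2\|a_{i}\|\to 0$), and identifying the two composites with corner embeddings by the rotation homotopies used in \cref{weroigjowerfrefw} and \cref{ergoijfofefdewdqed}.
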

\begin{proof}
Since $L_{K}\nCalg_{\sepa,h,q}$ is  a right Bousfield localization  of $L_{K}\nCalg_{\sepa,h}$ it inhertits all colimits from the latter category and the inclusion functor, being a left adjoint, preserves them. The assertion now follows from  \cref{wreogkpwegrewferf1}.\ref{wetokgpwerferwfwref1}.
\end{proof}

 \cref{erigjwoergerfwerfwref} is crucial for understanding the nature of the localization 
$L_{q}$ which in turn  implies the important categorical property  of $L_{K}\nCalg_{\sepa,h,q}$ of being countably cocomplete   and of course the  
 simple formula   \eqref{vweriuvbvjbjkfvfvfsfvsdfv} for the mapping spaces. Because of its relevance, for completeness  of the presentation
 we  decided to   
  repeat the proof of  \cref{erigjwoergerfwerfwref} from \cite{MR899916}.

  \begin{proof}[Proof of \cref{erigjwoergerfwerfwref}]
  We will use as a fact:
  \begin{lem}\label{wefojgiwergerfwerf9}
  If $I\to B$ is the inclusion of an ideal, then
  $I*I\to B*B$ is injective. 
  \end{lem}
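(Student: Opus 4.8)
The plan is to show that the map $I * I \to B * B$ induced by the ideal inclusion $j\colon I\to B$ is isometric, which in particular gives injectivity. Recall from \eqref{fvrvfevefcvsdfvsfdvsfdvsv} that the free product $A_0 * A_1 = \compl(A_0 *^{\alg} A_1)$ carries the maximal $C^{*}$-seminorm, so that for $x$ in the algebraic free product $A_0 *^{\alg} A_1$ one has $\|x\| = \sup \|(\pi_0 * \pi_1)(x)\|$, where the supremum runs over all pairs $(\pi_0,\pi_1)$ of representations of $A_0$ and $A_1$ on a common Hilbert space $H$ (by GNS such a pair is the same datum as a $*$-representation of the $*$-algebra $A_0 *^{\alg} A_1$, and any $*$-homomorphism out of a $C^{*}$-algebra is automatically contractive). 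First I would record that $j * j$ is contractive: for $x$ in $I *^{\alg} I$, every pair $(\pi_0, \pi_1)$ of representations of $B$ restricts along $j$ to a pair of representations of $I$, so the defining supremum for $\|(j*j)(x)\|_{B * B}$ is taken over a subset of the pairs computing $\|x\|_{I * I}$.

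The reverse inequality $\|(j*j)(x)\|_{B * B} \ge \|x\|_{I * I}$ is the crux. For this I would show that every representation $\rho\colon I \to B(H)$ of the ideal extends to a representation $\bar\rho\colon B \to B(H)$ on the \emph{same} Hilbert space with $\bar\rho \circ j = \rho$. Writing $H_0 := \overline{\rho(I)H}$ and using that $I$ is $*$-closed, one checks that $H_0^{\perp}$ is exactly the null space $\{v \in H : \rho(a)v = 0 \text{ for all } a \in I\}$ and that $\rho$ is block-diagonal with respect to $H = H_0 \oplus H_0^{\perp}$, acting as zero on $H_0^{\perp}$. The restriction $\rho_0 := \rho|_{H_0}$ is non-degenerate, hence extends uniquely to a representation $\bar\rho_0\colon B \to B(H_0)$ by the standard extension of a non-degenerate representation of an ideal (via the canonical map $B \to M(I)$). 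Setting $\bar\rho := \bar\rho_0 \oplus 0$ with respect to $H_0 \oplus H_0^{\perp}$ then gives $\bar\rho \circ j = \rho_0 \oplus 0 = \rho$.

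Granting this, for any pair $(\rho_0, \rho_1)$ of representations of $I$ on a common $H$ I would extend each factor separately (the two extensions use the essential/null decompositions of $H$ associated to $\rho_0$ and $\rho_1$ individually and need not be mutually compatible, which causes no difficulty) to obtain representations $\bar\rho_0, \bar\rho_1$ of $B$ on $H$. Then $(\bar\rho_0 * \bar\rho_1) \circ (j*j) = \rho_0 * \rho_1$, whence $\|(\rho_0 * \rho_1)(x)\| = \|(\bar\rho_0 * \bar\rho_1)((j*j)(x))\| \le \|(j*j)(x)\|_{B * B}$; taking the supremum over all such pairs yields $\|x\|_{I * I} \le \|(j*j)(x)\|_{B * B}$. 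Combined with contractivity this shows $j*j$ is isometric on the dense subalgebra $I *^{\alg} I$, hence isometric, hence injective. I expect the main obstacle to be the careful treatment of the extension step for possibly degenerate representations, including the verification that $H_0^{\perp}$ is the null space and that $\rho$ is correspondingly block-diagonal; everything else is a direct unwinding of the universal property of the free-product norm.
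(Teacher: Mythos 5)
Your proof is correct. Note that the paper itself offers no argument for this lemma at all: it is invoked inside the proof of Theorem \ref{erigjwoergerfwerfwref} with the words ``We will use as a fact'', so your proposal actually supplies a proof the paper omits. Your route is the natural one, and all the delicate points are handled: the identification of the maximal free-product seminorm with the supremum over pairs of Hilbert-space representations (every $C^{*}$-seminorm arises from a representation by completing and applying Gelfand--Naimark, and conversely), the decomposition $H=\overline{\rho(I)H}\oplus\{v:\rho(I)v=0\}$ with $\rho$ block-diagonal (using that $I$ is $*$-closed, and nondegeneracy of $\rho_{0}$ via an approximate unit or $\overline{I^{2}}=I$), and the unique extension of the nondegenerate part through $B\to M(I)$. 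Since the resulting isometry is established directly at the level of the maximal seminorms on the algebraic free products, injectivity of the map of completions follows without needing to know separately that $I*^{\alg}I\to B*^{\alg}B$ is injective. It is worth emphasizing what the ideal hypothesis buys you here: for a general $C^{*}$-subalgebra inclusion $A\subseteq B$ one cannot extend a representation of $A$ to a representation of $B$ on the \emph{same} Hilbert space, and although full free products still preserve injectivity in that generality, the argument then requires enlarging the Hilbert space (extending states by Hahn--Banach and padding both representations by zero on the extra summands); for ideals the same-space extension makes the proof as clean as you present it.
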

%
%
%
%

We abbreviate $QA:= A*A$.
By \cref{wefojgiwergerfwerf9}  the map $qA*qA\to QA*QA$ is injective.    We construct the following diagram of exact sequences
  \[\xymatrix{&0\ar[r]&\Mat_{2}(qQA)\ar[r]&\Mat_{2}(Q^{2}A)\ar[r]&\Mat_{2}(QA)\ar[r]&0\\
  0\ar[r]&J\ar[r]\ar[ur]&D\ar[r]^(0.6){\pi}\ar[ur]&S\ar[ur]\ar[r]&0&\\&0\ar[r] &\Mat_{2}(q^{2}A)\ar@/^{-1.5cm}/[uu]\ar[r] &\Mat_{2}(QqA)\ar[r]\ar@/^{-1.5cm}/[uu]&\ar@/^{-1.5cm}/[uu]\Mat_{2}(qA)\ar[r]&0\\ 0\ar[r]&J\ar@{=}@/^{1.5cm}/[uu]\ar[ur]\ar[r]&R\ar@/^{1.5cm}/[uu]\ar[ur]\ar[r]&\Mat_{2}(qA)\ar@/^{1.5cm}/[uu]\ar@{=}[ur]\ar[r]&0&}\ .\]
  All vertical maps are injective. The $C^{*}$-algebra 
   $R$ is defined as  the subalgebra of $\Mat_{2}(QqA)$ generated by
  matrices of the form
  \[\left(\begin{array}{cc}\eta_{0}(qA)&\eta_{0}(qA)  \eta_{1}(qA)\\  \eta_{1}(qA)  \eta_{0}(qA)& \eta_{1}(qA)\end{array}\right)\ ,\]
  where we use the notation $\eta_{0}$ and $ \eta_{1}$ for the canonical inclusions $\iota_{0}$ and $\iota_{1}$
  of $qA$ into $QqA$.
  We observe that the projection $R\to \Mat_{2}(qA)$ is surjective and defines the ideal $J$ as its kernel.
  We let $D$ be the subalgebra of $\Mat_{2}(Q^{2}A)$ generated by the image of $R$ and the elements 
  $$\left(\begin{array}{cc}\eta_{0} (\iota_{0}(a) )&0 \\ 0&   \eta_{1}   (\iota_{0}(a))\end{array}\right)\ , \quad a\in A\ .$$
   
 One checks by an explicit calculation that $R$ is an ideal in $D$, and hence $J$ is also an ideal in $D$.
  Then $S$ is the subalgebra of $\Mat_{2}(QA)$ generated by $\Mat_{2}(qA)$ and the diagonal elements 
\[\left(\begin{array}{cc} \iota_{0}(a) &0 \\ 0&  \iota_{0}(a)\end{array}\right)\ , \quad a\in A\ .\]
We let $U_{t}$ be the rotation matrix from \eqref{qrfqfoiue9foqwefqwedwqewdqewd}. We note that
conjugation by $U_{t}$ on $\Mat_{2}(QA)$ preserves the subalgebra $S$.  The derivative
of this action is a bounded derivation $\bar \delta $ of $S$.

By Pedersen's derivation lifting theorem  \cite{peder}, \cite[Thm 8.6.15]{pedersen} there exists a
derivation $\delta$ of $D$ such that $\pi\circ \delta=\bar \delta\circ \pi$. It is at this point
where separability of $A$ is important. There are counterexamples
to the derivation lifting theorem for non-separable algebras.

We define the family  $\sigma_{t}:=e^{t\delta}$ of automorphisms of $D$ and set $\sigma:=\sigma_{1}$.

We now define $\phi:qA\to  \Mat_{2}(q^{2}A)$ as the homomorphism with the components
$\hat \phi_{i}:A\to   \Mat_{2}(QqA)\to M( \Mat_{2}(q^{2}A))$ given by 
\begin{equation}\label{fqweewqfdewdewqdwedqwd} \hat \phi_{0}:=\left(\begin{array}{cc}\eta_{0}\circ  \iota_{0}   &0 \\ 0&  \eta_{1}\circ \iota_{0} \end{array}\right)\ , \quad 
\hat \phi_{1}:=\sigma \left(\begin{array}{cc}\eta_{0} \circ \iota_{1}   &0 \\ 0&  \eta_{1}\circ \iota_{0} \end{array}\right)\ . \end{equation}
In order to see that the application of $\sigma$ is well-defined we 
rewrite
\[\left(\begin{array}{cc}\eta_{0}\circ  \iota_{1}   &0 \\ 0&  \eta_{1}\circ \iota_{0} \end{array}\right)=
\left(\begin{array}{cc}\eta_{0} \circ \iota_{0}   &0 \\ 0&  \eta_{1}\circ \iota_{0} \end{array}\right)-\left(\begin{array}{cc} \eta_{0}\circ (\iota_{1} -\iota_{0}) &0 \\ 0&  0 \end{array}\right)\]
which obviously takes values in $D$. Using in addition a   similar rewriting  of $\hat \phi_{0}$
one then 
checks that  $\hat \phi_{0}-\hat \phi_{1}$ takes values in $\Mat_{2}(q^{2}A)$.  
 
The homotopy $\gamma_{t}$ from the left upper corner inclusion $q^{2}A\to \Mat_{2}(q^{2}A)$  to $\phi\circ \iota_{qA}$  has the components $\hat \gamma_{i}:qA\to \Mat_{2}(Q^{2}A)$ given by the map (again given by a pair of components)
\[\hat \gamma_{0}:= \left(\left(\begin{array}{cc}\eta_{0} \circ \iota_{0}   &0 \\ 0&  \eta_{1} \circ\iota_{0} \end{array}\right)\ ,  
 \sigma_{t} \left(\begin{array}{cc}\eta_{0} \circ \iota_{1}   &0 \\ 0&  \eta_{1} \circ\iota_{1} \end{array}\right)\right)\]
 and the map 
 \[\hat \gamma_{1}:= \left(\left(\begin{array}{cc}\eta_{1}\circ  \iota_{0}   &0 \\ 0&  \eta_{1}\circ \iota_{1} \end{array}\right)\ ,  
 U_{t} \left(\begin{array}{cc}\eta_{1}\circ  \iota_{1}   &0 \\ 0&  \eta_{1}\circ \iota_{0} \end{array}\right) U_{t}^{*}\right)\ .\]
 Similarly,   a homotopy $\lambda_{t}$ from the left upper corner inclusion
 $qA\to  \Mat_{2}(qA) $   to $\iota_{qA}\circ \phi$    has the components $\Mat_{2}(p_{0})\circ \tilde \lambda_{t}:qA \to \Mat_{2}(qA)$,
 where $p_{0}:  QqA \to  qA$ and $\tilde \lambda_{t}:qA\to R\to \Mat_{2}(QqA)$ is given by
 \[  \left(\left(\begin{array}{cc}\eta_{0} \circ \iota_{0}   &0 \\ 0&  \eta_{1}\circ \iota_{1} \end{array}\right)\ ,  
 \sigma_{t}\left(\begin{array}{cc}\eta_{0} \circ \iota_{1}   &0 \\ 0&  \eta_{1}\circ \iota_{0} \end{array}\right)  \right)\ .\]
  We leave the  justifications for these formulas to the interested reader or refer to the proof of  \cite[Thm. 1.6]{MR899916}.
    \end{proof}

\section{The automatic semiexactness theorem}\label{weijgiowergfrfreferfwr}

Since the symmetric monoidal functor $\kk_{\sepa}$ is homotopy invariant, stable and split-exact,  it
  belongs to the right-hand side of  the separable version of the  equivalence \eqref{fwefwefhhuhdiqweuhdwedwqdd1} describing the universal property of $L_{\sepa,h,K,q}$ for $\bD:=\KK_{\sepa}$.   
Its preimage under this equivalence 
is the  left-exact and symmetric monoidal functor $h$ depicted by the lower horizontal arrow in the commutative triangle \begin{equation}\label{frewfijofifqedwedqd}\xymatrix{&\nCalg_{\sepa}\ar[dl]_{L_{\sepa,h,K,q}}\ar[dr]^{\kk_{\sepa}}&\\L_{K}\nCalg_{\sepa,h,q}\ar@{..>}[rr]^{h}&&\KK_{\sepa}}\ .
\end{equation}

The functor $h$ will be called the {\em comparison functor}.

%
 In  \cref{wgokjweprgrefwrefwrfw} we claim that this comparison functor is an equivalence.   We will give two immediate proofs 
 which   at least implicitly   assume the formulas  \eqref{vefdvvfdvsdvsfdvdfcl}, \eqref{vefdvvfdvsdvsfdvdf} and \cref{qiurhfgiuewrgwrfrefrfwrefw}. They therefore involve more than just the simple homotopy theoretic considerations from the present notes.  In order to provide a selfcontained proof  we will  formulate two equivalent statements \cref{tkohprtggrtgetrg} and  \cref{rijoirwtertretwetwretetwretw}.  
Note that  \cref{tkohprtggrtgetrg} is just an assertion about functors defined on the category of separable $C^{*}$-algebras and does not require any $K$-theoretic element at all. On the other hand,  the argument for 
\cref{rijoirwtertretwetwretetwretw} is quite accessible to the methods developed here so that we
write out the details of the argument for this version. This finally also verifies \eqref{vefdvvfdvsdvsfdvdf} in a non-circular manner.

The fact that the comparison functor is an equivalence has the important consequence that $\KK_{\sepa}$
admits countable colimits and is idempotent complete, see \cref{weijotgwegferfwrefw}.
We do not have a direct proof of this fact just from the construction of $\KK_{\sepa}$.

We start with formulating the main result of the present section.
\begin{theorem}\label{wgokjweprgrefwrefwrfw}
The comparison functor $h:L_{K}\nCalg_{\sepa,h,q}\to \KK_{\sepa}$ is an equivalence.
\end{theorem}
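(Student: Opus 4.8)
## Proof Strategy for the Comparison Equivalence

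The plan is to reduce the statement that $h$ is an equivalence to a statement purely about functors defined on $\nCalg_{\sepa}$, namely the automatic semiexactness theorem (\cref{tkohprtggrtgetrg}). First I would observe that both $L_{K}\nCalg_{\sepa,h,q}$ and $\KK_{\sepa}$ are characterized by universal properties among left-exact additive $\infty$-categories. Indeed, by the separable version of \cref{erfjrgergfregrgw9} the functor $L_{\sepa,h,K,q}$ represents the universal homotopy invariant, stable, split- and Schochet-exact functor, so that
\begin{equation}\label{cmpone}
L_{\sepa,h,K,q}^{*}:\Fun^{\lex}(L_{K}\nCalg_{\sepa,h,q},\bD)\xrightarrow{\simeq}\Fun^{h,s,splt+Sch}(\nCalg_{\sepa},\bD)
\end{equation}
for every left-exact additive $\bD$. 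On the other hand, the separable version of \cref{wtgkwotpgkelrf09i0r4fwefwerfsepa}.\ref{wetiogjwegferwferfwrefw883sepa} shows that $\kk_{\sepa}$ represents the universal homotopy invariant, stable, semiexact and Schochet-exact functor:
\begin{equation}\label{cmptwo}
\kk_{\sepa}^{*}:\Fun^{\lex}(\KK_{\sepa},\bD)\xrightarrow{\simeq}\Fun^{h,s,se+Sch}(\nCalg_{\sepa},\bD).
\end{equation}
Since split-exactness is a weaker condition than semiexactness, restriction along the comparison triangle \eqref{frewfijofifqedwedqd} gives, for every such $\bD$, a fully faithful inclusion $\Fun^{h,s,se+Sch}(\nCalg_{\sepa},\bD)\hookrightarrow\Fun^{h,s,splt+Sch}(\nCalg_{\sepa},\bD)$, and the functor $h$ is precisely the one whose pullback $h^{*}$ realizes this inclusion under the identifications \eqref{cmpone} and \eqref{cmptwo}.

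The key reduction is therefore the following: $h$ is an equivalence if and only if the inclusion
\[
\Fun^{h,s,se+Sch}(\nCalg_{\sepa},\bD)\to\Fun^{h,s,splt+Sch}(\nCalg_{\sepa},\bD)
\]
is an equivalence for all left-exact additive $\bD$. This is exactly the content of \cref{tkohprtggrtgetrg}. To see the reduction rigorously, I would apply both universal properties with $\bD=\KK_{\sepa}$ and $\bD=L_{K}\nCalg_{\sepa,h,q}$ respectively, and use the Yoneda-style argument that a functor between essentially small left-exact additive $\infty$-categories is an equivalence precisely when it induces equivalences on all categories of left-exact functors into arbitrary such targets; here one must take care that $\KK_{\sepa}$ and $L_{K}\nCalg_{\sepa,h,q}$ are themselves objects of the relevant $2$-category, using that $\KK_{\sepa}$ is stable (hence left-exact additive) and that $L_{K}\nCalg_{\sepa,h,q}$ is additive and left-exact by \cref{wetgjowoegrewfwref}.

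The genuinely hard part is not this formal reduction but the automatic semiexactness statement \cref{tkohprtggrtgetrg} itself, which as the text emphasizes is \emph{not} purely homotopy-theoretic: every homotopy invariant, stable, Schochet- and split-exact functor to an additive left-exact $\infty$-category is automatically semiexact. I expect the proof of that to proceed through the equivalent formulation \cref{rijoirwtertretwetwretetwretw}, exploiting the explicit formula \eqref{vweriuvbvjbjkfvfvfsfvsdfv} for the mapping spaces in $L_{K}\nCalg_{\sepa,h,q}$ in terms of $\ell\underline{\Hom}(qA,K\otimes B)$, together with \cref{erigjwoergerfwerfwref} (the stabilization of the $q$-tower from $n=1$), and the analytic input coming from the existence of completely positive contractive splits — this is where the Choi–Effros lifting theorem and the explicit splitting of the Toeplitz-type extensions enter. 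The main obstacle is precisely that one cannot deduce semiexactness from split-exactness by formal manipulation; one must produce, for a given semisplit exact sequence, enough homomorphisms (quasihomomorphisms in the Cuntz sense) witnessing that the candidate fibre sequence is exact, and verifying this requires the deferred analytic arguments of \cref{weijgiowergfrfreferfwr} rather than anything available at the level of the present formal construction.
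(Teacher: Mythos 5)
Your formal reduction is correct, and it is exactly the equivalence the paper itself records just before stating \cref{tkohprtggrtgetrg}: comparing the separable version of \eqref{fwefwefhhuhdiqweuhdwedwqdd} with \eqref{vdfvqr3ferffafafds} shows that \cref{wgokjweprgrefwrefwrfw} is equivalent to automatic semiexactness, and your Yoneda-style argument (testing against $\bD=L_{K}\nCalg_{\sepa,h,q}$ and $\bD=\KK_{\sepa}$, using that $h$ is left-exact and that both categories are left-exact and additive) is the right way to make this precise. Note, however, that the paper's two printed proofs of the theorem take shortcuts through classical theory rather than through \cref{tkohprtggrtgetrg}: the first establishes the universal property of $\kk_{\sepa}$ among split-exact functors via the diagram \eqref{vewrfcwdcsdcsdca}, importing the crucial arrow (that split-exact functors invert $\kk$-equivalences) from \cite{KKG} and hence from the classical universal property of $\ho\KK_{\sepa}$; the second quotes the classical Cuntz formula \eqref{vefdvvfdvsdvsfdvdf}. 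Your route is the one the paper develops afterwards as its self-contained alternative, so in structure you and the paper agree.

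The gap is that in your proposal all of the mathematical content now sits in \cref{tkohprtggrtgetrg}, which you defer, and your sketch of its proof points at the wrong ingredients. The paper's proof of the equivalent statement \cref{rijoirwtertretwetwretetwretw} does not principally rest on Choi--Effros or on the formula \eqref{vweriuvbvjbjkfvfvfsfvsdfv}; indeed the introduction stresses that the argument is deliberately independent of that formula and of Pedersen's derivation lifting entering \cref{erigjwoergerfwerfwref}. What it actually needs is: (i) the construction of $f_{\cS}:L(Q)\to L(S(I))$ for a semisplit extension $\cS$ via Kasparov--Stinespring and Kasparov stabilization (\cref{weijrogreferwfwerf}), together with its naturality lemmas; (ii) an explicit Bott element $L(\beta)$ built from the Dirac operator on $\C\P^{1}$; and (iii) the single nontrivial composition $L(\beta)\circ f_{\cR}\simeq\pm\id_{L(\C)}$ (\cref{fkjgogerfwreferfw}), proved by the index computations of \cref{qriofjoqfwedqweddedqe} and \cref{geriowgjeoiferfwerfwef}. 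Step (iii) in turn requires knowing beforehand that the comparison map is an equivalence on mapping spaces out of $\C$, i.e.\ \cref{qerijgoqrfqdfewfqef}, which must be proved directly (by rerunning the group-completion argument of \cref{wogkpwgerfwrefwf1} for $L_{\sepa,h,K,q}$) precisely to avoid invoking the theorem one is trying to prove. So your skeleton is the paper's skeleton, but the load-bearing analytic part is absent, and as you describe it, it would lean on results the paper's self-contained argument is designed to avoid.
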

 As said above, we will give two proofs which should convince the reader that the assertion is true. On the other hand both  involve deep facts from the classical $KK$-theory which are not easily provable on the basis of the approach taken in the present paper. 

\begin{proof}[1. Proof via universal properties]
We will show that $\kk_{\sepa}:\nCalg_{\sepa}\to \KK_{\sepa}$   has the same universal property as
$L_{\sepa,h,K,q}:\nCalg_{\sepa}\to L_{K}\nCalg_{\sepa,h,q}$ stated in the separable version of  \eqref{fwefwefhhuhdiqweuhdwedwqdd}. It appears as the upper horizontal equivalence in the diagram below, where $\bD$
 is any  left-exact and additive $\infty$-category:
\begin{equation}\label{vewrfcwdcsdcsdca}\xymatrix{\Fun^{\lex}(\KK_{\sepa},\bD)\ar@{-->}[r]^-{\simeq}_-{\kk_{\sepa}^{*}}\ar[d] &  \Fun^{h,s,splt+Sch}(\nCalg_{\sepa},\bD) \ar[d] \\ \Fun^{\coprod}(\KK_{\sepa},\bD)\ar[d]\ar[r]_-{\kk_{\sepa}^{*}}^-{\simeq}& \Fun^{h,s,splt}(\nCalg_{\sepa},\bD) \ar[d]^{!}\\\Fun (\KK_{\sepa},\bD) \ar[r]_-{\kk_{\sepa}^{*}}^-{\simeq} & \Fun^{\tilde W_{\sepa,\se}}(\nCalg_{\sepa},\bD)  }  ,
\end{equation} 
where the superscript $\coprod$ indicates finite coproduct preserving functors.
The lower square has been discussed in the proof of \cite[Thm.2.23]{KKG}, see \cite[(2.31)]{KKG}. 
The lower horizontal equivalence reflects the fact (see \cref{ergiojeroigwerfwrefrefdvs}) that $\kk_{\sepa}$ is the Dwyer-Kan localization 
at the set $\tilde W_{\sepa,\se}$ of morphisms in $\nCalg_{\sepa}$ inverted by $\kk_{\sepa}$. 
The crucial point is the existence of the arrow marked by $!$. To see that it exists in \cite{KKG} we used the comparison of
$\ho\KK_{\sepa}$ with the classical theory  and the fact that the latter has a universal property  involving the condition of split-exactness \cite[Cor. 2.4]{KKG}.
  The  middle
 horizontal  equivalence has been discussed in  the  proof of \cite[Thm.2.23]{KKG}. For our present purpose
 we need the dashed equivalence which is obtained by an analogous argument explained in the subsequent paragraph.
  
  All vertical arrows in the diagram above are fully faithful functors.
Since $\kk_{\sepa}$ is homotopy invariant, stable, and  Schochet- and  semiexact, it is Schochet- and split-exact. Therefore  the dashed arrow exists. We must show that it is essentially surjective. Thus  consider a functor  $F$  in $ \Fun^{h,s,splt+Sch}(\nCalg_{\sepa},\bD)$. It gives rise to a functor $\hat F$ in $ \Fun (\KK_{\sepa},\bD)$ such that $\kk_{\sepa}^{*}\hat F\simeq F$.
It remains to show that $\hat F$ is left-exact. It is clearly reduced.  Every cartesian square in $\KK_{\sepa}$ can be represented as the image under $\kk_{\sepa}$ of  a Schochet fibrant cartesian square, and by assumption  $F$ sends this square to a cartesian square in $\bD$. This implies left-exactness of $\hat F$.
See the proof of \cref{weokjgpwerferfwef}.\ref{elrkijgowlferfwffr9} for an analogous argument.
  \end{proof}

\begin{proof}[2.Proof based on \eqref{vefdvvfdvsdvsfdvdf}]
For any two separable $C^{*}$-algebras $A$ and $B$ the comparison map induces the second map in 
\[\pi_{0} \underline{\Hom}(qA,K\otimes B)\stackrel{\cref{wejogiwerfgerwfrewfwerfw}}{\simeq}\pi_{0} \Map_{L_{K}\nCalg_{\sepa,h,q}}(A,B)\stackrel{h}{\to}
\KK_{\sepa,0}(A,B)\ .\] In view of \eqref{vefdvvfdvsdvsfdvdf}
it is a bijection. 
%
%
%
Using the left-exactness of the comparison functor we can upgrade this
to obtain an isomorphism between the higher homotopy groups of mapping spaces by 
 inserting the suspension $S^{i}(B)$ in the place of  $B$ for $i$ in $\nat$. 
Since the comparison  functor is clearly essentially surjective
 it is an equivalence.
 \end{proof}

 \begin{rem}
 We note that the two proofs are not independent.  In order to obtain the marked arrow in \eqref{vewrfcwdcsdcsdca}
 we  used \cite[Cor.2.4]{KKG}  which is based on the universal property 
 of $\ho\KK_{\sepa}$  as the initial functor to an additive category
 which is homotopy invariant, stable and split-exact. The verification of this universal property
   \cite[Thm. 4.5]{higsondiss} 
 also uses the formula  \eqref{vefdvvfdvsdvsfdvdf}. 
\hB 
 \end{rem}
 

\begin{kor}\label{weijotgwegferfwrefw}\mbox{}
\begin{enumerate}
\item  \label{fhqdewjfioqwefqdwedqwed}The category $\KK_{\sepa}$ admits all countable colimits.
\item  \label{fhqdewjfioqwefqdwedqwed1} For a countable family of separable $C^{*}$-algebras $(B_{i})_{i\in I}$ we have an equivalence
\[\bigsqcup_{i\in I} \kk_{\sepa}(B_{i})\simeq \kk_{\sepa}(\bigoplus_{i\in I} B_{i})\ .\]
\item  \label{fhqdewjfioqwefqdwedqwed2} $\KK_{\sepa}$ is idempotent complete and the inclusion $\KK_{\sepa}\to \KK$ identifies
$\KK_{\sepa}$ with the full subcategory of compact objects of $\KK$.
\end{enumerate}
\end{kor}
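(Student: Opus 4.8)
The plan is to derive all three assertions from the equivalence $h:L_{K}\nCalg_{\sepa,h,q}\xrightarrow{\simeq}\KK_{\sepa}$ of \cref{wgokjweprgrefwrefwrfw}, transporting the good colimit properties of the $q$-model across it. By \cref{roijgferiogerfwerffwref} the category $L_{K}\nCalg_{\sepa,h,q}$ admits countable coproducts, and for a countable family $(B_{i})_{i\in I}$ of separable $C^{*}$-algebras the coproduct $\bigsqcup_{i\in I} L_{\sepa,h,K,q}(B_{i})$ is represented by $L_{\sepa,h,K,q}(\bigoplus_{i\in I}B_{i})$. Since an equivalence of $\infty$-categories preserves and reflects all colimits, $\KK_{\sepa}$ admits countable coproducts; applying $h$ to the above identification and using the commuting triangle \eqref{frewfijofifqedwedqd}, which gives $h\circ L_{\sepa,h,K,q}\simeq \kk_{\sepa}$, yields Assertion \ref{fhqdewjfioqwefqdwedqwed1}.

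For Assertion \ref{fhqdewjfioqwefqdwedqwed} I would combine the countable coproducts just obtained with the finite colimits supplied by stability of $\KK_{\sepa}$ (\cref{wtgkwotpgkelrf09i0r4fwefwerfsepa}.\ref{wetiogjwegferwferfwrefw881sepa}). The standard telescope argument shows that a stable $\infty$-category with countable coproducts admits sequential colimits: the colimit of a diagram $X_{0}\to X_{1}\to\cdots$ is the cofiber of the map $\mathrm{id}-\mathrm{sh}:\bigoplus_{n}X_{n}\to\bigoplus_{n}X_{n}$, and cofibers are finite colimits. Finally, the colimit of any countable diagram is the sequential colimit of its colimits over the (countably many) finite subcategories of its index category, each of which exists by stability; hence $\KK_{\sepa}$ admits all countable colimits.

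Assertion \ref{fhqdewjfioqwefqdwedqwed2} then follows formally. Idempotent completeness is a consequence of Assertion \ref{fhqdewjfioqwefqdwedqwed}, since idempotents split in any $\infty$-category admitting sequential colimits \cite[Cor. 4.4.5.16]{htt}. For the identification of compact objects, recall from \cref{iejgowergrfrfwerfref} that $\KK=\Ind(\KK_{\sepa})$ and that $y:\KK_{\sepa}\to\KK$ is fully faithful and exact with image contained in the compact objects. In general the compact objects of $\Ind(\bC)$ are precisely the retracts of objects in the image of $\bC$; since $\KK_{\sepa}$ is now idempotent complete these retracts already lie in $\KK_{\sepa}$, so $y$ identifies $\KK_{\sepa}$ with the full subcategory $\KK^{\omega}$ of compact objects of $\KK$.

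The real work is entirely contained in \cref{wgokjweprgrefwrefwrfw}, which I am assuming; granting it, the rest is formal category theory. The step demanding the most care is the passage from countable coproducts to all countable colimits and thence to idempotent completeness: it genuinely uses the stability of $\KK_{\sepa}$ (to turn coproducts into sequential colimits via cofibers) rather than the existence of coproducts alone, and one must check that the relevant sequential colimits really exist and not merely the coproducts. Once this is in place, the compact-object statement is the usual universal property of the $\Ind$-completion for an idempotent-complete input.
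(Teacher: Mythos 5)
Your proof is correct and follows essentially the same route as the paper: both deduce countable coproducts in $\KK_{\sepa}$ from \cref{wgokjweprgrefwrefwrfw} together with \cref{roijgferiogerfwerffwref}, combine them with the finite colimits coming from stability to get all countable colimits, and obtain Assertion \ref{fhqdewjfioqwefqdwedqwed2} from the general theory of $\Ind$-completions of stable $\infty$-categories with countable colimits. The only difference is that you spell out the general facts (the telescope construction, idempotent splitting via sequential colimits, and compact objects of $\Ind(\bC)$ as retracts) which the paper cites implicitly.
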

 \begin{proof}
 Since $\KK_{\sepa}$ is stable by \cref{wtgkwotpgkelrf09i0r4fwefwerf}.\ref{wetiogjwegferwferfwrefw881} it admits all finite colimits. For Assertion \ref{fhqdewjfioqwefqdwedqwed} it thus suffices to show that $\KK_{\sepa}$ admits countable coproducts. But this immediately follows from  \cref{wgokjweprgrefwrefwrfw} and \cref{roijgferiogerfwerffwref}.
 
The same results imply  also Assertion \ref{fhqdewjfioqwefqdwedqwed1}.

Assertion  \ref{fhqdewjfioqwefqdwedqwed2} is a general fact about $\Ind$-completions of stable $\infty$-categories admitting countable colimits and thus an immediate consequence of \cref{iejgowergrfrfwerfref}
and Assertion \ref{fhqdewjfioqwefqdwedqwed}.
 \end{proof}

By comparing the universal properties of $\kk_{\sepa}$ and $L_{\sepa,h,K,q}$ stated in the separable version of \eqref{fwefwefhhuhdiqweuhdwedwqdd} and \eqref{vdfvqr3ferffafafds} we see that
\cref{wgokjweprgrefwrefwrfw} is equivalent to  the {\em automatic semiexactness theorem}.

\begin{theorem}\label{tkohprtggrtgetrg}
For every left-exact and additive $\infty$-category $\bD$ the canonical inclusion is an equivalence
 $$  \Fun^{h,s,se+Sch}(\nCalg_{\sepa},\bD)\stackrel{\simeq}{\to} \Fun^{h,s,splt+Sch}(\nCalg_{\sepa},\bD)\ .$$
  \end{theorem}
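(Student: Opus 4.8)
The plan is to reduce the theorem to a single analytic claim about the localization functor and to discharge everything else formally. First I would record that, under the separable versions of the universal properties \eqref{fwefwefhhuhdiqweuhdwedwqdd} and \eqref{vdfvqr3ferffafafds}, the asserted inclusion $\Fun^{h,s,se+Sch}(\nCalg_{\sepa},\bD)\hookrightarrow\Fun^{h,s,splt+Sch}(\nCalg_{\sepa},\bD)$ is identified with the restriction $h^{*}$ along the comparison functor $h$ of \eqref{frewfijofifqedwedqd}, since $\kk_{\sepa}^{*}=L_{\sepa,h,K,q}^{*}\circ h^{*}$. Thus the theorem is equivalent to \cref{wgokjweprgrefwrefwrfw}, that $h$ is an equivalence. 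As $h$ is visibly essentially surjective, it remains to produce an inverse, and I claim this follows purely formally once one knows that $L_{\sepa,h,K,q}$ is not merely split-exact but already \emph{semiexact}: the universal property \eqref{vdfvqr3ferffafafds} of $\kk_{\sepa}$ then yields a left-exact $g:\KK_{\sepa}\to L_{K}\nCalg_{\sepa,h,q}$ with $g\circ\kk_{\sepa}\simeq L_{\sepa,h,K,q}$, and the two relations $h\circ g\circ\kk_{\sepa}\simeq\kk_{\sepa}$ and $g\circ h\circ L_{\sepa,h,K,q}\simeq L_{\sepa,h,K,q}$ force $h\circ g\simeq\id$ and $g\circ h\simeq\id$ by the uniqueness clauses of \eqref{vdfvqr3ferffafafds} and \eqref{fwefwefhhuhdiqweuhdwedwqdd}.

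So the whole problem collapses to showing that $L_{\sepa,h,K,q}$ sends semi-split exact sequences to fibre sequences. Since this functor is homotopy invariant, reduced, and Schochet-exact — hence sends mapping cone sequences to fibre sequences — \cref{wetigowergferferferwfw} lets me replace this by the equivalent assertion that for every semi-split exact sequence $0\to I\xrightarrow{i}A\xrightarrow{\pi}Q\to 0$ with $Q$ \emph{contractible} the morphism $L_{\sepa,h,K,q}(i)$ is an equivalence. The same reduction applied to split-exactness is already known, and a genuinely homomorphic section handles the contractible-quotient case at once: split-exactness together with $F(Q)\simeq 0$ exhibits $F(i)$ as the inclusion of a direct factor. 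The content is therefore to upgrade a \emph{completely positive contractive} section to this behaviour.

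The analytic heart is to manufacture honest $*$-homomorphisms out of the cpc section $s:Q\to A$. The plan is to dilate $s$ by a Kasparov--Stinespring construction to a genuine homomorphism $\rho:Q\to M(A\otimes K)$ of which $s$ is a compression, and to compare $\rho$ with the homomorphism built from the section so that their difference lands in the relevant ideal; by \cref{4tjigortgwrefrwefref} this prequasihomomorphism datum determines a map out of $qQ$. Because $q$ is homotopy invariant and $Q$ is contractible, $qQ$ is contractible, so the correction term dies in $L_{K}\nCalg_{\sepa,h,q}$; what remains, after the usual rotation/conjugation homotopies in $\Mat_{2}$, is exactly the split-exact situation, and it produces a candidate inverse to $L_{\sepa,h,K,q}(i)$. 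The hard part will be verifying that the relevant composite of this candidate with $L_{\sepa,h,K,q}(i)$ is the identity in the localized category: this is a composition of morphisms that does \emph{not} come from a homomorphism of $C^{*}$-algebras and so must be controlled by hand, which is precisely the nontrivial composition computation \cref{fkjgogerfwreferfw}. I expect this to be the main obstacle, since it is the one step where the reductions above cease to be formal and the analytic structure of the cpc lift is genuinely exploited.

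Finally I would harvest the payoff. Once $L_{\sepa,h,K,q}(i)$ is an equivalence for all contractible-quotient semi-split sequences, the first paragraph gives that $h$ is an equivalence, and transporting back through the universal properties yields the asserted equivalence of functor categories. As a consistency check one may read this on mapping spaces: by \cref{wejogiwerfgerwfrewfwerfw} the source mapping space is $\ell\underline{\Hom}(qA,K\otimes B)$, and the equivalence of $h$ delivers the self-contained isomorphism \eqref{vefdvvfdvsdvsfdvdf} on $\pi_{0}$, the higher homotopy groups following by inserting the suspensions $S^{i}(B)$ and invoking left-exactness.
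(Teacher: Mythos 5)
Your formal skeleton is correct and is in fact the paper's own: the identification of the inclusion with restriction along the comparison functor $h$ of \eqref{frewfijofifqedwedqd}, hence the equivalence of the theorem with \cref{wgokjweprgrefwrefwrfw}, and the further reduction via \cref{wetigowergferferferwfw} to inverting the localization functor on semi-split sequences (the paper phrases this as \cref{rijoirwtertretwetwretetwretw}) all go through exactly as you say. The gap is concentrated in your third paragraph, the analytic heart, and it is a real one. First, your construction is self-defeating as stated: if $Q$ is contractible then $L_{\sepa,h,K,q}(qQ)\simeq 0$, so \emph{any} morphism out of $qQ$ --- not merely a ``correction term'' --- is null in the localization and can produce nothing, in particular no candidate inverse to $L_{\sepa,h,K,q}(i)$. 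This is precisely why the paper never applies the Stinespring construction to the contractible-quotient sequence itself; it instead proves that $L(\iota_{q}):L(I)\to L(C(q))$ is an equivalence for a \emph{general} semi-split surjection $q:A\to Q$, feeding the construction the mapping-cone sequence $\cT:0\to S(I)\to C(A)\to C(q)\to 0$, whose quotient is not contractible.

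Second, and more fundamentally, the quasihomomorphism obtained from the Kasparov--Stinespring dilation of the cpc split does not have its difference in the ideal: the two components can only be compared after conjugating by the unitary family $F(u)=P+u(1-P)$, $u\in S^{1}$, so the resulting homomorphism lands in the \emph{suspension} $S(K\otimes I)$, i.e.\ one obtains $f_{\cS}:L(Q)\to L(S(I))$ as in \cref{weijrogreferwfwerf}, off by a (double) suspension from anything that could invert $L(i)$. Undoing this is exactly where Bott periodicity in the split-exact world must enter: one needs the explicit Bott element $\beta$, the identity $L(\beta)\circ f_{\cR}\simeq\pm\id_{L(\C)}$ of \cref{fkjgogerfwreferfw}, \emph{and} the naturality lemmas \cref{woptkgpwtgttheth9}, \cref{woptkgpwtgttheth91}, \cref{woptkgpwtgttheth2}, which are what allow the composite $\bigl(L(\beta)\otimes L(I)\bigr)\circ f_{\cT}\circ L(\iota_{q})$ to be rewritten as $\bigl(L(\beta)\circ f_{\cR}\bigr)\otimes L(I)$. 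You correctly single out \cref{fkjgogerfwreferfw} as the crux, but in your sketch there is no suspension-valued construction and no naturality apparatus, hence no composite to which that proposition could apply. Finally, even once this machinery yields a left inverse, you only have a split monomorphism; the upgrade to an equivalence is not formal and requires the further cone argument (applying the same split-mono statement to $\cU:0\to C(\iota_{q})\to C(C(q))\to C(Q)\to 0$ to get $L(C(\iota_{q}))\simeq 0$) together with the retract trick exhibiting $L(\iota_{q})$ as a retract of $L(S^{2}(\iota_{q}))$. These steps constitute the bulk of the paper's proof and cannot be waved through as ``the split-exact situation''.
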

   Apriori semiexactness is a much stronger  condition than  split-exactness.

 Recall from \eqref{tbrtbbgdbfert} that for every exact sequence of $C^{*}$-algebras  \begin{equation}\label{qewfiojoifqefqdewdqqwedewdq}0\to A\to B\xrightarrow{f} C\to 0
\end{equation}
 we have defined a map $\iota_{f}:A\to C(f)$ from $A$ to the mapping cone $C(f)$ of $f$.
The following theorem was shown in \cite{zbMATH03973625}.
 \begin{theorem}\label{rijoirwtertretwetwretetwretw} For every semi-split exact sequence \eqref{qewfiojoifqefqdewdqqwedewdq} of separable $C^{*}$-algebras the morphism
$L_{\sepa,h,K,q}(\iota_{f})$ in $L_{K}\nCalg_{\sepa,h,q}$ is an equivalence. 
 \end{theorem}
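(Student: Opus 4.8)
The plan is to prove the equivalent but more concrete statement \cref{rijoirwtertretwetwretetwretw} directly, rather than attacking \cref{wgokjweprgrefwrefwrfw} head on. First I would record that $\iota_f\colon A\to C(f)$ is the inclusion of the ideal in the left column of \eqref{tbrtbbgdbfert}, i.e.\ in the semi-split exact sequence $0\to A\xrightarrow{\iota_f} C(f)\xrightarrow{\pi_f} Q(f)\to 0$, whose quotient $Q(f)\cong C_0([0,1))\otimes C$ is contractible and whose cpc split $\hat s\colon Q(f)\to C(f)$ is induced by the cpc split of $f$. Since $L_{\sepa,h,K,q}$ is homotopy invariant, stable, split- and Schochet exact (\cref{wetgjowoegrewfwref}, \cref{erfjrgergfregrgw9}), it is reduced and sends mapping cone sequences to fibre sequences, and its target $L_{K}\nCalg_{\sepa,h,q}$ is additive. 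By the diagram argument in the proof of \cref{wetigowergferferferwfw}, knowing that $L_{\sepa,h,K,q}(\iota_f)$ is an equivalence for every semi-split $f$ yields full semiexactness; conversely this is what semiexactness asserts. As $Q(f)$ is contractible we have $L_{\sepa,h,K,q}(Q(f))\simeq 0$, so it suffices to exhibit a two-sided inverse of $L_{\sepa,h,K,q}(\iota_f)$.

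Second, I would use the mapping space formula \eqref{vweriuvbvjbjkfvfvfsfvsdfv} together with \cref{wejogiwerfgerwfrewfwerfw}, which identifies a morphism $C(f)\to A$ in $L_{K}\nCalg_{\sepa,h,q}$ with an element of $\ell\underline{\Hom}(qC(f),K\otimes A)$, that is, with a quasihomomorphism in the sense of \cref{4tjigortgwrefrwefref} and \cref{wtokgwopgfrefreferfw}. The candidate inverse $r$ is built from the cpc split: take $\hat\Phi_0\colon C(f)\to M(A\otimes K)$ to be the canonical map $C(f)\to M(A)\hookrightarrow M(A\otimes K)$ into a corner, and set $\hat\Phi_1:=\tilde s\circ\pi_f$, where $\tilde s\colon Q(f)\to M(A\otimes K)$ is a homomorphism. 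This is exactly where the analytic input enters: the composite $Q(f)\xrightarrow{\hat s} C(f)\to M(A)$ is only cpc, so I would apply a Stinespring/Kasparov dilation (using both complete positivity and separability) to replace it by an honest $*$-homomorphism $\tilde s$ agreeing with $\hat s$ on a corner modulo $A\otimes K$. Since $x-\hat s\pi_f(x)\in\ker\pi_f=A$ for $x\in C(f)$, this forces $\hat\Phi_0-\hat\Phi_1$ to take values in $A\otimes K$, so by \cref{4tjigortgwrefrwefref} the pair defines a homomorphism $qC(f)\to A\otimes K$ and hence the desired $r\colon C(f)\to A$ in $L_{K}\nCalg_{\sepa,h,q}$.

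Third, I would verify that $r$ inverts $L_{\sepa,h,K,q}(\iota_f)$ on both sides. The composite $r\circ L_{\sepa,h,K,q}(\iota_f)$ is the direct one: precomposing the quasihomomorphism with $q(\iota_f)$ kills the $\hat\Phi_1$-component because $\pi_f\circ\iota_f=0$, so it reduces to the quasihomomorphism with vanishing second component, which is the genuine left upper corner inclusion $\kappa_A$ of \eqref{oiwjveoivjowevsvdfvsfdv}; this is inverted by stability of $L_{\sepa,h,K,q}$. The other composite $L_{\sepa,h,K,q}(\iota_f)\circ r$ is the serious one, since it is a composition of morphisms in the localization that do not both arise from honest $C^{*}$-algebra homomorphisms. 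Controlling it requires an explicit homotopy assembled from the dilation, a rotation argument of the type used in the proof of \cref{weroigjowerfrefw}, and the contractibility of $Q(f)$; this is precisely the nontrivial composition computation isolated as \cref{fkjgogerfwreferfw}.

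The main obstacle is therefore this second composite: turning the cpc split into an honest homomorphism by dilation and then showing, by an explicit homotopy in the localization, that $L_{\sepa,h,K,q}(\iota_f)\circ r\simeq\id_{L_{\sepa,h,K,q}(C(f))}$. Everything surrounding it — the identification of $\iota_f$ with the contractible-quotient cone inclusion, the vanishing $L_{\sepa,h,K,q}(Q(f))\simeq 0$, the interpretation of morphisms as quasihomomorphisms via \eqref{vweriuvbvjbjkfvfvfsfvsdfv}, and the reduction of the easy composite to $\kappa_A$ — is formal given the machinery already in place, so essentially all the work, and all the genuinely $C^{*}$-analytic content, is concentrated in the dilation and the one hard composition calculation.
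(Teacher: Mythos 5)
Your overall direction resonates with the paper's (the cpc split fed through a Stinespring/Kasparov dilation, quasihomomorphisms via \eqref{vweriuvbvjbjkfvfvfsfvsdfv}, the analytic content concentrated in one composition), but two of your steps break down. The first is the construction of $r$ itself: with $\hat\Phi_0\colon C(f)\to M(A\otimes K)$ the corner map and $\hat\Phi_1=\tilde s\circ \pi_f$ for a dilation $\tilde s$, the pair is a quasihomomorphism only if $\hat\Phi_0(x)-\hat\Phi_1(x)\in A\otimes K$ for all $x$. Stinespring gives only $\hat s(y)=P\tilde s(y)P$; writing $x=a+\hat s(\pi_f(x))$ with $a\in A$ one finds
\[\hat\Phi_0(x)-\hat\Phi_1(x)=a-(1-P)\tilde s(\pi_f(x))P-P\tilde s(\pi_f(x))(1-P)-(1-P)\tilde s(\pi_f(x))(1-P)\ ,\]
and the diagonal term $(1-P)\tilde s(\cdot)(1-P)$ has no reason to lie in the ideal: the off-corner part of a dilation is not small. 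This is exactly why \cref{weijrogreferwfwerf} conjugates by the unitary family $F(u)=P+u(1-P)$, which cancels the diagonal term and leaves only commutator-type terms landing in the compacts --- but at the price of the circle variable $u$. The output is therefore a map $qQ\to S(K\otimes I)$, i.e.\ a degree-one boundary morphism $f_{\cS}\colon L(Q)\to L(S(I))$, not a morphism $C(f)\to A$. This suspension shift cannot be engineered away: to get from $L(S^2(I))$ back to $L(I)$ the Bott element must be built into the \emph{construction} of the inverse (the paper's $u:=(L(\beta)\otimes L(I))\circ f_{\cT}$), not only into its verification.

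The second gap is your treatment of the composite $L_{\sepa,h,K,q}(\iota_f)\circ r$. Identifying it with \cref{fkjgogerfwreferfw} is inaccurate: that proposition is the Bott relation $L(\beta)\circ f_{\cR}\simeq\pm\id_{L(\C)}$ for the single sequence \eqref{hfquiewfhwedkewmldq}, proved by index-theoretic means (via \cref{qerijgoqrfqdfewfqef} and Atiyah--Singer), and the paper explicitly flags the direct computation of such compositions as intractable. In fact the paper never verifies a two-sided inverse at all. It proves only the left-inverse relation $u\circ L(\iota_q)\simeq\id_{L(I)}$, using the naturality of $f_{\cS}$ under pullback, pushout and tensoring (\cref{woptkgpwtgttheth9}, \cref{woptkgpwtgttheth91}, \cref{woptkgpwtgttheth2}) together with the Bott relation, and then bootstraps: applying split-injectivity to the iterated cone sequence $\cU\colon 0\to C(\iota_q)\to C(C(q))\to C(Q)\to 0$ gives $L(C(\iota_q))\simeq 0$ by Schochet exactness (it is the fibre of a map between contractible cones), whence $L(S(\iota_q))$ and then $L(S^{2}(\iota_q))$ are equivalences, and finally $L(\iota_q)$ is a retract of $L(S^{2}(\iota_q))$ via $f_{\cR}\otimes-$ and $L(\beta)\otimes-$, hence an equivalence. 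Your outline needs both repairs --- the suspension-shifted boundary map with the Bott element inside the inverse, and the split-mono-plus-bootstrap argument in place of a direct homotopy for the second composite --- before it becomes a proof.
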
 
In view of \cref{wetigowergferferferwfw} the \cref{rijoirwtertretwetwretetwretw}  is  equivalent to  the automatic semiexactness \cref{tkohprtggrtgetrg} and hence to 
  \cref{wgokjweprgrefwrefwrfw}.
 Since the automatic semiexactness theorem is absolutely crucial in order to see that our construction of
 $\KK_{\sepa}$ coincides with the classical constructions and the proof 
 of \cref{rijoirwtertretwetwretetwretw} in  \cite{zbMATH03973625} implicitly already  uses 
this comparison   we must give an independent argument in order to avoid a circularity.

The remainder of the present section is devoted to the proof of \cref{rijoirwtertretwetwretetwretw}.
We will closely follow the outline given in
 the appendix of \cite{zbMATH03973625}.  We reduce the argument to a single calculation \cref{fkjgogerfwreferfw} in
 $\Map_{L_{K}\nCalg_{\sepa,h,q}}(\C,\C)$ verifying that the composition of two explicit
candidates for the Bott element and its inverse is really the identity.   We use this argument also as a chance to present a calculus 
 which allows to manipulate morphisms in $L_{K}\nCalg_{\sepa,h,q}$ by constructions with 
 semi-split exact sequences.

In the following discussion it is important to remember, in which categories the morphisms live. We will therefore
be more precise with the notation.
We will abbreviate $L:=L_{\sepa,h,K,q}$. In  contrast to the conventions in the rest of the text,  e.g. a $C^{*}$-algebra considered as an object of $L_{K}\nCalg_{\sepa,h,q}$ will be denoted by $L (A)$ instead of simply by $A$.
By abusing the notation, for a morphism $f:qA\to K\otimes B$  we will also use the notation $L(f)$ for the induced element in
$\Map_{L_{K}\nCalg_{\sepa,h,q}}(A,B)$ under the map \eqref{fqwefiuqhduiijqqwedqwedqewdeq}.

Following \cite[Sec. 1]{skand12} and the appendix of \cite{zbMATH03973625} we  start with a construction  which associates to every   semi-split exact sequence  \begin{equation}\label{qrwrdedeqwdewdqwdw}\cS:\qquad 0\to I\to A\xrightarrow{q}  Q\to 0 \end{equation}  of separable $C^{*}$-algebras a   morphism  \begin{equation}\label{wgerfrefcdfsfdvsfvsfvre}
f_{\cS}:L (Q)\to L (S(I))
\end{equation} in $L_{K}\nCalg_{\sepa,h,q}$. 
This construction is necessarily of analytic nature since it must take the existence of the cpc split and separability into account. We provide the  details since we merge the approaches of \cite[Sec. 1]{skand12} and  \cite{zbMATH03973625}. In particular we want to work out in detail that the   morphism $f_{\cS}$ is independent of the choices.


 \begin{construction}\label{weijrogreferwfwerf}
We fix a cpc split $s:Q\to A$. By Kasparov's version of Stinespring's theorem  there exists a countably generated $A^{u}$-Hilbert-$C^{*}$-module $E_{0}$
and a homomorphism $\phi:Q\to B(A^{u}\oplus E_{0})$ such that
$s(x)=P\phi(x)P$ for all $x$ in $Q$, where $P$ in $B(A^{u}\oplus E_{0})$ is the projection
onto $A^{u}$ and we consider $A$ as a subset of $B(A^{u}\oplus E_{0})$ in the canonical way. 
The point of taking $A^{u}$ instead of $A$ is that $P$ becomes a compact operator on $A^{u}\oplus E_{0}$. 

After making $E_{0}$ smaller if necessary we can assume that $E_{0}$ is generated as a $A^{u}$-Hilbert $C^{*}$-module by the elements of the form $(1-P)\phi(x)Pa$ for all $x$ in $Q$ and $a$ in $A^{u}$. We refer to  \cref{wgergiwerjgiofwfwrf} for a sketch of a direct construction of $E_{0}$ and $\phi$ which explains  the essence of the proof of the Stinespring theorem mentioned above.  

The pair $(E_{0},\phi)$ is uniquely determined up to canonical isomorphism.
Let $(E'_{0},\phi')$ be another choice. Then we define a map
$E_{0}\to E_{0}'$ sending the generator $(1-P)\phi(x)Pa$ to the generator $(1-P')\phi'(x)P'a$.
In order to see that this map is well-defined we note that
\begin{eqnarray}\label{qijrgoqrfqrfqffqwe}
\langle \sum_{i}(1-P)\phi(x_{i})P a_{i},\sum_{j}(1-P)\phi(x_{j})P a_{j}\rangle&=&
\sum_{i,j} a_{i}^{*} P\phi(x_{i})^{*}(1-P)\phi(x_{j})P a_{j}\\&=&
\sum_{i,j}a_{i}^{*} \left( s(x_{i}^{*}x_{j})- s(x_{i})^{*}s(x_{j}) \right) a_{j} \nonumber
\end{eqnarray}
does only depend on the split $s$, but not on $E_{0}$ and $\phi$.
In addition we observe that 
  the right-hand side takes values in the ideal $I$. Hence the $A^{u}$-Hilbert $C^{*}$-module 
  $E_{0}$ becomes    an $I$-Hilbert $C^{*}$-module $E_{0|I}$ when  we restrict the right $A^{u}$-action to $I$.
  \begin{rem}\label{wgergiwerjgiofwfwrf}Here is a direct construction of $E_{0}$ and $\phi$ starting from the datum of the split $s$.
  One can consider the  right $A^{u}$-module $Q\otimes A^{u}$
  with the  $A^{u}$-valued (actually $I$-valued) scalar product
  \[\langle  x\otimes  a,x'\otimes a'\rangle:=
a^{*} \left( s(x^{*}x')- s(x)^{*}s(x') \right) a'\ .\]
Using that $s$ is completely positive one checks that this is non-negative.
We then let $E_{0}$ be the completion of $Q\otimes A^{u}$ with respect to the induced seminorm.  Note that this involves factoring out vectors with zero norm. We write suggestively $(1-P)\phi(x)Pa$ for the image of $x\otimes a$ in $E_{0}$.
For $y$ in $Q$ we then define $\phi(y)$ in  $B(A^{u}\oplus E_{0})$   by
\[\phi(y)\left(\begin{array}{c}a\\ (1-P)\phi(x)Pb\end{array}\right)=\left(\begin{array}{c}s(y)a+ s(yx)b-s(y)s(x)b\\ (1-P)\phi(y)Pa+(1-P)\phi(yx)Pb+(1-P)\phi(y)Ps(x)b\end{array}\right)\ .\]
One checks that this is a $*$-homomorphism.  \hB
     \end{rem}

We let $B$ denote the unital subalgebra of $B(A^{u}\oplus E_{0})$ generated by $\phi(Q)$ and $P$. We further let $J$ denote the ideal in $B$ generated by $[\phi(Q),P]$.  
We finally let $E_{1}$ be the sub-$A^{u}$-Hilbert $C^{*}$ module of $A^{u}\oplus E_{0}$  generated by $J (A^{u}\oplus E_{0}) $. We then have a canoncial homomorphism
$B\to M(J)\to B(E_{1})$.

Note that  
$[\phi(x),P]=(1-P)\phi(x)P-P\phi(x)(1-P)$. Combining this formula with \eqref{qijrgoqrfqrfqffqwe}
we see that  
for $j$ in $J$  and all $e$ and $e'$ in $A^{u}\oplus E_{0}$ we have
$\langle je,e'\rangle \in I$. 
Hence the   scalar product of $E_{1}$ takes values in $I$ and  $E_{1}$ becomes an   $I$-Hilbert $C^{*}$-module $E_{1|I}$ after restricting the right-module structure to $I$. Furthermore, since $J\subseteq K(A^{u}\oplus E_{0})$ 
(because $P$ was compact), we can conclude that 
  $J\subseteq K(E_{1|I})$.
  In detail, consider   an element $j$ in $J$. It can be approximated by    finite sums
    $\sum_{i}\theta_{\xi_{i},\eta_{i}}$ of one-dimensional operators on $A^{u}\oplus E_{0}$.
  We can find members $u$ and $u'$ of an approximate identity of $J$ such that $uju'$ approximates  $j$.
  But then $j$ is also approximated by the finite sums $\sum_{i}\theta_{u\xi_{i},u^{\prime,*}\eta_{i}}$  of one-dimensional operators on $E_{1|I}$.

  In the following we identify the suspension $S(A)$ of a $C^{*}$-algebra with $C_{0}(S^{1}\setminus\{1\},A)$ and $C_{b}(S^{1}\setminus\{1\}, M(A))$ with a subalgebra of the multiplier algebra  $M(S(A))$.
We consider the family $F:S^{1}\mapsto B(E_{1|I})$ given by $F(u):=P+u(1-P)$.   Since $\phi$ takes values in $B$  we can consider $\phi$ as a homomophism $Q\to B(E_{1|I})$. 
We then define a homomorphism
\begin{equation}\label{vfvsfvdverfcds}
f:qQ\to S(K(E_{1|I}))
\end{equation} whose associated homomorphism (see \cref{4tjigortgwrefrwefref}) has the  components 
\[\hat f_{0},\hat f_{1}:Q\to  C_{b}(S^{1}\setminus \{1\},B(E_{1|I})) \subseteq M(S(K(E_{1|I})))\]  given by \[\hat f_{0}(x)(u):=\phi(x)\ , \qquad \hat f_{1}(x)(u):=F(u) \phi(q)  F(u)^{*}\ .\]
Then $\hat f_{0}(x)(u)-\hat f_{1}(x)(u)$ belongs to  $K(E_{1|I})$ for every $u$ and
$\hat f_{0}(x)(1)-\hat f_{1}(x)(1)=0$. Thus $\hat f_{0}(x)-\hat f_{1}(x)$ belongs to $S(K(E_{1|I}))$ and $f$ is well-defined. This homomorphism does not yet take values in the desired target $S(K\otimes I)$.
We will employ Kasparov's stabilization theorem in  order to produce a homomorphism
$K(E_{1|I})\to K\otimes I$ which is unique up to homotopy.

We let $H_{I}:=\bigoplus_{\nat}I$ denote the standard $I$-Hilbert $C^{*}$-module.
We then have a canonical isomorphism $K(H_{I})\cong K\otimes I$.
 Using that that $E_{1|I}$ is countably generated (it is here where we use separability) and Kasparov's stabilization theorem \cite[Thm. 2]{kaspstine} we can choose an isomorphism $E_{1|I}\oplus H_{I}\cong  H_{I}$ which is unique up to homotopy since the unitary group of $B(H_{I})$ is connected, even contractible.
Using this isomorphism 
 we get an   embedding $E_{1|I}\to  H_{I}$ of $I$-Hilbert $C^{*}$-modules which is also well-defined   up to homotopy. It induces a homomorphism $K(E_{1|I})\to K(H_{I})\cong K\otimes I$ and hence 
$S(K(E_{1|I}))\to  S(K\otimes I)$. Postcomposing \eqref{vfvsfvdverfcds} with this map 
we get a map \begin{equation}\label{rgfqerfdewdqewddacds}
f'_{\cS}:qQ\to S(K\otimes I)
\end{equation} which represents the desired map \eqref{wgerfrefcdfsfdvsfvsfvre}. 
Up to homotopy it only depends on choice of the cpc split.
We finally see that $f_{\cS}$ is independent of the choice of the cpc split 
since any two splits can be joined by a path. \hB
\end{construction}

We now  interpret the pre- or post-composition of $f_{\cS}$   with a homomorphism in $\nCalg_{\sepa}$ 
and its tensor product with an auxiliary $C^{*}$-algebra 
in terms of   operations with 
semi-split exact sequences \cite[Lem. 1.5]{skand12}.
We  consider a map of semi-split exact sequences of separable $C^{*}$-algebras
\[\xymatrix{\tilde \cS&&0\ar[r]&I\ar[r]\ar@{=}[d]&\tilde A\ar[r]^{\tilde q}\ar[d]&\tilde Q\ar[d]^{e}\ar[r]&0\\\cS&&0\ar[r]&I\ar[r]&A\ar[r]^{q}&Q\ar[r]&0}\]
where the right square is a pull-back.
\begin{lem}\label{woptkgpwtgttheth9}We have an equivalence $ f_{\tilde \cS}\simeq f_{\cS}\circ L (e)$.
\end{lem}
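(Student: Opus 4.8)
The statement $f_{\tilde\cS}\simeq f_{\cS}\circ L(e)$ compares two morphisms in $L_{K}\nCalg_{\sepa,h,q}$ built by \cref{weijrogreferwfwerf}. The natural strategy is to trace the entire construction of $f_{\tilde\cS}$ and show that, with suitable choices, every piece of data can be obtained by pulling back the corresponding datum for $f_{\cS}$ along $e$. Since \cref{weijrogreferwfwerf} proves that $f_{\cS}$ is independent of all the choices involved (the Stinespring pair $(E_{0},\phi)$, the cpc split $s$, and the Kasparov embedding into $H_{I}$), I am free to pick the data for $\tilde\cS$ in a way that is manifestly compatible with $e$, and then the two homomorphisms $qQ\to S(K\otimes I)$ representing the two sides will literally agree (or be homotopic).

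\textbf{First steps.} The key observation is that the right-hand square being a pull-back means $\tilde A\cong A\times_{Q}\tilde Q$, so that $\tilde q:\tilde A\to\tilde Q$ is the base change of $q$ along $e$, and crucially both sequences have the \emph{same} ideal $I$ with the same inclusion. First I would choose a cpc split $s:Q\to A$ for $\cS$ (as in the construction) and produce the cpc split $\tilde s:\tilde Q\to\tilde A$ for $\tilde\cS$ by the universal property of the pull-back applied to $s\circ e:\tilde Q\to A$ and $\id_{\tilde Q}:\tilde Q\to\tilde Q$; one checks $\tilde s=(s\circ e,\id_{\tilde Q})$ is completely positive and contractive because $s$ is and the second component is an isometry onto the relevant summand. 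Then the formula \eqref{qijrgoqrfqrfqffqwe} shows that the $I$-valued inner product defining the Hilbert module $\tilde E_{0}$ for $\tilde\cS$ is obtained from that for $\cS$ simply by precomposing the arguments with $e$: explicitly $\tilde s(y^{*}y')-\tilde s(y)^{*}\tilde s(y')=s(e(y)^{*}e(y'))-s(e(y))^{*}s(e(y'))$. This identifies $\tilde E_{0}$ (as an $I$-Hilbert module) with the submodule of $E_{0}$ generated by $e(Q)$, and lets one take $\tilde\phi:=\phi\circ e$ as the Stinespring homomorphism for $\tilde\cS$.

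\textbf{Propagating through the construction.} With these compatible choices, the algebra $\tilde B$, the ideal $\tilde J$, and the module $\tilde E_{1}$ all map canonically into their counterparts for $\cS$ via $e$, and one gets an isometric inclusion $\tilde E_{1|I}\hookrightarrow E_{1|I}$ of $I$-Hilbert modules. The homomorphism $\tilde f:q\tilde Q\to S(K(\tilde E_{1|I}))$ from \eqref{vfvsfvdverfcds} has components $\hat{\tilde f}_{0}=\hat f_{0}\circ e$ and $\hat{\tilde f}_{1}=\hat f_{1}\circ e$ after identifying modules, because $\tilde\phi=\phi\circ e$ and the family $F(u)=P+u(1-P)$ is defined on the restricted module. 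Finally, for the Kasparov stabilization step I would use that the embedding $\tilde E_{1|I}\hookrightarrow E_{1|I}\hookrightarrow H_{I}$ is a valid choice of stabilization for $\tilde\cS$ (itself unique up to homotopy by connectedness of the unitary group of $B(H_{I})$), so that $f'_{\tilde\cS}$ factors as $q(e)$ followed by $f'_{\cS}$ at the level of the representing homomorphisms $qQ\to S(K\otimes I)$. Passing to $L_{K}\nCalg_{\sepa,h,q}$ via \eqref{fqwefiuqhduiijqqwedqwedqewdeq} and using functoriality of $q$ and $L$ gives $f_{\tilde\cS}\simeq f_{\cS}\circ L(e)$.

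\textbf{The main obstacle.} The genuine difficulty is not any single identity but the bookkeeping of the Hilbert-module identifications: I must verify that the canonical map $\tilde E_{0}\to E_{0}$ (sending the generator $(1-P)\tilde\phi(x)Pa$ to $(1-P)\phi(e(x))Pa$) is a well-defined isometry of $I$-Hilbert modules, that it intertwines $\tilde\phi$ and $\phi\circ e$, and that it carries $\tilde E_{1}$ compatibly. The uniqueness-up-to-homotopy clauses established in \cref{weijrogreferwfwerf} are exactly what makes this rigorous: any discrepancy between a chosen stabilization for $\tilde\cS$ and the pulled-back one is absorbed by a homotopy, so the final morphisms agree in the homotopy-invariant category $L_{K}\nCalg_{\sepa,h,q}$. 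I expect the cleanest write-up to isolate the claim ``compatible choices exist'' as the heart of the argument and then let independence-of-choices do the rest.
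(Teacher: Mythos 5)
You follow essentially the same route as the paper: induce the split $\tilde s$ from $s$ via the pull-back, run \cref{weijrogreferwfwerf} with these compatible choices, and let the independence-of-choices clauses absorb the rest. One divergence is actually in your favour: the paper asserts that $\tilde E_{1|I}$ is canonically \emph{isomorphic} to $E_{1|I}$, whereas you claim only an isometric inclusion. Your version is the correct one, since the tilde construction only involves $\phi(e(\tilde Q))$; for non-surjective $e$ (e.g.\ $\tilde Q=0$, where $\tilde E_{1|I}=0$, and note that in the application of this lemma $e=\iota_{q}$ is not surjective) the comparison map is a proper inclusion.

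However, this extra care opens a genuine gap at your stabilization step, and the justification you give does not close it. You claim that the composite $\tilde E_{1|I}\hookrightarrow E_{1|I}\hookrightarrow H_{I}$ is ``a valid choice of stabilization \dots unique up to homotopy by connectedness of the unitary group of $B(H_{I})$''. The uniqueness clause in \cref{weijrogreferwfwerf} applies to embeddings coming from isomorphisms $\tilde E_{1|I}\oplus H_{I}\cong H_{I}$; such embeddings are adjointable with orthogonally complemented image. A closed submodule $\iota(\tilde E_{1|I})\subseteq E_{1|I}$ of a Hilbert module need not be complemented, so $V\circ\iota$ is in general not of this form, and connectedness of $U(B(H_{I}))$ says nothing about it. What you actually need, and must prove, is a statement such as: for any two isometric, not necessarily adjointable, embeddings $W_{0},W_{1}$ of a countably generated Hilbert $I$-module $F$ into $H_{I}$, the induced homomorphisms $K(F)\to K\otimes I$ agree in $L_{K}\nCalg_{\sepa,h,q}$. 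This is true — $W_{t}:=(\cos(t)\,W_{0})\oplus(\sin(t)\,W_{1}):F\to H_{I}\oplus H_{I}$ is isometric for every $t$ and yields a norm-continuous path of homomorphisms $K(F)\to \Mat_{2}(K\otimes I)$ joining the two corner embeddings, after which stability finishes — but some such argument must be supplied in place of the appeal to uniqueness of stabilizations.

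A second, smaller gap: ``the components match after identifying modules'' is not mere bookkeeping. The precise statement your factorization rests on is $f\circ q(e)=K(\iota)\circ\tilde f$ in $S(K(E_{1|I}))$, where $K(\iota):K(\tilde E_{1|I})\to K(E_{1|I})$ is the isometric embedding onto the hereditary subalgebra spanned by the operators $\theta_{\iota\xi,\iota\eta}$. The operators in the image of $f\circ q(e)$ act a priori on all of $E_{1|I}$, and one has to show that they lie in this hereditary subalgebra and agree there with the pushforward of $\tilde f$. For instance, for $T=(1-P)\phi(e(y))P$ one checks that $T$ kills the $E_{0}$-summand and that on the $A^{u}$-part $\langle T\zeta,T\zeta\rangle=\zeta^{*}k\zeta$ with $k=s(e(y)^{*}e(y))-s(e(y))^{*}s(e(y))$; since $k$ is the $A^{u}$-component of a vector in $\iota(\tilde E_{1|I})$, functional calculus in $k$ lets one write $T$ as a norm limit of rank-one operators with both legs in $\iota(\tilde E_{1|I})$. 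Without an argument of this kind the claimed equality of representing homomorphisms is unproved.
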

\begin{proof}
The split $s:Q\to A$ canonically induces a split $\tilde s:\tilde Q\to \tilde A$.
Working with this split, by an inspection of the constructions we see that the resulting
$I$-Hilbert $C^{*}$-module $\tilde E_{1|I}$ is canonically isomorphic to $E_{1|I}$. 
With this identification we get an equality $f\circ q(e)=\tilde f:q\tilde Q\to S(K(E_{1|I}))$ of maps in \eqref{vfvsfvdverfcds}.
%
%
%
%
 This implies that the desired equivalence.
\end{proof}

We now consider a diagram of semi-split exact sequences 
$$\xymatrix{\cS&&0\ar[r]&I\ar[r]\ar[d]^{h}&A\ar[r]\ar[d]^{\tilde h}&Q\ar@{=}[d]\ar[r]&0\\\tilde \cS&&0\ar[r]&\tilde I\ar[r]&\tilde A\ar[r]&Q\ar[r]&0}\ .$$

\begin{lem} \label{woptkgpwtgttheth91} We have an equivalence $f_{\tilde \cS}\simeq  L (S(h))\circ f_{\cS} $.
\end{lem}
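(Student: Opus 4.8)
The plan is to run \cref{weijrogreferwfwerf} for both sequences using \emph{compatible} cpc splits and to track how the resulting Hilbert $C^{*}$-modules and homomorphisms transform under $h$. First I would fix a cpc split $s:Q\to A$ of $\cS$ and take $\tilde s:=\tilde h\circ s:Q\to \tilde A$ as the split of $\tilde\cS$. Since $\tilde h$ covers the identity on $Q$ we have $\tilde q\circ\tilde s=q\circ s=\id_{Q}$, and $\tilde s$ is cpc as a composition of a cpc map with a homomorphism; moreover $f_{\tilde\cS}$ is independent of the chosen split by \cref{weijrogreferwfwerf}, so this choice is harmless.

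Next I would compare the Stinespring data produced from $s$ and $\tilde s$. Because $\tilde s=\tilde h\circ s$ and $\tilde h$ restricts to $h$ on the ideal, the defining inner product for $\tilde\cS$ satisfies $\tilde s(x^{*}x')-\tilde s(x)^{*}\tilde s(x')=\tilde h\big(s(x^{*}x')-s(x)^{*}s(x')\big)=h\big(s(x^{*}x')-s(x)^{*}s(x')\big)$, i.e. it is the image under $h$ of the inner product for $\cS$. From the explicit model in \cref{wgergiwerjgiofwfwrf} this exhibits a canonical isomorphism $\tilde E_{0}\cong E_{0}\otimes_{\tilde h^{u}}\tilde A^{u}$ of $\tilde A^{u}$-Hilbert modules under which $\tilde\phi$ corresponds to $\phi\otimes 1$ and $\tilde P$ to $P\otimes 1$. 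Propagating this through the constructions of $\tilde J$ and $\tilde E_{1}$, and then restricting the module structures to the ideals $I$ and $\tilde I$, yields a canonical isomorphism $\tilde E_{1|\tilde I}\cong E_{1|I}\otimes_{h}\tilde I$. The latter produces a coefficient-change homomorphism $k:K(E_{1|I})\to K(\tilde E_{1|\tilde I})$, and since $\tilde\phi$ and the family $\tilde F$ are the images of $\phi$ and $F$ under this base change, the homomorphisms of \eqref{vfvsfvdverfcds} are related by $\tilde f=S(k)\circ f:qQ\to S(K(\tilde E_{1|\tilde I}))$.

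It then remains to compare the two $K$-stabilizations. I would observe that there are two homomorphisms $K(E_{1|I})\to K\otimes\tilde I$: first the Kasparov embedding $E_{1|I}\hookrightarrow H_{I}$ followed by $\id_{K}\otimes h$, and second the coefficient-change $k$ followed by the Kasparov embedding $\tilde E_{1|\tilde I}\hookrightarrow H_{\tilde I}$. Both are induced by embeddings of $E_{1|I}\otimes_{h}\tilde I$ into the standard module $H_{\tilde I}$, and by Kasparov's stabilization theorem any two such embeddings are homotopic since the unitary group of $B(H_{\tilde I})$ is contractible. Hence the two composites agree up to homotopy, giving $f'_{\tilde\cS}\simeq S(\id_{K}\otimes h)\circ f'_{\cS}$ on the level of $C^{*}$-algebra homomorphisms. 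As $L(S(h))$ is represented after $K$-stabilization precisely by $S(\id_{K}\otimes h)$, applying $L$ yields the asserted equivalence $f_{\tilde\cS}\simeq L(S(h))\circ f_{\cS}$.

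The hard part will be the bookkeeping in the second paragraph: verifying that the isomorphism $\tilde E_{0}\cong E_{0}\otimes_{\tilde h^{u}}\tilde A^{u}$ really propagates compatibly through the ideal $J$, the submodule $E_{1}$, and the restriction of scalars, so that $\tilde\phi$, $\tilde P$ and $\tilde F$ are genuinely the pushforwards of $\phi$, $P$ and $F$ and $k$ is well defined on compacts. Once this naturality of the whole \cref{weijrogreferwfwerf} under $h$ is in place, the comparison of the two stabilizations is a routine appeal to the homotopy-uniqueness clause already used in the construction of $f_{\cS}$.
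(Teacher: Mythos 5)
Your proposal is correct and follows essentially the same route as the paper's proof: choose the compatible split $\tilde s=\tilde h\circ s$, identify $\tilde E_{1|\tilde I}\cong E_{1|I}\otimes_{I}\tilde I$ by coefficient change along $h$, and observe that the map \eqref{vfvsfvdverfcds} for $\tilde\cS$ factors as the map for $\cS$ followed by the induced homomorphism on compacts. Your final paragraph comparing the two Kasparov stabilization embeddings merely spells out the step the paper leaves implicit ("This implies the desired equivalence"), using the same homotopy-uniqueness of the embedding into $H_{\tilde I}$ already invoked in \cref{weijrogreferwfwerf}.
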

\begin{proof}
The split $s:Q\to A$ induces a split $\tilde s:=\tilde h\circ s:Q\to \tilde A$. Working with this split
we get  a canonical isomorphism $\tilde E_{i|\tilde I}:=E_{i|I}\otimes_{I}\tilde I$. 
Then the resulting map $qQ\to S(K(\tilde E_{1|\tilde I}))$ in  \eqref{vfvsfvdverfcds}
is $qQ
\to S(K( E_{1|I})) \xrightarrow{\id\otimes \tilde I}  S(K( \tilde E_{1|\tilde I}))$.
%
%
%
 This implies the desired equivalence.
\end{proof}

In the following   $\otimes$ can be the minimal or the maximal tensor product.
Recall that $\cS$ denotes a semi-split exact sequence \eqref{qrwrdedeqwdewdqwdw}.
If $B$ is any $C^{*}$-algebra, then
\[\cS\otimes B\qquad 0\to I\otimes B\to I\otimes A\to Q\otimes B\to 0\ .\]
is semi-split exact again.

\begin{lem}\label{woptkgpwtgttheth2}We have an  equivalence $f_{\cS\otimes B}\simeq f_{\cS}\otimes L (B)$.
\end{lem}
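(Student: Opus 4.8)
The plan is to run Construction~\cref{weijrogreferwfwerf} for the sequence $\cS\otimes B$ with the data induced from $\cS$, in complete analogy with the proofs of \cref{woptkgpwtgttheth9} and \cref{woptkgpwtgttheth91}. First I would fix a cpc split $s\colon Q\to A$ for $\cS$ and note that $s\otimes\id_B\colon Q\otimes B\to A\otimes B$ is a cpc split for $\cS\otimes B$, since completely positive contractions remain completely positive and contractive after tensoring with $\id_B$, for both $\otimes_{\min}$ and $\otimes_{\max}$. Working with this split, I would verify that the direct construction in \cref{wgergiwerjgiofwfwrf} produces for $\cS\otimes B$ the external tensor product $E_{1|I}\otimes B$, regarded as an $(I\otimes B)$-Hilbert $C^{*}$-module. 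The essential point is that the $I$-valued inner product \eqref{qijrgoqrfqrfqffqwe} only involves the split $s$; hence after restricting the right module action to the ideal $I\otimes B$ the unitalization of $A$ drops out, the module for $\cS\otimes B$ is exactly the tensor of $E_{1|I}$ with $B$, and $K(E_{1|I}\otimes B)\cong K(E_{1|I})\otimes B$.

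Under this identification the components $\hat f_0,\hat f_1$ of the homomorphism \eqref{vfvsfvdverfcds} for $\cS\otimes B$ become $\hat f_i\otimes\id_B$, because $P$ is replaced by $P\otimes\id_B$ and hence $F(u)$ by $F(u)\otimes\id_B$. Consequently the homomorphism \eqref{vfvsfvdverfcds} attached to $\cS\otimes B$ is the composite
\[ q(Q\otimes B)\longrightarrow qQ\otimes B \xrightarrow{\ f\otimes\id_B\ } S(K(E_{1|I}))\otimes B=S(K(E_{1|I}\otimes B))\ , \]
where the first map is induced on the kernels of the fold maps by the pair $\iota_i\otimes\id_B\colon Q\otimes B\to (Q*Q)\otimes B$ via the universal property of the free product. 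For the stabilization step \eqref{rgfqerfdewdqewddacds} I would tensor the chosen isomorphism $E_{1|I}\oplus H_I\cong H_I$ with $B$ and compose with the canonical isomorphism $H_I\otimes B\cong H_{I\otimes B}$; this is an admissible Kasparov stabilization for $\cS\otimes B$, and since any two such are homotopic, $f'_{\cS\otimes B}$ agrees up to homotopy with $(f'_{\cS}\otimes\id_B)$ precomposed with the natural map $q(Q\otimes B)\to qQ\otimes B$.

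It then remains to recognise this representative as the one defining $f_{\cS}\otimes L(B)$. This is a matter of unwinding the symmetric monoidal structure on $L_{K}\nCalg_{\sepa,h,q}$: since $L$ is symmetric monoidal and the description \eqref{vweriuvbvjbjkfvfvfsfvsdfv} of the mapping spaces via \cref{wejogiwerfgerwfrewfwerfw} is compatible with $-\otimes L(B)$, the morphism $f_{\cS}\otimes L(B)$ is represented precisely by $f'_{\cS}\otimes\id_B$ precomposed with $q(Q\otimes B)\to qQ\otimes B$. Comparing with the previous paragraph yields $f_{\cS\otimes B}\simeq f_{\cS}\otimes L(B)$. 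I expect the main obstacle to lie in the first two paragraphs, namely in checking that the external tensor of the Hilbert module and the tensored stabilization genuinely reproduce Construction~\cref{weijrogreferwfwerf} for $\cS\otimes B$ up to the homotopies that are invisible in the localization, and in particular that the unitalization entering the Stinespring dilation produces no discrepancy once one passes to the ideal-valued inner product.
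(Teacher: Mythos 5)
Your proof is correct and follows essentially the same route as the paper's: fix the induced cpc split $s\otimes \id_{B}$, identify the representative constructed from $\cS\otimes B$ with $(f'_{\cS}\otimes \id_{B})\circ \can$ via the canonical map relating $q(Q\otimes B)$ and $qQ\otimes B$, and conclude using that $L$ is symmetric monoidal (so that $L(\can)$ is an equivalence); you merely spell out the Hilbert-module and stabilization details that the paper compresses into a single sentence. Note that your direction $\can\colon q(Q\otimes B)\to qQ\otimes B$ is the natural one, induced on kernels of the fold maps by the universal property of the free product, whereas the paper writes this arrow in the opposite direction, which appears to be a typo.
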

\begin{proof} The split $s$ induces a split $s\otimes B$. With this choice 
 the  composition 
\[qQ\otimes B\xrightarrow{\can} q(Q\otimes B) \xrightarrow{!}  S(K\otimes I\otimes B)\cong S(K\otimes I)\otimes B\] 
with the marked map constructed from $\cS\otimes B$
is equal to the map $f\otimes B$ constructed from $s$, and the morphism  $\can$ induces an equivalence in $L_{K}\nCalg_{\sepa,h,q}$ since $L $ is symmetric monoidal.
This implies the assertion. 
\end{proof}

The semi-split exact exact sequence 
\begin{equation}\label{hfquiewfhwedkewmldq}
\cR:\qquad 0\to S(\C)\to C(\C)\to \C\to 0\ .
\end{equation}
gives rise to a  map $f_{\cR}:L(\C)\to L (S^{2}(\C))$ in $L_{K}\nCalg_{\sepa,h,q}$.
The crucial fact  is that it admits
a left inverse, a posteriori even an inverse.
This left inverse $L(\beta): L (S^{2}(\C))\to L (\C)$, called the {\em Bott element},
will be given by an explicit   homomorphism  $\beta:q S^{2}(\C)\to \Mat_{2}(K)$ in \cref{qwoifjqowfwedqewdewdqwedwedq}.
\begin{rem}
In the reference \cite{zbMATH03973625} the map $f_{\cR}$ is called the Bott element. We prefer to call $\beta$ the Bott element because of its role in \cref{wtgkwotpgkelrf09i0r4fwefwerf}.
It is actually the crucial point that the Bott element $\beta$ constructed in \cref{qeorigjiowergwerfwfrewfwr}
in the semiexact situation  has a lift to the split-exact world considered in the present section. 
This fact is the heart of the automatic semiexactness. \hB
\end{rem}

\begin{prop}\label{fkjgogerfwreferfw}
We have $L(\beta)\circ f_{\cR}\simeq \pm \id_{L(\C)}$.
\end{prop}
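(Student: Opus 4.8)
The plan is to reduce the statement $L(\beta)\circ f_{\cR}\simeq \pm\id_{L(\C)}$ to an explicit computation in the group $\Map_{L_{K}\nCalg_{\sepa,h,q}}(\C,\C)$, which by \cref{wejogiwerfgerwfrewfwerfw} is identified with $\pi_{0}\underline{\Hom}(q\C,K\otimes\C)=\pi_{0}\underline{\Hom}(q\C,K)$. Since every morphism in this mapping group is represented by an honest homomorphism $q\C\to K\otimes(-)$, and composition is implemented on the level of Kasparov modules (pre-quasihomomorphisms, see \cref{wtokgwopgfrefreferfw} and \cref{4tjigortgwrefrwefref}), the equivalence becomes a concrete homotopy statement between two homomorphisms out of $q\C$.

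First I would make the map $f_{\cR}$ explicit. The sequence $\cR:0\to S(\C)\to C(\C)\to\C\to 0$ of \eqref{hfquiewfhwedkewmldq} is semi-split with a canonical cpc split (the cone is contractible, so one has an obvious positive lift), and I would trace \cref{weijrogreferwfwerf} through this elementary example: the Hilbert module $E_{0}$, the algebra $B$ generated by $\phi(\C)$ and $P$, the ideal $J$ generated by the commutators $[\phi,P]$, and the resulting module $E_{1|I}$ with $I=S(\C)$. For $\cR$ these objects are finite-dimensional enough that the homomorphism $f$ in \eqref{vfvsfvdverfcds} and then $f'_{\cR}$ in \eqref{rgfqerfdewdqewddacds} can be written down in closed form as a map $q\C\to S^{2}(K)\cong K\otimes S^{2}(\C)$, giving the class of $f_{\cR}$ in $\pi_{0}\underline{\Hom}(q\C,K\otimes S^{2}(\C))$.

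Next I would assemble the composite $L(\beta)\circ f_{\cR}$. The Bott element $\beta:qS^{2}(\C)\to\Mat_{2}(K)$ of \cref{qwoifjqowfwedqewdewdqwedwedq} is a prescribed homomorphism; precomposing with $f_{\cR}$ (and using the calculus of \cref{woptkgpwtgttheth9}, \cref{woptkgpwtgttheth91}, \cref{woptkgpwtgttheth2} to rewrite the composition at the level of semi-split sequences and tensor factors, and \cref{ewfiuhgwqeuifdewdewqdqewdqd} to identify the sum in the mapping group with a block-sum of homomorphisms) produces a single homomorphism $q\C\to K\otimes\C$. The claim is that its class in $\pi_{0}\underline{\Hom}(q\C,K)$ equals $\pm[\iota_{\C}]$, i.e.\ $\pm$ the class of the canonical map $q\C\xrightarrow{\iota_{\C}}\C\hookrightarrow K$, which represents $\id_{L(\C)}$ under the identification $\Map_{L_{K}\nCalg_{\sepa,h,q}}(\C,\C)\simeq H_{\C}(\C)$ coming from \eqref{vsdfvfsvqrferf}. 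I would exhibit this by writing both the composite and (a block sum with) $\iota_{\C}$ as the two components $\hat g_{0},\hat g_{1}:\C\to M(\,\cdot\,)$ of pre-quasihomomorphisms, and then constructing a norm-continuous rotation homotopy (of the same $U_{t}$-conjugation type used in the proof of \cref{weroigjowerfrefw} and in \cref{weoighwjeoirgfwrefrfwerfwrfw}) connecting them through pre-quasihomomorphisms. The sign $\pm$ reflects an unavoidable orientation choice in identifying $S^{2}(\C)$ and in the stabilization isomorphism of \cref{weijrogreferwfwerf}, and I would simply record which sign occurs rather than normalize it away.

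The main obstacle will be the bookkeeping in making $f_{\cR}$ genuinely explicit: \cref{weijrogreferwfwerf} involves Kasparov's stabilization theorem to embed $E_{1|I}$ into the standard module $H_{I}$, and although this embedding is unique up to homotopy, one must check that for $\cR$ the resulting class is computed by the expected Toeplitz-type index data so that it pairs correctly with $\beta$. Concretely, the difficulty is matching the Fredholm module $(E_{1|I},\phi,F)$ arising from the cone sequence with the standard generator of $\pi_{0}\underline{\Hom}(q\C,K\otimes S^{2}(\C))$ that $\beta$ is designed to invert; once this identification is in hand, the final homotopy is the same two-dimensional rotation argument that appears repeatedly in the preceding sections, and the computation closes. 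Because \cref{erigjwoergerfwerfwref} guarantees the colimit defining the mapping space stabilizes at the first stage, it suffices to produce the homotopy at the level of homomorphisms $q\C\to K$ without passing to higher $q$-powers, which keeps the calculation finite.
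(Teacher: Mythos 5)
There is a genuine gap, and it sits exactly where the paper's own remark following the statement of \cref{fkjgogerfwreferfw} warns that it would. Your plan hinges on the claim that precomposing $\beta$ with $f_{\cR}$ ``produces a single homomorphism $q\C\to K\otimes\C$''. It does not: $f_{\cR}$ is a homomorphism $q\C\to K\otimes S^{2}(\C)$ while $\beta$ is defined on $qS^{2}(\C)$, so the two cannot be composed as homomorphisms. In the calculus-of-fractions description of $L_{K}\nCalg_{\sepa,h,q}$ the composite of two genuine level-one morphisms is naturally a homomorphism out of $q^{2}\C$, and bringing it back down to $q\C$ requires precomposition with Cuntz's stabilization homomorphism $\phi:q\C\to\Mat_{2}(q^{2}\C)$ of \cref{erigjwoergerfwerfwref} --- a map whose construction goes through Pedersen's derivation lifting theorem and is not explicit. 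The stabilization of the colimit \eqref{wervfweoivjroivjworecmdlcmsldffd} at stage one (via \cref{wejogiwerfgerwfrewfwerfw}) tells you where the mapping group lives; it does not let you compute a composition of two level-one morphisms without passing through level two and descending via $\phi$. This is precisely the path the paper calls ``quite tricky'' and deliberately avoids, so your outline reduces the proposition to a computation that is, at best, as hard as the original problem.

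Moreover, even granting the setup, your proposal has no mechanism to pin down the value of the composite: rotation homotopies of the kind used in \cref{weroigjowerfrefw} and \cref{weoighwjeoirgfwrefrfwerfwrfw} prove additivity and invariance, but deciding that the class is $\pm1$ rather than $0$ needs genuine index-theoretic input. The paper proceeds differently: it transports the problem to $\KK_{\sepa}$ along the comparison functor $h$, which is an equivalence on $\Map(\C,-)$ by the independently proven \cref{qerijgoqrfqdfewfqef}; since composition there is multiplication in $\pi_{*}\KU\cong\Z[b,b^{-1}]$ (\cref{qeorigjiowergwerfwfrewfwreee}), it suffices to show that $h(L(\beta))$ and $h(f_{\cR})$ are generators of $\pi_{-2}\KU\cong\Z$ and $\pi_{2}\KU\cong\Z$, respectively. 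These are \cref{qriofjoqfwedqweddedqe} and \cref{geriowgjeoiferfwerfwef}: the first pairs $\beta$ against the tautological-line-bundle projection $P$ on $\C\P^{1}$ --- a composition that stays at level one precisely because $P$ is an honest homomorphism, so only $q(P)$ is needed --- and evaluates it through the relative index of \cref{ewrgoihweriogefrwefw} and the Atiyah--Singer index theorem; the second makes $f_{\cR}$ explicit and identifies its class, via Swan's theorem, with a degree-one clutching construction generating $\tilde K^{0}(S^{2})$. To salvage your approach you would have to either make $\phi$ explicit or find another descent from $q^{2}\C$ to $q\C$, neither of which your outline addresses.
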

\begin{rem}
\cref{fkjgogerfwreferfw}  is shown in \cite{zbMATH03973625} by calculating a Kasparov product.  
This argument is therefore not part of the theory developed in the present note. 
%

Of course, the  obvious approach would be to calculate the composition of the two representatives explicitly. 
This would result in a map $q^{2}\C\to K\otimes \C$ which would have  to be compared with the composition of a left upper corner inclusion $\C\to K$ with $  \iota_{\C}\circ \iota_{q\C}:q^{2}\C\to \C$. To go  this path seems to be quite tricky. 

Further below we will therefore provide argument for \cref{fkjgogerfwreferfw}  which avoids to go through Kasparov products or making the homomorphism $\phi$ in  \cref{erigjwoergerfwerfwref} explicit.
  \hB \end{rem}
  
  For the moment we assume \cref{fkjgogerfwreferfw}. For simplicity we will  adjust the sign of $\beta$ such that
  $L(\beta)\circ f_{\cR}\simeq \id_{L(\C)}$.

  \begin{proof}[Proof of  \cref{rijoirwtertretwetwretetwretw}]
We reproduce the argument from the appendix of \cite{zbMATH03973625}.
We consider a semi-split exact sequence \[\cS:\qquad 0\to I\to A\xrightarrow{q} Q\to 0\] and the map $\iota_{q}:I\to C(q)$ as in \eqref{tbrtbbgdbfert}.
We want to show that $L(\iota_{q})$ is an equivalence.

We have a semi-split exact sequence
\[\cT:\qquad 0\to S(I)\to C(A)\to C(q)\to 0\ ,\]
where the second map sends an element of $C(A)$ given by a path $\sigma$ in $A$ with $\sigma(1)=0$  to
the pair $(\sigma(0),q\circ \sigma)$ in $C(q)$, see \cref{werigowrgerfewrfwerfwerf}. The kernel of the map consists of paths $\sigma$ in $I$ with $\sigma(0)=0=\sigma(1)$ and is hence isomorphic to $S(I)$. 
We let $f_{\cT}:L (C(q))\to  L (S^{2}(I))$ be the associated morphism.
Then we define \begin{equation}\label{wergerwffrfrffwe}u:= (L(\beta)\otimes L(I))\circ f_{\cT}:L (C(q))\to  L (I)\ .
\end{equation} 
We now calculate the composition
$u\circ L ( \iota_{q}):L (I)\to L (I)$.
Using \eqref{hfquiewfhwedkewmldq} 
we have a map of exact sequences
\[\xymatrix{\cR\otimes I:&&0\ar[r]&S(I)\ar[r]\ar@{=}[d]&C(I)\ar[r]\ar[d]&I\ar[d]^{\iota_{q}}\ar[r]&0\\\cT&&0\ar[r]&S(I)\ar[r]&C(A)\ar[r]&C(q)\ar[r]&0}\]
where the right square is a pull-back. 
Then
by \cref{woptkgpwtgttheth9} and \cref{woptkgpwtgttheth2} 
we have \begin{equation}\label{wergpokpregwfewff}f_{\cT}\circ L ( \iota_{q})\simeq f_{\cR\otimes I}\simeq f_{\cR}\otimes L (I):L (I)\to  L (S^{2}(I))\ .
\end{equation}
 Using \cref{fkjgogerfwreferfw} we conclude that 
\begin{eqnarray*}
u\circ L ( \iota_{q})
&\stackrel{\eqref{wergerwffrfrffwe}}{\simeq} &
 (L(\beta)\otimes L(I))\circ f_{\cT}\circ  L ( \iota_{q})
\\& \stackrel{\eqref{wergpokpregwfewff}}{\simeq} & (L(\beta)\otimes L(I)) \circ (f_{\cR}\otimes L (I))
 \\&\simeq & (L(\beta)\circ f_{\cR})\otimes L (I)\\&\stackrel{\cref{fkjgogerfwreferfw}}{\simeq}& \id_{L(I)}
\ .\end{eqnarray*}

Hence $\iota_{q}$ is a split monomorphism with left-inverse $u$.

We have the semi-split exact exact sequence
\[ \cU:\qquad 0\to C(\iota_{q})\to C(C(q))\xrightarrow{\phi} C(Q)\to 0\ .\]
Using  \cref{werigowrgerfewrfwerfwerf}
recall that  $C(C(q))$ consists of pairs $(\sigma,\gamma)$ with $\sigma$ a path in $A$, $\gamma=(\gamma(-,t))_{t}$ a path of paths in $Q$
such that $\gamma(s,1)=0$ for all $s$, $\gamma(1,t)=0$ for all $t$, $q(\sigma (s))=\gamma(s,0)$ for all $s$ and $\sigma(1)=0$.
The map $\phi$ sends $(\sigma,\gamma)$ to $ \gamma(0,-)$. Its kernel consists
pairs $(\sigma,\gamma)$ where $\sigma$ is a path in $A$ with $\sigma(0)\in I$ and $\sigma(1)=0$, and $\gamma$ is path of paths in $Q$ with $\gamma(0,t)=0$, $\gamma(1,t)=0$, $q(\sigma (s))=\gamma(s,0)$, and $\gamma(s,1)=0$ for all $s,t$. This is precisely the description of a point in $C(\iota_{q})$.

%
%

Applying the above to the semi-split exact sequence $\cU$
we conclude that $L (\iota_{\phi}):L (C(\iota_{q}))\to L (C(\phi))$ is a split monomorphism.
Since $L$ is Schochet  exact we see that $L (C(\phi))\simeq 0$  since it is the fibre of a map between objects which are equivalent to zero (cones are contractible).
This implies that $L (C(\iota_{q}))\simeq 0$.

Again using that  $L$ is Schochet  exact  we can conclude that $L (S(\iota_{q})):L (S(I))\to L (S(C(q)))$ is an equivalence. 
Then also $L (S^{2}(\iota_{q}))$ is an equivalence. But $L (\iota_{q})$ is a retract of $L (S^{2}(\iota_{q}))$ by
\[\xymatrix{ L (I)\ar@/^1cm/[rrrr]^{\id_{L (I)}}\ar[rr]^{f_{\cR}\otimes L ( I)}\ar[d]^{L(\iota_{q})} &&L (S^{2}(I))\ar[rr]^{\beta\otimes L (I)}\ar[d]^{L (S^{2}(\iota_{q}))}&&L (I) \ar[d]^{L (\iota_{q})}   \\C(q)\ar@/^-1cm/[rrrr]_{\id_{L (C(q))}} \ar[rr]^-{f_{\cR}\otimes L (C(q))} &&L (S^{2}(C(q)))\ar[rr]^-{\beta\otimes L (C(q))}&&  L (C(q)) } \]
 and hence an equivalence, too.
\end{proof}

 Assume that \[\cS:\qquad 0\to I\to A\stackrel{q}{\to} Q\to 0\] is  semi-split exact.   By semiexactness of $ \kk_{\sepa}$
  we get a boundary map $\partial_{\cS}^{\kk}:\kk_{\sepa}(S(Q))\to \kk_{\sepa}(I)$. By the automatic 
  semiexactness theorem we know  that there is a boundary map $\partial_{\cS}:L (S(Q))\to L  (I)$ such that $h(\partial_{\cS})\simeq \partial_{\cS}^{\kk}$, where $h$ is the left-exact comparison functor in \eqref{frewfijofifqedwedqd}.
 The following proposition clarifies its relation with  $f_{\cS}:L (Q)\to L (S(I))$ stated in 
the last sentence of   \cite{zbMATH03973625}.
 
 Recall the exact sequence \eqref{hfquiewfhwedkewmldq}. 
 \begin{prop}\label{weriogjwoierpgreferfrewferwf}
 We have an equivalence
 $$f_{\cS}\simeq S(\partial_{\cS})\circ  f_{\cR}\otimes L(Q):L (Q)\to L (S(I))\ .$$
 \end{prop}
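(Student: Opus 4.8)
The statement to prove is \cref{weriogjwoierpgreferfrewferwf}: for a semi-split exact sequence $\cS: 0\to I\to A\xrightarrow{q} Q\to 0$ we have an equivalence
\[
f_{\cS}\simeq S(\partial_{\cS})\circ (f_{\cR}\otimes L(Q)):L(Q)\to L(S(I))\ ,
\]
relating the ``suspension-shift'' morphism $f_{\cS}$ of \cref{weijrogreferwfwerf} to the boundary map $\partial_{\cS}$ of the localization together with the canonical map $f_{\cR}$ coming from the suspension sequence $\cR$ of \eqref{hfquiewfhwedkewmldq}.

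The plan is to reduce the general case to a universal one and then compare the two sides by naturality of both constructions. First I would recall that both $\partial_{\cS}$ and $f_{\cS}$ are natural in $\cS$ with respect to morphisms of semi-split exact sequences: for $\partial_{\cS}$ this is clear from the construction via fibre sequences in the left-exact localization $L_{K}\nCalg_{\sepa,h,q}$, while for $f_{\cS}$ it is exactly the content of \cref{woptkgpwtgttheth9}, \cref{woptkgpwtgttheth91} and \cref{woptkgpwtgttheth2}, which govern the behaviour of $f_{\cS}$ under pullback along $Q$, pushout along $I$, and tensoring with an auxiliary algebra. The key observation is that the suspension sequence $\cR\otimes Q$ fits into a morphism of semi-split exact sequences mapping to $\cS$: the mapping cone construction \eqref{tbrtbbgdbfert} exhibits $\cS$ as (up to the canonical equivalences $h_f$, $\iota_f$) receiving a map from $\cR\otimes Q$ via the map $\iota_q:I\to C(q)$ analysed in the proof of \cref{rijoirwtertretwetwretetwretw}. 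Using \eqref{wergpokpregwfewff}, namely $f_{\cT}\circ L(\iota_q)\simeq f_{\cR}\otimes L(I)$, together with the definition of $\partial_{\cS}$ as the boundary extracted from the same mapping-cone data, I would set up a commutative diagram whose two paths realize the two sides of the asserted equivalence.

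Concretely, I would proceed as follows. I would write $\partial_{\cS}:L(S(Q))\to L(I)$ as the connecting morphism in the fibre sequence obtained by applying $L$ to the mapping cone sequence of $q$, using that $L$ is Schochet exact and sends the relevant squares to cartesian ones (as established in the proof of \cref{rijoirwtertretwetwretetwretw}, where $\iota_q$ was shown to be an equivalence). Then $S(\partial_{\cS})$ is the suspension of this boundary map. On the other side, $f_{\cR}\otimes L(Q):L(Q)\to L(S^2(Q))$ is the morphism induced by tensoring the universal suspension sequence $\cR$ with $Q$, so by \cref{woptkgpwtgttheth2} it equals $f_{\cR\otimes Q}$. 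The heart of the argument is to identify the composite $S(\partial_{\cS})\circ f_{\cR\otimes Q}$ with $f_{\cS}$ by tracing through the morphism of sequences $\cR\otimes Q\to$ (cone data of $\cS$) $\to \cS$ and applying the three naturality lemmas: \cref{woptkgpwtgttheth91} converts the pushout-along-$I$ part into the suspension $S(\cdot)$ of a boundary, while \cref{woptkgpwtgttheth9} handles the pullback-along-$Q$ part. I expect the main obstacle to be bookkeeping the precise suspension shifts and signs: one must verify that the single suspension appearing in $S(\partial_{\cS})$ matches the suspension introduced by $f_{\cR}$ (which raises degree by shifting $Q$ to $S^2$ and then $\partial_{\cS}$ lowering back), and that the identification of $f_{\cS}$ as a degree-one shift $L(Q)\to L(S(I))$ is compatible with the two-fold suspension bookkeeping inherent in $f_{\cR}$. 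Getting these shifts to align cleanly—rather than the identification of the underlying maps, which is forced by naturality—is where the real care is needed.
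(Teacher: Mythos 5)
Your proposal follows essentially the same route as the paper: one maps both $\cS$ and $\cR\otimes Q$ into the mapping cylinder sequence $\cU:\ 0\to C(q)\to Z(q)\xrightarrow{\tilde q} Q\to 0$ by morphisms of semi-split exact sequences that are the identity on $Q$, applies \cref{woptkgpwtgttheth91} twice (together with \cref{woptkgpwtgttheth2}) to obtain $L(S(\iota_{q}))\circ f_{\cS}\simeq f_{\cU}\simeq L(S(\partial_{q}))\circ (f_{\cR}\otimes L(Q))$, and then cancels the equivalence $L(S(\iota_{q}))$ using $L(\iota_{q})\circ \partial_{\cS}\simeq L(\partial_{q})$. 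Two details to adjust when writing this up: the identity \eqref{wergpokpregwfewff} you invoke concerns $\cT$ and $\cR\otimes I$ and is not the ingredient needed here, and \cref{woptkgpwtgttheth9} is never used since both comparison maps fix the quotient $Q$ --- in particular the suspension bookkeeping and signs you worry about come out automatically from \cref{woptkgpwtgttheth91}.
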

\begin{proof}
We consider the following diagram
\[
\xymatrix{
\cS&&0\ar[r]&I\ar[d]^{\iota_{q}}\ar[r]&A\ar[r]^{q}\ar[d] &Q\ar[r]\ar@{=}[d]&0\\
\cU&&0\ar[r]&C(q)\ar[r]&Z(q)\ar[r]^{\tilde q}&Q\ar[r]&0\\
\cR\otimes Q&&0\ar[r]&S(Q)\ar[u]_{\partial_{q}} \ar[r]&C(Q)\ar[r]\ar[u]&Q\ar[r]\ar@{=}[u]&0}\ .\]
It implies by \cref{woptkgpwtgttheth91} that
\[L (S(\iota_{q}))\circ f_{\cS}\simeq f_{\cU}\simeq L (S(\partial_{q}))\circ (f_{\cR}\otimes L(Q))\ .\]
We now use that $L(\iota_{q})$ is an equivalence and that 
$L (\iota_{q})\circ \partial_{\cS}\simeq  L (\partial_{q})$ in order to deduce the 
desired equivalence.
\end{proof}

We now prepare the proof of \cref{fkjgogerfwreferfw}.
 We will freely
use results from \cref{wreijgowertgwrgrwfrefwrf} and \cref{reoijqoeirfewfqwdewd}.
 We start with  showing a partial case of \cref{wgokjweprgrefwrefwrfw}.
   
\begin{lem}\label{qerijgoqrfqdfewfqef}
For any separable $C^{*}$-algebra $B$ the  comparison functor $h$ in \eqref{frewfijofifqedwedqd} induces an equivalence 
\[ \Map_{L_{K}\nCalg_{\sepa,h,q}}(\C,B) \xrightarrow{h}  \Omega^{\infty}\KK_{\sepa}(\C,B)\ .\] 
\end{lem}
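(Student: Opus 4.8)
The plan is to regard both sides as group-valued functors of $B$ and to compare them through the commutative monoid of stable projections. Write $G:=\Map_{L_{K}\nCalg_{\sepa,h,q}}(\C,-)$ and $T:=\Omega^{\infty}\KK_{\sepa}(\C,-)$. By \cref{wetgjowoegrewfwref} the functor $G$ is homotopy invariant, stable and split-exact, and it takes values in $\CGroups(\Spc)$ since the mapping spaces of the additive $\infty$-category $L_{K}\nCalg_{\sepa,h,q}$ are commutative groups; the comparison functor from \eqref{frewfijofifqedwedqd} induces the natural transformation $h\colon G\to T$ that is to be shown to be an equivalence. On the target, \eqref{fvwiuehviuervwvc} and \eqref{fewqifjeqwfewfqewfd} give $T\simeq\Omega^{\infty}K$, so that $T$ is in fact exact. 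Thus $h$ is a natural transformation between split-exact, $\CGroups(\Spc)$-valued functors.

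First I would reduce to unital algebras. Applying the split-exact functors $G$ and $T$ to the split unitalization sequence \eqref{afoiahjfoiaf} yields split fibre sequences, and $h$ is a morphism between them; hence $h_{B}$ is an equivalence once $h_{\C}$ and $h_{B^{u}}$ are. It therefore suffices to prove the statement for all unital algebras at once, which covers $\C$ and every $B^{u}$.

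For unital $B$ the strategy is to factor both functors through the monoid $\cProj^{s}(B)=\Map_{L_{K}\nCalg_{h}}(\C,B)$ of \eqref{vwihviuewhviwevewre}. The localization $L_{q}$ gives the canonical map $j\colon\cProj^{s}(B)\to G(B)$ (the zeroth stage of the colimit \eqref{wervfweoivjroivjworecmdlcmsldffd}), while $\kk_{\sepa}$ induces the map $k\colon\cProj^{s}(B)\to T(B)\simeq\Omega^{\infty}K(B)$, the $\KK$-analogue of \eqref{vsdfhuiwhiuvhjfuivsdfvsfdvs}; by construction of $h$ through the triangle \eqref{frewfijofifqedwedqd} one has $h\circ j\simeq k$. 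By \cref{erijgoerwgregffwfrefw}, via the identification $T\simeq\Omega^{\infty}K$, the map $k$ exhibits $T(B)$ as the group completion of $\cProj^{s}(B)$. If one also knows that $j$ exhibits $G(B)$ as a group completion of $\cProj^{s}(B)$, then uniqueness of group completions forces $h_{B}$ to be an equivalence.

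The main obstacle is exactly this last assertion, that $j$ is a group completion for unital $B$. I would first use \cref{wejogiwerfgerwfrewfwerfw} and \cref{erigjwoergerfwerfwref} to see that the colimit \eqref{wervfweoivjroivjworecmdlcmsldffd} defining $G(B)$ already stabilizes at its first stage, so that $G(B)\simeq\Map_{L_{K}\nCalg_{h}}((q\C)^{s},B)$ with $j$ induced by precomposition with $\iota^{s}_{\C}$. Since $\pi_{0}\cProj^{s}(B)$ contains the cofinal element $t_{B}$ by \cref{giwjrgoergerfrewferwfwrefw}, \cref{erijgowergfrefwrfwref} identifies the group completion of $\cProj^{s}(B)$ with the cofinal colimit $\cProj^{s}(B)_{\infty}$. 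The genuinely $C^{*}$-algebraic heart is then to match this cofinal colimit with $G(B)$: one must check that precomposition with $\iota^{s}_{\C}$ inverts the operation of adding $t_{B}$, so that $j$ factors through an equivalence $\cProj^{s}(B)_{\infty}\xrightarrow{\simeq}G(B)$. Both filtered systems compute the same group completion of $\cProj^{s}(B)$, and comparing them is the step that leans most heavily on the structure of the $q$-construction. Once $j$ is known to be a group completion, the previous paragraph closes the argument.
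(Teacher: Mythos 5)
Your reductions are correct as far as they go, and they constitute a genuinely different architecture from the paper's: split-exactness of $G=\Map_{L_{K}\nCalg_{\sepa,h,q}}(\C,-)$ and of $T\simeq\Omega^{\infty}K$ applied to the unitalization sequence \eqref{afoiahjfoiaf} does reduce the lemma to unital $B$; for unital $B$ the map $k$ is a group completion by \cref{erijgoerwgregffwfrefw} (transported through \eqref{fewqifjeqwfewfqewfd}); and uniqueness of group completions under $\cProj^{s}(B)$ would finish the proof once $j\colon\cProj^{s}(B)\to G(B)$ is known to be one. The paper, by contrast, never reduces to the unital case: it transports the functor $\rProj^{s}$ and the whole Yoneda-style proof of \cref{wogkpwgerfwrefwf1} from $\EE_{\sepa}$ to $L_{K}\nCalg_{\sepa,h,q}$, using the universal property of $L_{\sepa,h,K,q}$ (the separable version of \eqref{fwefwefhhuhdiqweuhdwedwqdd}) in place of \eqref{vdfvqr3ferffafafds1} to construct the factorization $\bProj$, and then exhibits mutually inverse transformations.

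However, there is a genuine gap, and it sits exactly at the step you defer: the assertion that $j$ is a group completion for unital $B$. Given your reductions this assertion is \emph{equivalent} to the lemma, so it cannot be left as ``one must check''. What your argument actually yields is only the formal half: since $G(B)$ is a group, translation by $j(t_{B})$ is invertible, so $j$ factors canonically through a map $\cProj^{s}(B)_{\infty}\to G(B)$; nothing you say shows this map is an equivalence. Your sentence ``both filtered systems compute the same group completion'' begs the question: the system \eqref{wervfweoivjroivjworecmdlcmsldffd} collapses at its first stage by \cref{wejogiwerfgerwfrewfwerfw}/\cref{erigjwoergerfwerfwref}, and there is no a priori reason that $\Map_{L_{K}\nCalg_{\sepa,h}}(q^{s}\C,B)\simeq\ell\underline{\Hom}(q\C,K\otimes B)$ is a group completion of $\cProj^{s}(B)$ --- already on $\pi_{0}$ this is Cuntz's identification of $K_{0}(B)$ with homotopy classes of pairs of multiplier projections with compact difference, a theorem rather than a formality, and you would need it in all degrees. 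Nor can the gap be closed softly by factoring a projection functor through the localization and invoking Yoneda: $(-)^{\group}$ does not preserve fibre sequences, so $\cProj^{s,\group}$ has no reason to be split-exact and does not descend to $L_{K}\nCalg_{\sepa,h,q}$; this is precisely why the paper works with $\rProj^{s}$, proves its exactness separately (\cref{wiergo0wergferwferwf9}), and only then runs the Yoneda argument. Indeed, in the paper the statement you want as input appears only as a \emph{consequence} of the lemma (see \cref{ewrgoihweriogefrwefw}). To complete your route you would have to either redo that machinery for $G$, or give a hands-on comparison of $\underline{\Hom}(q\C,K\otimes B)$ with block-sum-stabilized projections; neither is sketched.
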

\begin{proof}
Replacing $\ee :\nCalg \to E $ by $L_{\sepa,h,K,q}:\nCalg_{\sepa}\to L_{K}\nCalg_{\sepa,h,q} $ and correspondingly 
$\Omega^{\infty}K(-)$ by $\Map_{L_{K}\nCalg_{\sepa,h,q}}(\C,-)$ we can construct
a natural transformation
\[\tilde q_{\sepa,h}:\rProj_{\sepa}^{s}(-)\to \Map_{L_{K}\nCalg_{\sepa,h,q}}(\C,-)\]
in complete analogy to the construction of $\tilde \ee_{h}$ in \eqref{gwegergergreffewrwf}.
We  start from \[q_{h}:= L_{\sepa,q}: \cProj^{s}(-)\to   \Map_{L_{K}\nCalg_{\sepa,h,q}}(\C,-)\] in place of $e_{h}$ in \eqref{vsdfhuiwhiuvhjfuivsdfvsfdvs}, use that $L_{\sepa,h,K,q}$ is split-exact and that the unitalization sequence  \eqref{afoiahjfoiaf}   is split-exact, and that  $\Map_{L_{K}\nCalg_{\sepa,h,q}}(\C,-)$ takes values in groups.
The proof of \cref{wogkpwgerfwrefwf1} goes through word by word and shows that
$\tilde q_{\sepa,h}$ is an equivalence. Here instead of the \eqref{vdfvqr3ferffafafds1} we use of course the   universal property  of $L_{\sepa,h,K,q}$ given by the separable version of \eqref{fwefwefhhuhdiqweuhdwedwqdd}
in order to construct the factorization $\bProj: L_{K}\nCalg_{\sepa,h,q}\to \CGroups(\Spc)$ in the analogue of \eqref{vfdsvdfvhosvsdfvfsdv}. The commutativity of 
\[\xymatrix{&\rProj^{s}_{\sepa}(B)\ar[dr]_{\simeq}^{\tilde \ee_{\sepa}}\ar[dl]^{\simeq}_{\tilde q_{\sepa,h}}&\\\Map_{L_{K}\nCalg_{\sepa,h,q}}(\C,B)\ar[rr]^{h}& &\Omega^{\infty}\KK_{\sepa}(\C,B)}\] 
implies that $h$ is also an equivalence.
 \end{proof}
 
 \begin{ex}\label{ewrgoihweriogefrwefw}
 By the calculation of the spectrum $KU\simeq \KK_{\sepa}(\C,\C)$ in \cref{qeorigjiowergwerfwfrewfwreee} (and implicitly using \cref{wtrkgoerfrefwerfwerf}.\ref{qrfijqiofdewdqwed1} in order to go from $E$- to $KK$-theory)
 we know that for any $i$ in $\nat$
\[\pi_{0} \Map_{L_{K}\nCalg_{\sepa,h,q}}(\C,S^{i}(\C)) \stackrel{\cref{qerijgoqrfqdfewfqef}}{\cong}    \KK_{\sepa,0}(\C,S^{i}(\C))\cong \pi_{i}\KU\stackrel{\eqref{qefoihwefioewdqewdqwedwqedwqedwdwed}}{\cong} 
\left\{\begin{array}{cc} \Z&i\in 2\nat\\0 &else  \end{array} \right. \ . \] 
The proof of \cref{qerijgoqrfqdfewfqef} together with the fact that $\pi_{0}$ sends the homotopy theoretic group completion to the algebraic group completion  implies that 
\[\pi_{0}\Map_{L_{K}\nCalg_{\sepa,h}}(\C,\C)\to \pi_{0}\Map_{L_{K}\nCalg_{\sepa,h,q}}(\C,\C)\] is the algebraic group completion. This will allow  us to detect elements in  the group $\pi_{0}\Map_{L_{K}\nCalg_{\sepa,h,q}}(\C,\C)$
represented by maps $\C\to K$ or $q\C\to K$ in a simple manner. 

We identify maps $p:\C\to K$ with the projections  $p$  in $K$ given by the image of $1$.
 A map $p:\C\to K$ is  determined up to homotopy by the dimension  $\dim(p)$ of the range of $p$. We therefore have an isomorphism of monoids
\[\pi_{0}\Map_{L_{K}\nCalg_{\sepa,h}}(\C,\C)\to \nat\ , \quad  p\mapsto \dim(p)\ .\]
We now consider a map $(p_{0},p_{1}):q\C\to K$ given in terms of an associated map with components $p_{i}:\C\to M(K)=B$ for  $i=0,1$
such that $p_{0}-p_{1}\in K$. Then $p_{1}p_{0}:\im(p_{0})\to \im(p_{1})$ is a Fredholm operator 
and  we can define the relative index
\[I(p_{0},p_{1}):=\ind(p_{1}p_{0}:\im(p_{0})\to \im(p_{1}))\ .\] If $p_{i}$   is   compact for $i=0,1$, then  $I(p_{0},p_{1})=\dim(p_{0})-\dim(p_{1})$.

We now show that the relative index is homotopy invariant for homotopies of  pairs $(p_{0,t},p_{1,t})$ such that  $p_{0,t}-p_{1,t}$ is norm continuous. In particular there is no continuity  condition on the familes $p_{i,t}$ separately.
We follow \cite{Avron_1994} and define the norm continuous families of selfadjoint operators $A_{t}:=p_{0,t}-p_{1,t}$ and $B_{t}:=1-A_{t}$. Then $A^{2}_{t}+B^{2}_{t}=0$ and $A_{t}B_{t}+B_{t}A_{t}=0$ (see \cite[Thm. 2.1]{Avron_1994}). Since $A_{t}$ is selfadjoint and compact  the spectrum of $A_{t}$  away from $0$   is discrete and consists of eigenvalues of finite multiplicity. The relations above imply that
for $\lambda\not=1$ the operator $B_{t}$ induces an isomorphism between $\ker(A_{t}-\lambda)$ and $\ker(A_{t}+\lambda)$.  By \cite[Prop. 3.1]{Avron_1994} we have the first equality in 
\begin{eqnarray*}\ind(p_{0,t},p_{1,t})&=&\dim(\ker(A_{t}-1))-\dim(\ker(A_{t}+1))\\&=&\dim(E_{A_{t}}((1-\epsilon,1+\epsilon)))-\dim(E_{A_{t}}((-1-\epsilon,-1+\epsilon)))\end{eqnarray*} for any $\epsilon$ in $(0,1)$, where $E_{A_{t}}$ is the family of spectral projections for $A_{t}$. In order to see  the second equality  note that the contributions of the eigenspaces to the  eigenvalues different from $\pm 1$ cancel out by the consideration above.
 The right-hand side is  continuous in $t$ and hence constant. To this end, we consider  a point $t_{0}$ in $[0,1]$. Then we choose $\epsilon$  such that
$1\pm \epsilon$ do not belong to the spectrum of $A_{t_{0}}$. 
By the norm continuity of $t\mapsto A_{t}$   there exists $\delta$ in $(0,\infty)$ 
 for all $t$ in 
$[t_{0}-\delta,t_{0}+\delta]\cap [0,1]$ the points  $1\pm \epsilon$ do not belong to the spectrum 
of $A_{t}$. But then the right-hand side is constant on $[t_{0}-\delta,t_{0}+\delta]\cap [0,1]$.

Using the homotopy invariance and the  additivity of the relative index for block sums we can conclude that
the map  $(p_{0},p_{1})\mapsto I(p_{0},p_{1})$
induces a group homomorphism $\tilde \dim:\pi_{0}  \underline{\Hom}(q\C,K)\to \Z$ such that the bold part of 
\[\xymatrix{\ar@/^-2cm/[dd]_{\pi_{0}L}\pi_{0}\underline{\Hom}(\C,K)\ar[r]_-{\cong}^-{\dim}\ar[d]^{\iota_{\C}^{*}}&\nat \ar[d]^{i}\\ \pi_{0}  \underline{\Hom}(q\C,K)\ar[r]^-{\tilde \dim}\ar[d]_{!}^{\eqref{fqwefiuqhduiijqqwedqwedqewdeq}}&\Z\\\pi_{0}\Map_{L_{K}\nCalg_{\sepa,h,q}}(\C,K)\ar@{-->}[ur]_{\cong}&}\] commutes.
The dashed arrow is obtained from the universal property of the arrow denoted by $\pi_{0}L$ as a group completion, since 
the right-down map $i\circ \dim$ is a homomorphism to a group. 
It remains to show that the lower triangle commutes. 

We claim that the arrow marked by $!$ is an isomorphism. 
Assuming the claim we know that $\iota^{*}_{\C}$ also represents  a group completion. 
We can then argue
that the two ways to go from $\pi_{0}  \underline{\Hom}(q\C,K)$ to $\Z$ must agree since 
 group completions are initial in homomorphisms to groups.

In order to see the claim we can appeal to \cref{wejogiwerfgerwfrewfwerfw}.
But as this implicitly uses \cref{erigjwoergerfwerfwref} one could alternatively
show directly that $\tilde \dim$ is an isomorphism and then conclude that $!$ is an isomorphism.
\hB
\end{ex}

\begin{construction} \label{qwoifjqowfwedqewdewdqwedwedq}
 In this construction we describe the Bott element $L(\beta)$ in the mapping space $\Map_{L_{K}\nCalg_{\sepa,h,q}}(S^{2}(\C),\C)$. Instead of reproducing the construction from    \cite{zbMATH03973625} we describe a version which is more amenable to explicit calculations.

We consider the closed smooth manifold   $\C\P^{1}\cong S^{2}$. It will be   equipped
  with
a constant scalar curvature Riemannian metric and the orientation determined by the complex structure. We consider $\C\P^{1}$ as a
  Riemannian spin manifold and let $\Dirac$ be the spin Dirac operator.
 It acts as a  first order elliptic differential operator on the sections of  the $\Z/2\Z$-graded  spinor  bundle $S \cong S^{+}\oplus S^{-}$ which is odd with respect to the grading and therefore represented by a matrix
 \begin{equation}\label{qwefqwofjqwekofqweldewdqwedqwedq}  \left(\begin{array}{cc}0&\Dirac^{-}\\\Dirac^{+}&0 \end{array}\right)\ . \end{equation}  The Schr\"odinger-Lichnerowicz formula states that  $\Dirac^{2}=\Delta+\frac{s}{4}$, where $\Delta$ is the canonical Laplacian on the spinor bundle  associated to the connection and $s$ is the scalar curvature. Since the Laplacian is non-negative and $s$ is positive we see that $\Dirac^{2}$ is positive and hence invertible as an unbounded operator on $H:=L^{2}(\C\P^{1},S)$ with domain $C^{\infty}(\C\P^{1},S)$. Using function calculus we obtain the odd and  zero order pseudodifferential  unitary operator
  \[U:= \Dirac |\Dirac|^{-1} \] in $B(H)$.   
  The principal symbol of $U$ is the unitary part of the polar decomposition of the principal symbol of $\Dirac$.
  If $f$ in $C(\C\P^{1})$ acts as multiplication operator on $H$, then $[f,U]$ is compact.
  Indeed, let $f$ be smooth for the moment and consider it as  a zero order pseudodifferential operator. Then  the principal symbols of $f$ and $U$ commute and the commutator is  a pseudodifferential 
  operator of order $-1$ and hence compact. Since $C^{\infty}(\C\P^{1})$ is dense in $C(\C\P^{1})$ in the norm and the compact operators are closed in norm we see that $[f,U]$ is compact for all $f$ in $C(\C\P^{1})$.
  
   The grading of $S$ gives a decomposition $H=H^{+}\oplus H^{-}$ and we represent $U$ as a matrix \[\left( \begin{array}{cc} 0&U^{-} \\U^{+} &0 \end{array} \right)  \ .\]
  We have two homomorphisms $\hat \phi_{\pm}:C(\C\P^{1})\to  B(H^{\pm})$ such that for $f$ in $C(\C\P^{1})$ the operator $\hat \phi_{\pm}(f)$ is  the multiplication operator by $f$ on $H^{\pm}$. 
  
  We  define two homomorphisms $\hat \phi_{i}:C(\C\P^{1})\to B(H^{+})$  for $i=0,1$ by \begin{equation}\label{oihrfqiweoewdqwdwqdqwdewdedq}\hat \phi_{0}:=\hat \phi_{+}\ , \quad   \hat \phi_{1}:=U^{-}\hat \phi_{-}U^{+}\ .
\end{equation}  Then we have $\hat \phi_{0}(f)-\hat \phi_{1}(f)\in K(H^{+})$ 
  for all $ f$ in $C(\C\P^{1})$. The homomorphisms $\hat \phi_{i}$ for $i=0,1$ are therefore the components of the associated homomorphism of a homomorphism
  \[\hat \beta:qC(\C\P^{1})\to K(H^{+})\ .\]
  It represents a point  $L(\hat \beta)$ in $\Map_{L_{K}\nCalg_{\sepa,h,q}}(C(\C\P^{1}),\C)$.
 
 We fix a base point $*$ in $\C\P^{1}$. Using an orientation-preserving diffeomorphism $\R^{2}\cong \C\P^{1}\setminus\{*\}$ we  identify $S^{2}(\C)$ with the subalgebra $C_{0}(\C\P^{1}\setminus \{*\})$
 of $C(\C\P^{1})$ of functions vanishing at $*$. We let $\iota:S^{2}(\C)\to C(\C\P^{1})$ denote the inclusion.
  We define the Bott element   as the composition
  \[\beta:  qS^{2}(\C)\xrightarrow{q(\iota)} qC(\C\P^{1})\xrightarrow{\hat \beta} K(H^{+})\ .\]  
  Then $L(\beta)\simeq L(\hat \beta)\circ L(\iota)$  is a point in $\pi_{0}\Map_{L_{K}\nCalg_{\sepa,h,q}}(S^{2}(\C),\C)$ which is our candidate for the Bott element.
 \hB 
      \end{construction}

\begin{proof}[Proof of \cref{fkjgogerfwreferfw}]
 We have the  following commutative diagram
\begin{equation}\label{ewfoijqwioefjqiowjfiowqejdfqwedqweefewqfqewf} \xymatrix{\Map_{L_{K}\nCalg_{\sepa,h,q}}(S^{2}(\C),\C)\times \Map_{L_{K}\nCalg_{\sepa,h,q}}(\C,S^{2}(\C)) \ar[r]^-{\circ}\ar[d] &\Map_{L_{K}\nCalg_{\sepa,h,q}}( \C,\C) \ar[d]^{\simeq} \\ \Omega^{\infty}\KK_{\sepa}(S^{2}(\C),\C)\times  \Omega^{\infty}\KK_{\sepa}(\C,S^{2}(\C))\ar[r]^-{\circ} &  \Omega^{\infty}\KK_{\sepa}(\C,\C)}\  ,\end{equation}
 where the vertical morphisms are induced by $h$ and the horizontal morphisms are given by composition.
The right vertical morphism is an equivalence by \cref{qerijgoqrfqdfewfqef}.

We have
$  \KK_{\sepa,0}(S^{2}(\C),\C))\cong \pi_{-2}\KU\cong 
\Z$. We furthermore know that under the identification
 $\KK_{\sepa,0}(S^{i}(\C),S^{j}
(\C))\cong \pi_{j-i}\KU$ the composition 
 \[\KK_{\sepa,0}(S^{j}(\C),S^{k}
(\C))\times \KK_{\sepa,0}(S^{i}(\C),S^{j}(\C))\to \KK_{\sepa,0}(S^{i}(\C),S^{k}(\C))\] is identified with the 
multiplication in the ring $\pi_{*}\KU\cong \Z[b,b^{-1}]$.

  So in order to show that $L(\beta)$ is a left-inverse inverse of $f_{\cR}$ up to sign  it suffices to show
that the images $h(L(\beta))$ and $h(f_{\cR})$ of these elements in $ \pi_{-2}\KU$ and $\pi_{2}\KU$ are generators.
This is the content of  \cref{qriofjoqfwedqweddedqe} and \cref{geriowgjeoiferfwerfwef}.



%

       \begin{lem}\label{qriofjoqfwedqweddedqe}
The class $h(L(\beta))$ in $\KK_{\sepa,0}(S^{2}(\C),\C)\cong \Z$ is a generator.
\end{lem}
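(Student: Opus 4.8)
The plan is to detect $h(L(\beta))$ by pairing it, via composition, against an explicitly understood generator of the dual group. Recall from the proof of \cref{fkjgogerfwreferfw} that under the identifications $\KK_{\sepa,0}(S^{i}(\C),S^{j}(\C))\cong \pi_{j-i}\KU$ the composition law becomes the multiplication of $\pi_{*}\KU\cong \Z[b,b^{-1}]$, which is a perfect pairing of $\pi_{-2}\KU$ with $\pi_{2}\KU$ into $\pi_{0}\KU\cong \Z$. Hence $h(L(\beta))\in \KK_{\sepa,0}(S^{2}(\C),\C)\cong \pi_{-2}\KU$ is a generator as soon as there is one class $x\in \KK_{\sepa,0}(\C,S^{2}(\C))\cong \pi_{2}\KU$ for which $h(L(\beta))\circ x$ is a generator of $\pi_{0}\KU\cong\Z$. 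To keep the argument independent of \cref{fkjgogerfwreferfw} (and of the companion \cref{geriowgjeoiferfwerfwef}) I would \emph{not} take $x=f_{\cR}$, which would be circular, but the Bott class.

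For $x$ I would take the reduced Bott generator of $\KK_{\sepa,0}(\C,S^{2}(\C))\cong K_{0}(S^{2}(\C))$. Its image $y:=h(L(\iota))\circ x$ in $\KK_{\sepa,0}(\C,C(\C\P^{1}))$ is the virtual line bundle $[\cO(1)]-[\cO]$ on $\C\P^{1}$, represented by the honest projections $p_{B}$ and $1$, hence by a genuine $*$-homomorphism. Writing $h(L(\beta))\circ x\simeq h(L(\hat\beta))\circ y$ (using $L(\beta)\simeq L(\hat\beta)\circ L(\iota)$ from \cref{qwoifjqowfwedqewdewdqwedwedq}), the decisive simplification is that, because $y$ comes from a homomorphism, this composite is computed by plain functoriality rather than by a genuine Kasparov product: it is the pullback of the quasihomomorphism $(\hat\phi_{0},\hat\phi_{1})$ of \eqref{oihrfqiweoewdqwdwqdqwdewdedq} along $p_{B}$, again an explicit quasihomomorphism $q\C\to K$, namely the compression of $\id\otimes\hat\phi_{\pm}$ by $p_{B}$.

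It then remains to evaluate this class in $\pi_{0}\Map_{L_{K}\nCalg_{\sepa,h,q}}(\C,\C)\cong\Z$, for which I would apply the relative-index homomorphism $\tilde\dim$ of \cref{ewrgoihweriogefrwefw}. By construction the two components of the pulled-back quasihomomorphism differ, modulo $K$, by the off-diagonal unitary $U=\Dirac|\Dirac|^{-1}$ twisted by $p_{B}$, so $\tilde\dim$ returns the Fredholm index of $\Dirac^{+}$ twisted by the bundle $E=\operatorname{im}(p_{B})$. This index is $\pm 1$: identifying the spin Dirac operator on $\C\P^{1}$ with a Dolbeault operator and performing a Riemann--Roch computation (equivalently invoking Atiyah--Singer, which gives $\operatorname{ind}(\Dirac^{+}\otimes\cO(n))=n$ up to a universal sign) yields $1$ for the degree-one virtual bundle, whereas pairing against the trivial bundle gives $\operatorname{ind}(\Dirac^{+})=0$, consistent with the class being reduced. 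Thus $h(L(\beta))\circ x$ generates $\pi_{0}\KU$ and the lemma follows. The main obstacle is exactly this last step: matching the abstract composite with the twisted Dirac index and controlling the analytic bookkeeping (Fredholmness of the compressions $p_{B}Up_{B}$, well-definedness of $\tilde\dim$ on the resulting quasihomomorphism, and the sign and reduced-versus-unreduced conventions), since the index computation is the one genuinely geometric input of the proof.
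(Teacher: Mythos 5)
Your proposal is correct and follows essentially the same route as the paper's proof: the paper likewise pairs $L(\beta)$ against the reduced Bott class given by the pair of projections $(P,P_{*})$ built from the complement of the tautological bundle on $\C\P^{1}$ (your $[\cO(1)]-[\cO]$), computes the composite by functoriality of the $q$-construction, and detects it with the relative-index homomorphism $\tilde\dim$ of \cref{ewrgoihweriogefrwefw}, identifying the result with $\ind(\Dirac^{+}_{L})=\pm 1$ via Atiyah--Singer (the constant projection contributing index $0$). The only cosmetic differences are your explicit \emph{perfect pairing} phrasing of the reduction, which the paper leaves implicit in ``it suffices to provide an element $L(p)$'', and the Riemann--Roch wording of the index computation.
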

\begin{proof}
It suffices to provide  
  an element
 $L(p)$ in $\Map_{L_{K}\nCalg_{\sepa,h,q}}(\C, S^{2}(\C))$ such that $L(\beta) \circ L(p)$ represents a generator 
  of $\pi_{0}\Map_{L_{K}\nCalg_{\sepa,h,q}}(\C, \C)\cong \Z$.

  We keep the conventions from \cref{qwoifjqowfwedqewdewdqwedwedq}.
 We  have a tautological line bundle $L\to \C\P^{1}$  which is naturally a subbundle of the trivial bundle $\C\P^{1}\times \C^{2}$. We let $L^{\perp}$ be the orthogonal complement so that
 $L\oplus L^{\perp}\cong \C\P^{1}\times \C^{2}$. We consider the  projection $P$
 in $  \Mat_{2}(C(\C\P^{1}))$ such that the value $P_{x}$ is the orthogonal projection onto the fibre of $L^{\perp}_{x}$ of $L^{\perp}$  for all  $x$ in $\C\P^{1}$. We interpret $P$ as a homomorphism $P:\C\to \Mat_{2}(C(\C\P^{1}))$ such that $1\mapsto P $.
 We get $L(P)$ in $\Map_{L_{K}\nCalg_{\sepa,h,q}}(\C,C(\C\P^{1}))$.
 We can now calculate the composition
 $L(\hat \beta)\circ L(P)$ which is represented by 
 \begin{equation}\label{dqweihuifqwedewdewdqewd}q\C\xrightarrow{qP}  q\Mat_{2}(C(\C\P^{1}))   \xrightarrow{\Mat_{2}(\hat \beta)} \Mat_{2}(K(H^{+}))\ ,
\end{equation} 
where $\Mat_{2}(\hat \beta)$ is the map whose associated homomorphism has the components $\Mat_{2}(\hat \phi_{i})$  with $\hat \phi_{i}$ as in  \eqref{oihrfqiweoewdqwdwqdqwdewdedq}.
  We set $\hat H:=H^{+}\otimes \C^{2}$ and identify $\Mat_{2}(K(H^{+}))\cong K(\hat H)$.
 The components of the map \eqref{dqweihuifqwedewdewdqewd} are then given by the projections
 $Q_{i}:=\Mat_{2}(\hat \phi_{i})( P) $ in $  B(\hat H)$.
Note that    the difference $Q_{0}-Q_{1}$ is compact. By \cref{ewrgoihweriogefrwefw}  the class of the composition in \eqref{dqweihuifqwedewdewdqewd} is detected by the relative index
 $I(Q_{0},Q_{1})$ on $\Z$, i.e., the index of the Fredholm operator $Q_{1}Q_{0}:Q_{0}\hat H\to Q_{1}\hat H$.
 We note that $Q_{0}=P_{+}$ and $Q_{1}=\Mat_{2}(U^{-}) P_{-}\Mat_{2}(U^{+})$, where we consider $P_{\pm}$ as a multiplication operator $P$ on $H^{\pm}\otimes \C^{2}\cong L^{2}(\C\P^{1},S^{\pm}\otimes \C^{2})$.
 Multiplying with the unitary $\Mat_{2}(U^{-})$ from the left we can thus identify  the  Fredholm operator  $Q_{1}Q_{0}$ with 
 \[ P_{-}\Mat_{2}(U^{+})P_{+}:L^{2}(\C\P^{1},S^{+}\otimes \C^{2})\to L^{2}(\C\P^{1},S^{-}\otimes \C^{2})\ .\]
 This is a zero order pseudodifferential operator whose symbol is the 
 symbol of the twisted Dirac operator
 $\Dirac^{+}_{L}$ (see \eqref{qwefqwofjqwekofqweldewdqwedqwedq}) made unitary.  In particular
we have $ I(Q_{0},Q_{1})=\ind(\Dirac^{+}_{L})$.
  By the Atiyah-Singer index theorem we have $$\ind(\Dirac^{+}_{L})=\int_{\C\P^{1}} \hat A(S^{2})\ch(L^{\perp})=-\int_{\C\P^{1}} c_{1}(L^{\perp})=1\ .$$

The  base point of  $\C\P^{1}$ gives a decomposition $L_{*}\oplus L_{*}^{\perp}\cong \C\P^{1}\otimes \C^{2}$.
We let $P_{*}$ in $ \Mat_{2}(C(\C\P^{1}))$ be the corresponding constant projection onto $L_{*}^{\perp}$.
 The same calculations as above show that the composition $L(\hat \beta)\circ L(P_{*})$ represented by 
 \[q\C\xrightarrow{qP_{*}}  q\Mat_{2}(C(S^{2})) \xrightarrow{\Mat_{2}( \hat \beta)} \Mat_{2}(K(H^{+}))\]
 represents the zero element.
 
 The  projections  $P$ and $P_{*}$  in  $\Mat_{2}(C(S^{2}))\cong M(\Mat_{2}(S^{2}(\C)))$ can be considered
 as components of the associated homomorphism of a homomorphism
$p:q\C\to \Mat_{2}(S^{2}(\C))$ since $P-P_{*}\in \Mat_{2}(S^{2}(\C))$.
 The composition
 \[q\C\xrightarrow{p}  \Mat_{2}(S^{2}(\C))\xrightarrow{\Mat_{2}(\iota)}  \Mat_{2}( C(S^{2}))\] represents the difference, i.e., 
 $L(\iota)\circ L(p)\simeq   L(P)-L(P_{*} )$.   
 This implies that
 \[L(\beta)\circ L(p)\simeq
  L(\hat \beta)\circ L(\iota)\circ L(p)\simeq  L(\hat \beta) \circ   (L(P)-L(P_{*})) \simeq  L(\hat \beta) \circ   L(P) \simeq  L(\id_{\C})\]
  is a generator. 
   \end{proof}

\begin{lem}  \label{geriowgjeoiferfwerfwef} The class $h(f_{\cR})$ in $\KK_{\sepa,0}(\C,S^{2}(\C) )\cong \Z$ is a generator.
\end{lem}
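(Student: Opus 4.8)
The plan is to identify $h(f_{\cR})$ with an equivalence in $\KK_{\sepa}$ and to deduce from this that it is a generator. First I would reduce the statement to a question about a single morphism. By \cref{qerijgoqrfqdfewfqef} the comparison functor $h$ induces an isomorphism $\pi_{0}\Map_{L_{K}\nCalg_{\sepa,h,q}}(\C,S^{2}(\C))\cong \KK_{\sepa,0}(\C,S^{2}(\C))$, and by \cref{ewrgoihweriogefrwefw} (using the computation in \cref{qeorigjiowergwerfwfrewfwreee}) this group is $\pi_{2}\KU\cong \Z$. Recall that $\pi_{*}\KU\cong \Z[b,b^{-1}]$ with $\deg(b)=-2$ and that, under the identification $\KK_{\sepa,0}(S^{i}(\C),S^{j}(\C))\cong \pi_{j-i}\KU$, composition of morphisms corresponds to multiplication in this ring. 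Consequently an element of $\pi_{2}\KU$ is a generator if and only if the associated morphism $\kk_{\sepa}(\C)\to \kk_{\sepa}(S^{2}(\C))$ is an equivalence: writing $h(f_{\cR})=m\,b^{-1}$, invertibility forces $m=\pm 1$. It therefore suffices to prove that $h(f_{\cR})$ is an equivalence in $\KK_{\sepa}$.

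To see this, I would exploit that $\KK_{\sepa}$ is the semiexact world. The sequence $\cR$ of \eqref{hfquiewfhwedkewmldq} is semi-split exact, so since $\kk_{\sepa}$ is homotopy invariant, stable and semiexact by \cref{wtgkwotpgkelrf09i0r4fwefwerf}, it sends $\cR$ to a fibre sequence $\kk_{\sepa}(S(\C))\to \kk_{\sepa}(C(\C))\to \kk_{\sepa}(\C)$ in the stable $\infty$-category $\KK_{\sepa}$. The cone $C(\C)\cong C_{0}((0,1])$ is contractible, hence $\kk_{\sepa}(C(\C))\simeq 0$, and the resulting connecting map is an equivalence; using the suspension--loop identification \eqref{terhertgrtgetrgetrgt} together with Bott periodicity this yields an equivalence $\kk_{\sepa}(\C)\xrightarrow{\simeq}\kk_{\sepa}(S^{2}(\C))$. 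The remaining task is to identify $h(f_{\cR})$, up to a sign and these canonical suspension identifications, with this connecting equivalence.

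This identification is the main obstacle, and the delicate point is that I may \emph{not} invoke \cref{weriogjwoierpgreferfrewferwf} (which expresses $f_{\cS}$ through the boundary map in general), since that result rests on the automatic semiexactness theorem that the present lemma is being used to establish; I must argue by hand for the cone. I would unwind \cref{weijrogreferwfwerf} for $\cR$ using the cpc split $s\colon \C\to C_{0}((0,1])$ with $s(1)=(t\mapsto t)$: here $s(1)-s(1)^{2}=t(1-t)$ lies in $S(\C)$, the Kasparov--Stinespring dilation $(E_{0},\phi)$ is essentially one dimensional, and the homomorphism \eqref{vfvsfvdverfcds} $q\C\to S(K(E_{1|I}))$ visibly carries the clutching datum of the cone, so that after Kasparov stabilization the class $f_{\cR}$ matches the connecting map. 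As an independent cross-check, and as a fallback if the homotopy-theoretic identification proves cumbersome, I would instead compute the class of $f_{\cR}$ in $K_{0}(C_{0}(\R^{2}))\cong \widetilde{K}^{0}(S^{2})\cong \Z$ directly, detecting it through the index-theoretic pairing set up in \cref{ewrgoihweriogefrwefw} and showing it equals the Bott generator $\pm 1$, in parallel with the Atiyah--Singer computation used for \cref{qriofjoqfwedqweddedqe}.
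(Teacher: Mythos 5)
Your opening reduction is correct: under $\KK_{\sepa,0}(\C,S^{2}(\C))\cong\pi_{2}\KU$, with composition identified with multiplication in $\pi_{*}\KU\cong\Z[b,b^{-1}]$, the class $h(f_{\cR})$ is a generator precisely when it is invertible, i.e.\ precisely when $h(f_{\cR})$ is an equivalence in $\KK_{\sepa}$. But your primary route then stalls at the step you yourself call the main obstacle. The claim that the Stinespring data for the split $s(1)=t$ ``visibly carries the clutching datum of the cone, so that after Kasparov stabilization the class $f_{\cR}$ matches the connecting map'' is an assertion of exactly the nontrivial content, not an argument: comparing the analytic construction \cref{weijrogreferwfwerf} with the abstractly defined boundary of the fibre sequence in $\KK_{\sepa}$ is, for this special case, precisely \cref{weriogjwoierpgreferfrewferwf}, and as you rightly note that proposition presupposes the automatic semiexactness theorem (\cref{rijoirwtertretwetwretetwretw}), which rests on \cref{fkjgogerfwreferfw} and hence on the present lemma. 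Note also that this comparison cannot even be set up inside $L_{K}\nCalg_{\sepa,h,q}$: there the cone sequence only yields an equivalence $L(S(\C))\simeq\Omega L(\C)$, and producing from it a map $L(\C)\to L(S^{2}(\C))$ would require inverting $\Omega$, i.e.\ Bott periodicity in the $q$-world, which is again part of what is being proven. So as written the primary approach has a genuine gap.

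Your fallback, on the other hand, is essentially the paper's own proof, and it is the right move. The paper unwinds \cref{weijrogreferwfwerf} for $\cR$ with the split $s(1)=t$, finding $E_{0}\cong S(\C)$, $E_{1}\cong S(\C)\oplus S(\C)$, and that the map \eqref{vfvsfvdverfcds} has components the projection-valued functions $\hat f_{0}(1)(u)=\bigl(\begin{smallmatrix} t& \sqrt{t(1-t)}\\ \sqrt{t(1-t)}&1-t\end{smallmatrix}\bigr)$ and $\hat f_{1}(1)(u)$ its conjugate by $\diag(1,u)$. It then reads $(t,u)$ as coordinates on $S^{2}$: since $\hat f_{1}(1)$ is $u$-independent at $t\in\{0,1\}$, both components define projections $P_{0},P_{1}$ in $\Mat_{2}(C(S^{2}))$ agreeing at the north pole, and by Swan's theorem the class of $f_{\cR}$ becomes $[P_{0}]-[P_{1}]\in\tilde K^{0}(S^{2})$. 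The paper finishes not with an index pairing but with a clutching argument: $P_{0}$ is trivializable, and the isomorphism $\diag(1,u)$ exhibits $P_{1}$ as the bundle glued along the equator by the degree-one map $u\mapsto u$, so $[P_{0}]-[P_{1}]$ generates $\tilde K^{0}(S^{2})$. Your Atiyah--Singer detection (twist the Dirac operator by $P_{0}$ and $P_{1}$ and compare indices) would work equally well, in parallel with \cref{qriofjoqfwedqweddedqe}; just note that \cref{ewrgoihweriogefrwefw} as stated detects classes with target $\C$ via the relative index of projections, so for target $S^{2}(\C)$ you must pair the bundle classes with the Dirac operator directly rather than cite that example.
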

\begin{proof} We first 
  make \cref{weijrogreferwfwerf} explicit in order to
 describe an explicit representative of  the map $f_{\cR}$.
Let $t$ be the coordinate on $[0,1]$. We identify the cone over $\C$ as $C(\C)\cong C_{0}((0,1])$. We define the cpc map
$s:\C\to C(\C)$ such that  $s(1)=t$, where $t$ is the coordinate function of the interval acting by multiplication on $C_{0}((0,1])$. 
We consider $E_{0}:=S(\C)\cong C_{0}((0,1))$ as a $C(\C)^{u}$-Hilbert $C^{*}$-module.
We use the identifcation $C(\C)^{u}\cong C([0,1])$ and
 define
$\phi:\C\to B(C([0,1])\oplus E_{0})$ such that
\[\phi(1)=\left(\begin{array}{cc} t& \sqrt{t(1-t)}\\ \sqrt{t(1-t)}&(1-t )\end{array} \right)\ .\] Note that the right-hand side is a projection.
We set \[P:=\left(\begin{array}{cc} 1&0\\ 0&0\end{array} \right)\ .\]
Then $(1-P)\phi(1)Pa= a\sqrt{t(1-t)}$ for $a$ in $C([0,1]) $. 
These elements span a dense subset of $E_{0}$.
 The unital algebra $B$  spanned by $P$ and $\phi(1)$  is given by 
 \[ \left( \begin{array}{cc}C([0,1]) &S(\C) \\ S(\C)&C(S^{1}) \end{array} \right) \subseteq B (C([0,1])\oplus S(\C))\ .\]
 The ideal $J$ generated by $[P,\phi(1)]$ is $\Mat_{2}(S(\C))$, and $E_{1}=S(\C)\oplus S(\C)$.
 We get a map $q\C\to S(\Mat_{2}(S(\C)))$ whose associated map has the  components
 \[\hat f_{0}(1)(u)=\left(\begin{array}{cc} t& \sqrt{t(1-t)}\\ \sqrt{t(1-t)}&(1-t )\end{array} \right)\ , \quad 
 \hat f_{0}(1)(u)=\left(\begin{array}{cc} t& u^{-1}\sqrt{t(1-t)}\\ u\sqrt{t(1-t)}&(1-t )\end{array} \right)\ .\]
 This map represents $f_{\cR}$.

 In order to check that $h(f_{\cR})$ is a generator, using Swan's theorem we will translate the problem into  a calculation in usual topological $K$-theory 
 of compact spaces defined in terms of vector bundles.
 We interpret $t$ and $u$ as longitude and latitude coordinates on $S^{2}$ such that $t=0$ is the south pole and $t=1$ is the north pole. By  Swan's theorem  we have an isomorphism between
 $\KK_{\sepa,0}(\C,C(S^{2}))\cong K_{0}(C(S^{2}))$ and the  $K$-theory $  K^{0}(S^{2})$  of the sphere defined in terms of vector bundles as usual. 
 Under this isomorphism and with $S^{2}(\C)\cong C_{0}(S^{2}\setminus  \{n\})$ we   identify $\KK_{\sepa,0}(\C,S^{2}(\C))$ with the reduced $K$-theory $\tilde K^{0}(S^{2})$ relative to the north pole  $\{n\}$. 

Since $\hat f_{0}(1)$ does not depend on $u$ it is obviously a projection $P_{0}$ in $\Mat_{2}(C(S^{2}))$. We now observe that $\hat f_{1}(1)$ does not depend on $u$ if $t=0$ or $t=1$. We can therefore also  interpret $\hat f_{1}(1)$ as a projection $P_{1}$ in $ \Mat_{2}(C(S^{2}))$. Identifying projections with vector bundles (actually subbundles of the trivial bundle $S^{2}\times \C^{2}$) we get a class $[P_{0}]-[P_{1}]$ in $K^{0}(S^{2})$. 
Since $P_{0}$ and $P_{1}$ coincide at the north pole this difference is actually a reduced class
in $\tilde K^{0}(S^{2})\cong \Z$. Our task is to show that it is a generator. 

Since $P_{0}$ does not depend on the $u$-coordinate
it comes from a projection  in $C([0,1])$. Since $[0,1]$ is contractible
we can conclude that $P_{0}$ is homotopic to a constant projection and the corresponding vector bundle can be trivialized. The matrix function
\[(t,u)\mapsto U(t,u):=\left( \begin{array}{cc} 1&0 \\ 0& u\end{array} \right)\]
defines an isomorphism of vector bundles $P_{0}\to P_{1}$ (considered as subbundles of the two-dimensional trivial bundle)
on the subspace $\{t\not=0\}$. Since \[P_{0,n}=\C\left( \begin{array}{c} 1 \\ 0 \end{array} \right)\]
this isomorphism extends across the north pole.  Away from the north pole  this isomorphism sends the section
\[(u,t)\mapsto \left( \begin{array}{c} \sqrt{t(1-t)} \\ (1-t) \end{array} \right)\] of $P_{0}$
to the section
\[(u,t)\mapsto \left( \begin{array}{c} \sqrt{t(1-t)} \\ u(1-t) \end{array} \right)\ .\]
Note that if $t$ becomes small this is essentially multiplication by $u$.
So $P_{1}$ is obtained from the trivial bundle
by cutting at the equator $S^{1}\subseteq S^{2}$ and regluing with a map $S^{1}\to U(1)$ of degree one. 
This implies that $[P_{0}]-[P_{1}]$ generates $\tilde K^{0}(S^{2})$.
  \end{proof}
This finishes the proof of \cref{fkjgogerfwreferfw}.
 \end{proof}

 \section{Half-exact functors}
 
 In  classical $KK$- and $E$-theory universal properties are  formulated in terms of half-exact functors to ordinary additive categories.  In the present section we will recall this language and state the universal properties of the homotopy category versions of the functors constructed in the previous sections in these terms. This will be used to    show that they are equivalent to the classical $KK$- and $E$-theory functors. This comparison is the main objective of this section.
  For simplicity we will restrict to separable algebras.

 Recall that an additive $1$-category is an ordinary category  which is pointed, admits finite coproducts and products such  for any two objects $D,D'$ the canonical morphism $D\sqcup D'\to D\times D'$ is an isomorphism, and which has the property that the commutative monoids $\Hom_{\bD}(D,D')$  are abelian groups for all objects $D,D'$. An additive category is automatically  enriched in abelian groups.
 A functor $\bD\to \bD'$ between additive categories is additive if it preserves coproducts and products.
 It is then compatible with the enrichments in abelian groups. We let $\Fun^{\add}(\bD,\bD')$ be the category of additive functors.

\begin{ex}
 If $\bC$ is an additive $\infty$-category, then its homotopy category  $\ho\bC$ is an additive $1$-category. 
\hB \end{ex}

We next introduce the notion of a half-exact   additive category. A half-exact structure looks like a
glimpse of a triangulated structure. We will use this notion mainly in order to  
match the  formulation of the universal properties of $KK$- and $E$-theory in the classical literature,  in particular 
in \cite{MR1068250}.

Let $A\to B\to C$ be a sequence of maps in an additive $1$-category $\bD$
and $\cS$ be a set of objects of $\bD$.

\begin{ddd}
We say that this sequence is {\em half-exact with respect to $\cS$} if the induced sequences  \[\Hom_{\bD}(D,A)\to \Hom_{\bD}(D,B) \to \Hom_{\bD}(D,C)\]
 and
 \[\Hom_{\bD}(C,D)\to \Hom_{\bD}(B,D)\to \Hom_{\bD}(A,D)\]
 of abelian groups are exact for every $D$ in $\cS$.
   \end{ddd}


\begin{ddd}A {\em marking} on an additive $1$-category is a subset of objects $\cS$ which is closed under isomorphisms. 
A  {\em half-exact additive category} is an additive $1$-category with a marking and a collection of distinguished sequences 
which are required to be half-exact with respect to the given marking .\end{ddd}
  Let $\bD$ and $\bD'$ be   half-exact additive  categories  with markings $\cS$ and $\cS'$, respectively.

 \begin{ddd} A {\em half-exact functor} $\phi:\bD\to \bD'$      is an additive functor  such that $\phi(\cS)\subseteq \cS'$ and $\phi$ sends the distinguished  half-exact sequences in $\bD$ to   distinguished    half-exact sequences in $\bD'$. \end{ddd}
  We let $\Fun^{\add,\frac12\exa}(\bD,\bD')$ be the category of half-exact additive functors between half-exact additive categories.

  \begin{ex}\label{werijogrgerwgweg}
Every additive category   has the {\em  canonical  half-exact structure}  with  $\cS=\Ob(\bD)$ and
  the collection of all sequences which are half-exact with respect to this marking.
 In this case we denote the half-exact additive category by $\bD_{\can}$. \hB
\end{ex}

\begin{ex} Recall that a sequence $A\xrightarrow{i} B\xrightarrow{p} C$   in an additive  $1$-category  $\bD$ is {\em split-exact} if $p$ admits a left inverse $s:C\to B$ such that $(i,s):A\oplus C\to B$ is an isomorphism. The   split-exact sequences  are half-exact for the maximal marking.  
We write $\bD_{\splt}$ for $\bD$ equipped with the maximal marking and the collection of
split-exact sequences.
An additive functor  $\bD\to \bD'$ automatically preserves split-exact  sequences. 
 It therefore belongs to  
$\Fun^{\add,\frac12 \exa}(\bD_{\splt},\bD'_{\splt})$. \hB\end{ex}
\begin{rem}\label{righowgwrgrfrefwrf} Let $\bD$ be a half-exact additive category with marking $\cS$. Then a sequence
\[D_{0}\to D_{1}\to D_{2}\to D_{3}\to D_{4} \] in $\bD$ is called half-exact exact  if each segment $D_{i-1}\to D_{i}\to D_{i+1}$ is half-exact. If $D_{0}\to D_{1}$ and $D_{3}\to D_{4}$ are isomorphisms and $D_{2}$ belongs to $\cS$, then we can conclude that $D_{2}\cong 0$ by showing that $\id_{D_{2}}=0$.

If $D_{1}\cong 0$ and $D_{4}\cong 0$ and $D_{2}$ and $D_{3}$ belong to $\cS$, then we can conclude that
$D_{2}\to D_{3}$ is an isomorphism by constructing left and right inverses. \hB
\end{rem}
  
 \begin{ex} In order to have a non-trivial half-exact structure  at hand consider the category of abelian groups and let $\cS$ be the  set of uniquely divisible abelian groups. We distinguish all sequences which are half-exact with respect to $\cS$.
  Then e.g. the sequence $0\to \Z\xrightarrow{5}\Z\to 0$ is half-exact.
  Note that in this case we can not conclude that $5$ is an isomorphism since $\Z$ does not belong to the marking.
   \hB
  \end{ex}

Let $\bD$ be a half-exact additive category.
 A functor $F:\bC\to \bD$ from a left-exact $\infty$-category will be called {\em half-exact}  if the marking of $\bD$ contains $F(\Ob(\bC))$ 
  and $F$ 
   sends
 fibre sequences to distinguished  half-exact sequences.  \begin{ddd} A  functor
 $F:\nCalg_{\sepa}\to \bD$  is called {\em  half-exact (half-semiexact)} if the marking of $\bD$ contains    $F(\Ob(\nCalg))$, and if $F$  sends
 exact (semi-split exact) sequences to distinguished half-exact sequences. A functor $\nCalg_{\sepa}\to \bD$ is {\em split-exact} if it sends split-exact sequences of $C^{*}$-algebras to  split-exact sequences.  \end{ddd}We indicate such functors by superscripts $\frac12 ex$, $\frac12 se$, or $splt$.

\begin{ex}
  If 
  $\bC$   is   a stable $\infty$-category, then $\ho \bC$ is a triangulated  $1$-category.
  The sequences $A\to B\to C$ for any distinguished triangle $A\to B\to C\to \Sigma A$ are 
  half-exact with respect to the maximal marking. 
  We call the half-exact structure consisting of the maximal marking and   these sequences the {\em triangulated half-exact structure}. 
The corresponding half-exact additive category will be denoted by    
  $\ho\bC_{\Delta}$.
    
  If we consider $\bC$ as a left-exact  $\infty$-category, then according to our conventions the functor $\ho:\bC\to \ho \bC_{\Delta}$ is half-exact. \hB
 \end{ex}

We consider $!$ in $\{\exa,\se,q\}$. For $!\in \{\exa, \se\}$  the functor
$\kk_{\sepa,!}:\nCalg_{\sepa}\to \KK_{\sepa,!}$ is as in \eqref{vsddfkvopsdfvfsdvsdfvsfdvsdv}. For $!=q$
we set
\[\kk_{\sepa,q}:=L_{\sepa,h,K,q}:\nCalg_{\sepa}\to \KK_{\sepa,q}:=L_{K}\nCalg_{\sepa,h,q}\]
  using the separable versions of \eqref{vsdfvdfsjvposdvsfdvsfdvsfdvsfdv}.
We note that the targets of these functors  are a left-exact additive $\infty$-category for $!=q$ (separable version of \cref{wetgjowoegrewfwref}.\ref{woitgjwoerfrfwfrefefw}) or even stable $\infty$-categories (\cref{wtgkwotpgkelrf09i0r4fwefwerfsepa}.\ref{wetiogjwegferwferfwrefw881sepa}) for $!\in \{\exa ,\se\}$. In particular, their homotopy categories are additive $1$-categories.
We consider the functor
\[\ho\kk_{\sepa,!}:=\ho\circ \kk_{\sepa,!}:\nCalg_{\sepa}\to  \ho\KK_{\sepa,!}\ .\]
We will equip the additive $1$-category $   \ho\KK_{\sepa,!}$ with  the triangulated half-exact structure in case $!\in \{\exa,\se\}$, and the split 
half-exact structure if $!=q$.  In all cases the marking is the maximal one.
\begin{kor}\label{weroigjowergerfgwref9}
The functor $\ho\kk_{\sepa,!}$ is homotopy invariant and stable. Moreover, it is split-exact in case $!=q$, half-semiexact in case $!=\se$, and half-exact in case $!=\exa$.
\end{kor}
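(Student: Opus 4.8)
The plan is to read off all four assertions from the $\infty$-categorical exactness properties of $\kk_{\sepa,!}$ recorded in the previous sections, using only three soft facts about the functor $\ho(-)$: it carries equivalences to isomorphisms, it preserves finite biproducts, and it sends a fibre sequence in a stable $\infty$-category to the underlying sequence of a distinguished triangle. Since in each of the three cases the marking of $\ho\KK_{\sepa,!}$ is the maximal one, the set-theoretic requirement $F(\Ob(\nCalg_{\sepa}))\subseteq\cS$ in the definition of a half-(semi)exact functor holds automatically, and I will not mention it again.

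First I would dispatch homotopy invariance and stability, which are uniform in $!$. For $!\in\{\se,\exa\}$ the functor $\kk_{\sepa,!}$ is homotopy invariant and stable by \cref{wtgkwotpgkelrf09i0r4fwefwerfsepa}.\ref{wetiogjwegferwferfwrefw883sepa}, and for $!=q$ it is so by the separable version of the universal property \eqref{fwefwefhhuhdiqweuhdwedwqdd} (equivalently, because $L_{\sepa,h,K,q}$ is a composite of localizations inverting homotopy equivalences and the left-upper corner inclusions). Thus $\kk_{\sepa,!}$ sends homotopy equivalences and left-upper corner inclusions to equivalences, and applying $\ho(-)$ turns these into isomorphisms in $\ho\KK_{\sepa,!}$. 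By \cref{wrogjpwrgrfrewf}.\ref{rqgjfewfqfewf14rrerwg} and \cref{wrogjpwrgrfrewf}.\ref{erwqgoihrioferfqfewfqewfqwef}, this is precisely homotopy invariance and stability of $\ho\kk_{\sepa,!}$.

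Next I would treat the two stable cases $!\in\{\se,\exa\}$. Here $\KK_{\sepa,!}$ is stable by \cref{wtgkwotpgkelrf09i0r4fwefwerfsepa}.\ref{wetiogjwegferwferfwrefw881sepa}, so $\ho\KK_{\sepa,!}$ carries its triangulated half-exact structure. The functor $\kk_{\sepa,!}$ is $!$-exact, i.e.\ it sends semi-split exact sequences (for $!=\se$), resp.\ exact sequences (for $!=\exa$), to fibre sequences in $\KK_{\sepa,!}$. Any fibre sequence $X\to Y\to Z$ in a stable $\infty$-category extends to a distinguished triangle $X\to Y\to Z\to\Sigma X$, so its image under $\ho(-)$ is a distinguished sequence and is therefore half-exact for the maximal marking. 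Hence $\ho\kk_{\sepa,\se}$ is half-semiexact and $\ho\kk_{\sepa,\exa}$ is half-exact.

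Finally, for $!=q$ the target $L_{K}\nCalg_{\sepa,h,q}$ is only additive, not stable, so the triangulated argument is unavailable and this is the step that needs a little care. I would use that $\kk_{\sepa,q}=L_{\sepa,h,K,q}$ is split-exact: it sends a split-exact sequence $0\to A\to B\to C\to 0$, together with its section $s\colon C\to B$, to a fibre sequence equipped with a section in the additive $\infty$-category $\KK_{\sepa,q}$. As in the proof of the separable version of \cref{weiojgowergfwerfwerf}, a split fibre sequence in a semi-additive $\infty$-category is a biproduct sequence, so the canonical map $\kk_{\sepa,q}(A)\oplus\kk_{\sepa,q}(C)\to\kk_{\sepa,q}(B)$ is an equivalence. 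Since $\ho\KK_{\sepa,q}$ is an additive $1$-category and $\ho(-)$ preserves finite biproducts, the image of this sequence under $\ho\kk_{\sepa,q}$ is split-exact in the $1$-categorical sense of the present section, so $\ho\kk_{\sepa,q}$ is split-exact. The only genuinely non-formal ingredient is this last splitting, and it is already contained in the additivity of $L_{K}\nCalg_{\sepa,h,q}$ (\cref{wetgjowoegrewfwref}.\ref{woitgjwoerfrfwfrefefw}) together with the split-exactness of $L_{\sepa,h,K,q}$ established earlier.
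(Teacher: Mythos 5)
Your proof is correct and takes essentially the same approach as the paper, whose much terser argument is: homotopy invariance and stability are clear by construction; for $!\in\{\se,\exa\}$ combine the $!$-exactness of $\kk_{\sepa,!}$ with the half-exactness of $\ho$ (fibre sequences become distinguished triangles); and for $!=q$ use that $\kk_{\sepa,q}$ and $\ho$ preserve split-exact sequences, which is exactly the splitting mechanism you spell out. One small correction to a general claim you state along the way: a sectioned fibre sequence is a biproduct sequence in an \emph{additive} $\infty$-category, but not in a merely semi-additive one --- in $\CMon(\Spc)$ the addition map $\nat\times\nat\to\nat$ has trivial fibre and admits a section, yet $0\oplus\nat\to\nat\times\nat$ is not an equivalence; this slip is harmless for your argument, since the category in question is additive and you correctly invoke \cref{wetgjowoegrewfwref}.\ref{woitgjwoerfrfwfrefefw} for precisely that fact.
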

\begin{proof}
 Homotopy invariance and stability are clear by construction.


  In case $!=\exa$ ($!=\se$) we use that $\kk_{\sepa,!}$ sends
 exact (semi-split exact) sequences of $C^{*}$-algebras to fibre sequences, and the half-exactness of $\ho$.
 
 In case $!=q$ we use that  $\kk_{\sepa,q}$  and $\ho$ preserve split-exact sequences.
\end{proof}

In order to make uniform statements in all three cases we will call a functor $F:\nCalg_{\sepa}\to \bD$ to a half-exact additive  category $!$-exact if it is split-exact in case $!=q$, half-semiexact in case $!=\se$, or exact in case $!=\exa$. We furthermore call the functor {\em suspension stable} if for every morphism $f:A\to B$ in $\nCalg_{\sepa}$ the fact that $F(f)$ is an isomorphism is equivalent to the fact  that $F(S(f))$ is an isomorphism.  

\begin{ex}\label{wiotghjogrefwfwf} For $!$ in $\{\se,\exa\}$
the functors $\ho\kk_{\sepa,!}:\nCalg_{\sepa}\to \ho\KK_{\sepa,!}$ are suspension stable.
This fact is due to  the triangulated structure on $ \ho\KK_{\sepa,!}$ which is a consequence if the stability of $\KK_{\sepa,!}$. We further use \cref{qerigqerfewfq9} in order to identify looping in $ \KK_{\sepa,!}$ with suspension on the level of algebras. \hB
\end{ex}

We let $\tilde W_{\sepa,!}$ denote the set of morphisms in $\nCalg_{\sepa}$ which are sent to equivalences by $\kk_{\sepa,!}$.

%

We consider a half-exact additive category $\bD$   and a functor $F:\nCalg_{\sepa}\to \bD$.

\begin{prop}\label{wergijowergwerfefwerfw}
If $F$ is 
 homotopy invariant, stable, (suspension stable in the cases $!\in  \{\exa,\se)\}$),  and $!$-exact, then $F$ sends $\tilde W_{\sepa,!}$ to isomorphisms
\end{prop}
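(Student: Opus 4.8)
The plan is to exploit that $\kk_{\sepa,!}$ is a Dwyer--Kan localization (\cref{ergiojeroigwerfwrefrefdvs}), whose inverted morphisms are by definition the set $\tilde W_{\sepa,!}$, and to arrange that $F$ factors through it. Since $\bD$ is an ordinary category, a functor to $\bD$ out of any of the $\infty$-categorical localizations occurring in the construction of $\kk_{\sepa,!}$ is the same datum as a functor out of its homotopy category; hence, applying the universal property \eqref{vfdsviuhsviisvfdvsvsdv} one stage at a time, it suffices to check that $F$ inverts the generating morphisms of each of the four localizations. As recorded in the proofs of \cref{ergiojeroigwerfwrefrefdvs} and \cref{eigjohwergerfgerwfrwef}, all these generators are represented by morphisms of $\nCalg_{\sepa}$: the homotopy equivalences, the left-upper corner inclusions $\kappa_A$, the maps $i\colon I\to A$ coming from the $!$-exact sequences $0\to I\to A\to Q\to 0$ with $Q$ contractible (the set $\hat W_{\sepa,!}$), and, for $!\in\{\se,\exa\}$, the periodicity maps $\lambda_A\colon S^2(A)\to C(\pi_A)$. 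Once $F$ is shown to invert all of these it factors through $\kk_{\sepa,!}$, and therefore inverts all of $\tilde W_{\sepa,!}$ because $\kk_{\sepa,!}$ does.

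The first two families are handled at once: $F$ inverts the homotopy equivalences because it is homotopy invariant, and it inverts the $\kappa_A$ because it is stable, so $F$ lifts to $\tilde F\colon L_K\nCalg_{\sepa,h}\to\bD$. For $!=q$ the remaining stage is exactly the $1$-categorical content of \cref{wergkooprefrwefrefre} (Cuntz, \cite[Prop.~3.1.b]{MR899916}): $\tilde F$ is split-exact because $F$ is, and every object of the additive category $\bD$ is a group object, so $\tilde F$ inverts the generators $\iota^s_A$ of $L_q$; as there is no periodicity stage, this already exhibits $F$ through $\KK_{\sepa,q}$. For $!\in\{\se,\exa\}$ I would instead run the long half-exact Puppe sequence: since $F$ is homotopy invariant it descends to $\nCalg_{\sepa,h}$, and since it sends semi-split exact sequences to distinguished half-exact sequences it sends the mapping-cone sequences \eqref{regegregefw} underlying the Puppe sequence \cref{wtghoktopgerwfrefwrefw} to half-exact sequences. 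Feeding an $!$-exact $0\to I\xrightarrow{i} A\to Q\to 0$ with $Q$ (hence $S(Q)$) contractible into this long half-exact sequence, both neighbours of $F(i)$ vanish, so testing against objects of the marking and invoking \cref{righowgwrgrfrefwrf} forces $F(i)$ to be an isomorphism.

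It remains, for $!\in\{\se,\exa\}$, to invert the periodicity generators $\lambda_A$. Here I would use that $\lambda_A$ is the inclusion of $S^2(A)$ into the mapping cone $C(\pi_A)$, sitting in a semi-split exact sequence $0\to S^2(A)\xrightarrow{\lambda_A} C(\pi_A)\to A\otimes\cT_0\to 0$. The $1$-categorical analogues of \cref{qerkghqeriffewq} and \cref{wtriojgopwrtgrwewfref} --- whose proofs are the homotopies of \cite[Sec.~4]{MR750677} together with cancellation of group objects, valid in any additive category --- give $F(A\otimes\cT_0)\cong 0$, hence $F(S^{k}(A\otimes\cT_0))\cong 0$ for all $k$, and the Puppe argument again yields that $F(\lambda_A)$ is an isomorphism. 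Suspension stability enters precisely at this periodicity step: it lets me transport isomorphism statements across the suspensions intrinsic to the Toeplitz/Bott picture, so that the periodicity generators are reached from the already-inverted $\hat W_{\sepa,!}$-morphisms rather than verified from scratch. I expect this third step to be the main obstacle, since all the supporting fibre-sequence and group-object results (\cref{wergkooprefrwefrefre}, \cref{qerkghqeriffewq}, \cref{wtriojgopwrtgrwewfref222}) were formulated for stable or left-exact $\infty$-categorical targets, whereas here $\bD$ is merely an additive $1$-category with a half-exact structure; reproving just enough of them in this weaker setting --- using \cref{righowgwrgrfrefwrf} to extract isomorphisms from half-exact sequences and suspension stability in place of genuine stability of the target --- is the technical heart of the argument.
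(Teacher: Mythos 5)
Your overall architecture --- factor $F$ stage by stage through the tower $\nCalg_{\sepa}\to\nCalg_{\sepa,h}\to L_{K}\nCalg_{\sepa,h}\to L_{K}\nCalg_{\sepa,h,!}\to{L_{K}\nCalg_{\sepa,h,!}}^{\group}$, checking at each stage that the generators are inverted --- is the same as the paper's, and your treatment of the first two stages, of the case $!=q$ (via \cref{wergkooprefrwefrefre}), and of the periodicity stage (via vanishing on $\cT_{0}$) matches the paper's proof. However, there is a genuine gap at the third stage. The localization $L_{\sepa,!}$ is \emph{not} the Dwyer--Kan localization at the image of $\hat W_{\sepa,!}$: by \cref{wegjwoergerwfrferfrwefwf} and \eqref{vervweriuheiucececwewce} it is the localization at $W_{\sepa,!}$, the closure of $\hat W_{\sepa,!}$ under pull-backs and the $2$-out-of-$3$ property, and the remark following \eqref{erijoglerfqwfwerfrefwrf} stresses that this enlargement cannot be dispensed with (localizing at $\hat W_{!}$ alone need not be left-exact). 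Consequently, showing that the descended functor $\bar F$ inverts $\hat W_{\sepa,!}$ does not yet yield the factorization through $L_{K}\nCalg_{\sepa,h,!}$ your argument needs. Closure under $2$-out-of-$3$ is automatic for the class of morphisms inverted by $\bar F$, but closure under pull-backs is not; establishing it is the substantive missing step.

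That missing step is exactly where the paper does real work and where suspension stability is actually consumed: one represents a pull-back in $L_{K}\nCalg_{\sepa,h}$ by a cartesian square in $\nCalg_{\sepa}$ whose vertical maps are surjections admitting cpc splits (for $!=\se$), extends them to $!$-exact sequences with isomorphic kernels $I\cong I'$, and runs the long half-exact sequences \eqref{vsdfvsfvrqwfvs} for $f$ and $f'$: from $F(f)$, $F(S(f))$, $F(S^{2}(f))$ being isomorphisms (suspension stability) one gets $F(I)\cong 0\cong F(S(I))$, hence $F(S(f'))$ and then, again by suspension stability, $F(f')$ are isomorphisms. Your proposal instead locates suspension stability at the periodicity stage, but there it is not what is needed: the paper's argument there uses only that every object of the additive category $\bD$ is a group together with $F_{A}(\cT_{0})\cong 0\cong F_{A}(S(\cT_{0}))$, which follow from \cref{wtriojgopwrtgrwewfref} applied to $F_{A}$ and $F_{A}\circ S$. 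A smaller point: your Puppe-sequence derivation of the inversion of $\hat W_{\sepa,!}$ produces an isomorphism $F(C(f))\cong F(A)$ with the mapping cone, and identifying $F(I)$ with $F(C(f))$ requires inverting the kernel-to-cone comparison $\iota_{f}$, which itself lies in $\hat W_{\sepa,!}$; the paper avoids this circularity by invoking \cite[Prop.~21.4.1]{blackadar} directly.
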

\begin{proof} We first consider the cases $!\in \{\se, \exa\}$. In order to show that $F$ sends $\tilde W_{\sepa,!}$ to equivalences it suffices to show that it admits a sequence of 
 factorizations 
  \[\xymatrix{\nCalg_{\sepa}\ar[drr]^{F}\ar[d]_{L_{h,K}}\ar@/^-2cm/[ddd]_{\kk_{\sepa,!}}&& \\L_{K}\nCalg_{\sepa,h}\ar@{..>}[rr]^-{\bar F}\ar[d]_{L_{\sepa,!}}&&\bD\\ 
 L_{K}\nCalg_{\sepa,h,!}\ar@{..>}[urr]^{\tilde F}\ar[d]_{\Omega_{\sepa,!}}&&\\{L_{K}\nCalg_{\sepa,h,!}}^{\group}\ar@{..>}[uurr]&&}\]
 Since $F$ is homotopy invariant and stable,  it has a factorization $\bar F$ as indicated.
 The $!$-exactness of $F$ implies that the functor $\bar F$ is actually half-exact.
 We now claim that $\bar F$ sends the morphisms  in the separable version $\hat W_{\sepa,!}$ 
 of \eqref{erijoglerfqwfwerfrefwrf} to equivalences.
 In case $!=\exa$ this is precisely \cite[Prop. 21.4.1]{blackadar}. In the case 
 $!=\se$ the proof of  \cite[Prop. 21.4.1]{blackadar} goes though word by word since all exact sequences
 used in that proof are then semi-split exact.

 We now claim that $\bar F$ also inverts the closures $W_{\sepa,!}$ of $\hat W_{\sepa,!}$ under $2$-out-of-$3$
 and pull-back. It suffices to show that the collection of morphisms inverted by $\bar F$ is preserved by pull-backs.
 We will use \cite[Thm. 21.4.4]{blackadar} saying that  $F$ admits    long  half-exact   sequences 
 \begin{equation}\label{vsdfvsfvrqwfvs}  \cdots\to  F(S(I))\to F(S(A))\xrightarrow{F(S(f))} F(S(B))\to F(I)\to F(A)\xrightarrow{F(f)} F(B)  
\end{equation}
 associated to 
 exact (or semi-split exact, respectively) sequences $0\to I\to A\to B\to 0$.
    For the semiexact case again note that all exact sequences appearing in the proof are semi-split exact.
 Alternatively  in   both cases, this also follows from the half-exactness of $\bar F$ by applying it to the image under $L_{K}$ of the Puppe sequence \eqref{oijiosfeqrfgerfefsfdfgsgsdfgdsfg} associated to the  map $A\to B$.
Here we use that $F(I)\xrightarrow{\cong} F(C(f))$  which follows from the fact that
$F$ inverts $\hat W_{\sepa,!}$ and the analog of \cref{wetigowergferferferwfw} for half-exact functors.


     We consider the case $!=\se$. The case of $!=\exa$ is simpler and obtained from the  $!=\se$ by removing all  mentionings  of cpc-splits.
 We consider a  diagram
 \[\xymatrix{  & L_{\sepa,h,K}(A)\ar[d]^{\bar f} \\ L_{\sepa,h,K}(B')\ar[r]^{\bar g} &L_{\sepa,h,K}(B) } \] in $L_{K}\nCalg_{\sepa,h}$.
  We can assume (see e.g. the proof of \cref{weokjgpwerferfwef}) that up to equivalence the diagram 
is the image under $ L_{\sepa,h,K}$ of the bold part of a cartesian diagram
\[\xymatrix{ A \ar@{..>}[r]^{g'}\ar@{..>}[d]^{f'}&  A \ar[d]^{  f} \\  B' \ar[r]^{   g} & B  } \] 
in $\nCalg_{\sepa}$ where $f$   admits   cpc split.
  The map $f'$ again admits a cpc split.
 We can extend the vertical maps to exact sequences
 \[0\to I\to A\xrightarrow{f} B\to 0\ , \quad 0\to I'\to A'\xrightarrow{f} B'\to 0\] in $\nCalg_{\sepa}$
 such that the induced map $I\to I'$ is an isomorphism.    Since $F(f)$ is an isomorphism also   $F(S(f))$ and $F(S^{2}(f))$ are   isomorphisms by suspension stability. 
  By the long  half-exact  sequence \eqref{vsdfvsfvrqwfvs}  for $f$
 we conclude that $F(I)\cong 0$ and $F(S(I))\cong 0$, see \cref{righowgwrgrfrefwrf}. Using the version of this   long  half-exact    sequence for $f'$    and $F(S^{2}(I'))\cong 0 $ and $F(S(I'))\cong 0$ we conclude (again by    \cref{righowgwrgrfrefwrf}) that
  $F(S(f'))$ is an isomorphism. 
  Finally,  again using suspension stability we see that  $F(f')=\bar F(\bar f')$ is an isomorphism.

We thus get a factorization $\tilde F$.
In $ L_{K}\nCalg_{\sepa,h,!}$ we have the morphisms \[\beta_{\sepa,!,A}:L_{\sepa,h,K,!}(S^{2}(A))\to L_{\sepa,h,K,!}( A)\ .\] Since $F$ takes values in groups, using similar arguments as for \cref{wtriojgopwrtgrwewfref222} (replacing left-exactness by the existence of the long half-exact sequences) we know that 
 $\tilde F(\beta_{\sepa,!,A})$ is an isomorphism.
 In detail,   we consider the functor $F_{A}(-):=F(-\otimes_{\max}A)$. By
 \cref{wtriojgopwrtgrwewfref} we can conclude that $F_{A}(\cT_{0})\cong 0$.
 Then the boundary map $F_{A}(S^{2}(\C))\to F_{A}(\C)$ of the long half-exact sequence for
 $0\to K\to \cT_{0}\to S(\C)\to 0$ is an isomorphism.
 But this map is precisely  $\tilde F(\beta_{\sepa,!,A})$.

 Consequently we get the last factorization as indicated.
 This finishes the proof in the cases $!=\se$ and $!=\exa$.

 In the case of $!=q$ we construct a sequence of factorizations
  \[\xymatrix{\nCalg_{\sepa}\ar[drr]^{F}\ar[d]_{L_{h,K}}\ar@/^-2cm/[dd]_{\kk_{\sepa,q}}&& \\L_{K}\nCalg_{\sepa,h}\ar@{..>}[rr]^-{\bar F}\ar[d]_{L_{\sepa,!}}&&\bD\\ 
 L_{K}\nCalg_{\sepa,h,q} \ar@{..>}[urr]&&}\]
 Since $F$ is split-exact and takes values in groups, by \cref{wergkooprefrwefrefre} the functor $\bar F$ sends the morphisms 
  $\iota^{s}_{A}:q^{s}A\to A^{s}$ to isomorphisms for all $A$ in $L_{K}\nCalg_{\sepa,h}$.
  This yields the last factorization in this case.
   \end{proof}

   In the following we remove the assumption of suspension stability in \cref{wergijowergwerfefwerfw}.
   Consider a half-exact additive category $\bD$.
 
 \begin{prop}\label{wtrioghjowgrwefrefrewfwref}
 A homotopy invariant, stable, and half-exact (or half-semiexact) functor $F:\nCalg_{\sepa}\to \bD$
inverts $\tilde W_{\sepa,\exa}$ (or $\tilde W_{\sepa,\se}$, respectively).
  \end{prop}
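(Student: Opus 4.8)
The plan is to deduce the statement from \cref{wergijowergwerfefwerfw} by showing that homotopy invariance, stability and half-exactness (resp. half-semiexactness) already force the conclusion, so that suspension stability becomes superfluous. The only place where suspension stability entered the proof of \cref{wergijowergwerfefwerfw} was in verifying that the factorization $\bar F\colon L_{K}\nCalg_{\sepa,h}\to \bD$ (which exists because $F$ is homotopy invariant and stable) inverts not merely $\hat W_{\sepa,!}$ but its closure $W_{\sepa,!}$ under pull-backs and $2$-out-of-$3$. I would therefore keep the first two ingredients of that proof verbatim: $\bar F$ inverts $\hat W_{\sepa,!}$ by the half-exactness argument of \cite[Prop. 21.4.1]{blackadar} (which goes through word for word in the semi-split case, all relevant sequences being semi-split exact), and hence $\bar F$ admits the long half-exact Puppe sequences of \cite[Thm. 21.4.4]{blackadar}, obtained by applying $\bar F$ to the image under $L_{K}$ of the Puppe sequence \eqref{oijiosfeqrfgerfefsfdfgsgsdfgdsfg}.

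The decisive new input is Bott periodicity for $\bar F$, which I would establish independently of suspension stability, exactly as in the final part of the proof of \cref{wergijowergwerfefwerfw}. Since $\bD$ is additive, $\bar F$ is group-valued and split-exact, so the half-exact analogue of \cref{wtriojgopwrtgrwewfref} (argued via the long half-exact sequences in place of left-exactness, cf. \cref{wtriojgopwrtgrwewfref222}), applied to the twisted functor $F_{A}(-):=F(-\otimes_{\max}A)$, yields $F_{A}(\cT_{0})\cong 0$. The boundary map of the long half-exact sequence attached to the semi-split sequence $0\to K\otimes A\to \cT_{0}\otimes A\to S(A)\to 0$ is then an isomorphism, giving a natural isomorphism $F(S^{2}A)\cong F(A)$. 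This upgrades the one-sided Puppe sequence to a $6$-term cyclic exact sequence, and, for a map of semi-split extensions inducing an isomorphism on ideals, to a Mayer--Vietoris comparison.

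With periodicity in hand I would revisit the pull-back step, which is the only remaining point. Given the cospan arising from a cartesian square with $f$ a cpc-split surjection, I would form the induced map of $6$-term exact sequences associated with the two semi-split extensions $0\to I\to A\xrightarrow{f}B\to 0$ and $0\to I'\to A'\xrightarrow{f'}B'\to 0$, for which the induced map $I'\to I$ on ideals is an isomorphism. The intended mechanism is that the compatibility of the connecting maps with this isomorphism, together with the vanishing forced by $F(f)$ being an equivalence, forces $\bar F(f')$ to be an equivalence as well; this is the excision phenomenon that \cref{wergijowergwerfefwerfw} circumvented by invoking suspension stability. Granting it, $\bar F$ inverts $W_{\sepa,!}$, and the remaining factorizations through $L_{K}\nCalg_{\sepa,h,!}$ and through the group objects $\KK_{\sepa,!}$ proceed exactly as before, so that $F$ inverts $\tilde W_{\sepa,!}$. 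I expect this excision step to be the main obstacle: a purely formal analysis of the $6$-term exact sequence shows that Bott periodicity by itself does \emph{not} imply suspension stability, so one must genuinely exploit the map of long half-exact sequences attached to the pull-back and the isomorphism on the ideals, which carries the analytic content of the argument.
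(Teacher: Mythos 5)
Your strategy is genuinely different from the paper's, and unfortunately it cannot be completed: the step you yourself flag as "the main obstacle" is not merely difficult, it is false. The hypotheses of the proposition do not imply suspension stability, and the class of morphisms inverted by $F$ need not be closed under pull-backs. Take $F=K_{0}$, ordinary $K$-theory in degree zero with values in abelian groups (canonical half-exact structure): it is homotopy invariant, stable and half-exact (hence also half-semiexact), but not suspension stable. For the Toeplitz extension $0\to K\to \cT\xrightarrow{\pi}C(S^{1})\to 0$, which is semi-split, the map $K_{0}(\pi):K_{0}(\cT)\cong \Z\to K_{0}(C(S^{1}))\cong\Z$ is an isomorphism (while $K_{1}(\pi)$ is not). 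Pulling $\pi$ back along the unital inclusion $\C\to C(S^{1})$ gives $f':K^{u}\to\C$, the unitalization of the compacts with its canonical character, and $K_{0}(f'):\Z\oplus\Z\to\Z$ is not injective; the same computation for the homotopy pull-back (replace $\pi$ by its mapping-cylinder Schochet fibration and use Mayer--Vietoris) gives the same answer. This also pinpoints exactly where your "intended mechanism" breaks: from $K_{0}(\pi)$ being an isomorphism one does get that $K_{0}(K)\to K_{0}(\cT)$ is zero and that the boundary out of $K_{0}(C(S^{1}))$ vanishes, but this does not transfer along the naturality squares, since $K_{0}(K)\to K_{0}(K^{u})$ is a split injection even though $K_{0}(K)\to K_{0}(\cT)$ is zero. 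Your Bott-periodicity step is correct (it is the same Toeplitz argument the paper uses to invert $\beta_{\sepa,!,A}$), but periodicity only yields that $F(f)$ is invertible iff $F(S^{2}(f))$ is, which is strictly weaker than suspension stability and insufficient for pull-back closure.

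The paper avoids this issue entirely by factoring through a universal suspension-stable example rather than arguing with an arbitrary $F$. For $!=\exa$ it applies \cref{wergijowergwerfefwerfw} to the classical functor $\ee^{\class}_{\sepa}$ of \cite{MR1068250} (whose target is periodic, so suspension stability holds for it), concluding that $\ee^{\class}_{\sepa}$ inverts $\tilde W_{\sepa,\exa}$, and then factors any homotopy invariant, stable, half-exact $F$ through $\ee^{\class}_{\sepa}$ using the universal property \cite[Thm. 3.6]{MR1068250}; a functor that factors through something inverting $\tilde W_{\sepa,\exa}$ inverts it too. For $!=\se$ the paper notes that $F$ is in particular split-exact, applies the $!=q$ case of \cref{wergijowergwerfefwerfw} (which requires no suspension stability) to conclude that $F$ inverts $\tilde W_{\sepa,q}$, and then invokes the automatic semiexactness theorem \cref{wgokjweprgrefwrefwrfw} to identify $\tilde W_{\sepa,q}=\tilde W_{\sepa,\se}$. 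The $K_{0}$ example shows that some such detour is unavoidable: morphisms of $\tilde W_{\sepa,!}$ are inverted by every half-exact $F$ because they are inverted by the universal one, not because the $F$-inverted class enjoys the closure properties your argument would need.
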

 \begin{proof}
 We start with the case $!=\exa$.
 We consider the classical $E$-theory functor $\ee^{\class}_{\sepa}\to \EE_{\sepa}^{\class}$  constructed in  \cite{MR1068250},
 where we equip the additive $1$-category $\EE_{\sepa}^{\class}$ with the canonical   half-exact structure from \cref{werijogrgerwgweg}.
 The functor   $\ee^{\class}_{\sepa}$  is homotopy invariant, stable,  half-exact, and suspension stable.
 In view of \cref{wergijowergwerfefwerfw} the functor $\ee_{\sepa}^{\class}$ inverts $\tilde W_{\sepa,\exa}$.
 
 Let now $F:\nCalg_{\sepa}\to \bD$ be a homotopy invariant, stable and half-exact functor.
 By the universal property  of $\ee_{\sepa}^{\class}$  stated  in \cite[Thm. 3.6]{MR1068250}
 we get a  factorization
 \[\xymatrix{\nCalg_{\sepa}\ar[dr]^{\ee^{\class}_{\sepa}}\ar[rr]^{F}&& \bD\\& \EE_{\sepa}^{\class}\ar[ur]^{\hat F}&}\]
This implies that $F$ also inverts $\tilde W_{\sepa,\exa}$.

We now consider the case $!=\se$. It would be natural to argue as in the exact case using a corresponding universal property of $\KK_{\sepa}^{\class}$ involving half-semiexactness. But since we do not know a reference for this we 
will argue differently invoking the automatic semicontinuity theorem.
 Being a half-semiexact functor, $F$ is in particular split-exact.
By \cref{wergijowergwerfefwerfw}  it inverts $ \tilde W_{\sepa,q}$. As a consequence of the automatic semicontinuity theorem \cref{wgokjweprgrefwrefwrfw} we have  $ \tilde W_{\sepa,q}= \tilde W_{\sepa,\se}$.
\end{proof}

   \uli{bis hier}
    
We can now state the universal property of $\ho\kk_{\sepa,!}$.
 \begin{prop}\label{qrifgjoergrfref9}
Pull-back along $ \ho\kk_{\sepa,!}$ induces an equivalence
\[\Fun^{\add,\frac12 \exa}(\ho\KK_{\sepa,!},\bD)\stackrel{\simeq}{\to} \Fun^{h,s,\frac12 !}(\nCalg_{\sepa},\bD)\ , \quad  
 !\in \{\exa,\se\}\]  for any additive  half-exact category $\bD$ or 
\[\Fun^{\add }(\ho\KK_{\sepa,q}, \bD)\stackrel{\simeq}{\to} \Fun^{h,s,  splt}(\nCalg_{\sepa},\bD  ) \ , \quad 
 !=q
\] for any additive $1$-category.
 
\end{prop}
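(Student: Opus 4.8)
The plan is to identify $\ho\KK_{\sepa,!}$ with an ordinary one-categorical localization of $\nCalg_{\sepa}$ and then read off the asserted universal property from the ``inverting'' results already established in \cref{wergijowergwerfefwerfw} and \cref{wtrioghjowgrwefrefrewfwref}. Recall that $\kk_{\sepa,!}$ is a Dwyer--Kan localization of $\nCalg_{\sepa}$ at $\tilde W_{\sepa,!}$ (by \cref{ergiojeroigwerfwrefrefdvs} for $!\in\{\se,\exa\}$ and by the separable version of \cref{eigjohwergerfgerwfrwef} for $!=q$). Passing to homotopy categories turns a Dwyer--Kan localization into the associated one-categorical localization: testing against an arbitrary $1$-category $\bD$ and using that every functor out of $\nCalg_{\sepa}$ or $\ho\KK_{\sepa,!}$ to $\bD$ factors through $\ho$, one sees that $\ho\kk_{\sepa,!}$ exhibits $\ho\KK_{\sepa,!}$ as the localization of the $1$-category $\nCalg_{\sepa}$ at $\tilde W_{\sepa,!}$. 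Hence pull-back along $\ho\kk_{\sepa,!}$ is an equivalence $\Fun(\ho\KK_{\sepa,!},\bD)\xrightarrow{\simeq}\Fun^{\tilde W_{\sepa,!}}(\nCalg_{\sepa},\bD)$ onto the functors inverting $\tilde W_{\sepa,!}$.

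It then remains to match the two distinguished full subcategories under this equivalence. In the forward direction, if $G$ is half-exact and additive (additive, for $!=q$), then $F:=G\circ\ho\kk_{\sepa,!}$ is homotopy invariant, stable and $\tfrac12!$-exact: this is immediate from \cref{weroigjowergerfgwref9}, since $G$ preserves isomorphisms and carries the distinguished half-exact (resp.\ split-exact) sequences produced by $\ho\kk_{\sepa,!}$ to such sequences in $\bD$. Conversely, any $F$ in $\Fun^{h,s,\frac12 !}(\nCalg_{\sepa},\bD)$ inverts $\tilde W_{\sepa,!}$ by \cref{wtrioghjowgrwefrefrewfwref} (for $!\in\{\se,\exa\}$) or \cref{wergijowergwerfefwerfw} (for $!=q$), hence lies in $\Fun^{\tilde W_{\sepa,!}}$ and factors uniquely as $F\simeq G\circ\ho\kk_{\sepa,!}$; the content to verify is that this $G$ is additive and half-exact, for then restricting the displayed equivalence to the full subcategories on both sides gives the claim.

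Additivity of $G$ follows from the fact that a homotopy invariant half-exact $F$ sends the split sequence $0\to A\to A\oplus B\to B\to 0$ to a split short exact sequence: the projection $A\oplus B\to A$ makes $F(A)\to F(A\oplus B)$ a split monomorphism, so $F(A\oplus B)\cong F(A)\oplus F(B)$ (for $!=q$ this is split-exactness outright). As every object of $\ho\KK_{\sepa,!}$ is isomorphic to some $\ho\kk_{\sepa,!}(A)$, with biproducts represented by $\ho\kk_{\sepa,!}(A\oplus B)$ (using semi-additivity of the underlying categories, and \cref{wetgjowoegrewfwref}.\ref{woitgjwoerfrfwfrefefw} for $!=q$), the functor $G$ preserves biproducts and is additive. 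For $!\in\{\se,\exa\}$ half-exactness of $G$ means it carries distinguished triangles of $\ho\KK_{\sepa,!}$ to half-exact sequences. Here the key step is that every such triangle is isomorphic, as a triangle, to the $\kk_{\sepa,!}$-image of the mapping cone sequence of a genuine $C^{*}$-homomorphism: for $!=\se$ one represents an arbitrary morphism $\kk_{\sepa}(B)\to\kk_{\sepa}(C)$ by a homomorphism $\tilde u\colon qB\to K\otimes C$ via \cref{wejogiwerfgerwfrewfwerfw} and \cref{wgokjweprgrefwrefwrfw}, together with the equivalences $\kk_{\sepa}(qB)\simeq\kk_{\sepa}(B)$ (split exactness, \cref{wergkooprefrwefrefre}) and $\kk_{\sepa}(K\otimes C)\simeq\kk_{\sepa}(C)$ (stability); for $!=\exa$ one argues analogously through the classical $E$-theory functor $\ee^{\class}_{\sepa}$ as in the proof of \cref{wtrioghjowgrwefrefrewfwref}. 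Applying $F=G\circ\ho\kk_{\sepa,!}$ to the mapping cone sequence, the long half-exact sequence of a homotopy invariant, stable, half-(semi)exact functor (\cite[Thm.~21.4.4]{blackadar}, exactly as invoked in the proof of \cref{wergijowergwerfefwerfw}) shows the resulting sequence in $\bD$ is half-exact; since half-exactness of a sequence is invariant under isomorphism, $G$ is half-exact.

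The main obstacle is precisely this last point, the half-exactness of the factored functor $G$, because it is the only step not settled by the formal localization machinery and instead requires representing arbitrary $KK$- (resp.\ $E$-) theory morphisms by honest $C^{*}$-homomorphisms. For $!=\se$ this representability is exactly the automatic semiexactness theorem \cref{wgokjweprgrefwrefwrfw} fed into the $q$-picture \cref{wejogiwerfgerwfrewfwerfw}, and for $!=\exa$ it is supplied by the universal property of the classical $E$-theory functor; everything else — identifying the homotopy category with a one-categorical localization and checking additivity — is routine.
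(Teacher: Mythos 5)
Your proposal follows the paper's own proof almost step for step: the paper also (i) combines the Dwyer--Kan localization property of $\kk_{\sepa,!}$ (\cref{ergiojeroigwerfwrefrefdvs}, resp.\ \cref{eigjohwergerfgerwfrwef}) with the universal property of $\ho$ to get the equivalence $\Fun(\ho\KK_{\sepa,!},\bD)\simeq\Fun^{\tilde W_{\sepa,!}}(\nCalg_{\sepa},\bD)$, (ii) deduces fully faithfulness of the restricted functor from the fully faithful inclusions of the full subcategories, and (iii) obtains essential surjectivity by factoring a given $F$ through $\ho\KK_{\sepa,!}$ using \cref{wtrioghjowgrwefrefrewfwref} / \cref{wergijowergwerfefwerfw} and then checking additivity and half-exactness of the factorization. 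Your biproduct argument is just a spelled-out version of the paper's one-line "since $F$ sends finite sums to products", and your reduction of half-exactness to the claim that every distinguished triangle in $\ho\KK_{\sepa,!}$ is isomorphic to the image of an exact (resp.\ semi-split exact) sequence of $C^{*}$-algebras is exactly the claim the paper asserts (and then concludes from, since half-exactness of sequences in $\bD$ is isomorphism-invariant). Note that once the triangle is realized by a mapping cone sequence, which is semi-split exact by \eqref{regegregefw}, half-(semi)exactness of $F$ applies directly; the detour through the long exact sequence of \cite[Thm.\ 21.4.4]{blackadar} is superfluous.

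The one place where you go beyond the paper is in trying to justify that triangles claim, and there your $!=\exa$ argument does not work as stated. For $!=\se$ your justification is correct and non-circular: \cref{wgokjweprgrefwrefwrfw} and \cref{wejogiwerfgerwfrewfwerfw} are established before this section, and together with $\kk_{\sepa}(\iota_{B})$ and stability they represent any morphism $\kk_{\sepa}(B)\to\kk_{\sepa}(C)$ by a homomorphism $qB\to K\otimes C$. But for $!=\exa$, "arguing analogously through $\ee^{\class}_{\sepa}$ as in the proof of \cref{wtrioghjowgrwefrefrewfwref}" gives nothing of the kind: that proof uses $\ee^{\class}_{\sepa}$ only through its universal property to factor functors, which says nothing about representing morphisms of $\EE_{\sepa}$ by $C^{*}$-homomorphisms; morphisms in classical $E$-theory are homotopy classes of asymptotic morphisms, not homomorphisms, so there is no immediate mapping-cone representability even classically; and any appeal to the comparison $\ho\EE_{\sepa}\simeq\EE^{\class}_{\sepa}$ would be circular, since \cref{qiurhfgiuewrgwrfrefrfwrefw} is proved \emph{from} the present proposition. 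To be fair, this is a hole at precisely the point the paper itself leaves unproved. A uniform, non-circular repair within the paper's toolkit: $L_{\sepa,!}$ is the localization of a category of fibrant objects $(L_{K}\nCalg_{\sepa,h},W_{\sepa,!},\mathrm{all\ morphisms})$ (this is how \cref{qerighqofqwefqew9} is proved), so morphisms in its homotopy category are represented by spans $A\leftarrow A'\to B$ with backward leg in $W_{\sepa,!}$ and forward leg an honest morphism of $L_{K}\nCalg_{\sepa,h}$, i.e.\ by \eqref{vwihviuewhviwevewre} a homomorphism $A'\to K\otimes B$; the fibre of the span equals the fibre of the forward leg, which is realized by its mapping cone sequence (semi-split exact, hence also exact), and Bott periodicity transports this to $\KK_{\sepa,!}$.
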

\begin{proof}
 
It follows from \cref{weroigjowergerfgwref9} that the pull-back along  $\ho\kk_{\sepa,!}$ takes values in the indicated category of functors. 

We first discuss the case $!\in \{\exa,\se\}$. We consider the diagram
 \[\xymatrix{&\Fun^{\add,\frac12 \exa }(\ho\KK_{\sepa,!},\bD)\ar[r]^{(2)}\ar[d]& \Fun^{h,s,\frac12 !}(\nCalg_{\sepa},\bD)\ar[d]\\\Fun(\KK_{\sepa,!},\bD)\ar@/_2cm/[rr]_{\cong}^{\kk_{\sepa,!}^{*}}\ar[r]^{\ho^{*}}_-{\simeq}& \Fun (\ho\KK_{\sepa,!},\bD)\ar[r]^{(1)}_{\simeq}&\Fun^{\tilde W_{\sepa,!}}(\nCalg_{\sepa},\bD)}\ .\]
The lower    functor is an equivalence by the universal property of the Dwyer-Kan localization $\kk_{\sepa,!}$, see \cref{ergiojeroigwerfwrefrefdvs}.
The lower left equivalence  is the universal property of $\ho$.
As a consequence  we see that the functor marked by $(1)$ is an equivalence.

The vertical functors are fully faithful. Therefore the functor marked by $(2)$ is fully faithful, too.
It remains to show that it is essentially surjective. If $F$ is any functor in  
 $\Fun^{h,s,\frac12 !}(\nCalg_{\sepa},\bD)$, then by \cref{wtrioghjowgrwefrefrewfwref}
  there exists a functor $\hat F$ in $\Fun(\KK_{\sepa,!},\bD)$ such that $\hat F\circ \kk_{\sepa,!}\simeq F$.
We furthermore have a functor $\bar F$ in $  \Fun (\ho\KK_{\sepa,!},\bD)$ such that
$\ho^{*}\bar F\simeq \hat F$. It remains to show that
$\bar F$ is additive and half-exact.

Since $F$ sends finite sums to products we can conclude that $\bar F$ is additive.
 Since all triangles in $\ho\KK_{\sepa,!}$ come from exact (semi-split exact) sequences  
in $\nCalg_{\sepa}$ and $F$ is half-exact (or half-semiexact)
the functor $\bar F$ sends the half-exact sequences in $\ho\KK_{\sepa,!}$ (with the triangulated half-exact structure)
to half-exact sequences in $\bD$.
Hence $\bar F$ is also half-exact.

In the case $!=q$ we argue similarly with 
 \[\xymatrix{&\Fun^{\add  }(\ho\KK_{\sepa,q},\bD)\ar[r]^{(2)}\ar[d]& \Fun^{h,s,splt}(\nCalg_{\sepa},\bD)\ar[d]\\\Fun(\KK_{\sepa,q},\bD)\ar@/_2cm/[rr]_{\cong}^{\kk_{\sepa,q}^{*}}\ar[r]^{\ho^{*}}_-{\simeq}& \Fun (\ho\KK_{\sepa,q},\bD)\ar[r]^{q}_{\simeq}&\Fun^{\tilde W_{\sepa,q}}(\nCalg_{\sepa},\bD)}\ .\]
\end{proof}

Let $\kk_{\sepa}^{\class}:\nCalg_{\sepa}\to \KK_{\sepa}^{\class}$ denote the classical additive category valued
functor described by the universal property   \cite[Thm. 3.4]{MR1068250}.
We equip $ \KK_{\sepa}^{\class}$ with the split half-exact structure. Then 
$\kk_{\sepa}^{\class}$ is split-exact.
Since $\kk_{\sepa}^{\class}$ is also homotopy invariant and stable,  by 
\cref{wergijowergwerfefwerfw}  we get a dotted factorization
\[\xymatrix{\nCalg_{\sepa}\ar[r]^{\kk_{\sepa}^{\class}}\ar[d]^{\kk_{\sepa,q}}&  \KK_{\sepa}^{\class}\\ \KK_{\sepa,q}\ar[r]^{\ho}\ar@{..>}[ur] &\ho\KK_{\sepa,q}\ar@{-->}[u]^{\psi}}\ .\]
The dashed factorization is induced by the universal property of $\ho$ since $\KK^{\class}_{\sepa}$ is an ordinary category. Since  $\kk_{\sepa}^{\class}$  is split-exact, the dotted arrow is half-exact, and the dashed arrow is additive.

 Since $\ee^{\class}_{\sepa}$ inverts $\tilde W_{\sepa,\exa}$ (as seen in the proof of \cref{wtrioghjowgrwefrefrewfwref}) we also have a factorization 
 \[\xymatrix{\nCalg_{\sepa}\ar[r]^{\ee_{\sepa}^{\class}}\ar[d]^{\ee_{\sepa}}&  \EE_{\sepa}^{\class}\\ \EE_{\sepa}\ar[r]\ar@{..>}[ur]&\ho\EE_{\sepa}\ar@{-->}[u]^{\psi}}\ .\]
 Since $\ee^{\class}_{\sepa}$ is half-exact we can conclude that the dashed arrow
 is additive and half-exact.
\begin{theorem}\label{qiurhfgiuewrgwrfrefrfwrefw}
The comparison functors
$\psi:\ho\KK_{\sepa,q}\to  \KK_{\sepa}^{\class}$ and 
$\psi:\ho\EE_{\sepa}\to  \EE_{\sepa}^{\class}$ are equivalences. 
\end{theorem}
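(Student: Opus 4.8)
The plan is to deduce both equivalences from a single formal mechanism: for each case the two candidate targets solve the \emph{same} ordinary localization problem, so the comparison functor $\psi$ intertwining the two structure maps is automatically an equivalence. The crucial preliminary observation is that $\ho\kk_{\sepa,q}$ and $\ho\ee_{\sepa}$ are genuine $1$-categorical localizations. Indeed, by \cref{ergiojeroigwerfwrefrefdvs} (and the separable version of \cref{eigjohwergerfgerwfrwef}) the functors $\ee_{\sepa}$ and $\kk_{\sepa,q}$ are Dwyer--Kan localizations at the sets $\tilde W_{\sepa,\exa}$ and $\tilde W_{\sepa,q}$ of morphisms they invert. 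Composing with the homotopy-category functor $\ho$, whose defining property yields an equivalence $\Fun(\ho\bC,\bD)\simeq\Fun(\bC,\bD)$ for every ordinary category $\bD$, shows that for every $1$-category $\bD$ restriction along $\ho\ee_{\sepa}$ induces an equivalence $\Fun(\ho\EE_{\sepa},\bD)\stackrel{\simeq}{\to}\Fun^{\tilde W_{\sepa,\exa}}(\nCalg_{\sepa},\bD)$, and similarly for $\ho\kk_{\sepa,q}$ with $\tilde W_{\sepa,q}$. This is precisely the equivalence marked $(1)$ in the proof of \cref{qrifgjoergrfref9}.

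With this in hand I would treat both cases in parallel. Write $\mathbb{L}$ for $\ho\EE_{\sepa}$ or $\ho\KK_{\sepa,q}$, $\mathbb{L}^{\class}$ for $\EE_{\sepa}^{\class}$ or $\KK_{\sepa}^{\class}$, $\lambda$ for the localization $\ho\ee_{\sepa}$ or $\ho\kk_{\sepa,q}$, and $\lambda^{\class}$ for the classical functor $\ee_{\sepa}^{\class}$ or $\kk_{\sepa}^{\class}$. The functor $\lambda$ is homotopy invariant, stable, and half-exact (respectively split-exact), so the classical universal property [Thm.~3.6]{MR1068250} (respectively [Thm.~3.4]{MR1068250}) produces an essentially unique additive functor $\phi\colon\mathbb{L}^{\class}\to\mathbb{L}$ with $\phi\circ\lambda^{\class}\simeq\lambda$. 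Conversely, $\lambda^{\class}$ inverts $\tilde W_{\sepa,\exa}$ by \cref{wtrioghjowgrwefrefrewfwref} (respectively $\tilde W_{\sepa,q}$ by \cref{wergijowergwerfefwerfw}); this is exactly the input that produces the given $\psi\colon\mathbb{L}\to\mathbb{L}^{\class}$ with $\psi\circ\lambda\simeq\lambda^{\class}$.

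It then remains to check that the two composites are identities, and here the two directions are run through \emph{different} universal properties. The endofunctor $\psi\circ\phi$ of $\mathbb{L}^{\class}$ is additive and satisfies $\psi\circ\phi\circ\lambda^{\class}\simeq\psi\circ\lambda\simeq\lambda^{\class}$, so the uniqueness clause of the classical (additive) universal property forces $\psi\circ\phi\simeq\id_{\mathbb{L}^{\class}}$. Symmetrically, the endofunctor $\phi\circ\psi$ of $\mathbb{L}$ satisfies $\phi\circ\psi\circ\lambda\simeq\phi\circ\lambda^{\class}\simeq\lambda$; since $\lambda$ is an ordinary localization, restriction along $\lambda$ is fully faithful on $\Fun(\mathbb{L},\mathbb{L})$, and as both $\phi\circ\psi$ and $\id_{\mathbb{L}}$ restrict to $\lambda$ we get $\phi\circ\psi\simeq\id_{\mathbb{L}}$. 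Note that this second identity needs no half-exactness of $\phi\circ\psi$ whatsoever: that is the payoff of having identified $\lambda$ as a plain $1$-categorical localization rather than invoking only the restricted universal property of \cref{qrifgjoergrfref9}. Hence $\psi$ is an equivalence in both cases.

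The genuinely substantial ingredients all sit upstream: the classical universal properties [Thm.~3.4]{MR1068250} and [Thm.~3.6]{MR1068250}, and the inversion lemmas \cref{wergijowergwerfefwerfw} and \cref{wtrioghjowgrwefrefrewfwref} guaranteeing that $\lambda^{\class}$ and $\lambda$ invert the same morphisms. Given these, the theorem is a purely formal comparison of representing objects. The one point I would be careful about — and the only real pitfall in the argument — is to route each inverse identity through the correct universal property: the classical \emph{additive} property for $\psi\circ\phi$, and the \emph{localization} property of $\lambda$ for $\phi\circ\psi$. Conflating the two would tempt one to demand that $\phi$ respect a half-exact structure on $\mathbb{L}^{\class}$, which it need not, since the classical target carries no preferred half-exact structure; identifying $\lambda$ as a bare localization is exactly what removes that difficulty. (The link of the present $q$-model to the semiexact category $\KK_{\sepa}$, as opposed to $\KK_{\sepa}^{\class}$, is a separate matter handled by the automatic semiexactness theorem \cref{wgokjweprgrefwrefwrfw} and is not needed here.)
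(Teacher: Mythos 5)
Your route is the same as the paper's: build $\phi$ from the classical universal property, build $\psi$ from the fact that the classical functor inverts $\tilde W_{\sepa,!}$, kill $\phi\circ\psi$ against the localization property of $\ho\kk_{\sepa,q}$ (resp.\ $\ho\ee_{\sepa}$), and kill $\psi\circ\phi$ against the classical uniqueness clause. Your handling of $\phi\circ\psi$ via the bare $1$-categorical localization property is fine, and even marginally cleaner than the paper's appeal to \cref{qrifgjoergrfref9}, since it avoids having to say anything about additivity or half-exactness of the composite.

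There is, however, a genuine gap in the other direction, and it is exactly the point the paper spends the second half of its proof on. The universal properties of $\kk^{\class}_{\sepa}$ and $\ee^{\class}_{\sepa}$ in \cite[Thm.\ 3.4, Thm.\ 3.6]{MR1068250} are formulated in terms of \emph{equalities} of functors: there is a \emph{unique} additive functor $\Phi$ with $\Phi\circ\kk^{\class}_{\sepa}=F$ on the nose. You conclude $\psi\circ\phi\simeq\id$ from the natural isomorphism $\psi\circ\phi\circ\lambda^{\class}\simeq\lambda^{\class}$, but strict uniqueness does not license that inference: it constrains only functors satisfying the strict equation, and $\psi\circ\phi$ need not satisfy one. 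Concretely, the construction of $\phi$ gives the strict equality $\phi\circ\kk^{\class}_{\sepa}=\ho\kk_{\sepa,q}$, whereas the construction of $\psi$ (factorization through a localization) only provides a natural isomorphism $f\colon\psi\circ\ho\kk_{\sepa,q}\stackrel{\cong}{\to}\kk^{\class}_{\sepa}$, so $\psi\circ\phi\circ\kk^{\class}_{\sepa}$ is merely isomorphic, not equal, to $\kk^{\class}_{\sepa}$. The paper repairs this by choosing the model of $\ho\kk_{\sepa,q}$ (resp.\ $\ho\ee_{\sepa}$) to be bijective on objects and replacing $\psi$ by the isomorphic functor $\psi'$ defined by $\psi'(\ho\kk_{\sepa,q}(A)):=\kk^{\class}_{\sepa}(A)$ on objects and $\psi'(h):=f_{B}\,\psi(h)\,f_{A}^{-1}$ on morphisms, which satisfies $\psi'\circ\ho\kk_{\sepa,q}=\kk^{\class}_{\sepa}$ strictly; only then does uniqueness yield $\psi'\circ\phi=\id$, hence $\psi\circ\phi\cong\id$. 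Your argument becomes correct once this strictification step is inserted, but as written the appeal to ``the uniqueness clause'' is exactly the pitfall you claimed to have avoided.
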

\begin{proof}
By  \cref{ergiojeroigwerfwrefrefdvs} or \cref{eigjohwergerfgerwfrwef}, respectively, we know that
$\ee_{\sepa}$ and $\kk_{\sepa,q}$ are Dwyer-Kan localizations. The composition of a Dwyer-Kan localization with the canonical functor to the homotopy category is again a localization, in this case  in the sense of ordinary categories. We conclude that $\ho \ee_{\sepa}$ and $\ho\kk_{\sepa,q}$ are  localizations.
Note that such a localization is determined uniquely up to  equivalence under $\nCalg_{\sepa}$, and that  two choices of such equivalences   under $\nCalg_{\sepa}$
  are isomorphic by a unique isomorphism.

Since the universal properties of  $\kk_{\sepa}^{\class}$ \cite[Thm. 3.4]{MR1068250}
and $\ee_{\sepa}^{\class}$ \cite[Thm. 3.6]{MR1068250} are formulated in terms of equalities of functors (instead of isomorphisms), 
it will be useful to choose $\ho\kk_{\sepa,q}$  and $\ho\ee_{\sepa}$  such that these functors are bijective on objects.

We write down the details of the argument for $KK$-theory.
Since $\ho\kk_{\sepa,q}$ is homotopy invariant, stable and split-exact,
the universal property   \cite[Thm. 3.4]{MR1068250} provides an additive factorization 
 \[\xymatrix{\nCalg\ar[dr]^{\kk^{\class}_{\sepa}}\ar[rr]^{\ho\kk_{\sepa,q}}&&\ho\KK_{\sepa,q} \\& \KK^{\class}_{\sepa} \ar@{..>}[ur]^{\phi}&}\]
 which strictly commutes.

 The pull-back along $\ho\kk_{\sepa,q}$ of
  the composition $\ho\KK_{\sepa,q}\xrightarrow{\psi}  \KK_{\sepa}^{\class} \xrightarrow{\phi} \ho\KK_{\sepa,q}$ is  equivalent to $\ho\kk_{\sepa,q}$. Therefore by 
\cref{qrifgjoergrfref9} this composition $\phi\circ \psi$  itself is an equivalence.

We now show that the composition $ \psi\circ \phi$  is also an equivalence invoking the uniqueness statement of  \cite[Thm. 3.4]{MR1068250}. 
This requires an equality $\psi\circ \phi\circ  \kk_{\sepa}^{\class}=\kk_{\sepa}^{\class}$.
By the construction of $\phi$ using  \cite[Thm. 3.4]{MR1068250}  we have an equality $\phi\circ \kk_{\sepa}^{\class}=\kk_{\sepa,q}$.
But the construction of $\psi$ only ensures a natural isomorphism
$f:\psi\circ\ho\kk_{\sepa,q} \stackrel{\cong}{\to} \kk_{\sepa}^{\class}$ which is not necessarily an equality.
We now use the freedom to replace $\psi$ by an isomorphic functor and our special choice of $\ho\kk_{\sepa,q}$ to be    bijective on objects.

For every $A$ in $\nCalg_{\sepa}$ we have an isomorphism $f_{A}:\psi(\ho\kk_{\sepa,q}(A)) \stackrel{\cong}{\to}  \kk_{\sepa}^{\class}(A)$.
We   define $\psi':  \ho\KK_{\sepa,q}\to  \KK^{\class} $ on objects
such that $\psi'(\ho\kk_{\sepa,q}(A)):=\kk_{\sepa}^{\class}(A)$.  For a morphism $h:A\to B$ in $ \ho\KK_{\sepa,q}$ we then define $\psi'(h):=f_{B}\phi(h) f_{A}^{-1}$.
The family $(f_{A})_{A}$ also implements an isomorphism $\psi\cong \psi'$. 
Now $\psi'\circ \phi\circ  \kk_{\sepa}^{\class}=\kk_{\sepa}^{\class}$ which implies that
$\psi'\circ \phi=\id$. 

In particular, $\phi$ has a right and a left inverse  equivalence and is hence itself an equivalence.   
But then also $\psi$ is an equivalence.

The case of $E$-theory is completely analogous. We use the universal property  \cite[Thm. 3.6]{MR1068250} 
of $\ee^{\class}_{\sepa}$.
  \end{proof}

Note that the proof of \cref{qiurhfgiuewrgwrfrefrfwrefw}  does not use the case of  \cref{wtrioghjowgrwefrefrewfwref} for 
$!=\se$ and is therefore independent of the automatic semicontinuity theorem.

\section{Asymptotic morphisms in $E$-theory}\label{ewriogjowegregrewf99}

The first construction of an additive $1$-category representing $E$-theory  was given in \cite{MR1068250}
 by enforcing universal properties. This construction was the blueprint for the $\infty$-categorical version considered in the present note.
  Shortly after in  \cite{zbMATH04182148} the $E$-theory groups were  represented as equivalence classes of asymptotic morphisms, see also  \cite{Guentner_2000}. 
Recall that we construct $\KK$-theory for separable algebras
by a sequence of Dwyer-Kan localizations applied to $\nCalg_{\sepa}$. 
In view of  \cite{zbMATH04182148}, \cite{Guentner_2000} a natural idea would be to apply a similar construction to the category of $C^{*}$-algebras and asymptotic morphisms. 
The first obstacle one encounters in this approach  is that the  composition of asymptotic morphisms  
is only well-defined after going over to homotopy classes. By now\footnote{2024} we think that the correct way to relate $E$-theory with asymptotic  morphisms is the one worked out recently in \cite[Sec. 3.5]{Bunke:2024aa}. 
It is based on the shape theory of \cite{zbMATH03996430}, \cite{Dadarlat_1994} and goes beyond the scope of the present paper. In the present  section we will therefore just  show that 
 asymptotic morphisms  also give rise to morphisms in our version $E$-theory in a way which is
compatible with the composition.


We consider the endofunctors
\[T,F:\nCalg\to \nCalg\] defined by
\[T(A):= C_{b}([0,\infty),A)\ , \quad  F(A):=C_{b}([0,\infty),A)/C_{0}([0,\infty),A)\ .\]
We have a natural transformation
$\alpha :T\to F$ such that $\alpha_{A}:T(A)\to F(A)$  is the projection onto the quotient.
We furthermore have the natural transformations \[\beta:\id_{\nCalg}\to T\ , \quad  \ev_{0}:T\to \id_{\nCalg}\] such that $\beta_{A}:A\to T(A)$ sends $a$ in $A$ to the constant function
with value $a$, and $\ev_{0,A}:T(A)\to A$ evaluates the function $t\to f(t)$ in $T(A)$ at $t=0$. We finally define the natural transformation \[\gamma:=\alpha\circ \beta:\id_{\nCalg}\to F(A)\ .\]
 Note that the sequence
\[0\to C_{0}([0,\infty),A) \to T(A)\xrightarrow{\alpha_{A}} F(A)\to 0\] is exact and that 
$C_{0}([0,\infty),A) $ is contractible. Since $\ee:\nCalg\to \EE$ is reduced, homotopy invariant  and exact we see that  $\ee(\alpha_{A})$ is an equivalence for every $A$ in $\nCalg$.
We define a natural transformation  $\delta:\ee\circ F\to \ee$ by \[\delta_{A}:=\ee(\ev_{0,A})\circ \ee(\alpha_{A})^{-1}:\ee(F(A))\to \ee(A)\ .\]

%
%
%
Following \cite[Sec. 2]{Guentner_2000} we adopt the following definition.
\begin{ddd}\label{werigjowergerwf}
For $n$ in $\nat$ an asymptotic morphism $f:A \leadsto_{n} B$ is a morphism $f:A\to F^{n}(B)$ in $\nCalg$.
\end{ddd}

\begin{rem}
Note that asymptotic morphisms for $n=0$ are usual morphisms, and the case of $n=1$ corresponds to the notion of an asymptotic morphism in \cite{zbMATH04182148}. As in  \cite[Sec. 2]{Guentner_2000} 
we include the case of bigger $n$ in order to have a simple definition of a composition  of  asymptotic morphisms which also works for non-separable algebras. 
\hB\end{rem}

If $f:A \leadsto_{n} B$ is an asymptotic morphism, then we define
\[\ee_{n}(f):= \delta_{B}\circ \dots\circ \delta_{F^{n-1}(B)}\circ  \ee(f)\ .\]
If $n=0$, then this formula is interpreted as $\ee_{0}(f):=\ee(f)$.

Let $f': A\leadsto_{n+1} B$ be given  by $\gamma_{F^{n}(B)}\circ f$.
Then we say that $f'$ and $f$ are {\em related}.
\begin{lem}
If $f'$ is related  to $f$, then $\ee_{n}(f)\simeq \ee_{n+1}(f')$.
\end{lem}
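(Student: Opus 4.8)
The plan is to reduce the claimed equivalence to a single cancellation identity and then verify that identity at the level of $C^{*}$-algebra homomorphisms. First I would unwind the definitions. By construction $f' = \gamma_{F^{n}(B)} \circ f$, so functoriality of $\ee$ gives $\ee(f') \simeq \ee(\gamma_{F^{n}(B)}) \circ \ee(f)$, and hence
\[
\ee_{n+1}(f') \simeq \delta_{B} \circ \dots \circ \delta_{F^{n-1}(B)} \circ \delta_{F^{n}(B)} \circ \ee(\gamma_{F^{n}(B)}) \circ \ee(f)\ .
\]
Comparing this with $\ee_{n}(f) = \delta_{B} \circ \dots \circ \delta_{F^{n-1}(B)} \circ \ee(f)$, the common prefix $\delta_{B} \circ \dots \circ \delta_{F^{n-1}(B)}$ occurs in both expressions, so it suffices to prove that $\delta_{C} \circ \ee(\gamma_{C}) \simeq \id_{\ee(C)}$ for $C := F^{n}(B)$ (indeed for any $C^{*}$-algebra $C$).

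The second step is to compute $\delta_{C} \circ \ee(\gamma_{C})$ directly. Recalling $\gamma_{C} = \alpha_{C} \circ \beta_{C}$ and $\delta_{C} = \ee(\ev_{0,C}) \circ \ee(\alpha_{C})^{-1}$, functoriality of $\ee$ yields
\[
\delta_{C} \circ \ee(\gamma_{C}) \simeq \ee(\ev_{0,C}) \circ \ee(\alpha_{C})^{-1} \circ \ee(\alpha_{C}) \circ \ee(\beta_{C}) \simeq \ee(\ev_{0,C}) \circ \ee(\beta_{C}) = \ee(\ev_{0,C} \circ \beta_{C})\ .
\]
Here I use that $\ee(\alpha_{C})$ is an equivalence, which was established just above from reducedness, homotopy invariance and exactness of $\ee$ together with the contractibility of $C_{0}([0,\infty),C)$, so that $\ee(\alpha_{C})^{-1} \circ \ee(\alpha_{C}) \simeq \id_{\ee(T(C))}$. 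Finally I would observe that $\ev_{0,C} \circ \beta_{C} = \id_{C}$ holds on the nose: $\beta_{C}$ sends $c$ to the constant function with value $c$, and $\ev_{0,C}$ evaluates at $t = 0$, recovering $c$. Hence $\ee(\ev_{0,C} \circ \beta_{C}) = \id_{\ee(C)}$, which completes the reduction.

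The argument is entirely formal once the definitions are unwound, so there is no genuine obstacle; the only point requiring care is the appeal to the invertibility of $\ee(\alpha_{C})$, which is exactly why the relation $\delta_{C} \circ \ee(\gamma_{C}) \simeq \id_{\ee(C)}$ lives in $\EE$ as an equivalence rather than as an equality of honest homomorphisms. The underlying $C^{*}$-algebraic identity $\ev_{0,C} \circ \beta_{C} = \id_{C}$ needs no homotopy theory at all, and the telescoping of the $\delta$'s against the extra factor $\ee(\gamma_{F^{n}(B)})$ is the conceptual heart of the statement: passing from $f$ to the related morphism $f'$ inserts one more quotient map $\gamma$ which is precisely undone by the one additional $\delta$ in $\ee_{n+1}$.
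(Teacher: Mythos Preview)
Your proof is correct and follows exactly the same approach as the paper: both reduce to the identity $\delta_{F^{n}(B)}\circ\ee(\gamma_{F^{n}(B)})\simeq\id$, established via $\gamma=\alpha\circ\beta$, the cancellation $\ee(\alpha)^{-1}\circ\ee(\alpha)\simeq\id$, and the equality $\ev_{0}\circ\beta=\id$. The paper simply states these two ingredients in one line, while you spell out the telescoping and the reduction more explicitly.
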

\begin{proof}
This follows from $\ee(\gamma_{F^{n}(B)})\simeq \ee(\alpha_{F^{n}(B)})\circ \ee(\beta_{F^{n}(B)})$
and $\ee(\ev_{0})\circ \ee(\beta_{F^{n}(B)})\simeq \id_{\ee(F^{n}(B))}$.
 \end{proof}
 
 The argument implies that $\ee_{1}(\gamma_{A})\simeq \id_{A}$ for every $C^{*}$-algebra $A$.

We define the {\em composition} of  two asymptotic morphisms 
$f:A \leadsto_{n} B$ and $g:B\leadsto_{m} C$ as
$g\sharp f:A\leadsto_{n+m}C$ given by $F^{n}(g)\circ f$.

\begin{lem}
We have $\ee_{n+m}(g\sharp f)\simeq \ee_{m}(g)\circ \ee_{n}(f)$.
\end{lem}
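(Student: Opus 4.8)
The plan is to unwind the definition of $\ee_{n+m}(g\sharp f)$ and reorganize the string of boundary transformations $\delta$ and applications of $\ee$ using naturality. Recall that $g\sharp f:A\leadsto_{n+m}C$ is given by $F^{n}(g)\circ f$, where $f:A\to F^{n}(B)$ and $g:B\to F^{m}(C)$, so that $F^{n}(g):F^{n}(B)\to F^{n+m}(C)$. By definition,
\[
\ee_{n+m}(g\sharp f)\simeq \delta_{C}\circ\cdots\circ\delta_{F^{n+m-1}(C)}\circ \ee\big(F^{n}(g)\circ f\big)\ .
\]
First I would split the product of $\delta$'s into the first $m$ factors (indices $F^{0}(C)$ through $F^{m-1}(C)$) and the remaining $n$ factors (indices $F^{m}(C)$ through $F^{n+m-1}(C)$), and factor $\ee(F^{n}(g)\circ f)\simeq \ee(F^{n}(g))\circ \ee(f)$.

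The key step is to commute the last $n$ boundary maps past $\ee(F^{n}(g))$. Concretely, I would establish a naturality-type identity asserting that for any $C^{*}$-algebra $D$ and any $k$, the composite $\delta_{F^{k}(C)}\circ\cdots$ applied after $\ee(F^{n}(g))$ agrees with $\ee(F^{n}(g))$ applied after the corresponding boundary maps built from $B$ in place of $C$; the cleanest route is to observe that $\delta:\ee\circ F\to \ee$ is a natural transformation of functors $\nCalg\to\EE$, so $\delta_{F^{n-1}(B)},\dots,\delta_{B}$ assemble to give the equality
\[
\delta_{F^{m}(C)}\circ\cdots\circ\delta_{F^{n+m-1}(C)}\circ \ee(F^{n}(g))\simeq \ee(g)\circ\big(\delta_{B}\circ\cdots\circ\delta_{F^{n-1}(B)}\big)
\]
as morphisms $\ee(F^{n}(B))\to\ee(F^{m}(C))$, using that $F^{n}(g):F^{n}(B)\to F^{n}(F^{m}(C))=F^{n+m}(C)$ and that each $\delta$ peels off one layer of $F$ naturally. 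I would prove this by an induction on $n$, the inductive step being a single application of the naturality square $\delta_{C'}\circ\ee(F(h))\simeq \ee(h)\circ\delta_{D'}$ for $h:D'\to C'$, instantiated at appropriate iterates of $F$.

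Assembling the pieces, after commuting the last $n$ boundary maps to the right of $\ee(F^{n}(g))$ they reassemble into $\ee_{n}(f)=\delta_{B}\circ\cdots\circ\delta_{F^{n-1}(B)}\circ\ee(f)$, while the first $m$ boundary maps together with $\ee(g)$ reassemble into $\ee_{m}(g)=\delta_{C}\circ\cdots\circ\delta_{F^{m-1}(C)}\circ\ee(g)$. This yields
\[
\ee_{n+m}(g\sharp f)\simeq \ee_{m}(g)\circ\ee_{n}(f)
\]
as desired, with the boundary cases $n=0$ or $m=0$ handled by the convention $\ee_{0}(-)=\ee(-)$. The main obstacle I anticipate is bookkeeping: one must track carefully which iterate $F^{k}$ each $\delta$ is indexed at and verify that the naturality squares compose coherently through the inductive step, since a misaligned index would break the reassembly. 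Everything else is formal, relying only on the already-established facts that $\ee$ is exact, homotopy invariant and reduced (so each $\ee(\alpha_{A})$ is invertible and $\delta$ is well-defined and natural).
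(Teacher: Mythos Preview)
Your proposal is correct and takes essentially the same approach as the paper: the key step is the naturality of $\delta$, which the paper encodes as the commutativity of the square in the diagram
\[\xymatrix{&&\ee(F^{n+m}(C))\ar[dr]^{\delta_{F^{m}(C)}\dots \delta_{F^{n+m-1}(C)}}&&\\&\ee(F^{n}(B))\ar[ur]^{\ee(F^{n}(g))}\ar[dr]^{\delta_{B}\dots \delta_{F^{n-1}(B)}}&&\ee( F^{m}(C))\ar[dr]^{\delta_{C}\dots\delta_{F^{m-1}(C)}}&\\\ee(A)\ar[ur]^{\ee(f)}\ar[rr]^{\ee_{n}(f)}& &\ee(B)\ar[ur]^{\ee(g)}\ar[rr]^{\ee_{m}(g)}&&\ee(C)}\]
with the remark that ``the square commutes since $\delta$ is a natural transformation''; your displayed identity is precisely this square, and your induction on $n$ is the spelled-out justification the paper leaves implicit.
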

\begin{proof}
We consider the following diagram
\[\xymatrix{&&\ee(F^{n+m}(C))\ar[dr]^{\delta_{F^{m}(C)}\dots \delta_{F^{n+m-1}(C)}}&&\\&\ee(F^{n}(B))\ar[ur]^{\ee(F^{n}(g))}\ar[dr]^{\delta_{B}\dots \delta_{F^{n-1}(B)}}&&\ee( F^{m}(C))\ar[dr]^{\delta_{C}\dots\delta_{F^{m-1}(C)}}&\\\ee(A)\ar[ur]^{\ee(f)}\ar[rr]^{\ee_{n}(f)}& &\ee(B)\ar[ur]^{e(g)}\ar[rr]^{\ee_{m}(g)}&&\ee(C)}\ .\]
The square commutes since $\delta$ is a natural transformation. The lower triangles reflect the definitions of the lower horizontal arrows.
\end{proof}

We say that $f_{0},f_{1}:A \leadsto_{n} B$ are {\em homotopic} if there exists
$f:A \leadsto_{n} C([0,1],B)$ such that $F^{n}(\ev_{i})\circ f=f_{i}$.

\begin{lem}
If $f_{0}$ and $f_{1}$ are homotopic, then $\ee_{n}(f_{0})\simeq \ee_{n}(f_{1})$.
\end{lem}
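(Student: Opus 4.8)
The plan is to reduce the statement to the homotopy invariance of $\ee$ together with the naturality of the transformation $\delta$. The key computation is to show that for a homotopy $f:A\leadsto_n C([0,1],B)$ witnessing that $f_0$ and $f_1$ are homotopic, one has $\ee_n(f_i)\simeq \ee(\ev_i)\circ \ee_n(f)$ in $\EE$ for $i=0,1$.

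First I would exploit that $\delta:\ee\circ F\to \ee$ is a natural transformation of functors $\nCalg\to\EE$. Thus for every morphism $g:C\to D$ the naturality square gives $\delta_D\circ \ee(F(g))\simeq \ee(g)\circ \delta_C$. Starting from the definition $\ee_n(f_i)=\delta_B\circ \delta_{F(B)}\circ\cdots\circ\delta_{F^{n-1}(B)}\circ \ee(F^n(\ev_i))\circ \ee(f)$ (using $f_i=F^n(\ev_i)\circ f$) and applying this square successively with $g=F^{k}(\ev_i)$ for $k=n-1,n-2,\dots,0$, I would push the factor $\ee(F^n(\ev_i))$ leftward through the entire string of $\delta$'s. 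Each pass turns $\delta_{F^{k}(B)}\circ \ee(F^{k+1}(\ev_i))$ into $\ee(F^{k}(\ev_i))\circ \delta_{F^{k}(C([0,1],B))}$, so after $n$ passes the factor emerges on the far left as $\ee(\ev_i)$ while all remaining $\delta$'s become indexed by $C([0,1],B)$ and its iterates. The resulting tail $\delta_{C([0,1],B)}\circ\cdots\circ\delta_{F^{n-1}(C([0,1],B))}\circ\ee(f)$ is precisely $\ee_n(f)$, which establishes $\ee_n(f_i)\simeq \ee(\ev_i)\circ \ee_n(f)$.

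Second I would note that the constant-function inclusion $c:B\to C([0,1],B)$ is a homotopy equivalence (with $c\circ\ev_0$ and $c\circ\ev_1$ homotopic to the identity via the obvious contractions of $[0,1]$), and that $\ev_i\circ c=\id_B$. Since $\ee$ is homotopy invariant, $\ee(c)$ is an equivalence, and the relation $\ee(\ev_i)\circ\ee(c)\simeq\id$ identifies each $\ee(\ev_i)$ with the inverse of $\ee(c)$; as such an inverse is unique up to a contractible space of choices, $\ee(\ev_0)\simeq\ee(\ev_1)$. Combining the two steps yields $\ee_n(f_0)\simeq \ee(\ev_0)\circ\ee_n(f)\simeq \ee(\ev_1)\circ\ee_n(f)\simeq \ee_n(f_1)$, as desired.

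The step I expect to require the most care is the iterated naturality: because $\ee_n(f_i)$ is a point of a mapping space in the $\infty$-category $\EE$ rather than a strict composite, the $n$ individual naturality homotopies must be assembled coherently. I would handle this either by an induction on $n$ that uses a single naturality square at each stage, or by carrying out the whole manipulation inside the functor $\infty$-category $\Fun(\nCalg,\EE)$, where $\delta$ is literally a morphism and the transport of $\ee(F^n(\ev_i))$ past the string of $\delta$'s is a single composition of morphisms. The homotopy-invariance step is then routine.
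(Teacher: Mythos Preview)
Your proof is correct and reaches the same key identity $\ee_n(f_i)\simeq \ee(\ev_i)\circ\ee_n(f)$ followed by homotopy invariance of $\ee$. The paper obtains that identity in one line by observing that $f_i=F^n(\ev_i)\circ f=\ev_i\sharp f$ is a composition of asymptotic morphisms (with $\ev_i:C([0,1],B)\leadsto_0 B$) and then invoking the immediately preceding lemma $\ee_{n+m}(g\sharp f)\simeq \ee_m(g)\circ\ee_n(f)$, whose proof already packaged the iterated naturality of $\delta$ that you spell out by hand. So your argument is a direct re-derivation of the $m=0$ case of that lemma; nothing is wrong, but you could shorten it by citing the composition lemma instead.
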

\begin{proof}
We have 
$\ee_{n}(f_{i})\simeq  \ee_{0}(\ev_{i})\sharp \ee_{n}(f)$.
The assertion now follows  since $e_{0}=e$ and $e$ is homotopy invariant.   
\end{proof}

In the remainder of this section we relate the $E$-theory constructed in the present note with the version from 
 \cite{Guentner_2000}, called the classical $E$-theory $\EE^{\class}$.
In  \cite[Def. 2.13]{Guentner_2000}  (even in the equivariant case)  a {\em homotopy category $\mathfrak{A}$ of asymptotic morphisms} is introduced. Its objects are $C^{*}$-algebras, and its morphisms are
equivalence classes of asymptotic morphisms, where the equivalence relation is generated by the relations of being related and homotopy introduced above.  The results above show that the functor $\ho\circ \ee:\nCalg\to \ho\EE$ factorizes over $\mathfrak{A}$.

\begin{kor}
We have a commutative triangle
$$\xymatrix{&\nCalg\ar[dr]^{\ho\circ \ee}\ar[dl]&\\\mathfrak{A}\ar[rr]^{c}&&\ho\EE}\ .$$
\end{kor}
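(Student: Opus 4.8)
The plan is to define the functor $c$ by the evident assignment $c(A) := \ho\ee(A)$ on objects and $c([f]) := [\ee_{n}(f)]$ on morphisms, where $f : A \leadsto_{n} B$ is any asymptotic morphism representing the given class in $\mathfrak{A}(A,B)$ and $[\ee_{n}(f)]$ denotes the homotopy class in $\ho\EE$ of the morphism $\ee_{n}(f)\colon \ee(A)\to \ee(B)$ constructed above. Everything then reduces to checking that this assignment is well defined, functorial, and makes the triangle commute; the three lemmas just proved supply exactly the inputs that are needed, so that the argument is essentially bookkeeping.

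First I would verify well-definedness. A morphism in $\mathfrak{A}(A,B)$ is an equivalence class of asymptotic morphisms $A \leadsto_{n} B$ (over all $n$ in $\nat$), where the equivalence relation is generated by the relation of being related and that of homotopy. The lemma on related morphisms gives $\ee_{n}(f)\simeq \ee_{n+1}(f')$ whenever $f'$ is related to $f$, and the homotopy lemma gives $\ee_{n}(f_{0})\simeq \ee_{n}(f_{1})$ whenever $f_{0}$ and $f_{1}$ are homotopic. Since equivalences in $\EE$ become equalities in the homotopy category $\ho\EE$, the class $[\ee_{n}(f)]$ is invariant under both generators of the relation and therefore depends only on the class of $f$ in $\mathfrak{A}$. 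This makes $c$ well defined on morphisms.

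Next I would check functoriality. The composite of classes $[f]$ and $[g]$ in $\mathfrak{A}$ is represented by $g\sharp f$, and the composition lemma gives $\ee_{n+m}(g\sharp f)\simeq \ee_{m}(g)\circ \ee_{n}(f)$; passing to $\ho\EE$ yields $c([g]\circ[f]) = c([g])\circ c([f])$. For identities, the identity of $A$ in $\mathfrak{A}$ is represented by $\id_{A}\colon A \leadsto_{0} A$, and $c([\id_{A}]) = [\ee_{0}(\id_{A})] = [\id_{\ee(A)}] = \id_{c(A)}$; consistently, the related representative $\gamma_{A}\colon A \leadsto_{1} A$ satisfies $\ee_{1}(\gamma_{A})\simeq \id_{\ee(A)}$ as already observed, so no ambiguity arises. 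Finally, commutativity of the triangle is immediate: the canonical functor $\nCalg \to \mathfrak{A}$ sends a homomorphism $\phi\colon A\to B$ to its class as an asymptotic morphism with $n=0$, whence $c$ returns $[\ee_{0}(\phi)] = [\ee(\phi)] = \ho\ee(\phi)$, and both composites agree with $\ho\ee(A)$ on objects; thus $c\circ(\nCalg\to\mathfrak{A}) = \ho\circ\ee$ strictly.

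I do not expect a genuine obstacle here, since the entire analytic content has been pre-packaged into the preceding lemmas. The only point requiring a little care is to match the definition of $\mathfrak{A}$ imported from \cite[Def. 2.13]{Guentner_2000}: one must confirm that the relations generating morphisms in $\mathfrak{A}$ are precisely those controlled by the lemmas (being related and homotopy) and that composition in $\mathfrak{A}$ is the one induced by $\sharp$. Both are guaranteed by that definition, so these compatibilities are what license the well-definedness and functoriality verifications above.
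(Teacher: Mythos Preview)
Your proposal is correct and follows exactly the approach of the paper: define $c$ by $c(A)=\ee(A)$ and $c([f])=[\ee_{n}(f)]$, and observe that the three preceding lemmas handle well-definedness (relatedness and homotopy) and functoriality (composition via $\sharp$), while commutativity of the triangle is immediate from $\ee_{0}=\ee$. The paper's proof is terser, merely recording the definition of $c$ and the down-left arrow and pointing to ``the results above'', whereas you have spelled out the verifications explicitly.
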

\begin{proof}
The down-left arrow sends a morphism $f:A\to B$ to the equivalence class represented by $f\leadsto_{0}B$,
and the lower horizontal {\em comparison arrow} $c$ sends the $C^{*}$-algebra $A$ to $\ee(A)$ and the class of an asymptotic morphism
$f\leadsto_{n}B$ to $\ee_{n}(f)$.
\end{proof}

In  \cite[Def. 2.13]{Guentner_2000} the {\em classical $E$-theory category} $\EE^{\class}$ is defined
as the category whose objects are $C^{*}$-algebras, and whose morphisms are
given by \begin{equation}\label{eifjwiqoedjoqwedewdqdeqwdq}\Hom_{\EE^{\class}}(A,B):=\Hom_{\mathfrak{A}}(K\otimes S(A),K\otimes S(B))\ .
\end{equation}
It should not be confused with the separable version $E^{\class}_{\sepa}$ from \cite{MR1068250}.
There is a canonical functor \[ i: \mathfrak{A}\to  \EE^{\class}\] which is the identity on objects and
 sends the class of an asymptotic morphism
$f:A\leadsto_{n}B$ to the class of $i(f):K\otimes S(A)\leadsto_{n} K\otimes S(B)$ given by the composition
\[K\otimes S(A)\xrightarrow{f}K\otimes S(F^{n}(B))\to F^{n}(K\otimes S(B))\ .\]
(note that the second map is not an isomorphism).

\begin{kor}
We have a commutative triangle \begin{equation}\label{feqwfdwedcdcasc}\xymatrix{&\mathfrak{A}\ar[dr]^{c}\ar[dl]_{i}&\\\EE^{\class}\ar[rr]^{\hat c}&&\ho\EE}\ .
\end{equation}
\end{kor}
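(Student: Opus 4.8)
The plan is to construct $\hat c$ by conjugating the functor $c$ with the natural identification of $\ee(K\otimes S(-))$ with a loop of $\ee(-)$ and then stripping off the loop, which is legitimate because $\ho\EE$ is triangulated. Concretely, since $\ee$ is stable (\cref{wergijweogrefwrferwfw}.\ref{qrigjfoqrefwedqew9}) the left upper corner inclusion induces a natural equivalence $\ee(K\otimes X)\simeq \ee(X)$, and by the suspension--loop identification \eqref{terhertgrtgetrgetrgt}, transported along the localizations defining $\ee$, we have $\ee(S(X))\simeq \Omega\ee(X)$. Composing these yields a natural equivalence $\theta_X\colon \ee(K\otimes S(X))\xrightarrow{\simeq}\Omega\ee(X)$ in $\ho\EE$. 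Because $\EE$ is stable, $\Omega\colon\ho\EE\to\ho\EE$ is an auto-equivalence; I fix an inverse $\Omega^{-1}$.

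On objects I would set $\hat c(A):=\ee(A)=c(A)$. On morphism sets, I would use that by \eqref{eifjwiqoedjoqwedewdqdeqwdq} a morphism $\phi$ in $\Hom_{\EE^{\class}}(A,B)$ is a morphism $K\otimes S(A)\to K\otimes S(B)$ in $\mathfrak{A}$, and set
\[
\hat c(\phi):=\Omega^{-1}\bigl(\theta_B\circ c(\phi)\circ\theta_A^{-1}\bigr)\colon \ee(A)\to\ee(B)\ .
\]
Functoriality of $\hat c$ is then routine: it preserves identities since $c$ does and $\theta$, $\Omega^{-1}$ are functorial, and it preserves composition because composition in $\EE^{\class}$ is by definition composition in $\mathfrak{A}$ of the stabilised and suspended representatives, so $c$ turns it into composition in $\ho\EE$, while conjugation by the natural family $\theta$ and the functor $\Omega^{-1}$ both respect composition.

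It remains to check that \eqref{feqwfdwedcdcasc} commutes, i.e.\ $\hat c\circ i\simeq c$. On objects both send $A$ to $\ee(A)$. On a morphism of $\mathfrak{A}$ represented by an asymptotic morphism $f\colon A\leadsto_n B$, the map $i(f)$ is the composite $K\otimes S(A)\xrightarrow{K\otimes S(f)}K\otimes S(F^n(B))\xrightarrow{j}F^n(K\otimes S(B))$ with $j$ the canonical map, and I must show $\Omega^{-1}(\theta_B\circ \ee_n(i(f))\circ\theta_A^{-1})\simeq \ee_n(f)=c(f)$; equivalently, that $\ee_n(i(f))$ corresponds under $\theta$ to $\Omega\,\ee_n(f)$. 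Since $\ee(K\otimes S(f))$ corresponds under $\theta$ to $\Omega\ee(f)$ by naturality of $\theta$, this reduces, by induction on $n$ along the factorisation of $j$ into single-level comparison maps, to the single-step compatibility: for every $X$ the composite $\delta_{K\otimes S(X)}\circ\ee(j_X)\colon \ee(K\otimes S(F(X)))\to\ee(K\otimes S(X))$ corresponds under $\theta$ to $\Omega\delta_X$, where $j_X\colon K\otimes S(F(X))\to F(K\otimes S(X))$ is the canonical map.

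The \emph{main obstacle} is exactly this last compatibility, which is where the analytic content hides: the operations $T$ and $F$ do not literally commute with $K\otimes S(-)$, only through the canonical non-invertible maps $j$. I would verify it by unfolding $\delta_X=\ee(\ev_{0,X})\circ\ee(\alpha_X)^{-1}$ and using the naturality squares of $\alpha$ and $\ev_0$ against the canonical comparison maps $K\otimes S(T(X))\to T(K\otimes S(X))$ and $K\otimes S(F(X))\to F(K\otimes S(X))$, together with the fact that $\ee(\alpha)$ is always an equivalence, since $\ker\alpha\cong C_0([0,\infty),-)$ is contractible and $\ee$ is reduced, homotopy invariant and exact. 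As $K\otimes S(-)=K\otimes C_0(\R)\otimes-$ is exact and preserves the relevant contractible ideals and homotopy equivalences, applying $\ee$ turns the comparison squares into commuting squares in $\ho\EE$, which yields the desired identity and completes the argument.
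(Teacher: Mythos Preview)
Your construction of $\hat c$ is exactly the paper's: apply $c$ to the $\mathfrak{A}$-morphism $K\otimes S(A)\to K\otimes S(B)$ and then use the identification $\EE_0(K\otimes S(A),K\otimes S(B))\cong\EE_0(A,B)$ coming from $K$-stability of $\ee$ and stability of $\EE$. The paper states this definition and stops; you go further and actually verify commutativity of the triangle, correctly isolating the single-step compatibility $\delta_{K\otimes S(X)}\circ\ee(j_X)\simeq\Omega\delta_X$ as the only thing to check. Your verification via the naturality squares for $\alpha$ and $\ev_0$ against the canonical comparison maps $K\otimes S(T(X))\to T(K\otimes S(X))$ and $K\otimes S(F(X))\to F(K\otimes S(X))$ is sound: these squares commute in $\nCalg$, and since $\ee(\alpha)$ is always invertible, the diagram chase yields $\delta_{K\otimes S(X)}\circ\ee(j_X)\simeq\ee(K\otimes S(\ev_{0,X}))\circ\ee(K\otimes S(\alpha_X))^{-1}$, which under the natural equivalence $\theta$ becomes $\Omega\delta_X$. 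So the proposal is correct and takes the same route as the paper, just with the commutativity check spelled out where the paper leaves it implicit.
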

\begin{proof}
The lower horizontal map sends
a $C^{*}$-algebra $A$ to $\ee(A)$ and   the class of a morphism
$f:K\otimes S(A)\to F^{n}(K\otimes S(B))$ to the image under
\[\Hom_{\mathfrak{A}}(K\otimes S(A),K\otimes S(B))\xrightarrow{c}
\EE_{0}(K\otimes S(A),K\otimes S(B))\cong \EE_{0}( A, B)\ ,\]
where we use stability of the functor $\ee$ and stability of the $\infty$-category $\EE$ for the second isomorphism.
\end{proof}

\begin{rem}The functor $\hat c$  in \eqref{feqwfdwedcdcasc} is not an equivalence. In fact   
the classical $E$-theory functor preserves countable sums by  \cite[Prop. 7.1]{Guentner_2000}. In contrast,
the functor $\ee$ does not preserve countable sums, since $y:\EE_{\sepa}\to \EE$ does not preserve countable sums.  

But note that it is shown in \cite{Bunke:2024aa} that
the restriction of $\hat c$ to the full subcategory of separable algebras  induces an equivalence
$\hat c_{\sepa}:\EE^{\class}_{\sepa}\to \ho \EE_{\sepa}$. In particular the formula \eqref{eifjwiqoedjoqwedewdqdeqwdq} gives an explicit description of the morphism groups in $ \ho \EE_{\sepa}$ in terms of homotopy classes of asymptotic morphisms.
\hB
   \end{rem}
 
\begin{rem}\label{fewfiuzhefiqweddqewd}
Let \begin{equation}\label{fewdqwedewdedqqewded}\cS:\quad 0\to A\to B\to C\to 0
\end{equation}  be an exact sequence of separable $C^{*}$-algebras. The $E$-theory analogue of \cref{weijrogreferwfwerf} is 
  \cite[Prop. 5.5]{Guentner_2000},    where a morphism 
$\sigma_{\cS}$ in $\Hom_{\mathfrak{A}}(S(C),A)$ was constructed.
It follows from  \cite[Prop. 5.15]{Guentner_2000} (this is an analogue of \cref{weriogjwoierpgreferfrewferwf})
that the image of $\sigma_{\cS}$ in $\EE_{0}(S(C),A)$ is the boundary map
$\partial_{\cS}$ of the fibre sequence in $\EE$ associated to the exact sequence \eqref{fewdqwedewdedqqewded}.
This shows that the comparison functor $\hat c$ is compatible with the long exact sequences associated to exact sequences of separable $C^{*}$-algebras.
\hB\end{rem}

\bibliographystyle{alpha}
\bibliography{forschung2021}

\end{document}